\newcommand{\subscript}[2]{$#1 _ #2$}
\renewcommand{\email}[2][]{%
  \ifx\emails\@empty\relax\else{\g@addto@macro\emails{,\space}}\fi%
  \@ifnotempty{#1}{\g@addto@macro\emails{\textrm{(#1)}\space}}%
  \g@addto@macro\emails{#2}%
}
\author{Julien Korinman}
\address{Department of Mathematics, Faculty of Science and Engineering, Waseda University,3-4-1 Ohkubo, Shinjuku-ku, Tokyo, 169-8555, Japan}
\email{julien.korinman@gmail.com}
\subjclass{$57$R$56$, $57$N$10$, $57$M$25$.}
\keywords{Stated skein algebras, quantum Teichm\"uller spaces, character varieties}
\newcommand{\quotient}[2]{{\raisebox{.2em}{$#1$}\left/\raisebox{-.2em}{$#2$}\right.}}
\newcommand{\sslash}{\mathbin{/\mkern-6mu/}}
\newcommand{\Hom}{\operatorname{Hom}}
\newcommand{\tr}{\operatorname{tr}}
\newcommand{\Tr}{\operatorname{Tr}}
\newcommand{\SL}{\operatorname{SL}}
\newcommand{\id}{id}
\newcommand{\Span}{\operatorname{Span}}
\newcommand{\End}{\operatorname{End}}
\newcommand{\GL}{\operatorname{GL}}
\newcommand{\Vect}{\operatorname{Vect}}
\newcommand{\Mod}{\operatorname{Mod}}
\newcommand{\qdim}{\operatorname{qdim}}
\newcommand{\heightexch}[3]{
	\begin{tikzpicture}[baseline=-0.4ex,scale=0.5, >=stealth]
	\draw [fill=gray!60,gray!45] (-.7,-.75)  rectangle (.4,.75)   ;
	\draw[#1] (0.4,-0.75) to (.4,.75);
	\draw[line width=1.2] (0.4,-0.3) to (-.7,-.3);
	\draw[line width=1.2] (0.4,0.3) to (-.7,.3);
	\draw (0.65,0.3) node {\scriptsize{$#2$}}; 
	\draw (0.65,-0.3) node {\scriptsize{$#3$}}; 
	\end{tikzpicture}
}
\newcommand{\heightcurve}{
\begin{tikzpicture}[baseline=-0.4ex,scale=0.5]
\draw [fill=gray!20,gray!45] (-.7,-.75)  rectangle (.4,.75)   ;
\draw[-] (0.4,-0.75) to (.4,.75);
\draw[line width=1.2] (-.7,-0.3) to (-.4,-.3);
\draw[line width=1.2] (-.7,0.3) to (-.4,.3);
\draw[line width=1.15] (-.4,0) ++(-90:.3) arc (-90:90:.3);
\end{tikzpicture}
}
\newcommand{\heightcurveright}{
\begin{tikzpicture}[baseline=-0.4ex,scale=0.5]
\draw [fill=gray!20,gray!45] (-.7,-.75)  rectangle (.4,.75)   ;
\draw[-] (-0.7,-0.75) to (-.7,.75);
\draw[line width=1.2] (0.1,-0.3) to (.4,-.3);
\draw[line width=1.2] (0.1,0.3) to (.4,.3);
\draw[line width=1.15] (.1,0) ++(90:.3) arc (90:270:.3);
\end{tikzpicture}
}
\newcommand{\heightexchright}[3]{
	\begin{tikzpicture}[baseline=-0.4ex,scale=0.5, >=stealth]
	\draw [fill=gray!60,gray!45] (-.7,-.75)  rectangle (.4,.75)   ;
	\draw[#1] (-0.7,-0.75) to (-0.7,.75);
	\draw[line width=1.2] (0.4,-0.3) to (-.7,-.3);
	\draw[line width=1.2] (0.4,0.3) to (-.7,.3);
	\draw (-1,0.3) node {\scriptsize{$#2$}}; 
	\draw (-1,-0.3) node {\scriptsize{$#3$}}; 
	\end{tikzpicture}
}
\begin{document}

\theoremstyle{plain}
\newtheorem{theorem}{Theorem}[section]
\newtheorem{proposition}[theorem]{Proposition}
\newtheorem{corollary}[theorem]{Corollary}
\newtheorem{lemma}[theorem]{Lemma}
\theoremstyle{definition}
\newtheorem{notations}[theorem]{Notations}
\newtheorem{convention}[theorem]{Convention}
\newtheorem{problem}[theorem]{Problem}
\newtheorem{definition}[theorem]{Definition}
\theoremstyle{remark}
\newtheorem{remark}[theorem]{Remark}
\newtheorem{conjecture}[theorem]{Conjecture}
\newtheorem{example}[theorem]{Example}
\newtheorem{strategy}[theorem]{Strategy}
\newtheorem{question}[theorem]{Question}

\title[Stated skein algebras and their representations]{Stated skein algebras and their representations}
%
%
%\author{Julien Korinman}
%

\date{}
\maketitle

%%%%%%%%%%%%%%%%%%%%%%%%%%%%%%%%%%%%%%%%%%%%%%%%%%%%%%%%%%%%%%%%%%%%%%%%%%%%%%%%

\begin{abstract} 
This is a survey on stated skein algebras and their representations. 
\end{abstract}

\tableofcontents
%%%%%%%%%%%%%%%%%%%%%%%%%%%%%%%%%%%%%%%%%%%%%%%%%%%%%%%%%%%%%%%%%%%%%%%%%%%%%%%

\section{Introduction}

Let $\Sigma$ be a compact oriented surface, $k$ be a (unital, associative) commutative ring and $A\in k^{\times}$ an invertible element. The \textit{Kauffman-bracket skein algebra} $\mathcal{S}_A(\Sigma)$ is the quotient of the $k$ module freely generated by embedded framed links $L\subset \Sigma\times (0,1)$ by the ideal generated by elements $L-L'$, for $L,L'$ two isotopic links, and by the following skein relations: 
  	\begin{equation*} 
\begin{tikzpicture}[baseline=-0.4ex,scale=0.5,>=stealth]	
\draw [fill=gray!45,gray!45] (-.6,-.6)  rectangle (.6,.6)   ;
\draw[line width=1.2,-] (-0.4,-0.52) -- (.4,.53);
\draw[line width=1.2,-] (0.4,-0.52) -- (0.1,-0.12);
\draw[line width=1.2,-] (-0.1,0.12) -- (-.4,.53);
\end{tikzpicture}
=A
\begin{tikzpicture}[baseline=-0.4ex,scale=0.5,>=stealth] 
\draw [fill=gray!45,gray!45] (-.6,-.6)  rectangle (.6,.6)   ;
\draw[line width=1.2] (-0.4,-0.52) ..controls +(.3,.5).. (-.4,.53);
\draw[line width=1.2] (0.4,-0.52) ..controls +(-.3,.5).. (.4,.53);
\end{tikzpicture}
+A^{-1}
\begin{tikzpicture}[baseline=-0.4ex,scale=0.5,rotate=90]	
\draw [fill=gray!45,gray!45] (-.6,-.6)  rectangle (.6,.6)   ;
\draw[line width=1.2] (-0.4,-0.52) ..controls +(.3,.5).. (-.4,.53);
\draw[line width=1.2] (0.4,-0.52) ..controls +(-.3,.5).. (.4,.53);
\end{tikzpicture}
\hspace{.5cm}
\text{ and }\hspace{.5cm}
\begin{tikzpicture}[baseline=-0.4ex,scale=0.5,rotate=90] 
\draw [fill=gray!45,gray!45] (-.6,-.6)  rectangle (.6,.6)   ;
\draw[line width=1.2,black] (0,0)  circle (.4)   ;
\end{tikzpicture}
= -(A^2+A^{-2}) 
\begin{tikzpicture}[baseline=-0.4ex,scale=0.5,rotate=90] 
\draw [fill=gray!45,gray!45] (-.6,-.6)  rectangle (.6,.6)   ;
\end{tikzpicture}
.
\end{equation*}

The product of two classes links  $[L_1]$ and $[L_2]$ is defined by  isotoping $L_1$ and $L_2$  in $\Sigma\times (1/2, 1) $ and $\Sigma\times (0, 1/2)$ respectively and then setting $[L_1]\cdot [L_2]:=[L_1\cup L_2]$.

Skein algebras have been introduced by Przytycki \cite{Przytycki_skein} and Turaev \cite{Tu88} at the end of the 80's as a tool to study the $\mathrm{SU}(2)$ Witten-Reshetikhin-Turaev topological quantum field theories (\cite{Wi2, RT}). Skein algebras appear in TQFTs through their finite dimensional representations. Such representations exist if and only if the parameter $A$ is a root of unity. 
 Despite the apparent simplicity of their definition, these algebras and their representations are quite hard to study. A major breakthrough was made by Bonahon and Wong (\cite{BonahonWongqTrace, BonahonWong1}) who introduced three key original concepts to understand skein algebras which are: 
\begin{enumerate}
\item the stated skein algebras, 
\item the quantum traces and
\item the Chebyshev-Frobenius morphisms.
\end{enumerate}
Since the introduction of these new ideas, the amount of papers on the subject has grown very fast and many problems which were out of reach recently are now affordable. The goal of the present survey is to provide a self-content introduction on these three concepts and their use including proofs when the latters are short and enlightening.  In addition to  review some recent results on the subject, we dress a list of open problems/questions. 
Our leading problem will be
\begin{problem}\label{problem_classification}
 Classify all finite dimensional weight representations of (stated) skein algebras when $A$ is a root of unity of odd order.
 \end{problem}
  As we shall see, this problem is deeply connected to the study of the Poisson geometry of relative character varieties, more precisely to the computation of their symplectic leaves. Here we choose the order of $A$ to be odd and restrict to weight representations for simplicity.  Actually, even for the bigon, one of the simplest marked surface, Problem \ref{problem_classification} is undecidable  so we will reformulate later a more reasonable version in Problem \ref{problem_classification2}.

 The paper is organised as follows. We first define stated skein algebras and review their fundamental properties such as their behaviour for the gluing and fusion operations, their triangular decomposition, their Chebyshev-Frobenius morphisms and their finite presentations. We then review Bonahon-Wong's quantum trace which permits to embed skein algebras into some quantum tori. The third section reviews the Poisson geometry of relative character varieties, in particular the classification of their symplectic leaves. In the last section we review three families of representations of skein algebras coming from modular TQFTs, non semi-simple TQFTs and quantum Teichm\"uller theory. We then present two important theorems concerning the representation theory of skein algebras which are
 the Frohman-L\^e-Kania-Bartoszynska Unicity representation theorem and Brown-Gordon's theory of Poisson orders. Putting everything together, we'll review some recent results of Ganev-Jordan-Safranov and solve Problem \ref{problem_classification} in some simple cases.

\vspace{2mm}
\paragraph{\textbf{Acknowledgments.}} 
This manuscript is an expanded version of a proceeding of the conference "Intelligence in low dimensional topology" held at the RIMS. The author warmly thanks T.Ohtsuki for inviting him at the conference and 
 S.Baseilhac, F.Bonahon,  F.Costantino,  L.Funar , T.Q.T. L\^e, J.March\'e, J.Murakami,  A.Quesney, P.Roche and R.Santharoubane for useful discussions on the subject. He acknowledges  support from the Japanese Society for Promotion of Science (JSPS) and the Centre National de la Recherche Scientifique (CNRS).

 \section{Stated skein algebras}
 
 \subsection{Marked surfaces vs punctured surfaces}
 The key concept behind stated skein algebras is the introduction of marked surfaces.

\begin{definition} A \textit{marked surface} $\mathbf{\Sigma}=(\Sigma, \mathcal{A})$ is a compact oriented surface $\Sigma$ (possibly with boundary) with a finite set $\mathcal{A}=\{a_i\}_i$ of orientation-preserving immersions $a_i : [0,1] \hookrightarrow \partial \Sigma$, \textit{named boundary arcs}, whose restrictions to $(0,1)$ are embeddings and whose interiors are pairwise disjoint. 

 An \textit{embedding} $f:(\Sigma, \mathcal{A}) \to (\Sigma', \mathcal{A}')$ of marked surfaces is a orientation-preserving proper embedding $f:\Sigma \to \Sigma'$ so that for each boundary arc $a \in \mathcal{A}$ there exists $a' \in \mathcal{A}$ such that $f\circ a$ is the restriction of $a'$ to some subinterval of $[0,1]$. When several boundary arcs $a_1, \ldots, a_n$ in $\mathbf{\Sigma}$ are mapped to the same boundary arc $b$ of $\mathbf{\Sigma}'$ we include in the definition of $f$ the datum of a total ordering of $\{a_1,\ldots, a_n\}$. 
 Marked surfaces with embeddings form a category $\mathrm{MS}$ with monoidal structure given by disjoint union.  
\end{definition}

  By abuse of notations, we will often denote by the same letter the embedding $a_i$ and its image $a_i((0,1)) \subset \partial \Sigma$ and both call them boundary arcs. We will also abusively identify $\mathcal{A}$ with the disjoint union $\bigsqcup_i a_i((0,1)) \subset \partial \Sigma$ of open intervals. The main interest in considering marked surfaces is that they have a natural gluing operation. Let $\mathbf{\Sigma}=(\Sigma, \mathcal{A})$ be a marked surface and $a,b\in \mathcal{A}$ two boundary arcs. Set $\Sigma_{a\#b} := \quotient{\Sigma}{a(t) \sim b(1-t)}$ and $\mathcal{A}_{a\#b}:= \mathcal{A}\setminus a\cup b$. The marked surface $\mathbf{\Sigma}_{a\#b}=(\Sigma_{a\#b}, \mathcal{A}_{a\#b})$ is said obtained from $\mathbf{\Sigma}$ by gluing $a$ and $b$. We say that $\mathbf{\Sigma}=(\mathbf{\Sigma}, \mathcal{A})$ is \textit{unmarked} if $\mathcal{A}=\emptyset$.

  A related concept is the notion of \textit{punctured surfaces}: a punctured surface is a pair $(S, \mathcal{P})$ where $S$ is a compact oriented surface and $\mathcal{P}\subset \Sigma$ a finite subset of punctures which non trivially intersects each connected component of $\partial \Sigma$. To a punctured surface, one associates a marked surface $(\Sigma, \mathcal{A})$ where $\Sigma$ is obtained from $S$ by blowing up each inner puncture and the boundary arcs in $\mathcal{A}$ are the connected components of $\partial S \setminus \mathcal{P}$.  Both notions marked surfaces and punctured surfaces are used in the literature and are essentially equivalent. On the one hand, marked surfaces have the advantage of having a natural notion of morphisms, which is hard to translate in the language of punctured surfaces, and generalize naturally to $3$-manifolds. On the other hand, punctured surfaces make much more natural the concept of triangulation. In this survey, we will use marked surfaces $\mathbf{\Sigma}=(\Sigma, \mathcal{A})$ but will call punctures the connected components of $\partial \Sigma \setminus \mathcal{A}$ and \textit{inner punctures} the unmarked connected components of $\partial \Sigma$ (so they are circles really).
  
  \begin{notations} Let us name some marked surfaces. Let $D(n)$ be a disc $\mathbb{D}^2$ with $n$ pairwise disjoint open subdiscs removed and let $\Sigma_{g,n}$ be an oriented connected surface of genus $g$ with $n$ boundary components. We will sometimes write $\Sigma_g=\Sigma_{g,0}$.
  \begin{enumerate}
  \item The $n^{th}$\textit{-punctured monogon} $\mathbf{m}_n=(D(n), \{a\})$ is $D(n)$ with one boundary arc in $\partial \mathbb{D}^2$.
  \item The $n^{th}$\textit{-punctured bigon} $\mathbb{D}_n= (D(n), \{a,b\})$ is $D(n)$ with two boundary arcs in $\partial \mathbb{D}^2$. We call $\mathbb{B}:=\mathbb{D}_0$ simply the \textit{bigon}.
  \item A \textit{triangle} $\mathbb{T}= (\mathbb{D}^2, \{a_1, a_2, a_3\})$ is a disc with three boundary arcs on its boundary. Its boundary arcs are called \textit{edges}.
  \item We denote by $\mathbf{\Sigma}_{g,n}^0=(\Sigma_{g,n+1}, \{a\} )$ the  surface $\Sigma_{g,n}$ with a single boundary arc in one of its only boundary component and by $\underline{\mathbf{\Sigma}}_{g,n}= (\Sigma_{g,n}, \mathcal{A})$ the surface $\Sigma_{g,n}$ with exactly one boundary arc in each of its boundary component (so $\mathbf{\Sigma}_{g,0}^0=\underline{\mathbf{\Sigma}}_{g,1}$). 
  \end{enumerate}
  
  \end{notations}
  
  \begin{definition}
  A marked surface $\mathbf{\Sigma}$ is \textit{triangulable} if it can be obtained from a finite disjoint union $\mathbf{\Sigma}_{\Delta}= \bigsqcup_i \mathbb{T}_i$ of triangles by gluing some pairs of edges. A \textit{triangulation} $\Delta$ is then the data of the disjoint union $\mathbf{\Sigma}_{\Delta}$ together with the set of glued pair of edges.  
  \end{definition}
  The connected components $\mathbb{T}_i$ of $\mathbf{\Sigma}_{\Delta}$ are called \textit{faces} and their set is denoted $F(\Delta)$. The image in $\Sigma$ of the edges of the faces $\mathbb{T}_i$ are called \textit{edges} of $\Delta$ and their set is denoted $\mathcal{E}(\Delta)$. Note that each boundary arc is an edge in $\mathcal{E}(\Delta)$; the elements of the complementary $\mathring{\mathcal{E}}(\Delta):= \mathcal{E}(\Delta) \setminus \mathcal{A}$ are called \textit{inner edges}. The punctures of $\mathbf{\Sigma}$ are the vertices of the triangulation; in particular this set is non empty so $\Sigma$ cannot be closed.
  
  Note that the only connected marked surfaces which are not triangulable are: the bigon $\mathbb{B}$, the monogon $\mathbf{m}_0$ and the unmarked surfaces $\Sigma_{g,0}$, $\Sigma_{0,1}$ and $ \Sigma_{0,2}$.

  \subsection{Definition and first properties of stated skein algebras}

   A \textit{tangle}  is a  compact framed, properly embedded $1$-dimensional manifold $T\subset \Sigma \times (0,1)$ such that for every point of $\partial T \subset \mathcal{A}\times (0,1)$ the framing is parallel to the $(0,1)$ factor and points to the direction of $1$.   The \textit{height} of $(v,h)\in \Sigma \times (0,1)$ is $h$.  If $a$ is a boundary arc and $T$ a tangle, we impose that no two points in $\partial_aT:= \partial T \cap a\times(0,1)$  have the same heights, hence the set $\partial_aT$ is totally ordered by the heights. Two tangles are isotopic if they are isotopic through the class of tangles that preserve the boundary height orders. By convention, the empty set is a tangle only isotopic to itself.
 
\vspace{2mm}

\par Let $\pi : \Sigma\times (0,1)\rightarrow \Sigma$ be the projection with $\pi(v,h)=v$. A tangle $T$ is in \textit{generic position} if for each of its points, the framing is parallel to the $(0,1)$ factor and points in the direction of $1$ and is such that $\pi_{| T} : T\rightarrow \Sigma$ is an immersion with at most transversal double points in the interior of $\Sigma$. Every tangle is isotopic to a tangle in generic position. We call \textit{diagram}  the image $D=\pi(T)$ of a tangle in generic position, together with the over/undercrossing information at each double point. An isotopy class of diagram $D$ together with a total order of $\partial_a D:=\partial D\cap a$ for each boundary arc $a$, define uniquely an isotopy class of tangle. When choosing an orientation $\mathfrak{o}(a)$ of a boundary arc $a$ and a diagram $D$, the set $\partial_aD$ receives a natural order by setting that the points are increasing when going in the direction of $\mathfrak{o}(a)$. We will represent tangles by drawing a diagram and an orientation (an arrow) for each boundary arc, as in Figure \ref{fig_statedtangle}. When a boundary arc $a$ is oriented we assume that $\partial_a D$ is ordered according to the orientation. A \textit{state} of a tangle is a map $s:\partial T \rightarrow \{-, +\}$. A pair $(T,s)$ is called a \textit{stated tangle}. We define a \textit{stated diagram} $(D,s)$ in a similar manner.

\begin{figure}[!h] 
\centerline{\includegraphics[width=6cm]{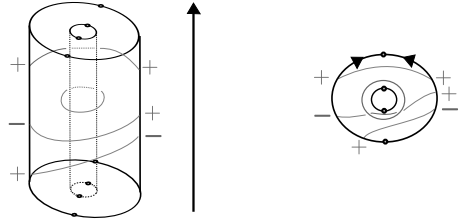} }
\caption{On the left: a stated tangle. On the right: its associated diagram. The arrows represent the height orders. } 
\label{fig_statedtangle} 
\end{figure} 

 \vspace{2mm}
\par  Let  $A^{1/2} \in k^{\times}$ an invertible element and denote by $A$ its square. Stated skein algebras were first introduced by Bonahon and Wong in \cite{BonahonWongqTrace}. The version we will present here is a refinement due to L\^e \cite{LeStatedSkein}.

\begin{definition}\label{def_stated_skein}\cite{LeStatedSkein} 
  The \textit{stated skein algebra}  $\mathcal{S}_{A}(\mathbf{\Sigma})$ is the  free $k$-module generated by isotopy classes of stated tangles in $\Sigma\times (0, 1)$ modulo the following relations \eqref{eq: skein 1} and \eqref{eq: skein 2}, 
  	\begin{equation}\label{eq: skein 1} 
\begin{tikzpicture}[baseline=-0.4ex,scale=0.5,>=stealth]	
\draw [fill=gray!45,gray!45] (-.6,-.6)  rectangle (.6,.6)   ;
\draw[line width=1.2,-] (-0.4,-0.52) -- (.4,.53);
\draw[line width=1.2,-] (0.4,-0.52) -- (0.1,-0.12);
\draw[line width=1.2,-] (-0.1,0.12) -- (-.4,.53);
\end{tikzpicture}
=A
\begin{tikzpicture}[baseline=-0.4ex,scale=0.5,>=stealth] 
\draw [fill=gray!45,gray!45] (-.6,-.6)  rectangle (.6,.6)   ;
\draw[line width=1.2] (-0.4,-0.52) ..controls +(.3,.5).. (-.4,.53);
\draw[line width=1.2] (0.4,-0.52) ..controls +(-.3,.5).. (.4,.53);
\end{tikzpicture}
+A^{-1}
\begin{tikzpicture}[baseline=-0.4ex,scale=0.5,rotate=90]	
\draw [fill=gray!45,gray!45] (-.6,-.6)  rectangle (.6,.6)   ;
\draw[line width=1.2] (-0.4,-0.52) ..controls +(.3,.5).. (-.4,.53);
\draw[line width=1.2] (0.4,-0.52) ..controls +(-.3,.5).. (.4,.53);
\end{tikzpicture}
\hspace{.5cm}
\text{ and }\hspace{.5cm}
\begin{tikzpicture}[baseline=-0.4ex,scale=0.5,rotate=90] 
\draw [fill=gray!45,gray!45] (-.6,-.6)  rectangle (.6,.6)   ;
\draw[line width=1.2,black] (0,0)  circle (.4)   ;
\end{tikzpicture}
= -(A^2+A^{-2}) 
\begin{tikzpicture}[baseline=-0.4ex,scale=0.5,rotate=90] 
\draw [fill=gray!45,gray!45] (-.6,-.6)  rectangle (.6,.6)   ;
\end{tikzpicture}
;
\end{equation}

\begin{equation}\label{eq: skein 2} 
\begin{tikzpicture}[baseline=-0.4ex,scale=0.5,>=stealth]
\draw [fill=gray!45,gray!45] (-.7,-.75)  rectangle (.4,.75)   ;
\draw[->] (0.4,-0.75) to (.4,.75);
\draw[line width=1.2] (0.4,-0.3) to (0,-.3);
\draw[line width=1.2] (0.4,0.3) to (0,.3);
\draw[line width=1.1] (0,0) ++(90:.3) arc (90:270:.3);
\draw (0.65,0.3) node {\scriptsize{$+$}}; 
\draw (0.65,-0.3) node {\scriptsize{$+$}}; 
\end{tikzpicture}
=
\begin{tikzpicture}[baseline=-0.4ex,scale=0.5,>=stealth]
\draw [fill=gray!45,gray!45] (-.7,-.75)  rectangle (.4,.75)   ;
\draw[->] (0.4,-0.75) to (.4,.75);
\draw[line width=1.2] (0.4,-0.3) to (0,-.3);
\draw[line width=1.2] (0.4,0.3) to (0,.3);
\draw[line width=1.1] (0,0) ++(90:.3) arc (90:270:.3);
\draw (0.65,0.3) node {\scriptsize{$-$}}; 
\draw (0.65,-0.3) node {\scriptsize{$-$}}; 
\end{tikzpicture}
=0,
\hspace{.2cm}
\begin{tikzpicture}[baseline=-0.4ex,scale=0.5,>=stealth]
\draw [fill=gray!45,gray!45] (-.7,-.75)  rectangle (.4,.75)   ;
\draw[->] (0.4,-0.75) to (.4,.75);
\draw[line width=1.2] (0.4,-0.3) to (0,-.3);
\draw[line width=1.2] (0.4,0.3) to (0,.3);
\draw[line width=1.1] (0,0) ++(90:.3) arc (90:270:.3);
\draw (0.65,0.3) node {\scriptsize{$+$}}; 
\draw (0.65,-0.3) node {\scriptsize{$-$}}; 
\end{tikzpicture}
=A^{-1/2}
\begin{tikzpicture}[baseline=-0.4ex,scale=0.5,>=stealth]
\draw [fill=gray!45,gray!45] (-.7,-.75)  rectangle (.4,.75)   ;
\draw[-] (0.4,-0.75) to (.4,.75);
\end{tikzpicture}
\hspace{.1cm} \text{ and }
\hspace{.1cm}
A^{1/2}
\heightexch{->}{-}{+}
- A^{5/2}
\heightexch{->}{+}{-}
=
\heightcurve.
\end{equation}
The product of two classes of stated tangles $[T_1,s_1]$ and $[T_2,s_2]$ is defined by  isotoping $T_1$ and $T_2$  in $\Sigma\times (1/2, 1) $ and $\Sigma\times (0, 1/2)$ respectively and then setting $[T_1,s_1]\cdot [T_2,s_2]=[T_1\cup T_2, s_1\cup s_2]$. Figure \ref{fig_product} illustrates this product.
\end{definition}
\par For an unmarked surface, $\mathcal{S}_{A}(\Sigma, \emptyset)$ coincides with the usual Kauffman-bracket skein algebra.

\begin{figure}[!h] 
\centerline{\includegraphics[width=8cm]{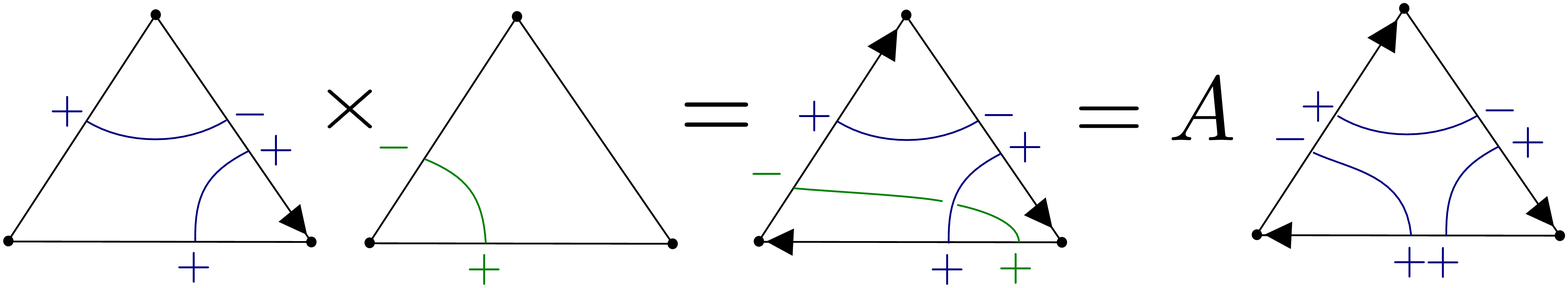} }
\caption{An illustration of the product in stated skein algebras.} 
\label{fig_product} 
\end{figure} 

\textbf{Functoriality}

Now consider an embedding $f:\mathbf{\Sigma}_1\to \mathbf{\Sigma}_2$ of marked surfaces and define a proper embedding $\widetilde{f} : \Sigma_1\times (0,1) \to \Sigma_2\times (0,1)$ such that: $(1)$ $\widetilde{f}(x,t)=(f(x), \varphi(x,t))$ for $\varphi$ a smooth map and $(2)$ if $a_1, a_2$ are two boundary arcs of $\mathbf{\Sigma}_1$ mapped to the same boundary arc $b$ of $\mathbf{\Sigma}_2$ and the ordering of $f$ is $a_1< a_2$, then for all $x_1\in a_1, x_2\in a_2, t_1, t_2 \in (0,1)$ one has $\varphi(x_1,t_1)<\varphi(x_2, t_2)$. 
It is an easy consequence of the definition that the formula $f_*([T,s]):= (\widetilde{f}(T), s\circ \widetilde{f}^{-1})$ defines a morphism of algebras $f_* : \mathcal{S}_A(\mathbf{\Sigma}_1) \to \mathcal{S}_A(\mathbf{\Sigma}_2)$ independent on the choice of $\widetilde{f}$ and that the assignment $\mathbf{\Sigma}\to \mathcal{S}_A(\mathbf{\Sigma})$ defines a symmetric monoidal functor 
$$ \mathcal{S}_A : \mathrm{MS} \to \mathrm{Alg}_k.$$
Here is an alternative easier $2$-dimensional definition of $f_*$. Suppose we have fixed some orientations $\mathfrak{o_1}, \mathfrak{o_2}$ of the boundary arcs of $\mathbf{\Sigma}_1$ and $\mathbf{\Sigma}_2$ and that $f: \Sigma_1 \to \Sigma_2$ is a proper oriented embedding sending $\mathcal{A}_1$ to $\mathcal{A}_2$ in such a way that it preserves the orientations of the boundary arcs. If $a_1, a_2 \in \mathcal{A}_1$ are sent to the same boundary arc $b\in \mathcal{A}_2$, they are naturally ordered by  $<_b$, so $f$ together with $\mathfrak{o_1}, \mathfrak{o}_2$ defines a morphism in $\mathrm{MS}$ and we can choose $\widetilde{f}:= f\times \id$, so $f_*$ is defined on stated diagrams by $f_*([D,s]):= [f(D), s\circ f^{-1}]$. 
\vspace{2mm}

\par \textbf{Bases}

The first theorem we state is the existence of bases for stated skein algebras. Whereas proving the existence of bases for usual Kauffman-bracket skein algebras (given by multicurves) is an easy exercise, the construction of bases for marked surfaces is a highly non trivial result based on the Diamond lemma.

A closed component of a diagram $D$ is trivial if it bounds an embedded disc in $\Sigma$. An open component of $D$ is trivial if it can be isotoped, relatively to its boundary, inside some boundary arc. A diagram is \textit{simple} if it has neither double point nor trivial component. By convention, the empty set is a simple diagram. Let $\mathfrak{o}$ denote an arbitrary orientation of the boundary arcs of $\mathbf{\Sigma}$. For each boundary arc $a$ we write $<_{\mathfrak{o}}$ the induced total order on $\partial_a D$. A state $s: \partial D \rightarrow \{ - , + \}$ is $\mathfrak{o}-$\textit{increasing} if for any boundary arc $a$ and any two points $x,y \in \partial_a D$, then $x<_{\mathfrak{o}} y$ implies $s(x)< s(y)$, with the convention $- < +$. 

\begin{definition}\label{def_basis}
 We denote by $\mathcal{B}^{\mathfrak{o}}\subset \mathcal{S}_{A}(\mathbf{\Sigma})$ the set of classes of stated diagrams $(D,s)$ such that $D$ is simple and $s$ is $\mathfrak{o}$-increasing. 
\end{definition}

\begin{theorem}\label{theorem_basis}(L\^e \cite[Theorem $2.11$]{LeStatedSkein})  The set $\mathcal{B}^{\mathfrak{o}}$ is a  basis of $\mathcal{S}_{A}(\mathbf{\Sigma})$. \end{theorem}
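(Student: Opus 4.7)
The plan is to prove spanning and linear independence separately. For spanning I would apply the defining relations as reduction rules controlled by a suitably chosen lexicographic complexity on stated diagrams. For linear independence I would either invoke Bergman's Diamond Lemma for the resulting rewriting system or, closer to L\^e's original approach, first compute $\mathcal{B}^{\mathfrak{o}}$ explicitly for the triangle $\mathbb{T}$ and the bigon $\mathbb{B}$ and then deduce the general case via a splitting morphism along a triangulation.

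For spanning, I would represent an arbitrary stated tangle by a stated diagram $(D,s)$ in generic position and attach to it the lexicographic tuple $c(D,s)$ given, for instance, by the number of double points of $D$, the total number of boundary endpoints of $D$, the number of isotopically trivial components of $D$, and the number of $\mathfrak{o}$-descents of $s$. The Kauffman-bracket smoothing from~\eqref{eq: skein 1} strictly decreases the number of double points; the trivial-circle relation from~\eqref{eq: skein 1} and the three trivial-arc cap relations from~\eqref{eq: skein 2} remove trivial components; and the height-exchange relation from~\eqref{eq: skein 2} rewrites a descent either as an increasing pair with swapped states (decreasing the descent count) or as a local configuration where the two endpoints are pulled off the boundary into a U-turn (decreasing the boundary-endpoint count). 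Each rewrite strictly decreases $c(D,s)$ in lexicographic order, so the procedure terminates in a $k$-linear combination of elements of $\mathcal{B}^{\mathfrak{o}}$.

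For linear independence, orient each relation as a rewriting rule whose left-hand side is the more complex pattern, so that $\mathcal{B}^{\mathfrak{o}}$ is precisely the set of normal forms, and invoke Bergman's Diamond Lemma. The confluences to verify are the classical triple-crossing Reidemeister III overlaps, a crossing overlapping a height-exchange, two consecutive height-exchanges on three heights of the same arc, and a cap overlapping a crossing, a height-exchange, or another cap on the same arc. A cleaner alternative, following L\^e, is to establish the theorem first directly for $\mathbb{T}$ and $\mathbb{B}$, using the identification $\mathcal{S}_A(\mathbb{B}) \cong \mathcal{O}_{q^2}(\SL_2)$ and the standard PBW basis, and then, for any triangulable $\mathbf{\Sigma}$, to use the splitting morphism associated to a triangulation $\Delta$ to embed $\mathcal{S}_A(\mathbf{\Sigma})$ into $\bigotimes_{\mathbb{T}\in F(\Delta)} \mathcal{S}_A(\mathbb{T})$ and deduce linear independence from the tensor-product basis. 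The non-triangulable cases $\mathbb{B}$, $\mathbf{m}_0$, $\Sigma_{g,0}$, $\Sigma_{0,1}$ and $\Sigma_{0,2}$ are handled separately; the unmarked ones recover the classical multicurve basis of Przytycki and Turaev.

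The hard part will be the bookkeeping of confluences involving the boundary relations, in particular the interactions between caps and height-exchanges on the same arc, where consistency depends on the precise coefficients $A^{1/2}$, $A^{5/2}$, $A^{-1/2}$ of~\eqref{eq: skein 2} and on the trivial-circle coefficient $-(A^2+A^{-2})$. The splitting approach shifts this difficulty to the construction and injectivity of the splitting morphism, but both the morphism itself and the required base-case computations are short and transparent, which is why this alternative is in practice easier.
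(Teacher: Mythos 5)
Your primary approach via the Diamond Lemma is the right one and is indeed what L\^e does in \cite{LeStatedSkein}; the present survey does not reproduce the proof but explicitly says the result is ``based on the Diamond lemma.'' Your spanning argument with a lexicographic complexity and your list of overlap ambiguities are a reasonable sketch of that proof, and you are right that the delicate part is verifying confluence for the boundary overlaps, where the precise coefficients in~\eqref{eq: skein 2} matter.

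Your ``cleaner alternative, following L\^e'' is, however, not what L\^e does, and as stated it is circular. The injectivity of the splitting map $\theta^{\Delta}: \mathcal{S}_A(\mathbf{\Sigma}) \hookrightarrow \bigotimes_{\mathbb{T}\in F(\Delta)} \mathcal{S}_A(\mathbb{T})$ is Theorem~\ref{theorem_gluing}, and L\^e's proof of that theorem \emph{uses} the basis of Theorem~\ref{theorem_basis}: one shows that $\theta_{a\#b}$ maps the basis of $\mathcal{S}_A(\mathbf{\Sigma}_{a\#b})$ to a family of vectors that remain linearly independent after expansion in the basis of $\mathcal{S}_A(\mathbf{\Sigma})$. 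The same issue arises for the base case: the identification $\mathcal{S}_A(\mathbb{B}) \cong \mathcal{O}_{q}[\SL_2]$ is proved to be an isomorphism by comparing the candidate basis $\mathcal{B}^{\mathfrak{o}}(\mathbb{B})$ with the PBW basis, so it also presupposes the spanning/independence statement for the bigon. If you wanted to make the splitting route work, you would need an independent proof of injectivity of $\theta^{\Delta}$ and of the bigon isomorphism; neither is available without the Diamond Lemma (or some equivalent normal-form argument). So the ``alternative'' does not shift the difficulty, it presupposes the conclusion. Stick with the Diamond Lemma route, where the genuine content lives in the confluence checks you already identified.
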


The basis $\mathcal{B}^{\mathfrak{o}}$ is independent on the choice of the ring $k$ and of $A^{1/2}\in k^{\times}$, in particular when $k=\mathbb{Z}[A^{\pm 1/2}]$, the corresponding stated skein algebra is flat. This fact will have deep consequences when defining deformation quantizations of relative character varieties.
\vspace{2mm}
\par \textbf{Off puncture ideal} An easy but useful consequence of Theorem \ref{theorem_basis} is the following. Suppose that $\mathbf{\Sigma}'$ is obtained from $\mathbf{\Sigma}$ by removing a puncture $p$, that is either by filling a closed unmarked component of $\partial \Sigma$ (inner puncture) or fusioning two adjacent boundary arcs (boundary puncture) to a single one (in which case we need to order the two boundary arcs). Then the inclusion $\Sigma \subset \Sigma'$ defines  a morphism $f: \mathbf{\Sigma} \to \mathbf{\Sigma}'$. Since any diagram in $\Sigma'$ is isotopic to a diagram in $\Sigma'$, the morphism $f_* : \mathcal{S}_A(\mathbf{\Sigma}) \to \mathcal{S}_A(\mathbf{\Sigma}')$ is surjective.
The \textit{off-puncture ideal} is $\mathcal{I}_p:= \mathrm{ker} (f_*)$, so we have an exact sequence:
\begin{equation}\label{eq_off_puncture}
0 \to \mathcal{I}_p \to \mathcal{S}_A(\mathbf{\Sigma}) \xrightarrow{f_*} \mathcal{S}_A(\mathbf{\Sigma}') \to 0.
\end{equation}

\begin{proposition}\label{prop_offpuncture}(K.-Quesney \cite[Proposition $2.18$]{KojuQuesneyClassicalShadows} )
The off puncture ideal $\mathcal{I}_p$ is generated by the elements $[D_1,s_1]- [D_2,s_2]$, where $(D_1,s_1)$ and $(D_2,s_2)$ are two connected stated diagrams (so closed curves or stated arcs) in $\Sigma$ which are isotopic in $\Sigma'$. 
\end{proposition}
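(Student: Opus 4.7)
The plan is to establish both inclusions of $\mathcal{I}_p=\mathcal{J}$. The easy direction $\mathcal{J}\subseteq\mathcal{I}_p$ is immediate from functoriality of $\mathcal{S}_A$: if $(D_1,s_1)$ and $(D_2,s_2)$ are connected stated diagrams isotopic in $\Sigma'$, then $f_*$ identifies them in $\mathcal{S}_A(\mathbf{\Sigma}')$. For the reverse inclusion, the conceptual point is that $\mathcal{S}_A(\mathbf{\Sigma})$ and $\mathcal{S}_A(\mathbf{\Sigma}')$ are defined by exactly the same local skein relations \eqref{eq: skein 1} and \eqref{eq: skein 2}, and differ only in the class of allowed isotopies: in $\Sigma'\times(0,1)$ a tangle may be isotoped through a neighborhood of $p\times(0,1)$ (the filled-in disc for an inner puncture, or across the merged point $p\in\partial\Sigma$ for a boundary puncture). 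Since every stated tangle and every local skein-relation box in $\Sigma'\times(0,1)$ can be pushed off such a neighborhood into $\Sigma\times(0,1)$, the map $f_*$ presents $\mathcal{S}_A(\mathbf{\Sigma}')$ as the quotient of $\mathcal{S}_A(\mathbf{\Sigma})$ by the ideal $\widetilde{\mathcal{J}}$ generated by differences $[T_1,s_1]-[T_2,s_2]$ of (possibly disconnected) stated tangles in $\Sigma\times(0,1)$ that are isotopic in $\Sigma'\times(0,1)$. Hence it suffices to prove $\widetilde{\mathcal{J}}\subseteq\mathcal{J}$.

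Given a generator of $\widetilde{\mathcal{J}}$ realized by an ambient isotopy $\phi_t$ in $\Sigma'\times(0,1)$ from $T_1$ to $T_2$, the plan is to refine $\phi_t$ as a finite concatenation of \emph{elementary moves}, each of which modifies a single connected component $(C,s_C)$ into some $(C',s_C)$ isotopic to it in $\Sigma'$, while all the other components $(R,s_R)$ remain fixed and stay within $\Sigma\times(0,1)$. For each such elementary move, after an isotopy within $\Sigma\times(0,1)$ arranging that the stacking product realizes the splitting $(T,s)=(R,s_R)\sqcup(C,s_C)$ (say with $R$ below and $C$ above), one has
\begin{equation*}
[T,s]-[T',s] \;=\; [R,s_R]\cdot\bigl([C,s_C]-[C',s_C]\bigr) \;\in\; \mathcal{J},
\end{equation*}
because $[C,s_C]-[C',s_C]$ is a generator of $\mathcal{J}$ and $\mathcal{J}$ is a two-sided ideal. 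A telescoping sum over the elementary moves then yields $[T_1,s_1]-[T_2,s_2]\in\mathcal{J}$.

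The main obstacle is the rigorous justification of this component-by-component refinement of $\phi_t$ while keeping every intermediate tangle inside $\Sigma\times(0,1)$. This is a general-position argument in $3$-manifold topology: disjoint $1$-submanifolds in the $3$-manifold $\Sigma'\times(0,1)$ have codimension $2$ and can be isotoped independently, but one cannot in general follow $\phi_t$ literally component-by-component, since the trajectory of one component under $\phi_t$ may cross the initial or final position of another (even though $\phi_t$ itself is a homeomorphism at each time). Instead, the intermediate representative of each component must be re-chosen within its homotopy class in the complement of the components currently kept fixed, and one must verify that such coherent re-choices can always be made so that the final tangle after the last elementary move is indeed $T_2$. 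This is the delicate topological step on which the argument hinges.
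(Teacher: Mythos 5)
Your first reduction is sound: by pushing tangles and local skein relations off a neighborhood of $p$, the surjection $f_*$ does present $\mathcal{S}_A(\mathbf{\Sigma}')$ as the quotient of $\mathcal{S}_A(\mathbf{\Sigma})$ by the relations coming from the extra isotopies allowed in $\Sigma'$, so $\mathcal{I}_p$ is indeed spanned (as a $k$-module, in fact, not merely generated as an ideal) by differences $[T_1,s_1]-[T_2,s_2]$ of tangles isotopic in $\Sigma'\times(0,1)$. You also correctly identify the component-by-component refinement of the isotopy as the serious topological step; a general-position argument (perturbing the ambient isotopy so that, in a small bicollar $N_p$ of the puncture, at most one strand is present at any time, with $T$ entirely in $\Sigma\times(0,1)$ between consecutive passes) does the job and is the standard way to handle it.

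The genuine gap is in the ideal-absorption formula
\begin{equation*}
[T,s]-[T',s]=[R,s_R]\cdot\bigl([C,s_C]-[C',s_C]\bigr),
\end{equation*}
which presupposes that $T$ can be isotoped, inside $\Sigma\times(0,1)$, to the \emph{stacking product} $R\cdot C$ (all of $R$ at heights below all of $C$). This is not always possible: for a general tangle $T\subset\Sigma\times(0,1)$, distinct components $C$ and $R$ can be essentially linked (a Hopf-link pattern in a ball, for instance), and then no ambient isotopy of $\Sigma\times(0,1)$ realizes $T$ as a stacked product. The fix, and presumably what makes the cited proof work, is to invoke Theorem~\ref{theorem_basis} \emph{before} the component-by-component argument and reduce to the case where $T_1$ and $T_2$ are $\mathfrak{o}$-increasing \emph{simple} stated diagrams: their connected components are then pairwise disjoint embedded curves and arcs in the surface $\Sigma$, so they lift to height-separated tangles and the identity $[D]=[C_1]\cdots[C_n]$ (in an appropriate order, with possibly some height-exchange coefficients for arcs sharing a boundary arc) does hold, making the telescoping sum legitimate. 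This reduction has a second payoff: once you are comparing two simple diagrams related by an ambient isotopy in $\Sigma'$, the relevant isotopies are isotopies of multicurves in the surface itself rather than of tangles in a $3$-manifold, which makes the ``move one component while the others sit still'' refinement elementary and dissolves the obstruction you flag about trajectories sweeping through other components' positions. Without this reduction to the basis, the argument as written does not close.
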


\vspace{2mm}
\par 
\textbf{Some useful skein relations} 
Let us state some useful skein relations, which are direct consequences of the definition. We first define some matrices:
$$ C= \begin{pmatrix} C_+^+ & C_-^+ \\ C_+^- & C_-^- \end{pmatrix} := \begin{pmatrix} 0 & A^{-1/2} \\ -A^{-5/2} & 0 \end{pmatrix}. \mbox{ Therefore }C^{-1}= -A^3 C =  \begin{pmatrix} 0 & -A^{5/2} \\ A^{1/2} & 0 \end{pmatrix}.$$

$$ \mathscr{R} =  
\begin{pmatrix}
 \mathscr{R}_{++}^{++} & \mathscr{R}_{+-}^{++} &\mathscr{R}_{-+}^{++} &\mathscr{R}_{--}^{++} \\
\mathscr{R}_{++}^{+-} &\mathscr{R}_{+-}^{+-} &\mathscr{R}_{-+}^{+-} &\mathscr{R}_{--}^{+-}  \\
\mathscr{R}_{++}^{-+} &\mathscr{R}_{+-}^{-+} &\mathscr{R}_{-+}^{-+} &\mathscr{R}_{--}^{-+} \\
\mathscr{R}_{++}^{--} &\mathscr{R}_{+-}^{--} &\mathscr{R}_{-+}^{--} &\mathscr{R}_{--}^{--} 
  \end{pmatrix}
:= \begin{pmatrix} A & 0 & 0 & 0 \\ 0 & 0 &A^{-1} & 0 \\ 0 & A^{-1} & A-A^{-3} & 0 \\ 0 & 0 & 0 & A \end{pmatrix}, \mbox{so } 
\mathscr{R}^{-1}  =
\begin{pmatrix} A^{-1} & 0 & 0 & 0 \\ 0 & A^{-1} - A^3 & A & 0 \\ 0 & A & 0 & 0 \\ 0 & 0 & 0 & A^{-1}\end{pmatrix}.
$$

\par $\bullet$  \textit{The trivial arc relations:} 

\begin{equation}\label{trivial_arc_rel}
\begin{tikzpicture}[baseline=-0.4ex,scale=0.5,>=stealth]
\draw [fill=gray!45,gray!45] (-.7,-.75)  rectangle (.4,.75)   ;
\draw[->] (0.4,-0.75) to (.4,.75);
\draw[line width=1.2] (0.4,-0.3) to (0,-.3);
\draw[line width=1.2] (0.4,0.3) to (0,.3);
\draw[line width=1.1] (0,0) ++(90:.3) arc (90:270:.3);
\draw (0.65,0.3) node {\scriptsize{$i$}}; 
\draw (0.65,-0.3) node {\scriptsize{$j$}}; 
\end{tikzpicture}
= C^i_j 
\hspace{.2cm}
\begin{tikzpicture}[baseline=-0.4ex,scale=0.5,>=stealth]
\draw [fill=gray!45,gray!45] (-.7,-.75)  rectangle (.4,.75)   ;
\draw[-] (0.4,-0.75) to (.4,.75);
\end{tikzpicture}
, \hspace{.4cm}
\begin{tikzpicture}[baseline=-0.4ex,scale=0.5,>=stealth]
\draw [fill=gray!45,gray!45] (-.7,-.75)  rectangle (.4,.75)   ;
\draw[->] (-0.7,-0.75) to (-.7,.75);
\draw[line width=1.2] (-0.7,-0.3) to (-0.3,-.3);
\draw[line width=1.2] (-0.7,0.3) to (-0.3,.3);
\draw[line width=1.15] (-.4,0) ++(-90:.3) arc (-90:90:.3);
\draw (-0.9,0.3) node {\scriptsize{$i$}}; 
\draw (-0.9,-0.3) node {\scriptsize{$j$}}; 
\end{tikzpicture}
=(C^{-1})^i_j 
\hspace{.2cm}
\begin{tikzpicture}[baseline=-0.4ex,scale=0.5,>=stealth]
\draw [fill=gray!45,gray!45] (-.7,-.75)  rectangle (.4,.75)   ;
\draw[-] (-0.7,-0.75) to (-0.7,.75);
\end{tikzpicture}.
\end{equation}

\par $\bullet$  \textit{The cutting arc relations:}

\begin{equation}\label{cutting_arc_rel}
\heightcurveright
= \sum_{i,j = \pm} C^i_j
 \hspace{.2cm} 
\heightexchright{->}{i}{j}
, \hspace{.4cm} 
\heightcurve =
\sum_{i,j = \pm} (C^{-1})_j^i
\hspace{.2cm}
\heightexch{->}{i}{j}
\end{equation}.

\par $\bullet$ \textit{The height exchange relations:}

\begin{equation}\label{height_exchange_rel}
\heightexch{->}{i}{j}= \adjustbox{valign=c}{\includegraphics[width=0.9cm]{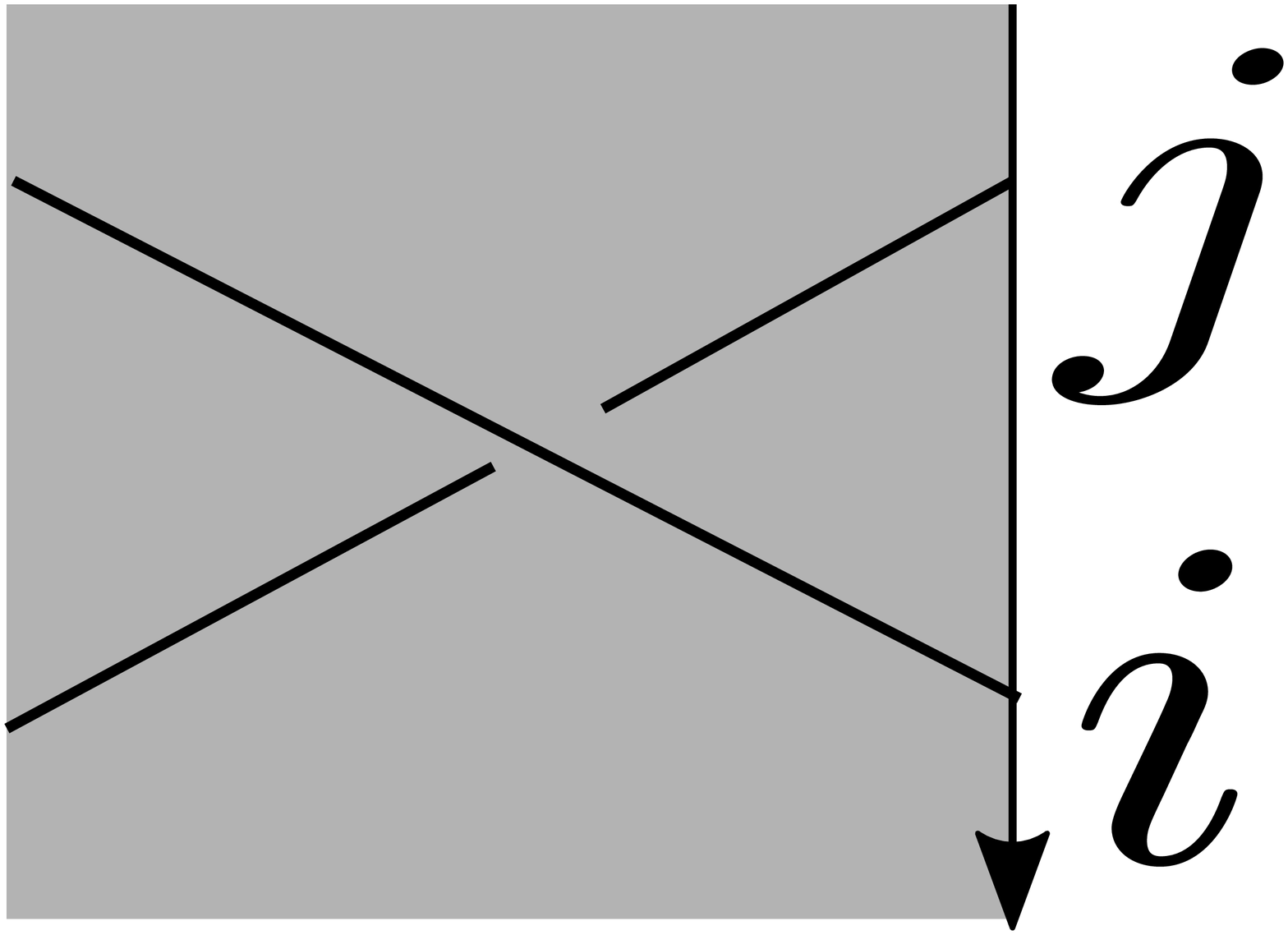}} =  \sum_{k,l = \pm} \mathscr{R}_{i j}^{k l} 
 \hspace{.2cm} 
 \heightexch{<-}{l}{k}
 , \hspace{.4cm}
 \heightexch{<-}{j}{i} = \adjustbox{valign=c}{\includegraphics[width=0.9cm]{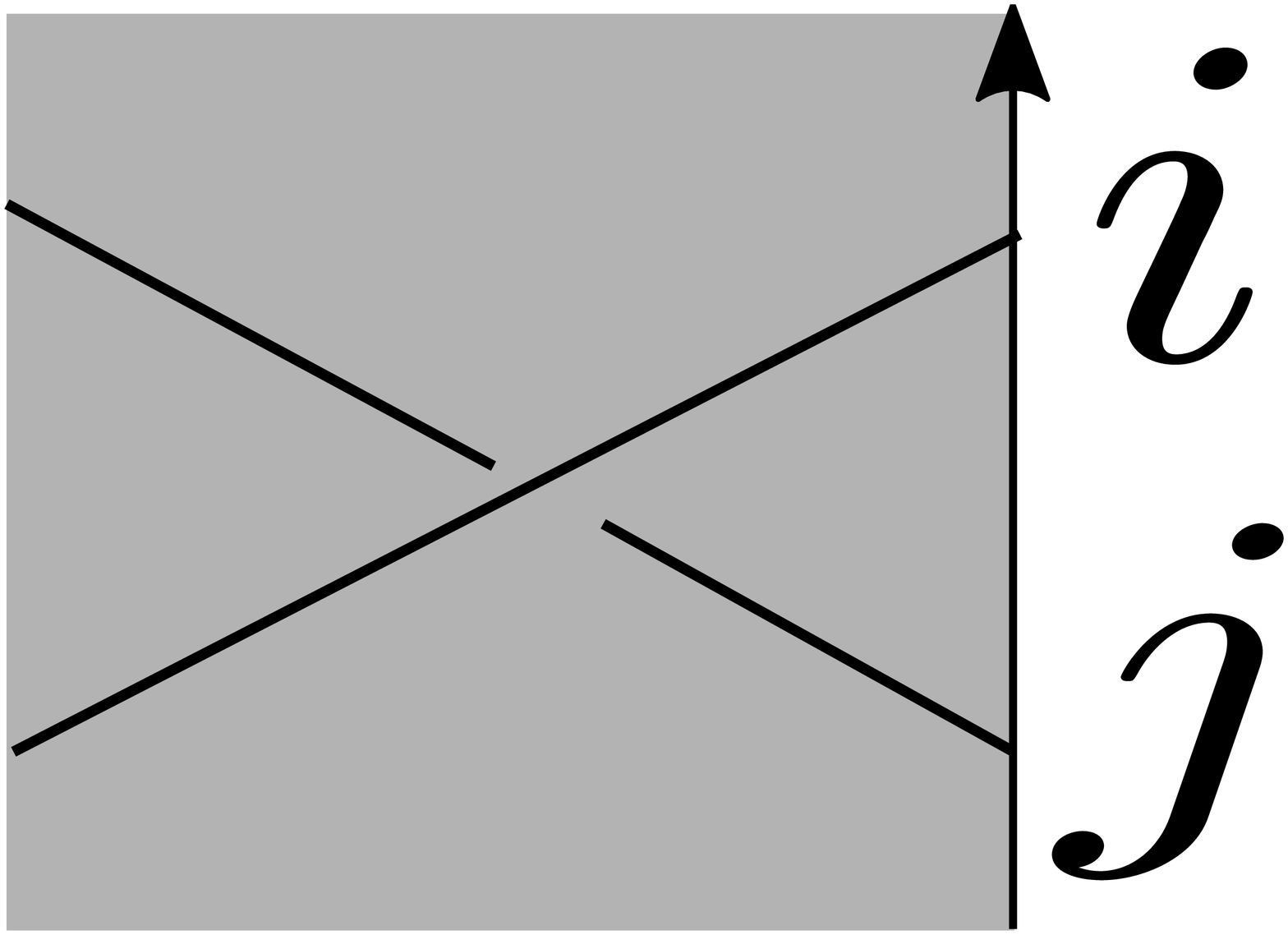}} = \sum_{k,l = \pm} (\mathscr{R}^{-1})_{i j}^{k l} 
 \hspace{.2cm}
 \heightexch{->}{k}{l}.
\end{equation}

\vspace{2mm}
\par 
\textbf{Graduations} 
Fix an orientation $\mathfrak{o}$ of the boundary arcs of $\mathbf{\Sigma}$. A simple diagram $D$ induces a relative homology class $[D]\in \mathrm{H}_1(\Sigma, \mathcal{A}; \mathbb{Z}/2\mathbb{Z})$. 

\begin{definition}
For $\chi \in \mathrm{H}_1(\Sigma, \mathcal{A}; \mathbb{Z}/2\mathbb{Z})$, we define the subspace:
$$ \mathcal{S}_A^{(\chi)}(\mathbf{\Sigma}):= \mathrm{Span} \left( [D,s]\in \mathcal{B}^{\mathfrak{o}} | [D]=\chi \right) \subset \mathcal{S}_A(\mathbf{\Sigma}).$$
\end{definition}
It follows from the defining skein relations \eqref{eq: skein 1} and \eqref{eq: skein 2} that $ \mathcal{S}_A^{(\chi)}(\mathbf{\Sigma})$ does not depend on the choice of $\mathfrak{o}$ and that $\mathcal{S}_A(\mathbf{\Sigma})= \oplus_{\chi \in \mathrm{H}_1(\Sigma, \mathcal{A}; \mathbb{Z}/2\mathbb{Z})}  \mathcal{S}_A^{(\chi)}(\mathbf{\Sigma})$ is a graded algebra, \textit{i.e.} that $ \mathcal{S}_A^{(\chi_1)}(\mathbf{\Sigma})\cdot  \mathcal{S}_A^{(\chi_2)}(\mathbf{\Sigma})\subset  \mathcal{S}_A^{(\chi_1+\chi_2)}(\mathbf{\Sigma})$.

\vspace{2mm}
\par 
\textbf{Reduced stated skein algebras} 

Let $\mathbf{\Sigma}$ a marked surface and $p$ a boundary puncture between two consecutive boundary arcs $a$ and $b$ on the same boundary component $\partial$ of $\partial \Sigma$.
The orientation of $\Sigma$ induces an orientation of $\partial$ so a cyclic ordering of the elements of $\partial \cap \mathcal{A}$ we suppose that $b$ is followed by $a$ in this ordering.  We denote by $\alpha(p)$ an arc with one endpoint $v_a\in a$ and one endpoint $v_b \in b$ such that $\alpha(p)$ can be isotoped inside $\partial$. Let $\alpha(p)_{ij}\in \mathcal{S}_A(\mathbf{\Sigma})$ be the class of the stated arc $(\alpha(p), s)$ where $s(v_a)=i$ and $s(v_b)=j$. The following definition will be justified in the next section when studying the quantum trace.

\begin{definition}
We call \textit{bad arc} associated to $p$ the element $\alpha(p)_{-+}\in \mathcal{S}_A(\mathbf{\Sigma})$ (see Figure \ref{fig_bad_arc}). The \textit{reduced stated skein algebra} $\mathcal{S}^{red}_A(\mathbf{\Sigma})$ is the quotient of $\mathcal{S}_A(\mathbf{\Sigma})$ by the ideal generated by all bad arcs.
\end{definition}

\begin{figure}[!h] 
\centerline{\includegraphics[width=4cm]{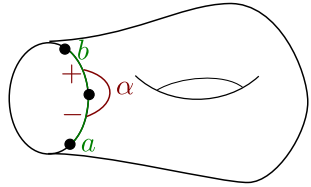} }
\caption{A bad arc.} 
\label{fig_bad_arc} 
\end{figure} 

The orientation of $\Sigma$ induces an orientation $\mathfrak{o}^+$ of the boundary arcs of $\mathcal{A}$. Let $\overline{\mathcal{B}}^{\mathfrak{o}^+} \subset \mathcal{S}_A^{red}(\mathbf{\Sigma})$ be the set of classes $[D,s]^{\mathfrak{o}^+}$ where $D$ is simple, $s$ is $\mathfrak{o}^+$-increasing and $(D,s)$ does not contain any bad arc.

\begin{theorem}(Costantino-Le \cite[Theorem $7.1$]{CostantinoLe19})
The set $\overline{\mathcal{B}}^{\mathfrak{o}^+} $ is a basis of $\mathcal{S}^{red}_A(\mathbf{\Sigma})$.
\end{theorem}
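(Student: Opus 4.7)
The plan is to handle the spanning property and linear independence separately. For spanning, I would start from Theorem~\ref{theorem_basis}: $\mathcal{B}^{\mathfrak{o}^+}$ spans $\mathcal{S}_A(\mathbf{\Sigma})$ before quotienting. I partition $\mathcal{B}^{\mathfrak{o}^+} = \overline{\mathcal{B}}^{\mathfrak{o}^+} \sqcup \mathcal{B}^{\mathrm{bad}}$, where $\mathcal{B}^{\mathrm{bad}}$ consists of those basis elements $(D,s)$ having at least one connected component isotopic to a bad arc. Any such element can be isotoped so that the offending component lies in a boundary collar at a strictly lower height than everything else; this exhibits it as a product $\alpha(p)_{-+}\cdot[D\setminus\alpha,\,s_{|D\setminus\alpha}]$ in $\mathcal{S}_A(\mathbf{\Sigma})$, hence as $0$ in $\mathcal{S}_A^{red}(\mathbf{\Sigma})$. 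Therefore $\overline{\mathcal{B}}^{\mathfrak{o}^+}$ already generates the reduced algebra as a $k$-module.

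For linear independence, my strategy is to construct a $k$-linear retraction $\pi:\mathcal{S}_A(\mathbf{\Sigma})\to \mathrm{Span}(\overline{\mathcal{B}}^{\mathfrak{o}^+})$ that is the identity on $\overline{\mathcal{B}}^{\mathfrak{o}^+}$ and annihilates the two-sided ideal $I$ generated by all bad arcs. Using the basis $\mathcal{B}^{\mathfrak{o}^+}$, the map $\pi$ is unambiguously defined by sending every element of $\mathcal{B}^{\mathrm{bad}}$ to zero; the nontrivial content is the vanishing $\pi(I)=0$. By $k$-bilinearity this reduces to checking $\pi\bigl([D_1,s_1]\cdot\alpha(p)_{-+}\cdot[D_2,s_2]\bigr)=0$ for $[D_i,s_i]\in\mathcal{B}^{\mathfrak{o}^+}$ and $p$ a boundary puncture. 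Once this is proved, $\pi$ descends to a section of $\mathcal{S}_A(\mathbf{\Sigma})\twoheadrightarrow \mathcal{S}_A^{red}(\mathbf{\Sigma})$, which gives linear independence of $\overline{\mathcal{B}}^{\mathfrak{o}^+}$ in the reduced algebra.

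The key lemma I would prove by induction on the pair (number of crossings of the underlying diagram, number of boundary endpoints in a small neighborhood of $p$) is the following: whenever a stated tangle contains $\alpha(p)_{-+}$ as a component at some height, rewriting it in the basis $\mathcal{B}^{\mathfrak{o}^+}$ via the relations \eqref{eq: skein 1}, \eqref{eq: skein 2}, \eqref{trivial_arc_rel}, \eqref{cutting_arc_rel} and \eqref{height_exchange_rel} produces only basis terms lying in $\mathcal{B}^{\mathrm{bad}}$. The main obstacle is precisely the inductive step near the puncture $p$: one must check that every application of the height-exchange matrix $\mathscr{R}$ or $\mathscr{R}^{-1}$, of a cutting-arc relation involving the anti-diagonal matrix $C^{\pm 1}$, and of a Kauffman resolution of a crossing adjacent to the bad arc, preserves the existence of a bad-arc component among the surviving terms. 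The structural properties that $\mathscr{R}_{ij}^{kl}$ vanishes unless $(k,l)\in\{(i,j),(j,i)\}$ and that $C,\,C^{-1}$ are anti-diagonal make this verification finite and case-by-case manageable, so the induction closes and the theorem follows.
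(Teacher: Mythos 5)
Your retraction strategy — identify $\mathrm{Span}(\mathcal{B}^{\mathrm{bad}})$ with the bad-arc ideal $I$ inside $\mathcal{S}_A(\mathbf{\Sigma})$ and deduce the basis from the resulting $k$-linear splitting of the quotient — is the right one. But the spanning step contains a concrete error that the rest of the proposal does not compensate for. In a basis element $[D,s]^{\mathfrak{o}^+}$ whose component $\alpha$ is a bad arc, the bad arc \emph{cannot} be isotoped to lie at strictly lower (or strictly higher) height than the other components: since $b$ precedes $a$ in the cyclic $\mathfrak{o}^+$-order on $\partial$, the endpoint $v_a$ is the $\mathfrak{o}^+$-\emph{first} point of $\partial_a D$ (hence at the \emph{lowest} height on $a$), while $v_b$ is the $\mathfrak{o}^+$-\emph{last} point of $\partial_b D$ (hence at the \emph{highest} height on $b$). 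The bad arc therefore spans both height extremes, and $[D,s]^{\mathfrak{o}^+}$ is not literally a product $\alpha(p)_{-+}\cdot[D\setminus\alpha,\cdot]$; one must first apply a sequence of height exchanges at $a$ to convert the product into $\mathfrak{o}^+$-order, and the correct assertion is only $[D,s]^{\mathfrak{o}^+}=A^{m/2}\,\alpha(p)_{-+}\cdot[D\setminus\alpha,\cdot]^{\mathfrak{o}^+}$ for some integer $m$. Establishing that the discrepancy is a monomial rather than a genuine sum already requires the heart of your ``key lemma'': one has to see that the $\mathscr{R}^{\pm 1}$-column indexed by the state pair occurring at $v_a$ and the neighbouring strand is concentrated at a single entry. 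So the spanning step quietly relies on the linear-independence computation; the two halves of the proposal are not as decoupled as written.

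On the key lemma itself, the sparsity of $\mathscr{R}$ you cite — nonvanishing only for $(k,l)\in\{(i,j),(j,i)\}$ — is correct but not sufficient. What is actually used is the sharper fact that the column of $\mathscr{R}^{-1}$ indexed by $(-,\epsilon)$ (resp.\ the one indexed by $(\epsilon,+)$) has a \emph{unique} nonzero entry, and this holds only because $v_a$ (resp.\ $v_b$) sits at the extremal $\mathfrak{o}^+$-position of its boundary arc carrying the extremal state; for columns such as $(+,-)$ there genuinely are two entries. Moreover, as stated — for an arbitrary stated tangle containing $\alpha(p)_{-+}$ as a component at some height — the lemma is not visibly inductive: a Kauffman resolution at a crossing between the bad arc and another strand destroys the bad arc as a connected component in \emph{both} smoothings, so the claimed invariant does not propagate along that branch of the induction. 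The repair is to restrict attention to the ideal generators $[D_1,s_1]\,\alpha(p)_{-+}\,[D_2,s_2]$, choosing the bad-arc representative as a small collar arc near $p$ disjoint in projection from $D_1\cup D_2$; then no Kauffman move ever touches it and only the height exchanges at $v_a$ and $v_b$ need be tracked. This is precisely the quasi-commutativity packaged in Lemma~\ref{lemma_leyu}, which the survey records, and which your induction is essentially re-deriving by hand; once you have it, both the spanning and the linear-independence steps become short.
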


  \subsection{The gluing/cutting/excision formula and triangular decomposition}
  The main interest in extending skein algebras to marked surfaces is their behaviour for the gluing operation that we now describe.

  Let $a$, $b$ be two distinct boundary arcs of $\mathbf{\Sigma}$, denote by $\pi : \Sigma\rightarrow \Sigma_{a\#b}$ the projection and $c:=\pi(a)=\pi(b)$. Let $(T_0, s_0)$ be a stated framed tangle of $\Sigma_{a\#b}\times (0,1)$ transversed to $c\times (0,1)$ and such that the heights of the points of $T_0 \cap c\times (0,1)$ are pairwise distinct and the framing of the points of $T_0 \cap c\times (0,1)$ is vertical towards $1$. Let $T\subset \Sigma \times (0,1)$ be the framed tangle obtained by cutting $T_0$ along $c$. 
Any two states $s_a : \partial_a T \rightarrow \{-,+\}$ and $s_b : \partial_b T \rightarrow \{-,+\}$ give rise to a state $(s_a, s, s_b)$ on $T$. 
Both the sets $\partial_a T$ and $\partial_b T$ are in canonical bijection with the set $T_0\cap c$ by the map $\pi$. Hence the two sets of states $s_a$ and $s_b$ are both in canonical bijection with the set $\mathrm{St}(c):=\{ s: c\cap T_0 \rightarrow \{-,+\} \}$. 

\begin{definition}\label{def_gluing_map}
Let $\theta_{a\#b}: \mathcal{S}_{A}(\mathbf{\Sigma}_{a\#b}) \rightarrow \mathcal{S}_{A}(\mathbf{\Sigma})$ be the linear map given, for any $(T_0, s_0)$ as above, by: 
$$ \theta_{a\#b} \left( [T_0,s_0] \right) := \sum_{s \in \mathrm{St}(c)} [T, (s, s_0 , s) ].$$
\end{definition}

\begin{figure}[!h] 
\centerline{\includegraphics[width=8cm]{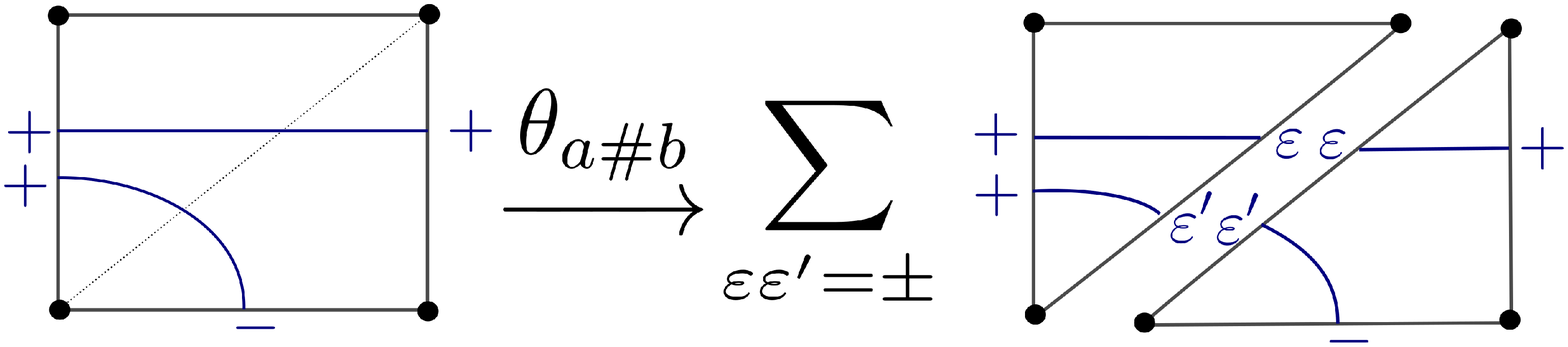} }
\caption{An illustration of the gluing map $\theta_{a\#b}$.} 
\label{fig_gluingmap} 
\end{figure} 

\begin{theorem}\label{theorem_gluing}( L\^e \cite[Theorem $3.1$]{LeStatedSkein})
 The linear map $\theta_{a\#b}: \mathcal{S}_A(\mathbf{\Sigma}_{a\#b}) \rightarrow \mathcal{S}_A(\mathbf{\Sigma})$ is an injective morphism of algebras. Moreover the gluing operation is coassociative in the sense that if $a,b,c,d$ are four distinct boundary arcs, then we have $\theta_{a\#b} \circ \theta_{c\#d} = \theta_{c\#d} \circ \theta_{a\#b}$.
\end{theorem}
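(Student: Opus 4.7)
The plan is to verify four properties of $\theta_{a\#b}$ in turn: well-definedness, multiplicativity, injectivity, and coassociativity.

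First, I would check that every defining relation of $\mathcal{S}_A(\mathbf{\Sigma}_{a\#b})$ is preserved by $\theta_{a\#b}$. Each such relation is local, taking place either inside a small ball (Kauffman crossing, height exchange) or near a boundary arc of $\mathbf{\Sigma}_{a\#b}$, which by construction is distinct from the images of $a$ and $b$ (trivial/cutting arc relations). In either case, by an isotopy one arranges the relation to occur in a region of $\Sigma_{a\#b}$ lifting canonically to a region of $\Sigma$ disjoint from $a\cup b$; the set $T_0\cap c$ is then the same for all terms of the relation, so $\sum_{s\in\mathrm{St}(c)}$ factors out and the relation descends to $\mathcal{S}_A(\mathbf{\Sigma})$. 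Multiplicativity is an equally direct check: for tangles $T_0^1, T_0^2$ stacked in the upper and lower halves of $\Sigma_{a\#b}\times(0,1)$, the set $(T_0^1\sqcup T_0^2)\cap c$ splits as the disjoint union of the intersections in the two halves, so the state sum over $\mathrm{St}(c)$ on the stacked tangle factors as a product of two independent sums, which is exactly $\theta_{a\#b}([T_0^1,s_0^1])\cdot\theta_{a\#b}([T_0^2,s_0^2])$.

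For injectivity I would invoke Theorem \ref{theorem_basis}. Fix an orientation $\mathfrak{o}_{ab}$ of the boundary arcs of $\mathbf{\Sigma}_{a\#b}$ and extend it to an orientation $\mathfrak{o}$ of those of $\mathbf{\Sigma}$ in which $a$ and $b$ receive the common orientation inherited from $\mathfrak{o}_{ab}$ on $c$. Given $[D,s]\in\mathcal{B}^{\mathfrak{o}_{ab}}$, choose a representative of $D$ transverse to $c$ with minimal intersection number; the cut $D'$ is a simple diagram in $\Sigma$, and
$$\theta_{a\#b}([D,s])=\sum_{s_c\in\mathrm{St}(c)}[D',(s,s_c,s_c)].$$
To rewrite each summand in $\mathcal{B}^{\mathfrak{o}}$, I would apply the height-exchange relations \eqref{height_exchange_rel} to bring the states on $a$ and $b$ into $\mathfrak{o}$-increasing order. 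A filtration argument based on the lexicographic order on the states along $a\sqcup b$, using that the $\mathscr{R}$-matrix entries that preserve the underlying unordered state multiset are invertible scalars, shows that this change of basis is unitriangular: each $(D,s)\in\mathcal{B}^{\mathfrak{o}_{ab}}$ produces a distinguished leading basis element $[D',s']\in\mathcal{B}^{\mathfrak{o}}$ with invertible coefficient, and distinct $(D,s)$ yield distinct $[D',s']$ (through distinct $D'$ or, when $D'$ coincides, through distinct leading states on $a\cup b$). Therefore $\theta_{a\#b}(\mathcal{B}^{\mathfrak{o}_{ab}})$ is linearly independent in $\mathcal{S}_A(\mathbf{\Sigma})$.

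Coassociativity is then essentially formal: for four distinct boundary arcs $a,b,c,d$, both $\theta_{a\#b}\circ\theta_{c\#d}$ and $\theta_{c\#d}\circ\theta_{a\#b}$ unfold into the same double sum indexed by the disjoint product of state sets on the two gluing arcs $\pi(a)=\pi(b)$ and $\pi(c)=\pi(d)$, and finite sums over disjoint index sets can be exchanged. The main obstacle is injectivity: establishing the unitriangularity of the change-of-basis uniformly for all $(D,s)$ requires careful bookkeeping of the interaction between the number of boundary points on $a$ and $b$, their prescribed states $s_c$, and the expansion via the $\mathscr{R}$- and $C$-matrices; the other three points reduce to elementary inspection of the definitions.
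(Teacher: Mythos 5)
The survey states this theorem as a citation to L\^e's paper and does not reproduce a proof, so I compare your proposal against the published argument. Your four-part outline (well-definedness, multiplicativity, injectivity via bases, coassociativity) does match the structure of L\^e's proof, and the multiplicativity and coassociativity steps are sound and indeed nearly formal. However, there are two genuine gaps in the substantive parts.

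\emph{Well-definedness.} You only treat relations that can be isotoped off the gluing arc $c$, but $\theta_{a\#b}$ is defined on isotopy classes of tangles, and an isotopy of $T_0$ in $\Sigma_{a\#b}\times(0,1)$ may push a strand across $c$, changing $T_0\cap c$ and hence the index set $\mathrm{St}(c)$. Showing that the state sum is unchanged under such a move is the real content of well-definedness and is precisely where the cutting arc relation \eqref{cutting_arc_rel} (and the trivial arc relation) enters; your argument does not invoke it.

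\emph{Injectivity.} Your unitriangularity idea is the right one, but the supporting claim is incorrect as stated: the height exchange relation \eqref{height_exchange_rel} only involves $\mathscr{R}$-entries $\mathscr{R}_{ij}^{kl}$ with $\{k,l\}=\{i,j\}$, so \emph{every} nonzero entry preserves the unordered state multiset, and among these $\mathscr{R}_{+-}^{+-}=0$ and $\mathscr{R}_{-+}^{-+}=A-A^{-3}$ is not invertible when $A^4=1$. The invertible-leading-coefficient argument therefore needs a finer partial order on states, not the multiset criterion. There is also an unaddressed orientation issue: since $\Sigma_{a\#b}$ is formed by $a(t)\sim b(1-t)$, no choice of $\mathfrak{o}$ on $\mathbf{\Sigma}$ makes the pullback of an $\mathfrak{o}_{ab}$-increasing state $s_c$ simultaneously $\mathfrak{o}$-increasing on both $a$ and $b$ (except for constant states); one of the two copies is increasing and the other decreasing, so the image $[D',(s,s_c,s_c)]$ is essentially never a basis element and must be normalized by height exchanges on exactly one side. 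Your write-up elides this, and it is exactly where the ``careful bookkeeping'' you flag is required; without it the claim that distinct basis elements yield distinct leading terms is not established.
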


Consider the bigon $\mathbb{B}$, that is a disc with two boundary arcs, say $a_L$ and $a_R$. Take two copies $\mathbb{B}\bigsqcup \mathbb{B}'$: when gluing $a_R$ to $a_L'$ we get another bigon, so we have a map:
$$ \Delta:= \theta_{a_R\#a_L'} : \mathcal{S}_A(\mathbb{B})^{\otimes 2} \to \mathcal{S}_A(\mathbb{B}).$$
The coassociativity of Theorem \ref{theorem_gluing} implies that $\Delta$ is coassociative. Denote by $\alpha_{ij} \in \mathcal{S}_A(\mathbb{B})$ the class of a single arc $\alpha$ connecting $a_L$ to $a_R$ with state $i$ on $\alpha\cap a_L$ and $j$ on $\alpha \cap a_R$. L\^e proved in \cite{LeStatedSkein} that $\mathcal{S}_A(\mathbb{B})$ is generated by the $\alpha_{ij}$ together with the following relations, where we state $q:=A^2$:
\begin{align}\label{relbigone}
\alpha_{++}\alpha_{+-} &= q^{-1}\alpha_{+-}\alpha_{++} & \alpha_{++}\alpha_{-+}&=q^{-1}\alpha_{-+}\alpha_{++}
\\ \alpha_{--}\alpha_{+-} &= q\alpha_{+-}\alpha_{--} & \alpha_{--}\alpha_{-+}&=q\alpha_{-+}\alpha_{--}
\\ \alpha_{++}\alpha_{--}&=1+q^{-1}\alpha_{+-}\alpha_{-+} &  \alpha_{--}\alpha_{++}&=1 + q\alpha_{+-}\alpha_{-+} 
\\ \alpha_{-+}\alpha_{+-}&=\alpha_{+-}\alpha_{-+} & &
\end{align}
We turn $\mathcal{S}_A(\mathbb{B})$ into a Hopf algebra using the counit and antipode given by

 \begin{equation*}
 \begin{pmatrix} \epsilon(\alpha_{++}) & \epsilon(\alpha_{+-}) \\ \epsilon(\alpha_{-+}) & \epsilon(\alpha_{--}) \end{pmatrix} =
\begin{pmatrix} 1 &0 \\ 0& 1 \end{pmatrix}  
\text{ and }
\begin{pmatrix} S(\alpha_{++}) & S(\alpha_{+-}) \\ 	S(\alpha_{-+}) & S(\alpha_{--}) \end{pmatrix} 
	= 
	\begin{pmatrix} \alpha_{--} & -q\alpha_{+-} \\ -q^{-1}\alpha_{-+} & \alpha_{++} \end{pmatrix} .
 \end{equation*}

\begin{notations} We denote by $\mathcal{O}_q[\SL_2]$ the Hopf algebra $(\mathcal{S}_A(\mathbb{B}), \Delta, \epsilon, S)$.\end{notations}

Now consider a marked surface $\mathbf{\Sigma}$ and a boundary arc $c$. By gluing a bigon $\mathbb{B}$ along $\mathbf{\Sigma}$ while gluing $b$ with $c$, one obtains a punctured surface isomorphic to $\mathbf{\Sigma}$, hence a map 
$$\Delta_c^L:= \theta_{b\#c} :   \mathcal{S}_A(\mathbf{\Sigma}) \rightarrow  \mathcal{O}_q[\SL_2] \otimes \mathcal{S}_A(\mathbf{\Sigma})$$
 which endows $ \mathcal{S}_A(\mathbf{\Sigma})$ with a structure of left $\mathcal{O}_q[\SL_2]$ comodule (see Figure \ref{fig_comodule}). Similarly, gluing $c$ with $a$ induces a right comodule morphism $\Delta_c^R := \theta_{c\#a} :  \mathcal{S}_A(\mathbf{\Sigma}) \rightarrow  \mathcal{S}_A(\mathbf{\Sigma})\otimes  \mathcal{O}_q[\SL_2] $.

\begin{figure}[!h] 
\centerline{\includegraphics[width=6cm]{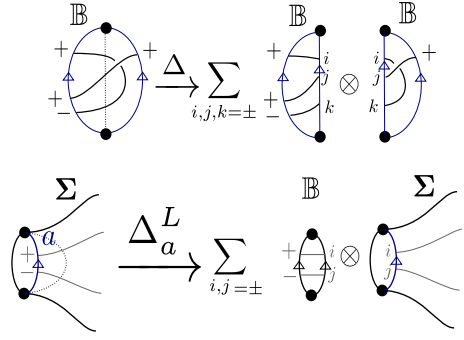} }
\caption{An illustration of the coproduct $\Delta$ in $\mathcal{O}_q[\SL_2]$ and of the left comodule map $\Delta_a^L$.} 
\label{fig_comodule} 
\end{figure}

 Probably the most fundamental property which justifies the importance and usefulness of stated skein algebra is the following: 

\begin{theorem}\label{theorem_gluingformula}(K.-Quesney \cite[Theorem $1.1$]{KojuQuesneyClassicalShadows}, Costantino-L\^e \cite[Theorem $4.7$]{CostantinoLe19}, see also Cooke \cite{Cooke_FactorisationHomSkein} in a different language)
Let $\mathbf{\Sigma}$ be a marked surface and $a,b$ two boundary arcs. The following sequence is exact: 
$$ 0 \to \mathcal{S}_A(\mathbf{\Sigma}_{a\#b}) \xrightarrow{\theta_{a\#b}} \mathcal{S}_A(\mathbf{\Sigma}) \xrightarrow{\Delta^L_a - \sigma \circ \Delta^R_b}\mathcal{O}_q[\SL_2] \otimes \mathcal{S}_A(\mathbf{\Sigma}), $$
where $\sigma(x\otimes y) := y\otimes x$. 
\end{theorem}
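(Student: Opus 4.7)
The injectivity of $\theta_{a\#b}$ is already part of Theorem \ref{theorem_gluing}, so two inclusions remain. For $\mathrm{Im}(\theta_{a\#b}) \subseteq \ker(\Delta^L_a - \sigma \circ \Delta^R_b)$, my plan is to reduce to the coassociativity part of Theorem \ref{theorem_gluing}. I would introduce the auxiliary marked surface $\mathbf{\Sigma}''$ built from $\mathbf{\Sigma} \sqcup \mathbb{B}$ by simultaneously identifying the two boundary arcs of the bigon with $a$ and $b$. Topologically the bigon is absorbed into a tubular neighborhood of $c$, giving $\mathbf{\Sigma}'' \cong \mathbf{\Sigma}_{a\#b}$. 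The two identifications can be undone in either order, producing two compositions of gluing maps $\mathcal{S}_A(\mathbf{\Sigma}'') \to \mathcal{S}_A(\mathbb{B}) \otimes \mathcal{S}_A(\mathbf{\Sigma})$. Tracking the conventions, one of these is $\Delta^L_a \circ \theta_{a\#b}$; the other is $\Delta^R_b \circ \theta_{a\#b}$ but with the bigon factor on the opposite side of the tensor, whence the flip $\sigma$ in the statement. Their equality is exactly the coassociativity assertion in Theorem \ref{theorem_gluing}.

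The reverse inclusion is more delicate, and I would proceed combinatorially using the basis theorem \ref{theorem_basis}. Fix an orientation $\mathfrak{o}$ of the boundary arcs and expand $x = \sum \lambda_{D,s}[D,s]$ in $\mathcal{B}^{\mathfrak{o}}$. The coaction $\Delta^L_a$ on a basis element admits an explicit description: it cuts a bigonal collar of $a$, sums over all states on the newly created interior arc, and produces a sum of pure tensors whose $\mathcal{O}_q[\SL_2]$-factor is a monomial in the bigon generators $\alpha_{ij}$ of \eqref{relbigone} and whose $\mathcal{S}_A(\mathbf{\Sigma})$-factor is the residual basis element with the endpoints on $a$ re-stated. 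A symmetric description holds for $\sigma \circ \Delta^R_b$. Equating $\Delta^L_a(x) = \sigma \circ \Delta^R_b(x)$ coefficient by coefficient in the product basis translates into linear relations among the $\lambda_{D,s}$ that force a ``matching'' of the states on $a$ with those on $b$, mediated by the trivial-arc pairing matrix $C$ of \eqref{trivial_arc_rel}.

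From this matching one constructs a candidate preimage $y \in \mathcal{S}_A(\mathbf{\Sigma}_{a\#b})$ by re-gluing: for each basis element of $\mathcal{S}_A(\mathbf{\Sigma})$ with matched states, identify the endpoints on $a$ and on $b$ through $c$, yielding a tangle in $\mathbf{\Sigma}_{a\#b}$, and sum these with the coefficients dictated by the matching constraints. The main obstacle I anticipate is that the re-glued tangle is generically not simple: strands may re-enter $c$ producing trivial arcs on either side, and height orders along the cut need to be rectified. Rewriting $y$ in the basis $\mathcal{B}^{\mathfrak{o}'}$ of $\mathcal{S}_A(\mathbf{\Sigma}_{a\#b})$ therefore requires repeated application of the trivial-arc relations \eqref{trivial_arc_rel}, the cutting-arc relations \eqref{cutting_arc_rel}, and the height-exchange relations \eqref{height_exchange_rel}. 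The delicate bookkeeping is that, after these simplifications, the coefficients of $\theta_{a\#b}(y)$ re-expanded in $\mathcal{B}^{\mathfrak{o}}$ must reproduce the original $\lambda_{D,s}$, a consistency which ultimately reflects the compatibility of $C$ with the $R$-matrix $\mathscr{R}$, i.e.\ the Hopf axioms of $\mathcal{O}_q[\SL_2]$.
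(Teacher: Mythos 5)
The first inclusion, $\mathrm{Im}\,\theta_{a\#b} \subseteq \ker(\Delta^L_a - \sigma\circ\Delta^R_b)$, is correctly reduced to the coassociativity assertion of Theorem \ref{theorem_gluing}: gluing the two arcs of a bigon onto $a$ and $b$ of $\mathbf{\Sigma}$ and undoing the two gluings in either order realizes both $\Delta^L_a\circ\theta_{a\#b}$ and $\Delta^R_b\circ\theta_{a\#b}$ as compositions of cutting maps, and the flip $\sigma$ accounts for whether the bigon factor lands on the left or on the right of the tensor product. This is exactly how the inclusion is handled in the references cited by the paper, and one can also verify it directly: $\theta_{a\#b}([T_0,s_0])=\sum_\mu[T,(\mu,s_0,\mu)]$, after which cutting a collar of $a$ or of $b$ produces the same double sum up to relabeling the summation indices.

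For the reverse inclusion — the actual content of the theorem — your plan has the right shape but is not yet a proof, and one specific claim looks off. You say the kernel condition forces a matching of the $a$-states with the $b$-states ``mediated by the trivial-arc pairing matrix $C$''; but Definition \ref{def_gluing_map} prescribes \emph{equal} states on $\partial_a T$ and $\partial_b T$ (it sums over a single $s\in\mathrm{St}(c)$ used on both sides), so the matching one must recover is literal equality of the two state sequences, not a $C$-contraction. The matrices $C$ and $\mathscr{R}$ appear only in the re-normalization step, i.e.\ when you rewrite the monomials $\alpha_{\mu_1 s_1}\cdots\alpha_{\mu_n s_n}$ produced by $\Delta^L_a$ into a fixed basis of $\mathcal{O}_q[\SL_2]$ via the relations \eqref{relbigone}, and likewise when you re-express a non-simple re-glued diagram via \eqref{trivial_arc_rel}--\eqref{height_exchange_rel}. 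More substantively, the two load-bearing steps are left entirely unaddressed: you never identify what the linear relations among the $\lambda_{D,s}$ actually are (this requires re-expanding both $\Delta^L_a([D,s])$ and $\sigma\circ\Delta^R_b([D,s])$ in a common basis of $\mathcal{O}_q[\SL_2]\otimes\mathcal{S}_A(\mathbf{\Sigma})$, which is nontrivial because the $\mathcal{O}_q[\SL_2]$-monomial degrees $|\partial_a D|$ and $|\partial_b D|$ need not agree for a given $D$), and you never verify that the candidate $y$ obtained by re-gluing satisfies $\theta_{a\#b}(y)=x$ after the unavoidable basis rectification. Flagging that these steps are ``delicate bookkeeping'' correctly locates the difficulty, but as written it leaves a genuine gap: the surjectivity onto the kernel is precisely what makes the theorem nontrivial, and the outline gives no mechanism guaranteeing that the rectified $\theta_{a\#b}(y)$ reproduces the original coefficients.
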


In particular, when $\mathbf{\Sigma}$ is equipped with a triangulation $\mathbb{T}$, i.e. when $\mathbf{\Sigma}$ is obtained from a disjoint union of triangles by gluing some pairs $(e_i, e_i')$ of edges, by composing the maps $\theta_{e_i\#e_i'}$, we get an injective map: 
$$ \theta^{\Delta} : \mathcal{S}_A(\mathbf{\Sigma}) \hookrightarrow \otimes_{\mathbb{T}\in F(\Delta)} \mathcal{S}_A(\mathbb{T}).$$

Also, by composing the comodule maps $\Delta^L_{e_i\# e_i'}$, one  gets a Hopf comodule map

 \begin{equation*}
 \Delta^L : \otimes_{\mathbb{T}\in F(\Delta)} \mathcal{S}_A(\mathbb{T}) \rightarrow \left( \otimes_{e\in \mathring{\mathcal{E}}(\Delta)} \mathcal{O}_q[\SL_2] \right)
 \otimes \left(  \otimes_{\mathbb{T}\in F(\Delta)} \mathcal{S}_A(\mathbb{T}) \right).
 \end{equation*}
 
 and a right comodule map
 \begin{equation*}
 \Delta^R :  \otimes_{\mathbb{T}\in F(\Delta)} \mathcal{S}_A(\mathbb{T}) \rightarrow \left(  \otimes_{\mathbb{T}\in F(\Delta)} \mathcal{S}_A(\mathbb{T}) \right)\otimes \left( \otimes_{e\in \mathring{\mathcal{E}}(\Delta)} \mathcal{O}_q[\SL_2] \right). 
 \end{equation*} 
 
 \begin{corollary}
 	The following sequence is exact:
 \begin{equation}\label{eq_triangular}
 0 \rightarrow \mathcal{S}_A(\mathbf{\Sigma}) \xrightarrow{\theta^{\Delta}} 
 \otimes_{\mathbb{T}\in F(\Delta)} \mathcal{S}_A(\mathbb{T}) \xrightarrow{\Delta^L -\sigma\circ\Delta^R}  \left( \otimes_{e\in \mathring{\mathcal{E}}(\Delta)}\mathcal{O}_q[\SL_2] \right) \otimes \left(  \otimes_{\mathbb{T}\in F(\Delta)} \mathcal{S}_A(\mathbb{T}) \right).
 \end{equation} 
  \end{corollary}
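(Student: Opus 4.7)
The plan is to prove the corollary by induction on the cardinality $n = |\mathring{\mathcal{E}}(\Delta)|$, using Theorem \ref{theorem_gluing} (the one-step gluing exact sequence proved by Costantino--L\^e and K.--Quesney) as the engine. The base case $n=0$ is trivial: $\mathbf{\Sigma}$ is itself the disjoint union of its triangles, $\theta^{\Delta}$ is the identity, and the tensor product on the right is empty so the second arrow is zero. For the inductive step, distinguish one inner edge $e_n \in \mathring{\mathcal{E}}(\Delta)$ coming from the identification of a pair $(a,b)$ of boundary arcs and let $\Delta'$ be the partial triangulation in which every inner edge except $e_n$ is already glued, producing a marked surface $\mathbf{\Sigma}'$ with $\mathbf{\Sigma}'_{a\#b} = \mathbf{\Sigma}$. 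The coassociativity clause of Theorem \ref{theorem_gluing} gives $\theta^{\Delta} = \theta_{a\#b} \circ \theta^{\Delta'}$ (with appropriate source/target), and applying the inductive hypothesis to $(\mathbf{\Sigma}', \Delta')$ yields
$$0 \to \mathcal{S}_A(\mathbf{\Sigma}') \xrightarrow{\theta^{\Delta'}} \bigotimes_{\mathbb{T}\in F(\Delta)} \mathcal{S}_A(\mathbb{T}) \xrightarrow{\Delta^L_{\Delta'} - \sigma\circ \Delta^R_{\Delta'}} \Bigl(\bigotimes_{i<n}\mathcal{O}_q[\SL_2]\Bigr)\otimes \bigotimes_{\mathbb{T}\in F(\Delta)} \mathcal{S}_A(\mathbb{T}),$$
while Theorem \ref{theorem_gluing} applied to $\mathbf{\Sigma}'$ and the pair $(a,b)$ yields
$$0 \to \mathcal{S}_A(\mathbf{\Sigma}) \xrightarrow{\theta_{a\#b}} \mathcal{S}_A(\mathbf{\Sigma}') \xrightarrow{\Delta^L_a - \sigma\circ \Delta^R_b} \mathcal{O}_q[\SL_2] \otimes \mathcal{S}_A(\mathbf{\Sigma}').$$
Injectivity of $\theta^{\Delta}$ is immediate as a composition of two injections.

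For the middle exactness, the inclusion $\operatorname{im}(\theta^{\Delta}) \subset \ker(\Delta^L - \sigma\circ \Delta^R)$ is the easy direction: for each inner edge $e$ it follows from Theorem \ref{theorem_gluing} applied locally. For the reverse inclusion, take $y$ in the kernel of the combined map. Projecting the $e_n$-slot in the target via the counit $\epsilon$ of $\mathcal{O}_q[\SL_2]$ collapses the combined map to $\Delta^L_{\Delta'} - \sigma\circ \Delta^R_{\Delta'}$, so the inductive hypothesis produces a unique $y' \in \mathcal{S}_A(\mathbf{\Sigma}')$ with $\theta^{\Delta'}(y') = y$. Dually, projecting via $\epsilon$ on every slot in $\mathring{\mathcal{E}}(\Delta')$ together with the naturality of the gluing map with respect to coactions at remaining boundary arcs (which is built into the construction of $\theta^{\Delta'}$ and the commutativity of coactions at disjoint edges) converts the vanishing of the $e_n$-slot of $\Delta^L(y) - \sigma\circ\Delta^R(y)$ into the identity $(\id\otimes \theta^{\Delta'})(\Delta^L_a(y') - \sigma\circ\Delta^R_b(y')) = 0$; injectivity of $\theta^{\Delta'}$ (tensored with the identity of $\mathcal{O}_q[\SL_2]$) then gives $y' \in \ker(\Delta^L_a - \sigma\circ\Delta^R_b)$. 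A last application of Theorem \ref{theorem_gluing} produces $y'' \in \mathcal{S}_A(\mathbf{\Sigma})$ with $\theta_{a\#b}(y'') = y'$, hence $\theta^{\Delta}(y'') = y$.

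The main obstacle is precisely this bookkeeping with the counit projections: one must verify that picking off a single tensor factor of $\mathcal{O}_q[\SL_2]^{\otimes n}$ on the right-hand side genuinely recovers the single-edge coaction equation, and that these partial extractions are compatible with $\theta^{\Delta'}$. This rests on two structural facts that are essentially visible from the definition of $\theta_{a\#b}$ and of $\Delta^L_a, \Delta^R_b$ (Definition \ref{def_gluing_map} applied to bigons): first, the left/right coactions $\Delta^L_e, \Delta^R_e$ at distinct inner edges $e, e'$ of $\mathbf{\Sigma}'$ commute, so their iterated composition assembles unambiguously; second, the gluing morphism $\theta_{a\#b}$ is a morphism of $\mathcal{O}_q[\SL_2]$-bicomodules with respect to the remaining boundary arcs, which is what allows the $e_n$-coaction on the triangle tensor product to be pulled back through $\theta^{\Delta'}$ to the $(a,b)$-coaction on $\mathcal{S}_A(\mathbf{\Sigma}')$. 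Modulo these two naturality statements, which are implicit in the proof of Theorem \ref{theorem_gluing}, the corollary follows formally.
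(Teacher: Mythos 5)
Your proof is correct and takes the same route the paper implicitly intends: factor $\theta^{\Delta}$ using coassociativity and iterate the one-step gluing exact sequence, with the commutativity of coactions at disjoint edges as the bookkeeping glue. Two small corrections: the one-step exact sequence is Theorem \ref{theorem_gluingformula}, not Theorem \ref{theorem_gluing} (the latter gives only injectivity and coassociativity of $\theta_{a\#b}$), so several of your citations should point to the former; and the factorization should read $\theta^{\Delta} = \theta^{\Delta'}\circ\theta_{a\#b}$ rather than $\theta_{a\#b}\circ\theta^{\Delta'}$ (one glues $a$ to $b$ first, landing in $\mathcal{S}_A(\mathbf{\Sigma}')$, then applies $\theta^{\Delta'}$), though your subsequent computation of $\theta^{\Delta}(y'')=\theta^{\Delta'}(\theta_{a\#b}(y''))=y$ is consistent with the corrected order.
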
     

  For further use, let us emphasize a second consequence of the gluing property. Let $\mathbf{\Sigma}=(\Sigma, \mathcal{A})$ and let $\mathcal{B}\subset \mathcal{A}$ a subset of boundary arcs and consider $\underline{\mathbf{\Sigma}}:= (\Sigma, \mathcal{A}\setminus \mathcal{B})$ obtained by removing the boundary arcs of $\mathcal{B}$. The identity map of $\Sigma$ defines an embedding $ \underline{\mathbf{\Sigma}} \to \mathbf{\Sigma}$, so a morphism $\iota : \mathcal{S}_A(\underline{\mathbf{\Sigma}} ) \to \mathcal{S}_A(\mathbf{\Sigma})$. Note that  $\underline{\mathbf{\Sigma}}$ is obtained from  $\mathbf{\Sigma}$ by gluing a monogon $\mathbf{m}_0$ to each boundary arc of $\mathcal{B}$. After identifying  $\mathcal{S}_A(\mathbf{m}_0)\cong k$ through the isomorphism sending the empty link to $1\in k$, $\iota$ can be alternatively defined as the gluing map:
  $$ \iota: \mathcal{S}_A(\underline{\mathbf{\Sigma}} ) \xrightarrow{\theta} \mathcal{S}_A( \mathbf{\Sigma}\cup \mathbf{m}_0^{\cup \mathcal{B}}) \cong \left( \mathcal{S}_A(\mathbf{m}_0) \right)^{\otimes \mathbb{B}} \otimes \mathcal{S}_A(\mathbf{\Sigma})\cong \mathcal{S}_A(\mathbf{\Sigma}).$$
  From Theorem \ref{theorem_gluingformula}, we get an exact sequence:
  $$ 0 \to \mathcal{S}_A(\underline{\mathbf{\Sigma}} ) \xrightarrow{\theta}  \mathcal{S}_A( \mathbf{\Sigma}\cup \mathbf{m}_0^{\cup \mathcal{B}}) \xrightarrow{\Delta^L -\sigma \circ \Delta^R} \left(\mathcal{O}_q[\SL_2] \right)^{\otimes \mathcal{B}} \otimes  \mathcal{S}_A( \mathbf{\Sigma}\cup \mathbf{m}_0^{\cup \mathcal{B}}).$$
  Using the identification $\mathcal{S}_A( \mathbf{\Sigma}\cup \mathbf{m}_0^{\cup \mathcal{B}})\cong \mathcal{S}_A(\mathbf{\Sigma})$, the comodule maps $\Delta^L, \Delta^R$ induces comodules maps $\Delta^L_{\mathcal{B}} :  \mathcal{S}_A(\mathbf{\Sigma}) \to  \left(\mathcal{O}_q[\SL_2] \right)^{\otimes \mathcal{B}} \otimes  \mathcal{S}_A( \mathbf{\Sigma})$ and $\Delta^R_{\mathcal{B}} :  \mathcal{S}_A(\mathbf{\Sigma}) \to  \mathcal{S}_A( \mathbf{\Sigma}) \otimes \left(\mathcal{O}_q[\SL_2] \right)^{\otimes \mathcal{B}} $. By definition, $\Delta^R$ is defined by gluing some bigons $\mathbb{B}$ to the monogons $\mathbf{m}_0$. While gluing a bigon to a monogon, we get another monogon and using the identification $\mathcal{S}_A(\mathbf{m}_0)\cong k$, the gluing map $\mathcal{S}_A(\mathbf{m}_0)\to \mathcal{S}_A(\mathbf{m_0}\cup \mathbb{B})$ corresponds to the unit map $\eta: k\to \mathcal{O}_q[\SL_2]$, therefore $\Delta^R_{\mathcal{B}}=\id \otimes (\eta\circ \epsilon)^{\mathcal{B}}$ and we get the 
  
  \begin{corollary}\label{coro_remove_BA}
  The following sequence is exact:
  $$ 0 \to  \mathcal{S}_A(\underline{\mathbf{\Sigma}} ) \xrightarrow{\iota}  \mathcal{S}_A(\mathbf{\Sigma} ) \xrightarrow{ \Delta^L_{\mathcal{B}} - (\eta \circ \epsilon)^{\mathcal{B}}\otimes \id} 
   \left(\mathcal{O}_q[\SL_2] \right)^{\otimes \mathcal{B}} \otimes  \mathcal{S}_A(\mathbf{\Sigma} ).$$
   Said differently, $ \mathcal{S}_A(\underline{\mathbf{\Sigma}} ) \subset   \mathcal{S}_A(\mathbf{\Sigma} )$ is the subalgebra of coinvariant vectors for the left co-action $\Delta^L_{\mathcal{B}}$.
  \end{corollary}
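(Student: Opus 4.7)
The plan is to derive this by applying Theorem \ref{theorem_gluingformula} to the multi-arc gluing that presents $\underline{\mathbf{\Sigma}}$ as obtained from the disjoint union $\mathbf{\Sigma} \sqcup \mathbf{m}_0^{\cup \mathcal{B}}$ by gluing each boundary arc $b \in \mathcal{B}$ of $\mathbf{\Sigma}$ to the unique boundary arc of a corresponding monogon. The coassociativity statement in Theorem \ref{theorem_gluing} allows one to combine $|\mathcal{B}|$ successive applications of the gluing formula into a single exact sequence, exactly as in the derivation of \eqref{eq_triangular} but with monogons playing the role of triangles. This yields an exact sequence
\begin{equation*}
0 \to \mathcal{S}_A(\underline{\mathbf{\Sigma}}) \xrightarrow{\theta^{\Delta}} \mathcal{S}_A(\mathbf{\Sigma} \sqcup \mathbf{m}_0^{\cup \mathcal{B}}) \xrightarrow{\Delta^L - \sigma\circ \Delta^R} \mathcal{O}_q[\SL_2]^{\otimes \mathcal{B}} \otimes \mathcal{S}_A(\mathbf{\Sigma} \sqcup \mathbf{m}_0^{\cup \mathcal{B}}),
\end{equation*}
where $\Delta^L$ aggregates the left coactions associated to the arcs in $\mathcal{B}$ (on the $\mathbf{\Sigma}$-side) and $\Delta^R$ aggregates the right coactions associated to the boundary arcs of the monogons.

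Next, the plan is to invoke $\mathcal{S}_A(\mathbf{m}_0) \cong k$ (spanned by the empty diagram, since every simple diagram in a simply connected surface with one boundary arc is empty) to identify $\mathcal{S}_A(\mathbf{\Sigma} \sqcup \mathbf{m}_0^{\cup \mathcal{B}}) \cong \mathcal{S}_A(\mathbf{\Sigma})$ via the symmetric monoidal structure. Under this identification, $\theta^{\Delta}$ becomes $\iota$ and $\Delta^L$ becomes the aggregated left coaction $\Delta^L_{\mathcal{B}}$ on the marked surface $\mathbf{\Sigma}$. The key point is then to compute $\Delta^R$ on the monogon side: gluing a bigon to the unique boundary arc of $\mathbf{m}_0$ again produces a monogon, and inspecting Definition \ref{def_gluing_map} on the empty diagram, the only surviving stated configuration is again the empty one. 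Hence the elementary coaction $k \cong \mathcal{S}_A(\mathbf{m}_0) \to \mathcal{S}_A(\mathbf{m}_0 \cup \mathbb{B}) \cong k \otimes \mathcal{O}_q[\SL_2]$ is precisely the unit $\eta$. Aggregating over $\mathcal{B}$ and composing with $\sigma$ yields the map $(\eta \circ \epsilon)^{\mathcal{B}} \otimes \id$ claimed in the statement, and the resulting exact sequence is the one advertised in the corollary.

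The last assertion — that $\mathcal{S}_A(\underline{\mathbf{\Sigma}}) \subset \mathcal{S}_A(\mathbf{\Sigma})$ is the subalgebra of coinvariants for the left coaction $\Delta^L_{\mathcal{B}}$ — is then a direct reformulation of exactness: the kernel of $\Delta^L_{\mathcal{B}} - (\eta \circ \epsilon)^{\mathcal{B}} \otimes \id$ consists of those $x$ with $\Delta^L_{\mathcal{B}}(x) = 1^{\otimes \mathcal{B}} \otimes x$, which is the defining property of coinvariants. The main technical step is the explicit computation of the monogonal right coaction and its compatibility with the identification $\mathcal{S}_A(\mathbf{\Sigma} \sqcup \mathbf{m}_0^{\cup \mathcal{B}}) \cong \mathcal{S}_A(\mathbf{\Sigma})$; everything else is an assembly of Theorem \ref{theorem_gluingformula} and the coassociativity of the gluing operation.
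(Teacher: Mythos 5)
Your proof follows the paper's argument almost step for step: present $\underline{\mathbf{\Sigma}}$ as the result of gluing monogons $\mathbf{m}_0$ onto the arcs of $\mathcal{B}$, apply the gluing exact sequence of Theorem~\ref{theorem_gluingformula} iteratively, identify $\mathcal{S}_A(\mathbf{m}_0)\cong k$, and observe that the right coaction on the monogon factor is the trivial one (induced by the unit $\eta$) because gluing a bigon to a monogon produces another monogon and the empty diagram maps to the empty diagram. The only small addition over the paper's proof is your explicit justification (via the basis of simple diagrams) that $\mathcal{S}_A(\mathbf{m}_0)\cong k$, which the paper simply asserts.
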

  
    \subsection{The quantum fusion operation}
    
   The fusion operation was introduced by Alekseev and Malkin in the study of relative character varieties. Its interpretation in terms of marked surfaces and skein algebras was made by Costantino and L\^e. Recall that the triangle $\mathbb{T}$ is a disc with three boundary arcs say $e_1, e_2, e_3$.
   
   \begin{definition}
   Let $\mathbf{\Sigma}$ a marked surface with some boundary arcs $a$ and $b$. 
   
   The \textit{fusion of } $a$ and $b$ is the marked surface  $\mathbf{\Sigma}_{a \circledast b}$ obtained from $\mathbf{\Sigma}\bigsqcup \mathbb{T}$ by gluing $(a,e_1)$ and $(b,e_2)$, i.e.
   $$ \mathbf{\Sigma}_{a \circledast b}:= \left( \mathbf{\Sigma}\bigsqcup \mathbb{T} \right) _{a\#e_1, b\#e_2}.$$
   \end{definition}
   
   \begin{example}
   \begin{enumerate}
   \item the triangle $\mathbb{T}$ is obtained from $\mathbb{B}\bigsqcup \mathbb{B}'$ by fusioning $a_R$ and $a_L'$. We write $\mathbb{T}=\mathbb{B}\circledast \mathbb{B}$.
   \item $\mathbf{m}_1$ is obtained from $\mathbb{B}$ by fusioning its two boundary arcs.
   \item While fusioning the two boundary arcs of  $\mathbf{\Sigma}_{g_1,n_1}^0 \bigsqcup \mathbf{\Sigma}_{g_2,n_2}^0$ we get $\mathbf{\Sigma}^0_{g_1+g_2, n_1+n_2}$. We thus write: 
   $$ \mathbf{\Sigma}_{g,n}^{0} = {\mathbf{\Sigma}^0_{1,0}}^{\circledast g} \circledast \mathbf{m}_1^{\circledast n}.$$
   \end{enumerate}
   \end{example}
   In order to understand the fusion operation at the level of stated skein algebras, we need to endow $\mathcal{O}_q[\SL_2]$ with a structure of cobraided Hopf algebra.
   Define a co-R matrix  $\mathbf{r}: \mathcal{O}_q[\SL_2]^{\otimes 2} \to k$ by the formula
  $$ \mathbf{r} \left( (\adjustbox{valign=c}{\includegraphics[width=1.3cm]{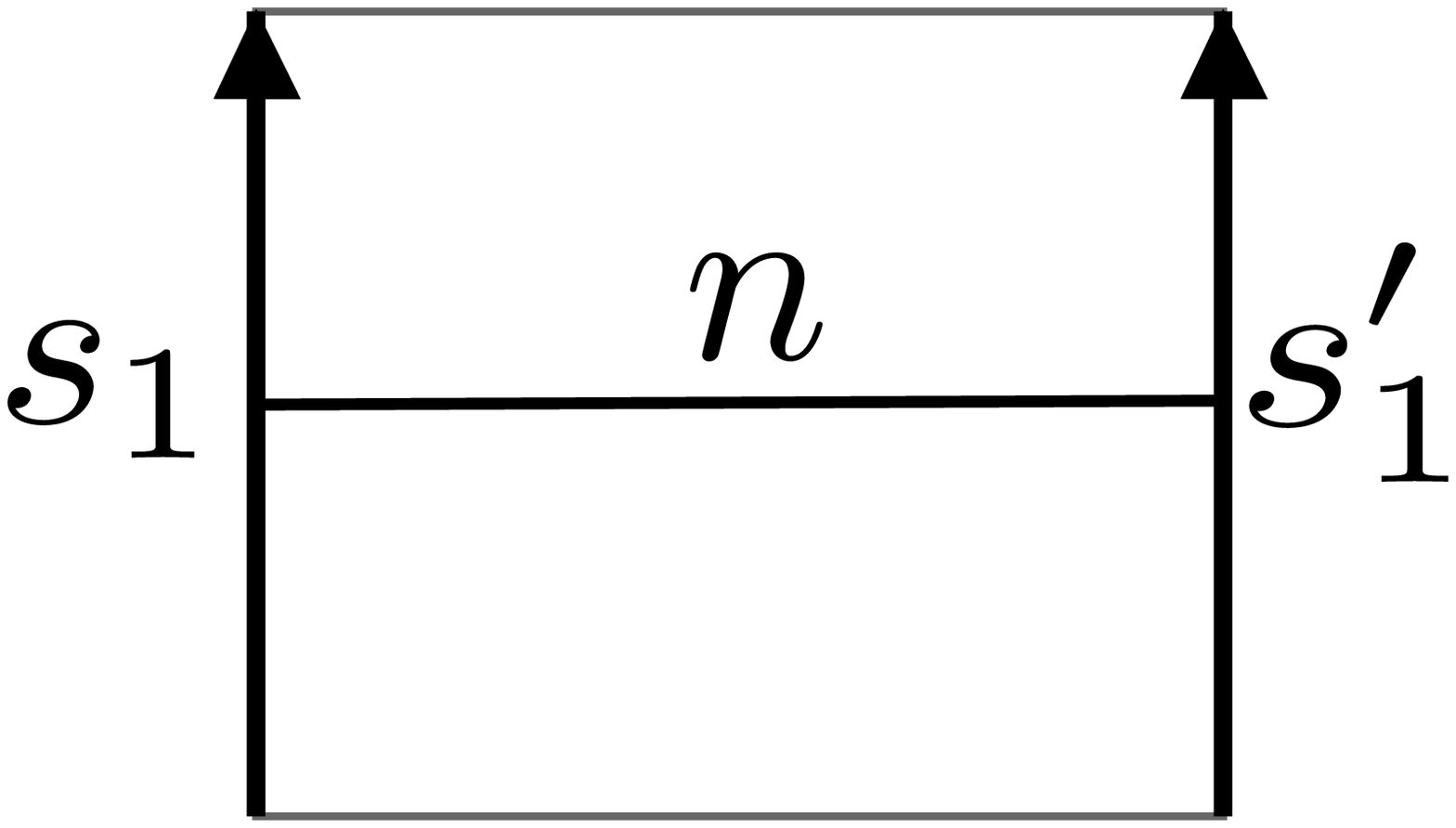}} \otimes (\adjustbox{valign=c}{\includegraphics[width=1.3cm]{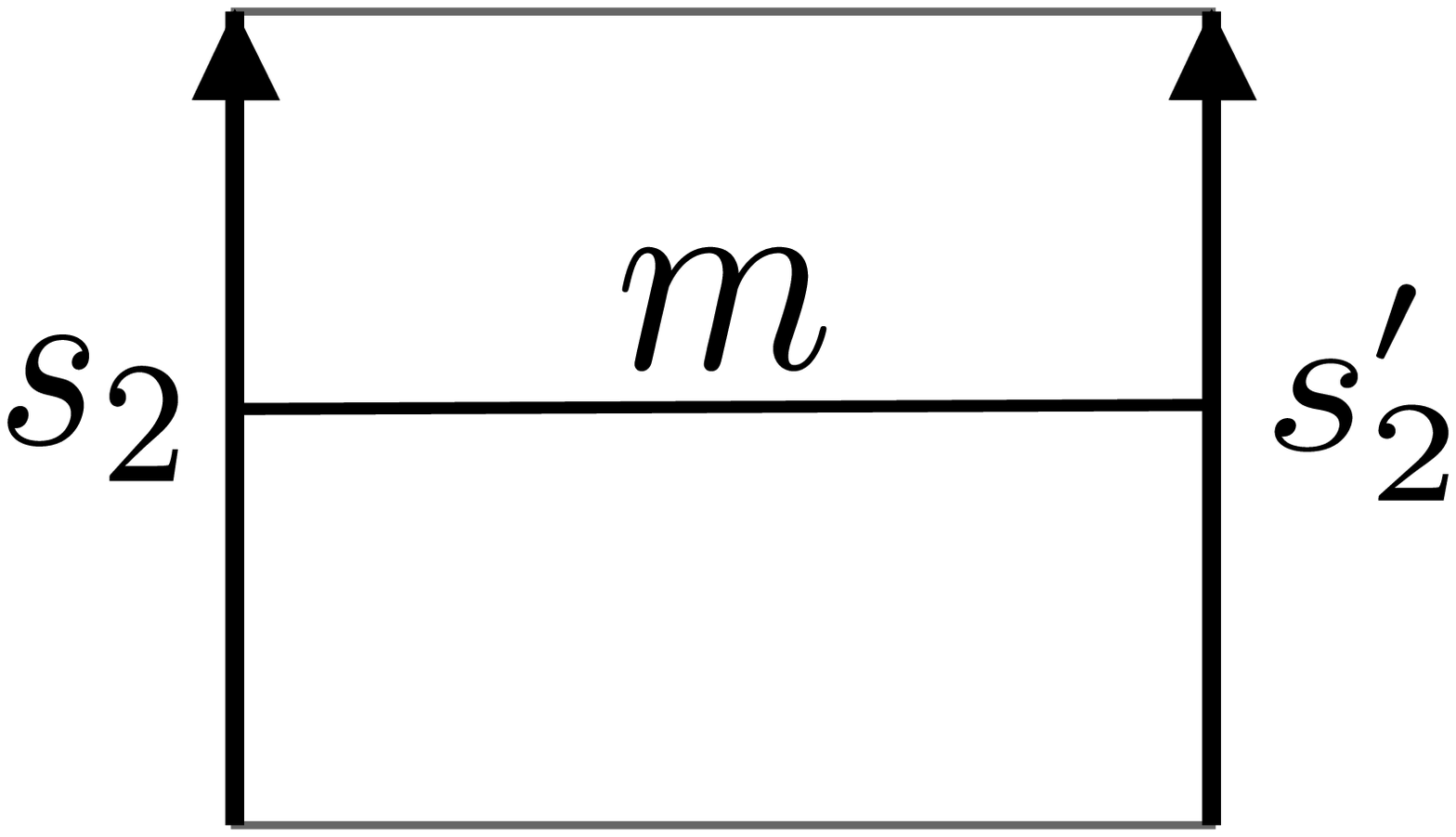}}  \right)
  := \epsilon\left(  \adjustbox{valign=c}{\includegraphics[width=1.3cm]{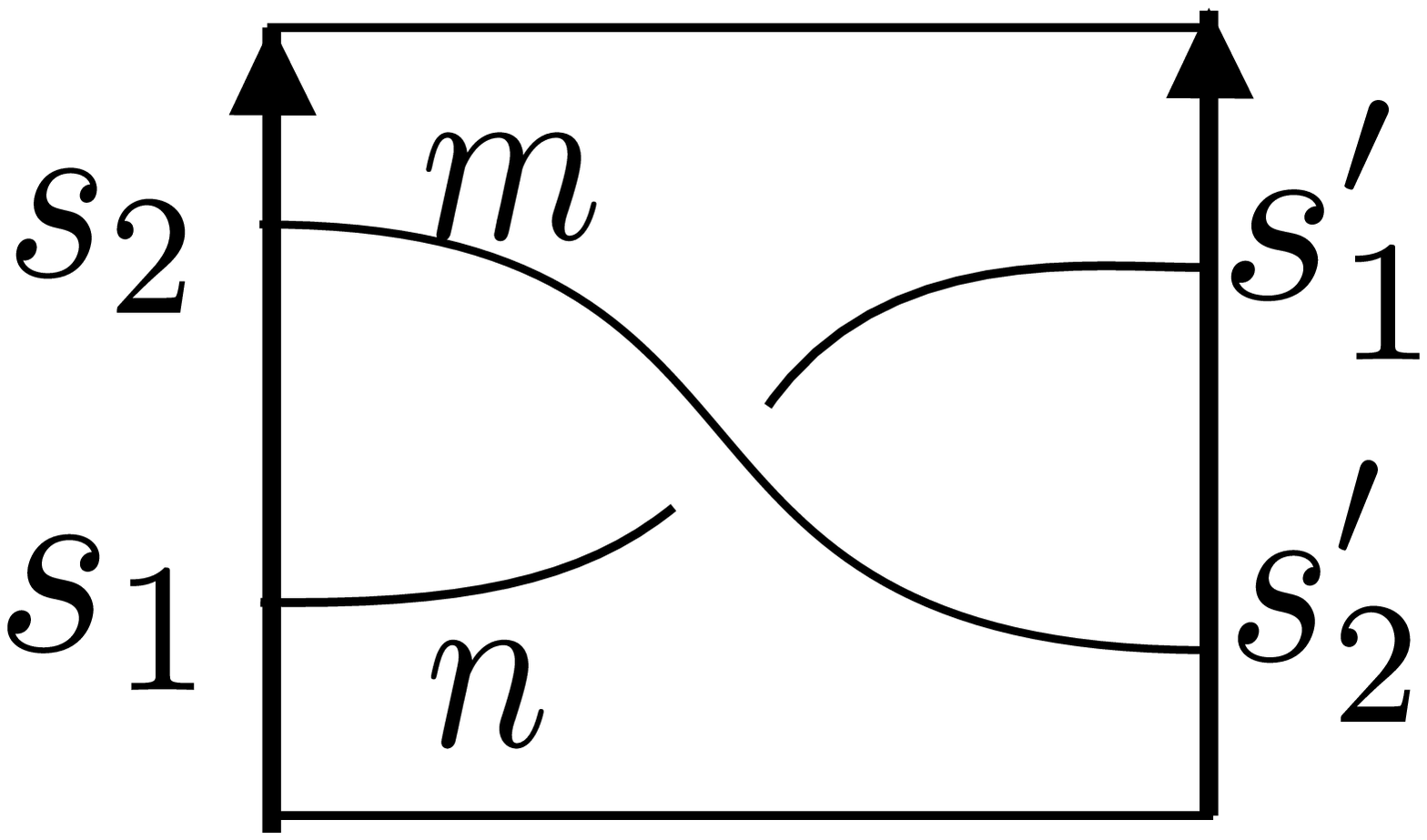}} \right),$$
  where a letter $n$ on top of a strand means that we take $n$ parallel copies. Note that the height exchange formula \eqref{height_exchange_rel} implies that $\mathbf{r}$ is characterized by 
  $$ \mathbf{r} (\alpha_{ij}\otimes \alpha_{kl}) = \mathscr{R}^{jl}_{ki}.$$
  
  Recall that, by definition, a bilinear form $\mathbf{r}: \mathcal{O}_q[\SL_2]^{\otimes 2} \to k$  is a co-R matrix means that there exists another linear form $\overline{\mathbf{r}}: \mathcal{O}_q[\SL_2]^{\otimes 2} \to k$  it satisfies the axioms
  \begin{enumerate}[label=(\subscript{E}{{\arabic*}})]
\item $\mathbf{r}\star \overline{\mathbf{r}} = \overline{\mathbf{r}}\star r = \epsilon\otimes \epsilon$; 
\item $\mu^{op}= \mathbf{r} \star \mu \star \overline{\mathbf{r}}$; 
\item $\mathbf{r}(\mu \otimes \id) = \mathbf{r}_{13}\star \mathbf{r}_{23}$ and $\mathbf{r}(\id \otimes \mu)= \mathbf{r}_{13}\star \mathbf{r}_{12}$.
\end{enumerate}
where $\star$ is the convolution operation on linear forms and $\mathbf{r}_{12}:= \mathbf{r}\otimes \epsilon, \mathbf{r}_{23}:= \epsilon \otimes \mathbf{r}, \mathbf{r}_{13}:= (\epsilon \otimes \mathbf{r})(\tau \otimes \id)$. Define $\overline{\mathbf{r}}$ by: 
 $$ \overline{\mathbf{r}} \left( \adjustbox{valign=c}{\includegraphics[width=1.3cm]{Tangle1.eps}} \otimes (\adjustbox{valign=c}{\includegraphics[width=1.3cm]{Tangle2.eps}}  \right)
  := \epsilon\left(  \adjustbox{valign=c}{\includegraphics[width=1.3cm]{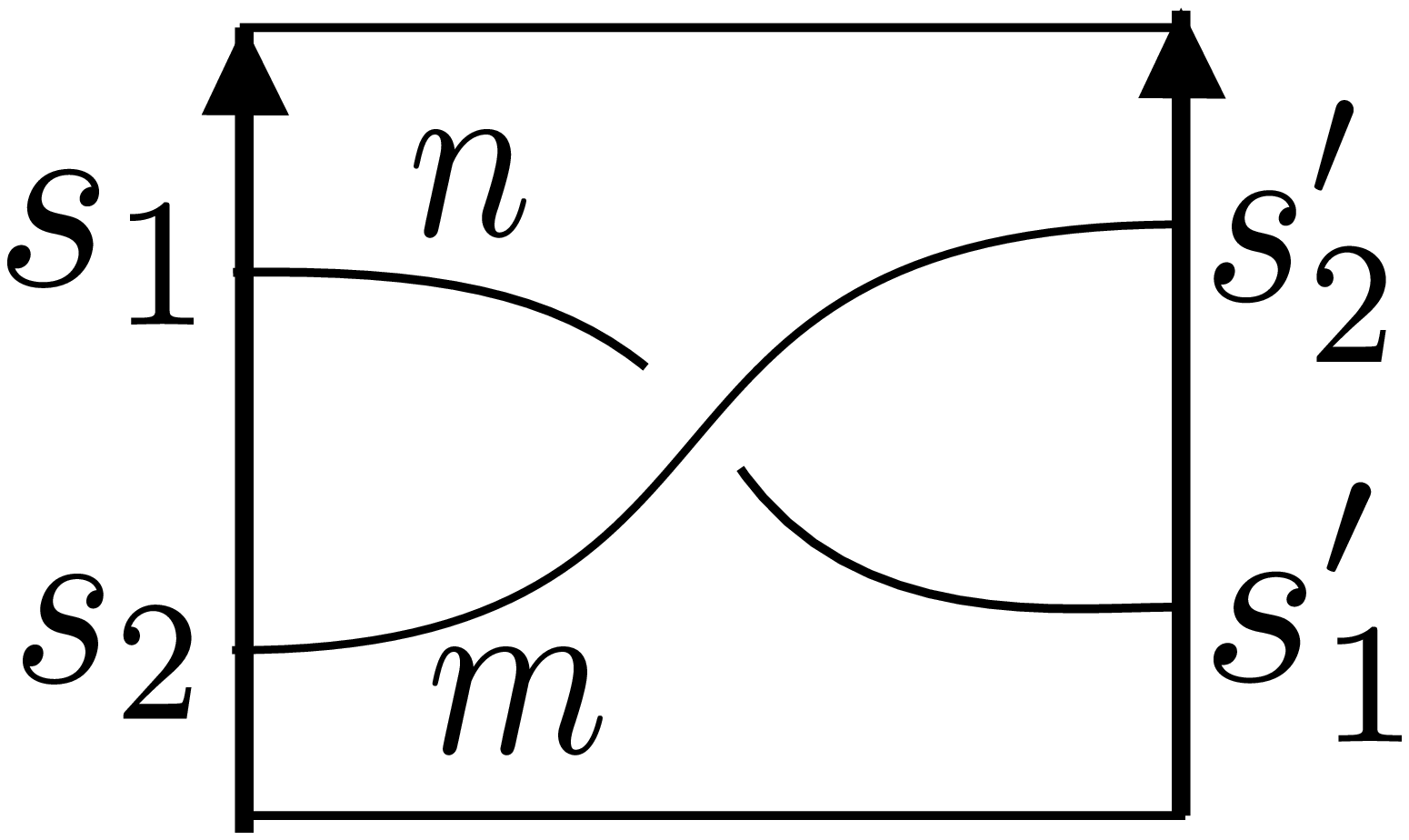}} \right).$$

Said differently, $\overline{\mathbf{r}}(\alpha_{ij}\otimes \alpha{kl})= (\mathscr{R}^{-1})^{jl}_{ki}$.
The axioms are proved graphically as follows:

\par \textit{Axiom $(E_1)$:} 
\begin{multline*}
 (\mathbf{r}\star \overline{\mathbf{r}}) \left(\adjustbox{valign=c}{\includegraphics[width=1.3cm]{Tangle1.eps}} \otimes \adjustbox{valign=c}{\includegraphics[width=1.3cm]{Tangle2.eps}} \right) = \sum_{i,j} \epsilon \left( \adjustbox{valign=c}{\includegraphics[width=1.3cm]{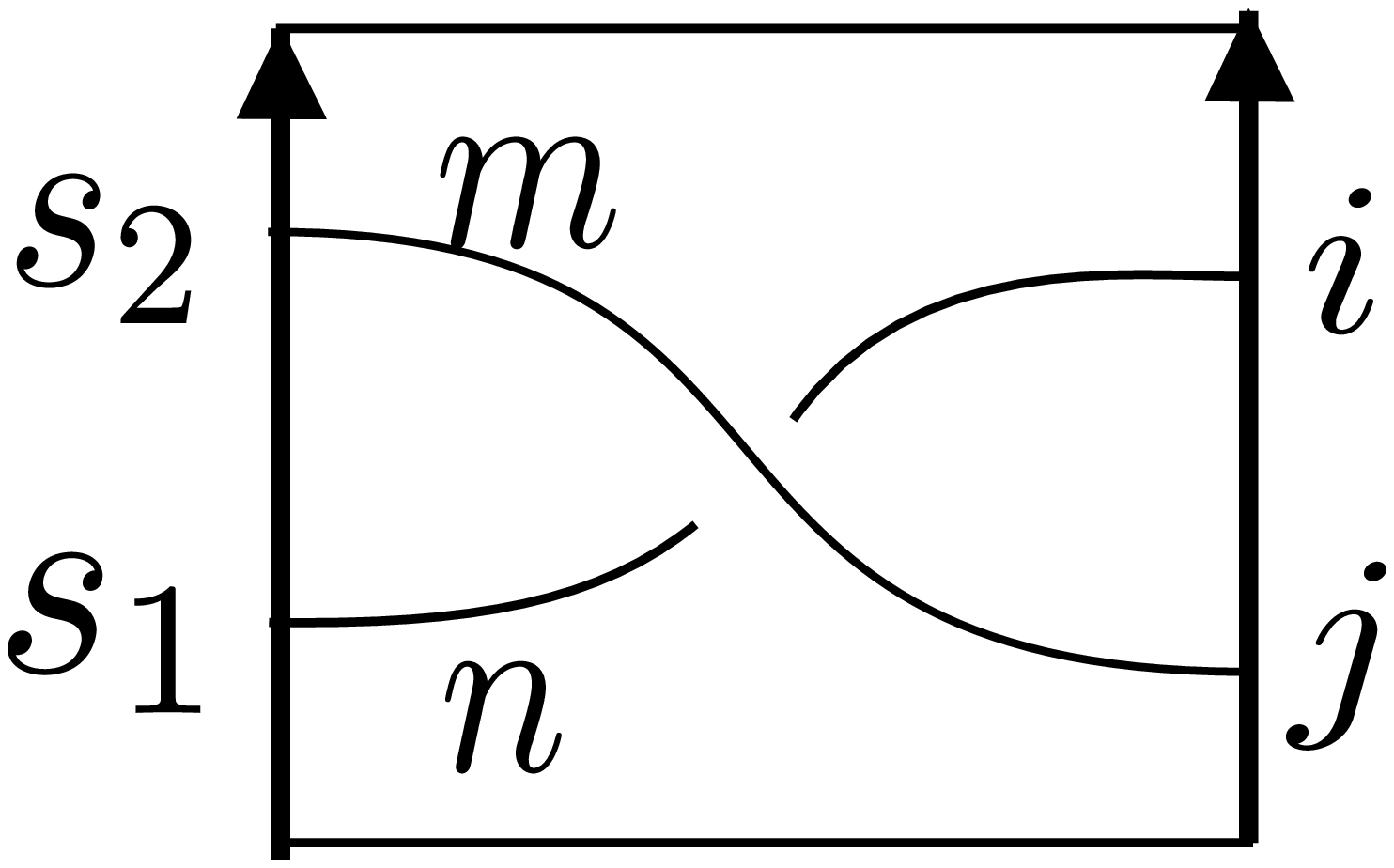}}\right) \epsilon \left( \adjustbox{valign=c}{\includegraphics[width=1.3cm]{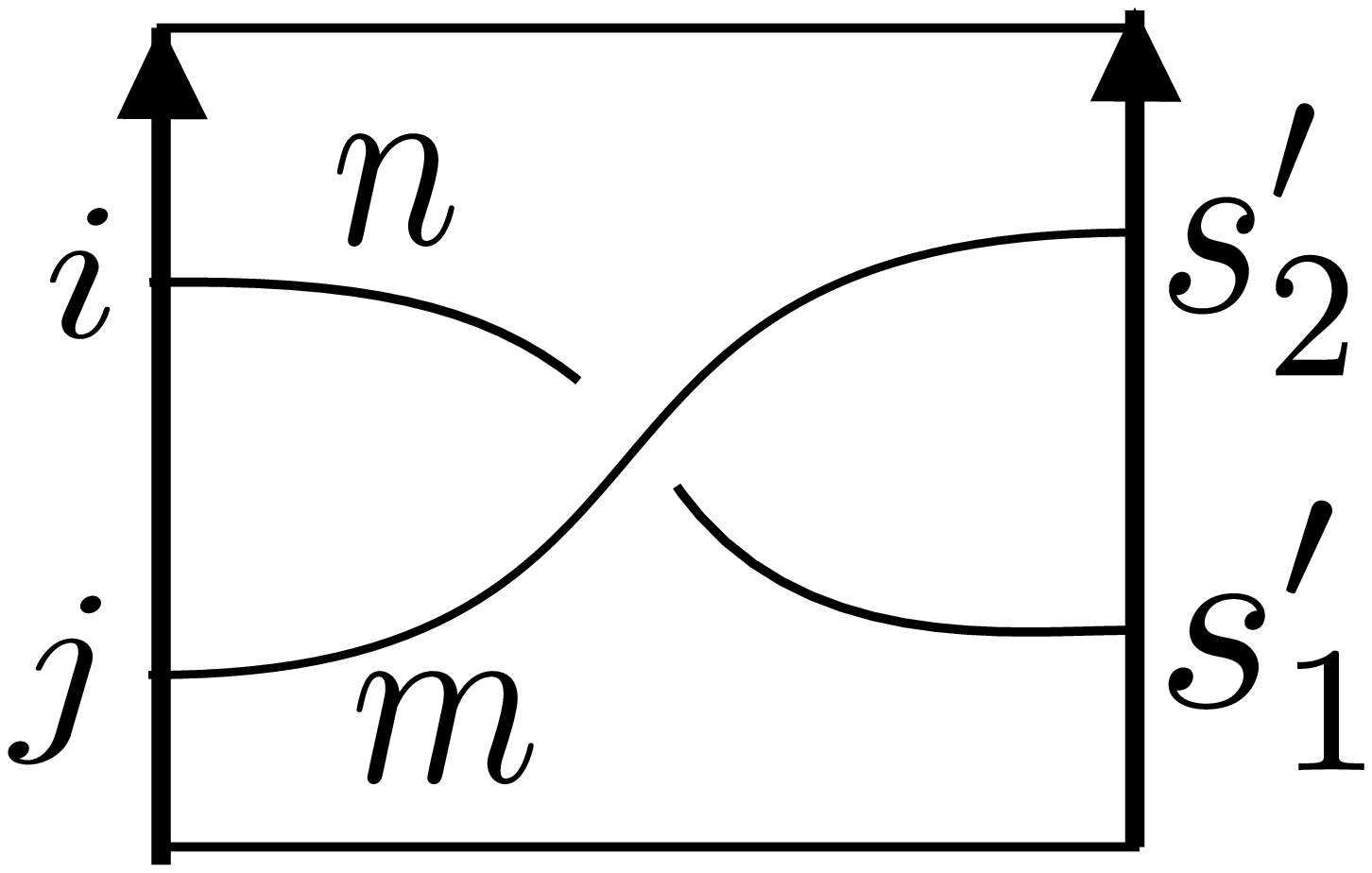}}\right) = \epsilon \left( \adjustbox{valign=c}{\includegraphics[width=1.5cm]{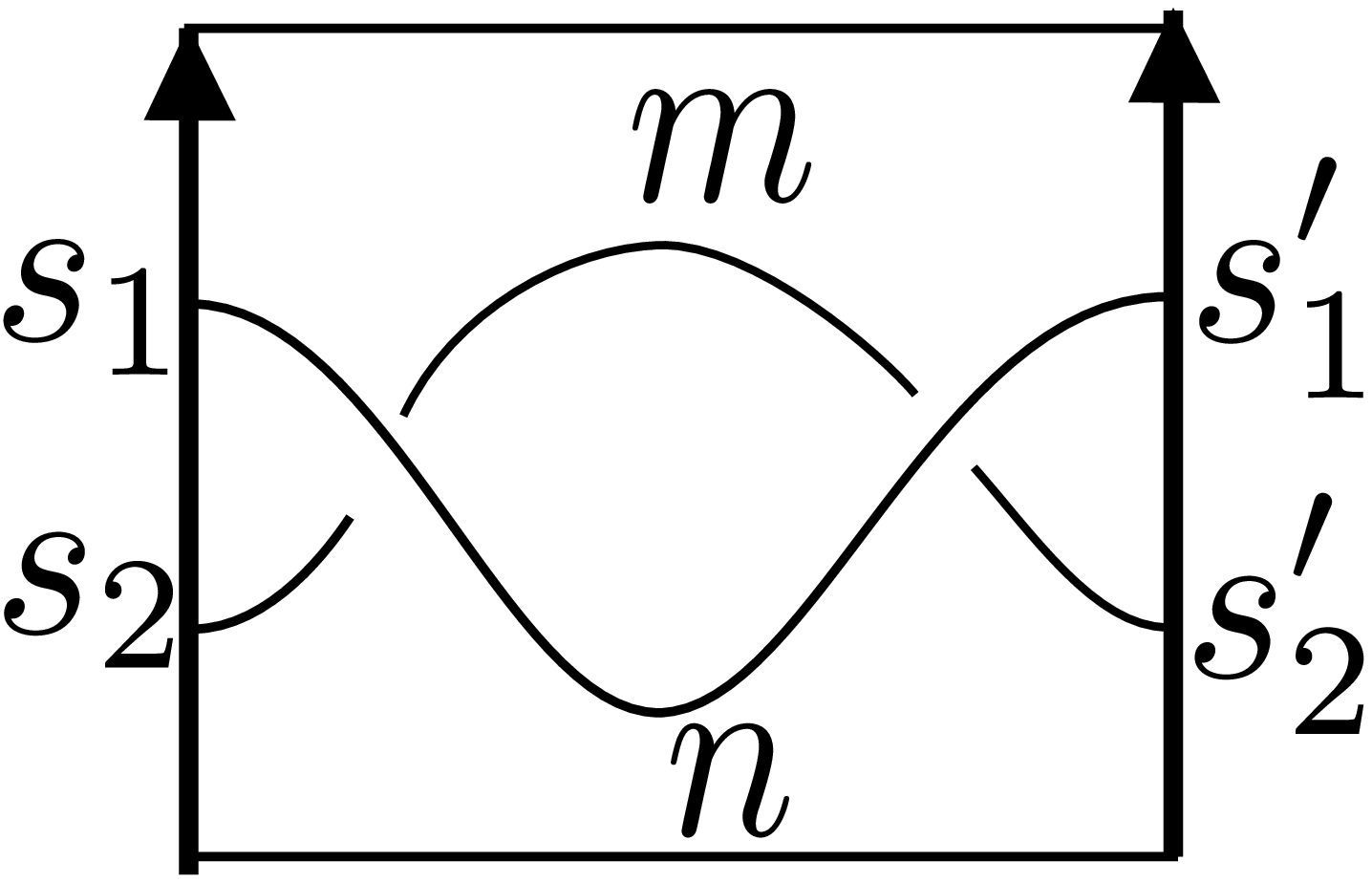}}\right) 
 \\ = \epsilon \left( \adjustbox{valign=c}{\includegraphics[width=1.3cm]{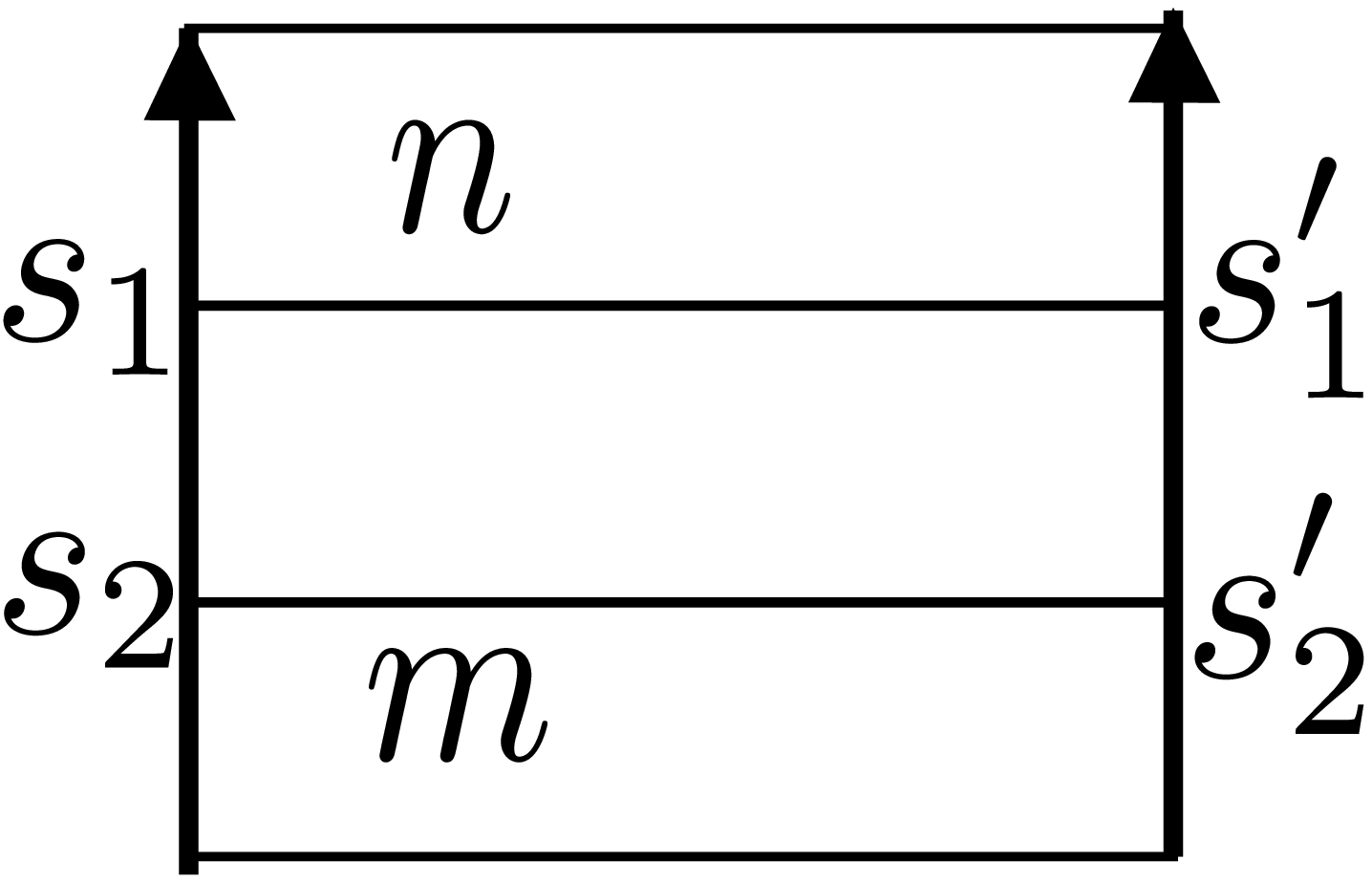}}\right) =(\epsilon \otimes \epsilon) \left(\adjustbox{valign=c}{\includegraphics[width=1.3cm]{Tangle1.eps}} \otimes \adjustbox{valign=c}{\includegraphics[width=1.3cm]{Tangle2.eps}} \right).
 \end{multline*}
The equality $\overline{\mathbf{r}}\star \mathbf{r} = \epsilon \otimes \epsilon$ is proved similarly.
\\ \textit{Axiom $(E_2)$:}
\begin{multline*}
(\mathbf{r} \star \mu \star \overline{\mathbf{r}}) \left(\adjustbox{valign=c}{\includegraphics[width=1.3cm]{Tangle1.eps}} \otimes \adjustbox{valign=c}{\includegraphics[width=1.3cm]{Tangle2.eps}} \right) =
\sum_{i,j,k,l} \epsilon \left( \adjustbox{valign=c}{\includegraphics[width=1.3cm]{R11.eps}} \right) \adjustbox{valign=c}{\includegraphics[width=1.3cm]{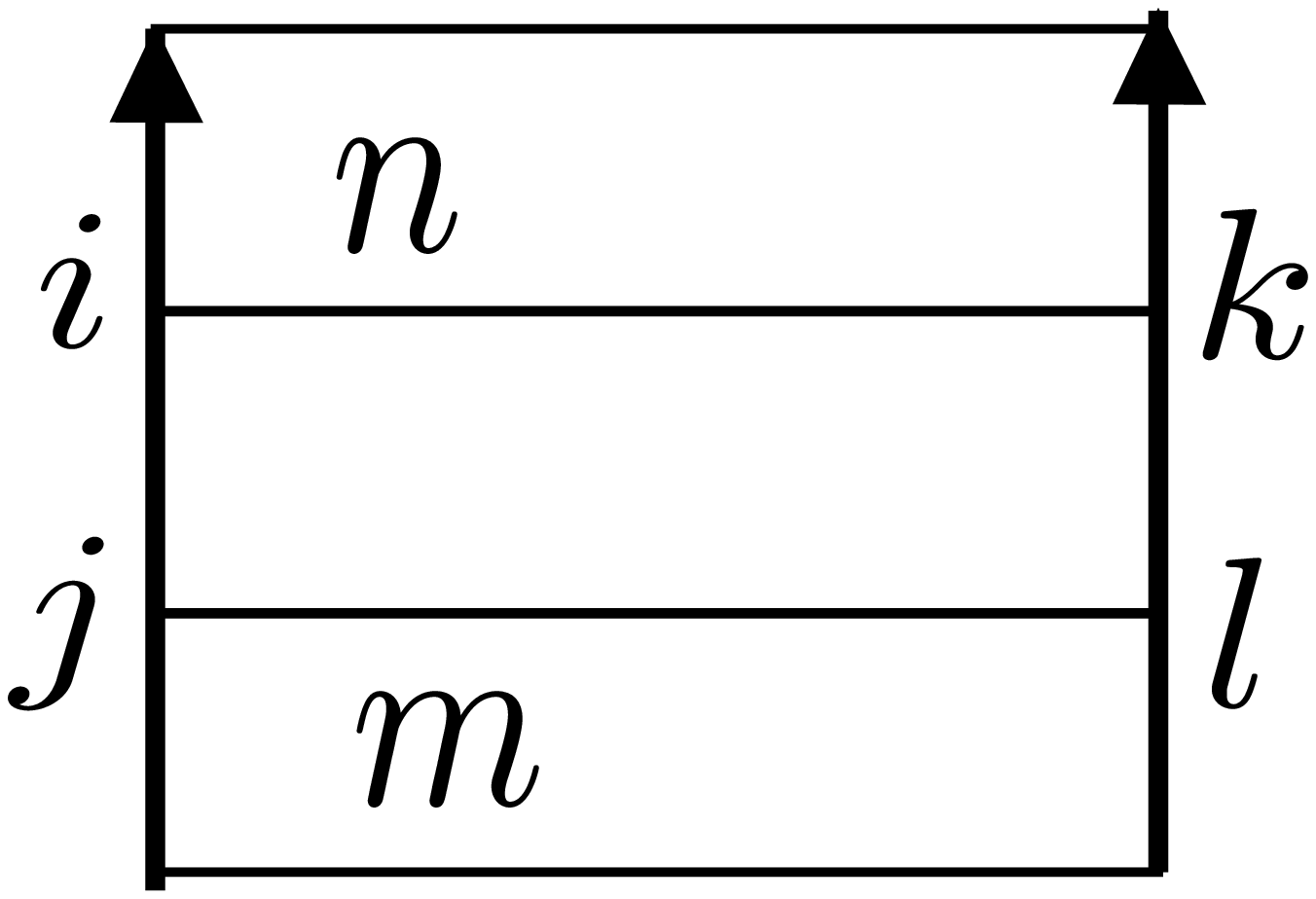}} 
\epsilon \left( \adjustbox{valign=c}{\includegraphics[width=1.3cm]{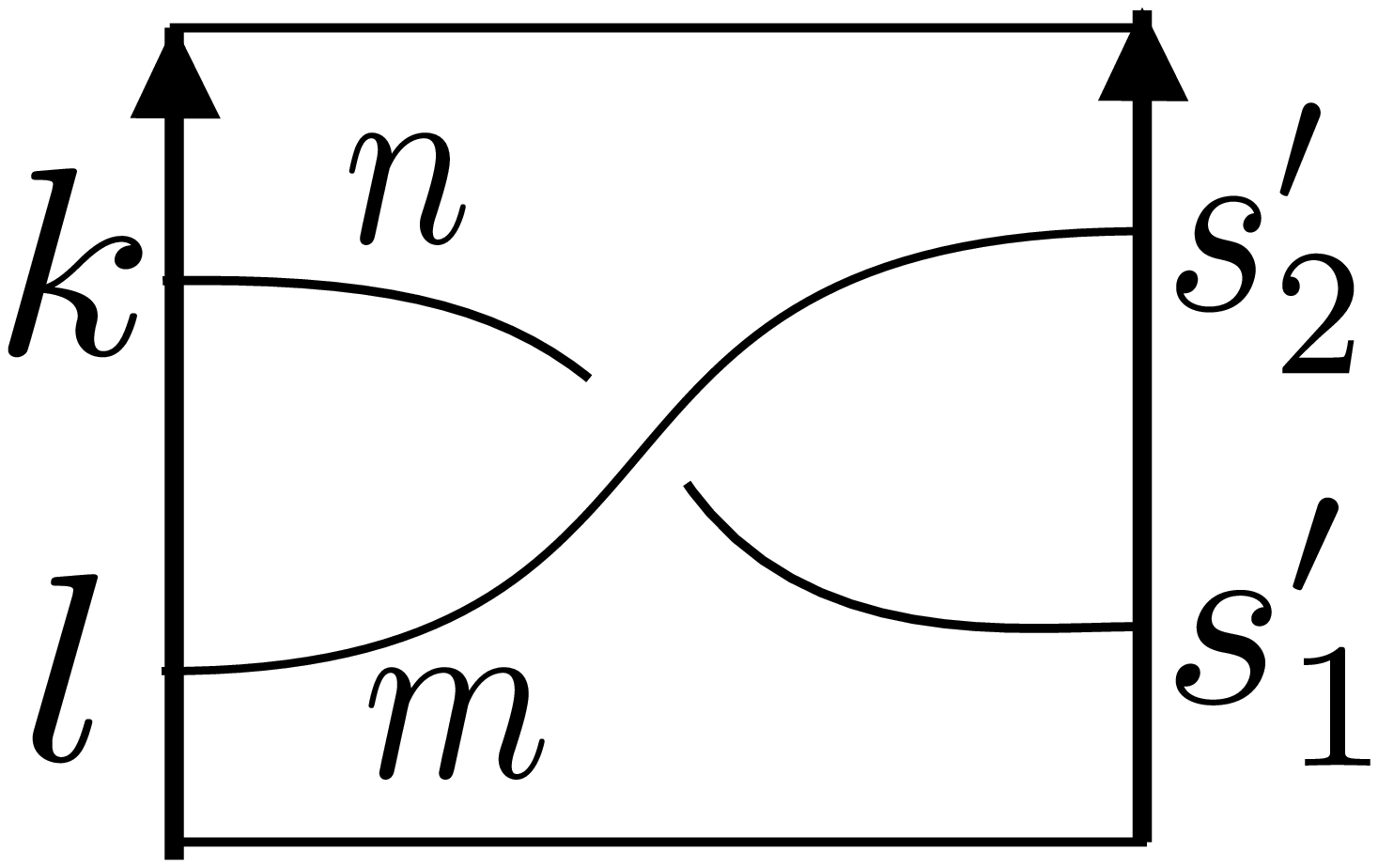}}  \right) 
\\ = \adjustbox{valign=c}{\includegraphics[width=1.3cm]{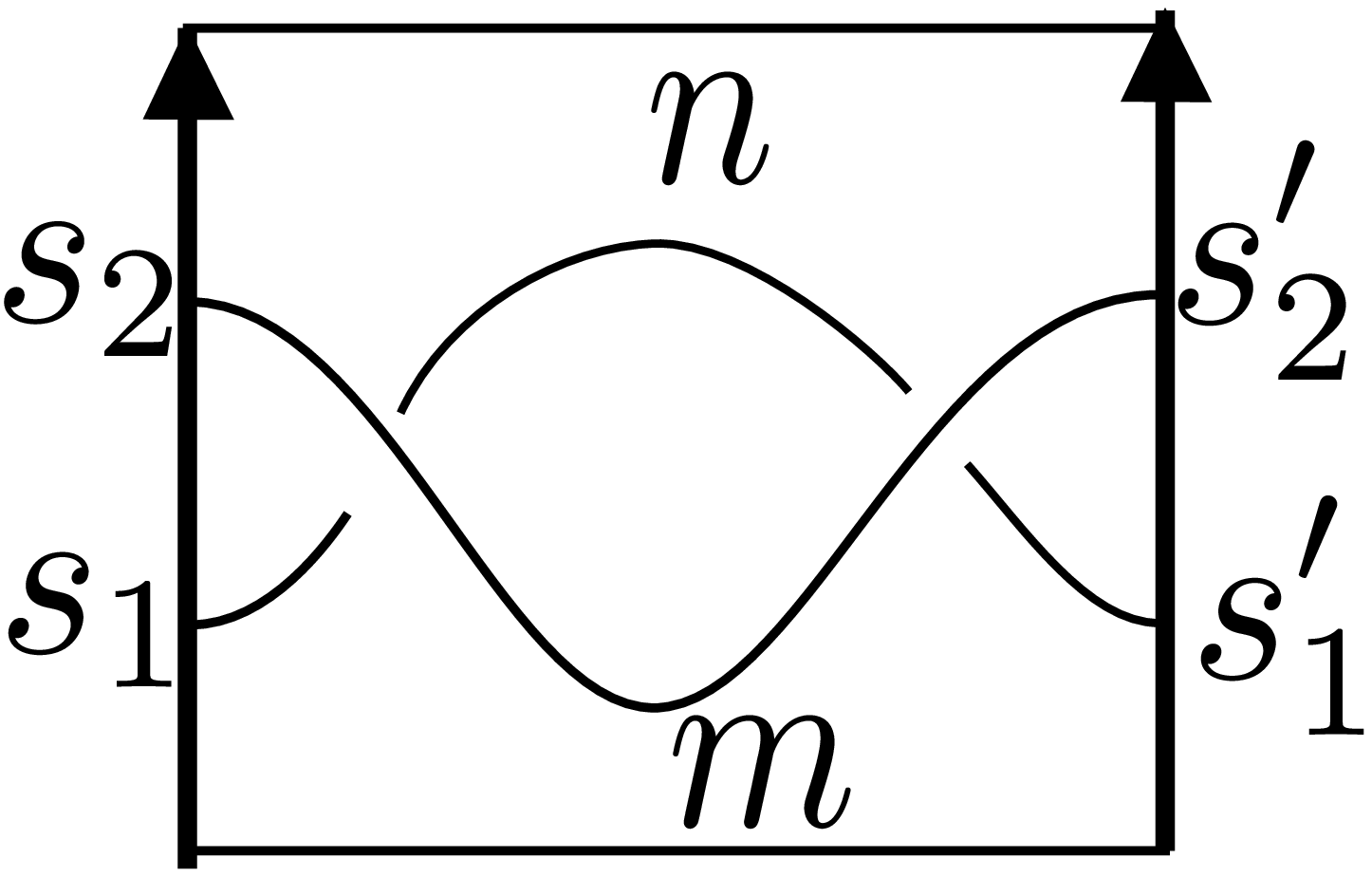}} = \adjustbox{valign=c}{\includegraphics[width=1.3cm]{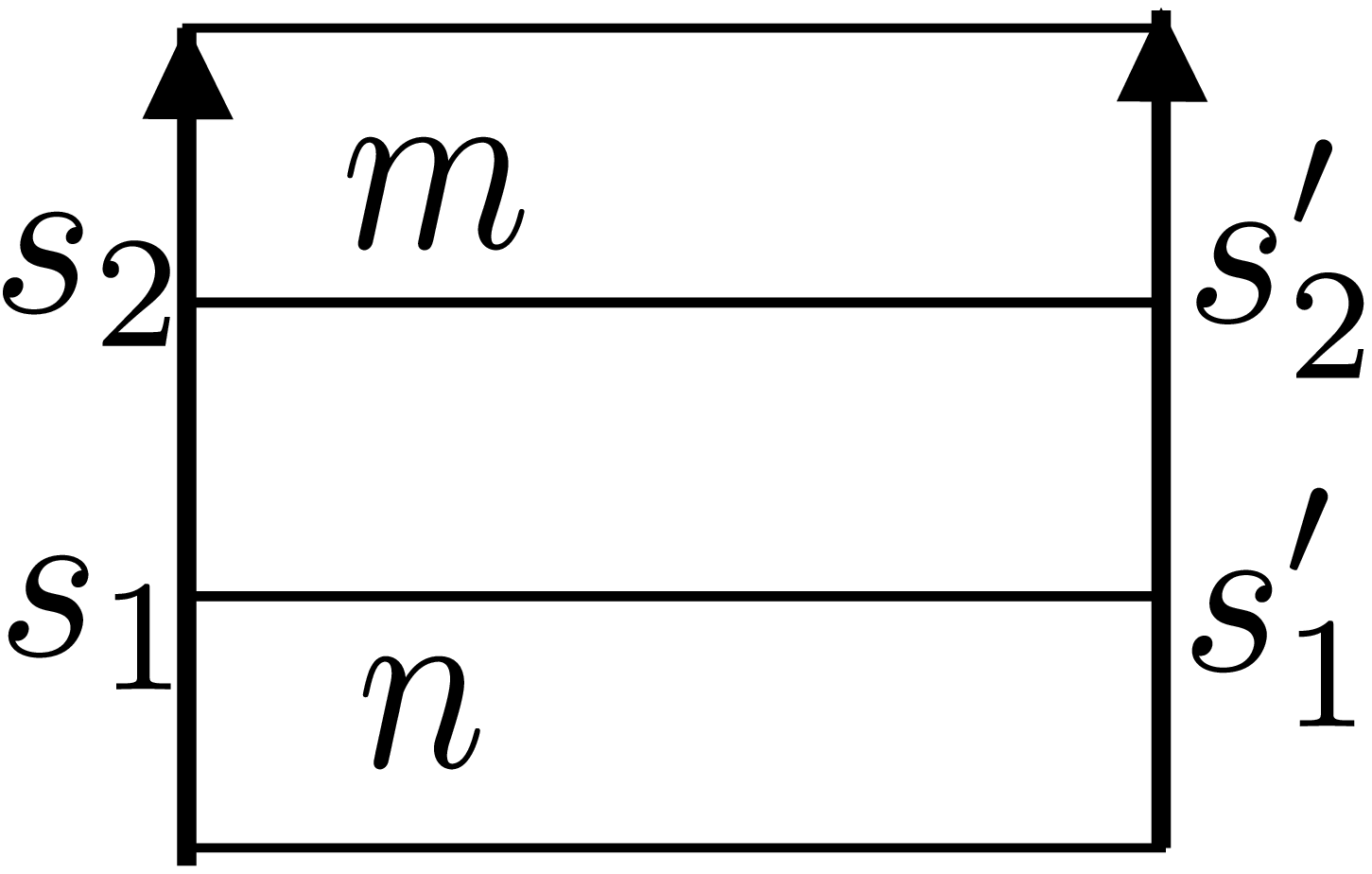}} = \mu^{op} \left(\adjustbox{valign=c}{\includegraphics[width=1.3cm]{Tangle1.eps}} \otimes \adjustbox{valign=c}{\includegraphics[width=1.3cm]{Tangle2.eps}} \right) .
\end{multline*}
\textit{Axiom $(E_3)$:}
\begin{multline*}
\mathbf{r}(\mu \otimes \id) \left( \adjustbox{valign=c}{\includegraphics[width=1.3cm]{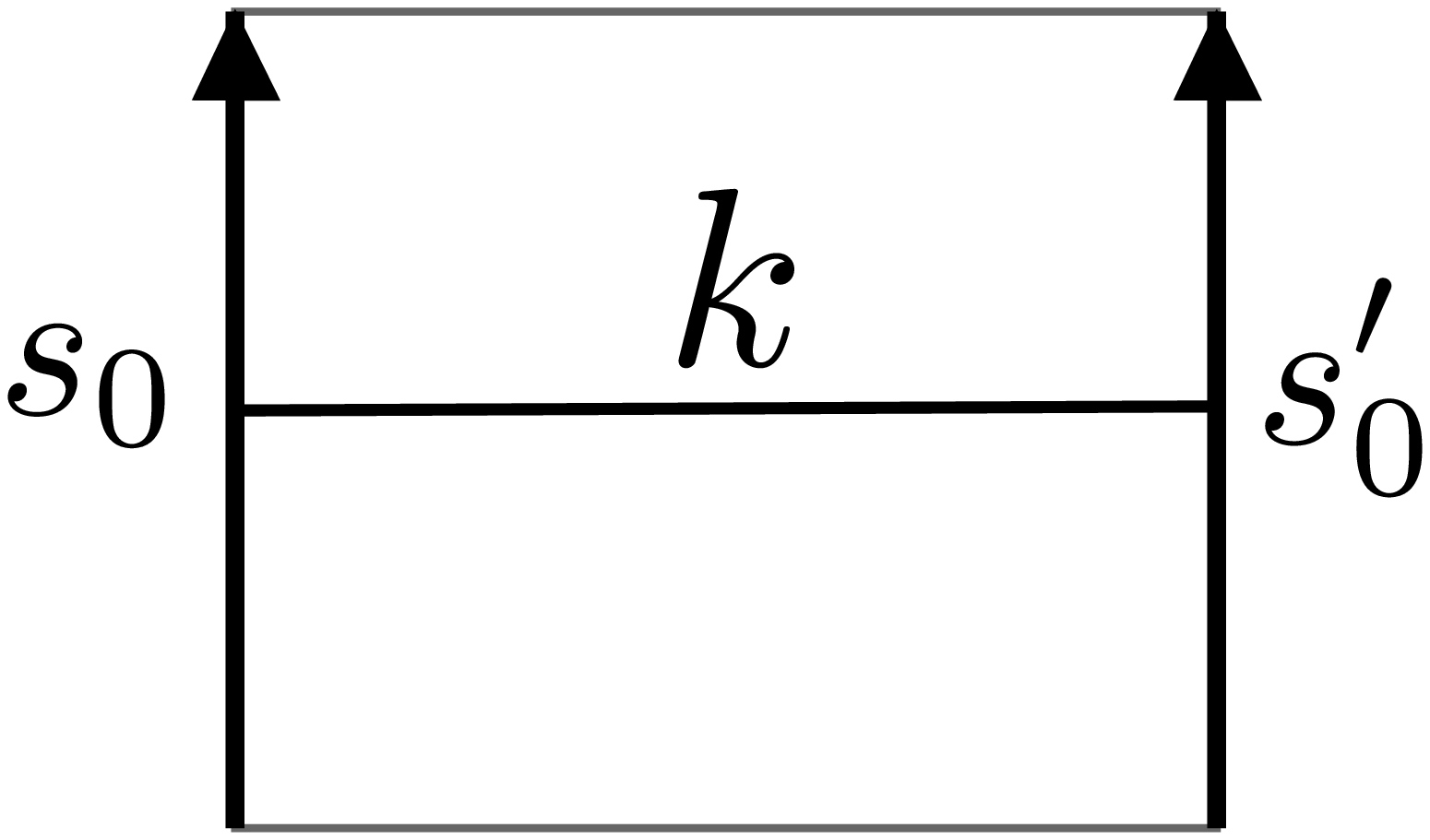}} \otimes \adjustbox{valign=c}{\includegraphics[width=1.3cm]{Tangle1.eps}} \otimes \adjustbox{valign=c}{\includegraphics[width=1.3cm]{Tangle2.eps}} \right) = \epsilon \left( \adjustbox{valign=c}{\includegraphics[width=1.3cm]{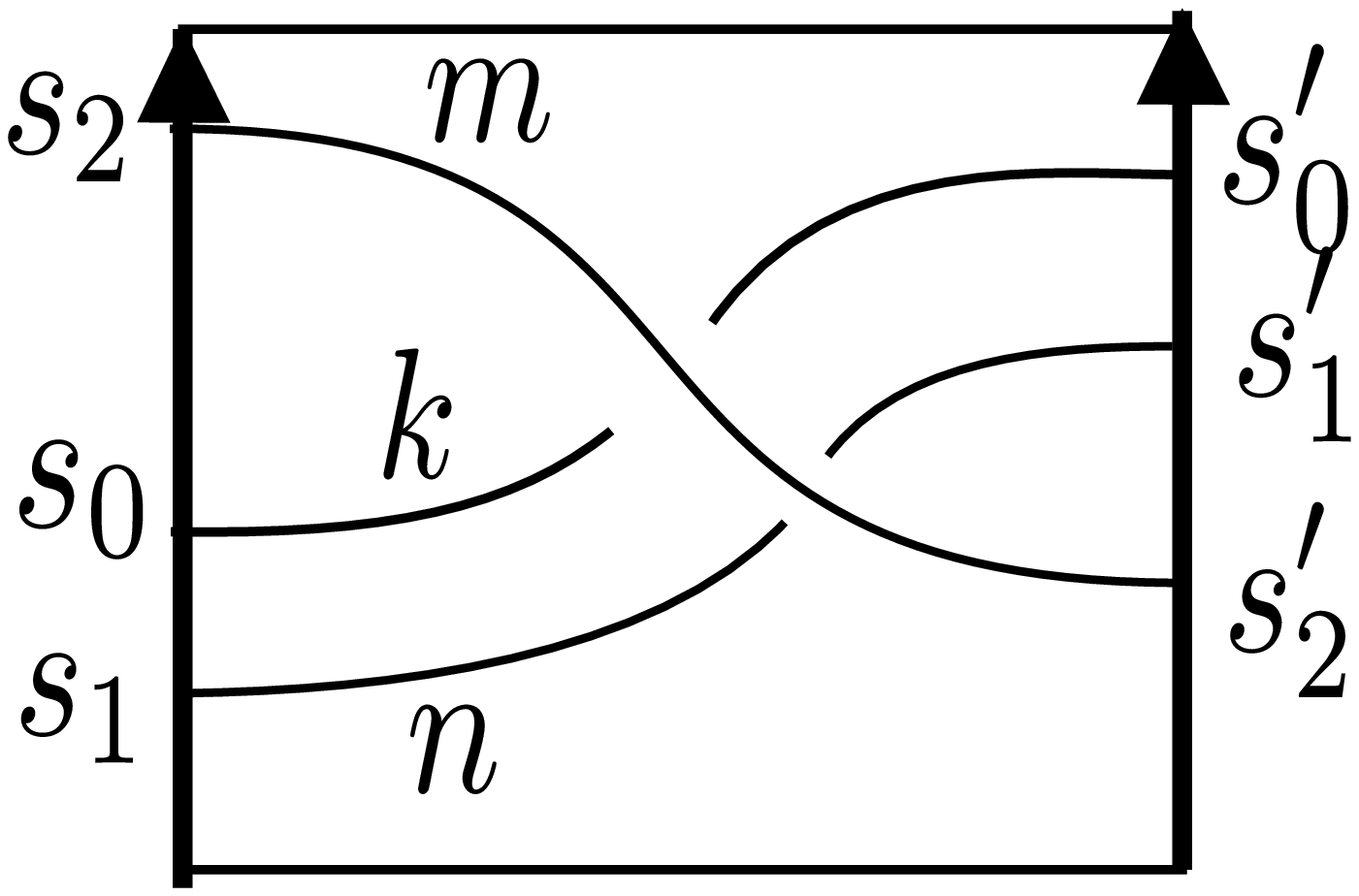}} \right)
\\ \sum_{i,j,k} \epsilon \left( \adjustbox{valign=c}{\includegraphics[width=1.3cm]{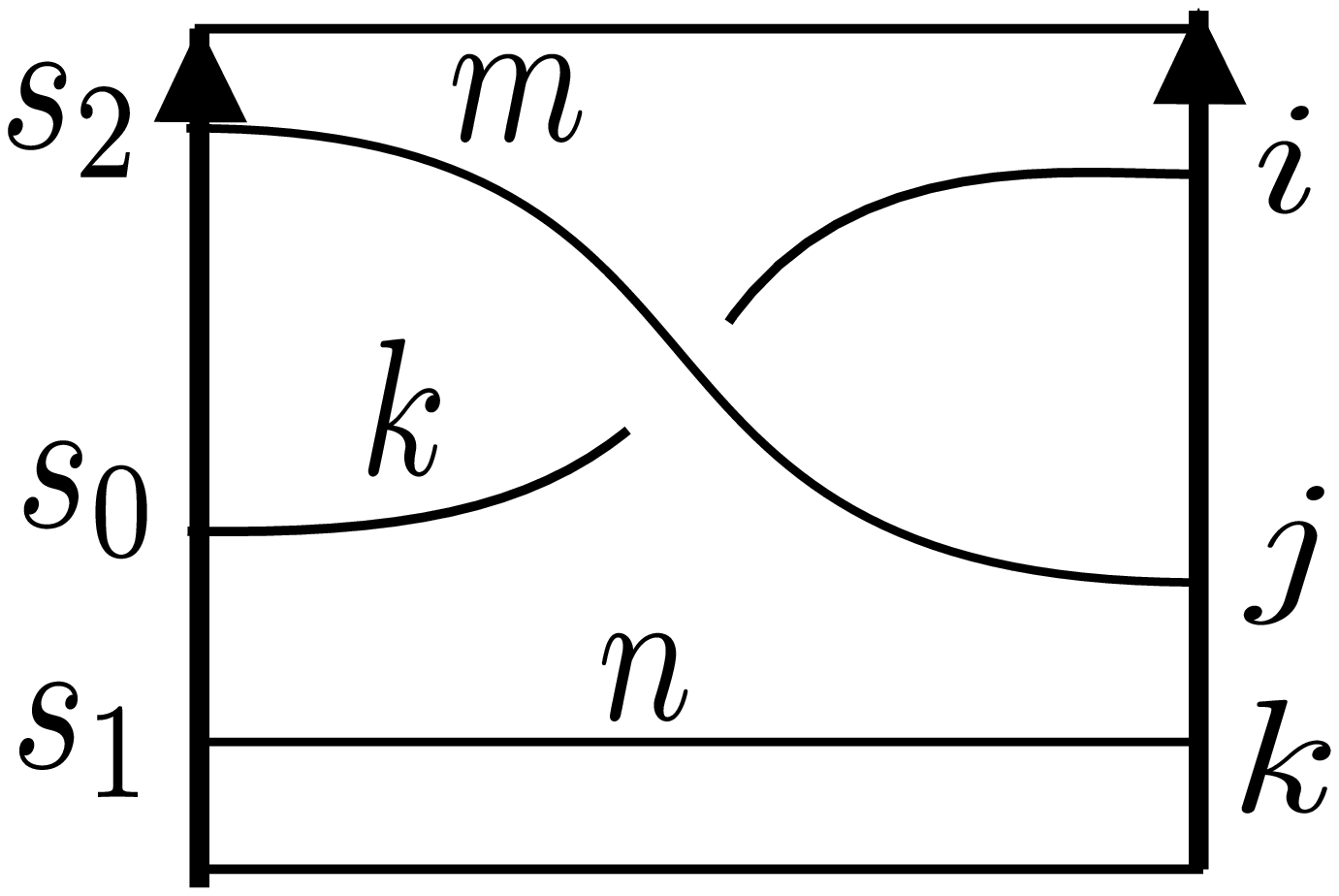}} \right)\epsilon \left( \adjustbox{valign=c}{\includegraphics[width=1.3cm]{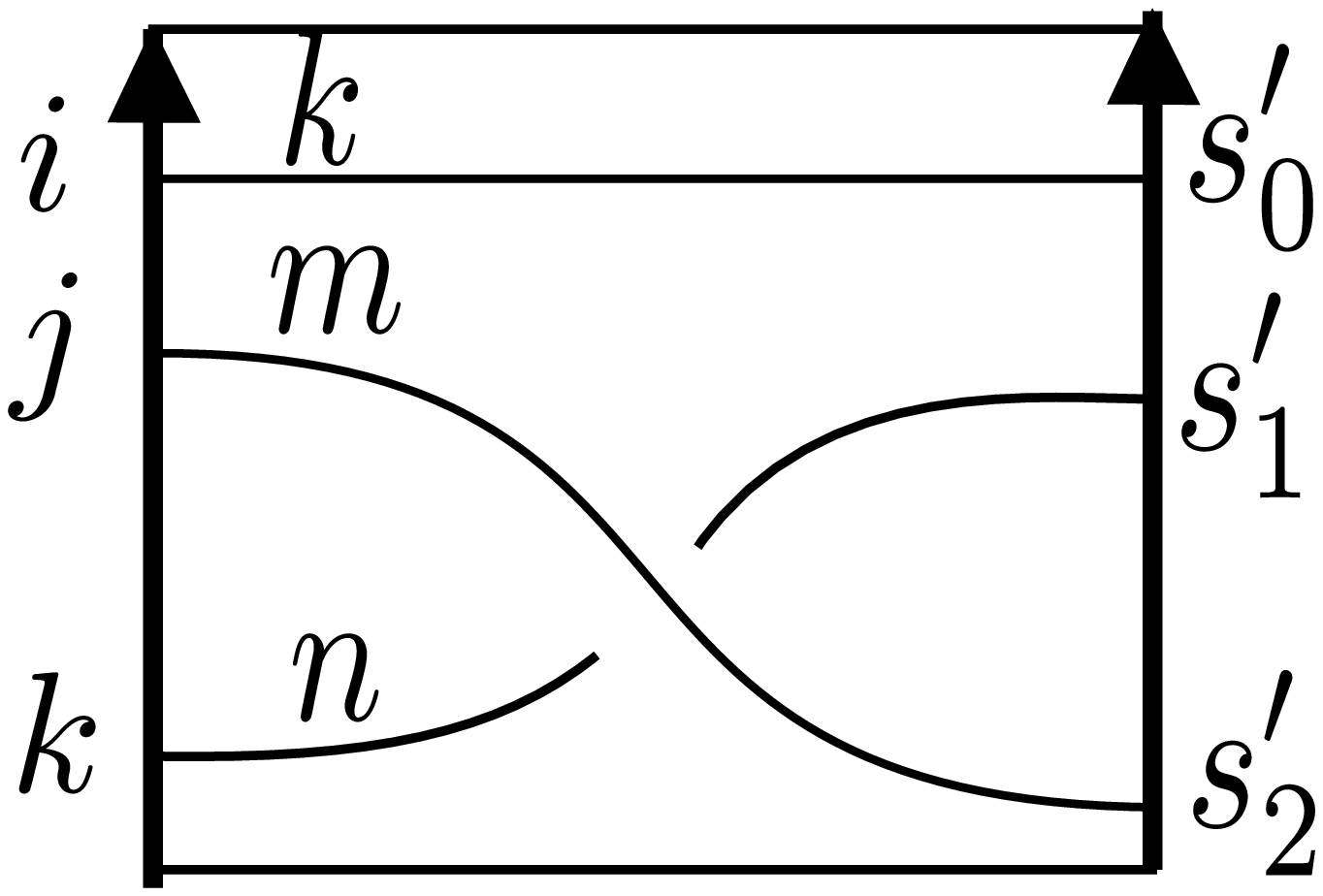}} \right)
= r_{13}\star r_{23}  \left( \adjustbox{valign=c}{\includegraphics[width=1.3cm]{Tangle0.eps}} \otimes \adjustbox{valign=c}{\includegraphics[width=1.3cm]{Tangle1.eps}} \otimes \adjustbox{valign=c}{\includegraphics[width=1.3cm]{Tangle2.eps}} \right).
\end{multline*}
The equality $\mathbf{r}\star (\id \otimes \mu) = \mathbf{r}_{13} \star \mathbf{r}_{12}$ is proved similarly.
   
   \vspace{2mm}
   \par When $(H,\mu, \eta, \Delta, \epsilon, S, \mathbf{r})$ is a cobraided Hopf algebra, its category $H-\mathrm{Comod}$ of finite dimensional left comodules has a natural structure of braided category where for $(V, \Delta_V)$ and $(W, \Delta_W)$ two $H$-comodules, the braiding is given by: 
$$
 c_{V,W} : V\otimes W \xrightarrow{\tau_{V,W}} W\otimes V \xrightarrow{\Delta_W\otimes \Delta_V} H\otimes W \otimes H \otimes V \xrightarrow{\id_H \otimes \tau_{W,H} \otimes \id_V} H\otimes H \otimes W \otimes V \xrightarrow{r\otimes \id_W \otimes \id_V} W\otimes V.
$$

\begin{definition}\label{def_fusion_quantique}
Let $(H,\mu_H, \eta, \Delta, \epsilon_H, S, \mathbf{r})$  be a cobraided Hopf algebra and $\mathbf{A}=(A, \mu, \epsilon)$ be an algebra object in $H\otimes H-\mathrm{Comod}$ and denote by $\Delta_{H\otimes H} : A \to H \otimes H \otimes A$ its comodule map. Write $\Delta^1:= (\id \otimes \epsilon \otimes \id) \circ \Delta_{H\otimes H}: A\to H\otimes A$ and $\Delta^2: (\epsilon \otimes \id \otimes \id) \circ \Delta_{H\otimes H}: A \to H \otimes A$.
 The \textit{fusion} $\mathbf{A}_{1\circledast 2}$ is the algebra object $(A_{1\circledast 2}, \mu_{1\circledast 2}, \epsilon_{1\circledast 2})$ in $H-\mathrm{Comod}$ where 
 \begin{enumerate}
 \item $A_{1\circledast 2}= A$ as a $k$-module and $\epsilon_{1 \circledast 2}=\epsilon$;
 \item The product is the composition
 $$ \mu_{1 \circledast 2} : A\otimes A \xrightarrow{\Delta_1\otimes \Delta_2} H\otimes A \otimes H \otimes A \xrightarrow{\id \otimes \tau_{A,H} \otimes \id} H\otimes H \otimes A \otimes A \xrightarrow{\mathbf{r}\circ \tau_{H,H} \otimes \mu} A.$$
 \item The comodule map is $\Delta_H:= (\mu_H \otimes \id) \circ \Delta_{H\otimes H} $.
 \end{enumerate}
 \end{definition}
 For instance, if $V$ and $W$ are two algebra objects in $H-\mathrm{Comod}$, then $V\otimes_k W$ is an algebra object in $H^{\otimes 2}-\mathrm{Comod}$ and its fusion $V\circledast W$ is called the \textit{cobraided tensor product}. Identify $V$ with $V\otimes 1$ and $W$ with $1\otimes W$ in $V\otimes W$. Its product  is characterized by the formula
 $$ \mu (x\otimes y) = \left\{ \begin{array}{ll} 
 \mu_V(x\otimes y) &\mbox{, if }x,y \in V; \\
 \mu_W(x\otimes y)&\mbox{, if }x,y \in W; \\ 
 x\otimes y & \mbox{, if }x\in V, y \in W; \\
 c_{W,V}(x\otimes y) & \mbox{, if }x\in W, y \in V.
 \end{array} \right.
 $$
 
 Now consider a marked surface $\mathbf{\Sigma}_{a\circledast b}$ obtained by fusioning two boundary arcs $a$ and $b$.  Fix an orientation $\mathfrak{o}
 $ of the boundary arcs of $\mathbf{\Sigma}\bigsqcup \mathbb{T}$, as in Figure \ref{fig_fusion} and let $\mathfrak{o}'$ the induced orientation of the boundary arcs of $\mathbf{\Sigma}'$. 
 Define a linear map $\Psi_{a\circledast b} : \mathcal{S}_A(\mathbf{\Sigma}) \to \mathcal{S}_A(\mathbf{\Sigma}_{a\#b})$ by $\Psi_{a\circledast b} ([D,s]^{\mathfrak{o}}):= [D',s']$ where $(D',s')$ is obtained from $(D,s)$ by gluing to each point of $D\cap a$ a straight line in $\mathbb{T}$ between $e_1$ and $e_3$ and by gluing to each point of $D\cap b$ a straight line in $\mathbb{T}$ between $e_2$ and $e_3$. Figure \ref{fig_fusion} illustrates $\Psi_{a\circledast b}$.
  \begin{figure}[!h] 
\centerline{\includegraphics[width=6cm]{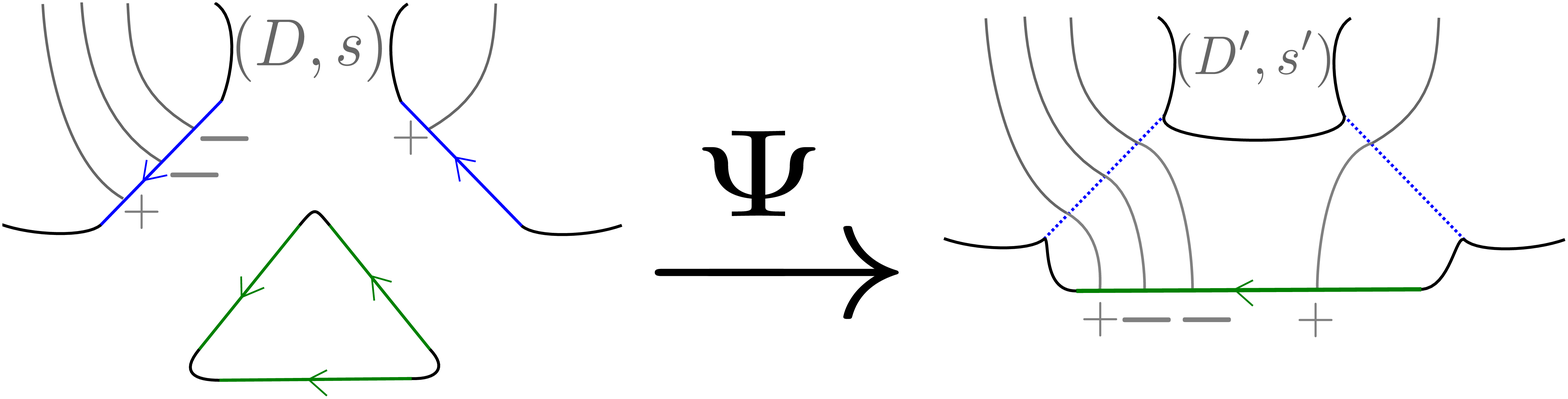} }
\caption{An illustration of $\Psi_{a\circledast b}$.} 
\label{fig_fusion} 
\end{figure} 
 \begin{theorem}\label{theorem_fusion}(Costantino-L\^e \cite[Theorem $4.13$]{CostantinoLe19}) The linear map $\Psi_{a\circledast b}$ is an isomorphism of $k$-modules which identifies $\mathcal{S}_A(\mathbf{\Sigma}_{a\circledast b})$  with the fusion $\mathcal{S}_A(\mathbf{\Sigma})_{a\circledast b}$.
 \end{theorem}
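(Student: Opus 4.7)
The plan is to prove the theorem in two steps: first show that $\Psi_{a\circledast b}$ is a $k$-module isomorphism, and then verify that it intertwines the algebra structures, where the source $\mathcal{S}_A(\mathbf{\Sigma})$ carries the fusion product described in Definition \ref{def_fusion_quantique}.

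For bijectivity I would appeal to the basis theorem (Theorem \ref{theorem_basis}). Extend the orientation $\mathfrak{o}$ on the boundary arcs of $\mathbf{\Sigma}$ to an orientation $\mathfrak{o}'$ of $\mathbf{\Sigma}_{a\circledast b}$ by orienting $e_3$ so that the endpoints on $e_3$ coming from strands ending on $a$ precede those coming from strands ending on $b$, with the order on each batch inherited from $a$ and $b$. For a basis element $[D,s] \in \mathcal{B}^{\mathfrak{o}}$, the straight arcs added in $\mathbb{T}$ may be taken pairwise disjoint and crossing-free, so $\Psi([D,s])$ is again a simple diagram and its induced state is $\mathfrak{o}'$-increasing by construction. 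Hence $\Psi$ sends $\mathcal{B}^{\mathfrak{o}}$ into $\mathcal{B}^{\mathfrak{o}'}$. Surjectivity is obtained by the inverse procedure: any simple $\mathfrak{o}'$-increasing diagram in $\mathbf{\Sigma}_{a\circledast b}$ can be isotoped inside $\mathbb{T}$ so that all its strands become straight arcs joining $e_3$ either to (the image of) $e_1$ or to $e_2$, using the trivial arc and cutting arc relations \eqref{trivial_arc_rel}--\eqref{cutting_arc_rel} to eliminate anything confined to the triangle. This identification of bases gives the $k$-module isomorphism.

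For the algebra statement I would verify $\Psi(x \cdot_{\circledast} y) = \Psi(x) \cdot \Psi(y)$ on a generating set of stated diagrams. The product $\Psi(x) \cdot \Psi(y)$ stacks the two diagrams. Outside the attached triangle this reproduces the ordinary product $\mu(x \otimes y)$ in $\mathcal{S}_A(\mathbf{\Sigma})$. Inside the triangle, the strands of $\Psi(x)$ coming from $b$ lie above those of $\Psi(y)$ coming from $a$, so in order to recognize the result as $\Psi$ of some diagram in $\mathbf{\Sigma}$, I must push the upper $b$-strands of $x$ past the lower $a$-strands of $y$ along $e_3$. Each such elementary crossing is resolved by the height exchange relation \eqref{height_exchange_rel}, contributing a factor $\mathscr{R}^{kl}_{ij}$; summing over the intermediate states produces an expression controlled by the form $\mathbf{r}$, while the remaining diagram, once the trivial triangle strands are removed, is the ordinary product in $\mathcal{S}_A(\mathbf{\Sigma})$ of suitably re-stated versions of $x$ and $y$. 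Reinterpreting ``restating the $b$-endpoints of $x$'' as the right coaction $\Delta^R_b(x)$ and ``restating the $a$-endpoints of $y$'' as the left coaction $\Delta^L_a(y)$, the accumulated $\mathscr{R}$-factors match the co-R matrix pairing, recovering exactly the formula $\mu_{1\circledast 2} = (\mathbf{r}\circ \tau) \star \mu$ of Definition \ref{def_fusion_quantique}.

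The main obstacle is the bookkeeping of the $\mathscr{R}$-factors in the second step and their matching with the cobraided structure of $\mathcal{O}_q[\SL_2]$. A useful reduction is to first check multiplicativity on elementary stated arcs having all their endpoints on $a\cup b$ (and on diagrams disjoint from a neighborhood of $a \cup b$, where both sides are trivially the ordinary product). On such arcs the coactions $\Delta^L_a, \Delta^R_b$ and the pairing $\mathbf{r}$ reduce to the explicit matrix coefficients $\mathscr{R}^{jl}_{ki}$ and the product in $\mathcal{S}_A(\mathbb{T})$ is governed by the relations \eqref{relbigone}, so the identity becomes a direct verification. The general case then follows by multiplicativity of both $\Psi(x)\cdot \Psi(y)$ and $\Psi(x\cdot_{\circledast} y)$ in each variable, together with the coassociativity statement of Theorem \ref{theorem_gluing} which guarantees that independent choices of how to decompose $x$ and $y$ produce the same answer.
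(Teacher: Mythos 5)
Your plan for injectivity contains a genuine gap. You claim that $\Psi_{a\circledast b}$ sends each basis diagram $[D,s]\in\mathcal{B}^{\mathfrak{o}}$ to a simple diagram with an $\mathfrak{o}'$-increasing state on $e_3$. The simplicity part is fine (the straight lines create no crossings, and $\pi_1(\Sigma)\hookrightarrow\pi_1(\Sigma_{a\circledast b})$ ensures no component becomes trivial). But the state claim is false. The state on $e_3$ is the concatenation, in the $\mathfrak{o}'$-order, of the state that $s$ restricted to $a$ and the state that $s$ restricted to $b$, and there is no choice of orientation of $e_3$ that makes every such concatenation increasing. For instance, if $D$ has a single endpoint on $a$ with state $+$ and a single endpoint on $b$ with state $-$, the concatenated state on $e_3$ reads $(+,-)$ or $(-,+)$ depending on the orientation, and exactly one of the two opposite inputs $(-,+)$ on $(a,b)$ versus $(+,-)$ on $(a,b)$ will produce a decreasing state for whichever orientation you pick. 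So $\Psi_{a\circledast b}$ does not map the basis $\mathcal{B}^{\mathfrak{o}}$ into the basis $\mathcal{B}^{\mathfrak{o}'}$, and injectivity does not follow from a direct basis-to-basis comparison. One could try to repair this by reordering with the height exchange relations \eqref{height_exchange_rel}, but then the image of a basis element becomes a genuine linear combination and the argument no longer closes without significantly more bookkeeping.

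The paper circumvents this entirely by Higgins' argument: one constructs an explicit left inverse $\Phi:=(\epsilon\otimes\id)\circ(i_*\otimes\id)\circ\theta$, where $\theta:\mathcal{S}_A(\mathbf{\Sigma}_{a\circledast b})\hookrightarrow\mathcal{S}_A(\mathbb{T})\otimes\mathcal{S}_A(\mathbf{\Sigma})$ is the gluing embedding, $i_*:\mathcal{S}_A(\mathbb{T})\to\mathcal{O}_q[\SL_2]$ is induced by folding the triangle to a bigon with $e_1,e_2\mapsto a_L$, $e_3\mapsto a_R$, and $\epsilon$ is the counit. The counit identity $\epsilon(\alpha_{ij})=\delta_{ij}$ then collapses the gluing sum back to the original stated diagram, giving $\Phi\circ\Psi_{a\circledast b}=\id$ without any basis comparison; bijectivity then follows from surjectivity (which you correctly derive from the cutting arc relation \eqref{cutting_arc_rel}). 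Your sketch of the algebra-structure step is consistent with the paper's graphical verification, and the reduction to arcs with endpoints near $a\cup b$ plus coassociativity is sensible, but you should replace the basis argument for injectivity with Higgins' one-sided inverse.
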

 
 \begin{proof}[Proof (Higgins)]
 The fact that  $\Psi_{a\circledast b}$ is surjective is an easy consequence of the cutting arc relation \eqref{cutting_arc_rel}. The injectivity is proved using the following elegant argument of Higgins in \cite{Higgins_SSkeinSL3}. Since $\mathbf{\Sigma}_{a\circledast b}$ is obtained from $\mathbf{\Sigma}\bigsqcup \mathbb{T}$ by gluing some boundary arcs, we have a gluing map $\theta : \mathcal{S}_A(\mathbf{\Sigma}_{a\circledast b}) \to \mathcal{S}_A (\mathbb{T}) \otimes \mathcal{S}_A(\mathbf{\Sigma})$. Let $i: \mathbb{T} \to \mathbb{B}$ be the embedding of marked surfaces sending $e_3$ to $a_R$ and $e_1, e_2$ to $a_L$ with $e_1>e_2$ and denote by $i_* : \mathcal{S}_A \to \mathcal{O}_q[\SL_2]$ the induced morphism. Consider the composition
 $$ \Phi: \mathcal{S}_A(\mathbf{\Sigma}_{a\circledast b}) \xrightarrow{\theta} \mathcal{S}_A (\mathbb{T}) \otimes \mathcal{S}_A(\mathbf{\Sigma})\xrightarrow{i_*\otimes \id} \mathcal{O}_q[\SL_2] \otimes \mathcal{S}_A(\mathbf{\Sigma})\xrightarrow{\epsilon \otimes \id} \mathcal{S}_A(\mathbf{\Sigma}).$$
 As illustrated in Figure \ref{fig_defusion}, it is easy to see that $\Phi$ is a left inverse to $\Psi_{a\circledast b}$, thus $\Psi_{a\circledast b}$ is an isomorphism. It remains to prove that 
 the pullback by $\Psi_{a\circledast b}$ of the product in $ \mathcal{S}_A(\mathbf{\Sigma}_{a\circledast b}) $ is the fusion product 
$  \mu_{a\circledast b}$. This fact is illustrated in Figure \ref{fig_fusion_product}.

 \begin{figure}[!h] 
\centerline{\includegraphics[width=16cm]{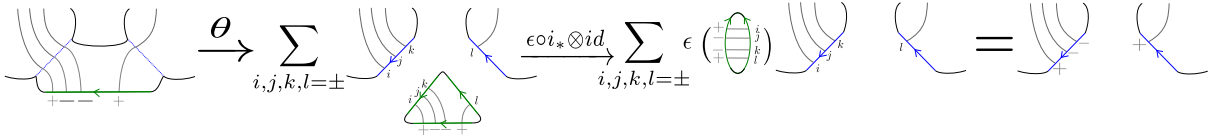} }
\caption{An illustration of the equality $\Phi \circ \Psi_{a\circledast b} = \id$.} 
\label{fig_defusion} 
\end{figure} 

 \begin{figure}[!h] 
\centerline{\includegraphics[width=10cm]{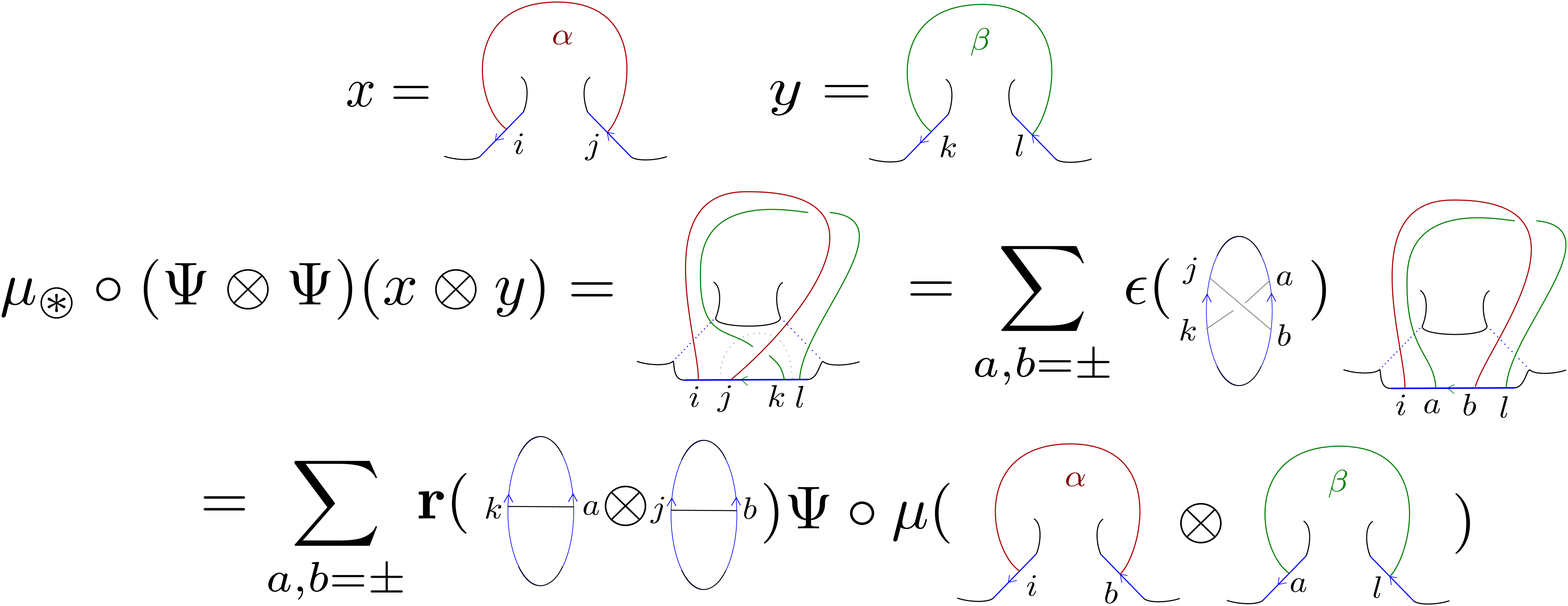} }
\caption{An illustration of the fusion product $\mu_{\circledast}$.} 
\label{fig_fusion_product} 
\end{figure}

 \end{proof}

   \subsection{The triangular strategy}
   
   We have now all the ingredients to show the powerfulness of stated skein algebras. 
   
   \begin{strategy}\label{strategy}(Triangular strategy)
   Suppose you want to prove that (stated) skein algebras satisfies a certain property (P).
   
   \begin{enumerate}
   \item First prove (P) in the case of the bigon. Since $\mathcal{S}_A(\mathcal{B})=\mathcal{O}_q[\SL_2]$ has been overly studied, property (P) has probably already been proved in the literature in this case (you might look at  \cite{Kassel, BrownGoodearl} for instance).
  \item Using the fact that $\mathcal{S}_A(\mathbb{T})\cong \mathcal{O}_q[\SL_2]\circledast \mathcal{O}_q[\SL_2]$, deduce that property (P) holds for the triangle.
  \item Using the embedding $\theta^{\Delta} : \mathcal{S}_A(\mathbf{\Sigma}) \hookrightarrow \otimes_{\mathbb{T}\in F(\Delta)} \mathcal{S}_A(\mathbb{T})$ and the exact sequence \eqref{eq_triangular}, deduce that property (P) is true for any triangulable surfaces.
  \item It remains to deal with non triangulable connected surfaces (other than the bigon). Since $\mathcal{S}_A(\mathbf{m}_0)\cong \mathcal{S}_A(\mathbb{D}^2, \emptyset) \cong  k$ and $\mathcal{S}_A(\Sigma_{0,1}, \emptyset) \cong k[X]$, property (P) is probably trivial in these cases, so it remains to deal with closed connected (unmarked) surfaces $(\Sigma_{g,0}, \emptyset)$. Since $\mathbf{\Sigma}=(\Sigma_{g,0}, \emptyset)$ is obtained from $\mathbf{\Sigma}'=(\Sigma_{g,1}, \emptyset)$ by removing a puncture (the unique boundary component), using the exact sequence 
$$  0 \to \mathcal{I}_p \to \mathcal{S}_A(\mathbf{\Sigma}) \xrightarrow{f_*} \mathcal{S}_A(\mathbf{\Sigma}') \to 0, $$
and the fact (Proposition \ref{prop_offpuncture}) that $\mathcal{I}_p$ is generated by elements $[\gamma]-[\gamma']$ where $\gamma, \gamma'$ are two closed curves in $\Sigma_{g,1}$ which are isotopic in $\Sigma_{g,0}$, you can probably deduce that property (P) holds for $\mathbf{\Sigma}$ from the fact that it holds for the triangulable surface $\mathbf{\Sigma}'$.
\end{enumerate}
\end{strategy}

Let us illustrate this strategy on the following:
\begin{theorem}\cite{BonahonWongqTrace, PrzytyckiSikora_SkeinDomain, LeStatedSkein} \label{theorem_domain}
If $k$ is a domain, so is $\mathcal{S}_A(\mathbf{\Sigma})$.
\end{theorem}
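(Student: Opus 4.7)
My plan is to follow the triangular strategy outlined in Strategy \ref{strategy}. First, for the bigon, $\mathcal{S}_A(\mathbb{B}) = \mathcal{O}_q[\SL_2]$ is a classical Noetherian domain when $k$ is a domain: its presentation exhibits it as an iterated Ore extension of $k$ (see for instance Brown--Goodearl), which is a standard way to verify the domain property. Alternatively, via Gauss decomposition one embeds $\mathcal{O}_q[\SL_2]$ into a quantum torus $k\langle x^{\pm 1}, y^{\pm 1} \mid xy = qyx\rangle[z]$, which is manifestly a domain.

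Next, for the triangle, $\mathcal{S}_A(\mathbb{T}) \cong \mathcal{O}_q[\SL_2] \circledast \mathcal{O}_q[\SL_2]$ by Theorem \ref{theorem_fusion}. The key point is that the cobraided tensor product of two $k$-free algebras is a deformation of the ordinary tensor product: the braiding in Definition \ref{def_fusion_quantique} is built from $\mathbf{r}$, and inspecting the matrix $\mathscr{R}$ one sees that the fusion product $\mu_{1\circledast 2}(x\otimes y)$ equals $x\otimes y$ plus terms that are strictly lower in a suitable filtration (for instance the one induced by the height/degree of the boundary endpoints, or by the $\mathrm{H}_1(\Sigma, \mathcal{A}; \mathbb{Z}/2\mathbb{Z})$-grading). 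The associated graded is therefore the ordinary tensor product $\mathcal{O}_q[\SL_2]\otimes_k \mathcal{O}_q[\SL_2]$, again an iterated Ore extension and hence a domain; and a filtered algebra whose associated graded is a domain is itself a domain. For a triangulable $\mathbf{\Sigma}$, the embedding $\theta^\Delta : \mathcal{S}_A(\mathbf{\Sigma}) \hookrightarrow \bigotimes_{\mathbb{T}\in F(\Delta)} \mathcal{S}_A(\mathbb{T})$ of Theorem \ref{theorem_gluingformula} realizes $\mathcal{S}_A(\mathbf{\Sigma})$ as a subalgebra of an iterated tensor product of domains of this form, which by the same filtration argument is itself a domain.

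For the remaining non-triangulable connected surfaces: $\mathcal{S}_A(\mathbf{m}_0) \cong \mathcal{S}_A(\mathbb{D}^2,\emptyset) \cong k$ and $\mathcal{S}_A(\Sigma_{0,2},\emptyset) \cong k[X]$ are trivially domains, so only closed surfaces $(\Sigma_{g,0}, \emptyset)$ with $g\geq 1$ remain. Here I invoke the off-puncture exact sequence \eqref{eq_off_puncture} with $\mathbf{\Sigma}' = (\Sigma_{g,0},\emptyset)$ and $\mathbf{\Sigma} = (\Sigma_{g,1}, \emptyset)$, the latter being triangulable and hence already known to be a domain. This is the main obstacle: since quotients of domains need not be domains, one has to show directly that $\mathcal{I}_p$ is a prime ideal. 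The cleanest route, following the spirit of Przytycki--Sikora, is to bypass the quotient and argue directly on $\mathcal{S}_A(\Sigma_g)$ by introducing a filtration indexed by geometric intersection numbers with a fixed pants decomposition (or by multi-curve complexity), and checking via the skein relations \eqref{eq: skein 1} that the associated graded algebra is commutative and free on the classes of simple multicurves, hence a polynomial $k$-algebra. Being a filtered algebra whose associated graded is a domain, $\mathcal{S}_A(\Sigma_g,\emptyset)$ is then a domain, completing all cases.
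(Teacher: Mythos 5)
Your proof follows the same triangular strategy as the paper and, for the triangulable case, is essentially identical to the displayed argument: the bigon is the classical fact, the triangle is handled via $\circledast$, the gluing embedding $\theta^{\Delta}$ reduces general triangulable $\mathbf{\Sigma}$ to the triangle case, and a subring of a domain is a domain. Where you genuinely add something is the closed case: the paper's displayed proof stops at triangulable surfaces and cites Przytycki--Sikora for closed ones, whereas you correctly flag the pitfall (the off-puncture exact sequence does not help directly, since quotients of domains need not be domains) and sketch the right fix, the Przytycki--Sikora pants-decomposition filtration with commutative polynomial associated graded; the delicate combinatorial content there is verifying that crossing resolutions strictly decrease complexity, which is the heart of their argument. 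The one imprecision is in your filtration justification of the ``$\circledast$ preserves domains'' step: the $\mathrm{H}_1(\Sigma,\mathcal{A};\mathbb{Z}/2\mathbb{Z})$-grading is preserved exactly by both the fused and the ordinary product, so it cannot separate them, and the braiding preserves total degree, so a degree filtration does not degenerate the product either. The filtration that does work is by boundary states (with $-<+$), where the near-triangularity of $\mathscr{R}$ degenerates the off-diagonal $A-A^{-3}$ term; but the associated graded is then a $q$-scalar twist of the ordinary tensor product rather than the ordinary one itself. That twisted tensor product is still a domain (it is an iterated Ore extension, or embeds in a quantum torus), so your conclusion survives, but the intermediate claim as stated is off. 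A cleaner route that avoids the filtration entirely is to observe that $\mathcal{S}_A(\mathbb{T})$ embeds into a quantum torus via the quantum trace $\Tr^{\mathbb{T}}$.
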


Proving Theorem \ref{theorem_domain} without the use of stated skein algebras is a difficult problem. In the particular case where $k=\mathbb{C}$, $A=-1$ and $\mathbf{\Sigma}$ is closed, it was proved independently by Przytycki-Sikora in \cite{PS00} and Charles-Marche in \cite{ChaMa} that the (commutative) skein algebra is reduced. 

The more general case of triangulable surfaces for any domain $k$ was proved by Bonahon and Wong in \cite{BonahonWongqTrace} using the quantum trace (see also M\"uller \cite{Muller} for a similar strategy using another quantum torus). The case of closed surfaces was  proved by Przytycki-Sikora in \cite{PrzytyckiSikora_SkeinDomain} using suitable filtrations from pants decomposition. 
 Using the triangular strategy, L\^e arrived at the following trivial proof:

\begin{proof} (L\^e \cite{LeStatedSkein})
The fact that $\mathcal{O}_q[\SL_2]$ is a domain is classical and a proof (using a PBW basis) can be found in any textbook on the subject like \cite{Kassel}. It easily results that the cobraided tensor product $\mathcal{O}_q[\SL_2] \circledast \mathcal{O}_q[\SL_2]$ is a domain. Since a subring of a domain is a domain, using the embedding $\theta^{\Delta} : \mathcal{S}_A(\mathbf{\Sigma}) \hookrightarrow \otimes_{\mathbb{T}\in F(\Delta)} \mathcal{S}_A(\mathbb{T})$, we see that $\mathcal{S}_A(\mathbf{\Sigma})$ is a domain whenever $\mathbf{\Sigma}$ is triangulable. 

\end{proof}

In this survey we will give three more illustrations of how the triangular strategy can be used to get (almost) trivial proofs of deep results on skein algebras. They are: $(1)$ the construction of the Chebyshev-Frobenius morphism, $(2)$ the construction of the quantum trace (which was the original motivation for the introduction of stated skein algebras) and $(3)$ the proof that (stated) skein algebras are deformation quantizations of (relative) character variety.

  \subsection{Chebyshev-Frobenius morphisms}

\begin{definition}  The $N$-th Chebyshev polynomial of first kind is the polynomial  $T_N(X) \in \mathbb{Z}[X]$ defined by the recursive formulas $T_0(X)=2$, $T_1(X)=X$ and $T_{n+2}(X)=XT_{n+1}(X) -T_n(X)$ for $n\geq 0$.
\end{definition}
The Chebyshev polynomial is alternatively defined by the equality
$$ \tr (A^N) = T_N(\tr(A)), \quad \forall A \in \SL_2(\mathbb{C}).$$
It appears at the quantum level as follows. Suppose that $q^2=A^4$ is a root of unity of order $N$. Then in $\mathcal{O}_q[\SL_2]$, we have the following equality
\begin{equation}\label{eq_miraculous}
T_N (\alpha_{++} + \alpha_{--}) = \alpha_{++}^N + \alpha_{--}^N.
\end{equation}
Consider a stated arc $\alpha_{ij}$ in some marked surface $\mathbf{\Sigma}$ and denote by $\alpha_{ij}^{(N)}$ the stated tangle obtained by taking $N$ parallel copies of $\alpha_{ij}$ pushed along the framing direction. In the case where both endpoints of $\alpha$ lye in two distinct boundary arcs, one has the equality
$ \alpha_{ij}^{(N)} = (\alpha_{ij})^N$
in $\mathcal{S}_A(\mathbf{\Sigma})$, but when both endpoints lye in the same boundary arc, they are distinct. More precisely, suppose the two endpoints, say $v$ and $w$, of $\alpha$ lye in the same boundary arc with $h(v)>h(w)$. Then  $\alpha_{ij}^{(N)}$ is defined by a stated tangle $(\alpha^{(N)}, s)$ where $\alpha^{(N)}=\alpha_1 \cup \ldots \cup \alpha_N$ represents $N$ copies of $\alpha$ and the endpoints $v_i, w_i$ of the copy $\alpha{(i)}$ are chosen such that $h(v_N) > \ldots > h(v_1) > h(w_N)>\ldots >h(w_1)$.

\begin{theorem}\label{theorem_chebyshev}(Bonahon-Wong \cite{BonahonWongqTrace} for unmarked surfaces, K.-Quesney \cite{KojuQuesneyClassicalShadows} for marked surfaces, see also \cite{BloomquistLe})
Suppose $A\in \mathbb{C}^{\times}$ is a root of unity of odd order $N$, then there is an embedding 
$$ Ch_A^{\mathbf{\Sigma}} : \mathcal{S}_{+1}(\mathbf{\Sigma}) \hookrightarrow \mathcal{Z}\left( \mathcal{S}_A(\mathbf{\Sigma}) \right)$$
sending the (commutative) algebra at $+1$ into the center of the skein algebra at $A^{1/2}$. Moreover, $Ch_A^{\mathbf{\Sigma}}$ is characterized by the facts that if $\gamma$ is a closed curve, then $Ch_A^{\mathbf{\Sigma}}(\gamma) = T_N(\gamma)$ and if $\alpha_{ij}$ is a stated arc, then $Ch_A^{\mathbf{\Sigma}}(\alpha_{ij})= \alpha_{ij}^{(N)}$.
\end{theorem}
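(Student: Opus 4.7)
The plan is to apply the triangular Strategy~\ref{strategy}: build $Ch_A^{\mathbf{\Sigma}}$ first on the bigon, transport it to the triangle via the fusion isomorphism of Theorem \ref{theorem_fusion}, extend it to every triangulable surface using the equalizer presentation \eqref{eq_triangular}, and finally descend it to closed surfaces through the off-puncture exact sequence \eqref{eq_off_puncture}.

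On the bigon, define $Ch_A^{\mathbb{B}} : \mathcal{O}_{+1}[\SL_2] \to \mathcal{O}_q[\SL_2]$ by $\alpha_{ij} \mapsto \alpha_{ij}^N$. Well-definedness and centrality both rest on one calculation: the commutation relations \eqref{relbigone} give, upon raising to $N$-th powers, a scalar $q^{\pm N^2}=1$, so the $\alpha_{ij}^N$ pairwise commute and commute with each generator $\alpha_{kl}$; the quantum determinant relation lifts to $\alpha_{++}^N\alpha_{--}^N - \alpha_{+-}^N\alpha_{-+}^N = 1$ by a direct induction, a classical consequence of \eqref{relbigone}. Injectivity follows from Theorem \ref{theorem_basis}: the image of the commutative PBW basis of $\mathcal{O}_{+1}[\SL_2]$ is a subset of the $q$-PBW basis of $\mathcal{O}_q[\SL_2]$, hence $k$-linearly independent. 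One further checks directly on generators that $Ch_A^{\mathbb{B}}$ is a Hopf morphism and that its image lies in a sub-bialgebra on which the cobraiding is trivial, because $\mathbf{r}(\alpha_{ij}^N \otimes \alpha_{kl}^N)$ involves $\mathscr{R}$-entries raised to powers that collapse modulo $q^N=1$.

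The fusion isomorphism of Theorem \ref{theorem_fusion} gives $\mathcal{S}_A(\mathbb{T}) \cong \mathcal{O}_q[\SL_2]\circledast\mathcal{O}_q[\SL_2]$, so the cobraiding compatibility of $Ch_A^{\mathbb{B}}$ lets $Ch_A^{\mathbb{B}}\otimes Ch_A^{\mathbb{B}}$ descend to a central injective algebra morphism $Ch_A^{\mathbb{T}} : \mathcal{S}_{+1}(\mathbb{T}) \to \mathcal{S}_A(\mathbb{T})$. For a triangulable $\mathbf{\Sigma}$ with triangulation $\Delta$, the Hopf property makes $\otimes_{\mathbb{T}\in F(\Delta)} Ch_A^{\mathbb{T}}$ intertwine $\Delta^L$ and $\Delta^R$ on both sides of \eqref{eq_triangular}; by exactness it restricts to a central injective morphism $Ch_A^{\mathbf{\Sigma}} : \mathcal{S}_{+1}(\mathbf{\Sigma}) \hookrightarrow \mathcal{Z}(\mathcal{S}_A(\mathbf{\Sigma}))$. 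The cases of $\mathbf{m}_0$, $\Sigma_{0,1}$ and $\Sigma_{0,2}$ are immediate, and for a closed surface the sequence \eqref{eq_off_puncture} with $\mathbf{\Sigma}' = \mathbf{\Sigma}_{g,1}^0$ reduces the descent to checking that $Ch_A^{\mathbf{\Sigma}'}$ sends the generators of the off-puncture ideal at $+1$ described by Proposition \ref{prop_offpuncture} into the off-puncture ideal at $A$.

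The main obstacle is the characterization $Ch_A^{\mathbf{\Sigma}}(\gamma) = T_N(\gamma)$ on closed curves, which both enforces uniqueness and feeds the off-puncture step. The key input is the Bonahon-Wong miraculous identity \eqref{eq_miraculous}, which at the level of $\mathcal{O}_q[\SL_2]$ is proved by a direct induction using \eqref{relbigone} and the vanishing of the crossed terms at $q^N=1$. To globalize, isotope a neighborhood of $\gamma$ into a fused bigon patch and apply the cutting-arc relation \eqref{cutting_arc_rel} to rewrite $\gamma$ and its $N$-parallel push-off as combinations of stated arcs on the cut-open bigon; \eqref{eq_miraculous} then transforms the $N$-parallel expression into a Chebyshev polynomial of the original trace, which glues back to $T_N(\gamma)$ under the closing map of Theorem \ref{theorem_gluing}. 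Since closed curves together with stated arcs exhaust the generators of $\mathcal{S}_A(\mathbf{\Sigma})$, this characterizes $Ch_A^{\mathbf{\Sigma}}$ uniquely.
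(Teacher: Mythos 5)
Your proposal follows the same triangular-strategy route as the paper's sketch: handle the bigon (deferring the well-known centrality of $\alpha_{ij}^N$ and the lifted $q$-determinant to classical computations), pass to the triangle through the cobraided fusion of Theorem \ref{theorem_fusion}, extend to triangulable surfaces via the exact sequence \eqref{eq_triangular}, verify the characterization $Ch_A^{\mathbf{\Sigma}}(\gamma)=T_N(\gamma)$ using the identity \eqref{eq_miraculous}, and descend to closed surfaces through the off-puncture sequence and Proposition \ref{prop_offpuncture}. The only discrepancy is a notational slip in the last step: the auxiliary triangulable surface should be $\mathbf{\Sigma}'=(\Sigma_{g,1},\emptyset)$, not $\mathbf{\Sigma}_{g,1}^0=(\Sigma_{g,2},\{a\})$, since you want removing the single inner puncture to give the closed surface $(\Sigma_{g,0},\emptyset)$.
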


We call $Ch_A^{\mathbf{\Sigma}}$ the \textit{Chebyshev-Frobenius morphism}. In this survey, we made the choice to focus on skein algebras at roots of unity of odd orders. Theorem \ref{theorem_chebyshev} has an analogue for roots of unity of even orders (see \cite{Le_QTraceMuller, BloomquistLe}), though in these cases we only get at best a central embedding of the $0$-th graded part of the skein algebra. For this reason, we restrict on the simpler case where $N$ is odd.

\begin{proof}(Sketch)
We apply the triangular strategy. The existence of a Chebyshev-Frobenius morphism $Ch_A^{\mathbb{B}}: \mathcal{O}[\SL_2]\hookrightarrow \mathcal{O}_q[\SL_2]$ for the bigon, sending $\alpha_{ij}$ to $\alpha_{ij}^N$  is well known since the original work of Lusztig \cite{Lusztig_QGroupsRoots1} and elementary proofs can be found in any textbook such as \cite{BrownGoodearl}. It is easy to deduce a central embedding $Ch_A^{\mathbb{T}}: \mathcal{O}[\SL_2] \circledast \mathcal{O}[\SL_2] \hookrightarrow \mathcal{O}_q[\SL_2] \circledast \mathcal{O}_q[\SL_2]$.  Now for a triangulated marked surface $(\mathbf{\Sigma}, \Delta)$, we define $Ch_A^{\mathbf{\Sigma}}$ as the unique injective morphism making commuting the following diagram:

$$ 
\begin{tikzcd}
0 \arrow[r] & \mathcal{S}_{+1}(\mathbf{\Sigma}) \arrow[r, "\theta^{\Delta}"] 
\arrow[d, dotted, "\exists!", "Ch_A^{\mathbf{\Sigma}}"'] 
& 
\otimes_{\mathbb{T}\in F(\Delta)} \mathcal{S}_{+1}(\mathbb{T})
 \arrow[d, "\otimes_{\mathbb{T}} Ch_A^{\mathbb{T}}"] 
\arrow[r, "\Delta^L - \sigma \circ \Delta^R"] 
 &
\left( \otimes_{e\in \mathring{\mathcal{E}}(\Delta)} \mathcal{O}[\SL_2] \right) \otimes \left( \otimes_{\mathbb{T}\in F(\Delta)} \mathcal{S}_{+1}(\mathbb{T})\right)
\arrow[d, "(\otimes_e Ch_A^{\mathbb{B}}) \otimes (\otimes_{\mathbb{T}} Ch_A^{\mathbb{T}})"]
\\
0 \arrow[r] & \mathcal{S}_A(\mathbf{\Sigma}) \arrow[r, "\theta^{\Delta}"] & 
\otimes_{\mathbb{T}\in F(\Delta)} \mathcal{S}_A(\mathbb{T}) \arrow[r, "\Delta^L - \sigma \circ \Delta^R"] &
\left( \otimes_{e\in \mathring{\mathcal{E}}(\Delta)} \mathcal{O}_q[\SL_2] \right) \otimes \left( \otimes_{\mathbb{T}\in F(\Delta)} \mathcal{S}_A(\mathbb{T})\right)
\end{tikzcd}
$$
Clearly, the image of $Ch_A^{\mathbf{\Sigma}}$ is central since the image of $Ch_A^{\mathbf{\Sigma}}$ is central. 
Working a little bit, and using Equation \eqref{eq_miraculous}, one can show that $Ch_A^{\mathbf{\Sigma}}(\gamma)=T_N(\gamma)$ and  $Ch_A^{\mathbf{\Sigma}}(\alpha_{ij})=\alpha_{ij}^{(N)}$; we refer to \cite{KojuQuesneyClassicalShadows} for details on this delicate step. Eventually, for a closed unmarked surface $\mathbf{\Sigma}=(\Sigma_{g,0}, \emptyset)$, we consider the triangulable surface $\mathbf{\Sigma}'=(\Sigma_{g,1}, \emptyset)$. By Proposition \ref{prop_offpuncture}, the off puncture ideal $\mathcal{I}_p \subset \mathcal{S}_A(\mathbf{\Sigma}')$ is generated by elements $\gamma - \gamma'$, where $\gamma, \gamma'$ are two closed curves isotopic in $\Sigma_{g,0}$. Since $Ch_A^{\mathbf{\Sigma}'}(\gamma - \gamma') = T_N(\gamma)-T_N(\gamma') \in \mathcal{I}_p$, the Chebyshev-Frobenius morphism preserves the off-puncture ideal, so we define $Ch_A^{\mathbf{\Sigma}}$ as the unique morphism making commuting the following diagram
$$\begin{tikzcd}
0 \arrow[r] & \mathcal{I}_p \arrow[r] \arrow[d, "Ch_A^{\mathbf{\Sigma}'}"] & \mathcal{S}_{+1}(\mathbf{\Sigma}') \arrow[d, hook, "Ch_A^{\mathbf{\Sigma}'}"] \arrow[r, "i_*"] &
\mathcal{S}_{+1}(\mathbf{\Sigma}) \arrow[d, dotted, "\exists!"', "Ch_A^{\mathbf{\Sigma}}"] \\
0 \arrow[r] & \mathcal{I}_p \arrow[r]& \mathcal{S}_A(\mathbf{\Sigma}') \arrow[r, "i_*"] &
\mathcal{S}_A(\mathbf{\Sigma})
\end{tikzcd}
$$
Clearly the image of $Ch_A^{\mathbf{\Sigma}}$ is central and we easily deduce its injectivity from the fact that $Ch_A^{\mathbf{\Sigma}}(\gamma)=T_N(\gamma)$ and using that multicurves form a basis.

\end{proof}

  \subsection{Center and PI dimension of stated skein algebras at roots of unity}
  
  \begin{definition} Let $\mathbf{\Sigma}=(\Sigma, \mathcal{A})$ a marked surface.
  \begin{enumerate}
  \item For $p$ an inner puncture (an unmarked connected component of $\partial \Sigma$), we denote by $\gamma_p \in \mathcal{S}_A(\mathbf{\Sigma})$ the class of a peripheral curve encircling $p$ once.
  \item For $\partial \in \pi_0(\partial \Sigma)$ a boundary component which intersects $\mathcal{A}$ non trivially, denote by $p_1, \ldots, p_n$ the boundary punctures in $\partial$ cyclically ordered by $\mathfrak{o}^+$  and define the elements in $\mathcal{S}^{red}_A(\mathbf{\Sigma})$:
  $$ \alpha_{\partial} := \alpha(p_1)_{++} \ldots \alpha(p_n)_{++}, \quad \mbox{ and } \quad \alpha_{\partial}^{-1}:= \alpha(p_1)_{--} \ldots \alpha_(p_n)_{--}.$$
  We easily see that in $\mathcal{S}^{red}_A(\mathbf{\Sigma})$, we have $\alpha_{\partial} \alpha_{\partial}^{-1} =1$.
  \item For $\partial \in \pi_0(\partial \Sigma)$ a boundary component whose intersection with $\mathcal{A}$ is $2n$, for $n\geq 1$, denote by $p_1, \ldots, p_{2n}$ the boundary punctures in $\partial$ cyclically ordered by $\mathfrak{o}^+$. For $k\in \{1, \ldots, N-1\}$, write the product of bad arcs:
 $$\beta_{\partial}^{(N,k)} := \alpha(p_1)_{-+}^{k} \alpha(p_2)_{-+}^{N-k} \ldots \alpha(p_{2n-1})_{-+}^k \alpha(p_{2n})_{-+}^{N-k} \in \mathcal{S}_A(\mathbf{\Sigma}).$$
 We will call \textit{even} such a boundary component $\partial$.
\end{enumerate}
\end{definition}

When trying to classify the representations of stated skein algebras, the knowledge of its center is an important information. 

\begin{theorem}\label{theorem_Posner}(Posner-Formanek  \cite[Theorem $I.13.3$]{BrownGoodearl}) Let $R$ a prime ring which has finite rank $r$ over its center $Z$. Let 
 $S:= Z\setminus \{0\}$ and consider the localization $Q:= S^{-1}R$. Then $Q$ is a central simple algebra with center $S^{-1}Z$.
 \end{theorem}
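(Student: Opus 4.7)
The plan is to show that $Q := S^{-1}R$ is a simple Artinian $F$-algebra whose center is exactly $F := S^{-1}Z$, which is precisely the assertion. The key observation is that central localization simultaneously makes $Q$ prime and Artinian, which forces simplicity through the Wedderburn--Artin theorem.

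First I would verify that $Z$ is a domain and that $S = Z \setminus \{0\}$ consists of regular elements of $R$: both follow from primeness via the identity $sRr = Rsr$ for central $s$ (if $s \in S$ and $sr = 0$, then $sRr = 0$ forces $r = 0$, and similarly for products in $Z$). Since $S$ is central the Ore conditions are automatic, so $Q$ exists as an overring of $R$ containing $F$. As $R$ has rank $r$ over $Z$, the natural map $F \otimes_Z R \to Q$ is an isomorphism of $F$-modules with $\dim_F Q = r$; hence $Q$ is Artinian. Next, $Q$ inherits primeness from $R$: if $a = s^{-1}r_1$ and $b = t^{-1}r_2$ satisfy $aQb = 0$, clearing the central denominators gives $r_1 R r_2 = 0$ back in $R$, and primeness of $R$ yields $a = 0$ or $b = 0$.

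The heart of the argument is now the classical fact that a prime Artinian ring is simple: the Jacobson radical $J(Q)$ is nilpotent by Artinianness, so primeness forces $J(Q) = 0$, making $Q$ semisimple Artinian; Wedderburn--Artin then realises $Q$ as a finite product of matrix algebras over division rings, and primeness collapses the product to a single factor. To identify the center, a central element $q = s^{-1}r \in Q$ must (since $s$ is already central) have $r$ commuting with every element of $R \subset Q$, so $r \in Z$ and $q \in F$; the reverse inclusion is clear. The main obstacle is really conceptual rather than technical: recognising that the right combination of ingredients (primeness, Artinianness, Wedderburn--Artin) assembles into simplicity, after which everything else is bookkeeping around the Ore localization $S^{-1}R$.
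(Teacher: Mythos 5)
The paper does not prove this theorem; it is cited verbatim from Brown--Goodearl (Theorem I.13.3) as known background, so there is no in-text argument to compare against. Your proof is correct and is the standard route: central localization at $S=Z\setminus\{0\}$ (which works because $Z$ is a domain and $S$ consists of regular elements, both consequences of primeness), then the identification $Q \cong F\otimes_Z R$ with $F := S^{-1}Z$ makes $Q$ finite-dimensional over the field $F$, hence Artinian; primeness passes to $Q$ by clearing central denominators; a prime Artinian ring is simple (nilpotent Jacobson radical must vanish, and Wedderburn--Artin then leaves a single matrix factor); and the center of $Q$ is pinned down to $F$ by the same denominator-clearing trick. The only thing worth flagging is hygienic: you should note explicitly that $Q\ne 0$ (immediate since $R$ embeds in $Q$ and a prime ring is nonzero), so that ``central simple algebra'' is not vacuous, and that the hypothesis ``finite rank $r$ over $Z$'' is being read as $\dim_F(F\otimes_Z R)=r<\infty$, which is the reading compatible with the rest of the argument and with how the paper later applies the theorem (to algebras module-finite over their centers).
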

 
 So there is an algebraic extension $\overline{Z}$ of  $S^{-1}Z$, such that $R\otimes_Z \overline{Z} \cong Mat_D(\overline{Z})$ is a matrix algebra.
  In particular, the rank $r=D^2$ is a perfect square and we call \textit{PI-dimension} of $R$ its square root $D$. Computing the PI-dimensions of stated skein algebras is an important step towards the classification of its representations.
By Theorem \ref{theorem_domain}, when the ground ring $k$ is a domain, the stated skein algebras are domains, so are prime.  

\begin{theorem}\label{theorem_center} Suppose $A\in \mathbb{C}^{\times}$ is a root of unity of odd order $N$ and $\mathbf{\Sigma}$ a marked surface.
\begin{enumerate}
\item (Frohman-L\^e-Kania-Bartoszynska \cite{FrohmanKaniaLe_UnicityRep,FrohmanKaniaLe_DimSkein}) If $\mathbf{\Sigma}$ is unmarked, then $(i)$ the center of $\mathcal{S}_A(\mathbf{\Sigma})$ is generated by the image of the Chebyshev-Frobenius morphism together with the eventual peripheral curves $\gamma_p$ for $p$ an inner puncture. $(ii)$ $\mathcal{S}_A(\mathbf{\Sigma})$ is finitely generated over the image of the Chebyshev-Frobenius morphism (so over its center) and $(iii)$ for $\mathbf{\Sigma}=(\Sigma_{g,n}, \emptyset)$ the PI-dimension of $\mathcal{S}_A(\mathbf{\Sigma})$ is $N^{3g-3+n}$.
\item (K. \cite{KojuAzumayaSkein}) For any marked surface then $(i)$ the center of $\mathcal{S}_A^{red}(\mathbf{\Sigma})$ is generated by the image of the Chebyshev-Frobenius morphism together with the peripheral curves $\gamma_p$ associated to inner punctures and the elements $\alpha_{\partial}^{\pm 1}$ associated to boundary components $\partial \in \pi_0(\partial \Sigma)$. $(ii)$ both $\mathcal{S}_A(\mathbf{\Sigma})$ and $\mathcal{S}^{red}_A(\mathbf{\Sigma})$ are finitely generated over the image of the Chebyshev-Frobenius morphisms (so over their center). $(iii)$ For $\mathbf{\Sigma}=(\Sigma_{g,n}, \mathcal{A})$,  the PI-dimension of $\mathcal{S}^{red}_A(\mathbf{\Sigma})$ is $N^{3g-3+n+|\mathcal{A}|}$.
\item (L\^e-Yu \cite{LeYu_ToAppear}: To appear) For any marked surfaces then $(i)$ the center of $\mathcal{S}_A(\mathbf{\Sigma})$ is generated by the image of the Chebyshev-Frobenius morphism together with the peripheral curves $\gamma_p$ and the elements $\beta^{(N,k)}_{\partial}$ associated to even boundary components and integers $1\leq k \leq N-1$. $(ii)$ For $\mathbf{\Sigma}=(\Sigma_{g,n}, \mathcal{A})$, the PI-dimension of $\mathcal{S}_A(\mathbf{\Sigma})$ is $N^{3g-3+n_{even}+\frac{3}{2}(|\mathcal{A}|+n_{odd})}$, where $n_{odd}, n_{even}$ are the number of boundary components with an odd and even number of boundary arcs respectively (clearly $|\mathcal{A}|$ and $n_{odd}$ have the same parity).
\end{enumerate}
\end{theorem}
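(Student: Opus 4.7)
My plan is to combine the triangular strategy (Strategy \ref{strategy}) with Bonahon-Wong's quantum trace, since all three claims---center, finite generation over the center, and PI-dimension---translate cleanly to statements about a quantum torus once one has reduced via the trace. For a triangulated marked surface $(\mathbf{\Sigma},\Delta)$ the quantum trace gives an injection $\operatorname{Tr}^\Delta : \mathcal{S}_A(\mathbf{\Sigma}) \hookrightarrow \mathcal{T}^\Delta$ into a quantum torus with generators indexed by edges of $\Delta$ and commutation determined by an antisymmetric matrix $Q^\Delta$; for the reduced algebra $\mathcal{S}_A^{red}(\mathbf{\Sigma})$ this map becomes an isomorphism onto a natural \emph{balanced} sub-torus. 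Closed unmarked surfaces are handled afterwards as in Strategy \ref{strategy}(4), by transferring the result from $(\Sigma_{g,1},\emptyset)$ to $(\Sigma_{g,0},\emptyset)$ via the off-puncture exact sequence \eqref{eq_off_puncture} and Proposition \ref{prop_offpuncture}, using that $Ch_A(\gamma-\gamma')=T_N(\gamma)-T_N(\gamma')$ lies in the off-puncture ideal whenever $\gamma$ and $\gamma'$ become isotopic after puncture-filling.

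The key algebraic input is the standard structure theory of a quantum torus $\mathcal{T}_Q$ at an odd $N$-th root of unity: its center is the monomial subalgebra spanned by $X^v$ with $v\in N\mathbb{Z}^d+\ker(Q)$, and its PI-dimension equals $N^{\operatorname{rank}(Q)/2}$. Each of (1), (2), (3) then becomes a rank-and-kernel computation on $Q^\Delta$. The $N\mathbb{Z}^d$ part of the kernel lattice is accounted for by the image of $Ch_A$, while the remaining generators of $\ker(Q^\Delta)$ modulo $N\mathbb{Z}^d$ are, in each setting: peripheral curves $\gamma_p$ in (1); $\gamma_p$ together with the monomials $\alpha_\partial^{\pm 1}$ in (2), which become central and invertible once bad arcs are killed; and $\gamma_p$ together with $\beta_\partial^{(N,k)}$ in (3), where the latter replace $\alpha_\partial^{\pm 1}$ because in the unreduced algebra the bad arcs are non-zero but the specific products $\beta_\partial^{(N,k)}$ along \emph{even} boundaries still commute with everything. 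The rank counts then yield $N^{3g-3+n}$, $N^{3g-3+n+|\mathcal{A}|}$, and $N^{3g-3+n_{even}+\frac{3}{2}(|\mathcal{A}|+n_{odd})}$ respectively, the last exponent reflecting that odd boundary components contribute $\frac{3}{2}$ per boundary arc to $\operatorname{rank}(Q^\Delta)$ whereas even ones contribute less thanks to extra kernel directions. Finite generation over $Ch_A$ follows because $\mathcal{T}_Q$ is finitely generated over its subalgebra of $N$-th powers of generators, together with Theorem \ref{theorem_chebyshev}.

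The main obstacle is the lattice computation identifying $\ker(Q^\Delta)$ with the $\mathbb{Z}$-span of the explicit central elements listed. The subtle part is case (3): one must show that the $\beta_\partial^{(N,k)}$, indexed by even boundary components and $k\in\{1,\dots,N-1\}$, exactly fill the extra kernel directions distinguishing $\mathcal{S}_A$ from $\mathcal{S}_A^{red}$, and that no analogous element exists on odd boundaries. This rests on a parity argument on the skew form restricted to each boundary component together with the height-exchange relations \eqref{height_exchange_rel}, which are what makes the alternating product $\alpha(p_1)_{-+}^k\alpha(p_2)_{-+}^{N-k}\cdots$ commute with arbitrary stated tangles while the individual factors do not. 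Once this is established, surjectivity onto the center of $\mathcal{S}_A(\mathbf{\Sigma})$ is automatic from the injectivity of the quantum trace, and the parity condition $|\mathcal{A}|\equiv n_{odd}\pmod 2$ (already noted in the statement) guarantees that the resulting PI-dimension exponent is an integer.
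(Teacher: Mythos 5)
Your overall strategy -- push everything through the quantum trace into a quantum torus, use the fact that a quantum torus at an odd root of unity has center $\mathbb{C}[N\mathbb{Z}^d+\ker Q]$ and PI-dimension $N^{\operatorname{rank}(Q)/2}$, and handle closed surfaces via the off-puncture exact sequence -- is indeed the approach taken in the references cited in Theorem~\ref{theorem_center}, and the survey itself sets up exactly this machinery in Sections~3.2--3.4. However, two of the bridging steps you assert are not true as stated and are in fact the hard content that the cited papers must supply.

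First, the quantum trace $\Tr^{\Delta}:\mathcal{S}^{red}_A(\mathbf{\Sigma})\hookrightarrow \mathcal{Z}_{\omega}(\mathbf{\Sigma},\Delta)$ is injective but emphatically \emph{not} an isomorphism onto the balanced torus (it is only an isomorphism for a single triangle). Because of this, the structure theory of the ambient quantum torus does not directly descend to the skein algebra: for a strict subalgebra $\mathcal{A}\subset\mathcal{T}$, neither ``$Z(\mathcal{A})=\mathcal{A}\cap Z(\mathcal{T})$'' nor ``$\mathrm{PI\text{-}dim}(\mathcal{A})=\mathrm{PI\text{-}dim}(\mathcal{T})$'' is automatic. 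What makes the argument work is the additional fact that the image of $\Tr^{\Delta}$ becomes all of $\mathcal{Z}_{\omega}(\mathbf{\Sigma},\Delta)$ after Ore localization at the monomials $X_e$; then the two algebras are orders in the same central simple $\mathbb{Q}$-algebra, which gives equality of PI-dimensions, and for the center one shows that the localization is flat enough that $Z(\mathcal{S}^{red}_A(\mathbf{\Sigma}))$ is the contraction of $Z(\mathcal{Z}_{\omega})$. You invoke neither the common localization nor the saturation argument, and without them the ``center $=$ kernel lattice, PI-dimension $=$ rank'' dictionary you want to apply simply isn't available for the skein algebra itself.

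Second, and symmetrically, the off-puncture reduction you sketch for closed unmarked surfaces is more delicate than ``$Ch_A(\gamma-\gamma')\in\mathcal{I}_p$.'' Quotienting a prime PI-algebra by a (non-central) ideal can raise the center and lower the PI-dimension in ways that require a separate argument; in the Frohman--Kania-Bartoszy\'nska--L\^e papers this is done with filtrations coming from pants decompositions rather than directly from the open-surface result, precisely because the quantum trace does not survive passage to closed surfaces. You should either reproduce such a filtration argument or explain how the PI-dimension $N^{3g-3}$ is forced on the quotient. As written, Steps (1)(iii) and the closed case of (1)(i)--(ii) have a genuine gap.

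The rest of the proposal -- computing $\ker Q^{\Delta}$, identifying the extra generators $\gamma_p$, $\alpha_{\partial}^{\pm1}$, $\beta_{\partial}^{(N,k)}$ with the Casimirs $H_p$, $H_{\partial}$ and the L\^e--Yu elements, and reading off the exponents $3g-3+n$, $3g-3+n+|\mathcal{A}|$, $3g-3+n_{even}+\tfrac{3}{2}(|\mathcal{A}|+n_{odd})$ from the rank of the restricted Weil--Petersson form -- is the right lattice computation and does match what Theorem~\ref{theorem_centerCF} and Theorem~\ref{theorem_doublecovering} in the survey are designed to enable, though you outline it at a level of generality that stops short of actually verifying the claimed generators exhaust $\ker Q^{\Delta}$ modulo $N\mathbb{Z}^d$.
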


The third item of Theorem \ref{theorem_center} is, at the time of writing the present survey, still not prepublished yet, though it was announced in \cite{LeYu_Survey}. In the particular case where $\mathbf{\Sigma}=\mathbb{B}$ (so for $\mathcal{O}_q[\SL_2]$), this is a classical theorem of Enriquez. For $\mathbf{\Sigma}=\mathbf{\Sigma}_{g,0}^0$, this was proved by Ganev-Jordan-Safranov in \cite{GanevJordanSafranov_FrobeniusMorphism}. Note that Theorem \ref{theorem_center} implies that, when $\mathcal{A}\neq \emptyset$, then 
$$ PI-Dim (\mathcal{S}^{red}_A(\mathbf{\Sigma})) < PI-Dim (\mathcal{S}_A(\mathbf{\Sigma})).$$

  \subsection{Finite presentations}\label{sec_presentations}

  For unmarked surfaces, except in genus $0$ and $1$ (\cite{BullockPrzytycki_00}), no finite presentation for the Kauffman-bracket skein algebras is known, though a conjecture in that direction was formulated in \cite[Conjecture $1.2$]{SantharoubaneSkeinGenerators}. However, it is well-known that they are finitely generated (\cite{BullockGeneratorsSkein, AbdielFrohman_SkeinFrobenius, FrohmanKania_SkeinRootUnity, SantharoubaneSkeinGenerators}). The stated skein algebras of marked surfaces is way much easier to study than in the unmarked case: many finite presentations are known. To describe them, we first need some definitions. Let $\mathbf{\Sigma}$ be a connected marked surface with $\mathcal{A}\neq \emptyset$. For each boundary arc $a\in \mathcal{A}$, fix a point $v_a \in a$ and denote by $\mathbb{V}=\{v_a\}_{a\in \mathcal{A}}$ the set of such points. Let $\Pi_1(\Sigma, \mathbb{V})$ be the full subcategory of the fundamental groupoid $\Pi_1(\Sigma)$ generated by $\mathbb{V}$. In other terms, the set of objects of $\Pi_1(\Sigma, \mathbb{V})$ is $\mathbb{V}$ and a morphism $\alpha: v_a\to v_b$ is an homotopy class of path $c_{\alpha}: [0,1] \to \Sigma$ such that $c_{\alpha}(0)=v_a$ and $c_{\alpha}(1)=v_b$; composition is the concatenation of paths. For a path $\alpha: v_a \to v_b$ we write the source  $s(\alpha):= v_a$ and the target $t(\alpha):=v_b$.
  
  \begin{definition}\label{def_presentation}
  \begin{enumerate}
  \item A \textit{set of generators} for $\Pi_1(\Sigma, \mathbb{V})$ is a finite set of paths $\mathbb{G}= \{\beta_1, \ldots, \beta_n\}$ such that any path $\beta$ can be decomposed as
  $$ \beta= \beta_{i_1}^{\varepsilon_1} \ldots \beta_{i_k}^{\varepsilon_k}$$
for some $\varepsilon_l \in \pm$. 
  \item Let $\mathcal{F}(\mathbb{G})$ denote the free semi-group generated by the elements of $\mathbb{G}$ and let $\mathrm{Rel}_{\mathbb{G}}$ (the set of relations) denote the subset of $\mathcal{F}(\mathbb{G})$ of elements of the form $R=\beta_{1}\star \ldots \star \beta_{n}$ such that $s(\beta_{i})= t(\beta_{i+1})$ and such that the path $\beta_1\ldots \beta_n$ is trivial. We write $R^{-1}:= \beta_n^{-1} \star \ldots \star \beta_1^{-1}$. A finite subset $\mathbb{RL}\subset \mathrm{Rel}_{\mathbb{G}}$ is called a \textit{finite set of relations} if   every word  $R\in \mathrm{Rel}_{\mathbb{G}}$ can be decomposed as $ R = \beta \star R_1^{\varepsilon_1} \star \ldots \star R_m^{\varepsilon_m}\star \beta^{-1}$, where $R_i \in \mathbb{RL}$, $\varepsilon_i \in \{ \pm 1 \}$ and $\beta=\beta_{1}\star \ldots \star \beta_{n}\in \mathcal{F}(\mathbb{G})$ is such that $s(\beta_{i})= t(\beta_{i+1})$.
A pair $\mathbb{P}:=(\mathbb{G}, \mathbb{RL})$ is called a \textit{finite presentation} of $\Pi_1(\Sigma_{\mathcal{P}}, \mathbb{V})$. 
  \end{enumerate}
  \end{definition}
  
  To each finite presentation of $\Pi_1(\Sigma, \mathbb{V})$, we will associate a finite presentation of $\mathcal{S}_A(\mathbf{\Sigma})$. Let us first give some exemples of such presentations. 
  
  \begin{example}\label{example_presentations}
  \begin{enumerate}
  \item The triangle $\mathbb{T}$ has a finite presentation with $3$ generators $\mathbb{G}=\{ \beta_1, \beta_2, \beta_3\}$ and unique relation $\mathbb{RL}=\{\beta_3\star \beta_2 \star \beta_1\}$. It has also another presentation with generators $\{\beta_1, \beta_2\}$ and no relation.
  \item Let $(\Gamma, c)$ be a ciliated oriented graph, that is a finite oriented graph together with the data, for each vertex, of a total ordering of its adjacent half-edges. Place a  a disc $D_v$ on top of  each vertex $v$ and a band $B_e$ on top of each edge $e$,  then glue the discs to the band using the underlying cyclic ordering of the adjacent half-edges: we thus get a surface $\Sigma(\Gamma)$. 
  For each vertex $v$ adjacent to half-edges ordered as $e_1<e_2<\ldots <e_n$ place one boundary arc $a_v$ on the boundary of $D_v$ containing the $e_i$ and such that while orienting $a_v$ using the orientation of $\Sigma(\Gamma)$, we have $e_1<_{a_v}e_2<_{a_v} \ldots <_{a_v} e_n$.
  Set $\mathcal{A}(c)=\{a_v\}_{v\in V(\Gamma)}$. By isotoping each vertex $v$ of $\Gamma \subset \Sigma(\Gamma)$ to $a_v$, we get the generating graph of a set of generators $\mathbb{G}= \mathcal{E}(\Gamma)$ (the oriented edges) such that $\mathbb{P}(\Gamma, c):=(\mathcal{E}(\Gamma), \emptyset)$ is a finite presentation of $\Pi_1(\Sigma(\Gamma), \mathbb{V})$  with no relation. Clearly, any connected marked surface with non trivial marking admits such a ciliated graph and therefore admits a presentation without relation. Conversely, any marked surface equipped with a presentation with no relation defines a ciliated oriented graph in an obvious way.
  \item The surface $\mathbf{\Sigma}_{g,n}^0$ admits a special presentation using the so-called Daisy graph illustrated in Figure \ref{daisy_graph} used by Alekseev and its collaborators as we shall review.
    
  \begin{figure}[!h] 
\centerline{\includegraphics[width=3cm]{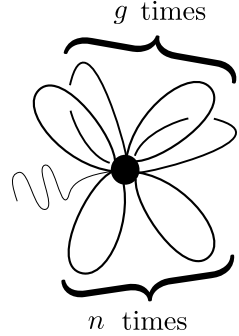} }
\caption{The Daisy graph $\Gamma_{g,n}$ defining a finite presentation of the fundamental groupoid of $\mathbf{\Sigma}_{g,n}^0$.} 
\label{daisy_graph} 
\end{figure} 
  \end{enumerate}
  \end{example}
  
  For $\beta$ a path in $\Pi_1(\Sigma(\Gamma), \mathbb{V})$ and $i,j\in \{\pm \}$, we want to associate $\beta_{ij} \in \mathcal{S}_A(\mathbf{\Sigma})$ as the arc $\beta$ with state $i$ on $s(\beta)$ and $j$ on $t(\beta)$. There is an ambiguity in this definition when $s(\beta)=t(\beta)=v$ since $(1)$ to associate an arc (=connected diagram) to the path $\beta$ we need to separate its endpoints inside the boundary arc and $(2)$ to associate a tangle in $\Sigma\times (0,1)$ to an arc, we need to specify the height order of its endpoints. So there are four possibilities to lift a loop path to a tangle that are illustrated in Figure \ref{fig_types} that we call type $b,c,d,e$. We now suppose that in a given presentation $(\mathbb{G}, \mathbb{RL})$ each path $\beta\in \mathbb{G}$ is equipped with a tangle representative in such a way that the associated arcs have no self-intersection point and are pairwise non-intersecting, so $\beta_{ij} \in \mathcal{S}_A(\mathbf{\Sigma})$ is unambiguously defined. 
  
  \begin{figure}[!h] 
\centerline{\includegraphics[width=8cm]{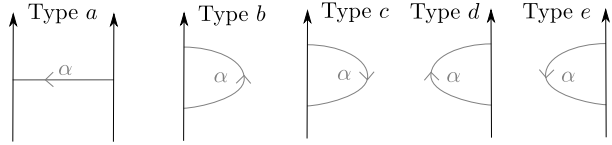} }
\caption{Five types of oriented arcs.} 
\label{fig_types} 
\end{figure} 
  
  \begin{notations}\label{notations_presentations}
 \begin{enumerate}
\item For $\alpha\in \mathbb{G}$, write $M(\alpha) := \begin{pmatrix} \alpha_{++} & \alpha_{+-} \\ \alpha_{-+} & \alpha_{--} \end{pmatrix}$ the $2\times 2$ matrix with coefficients in $\mathcal{S}_A(\mathbf{\Sigma})$. The relations among the generators of $\mathcal{S}_A(\mathbf{\Sigma})$ that we will soon define are simpler when written using of the following matrix
$$
N(\alpha) := \left\{ 
\begin{array}{ll} 
M(\alpha) & \mbox{, if }\alpha \mbox{ is of type }a; \\
M(\alpha)C & \mbox{, if }\alpha \mbox{ is of type }b; \\
M(\alpha) ^tC & \mbox{, if }\alpha \mbox{ is of type }c; \\
C^{-1}M(\alpha) & \mbox{, if }\alpha \mbox{ is of type }d; \\
{}^t C^{-1} M(\alpha) & \mbox{, if }\alpha \mbox{ is of type }e;
\end{array} \right.
$$
 where ${}^tM$ denotes the transpose of $M$.
\item Let $M_{a,b}(R)$ the ring of $a\times b$ matrices with coefficients in some ring $R$ (here $R$ will be $\mathcal{S}_A(\mathbf{\Sigma})$). The \textit{Kronecker product} $\odot : M_{a,b}(R) \otimes M_{c,d}(R) \rightarrow M_{ac, bd}(R)$ is defined by $(A\odot B)_{j,l}^{i,k} = A^i_j B^k_l$. For instance 
$$ M(\alpha) \odot M(\beta) = 
\begin{pmatrix}
 \alpha_{++} \beta_{++} & \alpha_{++} \beta_{+-} & \alpha_{+-} \beta_{++} & \alpha_{+-} \beta_{+-} \\
 \alpha_{++} \beta_{-+} &\alpha_{++} \beta_{--} &\alpha_{+-} \beta_{-+} &\alpha_{+-} \beta_{--}  \\
 \alpha_{-+} \beta_{++} &\alpha_{-+} \beta_{+-} &\alpha_{--} \beta_{++} &\alpha_{--} \beta_{+-} \\
 \alpha_{-+} \beta_{-+} &\alpha_{-+} \beta_{--} &\alpha_{--} \beta_{-+} &\alpha_{--} \beta_{--} 
 \end{pmatrix}.
 $$

 \item Define the flip matrix:
 $$ \tau = \begin{pmatrix}
 1 & 0 & 0 & 0 \\
 0 & 0 & 1 & 0 \\
 0 & 1 & 0 & 0 \\
 0 & 0 & 0 & 1
 \end{pmatrix}.
 $$
\item For $M= \begin{pmatrix} a & b \\ c & d \end{pmatrix}$, we set $\mathrm{det}_q (M) := ad-q^{-1}bc$ and $\mathrm{det}_{q^2}(M) := ad - q^{-2}bc$.
\end{enumerate}
\end{notations} 

Consider a relation $R=\beta_k^{\varepsilon_k}\star \ldots \beta_1^{\varepsilon_1} \in \mathbb{RL}$ and suppose that the concatenation of the representatives arcs of the $\beta_i^{\varepsilon_i}$ forms a contractible closed curve $\gamma$ bounding a disc in $\Sigma$ and such that the orientation of $\gamma$ coincides with the orientation induced from the disc. We suppose that all relations in $\mathbb{RL}$ have this form.
In this case, a simple application of the trivial curve and cutting arc relations, illustrated in the case of the triangle in Figure \ref{fig_polygonrel} and proved in \cite[Lemma $2.23$]{KojuPresentationSSkein}, shows the equality
  \begin{equation}\label{eq_trivial_loops_rel}
\mathds{1}_2 = C M(\beta_k) C^{-1} M(\beta_{k-1}) C^{-1} \ldots C^{-1} M(\beta_1).
\end{equation}
The relations between the $(\beta_k)_{ij}$ obtained from Equation \ref{eq_trivial_loops_rel} (using eventually $M(\beta)^{-1}={}^tM(\beta)$) will be referred to as the \textit{trivial loops relations}.

\begin{figure}[!h] 
\centerline{\includegraphics[width=12cm]{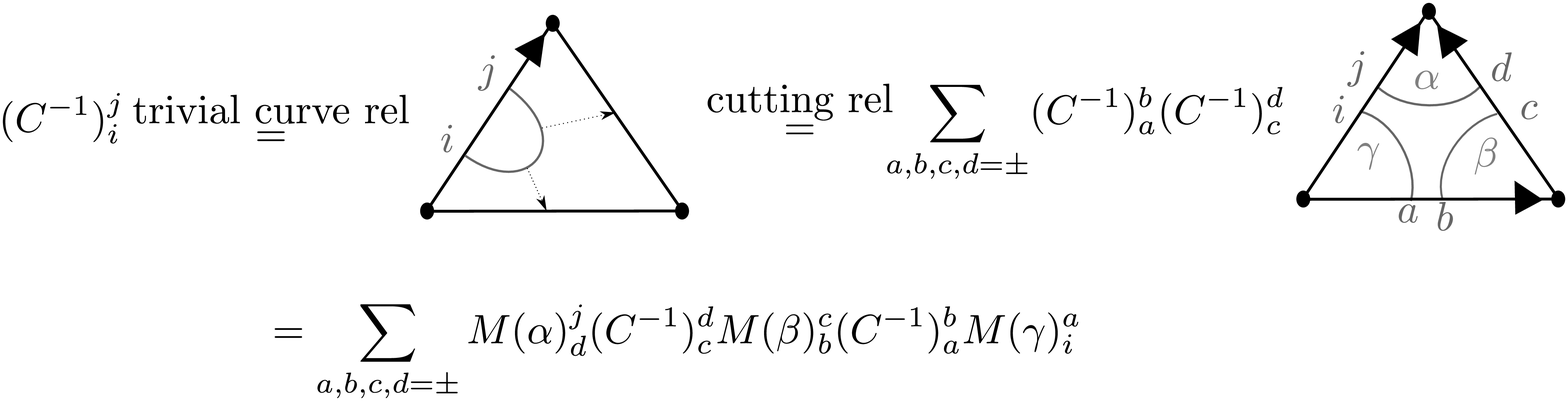} }
\caption{An illustration of  Equation \eqref{eq_trivial_loops_rel} in the case of the triangle.} 
\label{fig_polygonrel} 
\end{figure} 
  
  An easy computation shows (\cite[Lemma $2.25$]{KojuPresentationSSkein}) the following \textit{q-determinant relation}:
  \begin{equation}\label{qdet_rel}
\mathrm{det}_q (N(\alpha))=1 \mbox{, if }\alpha \mbox{ is of type a, and } \mathrm{det}_{q^2}(N(\alpha))= 1 \mbox{, else}.
\end{equation}

Eventually, given $\alpha, \beta \in \mathbb{G}$, we can express any product $\alpha_{ij} \beta_{kl}$ as a linear combination of products of the form $\beta_{ab}\alpha_{cd}$. We call such equations the \textit{arc exchange relations}. Up to orientation reversing, there are $10$ possibilities for the configuration of $\alpha \cup \beta$ depending wether their endpoints lye in distinct boundary arcs or not and depending on the height orders. These $10$ possibilities are depicted in Figure \ref{fig_arcrelations}. 

\begin{figure}[!h] 
\centerline{\includegraphics[width=12cm]{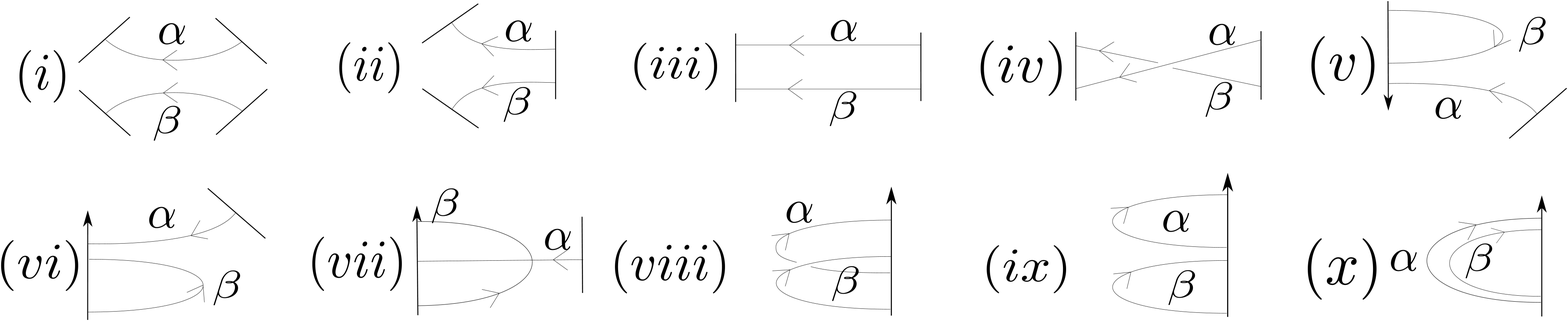} }
\caption{Ten configurations for two non-intersecting oriented arcs.} 
\label{fig_arcrelations} 
\end{figure} 

In each of these $10$ cases, the arc exchange relations write: 

\begin{itemize}
\item[Case (i)] $
N(\alpha) \odot N(\beta) = \tau \left( N(\beta) \odot N(\alpha)\right) \tau.
$
\item[Case (ii)] 
$N(\alpha)\odot N(\beta) = \tau (N(\beta) \odot N(\alpha)) \mathscr{R}.$
\item[Case (iii)] 
$N(\alpha)\odot N(\beta) = \mathscr{R}^{-1}(N(\beta) \odot N(\alpha)) \mathscr{R}.$
\item[Case (iv)] 
$N(\alpha)\odot N(\beta) = \mathscr{R}(N(\beta) \odot N(\alpha)) \mathscr{R}.$
\item[Case(v)]
$N(\alpha) \odot N(\beta) = \mathscr{R}^{-1} \left( N(\beta) \odot \mathds{1}_2 \right) \mathscr{R} \left( N(\alpha) \odot \mathds{1}_2 \right).$
\item[Case (vi)] $N(\alpha) \odot N(\beta) = \mathscr{R}^{-1}\left( N(\beta) \odot \mathds{1}_2 \right) \mathscr{R} \left( N(\alpha) \odot \mathds{1}_2 \right).
$
\item[Case (vii)] $
N(\alpha) \odot N(\beta) = \mathscr{R} \left( N(\beta) \odot \mathds{1}_2 \right) \mathscr{R} \left( N(\alpha) \odot \mathds{1}_2 \right).
$
\item[Case (viii)] $
\left( \mathds{1}_2 \odot N(\alpha) \right) \mathscr{R} \left( \mathds{1}_2 \odot N(\beta) \right) \mathscr{R}^{-1} = 
\mathscr{R} \left( \mathds{1}_2 \odot N(\beta) \right) \mathscr{R}^{-1} \left( \mathds{1}_2 \odot N(\alpha) \right).
$
\item[Case (ix)]$
\mathscr{R}^{-1} \left( \mathds{1}_2 \odot N(\alpha) \right) \mathscr{R} \left( \mathds{1}_2 \odot N(\beta) \right) =
\left( \mathds{1}_2 \odot N(\beta) \right) \mathscr{R}^{-1} \left( \mathds{1}_2 \odot N(\alpha) \right) \mathscr{R}.
$
\item[Case (x)]$
\left( \mathds{1}_2 \odot N(\alpha) \right) \mathscr{R}^{-1} \left( \mathds{1}_2 \odot N(\beta) \right)\mathscr{R} =
\mathscr{R} \left( \mathds{1}_2 \odot N(\beta) \right) \mathscr{R}^{-1} \left( \mathds{1}_2 \odot N(\alpha) \right).
$
\end{itemize}
  
  \begin{theorem}\label{theorem_presentation}(L\^e \cite{LeStatedSkein} for the bigon and the triangle, Faitg \cite{Faitg_LGFT_SSkein} for $\mathbf{\Sigma}_{g,n}^{0}$, K. \cite[Theorem $1.1$]{KojuPresentationSSkein} in general)
  For a connected marked surface $\mathbf{\Sigma}$ with non trivial marking and for a finite presentation $(\mathbb{G}, \mathbb{RL})$ of $\Pi_1(\Sigma, \mathbb{V})$, the stated skein algebra $\mathcal{S}_A(\mathbf{\Sigma})$ has finite presentation with generators the elements $\beta_{ij}$ with $\beta \in \mathbb{G}$ and $i,j=\pm$, and relations the trivial loops relations \eqref{eq_trivial_loops_rel}, the q-determinant relations \eqref{qdet_rel} and the arc exchange relations.
  \end{theorem}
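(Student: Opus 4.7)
The plan is to follow the triangular strategy (Strategy \ref{strategy}), combined with a Tietze-type comparison of different groupoid presentations. Let $\mathcal{A}(\mathbb{G}, \mathbb{RL})$ denote the abstract $k$-algebra presented by the generators $\beta_{ij}$ for $\beta \in \mathbb{G}$, $i,j \in \{\pm\}$ and the three families of relations of the statement. First I would check that all three families of relations hold in $\mathcal{S}_A(\mathbf{\Sigma})$: the arc exchange relations follow by casework on the ten configurations of Figure \ref{fig_arcrelations} from the height exchange \eqref{height_exchange_rel} applied near a common endpoint; the q-determinant identity is the bigon-level relation $\mathrm{det}_q M(\alpha) = 1$ (type $a$), conjugated by $C$ or $C^{-1}$ in the other types, and is a direct consequence of \eqref{trivial_arc_rel}; the trivial loop relation \eqref{eq_trivial_loops_rel} comes from iterating the cutting arc relation \eqref{cutting_arc_rel} along the contractible closed curve $\beta_k \cdots \beta_1$ together with the trivial curve relation \eqref{eq: skein 1}. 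This provides a well-defined algebra homomorphism $\pi: \mathcal{A}(\mathbb{G}, \mathbb{RL}) \to \mathcal{S}_A(\mathbf{\Sigma})$, which is surjective because the stated skein algebra is generated by stated arcs supported on the image of $\mathbb{G}$ (iteratively split every diagram along the paths $\beta_i$ via \eqref{cutting_arc_rel}).

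Next I would establish the theorem for the bigon and the triangle, the base cases of the triangular strategy. For the bigon, take the presentation with one generator $\alpha$ and no relation in $\mathbb{RL}$; the arc exchange relation for the pair $(\alpha, \alpha)$ in configuration (iv), read entry by entry, reproduces exactly the first four $q$-commutation relations of \eqref{relbigone}, while the q-determinant relation \eqref{qdet_rel} provides the remaining identity $\alpha_{++}\alpha_{--} = 1 + q^{-1}\alpha_{+-}\alpha_{-+}$, matching L\^e's presentation of $\mathcal{S}_A(\mathbb{B}) = \mathcal{O}_q[\SL_2]$. For the triangle $\mathbb{T} = \mathbb{B} \circledast \mathbb{B}$, the fusion isomorphism of Theorem \ref{theorem_fusion} identifies $\mathcal{S}_A(\mathbb{T})$ with the cobraided tensor product, and a two-generator presentation with no groupoid relation is immediate: each bigon contributes its own q-determinant and self-exchange relations, while the cobraiding contributes exactly the arc exchange relations for the cross configurations (viii)--(x).

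For a general triangulable marked surface $\mathbf{\Sigma}$, fix a triangulation $\Delta$ and first prove the theorem for the specific presentation $(\mathbb{G}_\Delta, \mathbb{RL}_\Delta)$ obtained by placing one generator across each inner edge of $\Delta$ together with one contractible-loop relation per face. Injectivity of $\pi$ for this presentation follows by chasing the exact sequence \eqref{eq_triangular}: the abstract algebra $\mathcal{A}(\mathbb{G}_\Delta, \mathbb{RL}_\Delta)$ injects into $\bigotimes_{\mathbb{T} \in F(\Delta)} \mathcal{S}_A(\mathbb{T})$ by the triangle case just handled, and the trivial loop relations together with the arc exchange relations across edges force its image into the equalizer of $\Delta^L$ and $\sigma \circ \Delta^R$, which coincides with $\theta^\Delta(\mathcal{S}_A(\mathbf{\Sigma}))$ by Theorem \ref{theorem_gluingformula}. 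Among non-triangulable connected marked surfaces with non-trivial marking only the bigon and the monogon $\mathbf{m}_0$ remain; the former has been treated, and for $\mathbf{m}_0$ one has $\mathcal{S}_A(\mathbf{m}_0) \cong k$ with a trivial presentation.

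The main obstacle is to show that the resulting presentation is independent of the chosen $(\mathbb{G}, \mathbb{RL})$. To pass from the canonical $(\mathbb{G}_\Delta, \mathbb{RL}_\Delta)$ to an arbitrary presentation I would verify that the two families of algebraic Tietze moves --- inserting a new generator $\beta = \beta' \star \beta''$ together with its defining relation, and replacing a relation by a product of conjugates of others --- induce isomorphisms of the abstract algebras. The key point reduces to a matrix identity expressing $N(\beta' \star \beta'')$ (up to appropriate placements of $C$) as a product of $N(\beta')$ and $N(\beta'')$, together with the mechanical verification that the q-determinant relation and all ten types of arc exchange relations for the composed generator are formal algebraic consequences of those for the pieces. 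This compatibility check, organized by the type table of Notations \ref{notations_presentations} and made uniform by the $N(\alpha)$ normalization, is the longest but essentially mechanical step.
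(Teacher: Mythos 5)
The overall strategy you propose---verify the relations, deduce a surjection, handle the base cases, propagate via the exact sequence of Theorem~\ref{theorem_gluingformula}, then remove the dependence on the chosen presentation by Tietze moves---is reasonable in outline, but the central step contains a genuine gap. You claim that ``$\mathcal{A}(\mathbb{G}_\Delta, \mathbb{RL}_\Delta)$ injects into $\bigotimes_{\mathbb{T}\in F(\Delta)}\mathcal{S}_A(\mathbb{T})$ by the triangle case just handled.'' The triangle case establishes $\mathcal{A}(\mathbb{G}_\mathbb{T},\mathbb{RL}_\mathbb{T})\cong\mathcal{S}_A(\mathbb{T})$, but does not by itself produce, let alone prove injective, a map from the abstract algebra of the glued surface into the tensor product. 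Note that since $\theta^\Delta$ is already known to be injective, the map you need is simply $\theta^\Delta\circ\pi$: its injectivity is \emph{equivalent} to the injectivity of $\pi$, which is precisely the thing you are trying to prove. The exact sequence~\eqref{eq_triangular} tells you where the image lands once you know the map is injective, but it gives you no independent leverage for injectivity. What is missing is a linear-algebraic input: either a construction of a left inverse of $\pi$, or an exhibition of a spanning set of $\mathcal{A}(\mathbb{G},\mathbb{RL})$ mapping bijectively onto a basis of $\mathcal{S}_A(\mathbf{\Sigma})$. This is exactly what the reference~\cite{KojuPresentationSSkein} does via the Diamond Lemma, producing a PBW basis of $\mathcal{A}(\mathbb{G},\mathbb{RL})$ that maps to the basis $\mathcal{B}^{\mathfrak{o}}$ of Theorem~\ref{theorem_basis}; the survey alludes to this when discussing the Koszul property of $\mathcal{S}_A(\mathbf{\Sigma})$ right after the statement of the theorem.

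A secondary issue: the ``canonical'' presentation $(\mathbb{G}_\Delta,\mathbb{RL}_\Delta)$ you propose, with one generator per inner edge of a triangulation and one contractible-loop relation per face, is not a presentation of $\Pi_1(\Sigma,\mathbb{V})$. Generators must be paths between points of $\mathbb{V}$, which live on boundary arcs, but faces of a triangulation need not be adjacent to any boundary arc, so a path ``crossing an inner edge'' has nowhere to begin or end. You should instead take a presentation without relations arising from a ciliated graph (Example~\ref{example_presentations}(2)); this is the one actually used in~\cite{KojuPresentationSSkein}, and it has the extra virtue of making the presentation quadratic, which is what lets the Diamond Lemma argument go through cleanly. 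The Tietze-move step to compare presentations is sound in principle, and the matrix identity $N(\beta'\star\beta'')=N(\beta')C^{\pm1}N(\beta'')$ (with the correct placement of $C$ depending on the types) is the right key, but it is doing substantial work---each of the ten arc-exchange configurations for the composite must be rederived---so it deserves more than a gesture toward ``mechanical verification.''
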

  While choosing a finite presentation with no relation (such as the ones given by ciliated graphs), the associated presentation of $\mathcal{S}_A(\mathbf{\Sigma})$ is quadratic inhomogeneous. A suitable application of the Diamond Lemma for PBW bases then show that
  
  \begin{theorem}(K. \cite[Theorem $1.2$]{KojuPresentationSSkein})   For a connected marked surface $\mathbf{\Sigma}$ with non trivial marking, the stated skein algebra $\mathcal{S}_A(\mathbf{\Sigma})$ is Koszul.
  \end{theorem}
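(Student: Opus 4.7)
The plan is to derive Koszulity from Theorem \ref{theorem_presentation} via the Diamond Lemma for PBW algebras in the quadratic-inhomogeneous setting. First I would fix a finite presentation $(\mathbb{G}, \emptyset)$ of $\Pi_1(\Sigma, \mathbb{V})$ coming from a ciliated graph $(\Gamma, c)$ as in Example \ref{example_presentations}(2), so that no trivial-loops relation appears in the resulting presentation of $\mathcal{S}_A(\mathbf{\Sigma})$. By Theorem \ref{theorem_presentation}, $\mathcal{S}_A(\mathbf{\Sigma})$ then has generators $\{\beta_{ij}\}_{\beta \in \mathcal{E}(\Gamma),\, i,j \in \{\pm\}}$ and only two families of defining relations: the ten arc-exchange relations, which are purely quadratic homogeneous, and the q-determinant relations \eqref{qdet_rel}, which are quadratic equal to the scalar~$1$. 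Thus the presentation is of quadratic-inhomogeneous (quadratic-scalar) type.

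Next I would invoke the standard Polishchuk--Positselski criterion: a quadratic-inhomogeneous algebra whose associated quadratic part is Koszul and which admits a PBW basis in the sense of Bergman is itself Koszul. For the quadratic part, one replaces each q-determinant relation $\det_q(N(\alpha)) = 1$ by $\det_q(N(\alpha)) = 0$; the resulting graded algebra is obtained by taking tensor products of reflection-equation-type blocks (one per edge of $\Gamma$) amalgamated via the arc-exchange relations. Each such block is a flat deformation of a skew polynomial ring in four variables modulo a monomial relation, hence Koszul by the standard theory of Manin quantum linear spaces, and the Koszul property propagates through the arc-exchange gluing because the latter are R-matrix twists of commutation relations. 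The PBW basis for $\mathcal{S}_A(\mathbf{\Sigma})$ itself is then built by choosing a total order on the edges of $\Gamma$, a lexicographic order on the four generators $\beta_{ij}$ attached to each $\beta$, and orienting the q-determinant and arc-exchange identities as rewriting rules whose leading monomials (for instance $\beta_{++}\beta_{--}$ in \eqref{qdet_rel}) are eliminated in favour of lower monomials.

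The main obstacle will be the Diamond Lemma verification itself. With four generators per edge of $\Gamma$ and the ten arc-exchange configurations of Figure \ref{fig_arcrelations}, the number of cubic overlap patterns to resolve is substantial. The cleanest way to handle them is to exploit the RTT/reflection-equation formalism encoded in Notations \ref{notations_presentations}: the arc-exchange identities are repackagings of Yang--Baxter-type equations for $\mathscr{R}$, and the q-determinant is central with respect to these relations in each local block $\mathcal{S}_A(\mathbb{B})=\mathcal{O}_q[\SL_2]$. Accordingly, each class of cubic ambiguity should reduce to a matrix identity following from the hexagon equation $\mathscr{R}_{12}\mathscr{R}_{13}\mathscr{R}_{23} = \mathscr{R}_{23}\mathscr{R}_{13}\mathscr{R}_{12}$ together with the centrality and grouplike behaviour of the quantum determinant in $\mathcal{O}_q[\SL_2]$; these are finite calculations in $\mathcal{O}_q[\SL_2]^{\otimes k}$ for $k\leq 3$ that can be organised case by case along Figure \ref{fig_arcrelations}. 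Once all ambiguities are resolved, the Diamond Lemma produces the desired PBW basis and the Polishchuk--Positselski criterion delivers Koszulity of $\mathcal{S}_A(\mathbf{\Sigma})$.
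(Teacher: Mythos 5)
Your proposal reproduces the paper's route: take the ciliated-graph presentation of Example~\ref{example_presentations}(2) so that Theorem~\ref{theorem_presentation} yields a quadratic-inhomogeneous presentation of $\mathcal{S}_A(\mathbf{\Sigma})$ (arc-exchange and $q$-determinant relations only, no trivial-loops relations), and apply the Diamond Lemma for PBW bases to deduce Koszulity. One caveat: the intermediate paragraph arguing Koszulity of the quadratic part separately via Manin quantum linear spaces is superfluous---the Diamond Lemma verification, once completed, directly produces a quadratic Gr\"obner basis for the associated graded algebra and hence its Koszulity, after which the Polishchuk--Positselski PBW theorem applies---and is also imprecise, since $\det_q(N) = ad - q^{-1}bc$ is not a monomial and there is no general principle that Koszulity survives arbitrary R-matrix twists; the substance of the proof lies entirely in the overlap resolutions you sketch in your final paragraph.
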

  
  For a ciliated graph $(\Gamma,c)$, the algebra presented using $\mathbb{P}(\Gamma,c)$ was first defined by Alekseev-Grosse-Schomerus \cite{AlekseevGrosseSchomerus_LatticeCS1,AlekseevGrosseSchomerus_LatticeCS2, AlekseevSchomerus_RepCS} and Buffenoir-Roche \cite{BuffenoirRoche, BuffenoirRoche2} and is called the \textit{quantum moduli algebra}. So Theorem \ref{theorem_presentation} says that stated skein algebras are essentially a presentation-free reformulation of quantum moduli algebras. It particular, this relation makes  obvious the fact that quantum moduli algebras only depend on the underlying marked surface and not on the choice of the ciliated graph. Relations between these algebras were previously enlightened in \cite{BullockFrohmanKania_LGFT} for unmarked surfaces and in \cite{Faitg_LGFT_SSkein, BenzviBrochierJordan_FactAlg1} for $\mathbf{\Sigma}_{g,n}^0$ with its daisy graph.

  \section{Quantum Teichm\"uller theory}
  
  \subsection{Quantum tori}
  
  \begin{definition}
  \begin{enumerate}
  \item A \textit{quadratic pair} is a pair  $\mathbb{E}=(E,\left(\cdot, \cdot\right)_E)$ where $E$ is a free $\mathbb{Z}$ module of finite rank and $\left(\cdot, \cdot\right)_E : E\times E \rightarrow \mathbb{Z}$ is a skew-symmetric bilinear form.  A morphism of quadratic pairs is a linear maps that preserves the pairings. Quadratic pairs with direct sum $\oplus$ form a symmetric monoidal category $(\mathrm{QP}, \oplus)$.
  \item For $k$ a commutative (unital, associative) ring and $\omega \in k^{\times}$ invertible, the \textit{quantum torus} $\mathbb{T}_{\omega}(\mathbb{E})$ is the complex algebra with underlying vector space $\mathbb{C}[E]$ and product given by $[x]\cdot [y] := \omega^{(x,y)_E}[x+y]$. Given $e=(e_1,\ldots, e_n)$ a basis of $E$, the quantum torus $\mathbb{T}_{\omega}(\mathbb{E})$ is isomorphic to the complex  algebra generated by invertible elements $Z_{e_i}^{\pm 1}$ with relations $Z_{e_i}Z_{e_j}=\omega^{2(e_i,e_j)_E}Z_{e_j}Z_{e_i}$. 
  Clearly, this defines a monoidal functor $\mathbb{T}_{\omega} : (\mathrm{QP}, \oplus) \to (\mathrm{Alg}_k, \otimes_k)$.
  \item The \textit{classical torus} $\mathcal{X}(\mathbb{E})$ is the Poisson affine variety with algebra of regular functions $\mathbb{C}[E]$ and Poisson bracket induced by
  $ \{[x], [y]\} = 2(x,y)[x+y]$. 
  \end{enumerate}
  \end{definition}
  
  The representation theory of quantum tori at roots of unity is particularly simple:
  
  \begin{theorem}(DeConcini-Procesi \cite[Proposition $7.2$]{DeConciniProcesiBook}) If $\omega \in \mathbb{C}^*$ is a root of unity, then $\mathbb{T}_{\omega}(\mathbb{E})$ is Azumaya of constant rank.
  \end{theorem}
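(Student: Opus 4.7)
The plan is to reduce to a normal form for the skew-symmetric pairing and then verify the Azumaya property factor by factor. Over $\mathbb{Z}$, any skew-symmetric form on a free module of finite rank admits a symplectic basis: there exist $(e_1, f_1, \ldots, e_g, f_g, h_1, \ldots, h_r)$ with $(e_i, f_i)_E = d_i$ for some $d_i \in \mathbb{Z}_{>0}$, all other pairings among basis vectors being zero. This decomposes $\mathbb{E}$ as an orthogonal direct sum $\mathbb{E} \cong \mathbb{E}_0 \oplus \bigoplus_{i=1}^{g} \mathbb{E}_i$ in $\mathrm{QP}$, where $\mathbb{E}_0$ has trivial pairing and each $\mathbb{E}_i$ is a rank-$2$ pair with parameter $d_i$. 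By monoidality of $\mathbb{T}_\omega$, this gives an isomorphism of $k$-algebras
\[
\mathbb{T}_\omega(\mathbb{E}) \;\cong\; \mathbb{T}_\omega(\mathbb{E}_0) \otimes_k \bigotimes_{i=1}^g \mathbb{T}_\omega(\mathbb{E}_i).
\]

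The factor $\mathbb{T}_\omega(\mathbb{E}_0)$ is a Laurent polynomial ring, which is commutative and trivially Azumaya of rank $1$ over itself. Each factor $\mathbb{T}_\omega(\mathbb{E}_i)$ is generated by invertible $X_i, Y_i$ with $X_i Y_i = \omega^{2 d_i} Y_i X_i$. Setting $q_i := \omega^{2 d_i}$ and letting $m_i$ denote the multiplicative order of $q_i$ in $\mathbb{C}^\times$, a direct computation shows that $X_i^{m_i}, Y_i^{m_i}$ lie in the center and in fact generate it, so the center is the Laurent polynomial ring $Z_i := \mathbb{C}[X_i^{\pm m_i}, Y_i^{\pm m_i}]$, and $\mathbb{T}_\omega(\mathbb{E}_i)$ is free of constant rank $m_i^2$ over $Z_i$ with basis $\{X_i^a Y_i^b : 0 \le a, b < m_i\}$.

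The main point is to verify the Azumaya property for each two-variable factor; I would do this via the standard local criterion. Specialising at a closed point of $\mathrm{Spec}(Z_i)$, say $X_i^{m_i} \mapsto \alpha$ and $Y_i^{m_i} \mapsto \beta$ for $\alpha,\beta \in \mathbb{C}^\times$, one exhibits the unique simple quotient $\pi_{\alpha,\beta}$ of dimension $m_i$: take an $m_i$-th root $\alpha^{1/m_i}$ and let $X_i$ act on $\mathbb{C}^{m_i}$ by the clock matrix $\alpha^{1/m_i} \cdot \mathrm{diag}(1, q_i, q_i^2, \ldots, q_i^{m_i - 1})$ and $Y_i$ by $\beta^{1/m_i}$ times the cyclic shift. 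Since the clock and shift matrices generate $\mathrm{Mat}_{m_i}(\mathbb{C})$, the specialisation $\mathbb{T}_\omega(\mathbb{E}_i) \otimes_{Z_i} \mathbb{C} \to \mathrm{Mat}_{m_i}(\mathbb{C})$ is surjective and, by a dimension count against $m_i^2$, an isomorphism. Equivalently, the canonical map
\[
\mathbb{T}_\omega(\mathbb{E}_i) \otimes_{Z_i} \mathbb{T}_\omega(\mathbb{E}_i)^{op} \;\longrightarrow\; \mathrm{End}_{Z_i}\bigl(\mathbb{T}_\omega(\mathbb{E}_i)\bigr)
\]
is fibrewise an isomorphism, hence an isomorphism, which is precisely the Azumaya condition.

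Since Azumaya algebras are stable under tensor product over the ground ring and their ranks multiply, the decomposition above yields that $\mathbb{T}_\omega(\mathbb{E})$ is Azumaya over its center $\mathbb{T}_\omega(\mathbb{E}_0) \otimes \bigotimes_i Z_i$ of constant rank $\bigl(\prod_i m_i\bigr)^2$. The only genuine obstacle is the local computation for the quantum plane; the rest is bookkeeping via the symplectic normal form and the monoidality of $\mathbb{T}_\omega$. I would expect the bulk of the write-up to be spent on choosing notation that makes the clock/shift representation transparent and on handling the case when the ground ring $k$ is more general than $\mathbb{C}$ (where one needs to adjoin $m_i$-th roots, working étale-locally on $\mathrm{Spec}(Z_i)$, which is still fine for the Azumaya conclusion).
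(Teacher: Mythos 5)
The paper states this result with a citation to De~Concini--Procesi and does not reproduce a proof, so there is nothing in the paper to compare against line by line; your argument is, however, correct and is the standard one (and essentially what the cited reference does): reduce the skew form to its Frobenius normal form over $\mathbb{Z}$, factor $\mathbb{T}_\omega(\mathbb{E})$ via the monoidality of $\mathbb{T}_\omega$ into a commutative Laurent piece tensored with rank-$2$ quantum planes, verify the Azumaya property for each quantum plane by computing the center, exhibiting the clock/shift matrix representation at each closed point of the center, and invoking the local (fibrewise central simple) criterion, and finally use that Azumaya algebras are stable under tensor product over the base field. One small remark: the local criterion requires knowing that $\mathbb{T}_\omega(\mathbb{E}_i)$ is finitely generated projective over $Z_i$ and that ``fibrewise isomorphism implies isomorphism'', which you correctly get for free here because the algebras are free of rank $m_i^2$ over a Laurent polynomial ring (a Jacobson ring), so the determinant of the canonical map is a unit once it is nonvanishing at every closed point.
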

  
  In particular,  $\mathbb{T}_{\omega}(\mathbb{E})$ is semi-simple and the character map $\chi: \mathrm{Irrep} \to \mathrm{Specm}(\mathcal{Z})$ gives a bijection between the set of isomorphism classes of its irreducible representations and the set of character $\chi: \mathcal{Z}\to \mathbb{C}$ over its center $\mathcal{Z}$. Moreover, its simple modules all have the same dimension which is the PI-dimension of  $\mathbb{T}_{\omega}(\mathbb{E})$. So in order to classify the representations of a quantum torus, we need to find its center and to compute its PI-dimension. Suppose that $\omega^2$ is a root of unity of order $N$. 
  Let $E_N \subset E$ be the left kernel of the composition 
  $$ \left(\cdot, \cdot \right)_N : E\times E \xrightarrow{(\cdot, \cdot )} \mathbb{Z} \rightarrow \mathbb{Z}/N\mathbb{Z}.$$
  It is easy to see that the center of $\mathbb{T}_{\omega}(\mathbb{E})$ is generated by the elements $Z_{e_0}$ for $e_0 \in E_N$. Define the \textit{Frobenius morphism}
  $$Fr_{\omega}: \mathbb{T}_{+1}(\mathbb{E}) \hookrightarrow \mathbb{T}_{\omega}(\mathbb{E})$$
  by $Fr_{\omega}(Z_e):= Z_e^N$. Since $N\cdot E \subset K_N$, the image of the Frobenius morphism is central. Clearly, if $e_0$ is in the kernel of $\left(\cdot, \cdot \right)$, then its is in the kernel of $\left(\cdot, \cdot \right)_N$ so $Z_{e_0}$ is central regarding whether $\omega$ is a root of unity or not. We call \textit{Casimir} such elements.

  \subsection{The balanced Chekhov-Fock algebra}\label{sec_CF}
  
  The goal of this section is to present a deep and powerful result of Bonahon and Wong \cite{BonahonWongqTrace} which states that skein algebras can be embedded into some quantum tori. The quantum tori in question are called the balanced Chekhov-Fock algebras. Fix $(\mathbf{\Sigma}, \Delta)$ a triangulated marked surface.
  
  \begin{definition}
  A map $k : \mathcal{E}(\Delta)\to \mathbb{Z}$ is \textit{balanced} if for any face $\mathbb{T}$ of the triangulation with edges $a,b,c$ then $k(a)+k(b)+k(c)$ is even. We denote by $K_{\Delta}$ the $\mathbb{Z}$-module of balanced maps.  For  $e$ and $e'$ two edges, denote by $a_{e,e'}$ the number of faces $\mathbb{T}\in F(\Delta)$ such that $e$ and $e'$ are edges of $\mathbb{T}$ and such that we pass from $e$ to $e'$ in the counter-clockwise direction in $\mathbb{T}$. The \textit{Weil-Petersson}  form $\left(\cdot, \cdot \right)^{WP}: K_{\Delta} \times K_{\Delta} \rightarrow \mathbb{Z}$ is the skew-symmetric form defined  by $\left( \mathbf{k}_1, \mathbf{k}_2\right)^{WP}:= \sum_{e,e'} \mathbf{k}_1(e)\mathbf{k}_2(e')(a_{e,e'}-a_{e',e})$.

  The \textit{balanced Chekhov-Fock algebra} is the quantum torus $\mathcal{Z}_{\omega}(\mathbf{\Sigma}, \Delta):= \mathbb{T}_{\omega}(K_{\Delta}, (\cdot, \cdot)^{WP})$.
  \end{definition}
  
  If $\mathcal{E}(\Delta)=\{e_1, \ldots, e_n\}$, we will write $Z^{\mathbf{k}}=[Z_{e_1}^{k(e_1)}\ldots Z_{e_n}^{k(e_n)}]$. Given $e\in \mathcal{E}(\Delta)$, one has a balanced map $\mathbf{k}_e$ sending $e$ to $2$ and all other edges to $0$. We write $X_e:= Z^{\mathbf{k}_e}$. The balanced Chekhov-Fock algebra has a natural graduation defined as follows.
   An edge $e\in \mathcal{E}(\Delta)$ defines a Borel-Moore homology class $[e]\in \mathrm{H}_1^c(\Sigma; \mathbb{Z}/2\mathbb{Z})$. Given a balanced monomial $Z^{\mathbf{k}}\in \mathcal{Z}_{\omega}(\mathbf{\Sigma}, \Delta)$, there exists a unique class $[\mathbf{k}] \in \mathrm{H}_1(\Sigma, \mathcal{A}; \mathbb{Z}/2\mathbb{Z})$ whose algebraic intersection with $[e]$ is $\mathbf{k}(e)$ modulo $2$. 

\begin{definition}
For $\chi \in \mathrm{H}_1(\Sigma, \mathcal{A}; \mathbb{Z}/2\mathbb{Z})$, we define the subspace:
$$ \mathcal{Z}_{\omega}^{(\chi)}(\mathbf{\Sigma},\Delta) := \mathrm{Span} \left( Z^{\mathbf{k}} | [\mathbf{k}]=\chi \right)\subset \mathcal{Z}_{\omega}(\mathbf{\Sigma},\Delta). $$
\end{definition}

It follows from definitions that $\mathcal{Z}_{\omega}(\mathbf{\Sigma}, \Delta)=  \oplus_{\chi \in \mathrm{H}_1(\Sigma, \mathcal{A}; \mathbb{Z}/2\mathbb{Z})}  \mathcal{Z}_{\omega}^{(\chi)}(\mathbf{\Sigma}, \Delta)$ is a graded algebra. Note that the $0$-graded part $ \mathcal{Z}_{\omega}^{(0)}(\mathbf{\Sigma}, \Delta)$ is generated, as an algebra, by the elements $X_e$ for $e\in \mathcal{E}(\Delta)$. This is an exponential version of Chekhov and Fock's original quantum Teichm\"uller space in \cite{ChekhovFock}.

Balanced Chekhov-Fock algebras have the same behaviour for the gluing operation than stated skein algebras. Let $(\mathbf{\Sigma}_{a\#b}, \Delta_{a\#b})$ be the triangulated punctured surface obtained from $(\mathbf{\Sigma}, \Delta)$ by gluing two boundary arcs together and call $c$ the edge of $ \Delta_{a\#b}$ corresponding to $a$ and $b$.

Define an injective algebra morphism $\theta_{a\#b} : \mathcal{Z}_{\omega}(\mathbf{\Sigma}_{|a\#b}, \Delta_{|a\#b}) \hookrightarrow \mathcal{Z}_{\omega}(\mathbf{\Sigma},\Delta)$  by $\theta_{a\#b}(Z^{\mathbf{k}}) := Z^{\mathbf{k'}}$, where $\mathbf{k'}(a)=\mathbf{k'}(b):=\mathbf{k}(c)$ and $\mathbf{k'}(e)=\mathbf{k}(e)$, if $e\neq a,b$.

\par The images of the gluing maps for balanced Chekhov-Fock algebras are characterized in a similar manner than for the stated skein algebras. For $a$ a boundary arc of $\mathbf{\Sigma}$, define some left and right comodule maps $\Delta_a^L : \mathcal{Z}_{\omega}(\mathbf{\Sigma}, \Delta) \rightarrow k[Z_a^{\pm 1}] \otimes \mathcal{Z}_{\omega}(\mathbf{\Sigma}, \Delta)$ and $\Delta_a^R : \mathcal{Z}_{\omega}(\mathbf{\Sigma}, \Delta) \rightarrow  \mathcal{Z}_{\omega}(\mathbf{\Sigma}, \Delta) \otimes k[Z_b^{\pm 1}] $ by the formulas $\Delta_a^L (Z^{\mathbf{k}}):= Z_a^{\mathbf{k}(a)} \otimes Z^{\mathbf{k}}$ and $\Delta_a^R(Z^{\mathbf{k}}):= Z^{\mathbf{k}}\otimes Z_a^{\mathbf{k}(a)}$. 
By definition, the following sequence is exact:
$$ 0 \to \mathcal{Z}_{\omega}(\mathbf{\Sigma}_{a\#b}, \Delta_{a\#b}) \xrightarrow{\theta_{a\#b}} \mathcal{Z}_{\omega}(\mathbf{\Sigma},\Delta) \xrightarrow{\Delta_a^L - \sigma \circ \Delta_b^R} k[Z^{\pm 1}] \otimes \mathcal{Z}_{\omega}(\mathbf{\Sigma},\Delta).$$

In particular, like for stated skein algebra, we get an exact sequence
\begin{equation}\label{eq_triangulationCF}
0 \to \mathcal{Z}_{\omega}(\mathbf{\Sigma}, \Delta) \xrightarrow{\theta^{\Delta}} \otimes_{\mathbb{T}\in F(\Delta)} \mathcal{Z}_{\omega}(\mathbb{T}) \xrightarrow{\Delta^L - \sigma \circ \Delta^R} \left( \otimes_{e \in \mathring{\mathcal{E}}(\Delta)} k[Z_e^{\pm 1}]\right) \otimes \left(  \otimes_{\mathbb{T}\in F(\Delta)} \mathcal{Z}_{\omega}(\mathbb{T}) \right).
\end{equation}

We now turn to the problem of classifying the representations of $\mathcal{Z}_{\omega}(\mathbf{\Sigma}, \Delta)$.

\begin{definition}\label{def_central_elements}
\begin{itemize}
\item  Let $p$ be an inner puncture. For each edge $e\in \mathcal{E}(\Delta)$, denote by $\mathbf{k}_p(e)\in \{0,1,2\}$ the number of endpoints of $e$ equal to $p$. The \textit{central inner puncture element} is  $H_p:=Z^{\mathbf{k}_p}\in \mathcal{Z}_{\omega}(\mathbf{\Sigma}, \Delta)$.
\item  Let  $\partial$ a connected component of $\partial \Sigma$. For each edge $e$, denote by $\mathbf{k}_{\partial}(e)\in \{0, 1, 2\}$ the number of endpoints of $e$ lying in $\partial$. The \textit{central boundary element} is $H_{\partial}:=Z^{\mathbf{k}_{\partial}}\in \mathcal{Z}_{\omega}(\mathbf{\Sigma}, \Delta)$. 
\end{itemize}
\end{definition}

It follows from the definition of the Weil-Petersson form that the elements $H_p$ and $H_{\partial}$ are Casimir (so central). 

\begin{theorem}\label{theorem_centerCF}(Bonahon-Wong \cite{BonahonWong1} for unmarked surfaces, K.-Quesney \cite{KojuQuesneyQNonAb} in general)
Suppose $\omega \in \mathbb{C}^{\times}$ is a root of unity of odd order $N$, then 
\begin{enumerate}
\item The center of $\mathcal{Z}_{\omega}(\mathbf{\Sigma}, \Delta)$ is generated by the Casimir elements $H_{\partial}^{\pm 1}$ and $H_p^{\pm 1}$ together with the image of the Frobenius morphism.
\item The PI-dimension of $\mathcal{Z}_{\omega}(\mathbf{\Sigma},\Delta)$ is equal to the PI-dimension of $\mathcal{S}^{red}_{\omega}(\mathbf{\Sigma})$ so, when $\mathbf{\Sigma}=(\Sigma_{g,n}, \mathcal{A})$, it is equal to $N^{3g-3+n +|\mathcal{A}| }$.
\end{enumerate}
\end{theorem}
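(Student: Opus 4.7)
The plan is to exploit that $\mathcal{Z}_\omega(\mathbf{\Sigma},\Delta)$ is a quantum torus, so both statements reduce to lattice computations on $(K_\Delta,(\cdot,\cdot)^{WP})$. By the DeConcini--Procesi theorem cited just above combined with elementary facts about skew forms on free abelian groups, writing $B$ for the matrix of $(\cdot,\cdot)^{WP}$ in a $\mathbb{Z}$-basis of $K_\Delta$ and $(K_\Delta)_N:=\ker(B\bmod N)$, the center of $\mathcal{Z}_\omega(\mathbf{\Sigma},\Delta)$ is the $\mathbb{C}$-span of the monomials $Z^{\mathbf{k}}$ with $\mathbf{k}\in(K_\Delta)_N$, and the PI-dimension equals $N^{\mathrm{rk}(B\bmod N)/2}$. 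Both parts thus amount to computing $(K_\Delta)_N$ and $\mathrm{rk}(B\bmod N)$.

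For part~(1), the first step is to verify that each $\mathbf{k}_p,\mathbf{k}_\partial$ lies in the \emph{integer} kernel $K_\Delta^{\mathrm{Cas}}:=\ker B$. At an inner puncture $p$, the contributions to $(\mathbf{k}_p,\mathbf{k})^{WP}$ coming from the triangles meeting $p$ telescope to zero around the full cycle of edges incident to $p$; for a boundary component $\partial$ with cyclically ordered punctures $v_1,\ldots,v_m$ and boundary arcs $a_1,\ldots,a_m$, the contribution of each $\mathbf{k}_{v_j}$ reduces to $\mathbf{k}(a_j)-\mathbf{k}(a_{j-1})$, and summing over $j$ telescopes around $\partial$. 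Together with $NK_\Delta\subseteq(K_\Delta)_N$ (from the Frobenius image being central), this gives the easy inclusion $L:=\langle\mathbf{k}_p,\mathbf{k}_\partial\rangle_{\mathbb{Z}}+NK_\Delta\subseteq(K_\Delta)_N$. For the reverse inclusion I would follow the triangular strategy of the survey: on the triangle $\mathbb{T}$, a $3\times 3$ direct computation shows that $B_{\mathbb{T}}$ has entries in $\{0,\pm 1\}$, that $K_{\mathbb{T}}^{\mathrm{Cas}}=\mathbb{Z}\,\mathbf{k}_\partial$, and that the nonzero elementary divisors of $B_{\mathbb{T}}$ all equal $1$, so $(K_{\mathbb{T}})_N=L_{\mathbb{T}}$. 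For a general triangulable $\mathbf{\Sigma}$, I would propagate this via the exact sequence~\eqref{eq_triangulationCF}, which identifies $K_\Delta$ as the kernel of the ``edge-matching'' linear map $\bigoplus_{\mathbb{T}\in F(\Delta)}K_{\mathbb{T}}\to\bigoplus_{e\in\mathring{\mathcal{E}}(\Delta)}\mathbb{Z}$. Gluing one pair of boundary arcs at a time, one checks that any newly appearing integer Casimir can be written in terms of the $\mathbf{k}_p,\mathbf{k}_\partial$ of the glued surface (corresponding to the puncture or boundary component created by the gluing), and that the nonzero elementary divisors of $B$ remain powers of~$2$.

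For part~(2), the elementary-divisor statement above, combined with $\gcd(2,N)=1$, gives $\mathrm{rk}(B\bmod N)=\mathrm{rk}_\mathbb{Q}B=\dim_\mathbb{Q}K_\Delta-\dim_\mathbb{Q}K_\Delta^{\mathrm{Cas}}$. By the previous step, $\dim_\mathbb{Q}K_\Delta^{\mathrm{Cas}}=n$ (one independent Casimir per boundary component, their $\mathbb{Q}$-independence being a direct inspection), while $\dim_\mathbb{Q}K_\Delta=|\mathcal{E}(\Delta)|$ since the balance condition only cuts a finite-index sublattice. An Euler-characteristic count $V-E+F=2-2g-n$ together with $V=|\mathcal{A}|+n_{\mathrm{inner}}$ and $3F=2|\mathring{\mathcal{E}}(\Delta)|+|\mathcal{A}|$ yields $|\mathcal{E}(\Delta)|=2|\mathcal{A}|+6g+3n-6$, whence $\mathrm{rk}(B\bmod N)/2=3g-3+n+|\mathcal{A}|$, proving the PI-dimension formula; the claimed equality with the PI-dimension of $\mathcal{S}^{\mathrm{red}}_\omega(\mathbf{\Sigma})$ is then just a comparison with Theorem~\ref{theorem_center}.

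The main technical obstacle will be the control of the Smith normal form of $B$ in the inductive gluing step: each gluing can in principle introduce factors of $2$ coming from the balance constraint in the definition of $K_\Delta$, and one must argue carefully that no odd prime divisor ever enters the elementary divisors. A cleaner conceptual alternative, should this bookkeeping become unwieldy, is to realise $K_\Delta$ (up to $2$-torsion) as a sublattice of $H_1(\Sigma,\mathcal{A};\mathbb{Z})$ with $(\cdot,\cdot)^{WP}$ corresponding to an intersection-type pairing; Poincar\'e--Lefschetz duality would then produce both the kernel description and the rank count intrinsically, without any triangulation-dependent bookkeeping.
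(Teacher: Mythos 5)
Your reduction of both statements to the lattice $(K_\Delta,(\cdot,\cdot)^{WP})$ is correct, your Euler-characteristic count $|\mathcal{E}(\Delta)|=2|\mathcal{A}|+6g+3n-6$ checks out, and the triangle computation (form of rank $2$, kernel spanned by $\mathbf{k}_\partial=(2,2,2)$, nonzero elementary divisors equal to $1$) is right. But the core of your plan --- propagating the elementary-divisor structure of the Weil--Petersson form along the gluings specified by the exact sequence~\eqref{eq_triangulationCF}, while simultaneously showing that every integer Casimir that appears is a $\mathbb{Z}$-combination of the $\mathbf{k}_p,\mathbf{k}_\partial$ --- is exactly the hard step, and your proposal names it (``the main technical obstacle'') without carrying it out. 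In its present form this is a proof plan with an acknowledged open step, not a proof: restricting a skew form to a finite-index sublattice $K_{\Delta_{a\#b}}\subset K_\Delta$ can a priori change the Smith normal form in an uncontrolled way, and the argument that only powers of $2$ ever appear (and that the kernel grows by exactly the new $\mathbf{k}_p$ or $\mathbf{k}_\partial$) requires a genuine bookkeeping lemma that you have not stated or proved.

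Your ``cleaner conceptual alternative'' at the end is, in spirit, the route the paper actually takes --- but the homology group you propose is the wrong one. The paper identifies $(K_\Delta,(\cdot,\cdot)^{WP})$ not with a sublattice of $\mathrm{H}_1(\Sigma,\mathcal{A};\mathbb{Z})$, whose rank is roughly $2g+n-1+|\mathcal{A}|$ and hence far too small, but with $(\mathrm{H}_1^{\sigma}(\widehat{\Sigma}\setminus\widehat{B},\widehat{\mathcal{A}};\mathbb{Z}),(\cdot,\cdot))$, the $\sigma$-anti-invariant relative homology of the double cover $\widehat{\Sigma}$ branched over the dual graph's vertices, equipped with the (relative) intersection form (Theorem~\ref{theorem_doublecovering}). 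That group has the correct rank $6g-6+3n+2|\mathcal{A}|$. With this identification, the Casimirs and the rank of the form are read off from the topology of $\widehat{\Sigma}$ (the paper displays the resulting block decomposition for $(\Sigma_{g,s},\emptyset)$, which is where the elementary divisors $1$, $2$, and $0$ you were trying to track by hand appear), and Poincar\'e--Lefschetz duality does the work you hoped it would. Interestingly, Theorem~\ref{theorem_doublecovering} is itself proved by the same triangle-plus-gluing induction you propose for the SNF, so the two routes are not unrelated; the gain from the homological reinterpretation is precisely that the gluing step becomes a transparent Mayer--Vietoris-type statement rather than an opaque SNF computation. If you want to complete your SNF route directly, the key missing lemma is that for each gluing the cokernel of $K_{\Delta_{a\#b}}\hookrightarrow K_\Delta$ is a finite $2$-group and that restriction of the form identifies $K_{\Delta_{a\#b}}^{\mathrm{Cas}}$ with the appropriate span of $\mathbf{k}_p,\mathbf{k}_\partial$; absent that, you should switch to the double-cover picture.
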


Let us give the main ingredients of the proof of Theorem \ref{theorem_centerCF}. The idea is to find a geometric interpretation of the quadratic pair $(K_{\Delta}, (\cdot, \cdot)^{WP})$ and for this, we will associate to the triangulated marked surface $(\mathbf{\Sigma}, \Delta)$ a double branched covering.  Let  $\Gamma^{\dagger}\subset \Sigma$ be the dual of the one-squeleton of the triangulation. The graph $\Gamma^{\dagger}$ has one vertex inside each face and one edge $e^*$ intersecting once transversally each edge $e$ of the triangulation. Denote by $B$ the set of its vertices. Let $[\Gamma^{\dagger}]$ denotes its Borel-Moore relative homology class in $\mathrm{H}_1^c(\Sigma \setminus B, \partial \Sigma; \mathbb{Z})$ and $\phi\in \mathrm{H}^1(\Sigma \setminus B; \mathbb{Z}/2\mathbb{Z})$ the Poincar\'e-Lefschetz dual of $[\Gamma^{\dagger}]$ sending a class $[\alpha]$ to its algebraic intersection with $[\Gamma^{\dagger}]$ modulo $2$.  

\begin{definition} The covering $ \pi : \hat{\Sigma} \rightarrow \Sigma$ is the double covering of $\Sigma$ branched along $B$ defined by $\phi$. \end{definition}

   \par   Figure \ref{figtrianglecov} illustrates the covering. Let $\widehat{B}=\pi^{-1}(B)$ and $\widehat{\mathcal{A}}=\pi^{-1}(\mathcal{A})$. Write $\widehat{\mathbf{\Sigma}}=(\widehat{\Sigma}\setminus \widehat{B}, \widehat{\mathcal{A}})$.
   
     \begin{figure}[!h] 
\centerline{\includegraphics[width=12cm]{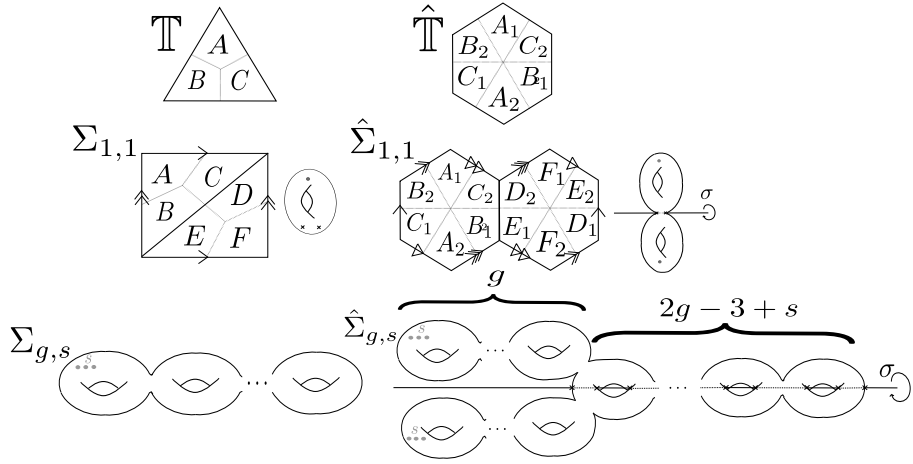} }
\caption{On the top, the triangle and its double branched covering. The dashed lines represent the dual graph $\Gamma^{\dagger}$, its edges are the branched cuts and its vertices the branched point of the covering. The covering involution is the central symmetry along the branched point.  In the middle, a triangulated once-punctured and the corresponding covering, a twice-punctured genus two surface. At the bottom, the double covering of a genus $g$ surface with $s$ punctures. } 
\label{figtrianglecov} 
\end{figure} 

Given $(\Sigma, \mathcal{A})$ a marked surface, we now define a skew-symmetric form $(\cdot, \cdot) : \mathrm{H}_1(\Sigma, \mathcal{A}; \mathbb{Z}) \to \mathbb{Z}$
 Let $c_1, c_2 : [0,1] \rightarrow \Sigma$ be two cycles in $\mathrm{Z}_1(\Sigma, \mathcal{A} ;  \mathbb{Z})$, that is such that for $i=1,2$, either $c_i(0)=c_i(1)$ or $c_i(0), c_i(1) \in \mathcal{A}$. Further suppose that the images of $c_1$ and $c_2$ intersect transversally in the interior of $\Sigma$ along simple crossings. Let $v\in c_1 \cap c_2\subset \mathring{\Sigma}$ be an intersection point and denote by $e_1, e_2 \in T_v \Sigma$ the tangent vectors of $c_1$ and $c_2$ respectively at the point $v$. We define the sign intersection $\varepsilon(v)= +1$ if $(e_1, e_2)$ is an oriented basis of $T_v \Sigma$ and $\varepsilon(v)=-1$ else. Let $a$ be a boundary arc and denote by $S(a)$ the set of pairs $(v_1, v_2)$ of points such that $v_i \in c_i \cap a$. Note that $c_1$ and $c_2$ do not have intersection point in $a$ by definition. Given $(v_1, v_2) \in S(a)$, we define a sign $\varepsilon(v_1, v_2) \in \{ \pm 1 \}$ as follows. Isotope $c_1$ around $a$ to bring $v_1$ in the same position than $v_2$ and denote by $c_1'$ the new geometric curve. The isotopy should preserve the transversality condition and should not make appear any new inner intersection points. Then define $e_1, e_2\in T_{v_2} \Sigma$ the tangent vectors at $v_2$ of $c_1'$ and $c_2$ respectively. Define $\varepsilon(v_1, v_2) = +1$ if $(e_1, e_2)$ is an oriented basis and $\varepsilon(v_1, v_2)=-1$ else. 

\begin{definition}\label{def_relative_intersection} The relative intersection form  $(\cdot, \cdot) : \mathrm{H}_1(\Sigma, \mathcal{A} ;  \mathbb{Z})^{\otimes 2} \rightarrow \frac{1}{2} \mathbb{Z}$ is defined by:
$$ (c_1, c_2) := \sum_a \sum_{(v_1, v_2)\in S(a)} \frac{1}{2} \varepsilon(v_1, v_2)  + \sum_{v\in c_1\cap c_2} \varepsilon(v)$$
\end{definition}
An easy computation shows that the value $(c_1, c_2)$ only depends on the relative homology classes of $c_1, c_2$. Note that when $\mathcal{A}=\emptyset$ then $(\cdot, \cdot)$ is the usual intersection form.

Now let us consider again a triangulated marked surface $(\Sigma, \mathcal{A})$ with double covering $(\widehat{\Sigma}\setminus \widehat{B}, \widehat{\mathcal{A}})$. Denote by $\sigma: \widehat{\Sigma} \to \widehat{\Sigma}$ the covering involution and let $\mathrm{H}_1^{\sigma}(\widehat{\Sigma}\setminus \widehat{B}, \widehat{\mathcal{A}}; \mathbb{Z}) \subset \mathrm{H}_1(\widehat{\Sigma}\setminus \widehat{B}, \widehat{\mathcal{A}}; \mathbb{Z})$ the submodule of classes $[\alpha]$ such that $\sigma_* ([\alpha]) = -[\alpha]$. The relative intersection form on $\mathrm{H}_1(\widehat{\Sigma}\setminus \widehat{B}, \widehat{\mathcal{A}}; \mathbb{Z})$ restricts to a skew-symmetric map (still denoted by the same letter) on $\mathrm{H}_1^{\sigma}(\widehat{\Sigma}\setminus \widehat{B}, \widehat{\mathcal{A}}; \mathbb{Z}) $ with integral values and 

\begin{theorem}\label{theorem_doublecovering} (Bonahon-Wong \cite{BonahonWong1} for unmarked surfaces, K.-Quesney \cite{KojuQuesneyQNonAb} in general)
The quadratic pair $(K_{\Delta}, (\cdot, \cdot)^{WP})$ is isomorphic to $(\mathrm{H}_1^{\sigma}(\widehat{\Sigma}\setminus \widehat{B}, \widehat{\mathcal{A}}; \mathbb{Z}) , (\cdot, \cdot))$.
\end{theorem}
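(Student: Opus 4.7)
The plan is to follow the triangular strategy: first establish the isomorphism on a single triangle, then check its compatibility with the gluing of boundary arcs. On $\mathbb{T}$, the module $K_{\Delta}$ is the rank-$3$ sublattice of $\mathbb{Z}^{3}$ cut out by the parity condition $k_{1}+k_{2}+k_{3}\in 2\mathbb{Z}$, while $\widehat{\mathbb{T}}$ is a disk branched at one interior point, so $\widehat{\mathbb{T}}\setminus\widehat{B}$ is homotopy equivalent to $S^{1}$; a long-exact-sequence computation for the pair $(\widehat{\mathbb{T}}\setminus\widehat{B},\widehat{\mathcal{A}})$, combined with splitting the $\sigma$-action, shows that $\mathrm{H}_{1}^{\sigma}(\widehat{\mathbb{T}}\setminus\widehat{B},\widehat{\mathcal{A}};\mathbb{Z})$ has rank $3$. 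I would then define $\Phi:K_{\Delta}\to \mathrm{H}_{1}^{\sigma}$ by sending a balanced map $\mathbf{k}$ to the class of the $\sigma$-antisymmetric chain $\sum_{e}\mathbf{k}(e)(\hat{e}^{+}-\hat{e}^{-})$, where $\hat{e}^{\pm}$ are the two lifts of $e$ with a chosen labeling. Balancedness translates into the vanishing of the boundary at each preimage of a vertex modulo $\widehat{\mathcal{A}}$: the sheet-swap around the branch point inside each face forces contributions at a lifted vertex to cancel precisely when the sum of coefficients on each incident face is even.

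To compare the bilinear forms on $\mathbb{T}$, observe that $a_{e_{i},e_{i+1}}=1$ and $a_{e_{i+1},e_{i}}=0$ cyclically, giving an explicit $3\times 3$ Gram matrix for $(\cdot,\cdot)^{WP}$ in the basis $(\mathbf{k}_{e_{i}})$. On the homological side, the lifts $\hat{e}^{\pm}$ are pairwise non-intersecting in the interior of $\widehat{\mathbb{T}}\setminus\widehat{B}$, so the pairing of Definition~\ref{def_relative_intersection} reduces to boundary-arc contributions, and tracking the $\varepsilon(v_{1},v_{2})$ signs at the two preimages of each vertex produces the same matrix. For a general triangulated surface, both sides are compatible with gluing: algebraically, $K_{\Delta}$ sits inside $\bigoplus_{\mathbb{T}\in F(\Delta)}K_{\mathbb{T}}$ as the equalizer of the two restrictions to each glued edge (the balanced condition matching on both adjacent faces); geometrically, the branched double cover of $\mathbf{\Sigma}$ is reconstructed from the covers of the triangles by a $\sigma$-equivariant gluing of lifted edges, and a Mayer-Vietoris argument identifies $\mathrm{H}_{1}^{\sigma}$ of the glued surface with the analogous equalizer. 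The isomorphism $\Phi$ for triangles then assembles into a global isomorphism.

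The hard part will be the orientation and sign bookkeeping in the comparison of forms: the labeling of $\hat{e}^{+}$ versus $\hat{e}^{-}$ is not canonical, and the sign $\varepsilon(v_{1},v_{2})$ in Definition~\ref{def_relative_intersection} depends on the orientation of $\widehat{\Sigma}$ near a boundary-arc endpoint together with the height conventions on the arc. Ensuring that all these choices propagate consistently across faces, so that on each face the cross-terms total exactly $a_{e,e'}-a_{e',e}\in\{-1,0,+1\}$, is the delicate point of the argument.
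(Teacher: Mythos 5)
Your overall strategy — check the isomorphism on a single triangle, then propagate it via the compatibility of both sides with gluing — is exactly the approach the paper indicates, so the architecture of your argument is sound. The rank computations, the factorization-through-faces description of $K_\Delta$, and the Mayer--Vietoris handling of the double cover are all reasonable.

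However, the specific map $\Phi$ you propose has a genuine flaw, not merely a bookkeeping subtlety. You set
$\Phi(\mathbf{k}) = \bigl[\sum_{e}\mathbf{k}(e)(\hat e^{+}-\hat e^{-})\bigr]$
with $\hat e^{\pm}$ the two lifts of the edge $e$ itself. But every boundary arc of $\mathbf{\Sigma}$ is an edge of $\Delta$, and for such an edge $e\in\mathcal{A}$ the lifts $\hat e^{\pm}$ lie entirely in $\widehat{\mathcal{A}}$, so the term $\mathbf{k}(e)(\hat e^{+}-\hat e^{-})$ is literally a chain supported in $\widehat{\mathcal{A}}$ and contributes nothing to the relative class. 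On the triangle, where \emph{all three} edges are boundary arcs, your $\Phi$ is therefore the zero map, yet you need it to be an isomorphism onto a rank-$3$ lattice. Your worry about "sign bookkeeping" is downstream of this; the chain is the wrong kind of chain. The class $\Phi(\mathbf{k})$ should not be supported on the lifted edges but should instead be represented by $\sigma$-anti-invariant arcs that are \emph{transverse} to the lifted edges and meet $\widehat e$ with algebraic multiplicity $\mathbf{k}(e)$ — exactly the dual picture the paper uses for the $\mathbb{Z}/2$-grading, where $[\mathbf{k}]$ is characterized by its intersection numbers with the $[e]$'s, not by being a linear combination of them. Once the map is set up in this dual fashion (using, say, a lift of arcs crossing each edge once near its midpoint, or suitably truncated lifted dual edges $\widehat{e^{*}}$ avoiding $\widehat B$), the rest of your outline — checking that balancedness corresponds to the relative cycle condition, computing the $3\times 3$ Gram matrix on the triangle, and assembling via gluing — can go through, but the triangle verification has to be redone with the corrected chains, since the intersection form of Definition~\ref{def_relative_intersection} is genuinely being evaluated on transverse arcs, not on chains living inside $\widehat{\mathcal{A}}$.
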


Theorem \ref{theorem_doublecovering} is proved by first noticing that it is trivial for a triangle and then studying the behaviour of the gluing operation for the quadratic pairs. Now Theorem \ref{theorem_centerCF} appears more naturally. For instance consider an unmarked surface $\mathbf{\Sigma}=(\Sigma_{g,s}, \emptyset)$. The double branched covering $\widehat{\mathbf{\Sigma}}$ is depicted in Figure \ref{figtrianglecov} and is obtained by gluing together $g$ pairs of tori which are exchanged by $\sigma$ together with $2g-3+s$ tori on which the covering the elliptic involution together with two balls having $s$ inner punctured exchanged by the involution. From this geometric description, we get a decomposition 
$$(\mathrm{H}_1^{\sigma}(\widehat{\Sigma}\setminus \widehat{B}, \widehat{\mathcal{A}}; \mathbb{Z}) , (\cdot, \cdot)) \cong \begin{pmatrix} 0 & 1 \\ -1 & 0 \end{pmatrix}^{\oplus g} \oplus \begin{pmatrix} 0 & 2 \\ -2 & 0 \end{pmatrix}^{\oplus 2g-3+s} \oplus (0)^{\oplus s}, $$
 from which we deduce Theorem \ref{theorem_centerCF}. The case of marked surfaces is similar though slightly more technical.

Note that the set of closed points of the Poisson torus $\mathcal{X}(\mathrm{H}_1^{\sigma}(\widehat{\Sigma}\setminus \widehat{B}, \widehat{\mathcal{A}}; \mathbb{Z}) , (\cdot, \cdot))$ can be described geometrically as the set of gauge equivalence of flat connections on a trivial $\mathbb{C}^*$ bundle over $\widehat{\Sigma}\setminus \widehat{B}$ which is equivariant in the sense that its holonomy along an arc $\alpha$ is the inverse of the holonomy along $\sigma(\alpha)$. This moduli space was considered by Gaitto-Moore-Neitzke in \cite{GMN12, GMN13}. 
  
  \subsection{Bonahon-Wong's quantum trace}
  
  Stated skein algebras were designed by Bonahon and Wong in \cite{BonahonWongqTrace} to permit the definition of the quantum trace. Thanks to the deep work of L\^e in \cite{LeStatedSkein}, the definition of the quantum trace is now very simple. Fix a commutative ring $k$ with $A^{1/2}\in k^{\times}$ and write $\omega:= A^{-1/2}$ its inverse.
   First consider the triangle $\mathbb{T}$ with edges $e_1, e_2, e_3$ and arcs $\alpha_i$ as in Figure \ref{fig_triangle}. 
   
   \begin{figure}[!h] 
\centerline{\includegraphics[width=2cm]{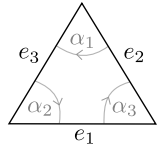} }
\caption{The triangle and some arcs.} 
\label{fig_triangle} 
\end{figure}

   For $i\in \mathbb{Z}/3\mathbb{Z}$, let $\mathbf{k}_i $ be the balanced map sending $e_i$ to $0$ and $e_{i+1}, e_{i+2}$ to $1$. Using the explicit presentation of the triangle in Theorem \ref{theorem_presentation}, it is easy to see 
   that the linear map $\Tr^{\mathbb{T}} : \mathcal{S}_A(\mathbb{T}) \rightarrow \mathcal{Z}_{\omega}(\mathbb{T})$, sending $(\alpha_i)_{-+}$ to $0$, sending $(\alpha_i)_{++}$ to $Z^{\mathbf{k}_i}$ and sending $(\alpha_i)_{--}$ to $Z^{-\mathbf{k}_i}$, extends to a morphism of algebras. Note that $\mathcal{S}_A(\mathbb{T})$ is a $\mathcal{O}_q[\SL_2]^{\otimes \mathcal{E}(\mathbb{T})}$-comodule and that  $\mathcal{Z}_{\omega}(\mathbb{T})$ is a $k[Z^{\pm 1}]^{\otimes \mathcal{E}(\mathbb{T})}$-comodule. Using the morphism $\varphi :  \mathcal{O}_q[\SL_2] \to
   k[Z^{\pm 1}]$ defined by $\varphi(\alpha_{+-})=\varphi(\alpha_{-+})=0$, $\varphi(\alpha_{\varepsilon \varepsilon})= Z^{\varepsilon}$, we see that $\tr^{\mathbb{T}}$ is equivariant for the induced $k[Z^{\pm 1}]^{\otimes \mathcal{E}(\mathbb{T})}$-coaction. In particular, for $(\mathbf{\Sigma}, \Delta)$ a triangulated marked surface, we get the commutative diagram

\begin{equation}\label{diagram_qtrace}
\begin{tikzcd}
0 \arrow[r] & \mathcal{S}_A(\mathbf{\Sigma}) \arrow[r, "\theta^{\Delta}"] \arrow[d, dotted, "\tr^{\Delta}", "\exists!"'] 
& \otimes_{\mathbb{T}\in F(\Delta)} \mathcal{S}_A(\mathbb{T}) \arrow[d, "\otimes_{\mathbb{T}}\tr^{\mathbb{T}}"] 
\arrow[r, "\Delta^L - \sigma \circ \Delta^R"]
& \left( \otimes_{e\in \mathring{\mathcal{E}}(\Delta)} \mathcal{O}_q[\SL_2] \right) \otimes \left(  \otimes_{\mathbb{T}\in F(\Delta)} \mathcal{S}_A(\mathbb{T}) \right)
\arrow[d, "(\otimes_e \varphi) \otimes (\otimes_{\mathbb{T}} \Tr^{\mathbb{T}})"]
\\
0 \arrow[r] & \mathcal{Z}_{\omega}(\mathbf{\Sigma}, \Delta) \arrow[r,  "\theta^{\Delta}"] 
& \otimes_{\mathbb{T}\in F(\Delta)}\mathcal{Z}_{\omega}(\mathbb{T})
\arrow[r, "\Delta^L - \sigma \circ \Delta^R"]
& \left( \otimes_{e\in \mathring{\mathcal{E}}(\Delta)}k[Z_e^{\pm 1}] \right) \otimes \left(  \otimes_{\mathbb{T}\in F(\Delta)}\mathcal{Z}_{\omega}(\mathbb{T}) \right)
\end{tikzcd}
\end{equation}
  
  \begin{definition} The \textit{quantum trace} is the unique algebra morphism $\Tr^{\Delta} : \mathcal{S}_A(\mathbf{\Sigma}) \to \mathcal{Z}_{\omega}(\mathbf{\Sigma}, \Delta)$ making commuting Diagram \ref{diagram_qtrace}.
  \end{definition}
  
  The following theorem justifies the definition of the reduced stated skein algebra.
  \begin{theorem}(Bonahon-Wong \cite{BonahonWongqTrace} for unmarked surfaces, Costantino-L\^e \cite[Theorem $7.12$]{CostantinoLe19} in general)
  The quantum trace induces an injective morphism of algebras $\Tr^{\Delta} : \mathcal{S}^{red}_A(\mathbf{\Sigma}) \hookrightarrow \mathcal{Z}_{\omega}(\mathbf{\Sigma}, \Delta)$.
  \end{theorem}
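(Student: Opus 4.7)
The plan is to split the proof into two steps: first, that $\Tr^{\Delta}$ annihilates the ideal of bad arcs (so that it descends to $\mathcal{S}^{red}_A(\mathbf{\Sigma})$); second, that the induced morphism on $\mathcal{S}^{red}_A(\mathbf{\Sigma})$ is injective. Both steps will be carried out via the triangular Strategy \ref{strategy} using the exact sequence \eqref{eq_triangular} and the commutative diagram \eqref{diagram_qtrace}.

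For the first step, every boundary puncture $p$ of $\mathbf{\Sigma}$ lies inside a single face $\mathbb{T}_0\in F(\Delta)$, so the bad arc $\alpha(p)_{-+}$ can be isotoped to lie entirely in $\mathbb{T}_0$. Its image under $\theta^{\Delta}$ is then the tensor of a bad arc $(\alpha_i)_{-+}\in \mathcal{S}_A(\mathbb{T}_0)$ with empty diagrams in the remaining factors, and since $\Tr^{\mathbb{T}_0}$ annihilates $(\alpha_i)_{-+}$ by its very definition, the commutativity of diagram \eqref{diagram_qtrace} gives $\Tr^{\Delta}(\alpha(p)_{-+})=0$. Hence $\Tr^{\Delta}$ factors through the reduced stated skein algebra.

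For the second step, I would first verify injectivity on the triangle directly. Using the presentation of $\mathcal{S}_A(\mathbb{T})$ from Theorem \ref{theorem_presentation}, imposing $(\alpha_i)_{-+}=0$ makes each $(\alpha_i)_{++}$ invertible with inverse $(\alpha_i)_{--}$ by the $q$-determinant relation \eqref{qdet_rel}. A direct check using the reduced basis $\overline{\mathcal{B}}^{\mathfrak{o}^+}$ of Costantino-L\^e then shows that $\Tr^{\mathbb{T}}$ sends this basis to linearly independent elements of $\mathcal{Z}_\omega(\mathbb{T})$, proving injectivity on $\mathcal{S}^{red}_A(\mathbb{T})$. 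The factorization $\Tr^{\Delta} = (\bigotimes_{\mathbb{T}} \Tr^{\mathbb{T}}) \circ \theta^{\Delta}$ from diagram \eqref{diagram_qtrace} then reduces the general case to showing that $\theta^{\Delta}$ descends to an injective morphism $\mathcal{S}^{red}_A(\mathbf{\Sigma}) \hookrightarrow \bigotimes_{\mathbb{T}} \mathcal{S}^{red}_A(\mathbb{T})$.

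The main obstacle is this last descent-and-injectivity statement, because the bad arcs of the individual triangles include bad arcs lying along inner edges of $\Delta$, which have no counterpart among the bad arcs of $\mathbf{\Sigma}$. I would exploit the characterization of $\mathrm{Im}(\theta^{\Delta})$ by the comodule condition $\Delta^L - \sigma \circ \Delta^R = 0$: whenever a bad arc on one side of an inner edge $e$ appears in $\theta^{\Delta}(x)$, the state $-+$ it produces on $e$ forces, by the comodule condition, a matching bad-arc contribution on the opposite side of $e$. Using the cutting arc relation \eqref{cutting_arc_rel}, such paired terms can be regrouped into diagrams not involving any bad arc along $e$. Performing this reorganization inductively over the inner edges of $\Delta$ should reduce the problem to the triangle case treated above and conclude the proof.
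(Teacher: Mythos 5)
Your overall strategy (prove the triangle case via the presentation, then leverage the commutative square \eqref{diagram_qtrace}) matches the paper's, but the two delicate steps you try to supply are not carried through. Your first step rests on a false premise: a boundary puncture $p$ of $\mathbf{\Sigma}$ does not in general lie inside a single face of $\Delta$. It is a vertex of the triangulation, and whenever inner edges of $\Delta$ are incident to $p$, the bad arc $\alpha(p)_{-+}$ must cross the whole fan of faces around $p$. A concrete counterexample is the disc with four boundary arcs triangulated by two triangles sharing a diagonal $d$: the bad arcs at the two endpoints of $d$ cross $d$, hence pass through both faces. Consequently $\theta^{\Delta}(\alpha(p)_{-+})$ is a sum over states on the interior edges the bad arc crosses, not a simple tensor with a single bad-arc factor, and showing that $\otimes_{\mathbb{T}}\Tr^{\mathbb{T}}$ annihilates that sum requires an actual cancellation argument, not the immediate reduction you describe.

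For your third step you correctly isolate the real difficulty, namely that $\theta^{\Delta}$ descends to an injective map $\mathcal{S}^{red}_A(\mathbf{\Sigma}) \hookrightarrow \bigotimes_{\mathbb{T}\in F(\Delta)}\mathcal{S}^{red}_A(\mathbb{T})$, but the mechanism you sketch (a bad-arc contribution on one side of an inner edge $e$ forces a matching one on the other side via the comodule condition, then regroup by the cutting-arc relation) is too vague to stand as a proof: bad arcs of a triangle live at its vertices, not along its edges, and the interaction of the corner-arc states on either side of $e$ under $\Delta^L - \sigma\circ\Delta^R$ is precisely the delicate combinatorics established by Costantino-L\^e in \cite[Theorem $7.12$]{CostantinoLe19}. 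The paper takes this ingredient as given (it is the hook on the top row of the final diagram) rather than reproving it, and your sketch does not substitute for the actual argument, which requires a careful analysis, for instance by tracking the reduced bases $\overline{\mathcal{B}}^{\mathfrak{o}^+}$ on both sides of $\theta^{\Delta}$.
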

  
  \begin{proof}
  For the triangle, it is a straightforward consequence of the presentation of the triangle (Theorem \ref{theorem_presentation}) that the quantum trace induces an isomorphism $\Tr^{\mathbb{T}} : \mathcal{S}^{red}_A(\mathbb{T}) \xrightarrow{\cong} \mathcal{Z}_{\omega}(\mathbb{T})$. In general, the injectivity follows from the commutativity of the following diagram
  $$\begin{tikzcd}
\mathcal{S}^{red}_A(\mathbf{\Sigma}) \arrow[r, hook, "\theta^{\Delta}"] \arrow[d, "\tr^{\Delta}"] 
& \otimes_{\mathbb{T}\in F(\Delta)} \mathcal{S}^{red}_A(\mathbb{T}) \arrow[d, "\otimes_{\mathbb{T}}\tr^{\mathbb{T}}", "\cong"'] \\
\mathcal{Z}_{\omega}(\mathbf{\Sigma}, \Delta) \arrow[r, hook, "\theta^{\Delta}"] 
& \otimes_{\mathbb{T}\in F(\Delta)}\mathcal{Z}_{\omega}(\mathbb{T})
\end{tikzcd}$$
  
  \end{proof}
  
  The quantum trace preserves the $\mathrm{H}_1(\Sigma, \mathcal{A}; \mathbb{Z}/2\mathbb{Z})$ grading (\cite[Lemma $2.27$]{KojuQGroupsBraidings}). 
  Also the quantum trace intertwines the Chebyshev-Frobenius morphism with the Frobenius morphism in the sense that the following diagram commutes:
  $$\begin{tikzcd}
  \mathcal{S}^{red}_{+1}(\mathbf{\Sigma}) \arrow[r, hook, "\tr^{\Delta}"] \arrow[d, hook, "Ch_A"] & 
  \mathcal{Z}_{+1}(\mathbf{\Sigma}, \Delta) \arrow[d, hook, "Fr_{\omega}"] \\
    \mathcal{S}^{red}_{A}(\mathbf{\Sigma}) \arrow[r, hook, "\tr^{\Delta}"]& 
  \mathcal{Z}_{\omega}(\mathbf{\Sigma}, \Delta) 
  \end{tikzcd}
  $$
  The commutativity of this diagram follows from the obvious fact that it commutes when $\mathbf{\Sigma}=\mathbb{T}$. So the Chebyshev-Frobenius morphism (for triangulable surfaces) is the restriction of the Frobenius morphism: this is how Bonahon and Wong made it first appear in \cite{BonahonWong1}.

  \subsection{L\^e-Yu's enhancement of the quantum trace}
  
  Bonahon and Wong's quantum trace embeds the reduced stated skein algebras into quantum tori. In order to embed the whole stated skein algebra into some quantum torus, L\^e and Yu defined in \cite{LeYu_SSkeinQTraces} an enhancement of the quantum trace that we now describe. Let $(\mathbf{\Sigma}, \Delta)$ be a triangulated marked surface and denote by $(\mathbf{\Sigma}^*, \Delta^*)$ the triangulated marked surface obtained from $(\mathbf{\Sigma}, \Delta)$ by gluing a triangle along each boundary arc of $\mathbf{\Sigma}$. So each boundary arc of $\mathbf{\Sigma}$ corresponds to two boundary arcs, say $a'$ and $a''$ in $\mathbf{\Sigma}^*$. Let $i: \mathbf{\Sigma} \to \mathbf{\Sigma}^*$ be the embedding which is the identity outside a small neighborhood of $\mathcal{A}$ and embedding $a$ into $a'$. Note that the morphism $i_* : \mathcal{S}_A(\mathbf{\Sigma}) \to \mathcal{S}_A(\mathbf{\Sigma}^*)$ sends injectivly the basis $\mathcal{B}^{\mathfrak{o}_+}(\mathbf{\Sigma})$ of $\mathcal{S}_A(\mathbf{\Sigma})$  to a subset of the basis $\overline{\mathcal{B}}^{\mathfrak{o}_+}(\mathbf{\Sigma}^*)$ of $\mathcal{S}^{red}_A(\mathbf{\Sigma}^*)$ (which does not contain any bad arc) so $i_*$ induces an injective morphism 
  $$ j : \mathcal{S}_A(\mathbf{\Sigma}) \hookrightarrow \mathcal{S}_A^{red}(\mathbf{\Sigma}^*).$$
  We can thus embeds $\mathcal{S}_A(\mathbf{\Sigma})$ into some quantum torus using the composition: 
  $$ \phi: \mathcal{S}_A(\mathbf{\Sigma}) \xrightarrow{j} \mathcal{S}_A^{red}(\mathbf{\Sigma}^*) \xrightarrow{\tr^{\Delta^*}} \mathcal{Z}_{\omega}(\mathbf{\Sigma}^*, \Delta^*).$$
  The quantum torus $ \mathcal{Z}_{\omega}(\mathbf{\Sigma}^*, \Delta^*)$ is very large and the construction can be refined as follows. 
    Let $\overline{\mathcal{E}}:= \mathcal{E} \bigsqcup \mathcal{A}$. Since a boundary arc $a$ is an edge of the triangulation, we denote it by $a$ when seen as an element of $\mathcal{E}(\Delta) $ and $\widehat{a}$ when seen as an element of $\mathcal{A}$. Let $\overline{K}_{\Delta}$ denote the set of maps $\mathbf{k}: \overline{\mathcal{E}}(\Delta) \to \mathbb{Z}$ such that $(1)$ for any face of $\Delta$ with edges $a,b,c$, then $\mathbf{k}(a)+\mathbf{k}(b)+\mathbf{k}(c)$ is even and $(2)$  each $\mathbf{k}(\widehat{a})$ is even. Define an injective linear map $i: \overline{K}_{\Delta} \hookrightarrow K_{\Delta^*}$ sending $\mathbf{k}$ to $\mathbf{k}'$ where $\mathbf{k}'(e)=\mathbf{k}(e)$ if $e\in \mathring{\mathcal{E}}(\Delta)$ and for $a\in \mathcal{A}$,   $\mathbf{k}'(a)=\mathbf{k}(a)+\mathbf{k}(\widehat{a})$, $\mathbf{k}'(a')=-\mathbf{k}(a)$ and $\mathbf{k}'(a'')=0$.
    Let $\overline{\mathcal{Z}}_{\omega}(\mathbf{\Sigma}, \Delta) \subset \mathcal{Z}_{\omega}(\mathbf{\Sigma}^*, \Delta^*)$ be the submodule spanned by elements $Z^{i(\mathbf{k})}$ for $\mathbf{k}\in \overline{K}_{\Delta}$. An easy computation shows that the map $\phi$ takes values in the smaller quantum torus $\overline{\mathcal{Z}}_{\omega}(\mathbf{\Sigma}, \Delta) $.
    
    \begin{definition} The injective morphism $\phi: \mathcal{S}_A(\mathbf{\Sigma}) \hookrightarrow \overline{\mathcal{Z}}_{\omega}(\mathbf{\Sigma}, \Delta) $ will be referred to as the \textit{refined quantum trace}.
    \end{definition}
  
  \subsection{Closed surfaces}\label{sec_ClosedCase}
  
  It remains to consider closed surfaces. Let $\mathcal{W}_q$ be the quantum torus generated by $X^{\pm 1}, Y^{\pm 1}$ with relation $XY=qYX$. Let $\Theta$ be the involutive automorphism  defined by $\Theta(X)=X^{-1}$ and $\Theta(Y)=Y^{-1}$. It decomposes the quantum torus into its eigenspaces $\mathcal{W}_q= \mathcal{W}_q^+ \oplus \mathcal{W}_q^-$ where $\mathcal{W}_q^{\pm}$ is the submodule of elements $z$ such that $\Theta(z)=\pm z$.
   In genus $1$, one has the 
  
  \begin{theorem}(Frohman-Gelca \cite{FG00}) For $\mathbf{\Sigma}=(\Sigma_{1,0}, \emptyset)$ the closed torus, there is an isomorphism $\phi^{FG}: \mathcal{S}_A(\Sigma_{1,0}) \xrightarrow{\cong} \mathcal{W}_q^+$ sending the meridian $\lambda \subset \Sigma_{1,0}$ to $X+X^{-1}$ and the longitude $\mu \subset \Sigma_{1,0}$ to $Y+Y^{-1}$.
  \end{theorem}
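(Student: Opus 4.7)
My plan is to define $\phi^{FG}$ explicitly on a geometric basis of $\mathcal{S}_A(\Sigma_{1,0})$ indexed by primitive homotopy classes of curves, to identify it with a natural basis of $\mathcal{W}_q^+$, and then to verify compatibility with multiplication via the Frohman-Gelca product-to-sum formula. Every primitive simple closed curve on the torus is of the form $\gamma_{p,q}$ with $\gcd(p,q)=1$, giving an indexing by $(p,q) \in \mathbb{Z}^2/\pm$. Using the Chebyshev polynomials $T_n$ of the first kind, for $(p,q) \in \mathbb{Z}^2$ with $d := \gcd(p,q)$ (convention $(0,0)_T := 2$), I set $(p,q)_T := T_d(\gamma_{p/d, q/d})$; a standard argument shows $\{(p,q)_T\}_{(p,q) \in \mathbb{Z}^2/\pm}$ is a $k$-basis of $\mathcal{S}_A(\Sigma_{1,0})$. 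On the target, $\Theta$ is an involutive algebra automorphism of $\mathcal{W}_q$, so $\mathcal{W}_q^+$ has basis $\{E_{p,q}\}_{(p,q) \in \mathbb{Z}^2/\pm}$, where $E_{p,q} := A^{-pq}(X^p Y^q + X^{-p} Y^{-q})$ (with $E_{0,0}:=2$). I define $\phi^{FG}((p,q)_T) := E_{p,q}$; in particular $\phi^{FG}(\lambda) = X + X^{-1}$ and $\phi^{FG}(\mu) = Y + Y^{-1}$, as required.

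With this definition, bijectivity of $\phi^{FG}$ as a $k$-module map is immediate from the basis matching, so the content is multiplicativity. This reduces to checking the Frohman-Gelca product-to-sum identity
\[
(p,q)_T \cdot (r,s)_T = A^{ps-qr}(p+r, q+s)_T + A^{qr-ps}(p-r, q-s)_T
\]
on each side. On the quantum torus side, the Weyl-normalized monomials $W_{p,q} := A^{-pq} X^p Y^q$ obey $W_{p,q} W_{r,s} = A^{ps-qr} W_{p+r, q+s}$, as follows directly from $Y^q X^r = A^{-2qr} X^r Y^q$; expanding $E_{p,q} = W_{p,q} + W_{-p,-q}$ and multiplying then produces exactly the required identity in $\mathcal{W}_q^+$.

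The main work is proving the product-to-sum formula on the skein side, and here I would follow the Triangular Strategy of the paper. Consider the once-punctured torus $\mathbf{\Sigma}' := (\Sigma_{1,1}, \emptyset)$, which is triangulable, and apply the Bonahon-Wong quantum trace $\tr^{\Delta}$ (or L\^e-Yu's refined version) to embed $\mathcal{S}_A(\mathbf{\Sigma}')$ into a balanced Chekhov-Fock algebra in which monomials commute by explicit exponential relations. After computing the images of the primitive curves $\gamma_{p,q}$ in this quantum torus, a direct monomial computation establishes the product-to-sum formula there; it then descends to $\mathcal{S}_A(\Sigma_{1,0})$ via the off-puncture surjection $\mathcal{S}_A(\mathbf{\Sigma}') \twoheadrightarrow \mathcal{S}_A(\Sigma_{1,0})$ of Proposition \ref{prop_offpuncture}. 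Alternatively, the classical topological argument of Frohman-Gelca works directly: place $\gamma_{p,q}$ and $\gamma_{r,s}$ in minimal position so that they meet transversally in $|ps - qr|$ double points, resolve each crossing using the Kauffman bracket, and verify that after simplification only the two homotopy classes $\gamma_{p+r, q+s}$ and $\gamma_{p-r, q-s}$ survive with the stated $A$-weights.

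The principal obstacle is the $A$-exponent bookkeeping in the product-to-sum formula. In the Chekhov-Fock approach, one must first identify the images $\tr^{\Delta}((p,q)_T)$ explicitly, which requires care because $(p,q)_T$ involves Chebyshev polynomials of the primitive $\gamma_{p/d, q/d}$ when $d > 1$; here the compatibility between the quantum trace and the Chebyshev-Frobenius morphism will be a key ingredient. In the direct topological approach, the difficulty is controlling the $2^{|ps-qr|}$ resolutions and showing that the surviving terms collapse into exactly the two claimed curves with the correct $A^{\pm(ps-qr)}$ coefficients.
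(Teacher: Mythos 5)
The paper does not give a proof of this theorem; it simply cites Frohman--Gelca \cite{FG00}. Your proposal is a faithful outline of the argument Frohman and Gelca actually gave: define the Chebyshev basis $\{(p,q)_T\}$ of $\mathcal{S}_A(\Sigma_{1,0})$ and the Weyl-normalized symmetric monomials $\{E_{p,q}\}$ of $\mathcal{W}_q^+$, match the bases, reduce multiplicativity to the product-to-sum formula, and verify the quantum-torus side by the commutation relation $W_{p,q}W_{r,s}=A^{ps-qr}W_{p+r,q+s}$ (which you compute correctly). The entire mathematical content of the theorem is the skein-side product-to-sum identity, and you correctly identify it as the main obstacle without discharging it; that step is a genuinely nontrivial inductive resolution argument and cannot be waved away, but as an outline the structure is right.

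One caution about your proposed alternative via the Chekhov--Fock embedding of $\mathcal{S}_A(\Sigma_{1,1},\emptyset)$: the product-to-sum identity does \emph{not} hold verbatim on the once-punctured torus. When one resolves all $2^{|ps-qr|}$ crossings of $\gamma_{p,q}\cdot\gamma_{r,s}$ on $\Sigma_{1,1}$, some of the resulting components are circles that bound a disc in $\Sigma_{1,0}$ but encircle the puncture in $\Sigma_{1,1}$; in $\mathcal{S}_A(\Sigma_{1,1})$ these contribute powers of the peripheral curve $\gamma_p$ rather than scalars $-(A^2+A^{-2})$, so the upstairs formula acquires correction terms lying in the off-puncture ideal $\mathcal{I}_p$. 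You therefore cannot ``prove the identity upstairs and push it down''; you would instead have to compute the images under $\tr^\Delta$ and verify the identity only modulo $\tr^\Delta(\mathcal{I}_p)$, which is substantially more bookkeeping than the direct topological argument on the closed torus. The direct Frohman--Gelca resolution argument remains the cleanest route.
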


An important remaining problem is the
  
  \begin{problem} For $g\geq 2$, find an embedding of  $\mathcal{S}_A(\Sigma_{g,0})$ into some quantum torus. If possible, choose one having the same PI-dimension than  $\mathcal{S}_A(\Sigma_{g,0})$ when $A$ is a root of unity.
  \end{problem}
  
  A candidate for the quantum torus was proposed in \cite{KojuQuesneyQNonAb} based on pants decomposition. An important step has been announced by Karuo  \cite{Karuo_ToAppear} who will construct a filtration of $\mathcal{S}_A(\Sigma_{g,0})$, based on pants decomposition, and prove that the associated graded algebra can be embedded into some quantum torus.

  \section{Relative character varieties}
  
  \subsection{Deformation quantization}\label{sec_DefoQuantization}
  
  Let $A_q$ be an associative unital $\mathbb{C}[q^{\pm 1}]$-algebra which is free and flat as a $\mathbb{C}[q^{\pm 1}]$-module. Consider the $\mathbb{C}$ algebra $A_{+1}=A_q\otimes_{q=1} \mathbb{C}$ and the $\mathbb{C}[[\hbar]]$ algebra $A_{\hbar}=A_q\otimes_{q=\exp(\hbar)} \mathbb{C}[[\hbar]]$. We suppose that $A_{+1}$ is commutative. Fix $\mathcal{B}$ a basis of $A_q$, so by flatness, $\mathcal{B}$ can be also considered as a basis of $A_{\hbar}$ and $A_{+1}$.
The basis $\mathcal{B}$ induces an isomorphism of $\mathbb{C}[[\hbar]]$-modules $\Psi^{\mathcal{B}}: A_{+1}\otimes_{\mathbb{C}} \mathbb{C}[[\hbar]] \xrightarrow{\cong} A_{\hbar}$ sending $b\in \mathcal{B}$ to itself.  Denote by $\star$ the pull-back in $ A_{+1}\otimes_{\mathbb{C}} \mathbb{C}[[\hbar]]$ of the product in $A_{\hbar}$. Define a Poisson bracket $\{\cdot, \cdot\}$ on $A_{+1}$ by the formula:
$$ x \star y - y \star x = \hbar \{x, y\} \pmod{\hbar^2}.$$
Note that the associativity of $\star$ implies the Jacobi identity for $\{\cdot, \cdot\}$. 
  We say that the algebra $(A_{+1}\otimes_{\mathbb{C}} \mathbb{C}[[\hbar]], \star)$ is a \textit{deformation quantization} of the Poisson algebra $(A_{+1}, \{\cdot, \cdot\})$ and refer to (\cite{KontsevichQuantizationPoisson}, \cite{GRS_QuantizationDeformation}  $II.2$)  for details on the subject. If $\mathcal{B}'$ is another basis, then $\Psi^{\mathcal{B}'} \circ \Psi^{\mathcal{B}}$ is an algebra isomorphism $(A_{+1}\otimes_{\mathbb{C}} \mathbb{C}[[\hbar]], \star_{\mathcal{B}}) \cong (A_{+1}\otimes_{\mathbb{C}} \mathbb{C}[[\hbar]], \star_{\mathcal{B}'})$ whose reduction modulo $\hbar$ is the identity:  such an isomorphism is called a \textit{gauge equivalence} and it is clear that two gauge equivalent star products induces the same Poisson bracket, in particular $\{\cdot, \cdot\}$ is independent on the choice of $\mathcal{B}$. Note that when $A_{+1}$ is reduced and finitely generated, $\mathrm{Specm}(A_{+1})$ is a Poisson variety. For instance, for $\mathbb{E}$ a quadratic pair, the quantum torus $\mathbb{T}_{\hbar}(\mathbb{E})$ where $q= \exp(\hbar)$, is a deformation quantization of the 
  Poisson torus $\mathcal{\chi}(\mathbb{E})$. Indeed, from the equality $Z_{a+b}= q^{-(a,b)}Z_aZ_b= q^{+(a,b)}Z_bZ_a$ and setting $q=\exp{\hbar}$, we get that 
  $$ Z_a \star Z_b - Z_b \star Z_a = (q^{(a,b)}-q^{-(a,b)})Z_{a+b}= \hbar 2(a,b) Z_{a+b} \pmod{\hbar^2}, $$
  giving the Poisson bracket $\{ Z_a, Z_b\}= 2(a,b) Z_{a+b}$ of $\mathcal{X}(\mathbb{E})$.
  
\begin{remark}\label{remarkPoisson} If $\Psi_A : A^1_q \to A^2_q$ is an algebra morphism, it induces some morphisms $\Psi_{\hbar}: A^1_{\hbar} \to A^2_{\hbar}$ and $\Psi_{+1}: A^1_{+1} \to A^2_{+1}$. Since $\Psi_{+1}$ is the reduction modulo $\hbar$ of $\Psi_{\hbar}$, it follows from the definition of the Poisson bracket that $\Psi_{+1}$ is a Poisson morphism.
\end{remark}

  \begin{definition}
  For $\mathbf{\Sigma}$ a marked surface, we denote by $\mathcal{X}(\mathbf{\Sigma}):= \mathrm{Specm}(\mathcal{S}_{+1}(\mathbf{\Sigma}))$ and $\mathcal{X}^{red}(\mathbf{\Sigma}):= \mathrm{Specm}(\mathcal{S}^{red}_{+1}(\mathbf{\Sigma}))$ the Poisson varieties associated to $A^{1/2}:= \exp(\hbar/4)$ (so that $q=\exp(\hbar)$).
  \end{definition}
Note that by Remark \ref{remarkPoisson}, the gluing maps $\theta_{a\#b}$ and the comodule maps $\Delta^{L/R}_a$ are Poisson.  

\begin{notations} When considering an affine complex variety $X$, we will denote by $\mathcal{O}[X]$ its algebra of regular functions (so $X=\mathrm{Specm}(\mathcal{O}[X])$). A closed point $x\in X$ is a maximal ideal $\mathfrak{m}_x \subset \mathbb{C}[X]$ which is the kernel of a (unique) character $\chi_x : \mathcal{O}[X]\to \mathbb{C}$. Even though $x$ and $\mathfrak{m}_x$ are the same object, we use three letters $x, \mathrm{m}_x, \chi_x$ to denote it. 
\end{notations}

  In view of classifying the weight representations of stated algebras, we will be interested in computing the symplectic leaves of these Poisson varieties. Let $X$ be an affine complex Poisson variety. Define a first partition $X=X^0\bigsqcup \ldots \bigsqcup X^n$ where $X^0$ is the smooth locus of $X$ and for $i=0, \ldots, n-1$, $X^{i+1}$ is the smooth locus of $X\setminus X^{i}$. Then each $X^i$ is a smooth complex affine variety that can be seen as an analytic Poisson variety. Define an equivalence relation $\sim$ on $X^i$ by 
  writing $x\sim y$ if there exists a finite sequence $x=p_0, p_1, \ldots, p_k=y$ and functions $h_0, \ldots, h_{k-1} \in \mathcal{O}[X^i]$ such that $p_{i+1}$ is obtained from $p_i$ by deriving along the Hamiltonian flow of $h_i$. Write $X^i= \bigsqcup_j X^{i,j}$ the orbits of this relation. Note that the $X^{i,j}$ are analytic subvarieties: they are the biggest connected smooth symplectic subvarieties of $X^i$.  
  
  \begin{definition} 
  The elements $X^{i,j}$ of the (analytic) partition $X= \bigsqcup_{i,j} X^{i,j}$ are called the \textit{symplectic leaves} of $X$.
  \end{definition}

  Recall the coaction map $\Delta^L : \mathcal{S}_A(\mathbf{\Sigma}) \to (\mathcal{O}_q[\SL_2])^{\otimes \mathcal{A}} \otimes \mathcal{S}_A(\mathbf{\Sigma})$ coming from gluing bigons to the boundary arcs. It induces an algebraic group action $\nabla^L: (\SL_2)^{\mathcal{A}} \times \mathcal{X}(\mathbf{\Sigma}) \to  \mathcal{X}(\mathbf{\Sigma})$ which is Poisson when $\SL_2$ has the Poisson structure of $\mathcal{X}(\mathbb{B})$ (see bellow). Using the diagonal inclusion $\mathbb{C}^* \hookrightarrow \SL_2(\mathbb{C})$ sending $z$ to $\begin{pmatrix} z & 0 \\ 0 & z^{-1} \end{pmatrix}$, we get an algebraic (left) group action $\nabla: (\mathbb{C}^*)^{\mathcal{A}} \times \mathcal{X}(\mathbf{\Sigma})\to  \mathcal{X}(\mathbf{\Sigma})$. This action restricts to a group action $\nabla^{red}: (\mathbb{C}^*)^{\mathcal{A}} \times \mathcal{X}^{red}(\mathbf{\Sigma}) \to  \mathcal{X}^{red}(\mathbf{\Sigma})$. Define an equivalence relation $\sim$ on these varieties by setting $x\sim y$ if there exists $g\in  (\mathbb{C}^*)^{\mathcal{A}} $ such that $g\cdot x$ and $y$ belong to the same symplectic leaf.
  
  \begin{definition} The \textit{equivariant symplectic leaves} of $\mathcal{X}(\mathbf{\Sigma})$ and $\mathcal{X}^{red}(\mathbf{\Sigma})$ are the equivalence classes for this relation.
  \end{definition}
  
  As we shall see in Section \ref{sec_PO}, the problem of classifying the weight representations of stated skein algebras is deeply related to the following:
 \begin{problem}\label{problem_classification_leafs} 
 Compute the equivariant symplectic leaves of $\mathcal{X}(\mathbf{\Sigma})$ and $\mathcal{X}^{red}(\mathbf{\Sigma})$.
 \end{problem}
  
  As we shall review, the problem was solved:
  \begin{enumerate}
  \item for closed surfaces by Goldman \cite{Goldman_symplectic}; 
  \item for unmarked non-closed surfaces, independently  by Fock-Rosly \cite{FockRosly} and Guruprasad-Huebschmann-Jeffrey-Weitsman \cite{ GHJW_ModSpacesParBd};
  \item for the once-punctured monogon $\mathbf{m}_1$ and  $\mathbb{D}_1^+:=\underline{\mathbf{\Sigma}}_{0,2}$ by Alekseev-Malkin \cite{AlekseevMalkin_PoissonLie};
  \item for the bigon, independently by Alekseev-Malkin \cite{AlekseevMalkin_PoissonLie} and Hodges-Levasseur \cite{HodgesLevasseur_OqG};
  \item Ganev-Jordan-Safranov \cite{GanevJordanSafranov_FrobeniusMorphism} found an explicit open dense symplectic leaf in $\mathcal{X}(\mathbf{\Sigma}_{g,0}^0)$, for $g\geq 1$. 
  \end{enumerate}

  Let us first state a trivial, but useful result towards the resolution of this problem. Consider an algebra $A_q$ as before and let $x\in \mathcal{A}_q$ be such that $xA_q=A_qx$, i.e. the left and right (and bilateral) ideals generated by $x$ coincide. Let $I_q=(x)\subset A_q$ this ideal and $I=I_q\otimes_{q=1} \mathbb{C} \subset A_{+1}$. Since we have $[I_q, A_q] \subset I_q$, it follows from the definition of the Poisson bracket that $\{ I, A_{+1}\} \subset I$, i.e. that $I$ is a Poisson ideal of $A_{+1}$. Partition the set $X=\mathrm{Specm}(A_{+1})$ into $X=X^0 \bigsqcup X^1$ where $X^0$ is the open subset of $x\in X$ such that $\chi_x(I)=0$ and $X^1$ its closed complement. Clearly each set $X^i$ is a disjoint union of symplectic leaves, i.e. the partition into symplectic leaves is a refinement of the partition $X=X^0 \bigsqcup X^1$.

  An easy skein manipulation shows the following:
  
  \begin{lemma}\label{lemma_leyu}(L\^e-Yu \cite[Lemma $4.4. (a)$]{LeYu_SSkeinQTraces}) Let $p\in \mathcal{P}^{\partial}$ be a boundary puncture and $\alpha(p)_{-+}\in \mathcal{S}_A(\mathbf{\Sigma})$ its associated bad arc. For any $[D,s] \in \mathcal{B}^{\mathfrak{o}^+}$, there exists $n\in \mathbb{Z}$ such that $\alpha(p)_{-+} [D,s]= A^{n/2} [D,s] \alpha(p)_{-+}$. In particular $\alpha(p)_{-+} \mathcal{S}_A(\mathbf{\Sigma})= \mathcal{S}_A(\mathbf{\Sigma}) \alpha(p)_{-+}$. 
  \end{lemma}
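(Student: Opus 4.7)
\smallskip
\noindent\textbf{Proof plan.} The plan is to prove the commutation relation by direct skein-theoretic computation, exploiting the specific form of the $\mathscr{R}$-matrix on the states that appear on a bad arc. First I would isotope $\alpha(p)$ into an arbitrarily small neighborhood of $p$ so that $v_a\in a$ is placed at the $\mathfrak{o}^+$-smallest point of $a$ (i.e.\ strictly before every endpoint of $D$ on $a$) and $v_b\in b$ at the $\mathfrak{o}^+$-largest point of $b$ (strictly after every endpoint of $D$ on $b$), and so that the interior of $\alpha(p)$ meets no strand of $D$ in $\Sigma$. After this isotopy, the tangles representing $\alpha(p)_{-+}\cdot [D,s]$ and $[D,s]\cdot \alpha(p)_{-+}$ have identical underlying diagrams in $\Sigma$; they differ only in the heights of the endpoints $v_a,v_b$ relative to the heights of the endpoints of $D$ on $a$ and $b$.

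\smallskip
\noindent Next, I would pass from the first product to the second by applying the height exchange relation \eqref{height_exchange_rel} at each boundary arc. On $a$, the point $v_a$ sits at the picture-lower position (since its $\mathfrak{o}^+$-position is the smallest), carries state $-$, and must be moved from above every $u_i\in \partial_a D$ to below. At each step one uses $\mathscr{R}_{t_i,-}^{k,l}$, where $t_i=s(u_i)$. A direct inspection of the explicit matrix
$$
\mathscr{R}=\begin{pmatrix} A & 0 & 0 & 0 \\ 0 & 0 &A^{-1} & 0 \\ 0 & A^{-1} & A-A^{-3} & 0 \\ 0 & 0 & 0 & A \end{pmatrix}
$$
shows that the rows $(+,-)$ and $(-,-)$ each contain exactly one non-zero entry (respectively $A^{-1}$ at column $(-,+)$ and $A$ at column $(-,-)$); in both cases that entry preserves the state $-$ on $v_a$, so the height exchange collapses to a single scalar and does not destroy the bad arc. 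Symmetrically, on $b$ the point $v_b$ is at the picture-upper position with state $+$, and the relevant rows $(+,+)$ and $(+,-)$ of $\mathscr{R}$ also have a single non-zero entry each, again preserving the state $+$ on $v_b$.

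\smallskip
\noindent Accumulating the scalars over all $v\in\partial D\cap(a\cup b)$ gives
$$
\alpha(p)_{-+}\cdot[D,s]\;=\;A^{N}\,[D,s]\cdot\alpha(p)_{-+},\qquad N=\sum_{v\in\partial D\cap(a\cup b)}\sigma(s(v)),
$$
with $\sigma(+)=+1,\ \sigma(-)=-1$. Setting $n=2N\in\mathbb Z$ yields the first assertion, and taking linear combinations over the basis $\mathcal{B}^{\mathfrak{o}^+}$ of Theorem \ref{theorem_basis} gives $\alpha(p)_{-+}\mathcal{S}_A(\mathbf{\Sigma})=\mathcal{S}_A(\mathbf{\Sigma})\alpha(p)_{-+}$.

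\smallskip
\noindent\textbf{Where the work is.} None of the steps is hard in isolation; the only real care needed is bookkeeping. The delicate point is matching conventions: translating \textquotedblleft $v_a$ at $\mathfrak{o}^+$-minimum, $v_a$ at highest (resp.\ lowest) height\textquotedblright\ into the correct instance $\heightexch{->}{t_i}{-}$ vs.\ $\heightexch{<-}{t_i}{-}$ of \eqref{height_exchange_rel}, and likewise at $b$, so that the resulting row of $\mathscr{R}$ is indeed $(+,\pm)$ or $(-,-)$ rather than $(-,+)$\,---\,the one row with two non-zero entries, which would correspond to the \emph{good} arcs and spoil the argument. Once the conventions are pinned down, the collapse of the sum to a single monomial is automatic, and everything else is a finite product of such scalars.
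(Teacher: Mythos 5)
Your proof is correct and carries out precisely the ``easy skein manipulation'' that the paper defers to Lê--Yu: isotope $\alpha(p)$ into a small half-disk near $p$ that is disjoint from $D$, with $v_a$ at the $\mathfrak{o}^+$-minimal point of $a$ and $v_b$ at the $\mathfrak{o}^+$-maximal point of $b$ (which is geometrically forced since $b$ is followed by $p$ then $a$ in the cyclic order), note that the two products then share the same underlying stated diagram and differ only in height orderings, and collapse each height exchange to a single monomial because the relevant slice of $\mathscr{R}$ has a unique nonzero entry that moreover preserves the states $-$ at $v_a$ and $+$ at $v_b$; the exponent you find, $n=2N$ with $N=\sum_{v\in\partial D\cap(a\cup b)}\sigma(s(v))$, is the right one. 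The only blemish is notational: you write $\mathscr{R}_{t_i,-}^{k,l}$ (so $(t_i,-)$ is a column index in the paper's convention) but then speak of ``rows $(+,-)$ and $(-,-)$''; this does not affect the argument because $\mathscr{R}$ and $\mathscr{R}^{-1}$ are symmetric as $4\times 4$ matrices, so the single-nonzero-entry property you invoke holds for both the row and the column in question.
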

  
  For $\mathbf{\varepsilon} : \mathcal{P}^{\partial} \to \{0, 1\}$, denote by $\mathcal{X}^{(\mathbf{\varepsilon})}(\mathbf{\Sigma}) \subset \mathcal{X}(\mathbf{\Sigma})$ the subset of these $x\in  \mathcal{X}(\mathbf{\Sigma})$ such that $\chi_x(\alpha(p)_{-+})=0$ if $\mathbf{\varepsilon}(p)=0$ and $\chi_x(\alpha(p)_{-+})\neq 0$ else.
  
  \begin{definition} We call the \textit{bad arcs partition} the partition $\mathcal{X}(\mathbf{\Sigma})= \bigsqcup_{\mathbf{\varepsilon}} \mathcal{X}^{(\mathbf{\varepsilon})}(\mathbf{\Sigma})$. 
  \end{definition}
  
  Note that for $\mathbf{\varepsilon}=0$ (the map sending every $p$ to $0$), one has $\mathcal{X}^{(0)}(\mathbf{\Sigma})= \mathcal{X}^{red}(\mathbf{\Sigma})$ by definition. 
  By Lemma \ref{lemma_leyu} and the preceding discussion, the partition into symplectic leaves is a refinement of the bad arc partition. Moreover, it is clear from the definition of the $(\mathbb{C}^*)^{\mathcal{A}}$ action that each set $\mathcal{X}^{(\mathbf{\varepsilon})}(\mathbf{\Sigma})$ is preserved  by $(\mathbb{C}^*)^{\mathcal{A}}$, so we have our first tool towards the resolution of Problem \ref{problem_classification_leafs}:
  
  \begin{lemma}
  The partition into equivariant symplectic leaves is a refinement of the bad arcs partition.
  \end{lemma}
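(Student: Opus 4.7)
The plan is to combine the two observations already recorded in the paragraph preceding the lemma. First I would verify that each stratum $\mathcal{X}^{(\mathbf{\varepsilon})}(\mathbf{\Sigma})$ of the bad arcs partition is a union of (ordinary) symplectic leaves. By Lemma~\ref{lemma_leyu}, the left ideal of $\mathcal{S}_A(\mathbf{\Sigma})$ generated by a bad arc $\alpha(p)_{-+}$ coincides with the two-sided ideal it generates. Applying the general mechanism from Section~\ref{sec_DefoQuantization}, namely that a bilateral ideal in the quantization reduces modulo $(q-1)$ to a Poisson ideal in $\mathcal{S}_{+1}(\mathbf{\Sigma})$, the image $I_p$ of this ideal is Poisson. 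Therefore $V(I_p) = \{x : \chi_x(\alpha(p)_{-+}) = 0\}$ and its complement are each unions of symplectic leaves; intersecting these stratifications over all boundary punctures $p$ yields the claim for every $\mathcal{X}^{(\mathbf{\varepsilon})}(\mathbf{\Sigma})$.

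Next I would check that the action $\nabla$ of $(\mathbb{C}^*)^{\mathcal{A}}$ preserves each stratum of the bad arcs partition. Unwinding the definition of $\Delta^L_a$ on the bad arc $\alpha(p)_{-+}$ whose endpoints sit on the two boundary arcs adjacent to $p$, and then restricting the $(\SL_2)^{\mathcal{A}}$-coaction along the diagonal inclusion $\mathbb{C}^* \hookrightarrow \SL_2$, one sees that $\alpha(p)_{-+}$ is a weight vector: $\nabla$ scales it by a nonzero monomial in the coordinates $z_a, z_b$ labelling the boundary arcs flanking $p$. Hence $\chi_{g\cdot x}(\alpha(p)_{-+})$ vanishes if and only if $\chi_x(\alpha(p)_{-+})$ does, which is exactly the $(\mathbb{C}^*)^{\mathcal{A}}$-invariance of each $\mathcal{X}^{(\mathbf{\varepsilon})}(\mathbf{\Sigma})$.

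The conclusion is then formal. Given $x \sim y$, unfolding the definition of the equivariant equivalence relation yields some $g \in (\mathbb{C}^*)^{\mathcal{A}}$ such that $g \cdot x$ and $y$ lie in a common symplectic leaf; the first step places them in the same $\mathcal{X}^{(\mathbf{\varepsilon})}(\mathbf{\Sigma})$, and the second places $x$ and $g \cdot x$ in the same $\mathcal{X}^{(\mathbf{\varepsilon})}(\mathbf{\Sigma})$, so transitivity gives $x, y \in \mathcal{X}^{(\mathbf{\varepsilon})}(\mathbf{\Sigma})$. No step is genuinely difficult; the only point requiring care is the explicit weight computation for $\nabla$ on a bad arc, and this reduces to reading off the state-sum expansion produced by gluing a single bigon onto the boundary arc carrying the endpoint of $\alpha(p)_{-+}$.
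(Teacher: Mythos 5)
Your argument is correct and follows the same two-step route the paper takes: it invokes Lemma~\ref{lemma_leyu} together with the observation from Section~\ref{sec_DefoQuantization} that a bilateral ideal in the quantization yields a Poisson ideal at $q=1$, so the ordinary symplectic leaves refine the bad arcs partition, and then it checks that the $(\mathbb{C}^*)^{\mathcal{A}}$-action preserves each stratum $\mathcal{X}^{(\mathbf{\varepsilon})}(\mathbf{\Sigma})$. The only difference is that you make explicit what the paper leaves as ``clear'' -- that $\alpha(p)_{-+}$ is a weight vector for the torus coaction, so vanishing of $\chi_x(\alpha(p)_{-+})$ is a torus-invariant condition -- which is a welcome addition rather than a deviation.
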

  
  Let us state a second obvious remark towards the resolution of Problem \ref{problem_classification_leafs}. Recall from Definition \ref{def_central_elements}, that for each inner puncture $p$ we defined a central element $\gamma_p \in \mathcal{S}_A(\Sigma)$ and for each boundary component $\partial$, we defined an invertible central element $\alpha_{\partial} \in \mathcal{S}_A^{red}(\Sigma)$. Let $\mathrm{Cas} \subset \mathcal{S}_{+1}(\mathbf{\Sigma})$ (resp. $\mathrm{Cas}^{red}\subset \mathcal{S}_{+1}^{red}(\mathbf{\Sigma})$) denote the subgroup generated by the elements $\gamma_p$ (resp. by the elements $\gamma_p, \alpha_{\partial}^{\pm 1}$). Since these elements are central in the skein algebras with parameter $A=\exp(\hbar/2)$, the elements in $\mathrm{Cas}$ and $\mathrm{Cas}^{red}$ are Casimir elements, i.e. they are in the kernel of the Poisson bracket. Therefore, if we consider the following \textit{Casimir partition}
  $$ \mathcal{X}(\mathbf{\Sigma}) = \bigsqcup_{\pi : \mathrm{Cas} \to \mathbb{C}} \mathcal{X}_{(\pi)}(\mathbf{\Sigma}), \quad \mathcal{X}^{red}(\mathbf{\Sigma})=\bigsqcup_{\pi: \mathrm{Cas}^{red}\to \mathbb{C}} \mathcal{X}^{red}_{(\pi)}(\mathbf{\Sigma}), $$
  where the $\pi$ are characters over the Casimir groups and $ \mathcal{X}_{(\pi)}(\mathbf{\Sigma})$ is the (algebraic) subset of elements $x$ such that $\chi_x(c)=\pi(c)$, for all $c\in \mathrm{Cas}$ and similarly for the reduced version, then
  
  \begin{lemma}\label{lemma_Casimir_partition}
  The partition into symplectic leaves is a refinement of the Casimir partition.
  \end{lemma}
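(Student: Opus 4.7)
The plan is to reduce the statement to the standard fact that a Poisson-central function is constant along every Hamiltonian flow, and hence constant on every symplectic leaf. The preamble already does the only non-trivial work: it identifies, via the discussion of Section \ref{sec_DefoQuantization}, that each generator $\gamma_p$ of $\mathrm{Cas}$ and each $\alpha_{\partial}^{\pm 1}$ of $\mathrm{Cas}^{red}$ lies in the Poisson center of $\mathcal{S}_{+1}(\mathbf{\Sigma})$ (resp.\ $\mathcal{S}^{red}_{+1}(\mathbf{\Sigma})$), because they come from elements of the quantized algebra that commute with everything on the nose, so that $c\star f-f\star c$ vanishes identically and therefore to first order in $\hbar$. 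By the Leibniz rule, the subalgebra generated by these Casimirs still lies in the Poisson center, so every element of $\mathrm{Cas}$ (resp.\ $\mathrm{Cas}^{red}$) is a Casimir of the Poisson algebra.

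Next I would work stratum by stratum in the partition $\mathcal{X}(\mathbf{\Sigma})=X^0\sqcup\cdots\sqcup X^n$ used to define symplectic leaves. On any smooth stratum $X^i$, the restriction of a Casimir $c$ is still Poisson-central in $\mathcal{O}[X^i]$, so for every $h\in \mathcal{O}[X^i]$ its Hamiltonian vector field satisfies $X_h(c)=\{h,c\}=0$; thus $c$ is constant along every integral curve of every Hamiltonian vector field on $X^i$. By construction of the equivalence relation $\sim$ defining symplectic leaves on $X^i$, two points that lie in the same leaf are joined by a finite chain of such integral curves, hence take the same value on $c$. The same reasoning applies verbatim to $\mathcal{X}^{red}(\mathbf{\Sigma})$ and the Casimirs in $\mathrm{Cas}^{red}$.

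Consequently, if $x$ and $y$ belong to the same symplectic leaf of $\mathcal{X}(\mathbf{\Sigma})$, the evaluations $\chi_x(c)=\chi_y(c)$ coincide for all $c\in\mathrm{Cas}$, so $x$ and $y$ determine the same character $\pi:\mathrm{Cas}\to \mathbb{C}$ and therefore lie in the same piece $\mathcal{X}_{(\pi)}(\mathbf{\Sigma})$; the reduced case is identical. This gives the desired refinement.

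There is essentially no obstacle here beyond bookkeeping: the argument is a direct consequence of the definitions, and the only point deserving care is the passage from the singular Poisson variety $\mathcal{X}(\mathbf{\Sigma})$ to its smooth strata, which is handled by noting that Poisson-centrality is preserved by restriction to a Poisson subvariety. The substantive content of the lemma is entirely encoded in the preamble's observation that the generators of $\mathrm{Cas}$ and $\mathrm{Cas}^{red}$ are honest centers in the quantum algebras, which forces them to be Casimirs in the semiclassical limit.
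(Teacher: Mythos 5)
Your argument is correct and is exactly the argument the paper has in mind: the paper simply records in the preceding paragraph that the generators of $\mathrm{Cas}$ and $\mathrm{Cas}^{red}$ are central in the quantum algebra at $A=\exp(\hbar/2)$, hence Casimirs of the semiclassical Poisson bracket, and then states the lemma without further proof as an ``obvious remark.'' Your write-up fills in the bookkeeping (extension from generators to the generated subalgebra, restriction to the smooth strata $X^i$, constancy along Hamiltonian flows), but the content and the route are the same.
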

  
  Note that the group $(\mathbb{C}^*)^{\mathcal{A}}$ preserves the Casimir leaves $\mathcal{X}_{(\pi)}(\mathbf{\Sigma})$ but not the leaves $ \mathcal{X}^{red}_{(\pi)}(\mathbf{\Sigma})$.   
  
  Let us make a final remark:
  
  \begin{lemma}\label{lemma_singleton} If $\mathcal{S}_A^{red}(\mathbf{\Sigma})$ is commutative for $A$ generic, then for every $x\in \mathcal{X}^{red}(\mathbf{\Sigma})=\mathcal{X}^{(0)}(\mathbf{\Sigma})$, then the singleton $\{x\}$ is a symplectic leaf of $\mathcal{X}(\mathbf{\Sigma})$.
  \end{lemma}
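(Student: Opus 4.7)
The plan is to leverage the deformation quantization framework of Section \ref{sec_DefoQuantization} to transfer the commutativity hypothesis from the algebra to the Poisson bracket. Specifically, I will argue that the commutativity of $\mathcal{S}_A^{red}(\mathbf{\Sigma})$ for generic $A$ forces the Poisson bracket on $\mathcal{S}_{+1}(\mathbf{\Sigma})$ to take values in the ideal generated by bad arcs, after which the fact that $\chi_x$ annihilates bad arcs immediately gives the conclusion.

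First I would show that under the hypothesis, for every $f, g \in \mathcal{S}_A(\mathbf{\Sigma})$ the commutator $[f, g]$ lies in the bad arc ideal $\mathcal{I}_{bad}^A \subset \mathcal{S}_A(\mathbf{\Sigma})$. Expanding $f$ and $g$ in the basis $\mathcal{B}^{\mathfrak{o}^+}$, membership of $[f, g]$ in $\mathcal{I}_{bad}^A$ is a linear condition whose coefficients are Laurent polynomials in $A^{1/2}$. Since by the flatness of $\mathcal{S}_A(\mathbf{\Sigma})$ and of its quotient $\mathcal{S}_A^{red}(\mathbf{\Sigma})$ over $k = \mathbb{Z}[A^{\pm 1/2}]$ (Theorem \ref{theorem_basis} and the Costantino-L\^e basis) these coefficients vanish on a Zariski-dense set of $A$'s, they vanish identically. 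Hence $[f, g] \in \mathcal{I}_{bad}$ over $k$, and in particular after specializing to $A^{1/2} = \exp(\hbar/4)$.

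Next I would extract the semi-classical information. Via the basis identification $\Psi^{\mathcal{B}^{\mathfrak{o}^+}}$, the star-product on $\mathcal{S}_{+1}(\mathbf{\Sigma}) \otimes \mathbb{C}[[\hbar]]$ satisfies $f \star g - g \star f = \hbar \{f, g\} + O(\hbar^2)$. The previous step expresses the commutator as a finite sum $\sum_i r_i(\hbar) \star \alpha(p_i)_{-+} \star s_i(\hbar)$ with $r_i, s_i \in \mathcal{S}_{+1}(\mathbf{\Sigma})[[\hbar]]$; since this vanishes at order $\hbar^0$, extracting the coefficient of $\hbar$ expresses $\{f, g\}$ as a $\mathbb{C}$-linear combination of commutative products each containing a bad arc factor. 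Therefore $\{f, g\}$ lies in the ideal $\mathcal{I}_{bad}^{+1} := \ker \bigl( \mathcal{S}_{+1}(\mathbf{\Sigma}) \to \mathcal{S}^{red}_{+1}(\mathbf{\Sigma}) \bigr)$.

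Finally, for any $x \in \mathcal{X}^{(0)}(\mathbf{\Sigma})$, the definition of the bad arcs partition gives $\chi_x(\alpha(p)_{-+}) = 0$ for every boundary puncture $p$, so $\chi_x$ annihilates $\mathcal{I}_{bad}^{+1}$ and hence $\chi_x(\{f, g\}) = 0$ for all $f, g \in \mathcal{S}_{+1}(\mathbf{\Sigma})$. This means that every Hamiltonian vector field $X_h : f \mapsto \{h, f\}$ vanishes at $x$, so the Hamiltonian flows all fix $x$, and the symplectic leaf through $x$ (in whichever stratum $X^i$ contains it) is reduced to $\{x\}$. The main delicate point will be the passage in the second step from the $A$-specialization statement to the $\hbar$-expansion statement, which relies on the compatibility of the bad arc ideal with both specializations; this is where the flatness and the explicit basis $\mathcal{B}^{\mathfrak{o}^+}$ are essential.
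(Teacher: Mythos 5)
Your argument follows the same chain as the paper's own proof: commutativity of $\mathcal{S}_A^{red}(\mathbf{\Sigma})$ pushes commutators of $\mathcal{S}_A(\mathbf{\Sigma})$ into the bad-arc ideal, the $\hbar$-linear term then shows the Poisson bracket on $\mathcal{S}_{+1}(\mathbf{\Sigma})$ lies in the kernel of the reduction map, and since $\chi_x$ kills that kernel for $x\in\mathcal{X}^{(0)}(\mathbf{\Sigma})$ the Hamiltonian flows fix $x$. Your added flatness/Zariski-density step (using the $A$-independent basis to upgrade commutativity for generic $A$ to identical vanishing over $\mathbb{Z}[A^{\pm 1/2}]$) fills in a passage the paper treats implicitly by taking $A=\exp(\hbar)$, and is a welcome precision rather than a different method.
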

  
  \begin{proof} Let $\mathcal{I}^{bad}\subset \mathcal{S}_{A}^{red}(\mathbf{\Sigma})$ be the ideal generated by the bad arcs. If $\mathcal{S}_A^{red}(\mathbf{\Sigma})$ is commutative for $A=\exp(\hbar)$, then we have $[\mathcal{S}_{\hbar}(\mathbf{\Sigma}), \mathcal{S}_{\hbar}(\mathbf{\Sigma})] \subset \mathcal{I}^{bad}$, so by definition of the Poisson bracket we have $\{\mathcal{S}_{+1}(\mathbf{\Sigma}), \mathcal{S}_{+1}(\mathbf{\Sigma})\} \subset \mathcal{I}^{bad}$. Therefore the restriction of the Poisson bracket to $\mathcal{X}^{red}(\mathbf{\Sigma})$ vanishes.
  \end{proof}

  \subsection{First examples and the work of Semenov-Tian-Shansky}\label{sec_STS}
  
  Let us study the cases of the bigon $\mathbb{B}$, the once-punctured bigon $\mathbb{D}_1$ and the once-punctured monogon $\mathbf{m}_1$ for which Problem \ref{problem_classification_leafs} has been solved by Alekseev-Malkin \cite{AlekseevMalkin_PoissonLie} and Hodges-Levasseur \cite{HodgesLevasseur_OqG}.
  At this stage, it is useful to understand the structure of the quantum $R$-matrix $\mathscr{R}$. Consider the matrices in $\mathfrak{sl}_2$:
  $$ E = \begin{pmatrix} 0 & 0 \\ 1 & 0 \end{pmatrix}, \quad F = \begin{pmatrix} 0 & 1 \\ 0 & 0 \end{pmatrix},\quad H= \begin{pmatrix} 1 &0\\0 & -1 \end{pmatrix}.$$
  Consider these matrices as operators acting on a $2$-dimensional vector space $V$ with ordered basis $(v_+, v_-)$ (the standard representation of $\mathfrak{sl}_2$). Define an endomorphism $q^{\frac{H\otimes H}{2}}\in \End(V\otimes V)$ by $q^{\frac{H\otimes H}{2}}\cdot  v_{\varepsilon_1} \otimes v_{\varepsilon_2} := A^{\varepsilon_1 \varepsilon_2} v_{\varepsilon_1} \otimes v_{\varepsilon_2}$. Then $\mathscr{R}$ is the matrix in the ordered basis $(v_+\otimes v_+, v_+\otimes v_-, v_-\otimes v_+, v_-\otimes v_-)$ of the operator 
 $$ \mathscr{R}= \tau \circ q^{\frac{H\otimes H}{2}}  \exp_q\left( (q-q^{-1})E \otimes F \right) =  \tau \circ q^{\frac{H\otimes H}{2}} \circ \left( \mathds{1}_2 + (q-q^{-1}) E \otimes F \right) .$$
Define the classical r-matrices: 
$$ r^+:= \frac{1}{2} H\otimes H +2E\otimes F, \quad r^{-} := \frac{1}{2} H\otimes H +2F\otimes E. $$
Then, writing $A^{1/2}=\exp(\hbar/2)$,  one has 
\begin{align}\label{eq_classicalr}
 & \mathscr{R}\equiv \tau ( \mathds{1}\otimes \mathds{1} + \hbar r^+) \pmod{\hbar^2} \equiv ( \mathds{1}\otimes \mathds{1} + \hbar r^-)\tau \pmod{\hbar^2}, \\
 \label{eq_classicalr2}
&\mathscr{R}^{-1}\equiv ( \mathds{1}\otimes \mathds{1} - \hbar r^+)\tau \pmod{\hbar^2} \equiv \tau ( \mathds{1}\otimes \mathds{1} - \hbar r^-) \pmod{\hbar^2}, 
\end{align}

  \vspace{2mm}
  \par \textbf{The bigon:} Let $\alpha\subset \mathbb{B}$ be an arc linking $a_L$ and $a_R$ and $\alpha_{ij}$ the stated arc with state $i$ on $a_L$ and $j$ on $a_R$. By Theorem \ref{theorem_presentation}, $\mathcal{S}_A(\mathbb{B})=\mathcal{O}_q[\SL_2]$ is generated by the elements $\alpha_{ij}, i,j = \pm$ together with the relations, written in matrix form using $N:= \begin{pmatrix} \alpha_{++} & \alpha_{+-} \\ \alpha_{-+} & \alpha_{--} \end{pmatrix}$, given by $\det_q(N)=1$ and the so-called Faddeev-Reshetikhin-Takhtadjian relation
  $$ N \odot N = \mathscr{R}^{-1} (N\odot N) \mathscr{R}.$$
  So we get Equation \eqref{relbigone} by identifying the matrix coefficients. Replacing $A$ by $\exp(\hbar/2)$ and developing using Equations \eqref{eq_classicalr} \eqref{eq_classicalr2}, we find
  \begin{align*}
  & N \odot N \equiv (1- \hbar r^+) \tau (N\odot N) \tau (1+\hbar r^+) \pmod{\hbar^2} \\
  \Leftrightarrow & \tau (N\odot N) \tau -N\odot N \equiv \hbar \left( r^{+} (N\odot N) - (N\odot N) r^+ \right) \pmod{\hbar^2}
  \end{align*}
  
  Therefore $\mathcal{X}(\mathbb{B})$ can be identified with the variety $\SL_2$ together with the Poisson bracket defined by 
  \begin{equation}\label{eq_DrinfeldBracket}
  \{ N \otimes N \}^D = r^+ (N \odot N) - (N\odot N) r^+.
  \end{equation}
  
  For further use, we define $\mathcal{O}[\SL_2]:= \quotient{\mathbb{C}[a,b,c,d]}{(ad-bd-1)}$ and the isomorphism $\mathcal{S}_{+1}(\mathbb{B}) \cong \mathcal{O}[\SL_2]$ identifies $\alpha_{++}, \alpha_{+-}, \alpha_{-+}, \alpha_{--}$ with $a,b,c,d$ respectively.
  Since the coproduct is defined as a gluing map, it is Poisson. We will denote by $\SL_2^D$ the obtained Poisson-Lie group (the D stands for Drinfel'd who first defined it in \cite{DrinfeldrMatrix}). We can rewrite Equation \eqref{eq_DrinfeldBracket} as 
\begin{align*}
\{ a, b\}^D &= -ab & \{a, c\}^D &= -ac &  \{ b, c \}^D &= 0
\\ \{ d, b \}^D &= db & \{d, c \}^D &= dc & \{ a, d\}^D &= -2bc
\end{align*}

  In this case, the bad arcs partition writes 
  $$\SL_2^D = X_{00} \bigsqcup X_{01} \bigsqcup X_{10} \bigsqcup X_{11}, $$
  where $\begin{pmatrix} a & b \\ c & d \end{pmatrix}$ is in $X_{11}$ if $bd\neq 0$, is in $X_{10}$ if $b=0, c\neq 0$, is in $X_{01}$ if $c=0, b\neq 0$ and is in $X_{00}$ if it is diagonal. The Weil group of $\SL_2$ is  $W= \{w_0, w_1\}$ where $w_0$ is the class of the identity $\dot{w}_0:= \mathds{1}_2$ and $w_1$ is the class of $\dot{w}_1:= \begin{pmatrix} 0 & 1 \\ -1 & 0 \end{pmatrix}$. Denote by $B^+$ (resp. $B^-$) the subgroup of $\SL_2$ of upper (resp. lower) triangular matrices. A simple computation shows that 
  $$X_{ij} = B^+ \dot{w}_i B^+ \cap B^- \dot{w}_j B^-.$$
  In other terms, the bad arc partition of $\SL_2^D$ coincides with the double Bruhat cells decomposition of $\SL_2$. As we shall review, Hodges and Levasseur have proved that in this case, the bad arc partition and the equivariant symplectic leaves partition coincide, i.e. the double Bruhat cells are the equivariant symplectic leaves.
  
    \vspace{2mm}
  \par \textbf{The once-punctured monogon:}
  Write $\mathcal{O}_q[\SL_2^{STS}] := \mathcal{S}_A(\mathbf{m}_1)$. This algebra is also a deformation of $\SL_2$ but with a different Poisson structure, that we will denote by $\SL_2^{STS}$ by reference to Semenov-Tian-Shansky who first defined it in \cite{STS_DressingAction}. This Poisson structure is no longer compatible with the coproduct, i.e. $\SL_2^{STS}$ is not a Poisson Lie group. Let $\alpha \subset \mathbf{m}_1$ be the unique corner arc with endpoint $v$ and $w$ such that $v<_{\mathfrak{o}_+} w$ and $\alpha_{ij}$ the stated arc with state $i$ on $v$ and $j$ on $w$ and $h(v)<h(w)$ (in the terminology of Section \ref{sec_presentations}, $\alpha$ is of type $d$).
  
   This time, in order to identify $\mathcal{S}_{+1}(\mathbf{m}_1)$ with $\mathcal{O}[\SL_2]$, we set 
  $$N=\begin{pmatrix} a & b \\ c & d \end{pmatrix} := C^{-1} \begin{pmatrix} \alpha_{++} & \alpha_{+-} \\ \alpha_{-+} & \alpha_{--} \end{pmatrix} =\begin{pmatrix} -\alpha_{-+} & -\alpha_{--} \\ \alpha_{++} & \alpha_{+-} \end{pmatrix}.$$
   Theorem \ref{theorem_presentation} shows that $\mathcal{O}_q[\SL_2^{STS}]$ is generated by $a,b,c,d$ with relations $\det_{q^2} (N)=1$ and 
   $$ (\mathds{1}_1 \odot N) \mathscr{R}^{-1}(\mathds{1}_1 \odot N) \mathscr{R} = \mathscr{R} (\mathds{1}_1 \odot N) \mathscr{R}^{-1}(\mathds{1}_1 \odot N)$$. 
  Replacing $A$ by $\exp(\hbar/2)$ and developing using Equations \eqref{eq_classicalr} \eqref{eq_classicalr2} as before, we find   
  $$ \{N\otimes N\}^{STS} = - (\mathds{1}_1 \odot N)r^+ (N\odot \mathds{1}_2) +\tau (N\odot N) \tau r^+ - r^- (N\odot N) + (N\odot \mathds{1}_2)r^- (\mathds{1}_2 \odot N).$$
  We can develop these equations to find
  \begin{align*}
 & [\alpha_{++}, \alpha_{+-}]= (q^{-1}-q) \alpha_{-+} \alpha_{++} & [\alpha_{+-}, \alpha_{--}] = (q^{-1}-q) \alpha_{--} \alpha_{-+} \\
  &\alpha_{--}\alpha_{-+} = q^2 \alpha_{-+} \alpha_{--} & \alpha_{-+}\alpha_{++} = q^2 \alpha_{++}\alpha_{-+}\\
  &[\alpha_{+-}, \alpha_{-+}]=0 & [\alpha_{++}, \alpha_{--}]= (q^{-1}-q)\alpha_{-+}^2 +(1-q^2)\alpha_{+-}\alpha_{-+} \\
  &\alpha_{--}\alpha_{++}-q^2 \alpha_{+-} \alpha_{-+}=A &{} 
  \end{align*}
From which we get 
\begin{align*}
\{c,d\}^{STS} &= 2ac & \{d,b\}^{STS}&= 2ab & \{d,a\}^{STS} &=0 \\
\{b,a\}^{STS} &= 2ab & \{a,c\}^{STS} &= 2ac & \{c,b\}^{STS}&= 2a(a-d).
\end{align*}

The bad arcs partition writes 
$$\SL_2^{STS} = \SL_2^0 \bigsqcup \SL_2^1, $$
 where $\SL_2^0=B^- B^+$ is the subset of matrices $\begin{pmatrix} a & b\\ c & d\end{pmatrix}$ with $a\neq 0$ and $\SL_2^1= B^- C B^+$ is the set of those matrices for which $a=0$. So the bad arcs partition coincides with the simple Bruhat cells decomposition associated to the Cartan decomposition $(B^-, B^+)$. 
 
Note that Lemmas \ref{lemma_singleton} and \ref{lemma_Casimir_partition} imply that
 \begin{enumerate}
 \item For $g\in \SL_2^1$, the singleton $\{g \}$ is a symplectic leaf of $\SL_2^{STS}$ and
 \item the symplectic leaves of $\SL_2^0$ are included in the algebraic sets $\SL_2^0(c):=\{g\in \SL_2^0 | tr(g)=c\}$ for $c\in \mathbb{C}$ (since $a+d$ is Casimir).
 \end{enumerate}
As we shall review, it follows from the work of Alekseev-Malkin in \cite{AlekseevMalkin_PoissonLie} that the symplectic leaves in $\SL_2^0$ are the analytic sets $\SL_2^0\cap C$, for $C$ a conjugacy class. The algebraic sets $\SL_2^0(c)$ will be called  symplectic cores of $\SL_2^{STS}$ in Section \ref{sec_PO}. Note that $\mathbf{m}_1$ has exactly one boundary arc and the induced Poisson action of $\SL_2^D$ on $\SL_2^{STS}$ is the action by conjugaison. So, writing $\varphi(z):= \begin{pmatrix} z & 0 \\ 0 & z^{-1} \end{pmatrix}$, the sets $\{ \varphi(z) g \varphi(z^{-1}), z \in \mathbb{C}^*\}$ for $g\in \SL_2^1$ are equivariant symplectic leaves.

Note also that the identification  $\mathcal{O}[\SL_2] \cong\mathcal{S}_{+1}(\mathbf{m}_1)$ that we made contains an arbitrary choice: we could have precompose this identification by the Cartan involution, i.e. have stated $N:= M(\alpha)C^{-1}$ instead of $N:=C^{-1}M(\alpha)$. With this new identification, the  bad arcs decomposition would have written
$$ {\SL_2'}^{STS} = {}^0\SL_2 \bigsqcup {}^1\SL_2, $$
where ${}^0\SL_2= B^+B^-$ is the set of these $\begin{pmatrix} a & b\\ c & d\end{pmatrix}\in \SL_2$ with $d\neq 0$ and ${}^1 \SL_2$ is its complementary.

\begin{remark}
The algebra $\mathcal{O}_q[\SL_2^{STS}]$ appeared in the work of Lyubashenko in \cite{Lyubashenko_ModularTransoTensorCat} who revisited Shan's Tannakian theorem in \cite{Majid_QGroups}. In short, Shan's theorem says that for a Tannakian ribbon category, i.e. a ribbon category $\mathcal{C}$ with a tensor faithful functor $F: \mathcal{C} \to \Vect$, where $\Vect$ is the category of finite dimensional vector spaces, then there exists a coribbon Hopf algebra $\mathcal{H}$ and a ribbon equivalence of categories $\mathcal{C} \to \mathcal{H}-\mathrm{Comod}$. 
Following the classical work of Grothendieck-Deligne, Shan defined $\mathcal{H}=\mathrm{Nat}(F)$ has a space of natural transformations of $F$. Lyubashenko found a much more explicit definition for $\mathcal{H}$. He considers $\mathcal{F}:= \int^{V\in \mathcal{C}} V^*\otimes V$ and turns $\mathcal{F}$ into an algebra object using the product defined  as follows: for $V, W \in \mathcal{C}$ then $\mu : [V\otimes V^*] \otimes [W \otimes W^*] \to [V\otimes W] \otimes [W^* \otimes V^*]$ is the morphism in $\mathcal{C}$ given by the image by the Reshetikhin-Turaev functor of 
 $$ \mu := F^{RT} \left( \adjustbox{valign=c}{\includegraphics[width=2cm]{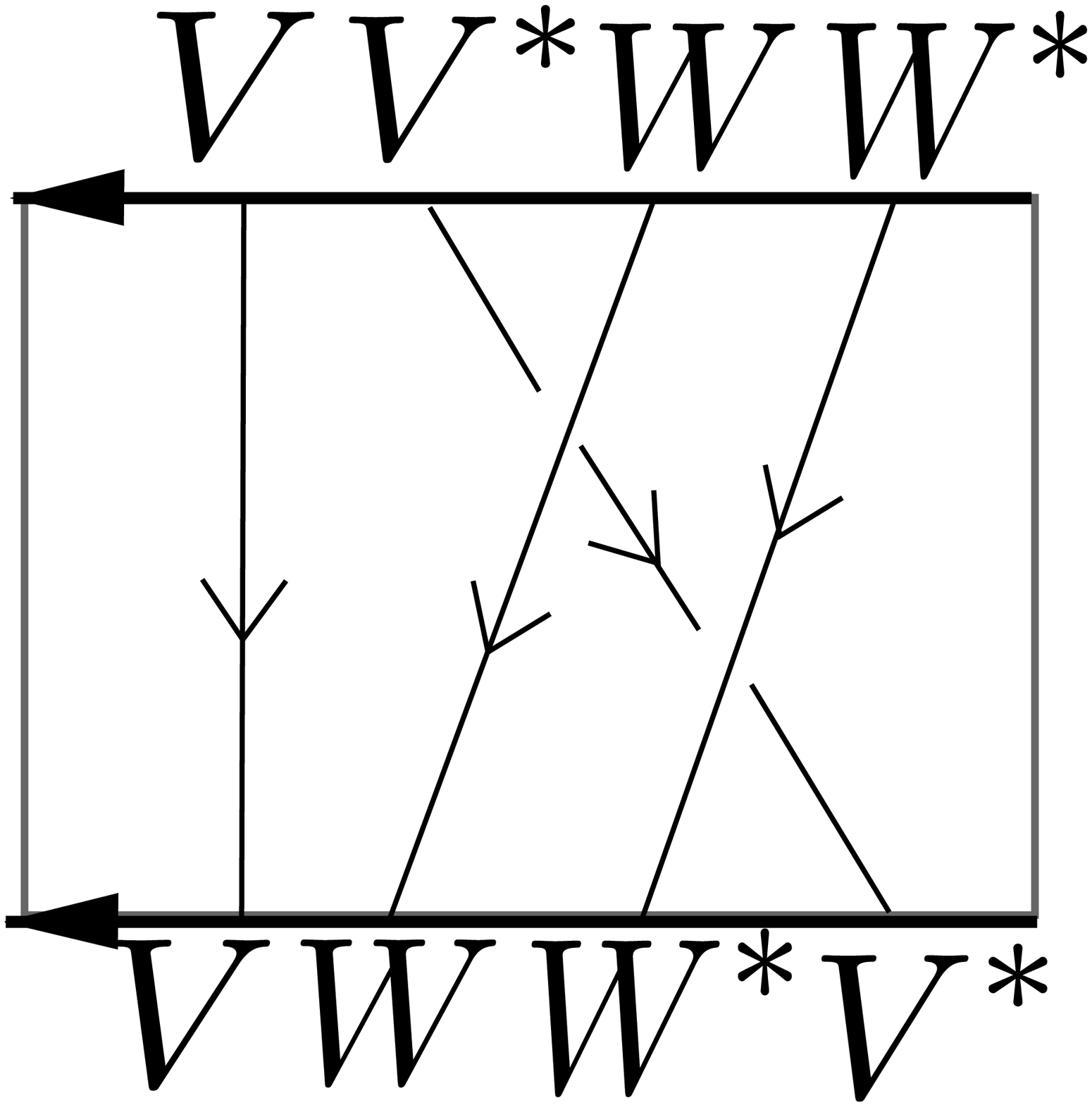}} \right). $$
Then $\mathcal{H}=F(\mathcal{F})\in \Vect$ is the desired cobraided Hopf algebra with  coproduct $\Delta([v^* \otimes v]):= \sum_i [v^* \otimes v_i] \otimes [v_i^* \otimes v]$, for $\{v_i\}_i$ a basis of $V$, co-unit $\epsilon([v^*\otimes v])=ev_V(v^* \otimes v)$  and co-R-matrix $\mathbf{r}([v^*\otimes v] \otimes [w^*\otimes w]):= \epsilon( [(v^*\otimes w^*) \otimes c_{V,W}(v,w)])$.
 As noticed by Gunningham-Jordan-Safranov in \cite{GunninghamJordanSafranov_FinitenessConjecture}, when $\mathcal{C}=U_q\mathfrak{sl}_2-\mathrm{Mod}$ with $q$ generic, and $F$ is the forgetful functor, the Lyubashenko algebra $\mathcal{H}$ is the stated skein algebra of the once-punctured monogon.  So Lyubashenko's Tannakian theorem states that the category of $U_q\mathfrak{sl}_2$ modules ($q$ is generic here) is equivalent to the category of $\mathcal{O}_q[\SL_2^{STS}]$-comodules. This is where the literature becomes confusing: since $\mathcal{O}_q[\SL_2]$ is the restricted dual of $U_q\mathfrak{sl}_2$, the category of $U_q\mathfrak{sl}_2$ modules is equivalent to the category of $\mathcal{O}_q[\SL_2]$-comodules as well ! Worst: Lyubashenko named $\mathcal{O}_q[\SL_2^{STS}]$ the \textit{quantum function algebra} (hence the letter $\mathcal{F}$), so with the same name that $\mathcal{O}_q[\SL_2]$. Because of this confusion, many authors denote by the same symbol the algebras $\mathcal{O}_q[\SL_2]$ and $\mathcal{O}_q[\SL_2^{STS}]$ though, as we just saw, they are not isomorphic as algebras and deform two very different Poisson structures. Since $\mathbf{m}_1$ is obtained from $\mathbb{B}$ by fusioning the two boundary arcs, the relation between the two algebras is clear: $\mathcal{O}_q[\SL_2^{STS}]$ is the fusion (in the sense of Definition \ref{def_fusion_quantique}) of $\mathcal{O}_q[\SL_2]$.
 
 \end{remark}
 
 \par \textbf{The Heisenberg double and its twist}
 We now consider the once-punctured bigon $\mathbb{D}_1$ and the marked surface $\mathbb{D}_1^+:=\underline{\mathbf{\Sigma}}_{0,2}$, so both are annuli with two boundary arcs, and both boundary arcs are in the same boundary component in $\mathbb{D}_1$ whereas the two boundary arcs in $\mathbb{D}_1^+$ are in two distinct boundary components. Write $D_q(\SL_2):= \mathcal{S}_A(\mathbb{D}_1)$ and $D_q^+(\SL_2):=\mathcal{S}_A(\mathbb{D}_1^+)$. Both their fundamental groupoids admit  presentations with two generators $\alpha, \beta$, drawn in Figure \ref{fig_D1}, and no relation. 
 
    \begin{figure}[!h] 
\centerline{\includegraphics[width=5cm]{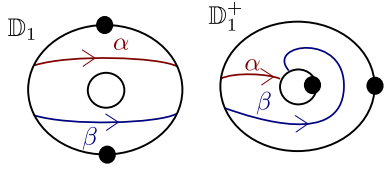} }
\caption{The marked surfaces $\mathbb{D}_1$ and $\mathbb{D}_1^+$ defining the Heisenberg doubles $D_q(\SL_2)$ and $D_q^+(\SL_2)$.} 
\label{fig_D1}
\end{figure}

 Consider the matrices $N(\alpha)= \begin{pmatrix} \alpha_{++} & \alpha_{+-} \\ \alpha_{-+} & \alpha_{--} \end{pmatrix}, N(\beta)=  \begin{pmatrix} \beta_{++} & \beta_{+-} \\ \beta_{-+} & \beta_{--} \end{pmatrix}$. Theorem \ref{theorem_presentation} shows that $D_q(\SL_2)$ is generated by the elements $\alpha_{ij}, \beta_{kl}$ with relations $\det_q(N(\alpha))=\det_q(N(\beta))=1$ and 
 $$ N(\alpha)\odot N(\alpha) = \mathscr{R}^{-1} (N(\alpha)\odot N(\alpha))\mathscr{R}, N(\beta)\odot N(\beta) = \mathscr{R}^{-1} (N(\beta)\odot N(\beta))\mathscr{R}, N(\alpha)\odot N(\beta) = \mathscr{R}^{-1} (N(\alpha)\odot N(\beta))\mathscr{R},$$
  therefore the Poisson structure is described by
 $$\{N(a)\otimes N(b)\}^D = r^+ (N(a) \odot N(b)) - (N(a)\odot N(b))r^+ \quad \mbox{, for }(a,b)= (\alpha, \alpha), (\beta, \beta), (\alpha, \beta).$$
The algebra $D_q(\SL_2)$ has a structure of Hopf algebra that we now describe. Let $i: \mathbb{D}_1\to \mathbb{D}_2$ be the embedding which is the identity on the boundary and embeds the two inner punctures of $\mathbb{D}_2$ inside the inner puncture of $\mathbb{D}_1$. Note that $\mathbb{D}_2$ is obtained from $\mathbb{D}_1\cup \mathbb{D}_1$ by gluing two boundary arcs, so we get a gluing map $\theta: \mathcal{S}_A(\mathbb{D}_2) \hookrightarrow D_q(\SL_2)^{\otimes 2}$. The \textit{Bigelow coproduct} is then defined as the composition
$$ \Delta : D_q(\SL_2) \xrightarrow{i_*} \mathcal{S}_A(\mathbb{D}_2) \xrightarrow{\theta} D_q(\SL_2)^{\otimes 2}.$$

    \begin{figure}[!h] 
\centerline{\includegraphics[width=9cm]{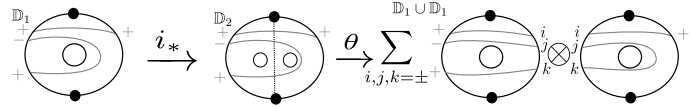} }
\caption{An illustration of Bigelow's coproduct.} 
\label{fig_coproduct}
\end{figure} 

It appeared in the work of Bigelow in a different language in \cite{BigelowQGroups}. In particular, $D(\SL_2)$ received a structure of Poisson-Lie group from this coproduct and was called the \textit{Heisenberg double} by Drinfel'd. Note that the reduced skein algebra $\mathcal{S}_A^{red}(\mathbb{D}_1)$ is also a Hopf algebra with this structure and it is proved in \cite[Theorem1]{KojuQGroupsBraidings} that it is isomorphic to the simply laced quantum group $\widetilde{U}_q\mathfrak{gl}_2$.

 In $D_q^+(\SL_2)$, the relations read 
 $$ N(\alpha)\odot N(\alpha) = \mathscr{R}^{-1} (N(\alpha)\odot N(\alpha))\mathscr{R}, N(\beta)\odot N(\beta) = \mathscr{R}^{-1} (N(\beta)\odot N(\beta))\mathscr{R}, N(\alpha)\odot N(\beta) = \mathscr{R} (N(\alpha)\odot N(\beta))\mathscr{R}$$
 so $D_+(\SL_2):= \mathcal{X}(\mathbb{D}_1^+)$ has a Poisson bracket described by 
 $$ \{N(\alpha) \otimes N(\beta) \}^+ = r_+ (N(\alpha)\odot N(\beta)) + (N(\alpha)\odot N(\beta))r_+, \quad \{N(a)\otimes N(a)\}^+ = r^+ (N(a) \odot N(a)) - (N(a)\odot N(a))r^+,$$
for $a=\alpha, \beta$.
The Poisson variety $D_+(\SL_2)$ was studied by Alekseev-Malkin in  \cite{AlekseevMalkin_PoissonLie} inspired by the work of Semenov-Tian-Shansky. More precisely, they consider the bracket $\{\cdot, \cdot\}^{AM}:= - \{\cdot, \cdot\}^+$ for which, using the notations $r:= r^+$ and $r^*:= -r^-$, one has (compare with \cite[Equation $(80$)]{AlekseevMalkin_PoissonLie}):
$$ \{N(\alpha)\odot N(\beta) \}^{AM}= - \left( r ( N(\alpha) \odot N(\beta)) + (N(\alpha)\odot N(\beta))r^* \right).$$

For $D_+(\SL_2)$, the bad arcs partition writes
 $$ D_+(\SL_2)= D_{00}\bigsqcup D_{01} \bigsqcup D_{10} \bigsqcup D_{11}, $$
 where 
  $$ D_{ij} = \{(g_1, g_2) | g_2^{-1}g_1 \in \SL_2^i , g_2g_1^{-1} \in \SL_2^j \}.$$
  Note that, by comparing the previous formulas for the Poisson brackets, it is clear that the diagonal embedding $\delta: \SL_2^D \to D(\SL_2)$ sending $g$ to $(g,g)$ is Poisson. Denote by $\SL_2^{\delta}\subset D(\SL_2)$ its image. Introduce also the subalgebra
 $$ \SL_2^* := \left\{ \left( \begin{pmatrix} \lambda & x \\ 0 & \lambda^{-1} \end{pmatrix}, \begin{pmatrix} \lambda^{-1} & 0 \\ y & \lambda \end{pmatrix} \right), \lambda \in \mathbb{C}^*, x,y \in \mathbb{C} \right\} \subset D(\SL_2).$$
 The bad arcs leaves admit the following characterisation, which appear in \cite{AlekseevMalkin_PoissonLie}:
 $$ D_{ij} = \SL_2^{\delta} (\dot{w}_i, \mathds{1}_2) \SL_2^* \cap \SL_2^* (\mathds{1}_2, \dot{w}_j) \SL_2^{\delta}.$$

 \begin{theorem}\label{theorem_leaves_D1}(Alekseev-Malkin \cite[Theorem $2$]{AlekseevMalkin_PoissonLie}) The bad arcs leaves $D_{ij}$ are symplectic in $D_+(\SL_2)$.
 \end{theorem}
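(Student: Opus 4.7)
The plan is to identify $D_+(\SL_2)$ with the double studied by Alekseev-Malkin and invoke their general description of symplectic leaves via double cosets of the two Poisson-Lie subgroups $\SL_2^\delta$ and $\SL_2^*$.

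First, using the classical limits \eqref{eq_classicalr} and \eqref{eq_classicalr2} of the quantum $R$-matrix, I would verify that the explicit Poisson bracket $\{\cdot,\cdot\}^+$ on $D_+(\SL_2)$ given in the preceding paragraphs coincides (up to the sign convention $\{\cdot,\cdot\}^{AM} = -\{\cdot,\cdot\}^+$ noted in the text) with the double bracket studied by Alekseev-Malkin on $\SL_2 \times \SL_2$. Under this identification $(g_1, g_2) \leftrightarrow (N(\alpha), N(\beta))$, both the diagonal $\SL_2^\delta$ and the dual $\SL_2^*$ embed as Poisson-Lie subgroups: for $\SL_2^\delta$ this is already stated in the text (the diagonal embedding $\delta$ is Poisson), and for $\SL_2^*$ it follows from a direct computation with the explicit form of the upper/lower triangular pairs defining it.

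Second, I would apply the result of Alekseev-Malkin showing that the symplectic leaves of such a double are exactly the connected components of intersections $\SL_2^\delta \cdot s \cdot \SL_2^* \cap \SL_2^* \cdot t \cdot \SL_2^\delta$ indexed by pairs $(s,t)$ of Weyl group representatives. Since the Weyl group of $\SL_2$ has only two elements $w_0, w_1$, there are exactly four such strata, which by the explicit Bruhat-type characterization stated just before the theorem coincide with the four bad arcs pieces $D_{ij}$. Connectedness of each $D_{ij}$ follows from the connectedness of $\SL_2^\delta$ and $\SL_2^*$ together with its description as a single double-coset intersection. Combined with the earlier lemma that the symplectic leaves partition refines the bad arcs partition, this forces each $D_{ij}$ to be exactly one symplectic leaf.

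The hard part will be the first step: reconciling the skein-theoretic definition of the Poisson bracket on $D_+(\SL_2)$ (arising from fusion of two bigons in the annulus $\mathbb{D}_1^+$) with the purely Lie-theoretic double bracket of Alekseev-Malkin. This is essentially a book-keeping task involving the classical $r$-matrices $r^\pm$, but requires careful attention to orientations, since the two boundary arcs of $\mathbb{D}_1^+$ lie on two distinct boundary components and contribute gluings with opposite relative orientations, explaining the simultaneous appearance of $r^+$ and $r^-$ in the bracket formulas. Once this matching is done cleanly, the identification of the $D_{ij}$ with the Alekseev-Malkin strata — and hence their symplectic nature — is automatic.
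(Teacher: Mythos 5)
Your proposal follows essentially the same route the paper takes: the text immediately preceding the theorem already carries out the bracket-matching you identify as the "hard part" (it verifies, by expanding \eqref{eq_classicalr}--\eqref{eq_classicalr2} in $\hbar$, that $\{\cdot,\cdot\}^{AM}=-\{\cdot,\cdot\}^+$ reproduces Alekseev--Malkin's Equation (80)), records the double-coset characterization $D_{ij}=\SL_2^\delta(\dot w_i,\mathds{1}_2)\SL_2^*\cap\SL_2^*(\mathds{1}_2,\dot w_j)\SL_2^\delta$, and then invokes Alekseev--Malkin's Theorem 2, exactly as you propose. One small inaccuracy: the bracket on $D_+(\SL_2)$ does not arise from a "fusion of two bigons" — the surface $\mathbb{D}_1^+=\underline{\mathbf{\Sigma}}_{0,2}$ is the annulus with one boundary arc on each boundary circle, and the bracket is read off directly from the arc-exchange relations of its stated skein algebra via deformation quantization; fusing those two boundary arcs is what produces $\mathbf{\Sigma}^0_{1,0}$, a different surface. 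This does not affect the validity of your argument since the only thing needed is that the bracket matches Alekseev--Malkin's, which is what the paper's calculation establishes.
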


 Let $\mathfrak{sl}_2, \mathfrak{sl}_2^*, D(\mathfrak{sl}_2)$ denote the Lie algebras of $\SL_2^D\cong \SL_2^{\delta}, \SL_2^*$ and $D(\SL_2)$ respectively and note that $\mathfrak{sl}_2\oplus \mathfrak{sl}_2^* = D(\mathfrak{sl}_2)$. The triple $(\mathfrak{sl}_2, \mathfrak{sl}_2^*, D(\mathfrak{sl}_2))$ is called a \textit{Manin triple} 
 and it encodes (and is determined by) at the infinitesimal level the Poisson Lie group structure of $\SL_2^D$
 (see \cite{ChariPressley} for a detailed account on the subject). Note that the $(2:1)$ map $\varphi: \SL_2^* \to (\SL_2^0)^{STS}$, sending $(g_1, g_2)$ to $g_2^{-1}g_1$ is Poisson: it corresponds to the embedding of marked surfaces drawn in Figure \ref{fig_embeddingD1}. 
 
    \begin{figure}[!h] 
\centerline{\includegraphics[width=4cm]{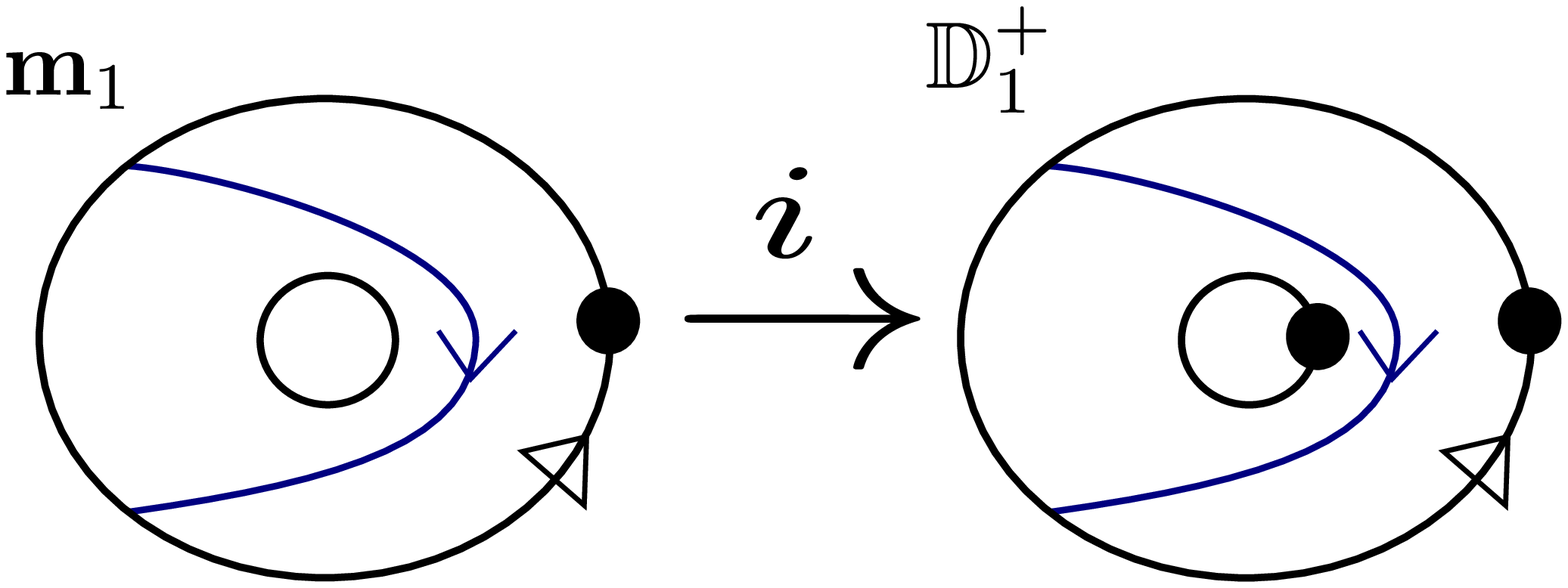} }
\caption{The embedding $\mathbf{m}_1 \to \mathbb{D}^+_1$ defining the map $\varphi: D_+(\SL_2)\to (\SL_2^{0})^{STS}$.} 
\label{fig_embeddingD1}
\end{figure} 
 
 \vspace{2mm}
\par \textbf{The work of Semenov-Tian-Shansky and its generalizations:}
 A useful tool to compute the symplectic leaves of a Poisson variety is the notion of dual pairs introduced by Weinstein in \cite{Weinstein_DualPairs}. For $X$ a smooth Poisson variety and $A\subset \mathcal{O}(X)$ a subspace, define
 $$A^{\perp} := \{ f \in \mathcal{O}(X) | \{f, a\} = 0, \forall a\in A\}.$$
 
 \begin{definition}
 Let $X, Y, Z$ three smooth Poisson manifolds such that $Z$ is symplectic. A \textit{dual pair} is a pair $(f,g)$ of Poisson morphisms $f: Z\to X, g: Z\to Y$ which are surjective submersions and such that 
 $$ f^* (\mathcal{O}(X))^{\perp} = g^{*}(\mathcal{O}(Y)) \quad \mbox{and} \quad g^{*}(\mathcal{O}(Y))^{\perp} =  f^* (\mathcal{O}(X))^{\perp}.$$
 \end{definition}
 
 \begin{theorem}\label{theorem_Weinstein}(Weinstein \cite[Section $8$]{Weinstein_DualPairs}) For $(f,g)$ a dual pair, then the symplectic leaves of $X$ are the connected components of the $f(g^{-1}(y))$ for $y\in Y$ and the symplectic leaves of $Y$ are the connected components of the $g(f^{-1}(x))$ for $x\in X$.
 \end{theorem}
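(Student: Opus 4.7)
The plan is to construct on the symplectic manifold $Z$ two distributions whose integral leaves realise, under $f$ and $g$ respectively, the symplectic leaves of $X$ and $Y$. Write $\omega$ for the symplectic form on $Z$ and, for $z\in Z$, set $x=f(z)$, $y=g(z)$. First I would introduce the Hamiltonian distributions
$$ \mathcal{H}_f(z) := \mathrm{Span}\{X_{f^*h}(z) \, : \, h \in \mathcal{O}(X)\}, \qquad \mathcal{H}_g(z) := \mathrm{Span}\{X_{g^*k}(z) \, : \, k \in \mathcal{O}(Y)\}.$$
Since $f$ and $g$ are surjective submersions, the pull-backs $df_z^*: T_x^*X \hookrightarrow T_z^*Z$ and $dg_z^*: T_y^*Y \hookrightarrow T_z^*Z$ are injective, and identifying $T_z^*Z$ with $T_zZ$ via $\omega$ we obtain $\dim \mathcal{H}_f(z)=\dim X$ and $\dim \mathcal{H}_g(z)=\dim Y$. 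The dual pair condition $f^*\mathcal{O}(X)^{\perp}=g^*\mathcal{O}(Y)$, combined with the identity $\omega(X_a,X_b)=\{a,b\}$, translates into the symplectic orthogonality $\mathcal{H}_f(z)^{\perp_\omega}=\mathcal{H}_g(z)$, so in particular $\dim X + \dim Y = \dim Z$.

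Next I would show that $\mathcal{H}_f$ is tangent to the fibers of $g$: we have $dg_z(X_{f^*h}(z))=0$ iff $\{g^*k, f^*h\}(z)=0$ for every $k\in\mathcal{O}(Y)$, which holds identically by the dual pair condition. Since $g^{-1}(y)$ has dimension $\dim Z - \dim Y = \dim X = \dim \mathcal{H}_f(z)$, the (a priori singular but integrable) distribution $\mathcal{H}_f$ has integral leaves open in, hence equal to, the connected components of the fibers of $g$. Because $f$ is a Poisson submersion, Hamiltonian vector fields project as $df_z(X_{f^*h}(z))=X_h(x)$. Consequently the image under $f$ of the connected component $C_z$ of $g^{-1}(y)$ through $z$ is the orbit of $x$ under the local flows of Hamiltonian vector fields on $X$, which is by definition the symplectic leaf of $X$ through $x$. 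Surjectivity of $f$ ensures every symplectic leaf of $X$ arises this way, and the analogous statement for $Y$ follows by interchanging the roles of $f$ and $g$.

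The hard part will be the middle step: identifying the integral leaves of the Hamiltonian distribution $\mathcal{H}_f$ with the connected components of the $g$-fibers. This requires the Stefan-Sussmann theorem for (possibly singular) involutive distributions spanned by a family of vector fields, and crucially uses the dimension equality $\dim \mathcal{H}_f = \dim g^{-1}(y)$ forced by the dual pair condition together with the submersion hypothesis. A secondary subtlety concerns connectedness: the full preimage $f(g^{-1}(y))$ is in general a disjoint union of symplectic leaves indexed by the connected components of the fiber $g^{-1}(y)$, which is why the statement of the theorem is phrased in terms of these connected components rather than the whole sets $f(g^{-1}(y))$.
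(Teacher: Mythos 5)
The paper does not actually prove Theorem~\ref{theorem_Weinstein}; it only cites Weinstein's original article, so there is no in-paper argument to compare against. Your proof reconstructs the standard symplectic--orthogonality argument and is correct in its overall structure (the identification $\mathcal{H}_f(z)=(\ker df_z)^{\perp_\omega}$ via the submersion hypothesis, the inclusion $\mathcal{H}_f\subseteq\ker dg$ from Poisson-commutation, the dimension count forcing $\mathcal{H}_f=\ker dg$, and the projection of Hamiltonian flows through the Poisson map $f$). Two steps deserve to be tightened. First, you assert that the dual pair condition ``translates into'' the pointwise equality $\mathcal{H}_f(z)^{\perp_\omega}=\mathcal{H}_g(z)$, and you extract $\dim X+\dim Y=\dim Z$ from it. But only the inclusion $\mathcal{H}_g\subseteq\mathcal{H}_f^{\perp_\omega}=\ker df$ follows immediately from $\{f^*h,g^*k\}=0$; the reverse inclusion (equivalently the dimension equality) is exactly the content of the \emph{equality} $f^*\mathcal{O}(X)^{\perp}=g^*\mathcal{O}(Y)$, and passing from that global function--algebra statement to a pointwise statement about tangent spaces requires an argument (one must realise every covector in $\ker df_z$ as $X_\phi(z)$ for some $\phi$ lying in the Poisson centraliser of $f^*\mathcal{O}(X)$; Weinstein in fact works directly with the geometric definition that $\ker df$ and $\ker dg$ are $\omega$-orthocomplements, which sidesteps this issue). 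Second, the last step only establishes that $f|_{C_z}$ is a submersion onto the symplectic leaf through $x$, hence that $f(C_z)$ is \emph{open} in that leaf; to conclude $f(C_z)$ is the entire leaf you must also argue accessibility, i.e.\ that each Hamiltonian flow on $X$ through $x$ lifts along $f$ to an integral curve of $\mathcal{H}_f$ that remains in the connected component $C_z$, so that $f(C_z)$ is also closed in the (connected) leaf. Finally, you do not need Stefan--Sussmann: once the dimension count is in place, $\mathcal{H}_f=\ker dg$ is a regular constant-rank involutive distribution, so the ordinary Frobenius theorem suffices.
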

 
 Let $(\mathfrak{g}, \mathfrak{g}^*, D(\mathfrak{g}))$ be a Manin triple of finite dimensional Lie bialgebras and let $G,G^*, D(G)$ denote the connected, simply connected Poisson Lie groups obtained by exponenting the three bialgebras. The equality maps $\mathfrak{g}\oplus \mathfrak{g}^* = D(\mathfrak{g})$ and $\mathfrak{g}^*\oplus \mathfrak{g} = D(\mathfrak{g})$ induce some Poisson maps $\Psi_1: G\times G^* \to D(G)$ and $\Psi_2: G^*\times G \to D(G)$ which are local diffeomorphisms in the neighbourhoods of the identities. The Poisson Lie group is called \textit{factorizable} if $\Psi_1$ and $\Psi_2$ are global diffeomorphisms. In this case, consider the diffeomorphism $\Psi_2^{-1} \circ \Psi_1: G \times G^* \cong G^* \times G$ sending $(g, g^*)$ to $\Psi_2^{-1} \circ \Psi_1(g,g^*)= (g^{g^*}, g\cdot g^*)$. The formulas $(g,g^*) \mapsto g^{g^*}$ and $(g,g^*) \mapsto g\cdot g^*$ define a right action of $G^*$ on $G$ and a left action of $G$ on $G^*$ respectively, which are called \textit{dressing actions}. In the factorizable case, Semonov-Tian-Shansky proved in \cite[Proposition $14$]{STS_DressingAction} that $D(G)$ is symplectic and that in the correspondance
 $$ \begin{tikzcd} 
 {} & D(G) \arrow[ld, "\pi_1"] \arrow[rd, "\pi_2"']& {} \\
 G \cong  {}_{G^*}\backslash^{D(G)} & {} & \quotient{D(G)}{G^*} \cong G
 \end{tikzcd}
 $$
 the pair $(\pi_1, \pi_2)$ is a dual pair. Using Theorem \ref{theorem_Weinstein}, Semenov-Tian-Shansky concludes in \cite[Proposition $15$]{STS_DressingAction} that the symplectic leaves of $G$ are the connected components of the $G^*$ dressing orbits $\pi_1(\pi_2^{-1}(g)), g\in G$ with a similar result by exchanging $G$ and $G^*$. 
 
 \vspace{2mm}
 \par Unfortunately, the Poisson Lie group $\SL_2^D$ is not factorizable: the product map $\Psi_1: \SL_2^{\delta}\times \SL_2^* \to D(\SL_2)$ is $2:1$ and is not surjective.
 Moreover, by Lemma \ref{lemma_Casimir_partition}, $D(\SL_2)$ is very far from been symplectic: it has an infinite number of symplectic leaves. The idea of Alekseev-Malkin is to replace $D(\SL_2)$ by the symplectic leaves $D_{ij}^+$ of $D_+(\SL_2)$. The inclusion maps $\SL_2^{\delta}, \SL_2^* \hookrightarrow D_+(\SL_2)$ are also Poisson. Let 
 $$\Gamma:= \SL_2^{\delta} \cap \SL_2^* = \{ (\pm \mathds{1}_2, \pm \mathds{1}_2)\} \cong \mathbb{Z}/2\mathbb{Z}$$
 and write $\overline{\SL_2}^D= \quotient{\SL_2^{\delta}}{\Gamma}$ and $\overline{\SL_2^*}=\quotient{\SL_2^*}{\Gamma}$. Recall the definition of the simple Bruhat cells $\SL_2^i$ and ${}^i\SL_2$ corresponding to the Cartan data $(B_+, B_-)$ and $(B_-, B_+)$ respectively. Alekseev and Malkin proved that in the correspondance

  $$ \begin{tikzcd} 
 {} & D_{ij}^+ \arrow[ld, "\pi_1"] \arrow[rd, "\pi_2"']& {} \\
 {}^i \overline{\SL_2}^{D} \cong {}_{\SL_2^* \cap D_{ij}} \backslash {D_{ij}^+}& {} & \quotient{D_{ij}^+}{\SL_2^*\cap D_{ij}} \cong {\overline{\SL_2}^j}^{D}
 \end{tikzcd}
 $$
 the maps $(\pi_1, \pi_2)$ form a dual pair, so Weinstein's Theorem \ref{theorem_Weinstein} implies that the symplectic leaves of $\overline{\SL_2}^D$ are the intersections of the $\SL_2^*$ dressing orbits with the Bruhat cells ${\overline{\SL_2}^j}^{D}$. Since the quotient map $\SL_2^D \to \overline{\SL_2}^D$ is a regular Poisson covering (it is \'etale), the symplectic leaves of $\SL_2^D$ are the pull-backs by this map of the symplectic leaves of $\overline{\SL_2}^D$ , and we get the
 
 \begin{theorem}\label{theorem_leaves_bigon}(Hodges-Levasseur \cite[Theorem $B.2.1$]{HodgesLevasseur_OqG}, Alekseev-Malkin \cite[Section $4$]{AlekseevMalkin_PoissonLie}) The symplectic leaves of $\SL_2^D$ are
 \begin{enumerate}
 \item The leaves $\mathfrak{F}_{11}(\lambda):= \left\{ \begin{pmatrix} a & \lambda b \\ b & d\end{pmatrix}| b\neq 0, ad-\lambda b^2=1\right\}, \lambda \in \mathbb{C}^*$;  
 \item the leaves $X_{01}$ and $X_{10}$; 
 \item the singletons $\{g\}$ for $g\in X_{11}$.
 \end{enumerate}
 In particular, the equivariant symplectic leaves of $\SL_2^D$ are the double Bruhat cells $X_{ij}$.
 \end{theorem}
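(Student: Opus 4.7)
My plan is to combine the Alekseev--Malkin dual pair recalled above with the \'etale Poisson covering $p : \SL_2^D \to \overline{\SL_2}^D = \SL_2^D/\Gamma$, together with direct Casimir computations. Since $\Gamma = \{\pm \mathds{1}_2\}$ acts freely on $\SL_2^D$ by Poisson automorphisms (coming from the Hopf-algebra structure), the symplectic leaves of $\SL_2^D$ are exactly the connected components of $p^{-1}(\mathcal{L})$ for $\mathcal{L}$ a symplectic leaf of $\overline{\SL_2}^D$. Applying Weinstein's Theorem \ref{theorem_Weinstein} to the Alekseev--Malkin dual pair then reduces the problem to computing the connected components of $\SL_2^*$-dressing orbits intersected with the images of the double Bruhat cells $X_{ij}$ in the quotient, and lifting back up.

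Rather than compute the dressing action $\SL_2^* \curvearrowright \overline{\SL_2}^D$ explicitly --- awkward in this non-factorizable setting --- I would carry out the identification using the brackets $\{a,b\}^D=-ab$, $\{d,b\}^D=db$, $\{a,d\}^D=-2bc$, etc. On the open cell $X_{11}=\{bc\neq 0\}$, the Leibniz rule together with these brackets immediately yields $\{c/b,a\}^D = -ac/b + ac/b = 0$ and $\{c/b,d\}^D = cd/b - cd/b = 0$, so $c/b$ is a Casimir; its level sets are precisely the surfaces $\mathfrak{F}_{11}(\lambda)$, each smooth, connected, of dimension $2$, and a non-vanishing bracket such as $\{a,b\}^D=-ab$ certifies that the induced Poisson structure is non-degenerate on them. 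Similarly $b/c$ is a Casimir on $\{c\neq 0\}$ with $\{b/c = 0\}=X_{10}$, and $\{c/b=0\}\cap\{b\neq 0\}$ is $X_{01}$, realizing both as single $2$-dimensional leaves. On $X_{00}$ every bracket lies in the ideal generated by $b$ and $c$, hence every Hamiltonian vector field vanishes pointwise and each point is a leaf.

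For the equivariant statement, the $(\mathbb{C}^*)^{\mathcal{A}}=(\mathbb{C}^*)^2$ action induced by $\nabla^L$ and $\nabla^R$ is $(z_L,z_R)\cdot g = \mathrm{diag}(z_L,z_L^{-1}) \, g \, \mathrm{diag}(z_R,z_R^{-1})$, under which $c/b \mapsto z_L^{-2} z_R^2 \cdot c/b$. Since $z_L^{-2}z_R^2$ sweeps out all of $\mathbb{C}^*$, all surfaces $\mathfrak{F}_{11}(\lambda)$ with $\lambda\in\mathbb{C}^*$ merge into the single equivariant leaf $X_{11}$; combined with torus-preservation of each $X_{ij}$ (clear from the defining inequalities on $b,c$), the equivariant symplectic leaves are exactly the double Bruhat cells.

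The main obstacle I expect is justifying that the $2$-dimensional candidates $X_{01}$, $X_{10}$ and $\mathfrak{F}_{11}(\lambda)$ are actually single symplectic leaves and not unions of smaller ones. The cleanest route is to invoke the Alekseev--Malkin dual pair from the outset, since the dimensions of the candidates match exactly those predicted by Weinstein's theorem and this rules out further refinement a priori; a purely computational alternative would require verifying that the induced Poisson structure is everywhere non-degenerate on each candidate, which is more tedious but feasible from the explicit brackets above.
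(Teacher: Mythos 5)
Your proposal is essentially the same route the paper takes: cite the Alekseev--Malkin dual pair $(\pi_1,\pi_2)$ on $D_{ij}^+$, apply Weinstein's Theorem~\ref{theorem_Weinstein} to get the leaves of $\overline{\SL_2}^D$, and pull back along the \'etale $2{:}1$ Poisson covering $p:\SL_2^D\to\overline{\SL_2}^D$. What you add, and what is genuinely useful, is the direct Casimir computation: verifying $\{c/b,a\}^D=\{c/b,d\}^D=0$ on $X_{11}$ identifies the level sets with the family $\mathfrak{F}_{11}(\lambda)$ without ever computing the dressing action explicitly, and this is a cleaner and more self-contained way to describe the refinement of the open cell. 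The paper does not spell this out; it simply asserts the list. Your treatment of the equivariant statement (rescaling $c/b$ by $z_L^{-2}z_R^2$) is correct and matches the paper's implicit argument.

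One technical point should be flagged. Your ``certification'' of non-degeneracy on $\mathfrak{F}_{11}(\lambda)$ via the single bracket $\{a,b\}^D=-ab$ fails precisely at points of $\mathfrak{F}_{11}(\lambda)$ with $a=0$ (which do exist: take $a=d=0$, $b^2=-\lambda$). At such points one must use $\{a,d\}^D=-2bc\neq0$, which works since $bc\neq0$ throughout $X_{11}$; the argument is salvageable but as written is incomplete. You acknowledge this obstacle in your last paragraph, and your suggested resolution --- fall back on the Alekseev--Malkin dual pair, where the dimensions of the $\SL_2^*$-dressing orbits already pin down the leaves --- is exactly how the paper handles it. Note, though, that the phrase ``the dimensions match and this rules out further refinement a priori'' is still a bit loose: Weinstein's theorem identifies the leaves with connected components of $\pi_1(\pi_2^{-1}(y))$, so one does need to know the dressing orbit structure (or, as you propose, the Casimirs) before comparing dimensions. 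The two ingredients together close the argument; either alone leaves a gap.
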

 
 Similarly, by exchanging the roles of $\SL_2$ and $\SL_2^*$, we obtain that the symplectic leaves of $\overline{\SL_2^*}$ and, using the Poisson isomorphism $\varphi: \overline{\SL_2^*}\xrightarrow{\cong} {\SL_2^0}^{STS}$, sending $(g_+, g_-)$ to $g_-^{-1}g_+$, and noting that the dressing action of $\SL_2^D$ on $\overline{\SL_2^*}$ corresponds through $\varphi$ to the action of $\SL_2^D$ on ${\SL_2^0}^{STS}$ by conjugacy, 
 we see that the leaves $\SL_2^0\cap C$, for $C$ a conjugacy class, are symplectic in $\SL_2^{STS}$. Together with Lemma \ref{lemma_singleton}, we get the
 
 \begin{theorem}\label{theorem_leaves_monogon}(Alekseev-Malkin \cite[Section $4$]{AlekseevMalkin_PoissonLie})
 The symplectic leaves of $\SL_2^{STS}$ are
 \begin{enumerate}
 \item The leaves $\SL_2^0 \cap C$, for $C$ a conjugacy class; 
 \item the singletons $\{g\}$ for $g\in \SL_2^1$.
 \end{enumerate}
 \end{theorem}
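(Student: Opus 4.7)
The plan is to treat separately the two pieces of the bad arcs partition $\SL_2^{STS} = \SL_2^0 \sqcup \SL_2^1$: by Lemma \ref{lemma_leyu} applied to the unique bad arc $\alpha(p)_{-+}$ of $\mathbf{m}_1$, this partition is a coarsening of the symplectic leaf partition, so it suffices to describe the leaves inside each piece.

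For the piece $\SL_2^1=\{a=0\}$, I would invoke Lemma \ref{lemma_singleton}. Because $a=-\alpha_{-+}$ under the chosen identification, one has $\SL_2^1=\mathcal{X}^{red}(\mathbf{m}_1)$, so the statement reduces to verifying that $\mathcal{S}^{red}_A(\mathbf{m}_1)$ is commutative for generic $A$. Setting $\alpha_{-+}=0$ in the seven defining relations of $\mathcal{O}_q[\SL_2^{STS}]$ displayed in Section \ref{sec_STS} annihilates every right-hand side carrying $\alpha_{-+}$ and leaves only the constraint $\alpha_{--}\alpha_{++}=1$; in particular every commutator vanishes. The hypothesis of Lemma \ref{lemma_singleton} is thus satisfied, and every singleton $\{g\}$ with $g\in \SL_2^1$ is a symplectic leaf of $\SL_2^{STS}$.

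For the piece $\SL_2^0$ the plan is to imitate the proof of Theorem \ref{theorem_leaves_bigon}, with the roles of $\SL_2^{\delta}$ and $\SL_2^*$ exchanged, and then transport the result through the Poisson isomorphism $\varphi:\overline{\SL_2^*}\xrightarrow{\cong}(\SL_2^0)^{STS}$, $(g_+,g_-)\mapsto g_-^{-1}g_+$ induced by the embedding $\mathbf{m}_1 \hookrightarrow \mathbb{D}_1^+$ of Figure \ref{fig_embeddingD1}. Concretely, the Poisson inclusions of $\SL_2^{\delta}$ and $\SL_2^*$ into $D_+(\SL_2)$ together with the symplecticity of each stratum $D_{ij}^+$ from Theorem \ref{theorem_leaves_D1} should provide a dual pair
\[
\overline{\SL_2^*}\cap \pi_1(D_{ij}^+) \;\xleftarrow{\;\pi_1\;}\; D_{ij}^+ \;\xrightarrow{\;\pi_2\;}\; \overline{\SL_2^*}\cap \pi_2(D_{ij}^+),
\]
whose base spaces are Bruhat-type strata of $\overline{\SL_2^*}$. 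Weinstein's Theorem \ref{theorem_Weinstein} then identifies the symplectic leaves of $\overline{\SL_2^*}$ with connected components of intersections of $\SL_2^{\delta}$-dressing orbits with these strata.

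It remains to translate this description across $\varphi$. The $\SL_2^D$-coaction on $\mathcal{S}_A(\mathbf{m}_1)$ is implemented by gluing a bigon along the unique boundary arc, which at the classical level is the conjugation action of $\SL_2^D$ on $\SL_2^{STS}$; a diagrammatic comparison shows that $\varphi$ intertwines the $\SL_2^{\delta}$-dressing action with this conjugation action. Since the orbits of $\SL_2^D$ acting by conjugation on $\SL_2$ are precisely the conjugacy classes $C$, one concludes that the symplectic leaves of $(\SL_2^0)^{STS}$ are the intersections $\SL_2^0\cap C$, giving item $(1)$. The main obstacle in turning this plan into a rigorous argument is the simultaneous verification of the dual-pair axioms and of the equivariance of $\varphi$: both reduce to a careful bookkeeping with the Manin triple $(\mathfrak{sl}_2,\mathfrak{sl}_2^*,D(\mathfrak{sl}_2))$ along the lines of \cite{STS_DressingAction,AlekseevMalkin_PoissonLie}, while every other ingredient is a direct application of the general machinery recalled in Section \ref{sec_STS}.
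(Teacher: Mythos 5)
Your proposal reproduces the paper's argument almost exactly: the same bad arcs split, the same invocation of Lemma~\ref{lemma_singleton} on $\SL_2^1$, and the same route through the exchanged Alekseev--Malkin dual pair, Weinstein's Theorem~\ref{theorem_Weinstein}, and transport along $\varphi:\overline{\SL_2^*}\to(\SL_2^0)^{STS}$ for $\SL_2^0$. Your explicit check that $\mathcal{S}^{red}_A(\mathbf{m}_1)$ is commutative for generic $A$ (which the paper uses without verifying) is a welcome addition; note only that setting $\alpha_{-+}=0$ leaves $\alpha_{--}\alpha_{++}=A$ rather than $1$, and that the base spaces in your displayed dual pair should really be the quotients of $D^+_{ij}$ by $\SL_2^{\delta}\cap D_{ij}$ (i.e.\ Bruhat strata of $\overline{\SL_2^*}$, as you say in words) rather than literal intersections $\overline{\SL_2^*}\cap\pi_k(D^+_{ij})$ --- neither slip affects the argument.
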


 \subsection{The classical fusion operation}
 
 We now describe the classical equivalent of Theorem \ref{theorem_fusion}. Let $\mathbf{r}_{\hbar} : \mathcal{O}_{\hbar}[\SL_2]^{\otimes 2}\to \mathbb{C}[[\hbar]]$ be the co-R-matrix for parameter $q:=\exp(\hbar)$ and note that 
 $$ \mathbf{r}_{\hbar} \equiv \epsilon \otimes \epsilon + \hbar r^+ \pmod{\hbar^2}.$$
 In this formula, we see $r^+ \in \mathfrak{sl}_2\otimes \mathfrak{sl}_2$ as an element of the Zariski tangent space of $\SL_2\times \SL_2$ at the neutral element $(\mathds{1}_2, \mathds{1}_2)$, i.e. $r^+ \in \mathrm{Der}(\mathcal{O}[\SL_2]^{\otimes 2}, \mathbb{C}_{\varepsilon \otimes \varepsilon})$ is a derivation valued in $\mathbb{C}$ with $\mathcal{O}[\SL_2]^{\otimes 2}$-module structure induced by $\epsilon \otimes \epsilon$.

  Let $G$ be an affine algebraic Poisson Lie group with Poisson structure given by a classical r-matrix $r^+ \in \mathfrak{g}\otimes \mathfrak{g}$ (i.e. with Poisson structure given by the cocomutator $\delta: \mathfrak{g}\to \mathfrak{g}\otimes \mathfrak{g}$, $\delta(X)=[X, r^+]$). A $G$-Poisson affine variety is a complex affine variety $X$ with an algebraic Poisson action $G\times X \to X$. 
  
  \begin{definition}
  Let $X$ be a $G^2$-Poisson affine variety and denote by $\Delta_{G\times G} : \mathcal{O}[X]\to \mathcal{O}[G]\otimes \mathcal{O}[X]$ it comodule map. Wite $\Delta^1:= (\id\otimes \epsilon\times \id)\circ \Delta_{G\times G}$ and $\Delta^2:= (\epsilon \otimes \id \otimes \id)$.
  The \textit{fusion} of $X$ is the $G$-Poisson affine variety $X^{\circledast}$ define by
  \begin{enumerate}
  \item As a $\mathbb{C}$-algebra, $\mathcal{O}[X^{\circledast}]=\mathcal{O}[X]$.
  \item  For $x\in \mathcal{O}[X]$ and $i=1,2$, write $\Delta^i (x) = \sum x_{(i)}'\otimes x_{(i)}''$. The Poisson bracket is defined by
  $$\{ x, y \}^{\circledast} := \{x,y\} + \sum r^+ (y'_{(2)} \otimes x'_{(1)}) x_{(1)}''y_{(2)}'' - \sum r^+ (x'_{(2)}\otimes y'_{(1)})y''_{(1)}x''_{(2)}.$$
   \item The $G$ action is given by the comodule map $\Delta_G:= (\mu_G \otimes \id) \circ \Delta_{G\times G} $.
  \end{enumerate}
  \end{definition}
  
  In the particular case where $X$ is smooth, consider $X$ as a smooth manifold and denote by $\pi_X $ the Poisson bivector field defining the Poisson structure (i.e. $\{f,g\}(x)=\left< D_xf \otimes D_xg, \pi_{X,x}\right>$). Let $r^- := \sigma(r^+)$, where $\sigma(x\otimes y)=y\otimes x$. Let $a_{G\times G}: \mathfrak{g}\otimes \mathfrak{g} \to \Gamma(X, T_X)$ the infinitesimal action induced by the action of $G^2$ on $X$. Then the fusion $X^{\circledast}$ is the manifold $X$ with the Poisson bivector field
  $$ \pi_{X^{\circledast}} = \pi_X + a_{G\times G} (r^- - r^+).$$
 This is using this formula that the concept of fusion was introduced in the work of Alekseev-Malkin \cite{AlekseevMalkin_PoissonCharVar}. As we shall see, for a connected marked surface with non-trivial marking, then 
 $\mathcal{X}(\mathbf{\Sigma})$ is smooth. Recall the coaction $\Delta^L : \mathcal{S}_{+1}(\mathbf{\Sigma}) \to \mathcal{O}[\SL_2]^{\otimes \mathcal{A}} \otimes \mathcal{S}_{+1}(\mathbf{\Sigma})$ induced by gluing some bigons to the boundary arcs. This endow $\mathcal{X}(\mathbf{\Sigma})$ with a structure of $(\SL_2^D)^{\mathcal{A}}$ Poisson variety. In particular, by choosing two boundary arcs $a,b$, we get a structure of $(\SL_2^D)^2$-Poisson variety on $\mathcal{X}(\mathbf{\Sigma})$.
 As a consequence of Costantino and L\^e's Theorem \ref{theorem_fusion}, we get the 
 
 \begin{theorem}\label{coro_fusion}
 If $\mathbf{\Sigma}_{a\circledast b}$ is obtained from $\mathbf{\Sigma}$ by fusioning $a$ and $b$, then $\mathcal{X}(\mathbf{\Sigma}_{a\circledast b})\cong \mathcal{X}(\mathbf{\Sigma}^{a\circledast b})$.
 \end{theorem}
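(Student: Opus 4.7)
The plan is to specialize Costantino--L\^e's quantum Theorem~\ref{theorem_fusion} to $A^{1/2} = \exp(\hbar/4)$ and take the classical limit $\hbar\to 0$. The $k$-linear isomorphism $\Psi_{a\circledast b}$ from that theorem is defined diagrammatically on stated tangles, so it descends to a $\mathbb{C}[[\hbar]]$-algebra isomorphism $\mathcal{S}_\hbar(\mathbf{\Sigma})_{a\circledast b} \xrightarrow{\cong} \mathcal{S}_\hbar(\mathbf{\Sigma}_{a\circledast b})$ whose reduction modulo $\hbar$ will be the candidate Poisson isomorphism between $\mathcal{X}(\mathbf{\Sigma})^{a\circledast b}$ and $\mathcal{X}(\mathbf{\Sigma}_{a\circledast b})$.

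First, I would check that at $\hbar=0$ the fusion product of Definition~\ref{def_fusion_quantique} collapses to the ordinary commutative product. From the expansion $\mathbf{r}_\hbar \equiv \epsilon\otimes\epsilon \pmod{\hbar}$ and the explicit formula
\[ \mu_{1\circledast 2}(x\otimes y) = \sum \mathbf{r}_\hbar\bigl(y'_{(2)}\otimes x'_{(1)}\bigr)\, x''_{(1)}\,y''_{(2)} \]
coming from Definition~\ref{def_fusion_quantique}, the counit axioms applied to the coactions $\Delta^1,\Delta^2$ collapse each sum to $xy$. Hence the $\hbar=0$ reduction of $\Psi_{a\circledast b}$ is an isomorphism of commutative algebras, giving the underlying isomorphism of affine varieties $\mathcal{X}(\mathbf{\Sigma}_{a\circledast b}) \cong \mathcal{X}(\mathbf{\Sigma})$ as bare complex varieties.

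Next, I would match the Poisson structures by extracting the order $\hbar$ coefficient of the commutator $\mu_{1\circledast 2}(x\otimes y) - \mu_{1\circledast 2}(y\otimes x)$, using the finer expansion $\mathbf{r}_\hbar \equiv \epsilon\otimes\epsilon + \hbar\, r^+ \pmod{\hbar^2}$ recorded in Equation~\eqref{eq_classicalr}. The $\hbar^0$ term vanishes by commutativity, and the $\hbar^1$ coefficient assembles into
\[ \{x,y\} + \sum r^+\bigl(y'_{(2)}\otimes x'_{(1)}\bigr)\, x''_{(1)}\,y''_{(2)} - \sum r^+\bigl(x'_{(2)}\otimes y'_{(1)}\bigr)\, y''_{(1)}\,x''_{(2)}, \]
which is precisely the classical fusion bracket from the definition immediately preceding the statement. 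By Remark~\ref{remarkPoisson}, any algebra morphism between $\hbar$-deformations reduces to a Poisson morphism, so this upgrades the variety isomorphism to a Poisson isomorphism.

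The hard part is not conceptual but bookkeeping: reconciling conventions when doing the $\hbar$-expansion — matching the indexing of $\Delta^1,\Delta^2$, the placement of the twists $\tau_{A,H}$ and $\tau_{H,H}$ in Definition~\ref{def_fusion_quantique}, and the choice of $r^+$ versus $r^-$ in Equations~\eqref{eq_classicalr}--\eqref{eq_classicalr2} — so that the quantum commutator's $\hbar^1$ coefficient matches the classical fusion formula term by term, without a sign or ordering discrepancy. Once this reconciliation is carried out, the theorem is a direct corollary of Theorem~\ref{theorem_fusion} and requires no further skein-theoretic input.
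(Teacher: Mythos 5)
Your proposal follows exactly the route the paper takes: specialize Costantino--L\^e's quantum fusion isomorphism $\Psi_{a\circledast b}$ to $q=\exp(\hbar)$, observe that its pullback product is the fusion product of Definition~\ref{def_fusion_quantique}, expand $\mathbf{r}_\hbar \equiv \epsilon\otimes\epsilon + \hbar r^+ \pmod{\hbar^2}$, and read off the $\hbar^1$ coefficient of the commutator $x\star_{\circledast}y - y\star_{\circledast}x$ to recover the classical fusion bracket. The paper simply carries out the bookkeeping you defer, so this is the same argument.
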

 
 \begin{proof}
 Recall from Theorem \ref{theorem_fusion} the isomorphism of vector spaces $\Psi_{a\circledast b} : \mathcal{S}_{\hbar}(\mathbf{\Sigma}) \cong \mathcal{S}_{\hbar}(\mathbf{\Sigma}_{a \circledast b})$ so that the  pull-back $\mu_{\circledast}$ of the product in $ \mathcal{S}_{\hbar}(\mathbf{\Sigma}_{a \circledast b})$ is the fusion, in the sense of Definition \ref{def_fusion_quantique}, of the product $\mu$ of $ \mathcal{S}_{\hbar}(\mathbf{\Sigma})$. Denote by $\star_{\circledast}$ and $\star$ the products in $\mathcal{S}_{+1}(\mathbf{\Sigma})\otimes \mathbb{C}[[\hbar]]$ corresponding to $\mu_{\circledast}$ and $\mu$ respectively.
 For $x,y\in \mathcal{S}_{+1}(\mathbf{\Sigma})$, we have
 $$ x \star_{\circledast} y = \sum \mathbf{r}_{\hbar}(y'_{(2)} \otimes x'_{(1)} ) x''_{(1)} \star y''_{(2)}, \quad \mbox{and} \quad y\star_{\circledast} x = \sum \mathbf{r}_{\hbar}(x'_{(2)}\otimes y'_{(1)})y''_{(1)} \star x''_{(2)}.$$
 Using the equalities $ \mathbf{r}_{\hbar}(x\otimes y) \equiv \epsilon(x)  \epsilon(y) + \hbar r^+(x\otimes y)  \pmod{\hbar^2}$ and $x\star y - y\star x = \hbar \{x,y\} \pmod{\hbar^2}$, we find:
 $$ x\star_{\circledast} y - y\star_{\circledast} x \equiv \hbar\left( \{x,y\} + \sum r^+ (y'_{(2)} \otimes x'_{(1)}) x_{(1)}''y_{(2)}'' - \sum r^+ (x'_{(2)}\otimes y'_{(1)})y''_{(1)}x''_{(2)}\right) \pmod{\hbar^2}.$$
 \end{proof}
 
The main motivation for the authors of \cite{AlekseevMalkin_PoissonCharVar} to introduce the notion of fusion was to get the following decomposition (which is \cite[Theorem 2]{AlekseevMalkin_PoissonCharVar}): 
 $$ \mathcal{X}(\mathbf{\Sigma}_{g,n}^{0}) \cong \mathcal{X}(\mathbf{\Sigma}_{1,0}^0) ^{\circledast g} \circledast (\SL_2^{STS})^{\circledast n},$$
 where $ \mathcal{X}(\mathbf{\Sigma}_{1,0}^0)$ is obtained from $D_+(\SL_2)$ by fusion. This reduces the study of $ \mathcal{X}(\mathbf{\Sigma}_{g,n}^{0}) $ to the study of $D_+(\SL_2)$ and $\SL_2^{STS}$.
 A useful concept, also introduced by Alekseev-Kosmann-Schwarzbach-Meinrenken in \cite{AlekseevKosmannMeinrenken} is the
 \begin{definition}
 A smooth $G$-Poisson variety $X$ is \textit{non-degenerate} if the map 
 $$ (a_G, \pi^{\#}_X) : \mathfrak{g}\oplus T_X^{*} \to T_X$$ 
 is surjective, where $a_G : \mathfrak{g} \to T_X$ is the infinitesimal $G$ action and $\pi_X^{\#}$ is the map induced from the Poisson bivector field $\pi_X$.
 \end{definition}
 
 \begin{lemma}\label{lemma_XXX}( Ganev-Jordan-Safranov \cite[Proposition $2.13$]{GanevJordanSafranov_FrobeniusMorphism} following \cite[Section $10$]{AlekseevKosmannMeinrenken})
 Let $X$  a smooth $G^2\times H$-Poisson variety and $X^{\circledast}$ is the $G\times H$-Poisson variety obtained by fusion. If $X$ is non degenerate then $X^{\circledast}$ is non-degenerate.
 \end{lemma}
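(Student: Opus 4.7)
I work pointwise. Fix $x \in X$, set $V := T_xX$, and denote by $a_1, a_2 \colon \mathfrak{g} \to V$ the infinitesimal actions of the two $G$-factors, by $a_H \colon \mathfrak{h} \to V$ that of $H$, and by $\pi^{\#}, \pi^{\#}_{\circledast} \colon T^*_xX \to V$ the two Poisson anchors. Non-degeneracy of $X$ as a $G^2\times H$-Poisson variety means
\begin{equation*}
\mathrm{Im}(a_1) + \mathrm{Im}(a_2) + \mathrm{Im}(a_H) + \mathrm{Im}(\pi^{\#}) = V,
\end{equation*}
and I must establish the analogous surjectivity for the diagonal $G$-action and the fused anchor:
\begin{equation*}
\mathrm{Im}(a_1 + a_2) + \mathrm{Im}(a_H) + \mathrm{Im}(\pi^{\#}_{\circledast}) = V.
\end{equation*}

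The first step is to compare the two anchors. Writing $r^+ = \sum_i x_i \otimes y_i$, the identity $\pi_{X^{\circledast}} = \pi_X + a_{G\times G}(r^- - r^+)$ gives, for every $\alpha \in T^*_xX$, that $\pi^{\#}_{\circledast}(\alpha) - \pi^{\#}(\alpha)$ is a linear combination of vectors in $\mathrm{Im}(a_1)$ and $\mathrm{Im}(a_2)$ whose coefficients depend linearly on $\alpha$. In particular $\mathrm{Im}(a_1)+\mathrm{Im}(a_2)+\mathrm{Im}(\pi^{\#}_{\circledast}) = \mathrm{Im}(a_1)+\mathrm{Im}(a_2)+\mathrm{Im}(\pi^{\#})$, and the hypothesis already yields $\mathrm{Im}(a_1) + \mathrm{Im}(a_2) + \mathrm{Im}(a_H) + \mathrm{Im}(\pi^{\#}_{\circledast}) = V$.

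It remains to upgrade $\mathrm{Im}(a_1) + \mathrm{Im}(a_2)$ to the diagonal image $\mathrm{Im}(a_1+a_2)$. Given $v \in V$, decompose $v = a_1(\xi_1) + a_2(\xi_2) + a_H(\eta) + \pi^{\#}_{\circledast}(\alpha)$ using the previous paragraph, and rewrite $a_1(\xi_1) + a_2(\xi_2) = (a_1+a_2)(\xi_2) + a_1(\xi_1 - \xi_2)$. The reduction thus boils down to showing that every $a_1(\zeta)$ with $\zeta \in \mathfrak{g}$ lies in $\mathrm{Im}(a_1 + a_2) + \mathrm{Im}(a_H) + \mathrm{Im}(\pi^{\#}_{\circledast})$. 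This is extracted from the explicit shape of the correction term: the bivector $a_{G\times G}(r^- - r^+)$ is an ``anti-diagonal'' mixing of $\mathrm{Im}(a_1)$ with $\mathrm{Im}(a_2)$, and a suitable choice of $\alpha$---whose existence follows from non-degeneracy of $X$ applied to the auxiliary vector $a_2(\zeta) \in V$---produces $\pi^{\#}_{\circledast}(\alpha)$ congruent to $(a_1-a_2)(\zeta)$ modulo $\mathrm{Im}(\pi^{\#})$. Combining this with the identity $2a_1(\zeta) = (a_1+a_2)(\zeta) + (a_1-a_2)(\zeta)$ and a second application of the first step to the residual $\pi^{\#}$-term completes the argument, following the linear-algebraic strategy of \cite[Section~10]{AlekseevKosmannMeinrenken}.

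\textbf{Main obstacle.} The crux of the proof is the last step: since $\mathrm{Im}(a_1+a_2) \subsetneq \mathrm{Im}(a_1) + \mathrm{Im}(a_2)$ in general, one must genuinely exploit the skew part $r^- - r^+$ of the classical $r$-matrix, which is precisely the ingredient producing the $a_1 \wedge a_2$ corrective bivector needed to recover the anti-diagonal directions. Verifying that this correction suffices---i.e.\ that for every $\zeta \in \mathfrak{g}$ there exists a covector whose $\pi^{\#}_{\circledast}$-image equals $a_1(\zeta)$ modulo the diagonal and $\mathrm{Im}(\pi^{\#})$---is the technical heart of the lemma and is what forces the hypothesis of non-degeneracy of $X$ to enter a second time.
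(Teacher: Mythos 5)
The first half of your argument is sound: the difference $\pi^{\#}_{\circledast}(\alpha) - \pi^{\#}(\alpha)$ indeed lands in $\mathrm{Im}(a_1)+\mathrm{Im}(a_2)$, and therefore $\mathrm{Im}(a_1)+\mathrm{Im}(a_2)+\mathrm{Im}(a_H)+\mathrm{Im}(\pi^{\#}_{\circledast})=V$. The gap is exactly where you flag it, but it is a genuine gap rather than a routine verification. You assert that for each $\zeta\in\mathfrak{g}$ one can find $\alpha$ with $\pi^{\#}_{\circledast}(\alpha)\equiv (a_1-a_2)(\zeta)$ modulo $\mathrm{Im}(\pi^{\#})$ and the diagonal, and that this ``follows from non-degeneracy of $X$ applied to $a_2(\zeta)$.'' It does not. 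Non-degeneracy only produces a decomposition of the \emph{vector} $a_2(\zeta)$; it hands you no \emph{covector}. Writing the corrective bivector schematically as $\sum_i a_1(e_i)\wedge a_2(e^i)$, your desired $\alpha$ has to simultaneously satisfy two linear conditions on $a_1^{*}\alpha$ and $a_2^{*}\alpha$ in $\mathfrak{g}^{*}$, so you are implicitly asking the map $(a_1^{*},a_2^{*}):T^{*}_xX\to\mathfrak{g}^{*}\oplus\mathfrak{g}^{*}$ to hit a prescribed point. That surjectivity is equivalent to injectivity of $a_1\oplus a_2:\mathfrak{g}\oplus\mathfrak{g}\to T_xX$, which fails wherever the $G^{2}$-action has nontrivial stabilizer, and nothing in the hypothesis rules that out.

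The missing ingredient is the group-valued moment map. In both \cite[Section 10]{AlekseevKosmannMeinrenken} and \cite[Proposition 2.13]{GanevJordanSafranov_FrobeniusMorphism} the variety $X$ is assumed \emph{Hamiltonian}, i.e.\ equipped with a $G^{2}\times H$-equivariant Poisson map $\mu:X\to (\SL_2^{STS})^{2}\times H^{STS}$ (in the $\SL_2$ case), and the covectors you need are manufactured by pulling back $1$-forms along $\mu$; the moment-map identity is precisely what converts a decomposition of a vector into the existence of a covector with controlled $\pi^{\#}_{\circledast}$-image. The lemma as quoted in the paper silently drops the Hamiltonian hypothesis (it is present in all the surfaces to which the lemma is later applied, via the maps $\mu$ of Figure \ref{fig_MomentMap}), and your proof attempt inherits that omission: as written, with only the surjectivity of $(a_{G^{2}\times H},\pi^{\#})$ as input, the reduction from $\mathrm{Im}(a_1)+\mathrm{Im}(a_2)$ to $\mathrm{Im}(a_1+a_2)$ does not go through.
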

 
 \begin{lemma}\label{lemma_ND}
 For $g\geq 1$, the $\SL_2^D$-Poisson variety $\mathcal{X}(\mathbf{\Sigma}_{g,0}^0)$ is non-degenerate.
 \end{lemma}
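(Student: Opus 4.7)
The plan is to argue by induction on $g$, reducing to the base case $g=1$ via the classical fusion operation.

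\textbf{Base case $g=1$.} Recall from Example \ref{example_presentations} (item 3) that $\mathbf{\Sigma}_{1,0}^0$ is obtained from $\mathbb{D}_1^+ = \underline{\mathbf{\Sigma}}_{0,2}$ by fusing its two boundary arcs. By Corollary \ref{coro_fusion}, $\mathcal{X}(\mathbf{\Sigma}_{1,0}^0)$ is, as an $\SL_2^D$-Poisson variety, the fusion of the $(\SL_2^D)^2$-Poisson variety $D_+(\SL_2) = \mathcal{X}(\mathbb{D}_1^+)$. By Lemma \ref{lemma_XXX}, it suffices therefore to establish the following key step: the $(\SL_2^D)^2$-Poisson variety $D_+(\SL_2)$ is non-degenerate.

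\textbf{Non-degeneracy of $D_+(\SL_2)$.} At a point $p=(g_1,g_2) \in \SL_2 \times \SL_2$ we must show that
$$ (a_{\SL_2^2},\, \pi^{\#}) \colon \mathfrak{sl}_2 \oplus \mathfrak{sl}_2 \oplus T_p^* D_+(\SL_2) \longrightarrow T_p D_+(\SL_2) $$
is surjective. By Theorem \ref{theorem_leaves_D1}, the open bad-arc stratum $D_{00}$ is a symplectic leaf of dimension $6 = \dim D_+(\SL_2)$, so $\pi^{\#}$ alone is surjective at every point of $D_{00}$, giving non-degeneracy there for free. On each of the three lower-dimensional strata $D_{01}, D_{10}, D_{11}$, the image of $\pi^{\#}$ coincides with the tangent space to the stratum, and it remains to verify that the infinitesimal action $a_{\SL_2^2}$, which in these coordinates acts on the matrices $N(\alpha), N(\beta)$ by left multiplication at the endpoint on one boundary arc and right multiplication at the endpoint on the other, supplies the missing transverse directions. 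This is a finite verification: at one chosen representative point of each stratum one computes the codimension (at most $2$) and exhibits explicit elements of $\mathfrak{sl}_2 \oplus \mathfrak{sl}_2$ whose infinitesimal action lands transversally to the leaf, then propagates the conclusion by $(\SL_2^D)^2$-invariance of each stratum.

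\textbf{Inductive step.} Assume the result for $g-1$. The marked surface $\mathbf{\Sigma}_{g,0}^0$ is obtained from $\mathbf{\Sigma}_{g-1,0}^0 \sqcup \mathbf{\Sigma}_{1,0}^0$ by one application of the fusion operation (fusing their unique boundary arcs). By the symmetric monoidality of $\mathcal{S}_A$, one has $\mathcal{X}(\mathbf{\Sigma}_{g-1,0}^0 \sqcup \mathbf{\Sigma}_{1,0}^0) \cong \mathcal{X}(\mathbf{\Sigma}_{g-1,0}^0) \times \mathcal{X}(\mathbf{\Sigma}_{1,0}^0)$ as $(\SL_2^D)^2$-Poisson varieties with the product Poisson structure and the componentwise group action. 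At every point of a product, the map $(a_{G_1 \times G_2},\, \pi^{\#})$ is the direct sum of the two corresponding maps for each factor; since a direct sum of surjections is surjective, products of non-degenerate varieties are non-degenerate. Corollary \ref{coro_fusion} identifies $\mathcal{X}(\mathbf{\Sigma}_{g,0}^0)$ with the fusion of this product, and a further application of Lemma \ref{lemma_XXX} yields non-degeneracy of $\mathcal{X}(\mathbf{\Sigma}_{g,0}^0)$ as an $\SL_2^D$-Poisson variety.

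\textbf{Main obstacle.} The technical heart of the proof is the base case: checking non-degeneracy on the codimension-$\geq 1$ symplectic strata of $D_+(\SL_2)$, where the Poisson bivector is not surjective on its own. This requires an explicit but elementary stratum-by-stratum computation, tracing the action of $\mathfrak{sl}_2 \oplus \mathfrak{sl}_2$ on the generators $N(\alpha)$ and $N(\beta)$ coming from the comodule maps $\Delta^L$ associated to the two boundary arcs, and verifying transversality to the $1$- and $2$-dimensional normal directions of $D_{01}$, $D_{10}$, and $D_{11}$ respectively.
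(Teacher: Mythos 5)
Your proposal takes the same route as the paper: reduce everything to non-degeneracy of $D_+(\SL_2)$ by applying Theorem~\ref{coro_fusion} and Lemma~\ref{lemma_XXX}. The only cosmetic difference is that the paper fuses all $g$ copies of $\mathbf{\Sigma}_{1,0}^0$ at once whereas you organize the fusions as an induction on $g$; these are the same argument.

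Where you differ meaningfully is in how much of the base case you expose. The paper's proof simply asserts that non-degeneracy of $D_+(\SL_2)$ ``follows from Theorem~\ref{theorem_leaves_D1}'' and moves on. You correctly point out that this hides a real verification: on the open stratum $D_{00}$ non-degeneracy is automatic because there $\pi^{\#}$ is already surjective, but on the lower bad-arc strata the image of $\pi^{\#}$ is only the tangent space to the symplectic leaf, and one must check that the infinitesimal $(\SL_2^D)^2$-action supplies the transverse directions. You do not carry out that stratum-by-stratum computation (you defer it as a ``finite verification''), so your write-up still has the same gap the paper has --- but you have at least named it honestly, which the paper does not. If you want a genuinely self-contained argument, the missing content is precisely the Alekseev--Malkin description of the strata $D_{ij}$ as double cosets $\SL_2^{\delta}(\dot{w}_i,\mathds{1})\SL_2^{*}\cap\SL_2^{*}(\mathds{1},\dot{w}_j)\SL_2^{\delta}$, from which the transversality of the $\SL_2^{\delta}$-orbits to the leaves can be read off; that is what makes the citation to \cite{AlekseevMalkin_PoissonLie} do the work the paper tacitly relies on.
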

 
 \begin{proof}
 The fact that $D_+(\SL_2)=\mathcal{X}(\underline{\mathbf{\Sigma}}_{0,2})$ is non-degenerate follows from Theorem \ref{theorem_leaves_D1}. By fusioning the two boundary arcs of $D_+(\SL_2)$ we obtain $\mathbf{\Sigma}_{1,0}^0$ and by fusioning $g$ copies of $\mathbf{\Sigma}_{1,0}^0$ we obtain $\mathbf{\Sigma}_{g,0}^0$, so the lemma follows from Lemma \ref{lemma_XXX} and Theorem \ref{coro_fusion}.
 \end{proof}

 Let $i: \mathbf{m}_1 \to \mathbf{\Sigma}_{g,0}^0$ be the (unique up to isomorphism) embedding of marked surfaces illustrated in Figure \ref{fig_MomentMap}. Denote by $\mu_q : \mathcal{O}_q[\SL_2^{STS}] \to \mathcal{S}_A(\mathbf{\Sigma}_{g,n}^0)$ the induced algebra morphism and denote by $\mu: \mathcal{X}(\mathbf{\Sigma}_{g,0}^0) \to \SL_2^{STS}$ the Poisson morphism induced by $\mu_{+1}$. 
 \begin{figure}[!h] 
\centerline{\includegraphics[width=7cm]{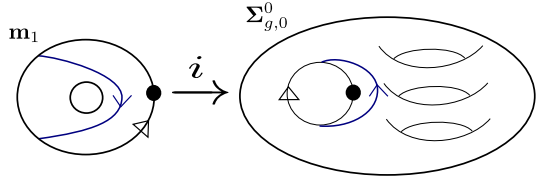} }
\caption{The embedding $\mathbf{m}_1 \to \mathbf{\Sigma}_{g,0}^0$ defining the moment map $\mu: \mathcal{X}(\mathbf{\Sigma}_{g,0}^0) \to \SL_2^{STS}$.} 
\label{fig_MomentMap}
\end{figure} 
 
 The map $\mu$ was called \textit{Lie group valued moment map} in \cite{AlekseevMalkinMeinrenken_LieGroupMomentMap} and the map $\mu_q$ was named \textit{quantum moment map} in \cite{GanevJordanSafranov_FrobeniusMorphism}. Note that $\mu_q$ sends the unique bad arc of $\mathcal{O}_q[\SL_2^{STS}] $ to the unique bad arc of $\mathcal{S}_A(\mathbf{\Sigma}_{g,0}^0)$, so the bad arcs decomposition
 $$ \mathcal{X}(\mathbf{\Sigma}^0_{g,0})= \mathcal{X}^0(\mathbf{\Sigma}^0_{g,0}) \bigsqcup \mathcal{X}^1(\mathbf{\Sigma}^0_{g,0}), $$
 is such that $\mathcal{X}^i(\mathbf{\Sigma}^0_{g,0})= \mu^{-1}(\SL_2^i)$.

 \begin{theorem}\label{theorem_OpenDense}(Ganev-Jordan-Safranov \cite[Theorem $2.14$]{GanevJordanSafranov_FrobeniusMorphism}) Let $g\geq 1$. The open dense subset $\mathcal{X}^0(\mathbf{\Sigma}^0_{g,0})\subset \mathcal{X}(\mathbf{\Sigma}^0_{g,0})$ is symplectic.
 \end{theorem}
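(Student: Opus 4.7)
The plan is to show that the Poisson bivector $\pi$ of $\mathcal{X}(\mathbf{\Sigma}^0_{g,0})$ is non-degenerate at every point of $\mathcal{X}^0$; this is exactly what it means for $\mathcal{X}^0$ to be symplectic. A dimension count using $\dim \mathcal{X}(\mathbf{\Sigma}^0_{g,0}) = 6g$ together with $\mathrm{PI}\text{-}\dim\,\mathcal{S}_A(\mathbf{\Sigma}^0_{g,0}) = N^{3g}$ from Theorem~\ref{theorem_center} already predicts that the generic symplectic leaf has the full dimension $6g$, so the theorem is pinning down the open locus where this happens.

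First I would invoke Lemma~\ref{lemma_ND}: for every $x$,
$$ (a_{\SL_2^D},\, \pi^{\#}_x) : \mathfrak{sl}_2 \oplus T^*_x \longrightarrow T_x $$
is surjective, where $a_{\SL_2^D}$ is the infinitesimal action of the boundary coaction $\Delta^L$. To obtain non-degeneracy of $\pi^{\#}_x$ alone on $\mathcal{X}^0$, it is enough to show there that the image of $a_{\SL_2^D}$ is already contained in the image of $\pi^{\#}_x$; skew-symmetry of $\pi$ between equidimensional spaces then upgrades surjectivity of $\pi^{\#}_x$ to bijectivity. To produce the containment I would work on the target. By the Gauss factorisation $\varphi : \overline{\SL_2^*} \xrightarrow{\cong} \SL_2^0 \subset \SL_2^{STS}$ from Section~\ref{sec_STS} and the identification of the $\SL_2^D$-dressing action on $\overline{\SL_2^*}$ with the conjugation action on $\SL_2^0$ (Theorem~\ref{theorem_leaves_monogon}), the conjugation orbits on $\SL_2^0$ are the symplectic leaves $\SL_2^0 \cap C$. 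In particular every conjugation-generated tangent vector on $\SL_2^0$ lies in the image of $\pi^{\#}_{\SL_2^{STS}}$, and one can make this explicit by writing, for each $X \in \mathfrak{sl}_2$, a 1-form $\alpha_X \in \Omega^1(\SL_2^0)$ with $a^{\mathrm{conj}}_{\SL_2^D}(X) = \pi^{\#}_{\SL_2^{STS}}(\alpha_X)$ in Gauss coordinates. Finally, since $\mu$ is Poisson and $\SL_2^D$-equivariant (equivariance is built into $\mu_q$ via the embedding $\mathbf{m}_1 \hookrightarrow \mathbf{\Sigma}^0_{g,0}$ and the functoriality of the $\Delta^L$ comodule maps), pulling back gives
$$ a_{\SL_2^D}(X)|_{\mathcal{X}^0} \;=\; \pi^{\#}_{\mathcal{X}(\mathbf{\Sigma}^0_{g,0})}(\mu^*\alpha_X)|_{\mathcal{X}^0}, $$
which is precisely the containment needed.

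The main obstacle I expect is the explicit construction of $\alpha_X$: using the Gauss decomposition and the explicit brackets on $\SL_2^{STS}$ recorded in Section~\ref{sec_STS}, this boils down to a finite linear identity in $\mathfrak{sl}_2 \otimes \mathfrak{sl}_2$ involving the classical $r$-matrices $r^\pm$, and is the dressing-transformation reformulation of the conjugation action on the big Bruhat cell. The same identity cannot be extended across to $\SL_2^1$, since there the bad-arc Casimirs degenerate $\pi^{\#}$ in the ambient $\SL_2^{STS}$, and this is precisely why the statement of Theorem~\ref{theorem_OpenDense} is restricted to $\mathcal{X}^0$. A cleaner, less computational alternative would be to appeal to the general principle of Alekseev--Kosmann-Schwarzbach--Meinrenken that a non-degenerate $G$-Hamiltonian space with group-valued moment map is symplectic over the big Bruhat cell of $G^{STS}$, and then specialise it via Lemma~\ref{lemma_ND}.
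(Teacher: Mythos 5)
Your proposal is correct and follows essentially the same route as the paper's proof: combine the non-degeneracy statement of Lemma~\ref{lemma_ND} with the Hamiltonian property of the conjugacy action on $\SL_2^0$ (from Theorem~\ref{theorem_leaves_monogon}), transfer it to $\mathcal{X}^0(\mathbf{\Sigma}^0_{g,0})$ via the Poisson, $\SL_2^D$-equivariant moment map $\mu$, and conclude surjectivity (hence bijectivity) of $\pi^\#$. The explicit construction of the $1$-forms $\alpha_X$ you flag as the main obstacle is not actually needed; the containment $\mathrm{Im}(a_x)\subset\mathrm{Im}(\pi^\#_x)$ on $\SL_2^0$ is an immediate abstract consequence of the fact that the conjugacy orbits are symplectic leaves, which is the step the paper takes.
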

 
 \begin{proof} Let $a: \mathfrak{sl}_2 \to \Gamma( {\SL_2^0}^{STS}, T_{\SL_2^0})$ be the infinitesimal action of the conjugacy action $\SL_2^D \circlearrowright {\SL_2^0}^{STS}$. By Theorem \ref{theorem_leaves_monogon}, since the orbits of this action are symplectic, then the image of $a_x$ is included in the image of $\pi_{\SL_2^0, x}^{\#}$ for all $x\in \SL_2^0$, i.e. the infinitesimal action by conjugacy is Hamiltonian. Since $\mu$ is Poisson and $\SL_2^D$ equivariant, the infinitesimal action of $\SL_2^D$ on $ \mathcal{X}^0(\mathbf{\Sigma}^0_{g,0})$ is also Hamiltonian. Moreover, since $ \mathcal{X}^0(\underline{\mathbf{\Sigma}}_{g,n})$ is non-degenerate, for any $x\in  \mathcal{X}^0(\mathbf{\Sigma}^0_{g,0})$ the map $\pi_{\mathbf{\Sigma}^0_{g,0}, x}^{\#} $ is surjective, therefore $\mathcal{X}^0(\mathbf{\Sigma}^0_{g,0})$ is symplectic.
 \end{proof}
 
 \begin{remark}
 In \cite[Theorem $5.9$]{SjamaarLerman}, Sjamaar and Lerman proved that for any compact Lie group $G$, the relative character variety $\mathcal{X}_G(\underline{\mathbf{\Sigma}}_{g,n})$  contains a connected open dense symplectic leaf and state that "Although most of the results of this paper hold for proper actions of arbitrary Lie groups, the proofs are technically more difficult". Theorem \ref{theorem_OpenDense} is an explicit version of the case $G=\SL_2$ and $n=1$.
\end{remark}

  \subsection{Relative character varieties}

The Poisson variety $\mathcal{X}(\mathbf{\Sigma})$ admits a geometric interpretation that we now  briefly describe and refer to \cite{FockRosly, AlekseevMalkin_PoissonCharVar, KojuTriangularCharVar} for details. Let $\Pi_1(\Sigma)$ be the fundamental groupoid of $\Sigma$ whose objects are points in $\Sigma$ and morphisms are paths $\alpha: v_0 \to v_1$. We keep the same notations than in Section \ref{sec_presentations}. The inclusion $\mathcal{A}\subset \Sigma$ induces a fully-faithful functor $\Pi_1(\mathcal{A})\subset \Pi_1(\Sigma)$. The \textit{representations space} is the set of functors
$$\mathcal{R}_{\SL_2}(\mathbf{\Sigma}):= \{ \rho : \Pi_1(\Sigma) \to \SL_2, | \rho(\alpha^0)=\mathds{1}_2, \forall \alpha^0 \in \Pi_1(\mathcal{A}) \}.$$
The \textit{gauge group} $\mathcal{G}$ is the group of maps $g:\Sigma \to \SL_2$ with finite support such that $g(v)=\mathds{1}_2$ for all $v\in \mathcal{A}$. It acts on $\mathcal{R}_{\SL_2}(\mathbf{\Sigma})$ by the formula
$$ g\cdot \rho (\alpha) = g(t(\alpha)) \rho(\alpha) g(s(\alpha))^{-1} , \quad \forall g\in \mathcal{G}, \rho \in \mathcal{R}_{\SL_2}(\mathbf{\Sigma}),  \alpha \in \Pi_1(\Sigma).$$
 The set $\mathcal{R}_{\SL_2}(\mathbf{\Sigma})$ is the set of closed points of an affine scheme over $\mathrm{Spec}(\mathbb{C})$ whose algebra of regular functions is 
 $$\mathcal{O}[\mathcal{R}_{\SL_2}]:= \quotient{ \mathbb{C}[X_{\alpha, i,j} | \alpha \in \Pi_1(\Sigma), i,j=\pm ]}{ \left( M(\alpha)M(\beta)= M(\alpha \beta), \det(M(\alpha))=1, M(\alpha^0)=\mathds{1}_2, \mbox{ if } \alpha^0 \in \Pi_1(\mathcal{A}) \right)}, $$
 where we wrote $M(\alpha):=\begin{pmatrix}X_{\alpha, +,+} & X_{\alpha, +,-} \\ X_{\alpha, -,+} & X_{\alpha, -,-} \end{pmatrix}$. Similarly, the elements of $\mathcal{G}$ form the closed points of an affine group scheme over $\mathrm{Spec}(\mathbb{C})$ whose Hopf algebra of regular functions is 
 $$\mathcal{O}[\mathcal{G}] = \quotient{ \mathbb{C}[ X_{v, i,j} | v\in \Sigma, i,j =\pm ] }{\left(\det(M(v))=1, M(v_0)=\mathds{1}_2 \mbox{ if }v_0 \in \mathcal{A}\right)}, $$
  where $M(v)= \begin{pmatrix} X_{v,+,+} & X_{v, +,-} \\ X_{v, -, +} & X_{v, -, -} \end{pmatrix}$. 
  Using the notation
  $$\begin{pmatrix} a & b \\ c & d \end{pmatrix} \boxtimes \begin{pmatrix} a' & b' \\ c' & d' \end{pmatrix} = \begin{pmatrix} a\otimes a' +b\otimes c' & a\otimes b' + b\otimes d' \\ c\otimes a' + d\otimes c' & c\otimes b' + d \otimes d' \end{pmatrix}, $$
  then the coproduct $\Delta$ on $\mathcal{O}[\mathcal{G}] $ is given by  $\Delta(M(v))= M(v)\boxtimes M(v)$ and the antipode by $S(M(v)):= \begin{pmatrix} X_{v, -, -}  & -X_{v, +, -} \\ X_{v, -, +} & X_{v, +, +}  \end{pmatrix}$.
   The action of $\mathcal{G}$ on $\mathcal{R}_{\SL_2}(\mathbf{\Sigma})$ is algebraic, given by the comodule map $\Delta^L(M(\alpha))= (\mu \otimes \id)(\id \otimes \tau) \left( M(t(\alpha))\boxtimes M(\alpha) \boxtimes S(M(s(\alpha))) \right)$.

  \begin{definition}\label{def_CharVar}
  The \textit{relative character variety} is the algebraic quotient 
  $$ \mathcal{X}_{\SL_2}(\mathbf{\Sigma}):= \mathcal{R}_{\SL_2}(\mathbf{\Sigma}) \sslash \mathcal{G}.$$
  \end{definition}
  Said differently, the algebra $\mathcal{O}[\mathcal{X}_{\SL_2}(\mathbf{\Sigma})]$ is defined as the set of coinvariant vectors for the coaction, i.e. as the kernel
  $$ 0 \to \mathcal{O}[\mathcal{X}_{\SL_2}(\mathbf{\Sigma})] \to \mathcal{O}[\mathcal{R}_{\SL_2}(\mathbf{\Sigma})] \xrightarrow{\Delta^L - \eta \circ \epsilon} \mathcal{O}[\mathcal{G}] \otimes \mathcal{O}[\mathcal{R}_{\SL_2}(\mathbf{\Sigma})].$$
  
\textbf{Curve functions}  For $\gamma \in \Sigma$ a simple closed curve, choose $v\in \gamma$ and let $\gamma_v : v \to v$ the path in $\Pi_1(\Sigma)$ representing $\gamma$. Fix $f\in \mathcal{O}[\SL_2]^{\SL_2}$ a regular function invariant by conjugacy, i.e. $f=P(a+d)$ is a polynomial in the trace function. Then the element $f_{\gamma} :=P(X_{\gamma_v, +,+} + X_{\gamma_v, -, -}) \in \mathcal{O}[\mathcal{X}_{\SL_2}(\mathbf{\Sigma})]$ is a coinvariant vector which only depends on the isotopy class of $\gamma$ and not on its orientation (because $\tr(A^{-1})=\tr(A)$ for $A\in \SL_2$). It corresponds to the function $f_{\gamma}([\rho])= P(\tr (\rho(\gamma)))$. Similarly, for $g\in \mathcal{O}[\SL_2]$ an arbitrary function, so $g=P(a,b,c,d)$ is a polynomial in the matrix coefficients $a,b,c,d \in \mathcal{O}[\SL_2]$, and for $\alpha\subset \Sigma$ a path with $s(\alpha), t(\alpha) \in \mathcal{A}$, the function $g_{\alpha} := P(X_{\alpha, ++}, X_{\alpha, -+}, X_{\alpha,-+}, X_{\alpha, --}) \in \mathcal{O}[\mathcal{X}_{\SL_2}(\mathbf{\Sigma})$ is a coinvariant vector which only depends on the isotopy class of $\alpha$ relatively to $\mathcal{A}$ (and depends on its orientation). So $g_{\alpha}([\rho])=g(\rho(\alpha))$. We call \textit{curve functions} such functions $f_{\gamma}, h_{\alpha}$ and a theorem of Procesi \cite{Procesi87} implies that curves functions generate $\mathcal{O}[\mathcal{X}_{\SL_2}(\mathbf{\Sigma})] $ (see \cite{KojuTriangularCharVar} for details). 
  
  \textbf{Poisson structure} We now define some Poisson structures on $ \mathcal{X}_{\SL_2}(\mathbf{\Sigma})$ which depend on a choice $\mathfrak{o}$ of orientations of the boundary arcs of $\mathbf{\Sigma}$. Let $f_{\alpha}, g_{\beta}$ be two curve functions and fix some geometric representatives of $\alpha$ and $\beta$ which only intersect transversally in the interior of $\Sigma$ and let us introduce some notations.  Fix $\rho \in \mathcal{R}_{\SL_2}(\mathbf{\Sigma})$. Recall the notations introduced in Section \ref{sec_CF} while defining the relative intersection form. In particular for $v\in \alpha\cap \beta$ we have defined a sign $\epsilon(v)$ depending whether the tangent vectors of $\alpha$ and $\beta$ at $v$ form an oriented bases of $T_v\Sigma$ or not. Also 
   for $a$ a boundary arc, we write $S(a):=a\cap \alpha \times a \cap \beta$ and for $(v_1,v_2)\in S(a)$, we have defined a sign $\epsilon(v_1,v_2)=\pm1$ in the same manner.
   Define $\mathfrak{o}(v_1,v_2)= +1$ if $v_1<_{\mathfrak{o}} v_2$ and $\mathfrak{o}(v_1,v_2)=-1$ else.
    Let $\left(\cdot , \cdot \right) : \mathfrak{sl}_2 \otimes \mathfrak{sl}_2 \to \mathbb{C}$ be the non degenerate Ad-invariant symmetric form normalized so that its dual is the symmetric part $\frac{1}{2}H\otimes H$ of $r^+$. 
    Let $X_{f, \alpha} \in \mathfrak{\sl_2}$ be the unique vector satisfying $(X_{f, \alpha}, Y)=D_{\rho(\alpha)}f (Y)$. For $v$ a point in $\alpha$, decompose the path in $\alpha=\alpha^+ \alpha^-$ where $s(\alpha^+)=t(\alpha^-)=v$ and write $X_{f,\alpha}(v):= \rho(\alpha^+)^{-1} X_{f, \alpha} \rho(\alpha^-)^{-1}\in \mathfrak{sl}_2$. Extend $(\cdot, \cdot)$ to a bilinear form on $\mathfrak{sl}_2^{\otimes 2}$ by $(x\otimes y, z \otimes t):= (x, z) (y,t)$.

  \begin{definition}\label{GoldmanFormula} The Poisson bracket $\{\cdot, \cdot \}^{\mathfrak{o}}$ on $\mathcal{O}[\mathcal{X}_{\SL_2}(\mathbf{\Sigma})]$ is defined by the following generalized Goldman formula
  \begin{eqnarray*} \{ f_{\alpha} , g_{\beta} \}^{\mathfrak{o}} ([\rho])  &=&   \sum_{a\in \mathcal{A}} \sum_{(v_1, v_2)\in S(a)} \varepsilon(v_1, v_2) \left( X_{f, \alpha}(v_1)\otimes X_{g, \beta}(v_2), r^{\mathfrak{o}(v_1,v_2)} \right) \\ &&+2\sum_{v\in c_1\cap c_2} \varepsilon(v) \left( X_{f, \alpha}(v), X_{g, \beta}(v) \right) 
 \end{eqnarray*}
  \end{definition}
  That the Poisson bracket of two regular functions is a regular function and the fact that $\{\cdot, \cdot\}^{\mathfrak{o}}$ satisfies the Jacobi identity are non trivial facts.

  \textbf{Discrete models} The relative character variety is an algebraic variety, i.e. $\mathcal{O}[\mathcal{X}_{\SL_2}(\mathbf{\Sigma})]$ is reduced and finitely generated. To prove this, fix $\mathbb{V}\subset \Sigma$ a finite subset such that $(1)$ $\mathbb{V}$ intersects each boundary arc $a$ once, in say $v_a$, and $(2)$ $\mathbb{V}$ intersects each connected component of $\Sigma$ at least once. Let $\Pi_1(\Sigma, \mathbb{V}) \subset \Pi_1(\Sigma)$ the full subcategory generated by $\mathbb{V}$ and define a finite presentation $\mathbb{P}=(\mathbb{G}, \mathbb{RL})$ of $\Pi_1(\Sigma, \mathbb{V})$ as in Definition \ref{def_presentation}. Write $\mathbb{RL}=(R_1, \ldots, R_m)$ and consider the $R_j$ as Laurent monomials in the generators $\beta_i \in \mathbb{G}$. Define the algebraic map $\mathcal{R}: (\SL_2)^{\mathbb{G}} \to (\SL_2)^{\mathbb{RL}}$ by $\mathcal{R}=(R_1, \ldots, R_m)$. The \textit{discrete representations space} is the affine subvariety $\mathcal{R}_{\SL_2}(\mathbf{\Sigma}, \mathbb{P}) \subset (\SL_2)^{\mathbb{G}} $ of elements $\rho$ such that $\mathcal{R} (\rho)= (\mathds{1}_2, \ldots, \mathds{1}_2)$. Write $\mathring{\mathbb{V}} \subset \mathbb{V}$ the subset of elements which are not in $\mathcal{A}$ and define the \textit{discrete gauge group} $\mathbb{G}_{\mathbb{V}}:= (\SL_2)^{\mathring{\mathbb{V}}}\cong \{ g: \mathbb{V} \to \SL_2 | g(v_a)=\mathds{1}_2,  \forall a \in \mathcal{A}\}$. It acts on $\mathcal{R}_{\SL_2}(\mathbf{\Sigma}, \mathbb{P})$ algebraically by the same formula
  $$ g\cdot \rho (\beta) = g(t(\beta)) \rho(\beta) g(s(\beta))^{-1} , \quad \forall g\in \mathcal{G}_{\mathbb{V}}, \rho \in \mathcal{R}_{\SL_2}(\mathbf{\Sigma}, \mathbb{P}),  \beta \in \mathbb{G}.$$
  The \textit{discrete character variety} is the algebraic quotient 
  $$\mathcal{X}_{\SL_2}(\mathbf{\Sigma}, \mathbb{P}) := \mathcal{R}_{\SL_2}(\mathbf{\Sigma}, \mathbb{P}) \sslash \mathcal{G}_{\mathbb{V}}.$$
  Now consider the algebraic morphisms $Res: \mathcal{R}_{\SL_2}(\mathbf{\Sigma}) \to \mathcal{R}_{\SL_2}(\mathbf{\Sigma}, \mathbb{P})$ and $\mathcal{G}\to \mathcal{G}_{\mathbb{V}}$ sending $\rho\in \mathcal{R}_{\SL_2}(\mathbf{\Sigma}) $ to its restriction to $\mathbb{G}$ and sending $g\in \mathcal{G}$ to its restriction to $\mathbb{V}$. The morphism $Res$ is clearly $\mathcal{G}_{\mathbb{V}}$ equivariant for the induced structure so it defines a morphism
  $$\Psi_{\mathbb{P}}: \mathcal{X}_{\SL_2}(\mathbf{\Sigma}) \to \mathcal{X}_{\SL_2}(\mathbf{\Sigma}, \mathbb{P}).$$
  
  \begin{theorem}\label{theorem_discrete_model}(K. \cite[Proposition $2.5$ ]{KojuTriangularCharVar}) $\Psi_{\mathbb{P}}$ is an isomorphism.
  \end{theorem}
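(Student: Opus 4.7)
The plan is to construct an explicit inverse $\Psi_{\mathbb{P}}^{-1}$ using a choice of ``connecting paths'' in $\Sigma$, and then check that the construction descends to the GIT quotient, with the two compositions equal to the identity. Concretely, I would first choose, for each point $v \in \Sigma$, a path $\eta_v$ from $v$ to a chosen vertex $\nu(v) \in \mathbb{V}$ lying in the same connected component, subject to the conventions $\eta_v = \id_v$ whenever $v \in \mathbb{V}$ and, for $v$ in a boundary arc $a$, $\eta_v$ is a path inside $a$ terminating at $v_a$. Since $\mathbb{V}$ meets every connected component of $\Sigma$, such a family exists.

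Given $\rho \in \mathcal{R}_{\SL_2}(\mathbf{\Sigma}, \mathbb{P})$, define an extension $S(\rho) \in \mathcal{R}_{\SL_2}(\mathbf{\Sigma})$ by the formula
\[
S(\rho)(\alpha) \;:=\; \rho\bigl(\eta_{t(\alpha)} \star \alpha \star \eta_{s(\alpha)}^{-1}\bigr), \qquad \alpha \in \mathrm{mor}(\Pi_1(\Sigma)),
\]
where the argument is viewed as a morphism in $\Pi_1(\Sigma, \mathbb{V})$, decomposed as a word $\beta_{i_1}^{\varepsilon_1} \cdots \beta_{i_k}^{\varepsilon_k}$ using that $\mathbb{G}$ generates, and $\rho$ is applied to this word. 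The fact that $\mathbb{P}$ is a presentation means any two such decompositions differ by a product of conjugates of relations in $\mathbb{RL}$, and since $\rho$ satisfies $\mathcal{R}(\rho) = (\mathds{1}_2,\ldots,\mathds{1}_2)$, the value $\rho(\eta_v \alpha \eta_u^{-1})$ is unambiguous. Functoriality of $S(\rho)$ is immediate from concatenation of words, and $S(\rho)$ is trivial on $\Pi_1(\mathcal{A})$ because for $\alpha^0 \in \Pi_1(\mathcal{A})$ with $s(\alpha^0)=u$, $t(\alpha^0)=v$ lying in the same boundary arc $a$, the loop $\eta_v \alpha^0 \eta_u^{-1}$ is homotopic, within $\mathcal{A}$, to the trivial morphism at $v_a$. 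Algebraicity of $S$ follows because $\rho$ applied to a word in $\mathbb{G}$ is a polynomial expression in the matrix entries $X_{\beta,i,j}$.

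For the two compositions: $Res \circ S = \id$ is immediate, since for $\beta \in \mathbb{G}$ the endpoints are in $\mathbb{V}$, so $\eta_{s(\beta)} = \eta_{t(\beta)} = \id$. For $S \circ Res$, given $\rho \in \mathcal{R}_{\SL_2}(\mathbf{\Sigma})$, define a gauge element $g_{\rho} \in \mathcal{G}$ by $g_{\rho}(v) := \rho(\eta_v)$; this lies in $\mathcal{G}$ because for $v \in \mathcal{A}$ the path $\eta_v$ is in $\Pi_1(\mathcal{A})$, so $\rho(\eta_v) = \mathds{1}_2$. A direct computation shows $g_{\rho} \cdot (S \circ Res)(\rho) = \rho$, so $S \circ Res$ agrees with the identity up to a gauge transformation. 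Consequently $S$ descends to the algebraic quotient and provides a two-sided inverse to $\Psi_{\mathbb{P}}$ there.

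The main technical obstacle is the well-definedness in the second paragraph: one has to verify rigorously that the value $\rho(\eta_v \star \alpha \star \eta_u^{-1})$ is independent of both the chosen decomposition into generators and the chosen geometric representatives of the paths $\eta_v$ (different homotopies produce elements of $\mathrm{Rel}_{\mathbb{G}}$), and that the resulting scheme-theoretic map $S$ is a genuine morphism of affine varieties, not merely a bijection on $\mathbb{C}$-points. A secondary subtlety is to handle the fact that $\mathcal{G}$ is defined with finite support while $g_{\rho}$ may in principle be nontrivial on all of $\Sigma \setminus (\mathbb{V} \cup \mathcal{A})$; this is bypassed by working directly with the algebras of regular functions, where $S^*$ identifies $\mathcal{O}[\mathcal{R}_{\SL_2}(\mathbf{\Sigma},\mathbb{P})]$ with the $\ker(\mathcal{G}\to \mathcal{G}_{\mathbb{V}})$-invariants of $\mathcal{O}[\mathcal{R}_{\SL_2}(\mathbf{\Sigma})]$, and Procesi's theorem guarantees that the remaining $\mathcal{G}_{\mathbb{V}}$-invariants match.
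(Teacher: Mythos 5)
Your strategy—construct an explicit algebraic section $S$ of $Res$ using a compatible family of connecting paths $\eta_v$, verify $Res\circ S=\id$ and that $S\circ Res$ differs from the identity by a gauge element, then descend to GIT quotients—is the standard one and matches the approach of the cited paper. The well-definedness argument for $S$ (independence of the word decomposition, using that $\rho$ annihilates $\mathbb{RL}$) and the check that $S(\rho)$ is trivial on $\Pi_1(\mathcal{A})$ are correct. But the last paragraph is where the real content lies, and a few things there need to be straightened out.

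A minor sign issue first: with $g_\rho(v):=\rho(\eta_v)$ and $\eta_v\colon v\to\nu(v)$, one computes directly that $(S\circ Res)(\rho)(\alpha)=\rho(\eta_{t(\alpha)})\,\rho(\alpha)\,\rho(\eta_{s(\alpha)})^{-1}=(g_\rho\cdot\rho)(\alpha)$, so $(S\circ Res)(\rho)=g_\rho\cdot\rho$, not $g_\rho\cdot(S\circ Res)(\rho)=\rho$. Harmless, but it matters if you want the computation to be checkable.

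The more substantial point is that the invocation of Procesi's theorem is a red herring. Procesi's theorem tells you that curve functions generate the invariant ring; it plays no role in showing that the two GIT quotients coincide. What is actually needed, and what you have essentially assembled the ingredients for without saying so, is the following: setting $K:=\ker(\mathcal{G}\to\mathcal{G}_{\mathbb{V}})$, the pair $(Res,\,\rho\mapsto g_\rho)$ identifies each finite stage $\mathcal{R}_{\SL_2}(\mathbf{\Sigma},\mathbb{W})$ (for $\mathbb{W}\supset\mathbb{V}$ finite) with $\mathcal{R}_{\SL_2}(\mathbf{\Sigma},\mathbb{P})\times(\SL_2)^{\mathbb{W}\setminus(\mathbb{V}\cup\mathcal{A})}$ in such a way that $K_{\mathbb{W}}$ acts trivially on the first factor and by free translation on the second; hence $\mathcal{O}[\mathcal{R}_{\SL_2}(\mathbf{\Sigma})]^{K}\cong\mathcal{O}[\mathcal{R}_{\SL_2}(\mathbf{\Sigma},\mathbb{P})]$ via $Res^*$. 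This is precisely where the finite-support worry gets resolved legitimately: every regular function on $\mathcal{R}_{\SL_2}(\mathbf{\Sigma})$ involves only finitely many of the $X_{\alpha,i,j}$, hence lives at some finite stage $\mathbb{W}$, and the restriction of the (a priori infinitely supported) $g_\rho$ to $\mathbb{W}$ is a genuine element of $\mathcal{G}_{\mathbb{W}}$. Once the $K$-invariant identification is in place, one checks that $Res^*$ is $\mathcal{G}_{\mathbb{V}}$-equivariant and takes $\mathcal{G}_{\mathbb{V}}=\mathcal{G}/K$-invariants of both sides; no theorem of Procesi enters. One more subtlety worth flagging, since you implicitly use it: the section $S$ is not $\mathcal{G}_{\mathbb{V}}$-equivariant for the naive inclusion $\mathcal{G}_{\mathbb{V}}\hookrightarrow\mathcal{G}$ (one finds $S(h\cdot r)=\tilde h\cdot S(r)$ with $\tilde h(v)=h(\nu(v))$, which differs from the trivial extension of $h$ by an element of $K$), so the $\mathcal{G}_{\mathbb{V}}$-compatibility only holds after passing to $K$-invariants—exactly as the two-step argument above reflects.
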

  In particular, $\mathcal{X}_{\SL_2}(\mathbf{\Sigma})$ is an algebraic variety.
  
   \textbf{Character varieties for closed surfaces} 
   Let us now consider the case where $\mathbf{\Sigma}=(\Sigma, \emptyset)$ is closed and choose $\mathbb{V}=\{v\}$ a singleton. Then Theorem \ref{theorem_discrete_model} shows that
  $$ \mathcal{X}_{\SL_2}(\Sigma, \emptyset) \cong \Hom(\pi_1(\Sigma, v), \SL_2) \sslash \SL_2, $$
where $\SL_2$ acts by conjugacy.  Under this form, character varieties were introduced by Culler and Shalen in \cite{CullerShalenCharVar} as a powerful tool to analyse $3$-manifolds, in particular to study their incompressible surfaces. Suppose that $\Sigma$ is closed and connected.
The action of $\SL_2$ on $\Hom(\pi_1(\Sigma, v), \SL_2)$ by conjugacy is not free.
\begin{definition}
 We decompose the set of representations $\rho: \pi_1(\Sigma, v) \to \SL_2$  into three classes: 
  \begin{enumerate}
   \item The \textit{central representations} taking value in $\pm \mathds{1}_2$ and for which the stabilizer is $\SL_2$.
   \item The \textit{diagonal representations} which are conjugate to a non central representation valued in the subgroup $D\subset \SL_2$ of diagonal matrices and for which the stabilizer is the group of diagonal matrices.
   \item The \textit{irreducible representations} for which the stabilizer is $\pm \mathds{1}_2$.
   \end{enumerate}
  
  Accordingly, we decompose the (closed points of the) character variety in three classes
  $$ \mathcal{X}_{\SL_2}(\Sigma, \emptyset)= \mathcal{X}_{\SL_2}^0(\Sigma) \bigsqcup \mathcal{X}_{\SL_2}^1(\Sigma) \bigsqcup \mathcal{X}_{\SL_2}^2(\Sigma), $$
  where $\mathcal{X}_{\SL_2}^i(\Sigma)$ is the set of classes of  irreducible, diagonal and central representations when $i=0, 1, 2$ respectively.
  \end{definition}
  
   When $\Sigma$ has genus $1$, the set  $\mathcal{X}_{\SL_2}^0(\Sigma)$ is empty and the smooth locus of the character variety is $ \mathcal{X}_{\SL_2}^1(\Sigma)$. When $\Sigma$ has genus $g\geq 2$, the smooth locus is $\mathcal{X}_{\SL_2}^0(\Sigma)$. The Poisson variety $\mathcal{X}_{\SL_2}^{diag}(\Sigma)=\mathcal{X}_{\SL_2}^1(\Sigma) \bigsqcup \mathcal{X}_{\SL_2}^2(\Sigma)$ is easy to study. Consider the quadratic pair $\mathbb{E}=(\mathrm{H}_1(\Sigma, \mathbb{Z}), (\cdot, \cdot))$ formed by the first homology group with the intersection form and denote by $\mathcal{X}_{\mathbb{C}^*}(\Sigma):= \mathcal{X}(\mathbb{E})\cong \mathrm{H}^1(\Sigma, \mathbb{C}^*)$ the associated symplectic torus (which is the $\mathbb{C}^*$ character variety). The diagonal inclusion $\varphi: \mathbb{C}^* \hookrightarrow \SL_2$, sending $z$ to $\begin{pmatrix} z & 0 \\ 0 & z^{-1} \end{pmatrix}$, induces a morphism $\Phi : \mathcal{X}_{\mathbb{C}^*} (\Sigma) \hookrightarrow \mathcal{X}_{\SL_2}^{diag}(\Sigma)$ sending $\chi: \mathrm{H}_1(\Sigma; \mathbb{Z}) \to \mathbb{C}^*$ to the representation $\Phi(\chi)$ sending $\gamma \in \pi_1(\Sigma, v)$ to $\varphi(\chi[\gamma])$. Clearly $\Phi$ is a double covering which is branched along the set $ \mathcal{X}_{\SL_2}^2(\Sigma)$ of central representations.
  
   In algebraic language, $\mathcal{O}[\mathcal{X}_{\mathbb{C}^*}(\Sigma)]=\mathbb{C}[\mathrm{H}_1(\Sigma; \mathbb{Z})]$ and for $\rho: \pi_1(\Sigma)\to D$ a diagonal representation, the value $\tau_{\gamma}(\rho)=\tr(\rho(\gamma))$ only depends on the homology class $[\gamma]\in \mathrm{H}_1(\Sigma; \mathbb{Z})]$.
 The  morphism $\Phi^*: \mathcal{O}[\mathcal{X}^{diag}_{\SL_2}(\Sigma, \emptyset)]\to \mathcal{O}[\mathcal{X}_{\mathbb{C}^*}(\Sigma)]$ sends a trace function $\tau_{\gamma}$ to the homology class $[\gamma]$. It is an immediate consequence of Goldman formula in Definition \ref{GoldmanFormula} that for $\rho \in \mathrm{R}_{\SL_2}(\Sigma)$ a functor valued in the subset of diagonal matrices, then 
  $$ \{ \tau_{[\alpha]} , \tau_{[\beta]} \} (\rho)= \sum_{v \in \alpha \cap \beta} \epsilon(v) \tau_{[\alpha]+[\beta]}(\rho)= ([\alpha], [\beta]) \tau_{[\alpha]+[\beta]}(\rho).$$
  Therefore the double branched covering $\Phi : \mathcal{X}_{\mathbb{C}^*} (\Sigma) \hookrightarrow \mathcal{X}_{\SL_2}^{diag}(\Sigma)$ is Poisson and since $\mathcal{X}_{\mathbb{C}^*} (\Sigma) $ is symplectic we have proved the 
  
  \begin{lemma}\label{lemma_diagonal_symplectic}
  The leaf  $\mathcal{X}_{\SL_2}^1(\Sigma)$ is symplectic.
  \end{lemma}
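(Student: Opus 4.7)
My plan is to deduce the symplecticity of $\mathcal{X}_{\SL_2}^1(\Sigma)$ from the symplecticity of the $\mathbb{C}^*$-character variety $\mathcal{X}_{\mathbb{C}^*}(\Sigma)$ by transporting the structure through the double covering $\Phi$. The starting point is the observation, already made in the excerpt, that $\mathcal{X}_{\mathbb{C}^*}(\Sigma)$ is by definition the classical torus $\mathcal{X}(\mathbb{E})$ associated to the quadratic pair $\mathbb{E} = (\mathrm{H}_1(\Sigma;\mathbb{Z}),(\cdot,\cdot))$. Since $\Sigma$ is a closed oriented surface, Poincaré duality tells us that the intersection form on $\mathrm{H}_1(\Sigma;\mathbb{Z})$ is non-degenerate (it is the standard symplectic form of rank $2g$). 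As a result, $\mathcal{X}_{\mathbb{C}^*}(\Sigma) \cong (\mathbb{C}^*)^{2g}$ carries a symplectic Poisson structure, i.e. its Poisson bivector is non-degenerate everywhere.

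Next, I would record that the double covering $\Phi : \mathcal{X}_{\mathbb{C}^*}(\Sigma) \to \mathcal{X}_{\SL_2}^{diag}(\Sigma)$ is a Poisson morphism: this is precisely the computation performed just before the lemma, which shows that Goldman's formula in Definition \ref{GoldmanFormula} restricted to diagonal representations reproduces the torus bracket $\{\tau_{[\alpha]},\tau_{[\beta]}\} = ([\alpha],[\beta])\,\tau_{[\alpha]+[\beta]}$. Moreover, $\Phi$ is ramified exactly along the preimage of the central locus $\mathcal{X}_{\SL_2}^2(\Sigma)$, since the stabilizer of a diagonal non-central representation for the deck involution is trivial (the Weyl group swap $z \mapsto z^{-1}$ acts freely off the $2$-torsion). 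Hence the restriction
\[
\Phi_{|} : \Phi^{-1}\bigl(\mathcal{X}_{\SL_2}^1(\Sigma)\bigr) \longrightarrow \mathcal{X}_{\SL_2}^1(\Sigma)
\]
is an étale Poisson covering onto a smooth variety.

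From here the conclusion is formal. For any closed point $x \in \mathcal{X}_{\SL_2}^1(\Sigma)$ pick a preimage $\tilde x \in \Phi^{-1}(x)$; the differential $d_{\tilde x}\Phi$ is an isomorphism of tangent spaces intertwining the Poisson bivectors $\pi_{\tilde x}$ and $\pi_x$. Since $\pi_{\tilde x}$ is non-degenerate, so is $\pi_x$. As the rank of the Poisson bivector at $x$ agrees with the dimension of $\mathcal{X}_{\SL_2}^1(\Sigma)$, the connected components of $\mathcal{X}_{\SL_2}^1(\Sigma)$ are symplectic leaves and the restriction of the bracket defines a symplectic form.

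The only mildly delicate point is the étaleness of $\Phi$ over $\mathcal{X}_{\SL_2}^1(\Sigma)$, which must be argued at the level of the GIT quotient rather than directly on representation spaces: one needs to check that the $\SL_2$-stabilizer of a non-central diagonal representation coincides with the maximal torus $D$, so that the induced map on character varieties, obtained after dividing upstairs by $D$ and downstairs by $\SL_2/D$, becomes a $\mathbb{Z}/2\mathbb{Z}$-covering whose non-trivial deck transformation is the Weyl involution. This should be the main technical step; everything else is a routine application of the definitions and of the discussion already preceding the lemma.
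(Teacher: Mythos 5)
Your proposal is correct and follows essentially the same route as the paper: identify $\mathcal{X}_{\mathbb{C}^*}(\Sigma)$ as the symplectic torus associated to $(\mathrm{H}_1(\Sigma;\mathbb{Z}),(\cdot,\cdot))$, observe that the double branched covering $\Phi$ is Poisson via the Goldman formula restricted to diagonal representations, and transfer non-degeneracy of the Poisson bivector through the covering. You are somewhat more explicit than the paper at the step the paper dismisses with ``Clearly $\Phi$ is a double covering branched along the central representations'' — you correctly flag that the relevant stabilizer computation (maximal torus $D$ for non-central diagonal representations, so the deck involution is the Weyl reflection acting freely off the $2$-torsion) is what makes $\Phi$ étale over $\mathcal{X}_{\SL_2}^1(\Sigma)$, and that non-degeneracy must be pushed forward through an isomorphism of tangent spaces rather than just any Poisson map; these are worthwhile clarifications but not a departure from the paper's argument.
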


   Atiyah and Bott revisited in \cite{AB} character varieties in the language of gauge theory by studying 2 dimensional Yang-Mills theory. Let $\Omega_F^1(\Sigma, \mathfrak{sl}_2)$ denote the set of $\mathfrak{sl}_2$ valued $1$-forms $A$ on $\Sigma$ with vanishing curvature, i.e. such that $FA:=dA +\frac{1}{2}[A\wedge A] =0$. Let $\mathcal{G}=C^{\infty}(\Sigma, \SL_2)$ act on  $\Omega_F^1(\Sigma, \mathfrak{sl}_2)$ by $A^g= g^{-1}Ag+g^{-1}dg$ and write
   $$\mathcal{M}_{\SL_2}(\Sigma) = \quotient{\Omega_F^1(\Sigma, \mathfrak{sl}_2)}{\mathcal{G}}.$$
   The set $\mathcal{M}_{\SL_2}(\Sigma)$ is isomorphic to the moduli space of $\SL_2$-flat structures on $\Sigma$, i.e. the set of isomorphism classes of pairs $(E,\nabla)$ where $E\to \Sigma$ is a principal $\SL_2$-bundle and $\nabla$ a flat connection. Since $\pi_1(\SL_2)=\pi_2(\SL_2)=0$, such a bundle is always isomorphic to the trivial bundle $\Sigma\times\SL_2$ in which $\nabla=d+A$ for $A\in \Omega_F^1(\Sigma, \mathfrak{sl}_2)$. The group $\mathcal{G}$ then identifies with the group of automorphisms of the trivial bundle so the correspondance is clear. By imposing some Sobolev regularity to the $1$-forms $A$ and the gauge group elements, one can turn $\Omega^1(\Sigma, \mathfrak{sl}_2)$ into a Banach space and  $\Omega_F^1(\Sigma, \mathfrak{sl}_2)$  becomes a Banach subvariety. Its tangent space at $A$ identifies with the space $\mathrm{Z}^1_A(\Sigma, \mathfrak{sl}_2)$ of these forms $\alpha\in \Omega^1(\Sigma, \mathfrak{sl}_2)$ such that $d\alpha+[\alpha \wedge A]=0$. 
    When $\Sigma$ is closed and equipped with some Riemannian metric, Atiyah and Bott defined a symplectic form on $\Omega^1_F(\Sigma, \mathfrak{sl}_2)$ by
   $$\omega^{AB}(\alpha, \beta) := \int_{\Sigma} \left( \alpha\wedge \beta \right).$$
   The action of the gauge group $\mathcal{G}$ on $\Omega_F^1(\Sigma, \mathfrak{sl}_2)$ is not free, so the quotient $\mathcal{M}_{\SL_2}(\Sigma)$ is not a manifold. However if we restrict to the subvariety of the principal orbits, i.e. these $A$ for which the stabilizer is minimal, we get a subset $\mathcal{M}^0_{\SL_2}(\Sigma)\subset \mathcal{M}_{\SL_2}(\Sigma)$ with the structure of a symplectic manifold (it can be seen as a symplectic reduction with the curvature map $F: \Omega^1(\Sigma, \mathfrak{sl}_2) \to \Omega^2(\Sigma, \mathfrak{sl}_2) \cong \Omega^0(\Sigma,\mathfrak{sl}_2)^*= Lie(\mathcal{G})^*$ playing the role of a moment map).
    The holonomy map gives a bijection $Hol: \Omega_F^1(\Sigma, \mathfrak{sl}_2) \cong \mathcal{R}_{\SL_2}(\Sigma, \emptyset)$ (more precisely, one needs to replace the fundamental group by the path groupoid) which induces a diffeomorphism $Hol : \mathcal{M}_{\SL_2}^0(\Sigma) \cong \mathcal{X}_{\SL_2}^0(\Sigma)$. This is while trying to describe algebraically the Atiyah-Bott symplectic structure induced by $Hol$ on $\mathcal{X}_{\SL_2}^0(\Sigma)$, and by generalizing previous computations of Wolpert on the Teichm\"uller space, that Goldman discovered in \cite{Goldman86} the formula in Definition \ref{GoldmanFormula} in the closed case. More precisely, fix a representation $\rho: \pi_1(\Sigma, v) \to \SL_2$ so that $Ad\circ \rho$ induces a structure of $\pi_1(\Sigma,v)$ module on $\mathfrak{sl}_2$. Let $\widetilde{\Sigma}$ be a universal covering of $\Sigma$ and consider the cochain complex $C^{\bullet}(\Sigma, \mathrm{Ad}_{\rho}):=\Hom_{\mathbb{Z}[\pi_1(\Sigma)]}(C^{\bullet}(\widetilde{\Sigma}; \mathbb{Z}), \mathfrak{sl}_2)$ with chain map $d\otimes \id$. 
    The Zariski tangent space of $ \mathcal{X}_{\SL_2}(\Sigma)$ is canonically isomorphic to the first cohomology group $\mathrm{H}^1_{\rho}(\Sigma, \mathfrak{sl}_2)$ of this complex and Goldman reinterpreted in \cite{Goldman_symplectic} the Atiyah-Bott symplectic form as the composition
    $$\omega^G : \mathrm{H}^1_{\rho}(\Sigma, \mathfrak{sl}_2)^{\otimes 2} \xrightarrow{\cup} \mathrm{H}^2_{\rho}(\Sigma, \mathfrak{sl}_2^{\otimes 2}) \xrightarrow{ (\cdot, \cdot)}  \mathrm{H}^2(\Sigma, \mathbb{C})\xrightarrow{\int_{\Sigma}} \mathbb{C}, $$
    where $(\cdot, \cdot)$ denote the Killing form normalized as in Definition \ref{GoldmanFormula}. Now $\omega^G$ endows the smooth locus of $\mathcal{X}_{\SL_2}(\Sigma)$ with a structure of symplectic manifold and 
    the main theorem of \cite{Goldman86}, is that for $\tau_{\alpha}, \tau_{\beta}$ two curve functions and for $[\rho]$ in the smooth locus, then $\{\tau_{\alpha}, \tau_{\beta}\}([\rho])$ is given by the formula of Definition \ref{GoldmanFormula}. Therefore one has the: 
    
    \begin{theorem}\label{theorem_smoothsymplectic}(Goldman \cite{Goldman_symplectic}) The leaf $\mathcal{X}_{\SL_2}^0(\Sigma)$ is symplectic. \end{theorem}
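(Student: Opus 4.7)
The plan is to show that $\mathcal{X}_{\SL_2}^{0}(\Sigma)$ is smooth, carries a natural non-degenerate closed $2$-form $\omega^G$ coming from cup product on group cohomology, and that the Poisson bracket from Definition \ref{GoldmanFormula} is the one induced by $\omega^G$. I would proceed as follows.

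First, I would recall that for an irreducible representation $\rho: \pi_1(\Sigma,v)\to \SL_2$, standard deformation theory identifies the Zariski tangent space to $\mathcal{X}_{\SL_2}(\Sigma)$ at $[\rho]$ with $\mathrm{H}^1_{\rho}(\Sigma, \mathfrak{sl}_2)$, where $\mathfrak{sl}_2$ carries the $\pi_1$-module structure induced by $\mathrm{Ad}\circ\rho$. Since $\rho$ is irreducible, its centralizer in $\SL_2$ is finite, so $\mathrm{H}^0_{\rho}(\Sigma, \mathfrak{sl}_2)=0$, and by Poincar\'e duality $\mathrm{H}^2_{\rho}(\Sigma, \mathfrak{sl}_2)=0$ as well. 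A simple Euler characteristic count then shows that $\dim \mathrm{H}^1_{\rho}(\Sigma,\mathfrak{sl}_2)$ is constant equal to $-\chi(\Sigma)\dim \mathfrak{sl}_2$ on $\mathcal{X}_{\SL_2}^{0}(\Sigma)$, and the slice theorem yields that this locus is a smooth complex manifold of the expected dimension $6g-6$.

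Second, I would introduce the Goldman $2$-form $\omega^G$ at $[\rho]\in \mathcal{X}_{\SL_2}^{0}(\Sigma)$ as the composition
\begin{equation*}
\omega^G : \mathrm{H}^1_{\rho}(\Sigma, \mathfrak{sl}_2)^{\otimes 2} \xrightarrow{\cup} \mathrm{H}^2_{\rho}(\Sigma, \mathfrak{sl}_2^{\otimes 2}) \xrightarrow{(\cdot,\cdot)_*} \mathrm{H}^2(\Sigma, \mathbb{C}) \xrightarrow{\int_{\Sigma}} \mathbb{C},
\end{equation*}
where $(\cdot,\cdot)$ is the Killing form normalized as in Definition \ref{GoldmanFormula}. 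Non-degeneracy of $\omega^G$ at a single point $[\rho]$ is the key algebraic fact: the cup product pairing $\mathrm{H}^1 \otimes \mathrm{H}^1 \to \mathrm{H}^2$ is perfect by Poincar\'e duality for the closed surface $\Sigma$ with coefficients in the local system $\mathrm{Ad}\,\rho$, and the coefficient pairing is perfect because the Killing form on $\mathfrak{sl}_2$ is non-degenerate; composing the two perfect pairings gives a non-degenerate skew form. This establishes $\omega^G$ as a non-degenerate $2$-form on $\mathcal{X}_{\SL_2}^{0}(\Sigma)$.

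Third, I would identify $\omega^G$ with the Poisson structure of Definition \ref{GoldmanFormula}. The differentials of curve functions $\tau_{\gamma}$ at $[\rho]$ can be computed explicitly as cocycles supported along the curve $\gamma$, with value in $\mathfrak{sl}_2$ dual (via the Killing form) to $X_{f,\gamma}$. Computing the cup product of two such curve cocycles localizes at the intersection points $\gamma_1 \cap \gamma_2$ in $\mathring{\Sigma}$, each contributing a sign $\varepsilon(v)$ from the orientation of $T_v\Sigma$ and a local Killing pairing $(X_{f,\gamma_1}(v), X_{g,\gamma_2}(v))$, which matches exactly the interior sum in Definition \ref{GoldmanFormula} (the boundary sum is absent since $\mathcal{A}=\emptyset$). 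This is the content of Goldman's computation in \cite{Goldman86}, and it gives $\{\tau_{\alpha},\tau_{\beta}\}([\rho]) = \omega^G(X_{\tau_\alpha}, X_{\tau_\beta})$. Since curve functions generate $\mathcal{O}[\mathcal{X}_{\SL_2}(\Sigma)]$ by Procesi's theorem, this determines $\omega^G$ as the symplectic form induced by the Poisson bracket. Finally, closedness of $\omega^G$ follows from the Jacobi identity for $\{\cdot,\cdot\}^{\mathfrak{o}}$ (already known for curve functions), or alternatively from the interpretation of $\omega^G$ as the Atiyah-Bott form reduced from the affine space $\Omega^1_F(\Sigma, \mathfrak{sl}_2)$.

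The main obstacle is step three: matching the cohomological $\omega^G$ with the combinatorial Goldman bracket. It requires choosing convenient cocycle representatives of the differentials $d\tau_\gamma$ that are supported in tubular neighborhoods of the curves, computing the cup product via a Mayer--Vietoris/\v{C}ech argument localized at transverse intersection points, and carefully tracking the orientation and Killing signs. The non-degeneracy of $\omega^G$ via Poincar\'e duality and the smoothness of the irreducible locus are comparatively routine.
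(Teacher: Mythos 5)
Your proposal is correct and follows essentially the same route the paper sketches: smoothness of the irreducible locus via $\mathrm{H}^1_{\rho}$, the Goldman form as cup product composed with the Killing pairing and integration, non-degeneracy from Poincar\'e duality, and identification with the combinatorial Goldman bracket on curve functions. One small caution on the closedness step: deducing $d\omega^G=0$ from the Jacobi identity for $\{\cdot,\cdot\}^{\mathfrak{o}}$ is slightly circular in the historical development (Goldman proves Jacobi on the smooth locus precisely because $\omega^G$ is closed), so the Atiyah--Bott reduction argument you offer as an alternative is the one to lean on, and it is also the one the paper emphasizes.
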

    
    \begin{corollary}\label{coro_closed_surface} For a closed surface $\Sigma$, the symplectic leaves of $\mathcal{X}_{\SL_2}(\Sigma)$ are $(1)$ the leaf $\mathcal{X}_{\SL_2}^0(\Sigma)$ (eventually empty in genus $1$), $(2)$ the leaf  $\mathcal{X}_{\SL_2}^1(\Sigma)$ and $(3)$ the singletons $\{ [\rho] \}$ for $\rho$ a central representation.
   \end{corollary}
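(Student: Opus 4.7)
My plan is to identify the smooth stratification $\mathcal{X}_{\SL_2}(\Sigma) = X^0 \bigsqcup X^1 \bigsqcup \cdots$ defined at the beginning of Section~4.1 and then invoke Theorem~\ref{theorem_smoothsymplectic} and Lemma~\ref{lemma_diagonal_symplectic} on each smooth stratum separately. The central observation is that the partition $\mathcal{X}_{\SL_2}(\Sigma) = \mathcal{X}^0 \bigsqcup \mathcal{X}^1 \bigsqcup \mathcal{X}^2$ by orbit type essentially coincides with the canonical smooth stratification once one sees that $\mathcal{X}^2$ is finite and forms the branch locus of $\Phi$.

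First I would unpack the stratification. Using Goldman's identification of the Zariski tangent space $T_{[\rho]}\mathcal{X}_{\SL_2}(\Sigma)\cong \mathrm{H}^1_\rho(\Sigma,\mathfrak{sl}_2)$ combined with Poincar\'e duality, a standard deformation-theoretic computation shows that $[\rho]$ is a smooth point precisely when the stabilizer of $\rho$ is minimal; therefore the smooth locus equals $\mathcal{X}^0$ when $g\geq 2$ and equals $\mathcal{X}^1$ when $g=1$. The set $\mathcal{X}^2$ is a torsor over $\mathrm{H}^1(\Sigma,\mathbb{Z}/2\mathbb{Z})$, hence finite, and the double cover $\Phi:\mathcal{X}_{\mathbb{C}^*}(\Sigma)\to\mathcal{X}_{\SL_2}^{diag}(\Sigma)$ is branched exactly along $\mathcal{X}^2$, so $\mathcal{X}^1$ is the smooth locus of $\mathcal{X}^1\cup\mathcal{X}^2$. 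This gives:
\begin{itemize}
\item in genus $g\geq 2$: $X^0=\mathcal{X}^0$, $X^1=\mathcal{X}^1$, $X^2=\mathcal{X}^2$;
\item in genus $g=1$: $X^0=\mathcal{X}^1$, $X^1=\mathcal{X}^2$.
\end{itemize}

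Second, I would check that the Poisson bracket restricted to each smooth stratum endows it with the structure of a symplectic manifold whose connected components are the symplectic leaves of that stratum. On $\mathcal{X}^0$ this is precisely Theorem~\ref{theorem_smoothsymplectic} (Goldman), and on $\mathcal{X}^1$ this is Lemma~\ref{lemma_diagonal_symplectic} (obtained from the Poisson double covering by the symplectic torus $\mathcal{X}_{\mathbb{C}^*}(\Sigma)$). Connectedness of $\mathcal{X}^0$ for $g\geq 2$ is a classical fact (the moduli space of irreducible $\SL_2(\mathbb{C})$-representations of a closed surface group is irreducible), and connectedness of $\mathcal{X}^1$ follows because $\mathcal{X}_{\mathbb{C}^*}(\Sigma)\cong (\mathbb{C}^*)^{2g}$ is connected, its quotient by the involution is connected, and removing the finite branch set $\mathcal{X}^2$ from a connected complex variety of dimension $2g\geq 2$ preserves connectedness. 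Thus $\mathcal{X}^0$ and $\mathcal{X}^1$ are each single symplectic leaves.

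Finally, the stratum $\mathcal{X}^2$ consists of isolated points, so each of its points is trivially a symplectic leaf (a zero-dimensional smooth symplectic manifold is a disjoint union of points). The main obstacle in this plan is not any single hard computation, but rather justifying cleanly the identification of the smooth locus with the desired orbit-type stratum (and the connectedness of $\mathcal{X}^0$): these are classical in the theory of character varieties but have to be cited carefully, e.g.\ via the cohomological description of the tangent space used in Theorem~\ref{theorem_smoothsymplectic} together with the standard dimension count $\dim \mathrm{H}^1_\rho(\Sigma,\mathfrak{sl}_2) = 6g-6 + 2\dim\mathrm{H}^0_\rho(\Sigma,\mathfrak{sl}_2)$.
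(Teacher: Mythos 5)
Your proof is correct and in substance follows the same route the paper takes implicitly: stratify $\mathcal{X}_{\SL_2}(\Sigma)$ by orbit type, identify this with the recursive smooth-locus stratification of Section~4.1, then invoke Theorem~\ref{theorem_smoothsymplectic} on $\mathcal{X}^0$, Lemma~\ref{lemma_diagonal_symplectic} on $\mathcal{X}^1$, and the triviality of points on $\mathcal{X}^2$. Since the paper states the corollary without proof, your contribution is to spell out the verifications (matching $X^i$ with $\mathcal{X}^i$ in both genera, connectedness, etc.) that make the implication honest.

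Two places deserve a slightly tighter argument than you give. First, "the double cover $\Phi$ is branched exactly along $\mathcal{X}^2$, so $\mathcal{X}^1$ is the smooth locus of $\mathcal{X}^1\cup\mathcal{X}^2$" is not a valid implication in general: branching does not force the downstairs variety to be singular (compare $z\mapsto z^2$ on $\mathbb{A}^1$). What saves the argument here is that, locally around a fixed point, $\Phi$ is the quotient of $(\mathbb{C}^*)^{2g}$ by the involution $z\mapsto z^{-1}$, so the local model is $\mathbb{C}^{2g}/(\pm 1)$ with $2g\geq 2$, which is genuinely singular at the origin. You should state that explicitly rather than derive singularity from branching alone. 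Second, your identification of the smooth locus of $\mathcal{X}_{\SL_2}(\Sigma)$ with the orbit-type stratum via $T_{[\rho]}\mathcal{X}_{\SL_2}(\Sigma)\cong\mathrm{H}^1_\rho(\Sigma,\mathfrak{sl}_2)$ only applies verbatim at irreducible $[\rho]$; at reducible points the GIT quotient makes the Zariski tangent space a more involved object than $\mathrm{H}^1_\rho$, and the dimension count $6g-6+2\dim\mathrm{H}^0_\rho$ is a heuristic rather than a proof. You correctly flag that this needs a careful citation (e.g.\ Goldman's description of the singular locus of $\SL_2(\mathbb{C})$ character varieties, and the irreducibility of the $\SL_2(\mathbb{C})$ representation variety of a closed surface group for the connectedness of $\mathcal{X}^0$). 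With those references in place, the proof is complete.
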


  \textbf{Character varieties for marked surfaces}
  \par Recall from Example \ref{example_presentations} that to an oriented ciliated graph $(\Gamma,c)$, one can associate a marked surface $\mathbf{\Sigma}(\Gamma, c)$ whose set of boundary arcs is (non empty and) in correspondance with the set $\mathbb{V}$ of vertices of $\Gamma$, 
  together with a finite presentation $\mathbb{P}=(\mathcal{E}(\Gamma), \emptyset)$ without relation of $\Pi_1(\Sigma, \mathbb{V})$ for which the generators corresponds to the oriented edges of $\Gamma$. Moreover, any connected marked surface with non-empty set of boundary arcs can be obtained from such a graph. Since the presentation has no relation and since all points in $\mathbb{V}$ are in the boundary arcs, the discrete gauge group is trivial and 
  $$ \mathcal{X}_{\SL_2}(\mathbf{\Sigma}(\Gamma,c)) \cong \mathcal{X}_{\SL_2}(\mathbf{\Sigma}(\Gamma,c), \mathbb{P}) = \mathcal{R}_{\SL_2}(\mathbf{\Sigma}(\Gamma,c), \mathbb{P})= (\SL_2)^{\mathcal{E}(\Gamma)}.$$
  This is under this form that relative character varieties for marked surfaces first appeared in the work of Fock-Rosly in \cite{FockRosly}. Note that for
   $\mathbf{\Sigma}=(\Sigma_{g,n}, \mathcal{A})$ a connected marked surface with $\mathcal{A}\neq \emptyset$, we thus have an isomorphism of varieties
  $$\mathcal{X}_{\SL_2}(\Sigma_{g,n}, \mathcal{A}) \cong (\SL_2)^{2g-1+n+|\mathcal{A}|}.$$
  In particular, $\mathcal{X}_{\SL_2}(\mathbf{\Sigma})$ is smooth.  Fock and Rosly defined an explicit Poisson bivector field on $(\SL_2)^{\mathcal{E}(\Gamma)}$ for which the Poisson bracket of two elements $f_{\alpha}$ and $g_{\beta}$, for $\alpha, \beta \in \mathcal{E}(\Gamma)$, corresponds to the generalized Goldman formula in \ref{GoldmanFormula} (see \cite[Proposition $B.2$]{KojuTriangularCharVar} for details).

 The discrete models $\mathcal{X}_{\SL_2}(\mathbf{\Sigma}_{g,n}^0, \mathbb{P})$, for $\mathbb{P}$ the presentation induced by a Daisy graph as defined in Example \ref{example_presentations}, appeared apparently independently in the work of Alekseev-Malkin \cite{AlekseevMalkin_PoissonCharVar}. Note that the fusion property $\mathcal{X}_{\SL_2}(\mathbf{\Sigma}^{\circledast}) \cong \mathcal{X}_{\SL_2}(\mathbf{\Sigma})_{\circledast}$ (equivalent to Theorem \ref{coro_fusion}) can be read directly from the generalized Goldman formula. The decomposition $\mathcal{X}_{\SL_2}(\mathbf{\Sigma}_{g,n}^0) \cong\mathcal{X}_{\SL_2}(\mathbf{\Sigma}_{1,0}^0)^{\circledast g} \circledast \mathcal{X}_{\SL_2}(\mathbf{\Sigma}_{0,1}^0)^{\circledast n}$ was the initial motivation for the authors of \cite{AlekseevMalkin_PoissonCharVar} to introduce the notion of fusion. 
 Another discrete model appeared also in the work of Guruprasad, Huebschmann, Jeffrey and Weinstein in \cite{GHJW_ModSpacesParBd}, where the authors study marked surfaces having exactly one boundary arc per boundary component.
 The definition we gave here in Definitions \ref{def_CharVar} and \ref{GoldmanFormula} of relative character varieties is due to the author in \cite{KojuTriangularCharVar} and has the advantage of been independent on the choice of finite presentation $\mathbb{P}$. In particular by identifying canonically each discrete model with a continuous model in Theorem \ref{theorem_discrete_model}, it makes obvious the fact that all discrete models developed in \cite{FockRosly, AlekseevMalkin_PoissonCharVar, GHJW_ModSpacesParBd} are canonically isomorphic and only depend on the underlying marked surface. The (continuous) relative character variety we present here can be though as a presentation free reformulation of the discrete models of Fock-Rosly in the same manner than stated skein algebra are a presentation free reformulation of the quantum moduli spaces.
 
 \vspace{2mm}
 \par
 A new feature is the behaviour of relative character varieties for the gluing operation similar to that of stated skein algebras. Consider two boundary arcs $a,b$ in $\mathbf{\Sigma}$. The projection map $p: \Sigma \to \Sigma_{a\#b}$ induces a functor $p_* : \Pi_1(\Sigma) \to \Pi_1(\Sigma_{a\#b})$ which itself induces a regular (dominant) morphism $\theta_{a\#b}^*: \mathcal{R}_{\SL_2}(\mathbf{\Sigma}) \to \mathcal{R}_{\SL_2}(\mathbf{\Sigma}_{a\#b})$ sending a functor $\rho: \Pi_1(\Sigma)\to \SL_2$ to $\rho \circ p_*$. Remember that the Poisson structure depends on a choice of orientation $\mathfrak{o}$ of the boundary arcs. While choosing such a choice $\mathfrak{o}$ for $\mathbf{\Sigma}$ so that the orientation of $a$ and $b$ coincides under the gluing map (so we cannot choose $\mathfrak{o}^+$ for instance) and by choosing  the induced orientation $\mathfrak{o}_{a\#b}$ of the boundary arcs of $\mathbf{\Sigma}_{a\#b}$ (so that $p$ preserves the orientations), the morphism $\theta_{a\#b}$ is Poisson. For the bigon $\mathbb{B}$, while choosing the presentation with a single generator and no relation, and choosing the orientation $\mathfrak{o}^+$, we get a Poisson isomorphism
 $$\mathcal{X}_{\SL_2}^{\mathfrak{o}_+}(\mathbb{B}) \cong \SL_2^D.$$
 By gluing a bigon to a boundary arc $a$ of $\mathbf{\Sigma}$, exactly like for stated skein algebras, we get a Poisson comodule maps $\Delta_a^L: \mathcal{O}[\mathcal{X}_{\SL_2}(\mathbf{\Sigma})] \to \mathcal{O}[\SL_2^D] \otimes \mathcal{O}[\mathcal{X}_{\SL_2}(\mathbf{\Sigma})]$ and $\Delta_a^R: \mathcal{O}[\mathcal{X}_{\SL_2}(\mathbf{\Sigma})] \to \mathcal{O}[\mathcal{X}_{\SL_2}(\mathbf{\Sigma})]\otimes \mathcal{O}[\SL_2^D] $ so that
 
 \begin{theorem}(K. \cite[Theorem $1.3$]{KojuTriangularCharVar})
 The following sequence is exact and each map is Poisson:
$$ 0 \to \mathcal{O}[\mathcal{X}_{\SL_2}(\mathbf{\Sigma}_{a\#b})]^{\mathfrak{o}_{a\#b}} \xrightarrow{\theta_{a\#b}} \mathcal{O}[\mathcal{X}_{\SL_2}(\mathbf{\Sigma})]^{\mathfrak{o}} \xrightarrow{\Delta_a^L - \sigma \circ \Delta_b^R}  \mathcal{O}[\SL_2^D] \otimes \mathcal{O}[\mathcal{X}_{\SL_2}(\mathbf{\Sigma})]^{\mathfrak{o}}.$$

In particular, for a triangulated marked surface $(\mathbf{\Sigma}, \Delta)$ and choosing an orientation $\mathfrak{o}_{\Delta}$ of the edges of $\Delta$, which itself induces an orientation $\mathfrak{o}$ of the edges of $\mathbf{\Sigma}$ and for each face $\mathbb{T}$ induces an orientation $\mathfrak{o}_{\mathbb{T}}$ of the edges of $\mathbb{T}$, we have an exact sequence, where each map is Poisson:
$$ 0 \to  \mathcal{O}[\mathcal{X}_{\SL_2}(\mathbf{\Sigma})]^{\mathfrak{o}} \xrightarrow{\theta^{\Delta}} \otimes_{\mathbb{T}\in F(\Delta)}\mathcal{O}[\mathcal{X}_{\SL_2}(\mathbb{T})]^{\mathfrak{o}_{\mathbb{T}}}  \xrightarrow{\Delta^L - \sigma \circ \Delta^R} \left( \otimes_{e\in \mathring{\mathcal{E}}(\Delta)} \mathcal{O}[\SL_2^D] \right) \otimes \left( \otimes_{\mathbb{T}\in F(\Delta)} \mathcal{O}[\mathcal{X}_{\SL_2}(\mathbb{T})]^{\mathfrak{o}_{\mathbb{T}}}  \right).$$
 \end{theorem}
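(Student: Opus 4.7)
The statement is the classical (Poisson) shadow of the stated skein gluing formula (Theorem \ref{theorem_gluingformula}). My plan is to deduce it by specializing the quantum exact sequence at $A^{1/2}=1$ and transporting the Poisson bracket coming from the deformation onto the generalized Goldman bracket of Definition \ref{GoldmanFormula}; the triangulated form is then obtained by iterating the pairwise statement.

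First I would apply the functor $-\otimes_k \mathbb{C}$ (via $A^{1/2}\mapsto 1$) to the exact sequence of Theorem \ref{theorem_gluingformula} over the ground ring $k=\mathbb{Z}[A^{\pm 1/2}]$. By Theorem \ref{theorem_basis}, every stated skein algebra carries a distinguished basis independent of the ring and of $A^{1/2}\in k^\times$, so each term in the quantum sequence is a flat $k$-module; in particular, tensoring the three-term complex with $\mathbb{C}$ preserves exactness. Under the identification $\mathcal{S}_{+1}(\mathbf{\Sigma}) \cong \mathcal{O}[\mathcal{X}_{\SL_2}(\mathbf{\Sigma})]$ (application $(3)$ of Strategy \ref{strategy}: on the bigon one has $\mathcal{S}_{+1}(\mathbb{B})\cong \mathcal{O}[\SL_2]$ with $\alpha_{ij}$ going to matrix coefficients, and this extends to a Poisson isomorphism for every triangulable marked surface via the gluing maps, cf.\ Theorem \ref{theorem_discrete_model}; the off-puncture sequence \eqref{eq_off_puncture} together with Proposition \ref{prop_offpuncture} handles closed unmarked surfaces) and $\mathcal{O}_q[\SL_2]|_{q=1}\cong \mathcal{O}[\SL_2^D]$, this yields exactness of the claimed classical sequence.

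For the Poisson property of the three maps, I note that each is the classical limit of an algebra morphism between stated skein algebras (with $\mathcal{O}_q[\SL_2]$ playing the role of the stated skein algebra of the bigon). By Remark \ref{remarkPoisson} such a specialization is automatically Poisson. The only nontrivial point is that the Poisson bracket obtained on $\mathcal{S}_{+1}(\mathbf{\Sigma})$ from the skein deformation coincides with the generalized Goldman bracket $\{\cdot,\cdot\}^{\mathfrak{o}}$ defined independently in Definition \ref{GoldmanFormula}. Once again I apply the triangular strategy: on the bigon the two brackets were computed in Section \ref{sec_STS} and both give the Drinfel'd--Sklyanin bracket on $\SL_2^D$; fusion with the classical fusion operation (Theorem \ref{coro_fusion}) then yields the identification on the triangle; and the exact sequences we have just established show that the agreement propagates through every triangulation. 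The off-puncture argument of Proposition \ref{prop_offpuncture} finally extends the agreement to closed unmarked surfaces.

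The triangulated version follows from the pairwise one by iterating the maps $\theta_{e\#e'}$ over the glued pairs of edges of $\Delta$ and invoking coassociativity of the gluing morphisms (Theorem \ref{theorem_gluing}), exactly as Equation \eqref{eq_triangular} is deduced from Theorem \ref{theorem_gluingformula} at the quantum level. The main obstacle in the whole argument is the Poisson identification on the bigon: a direct check requires a careful first-order expansion of the $R$-matrix \eqref{height_exchange_rel} and a term-by-term matching of the resulting $r^{\pm}$-contributions with the boundary sign $\varepsilon(v_1,v_2)$ and the orientation factor $\mathfrak{o}(v_1,v_2)$ appearing in Definition \ref{GoldmanFormula}; everything downstream of that computation is essentially formal consequence of the triangular strategy and of Theorem \ref{theorem_gluingformula}.
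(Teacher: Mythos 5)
Your plan to derive the theorem by specializing Theorem \ref{theorem_gluingformula} at $A^{1/2}=1$ and transporting the Poisson bracket is a genuinely different route from the cited reference \cite{KojuTriangularCharVar}, which (as the paper's logical flow makes clear) proves the statement intrinsically on the character variety side, by a direct computation with the generalized Goldman formula of Definition \ref{GoldmanFormula}, and only afterwards uses it to deduce Theorem \ref{theorem_charvar_skein}. Your route inverts this order, and unfortunately this creates a circularity you do not fully escape.

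Two remarks first, on points that are fine or easily fixed. The flatness digression with $k=\mathbb{Z}[A^{\pm 1/2}]$ is unnecessary for the algebra exactness: Theorem \ref{theorem_gluingformula} holds over any $k$ with any $A^{1/2}\in k^\times$, so one can take $k=\mathbb{C}$ and $A^{1/2}=1$ directly. (Flatness is still relevant for the first-order $\hbar$-expansion defining the bracket, but that is a different use of it.) Your base and inductive cases for the bracket comparison on the bigon and the triangle are sound: the bigon is Section \ref{sec_STS}, and the triangle follows because both brackets respect fusion (the skein side by Theorem \ref{coro_fusion}, and the paper records that the Goldman side ``can be read directly from the generalized Goldman formula'').

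The real gap is the sentence ``the exact sequences we have just established show that the agreement propagates through every triangulation.'' What you have at that point is: (i) exactness of the classical skein sequence, (ii) the fact that $\theta^\Delta$ on the skein side is Poisson for the deformation bracket (Remark \ref{remarkPoisson}), and (iii) agreement of the two brackets on the triangle. To conclude that the Goldman bracket on $\mathcal{O}[\mathcal{X}_{\SL_2}(\mathbf{\Sigma})]$ matches the skein bracket, you would need to know that $\theta^\Delta$ on the character-variety side is Poisson for the Goldman bracket---but that is exactly the statement being proved, not an input. Similarly, your identification $\mathcal{S}_{+1}(\mathbf{\Sigma})\cong\mathcal{O}[\mathcal{X}_{\SL_2}(\mathbf{\Sigma})]$ cites the triangular strategy and Theorem \ref{theorem_discrete_model}, but the Poisson version of that identification is Theorem \ref{theorem_charvar_skein}, whose proof in the paper invokes the present statement. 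The one legitimate escape is the alternative argument the paper sketches at the end of the section comparing stated skein algebras with character varieties: reduce the presentation of Theorem \ref{theorem_presentation} at $A=+1$ and compare first-order commutators term by term with the generalized Goldman formula. You gesture at this as ``the main obstacle,'' but it is not an afterthought---it is the substantive content of the theorem, and without carrying it out (or else proving directly that $\theta_{a\#b}$ preserves the Goldman bracket by analysing how cutting along $c$ converts interior intersections into boundary ones and matches signs and $r^{\pm}$-terms), your argument does not close.
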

 
  \textbf{Character varieties for open unmarked surfaces}
  \par The original motivations for the authors of \cite{FockRosly, AlekseevMalkin_PoissonCharVar, GHJW_ModSpacesParBd} to introduce relative character varieties was to be able to study the Poisson geometry of character varieties of unmarked open surfaces $(\Sigma_{g,n}, \emptyset)$ for $n\geq 1$. Like in the closed case, $\mathcal{X}_{\SL_2}(\Sigma_{g,n})$ is singular and its smooth locus $\mathcal{X}^0_{\SL_2}(\Sigma_{g,n})$ is the set of classes of irreducible representations $r:\pi_1(\Sigma_{g,n}, v)\to \SL_2$. 
  Let us admit for a moment that we have a Poisson isomorphism $\mathcal{O}[\mathcal{X}_{\SL_2}(\mathbf{\Sigma})] \cong \mathcal{S}_{+1}(\mathbf{\Sigma})$ (it will be proved in the next subsection).
  Recall the definition of the moment map $\mu : \mathcal{X}_{\SL_2}(\underline{\mathbf{\Sigma}}_{g,n}) \to (\SL_2^{STS})^n$ induced by the embedding of marked surfaces 
  $\mathbf{m}_1^{\cup \mathcal{A}} \to  \underline{\mathbf{\Sigma}}_{g,n}$.
  Recall also from Corollary \ref{coro_remove_BA}, the exact sequence:
  $$ 0\to \mathcal{O}[\mathcal{X}_{\SL_2}(\Sigma_{g,n}, \emptyset)] \xrightarrow{\iota} \mathcal{O}[\mathcal{X}_{\SL_2}(\underline{\mathbf{\Sigma}}_{g,n})] \xrightarrow{\Delta^L_{\mathcal{A}} - (\eta \circ \epsilon)^{\otimes \mathcal{A}} \otimes \id} \left( \mathcal{O}[\SL_2^D] \right)^{n} \otimes  \mathcal{O}[\mathcal{X}_{\SL_2}(\underline{\mathbf{\Sigma}}_{g,n})], $$
 which makes appear $\mathcal{X}_{\SL_2}(\Sigma_{g,n}, \emptyset)$ as a GIT quotient:
 $$ \mathcal{X}_{\SL_2}(\Sigma_{g,n}, \emptyset) = \mathcal{X}_{\SL_2}(\underline{\mathbf{\Sigma}}_{g,n}) \sslash (\SL_2^D)^{n}.$$
 Denote by $\pi:  \mathcal{X}_{\SL_2}(\underline{\mathbf{\Sigma}}_{g,n}) \to  \mathcal{X}_{\SL_2}(\Sigma_{g,n}, \emptyset)$ the quotient map.
 
 \begin{theorem}(Fock-Rosly \cite[Proposition $2$]{FockRosly}, Guruprasad-Huebschmann-Jeffrey-Weinstein \cite[Theorem $9.1$]{GHJW_ModSpacesParBd})
 Let $C_1, \ldots, C_n$ be conjugacy classes in $\SL_2$. Then the set
 $$ \mathcal{X}^0_{\SL_2}(\Sigma_{g,n}) \cap \pi \left(\mu^{-1}(C_1, \ldots, C_n) \right) $$
 is a symplectic leaf.
 \end{theorem}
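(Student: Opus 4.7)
My plan is to deduce the theorem via Poisson-Lie symplectic reduction, in the spirit of Alekseev-Kosmann-Schwarzbach-Meinrenken. Corollary \ref{coro_remove_BA}, applied by removing all boundary arcs, identifies $\mathcal{O}[\mathcal{X}_{\SL_2}(\Sigma_{g,n}, \emptyset)]$ with the subalgebra of $(\SL_2^D)^n$-coinvariants inside $\mathcal{O}[M]$ for $M := \mathcal{X}_{\SL_2}(\underline{\mathbf{\Sigma}}_{g,n})$, realising $\pi: M \to \mathcal{X}_{\SL_2}(\Sigma_{g,n}, \emptyset)$ as the GIT quotient by the gauge action on the $n$ boundary arcs. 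The moment map $\mu: M \to (\SL_2^{STS})^n$, sending a representation to its tuple of holonomies around the $n$ boundary loops, is Poisson and $(\SL_2^D)^n$-equivariant (for the conjugation action on the target), so that $\mu^{-1}(C_1, \ldots, C_n)$ is $(\SL_2^D)^n$-stable and the fibres of $\pi$ restricted to it are precisely the conjugation orbits.

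The concrete steps I would carry out are: first, establish that $M$ is a non-degenerate $(\SL_2^D)^n$-Poisson variety, by decomposing $\underline{\mathbf{\Sigma}}_{g,n}$ as an iterated fusion of copies of $\mathbb{D}_1^+$, $\mathbf{\Sigma}_{1,0}^0$ and $\mathbf{m}_1$ starting from the non-degenerate $D_+(\SL_2)$ of Theorem \ref{theorem_leaves_D1} and applying Lemma \ref{lemma_XXX} inductively, exactly as in the proof of Lemma \ref{lemma_ND}; second, invoke Theorem \ref{theorem_leaves_monogon} to see that each $C_i \cap \SL_2^0$ is a symplectic leaf of $\SL_2^{STS}$, so that $\mu^{-1}(C_1,\ldots,C_n)$ is the intersection of $n$ preimages of symplectic leaves under a Poisson map; third, restrict to the irreducible locus $\mathcal{X}^0_{\SL_2}$, on which the $(\SL_2^D)^n$-action is free modulo the diagonal $\{\pm \mathds{1}_2\}$, so that $\pi(\mu^{-1}(C)) \cap \mathcal{X}^0_{\SL_2}$ is a smooth subvariety of $\mathcal{X}_{\SL_2}(\Sigma_{g,n}, \emptyset)$. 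The non-degeneracy of $M$ combined with the standard Poisson-Lie reduction procedure then yields on this smooth subvariety a symplectic form coinciding with the restriction of the quotient Poisson bracket to the Casimir level set cut out by $\tr(\gamma_{p_i}) = \tr(C_i)$; this proves that $\mathcal{X}^0_{\SL_2}(\Sigma_{g,n}) \cap \pi(\mu^{-1}(C_1,\ldots,C_n))$ is a symplectic subvariety of $\mathcal{X}_{\SL_2}(\Sigma_{g,n}, \emptyset)$, hence a union of symplectic leaves.

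The main obstacle will be to upgrade \emph{union of symplectic leaves} to \emph{single symplectic leaf}, i.e. to prove connectedness of the reduced space. For trace values away from $\pm 2$, where the conjugacy class is determined by the trace, this follows from the classical Mehta-Seshadri-type connectedness for parabolic moduli spaces; for the edge cases $\tr(C_i) = \pm 2$ one must argue directly by hand, exploiting the residual gauge freedom. I would follow Guruprasad-Huebschmann-Jeffrey-Weitsman \cite{GHJW_ModSpacesParBd}, who treat precisely this case in the real symplectic setting, and transport their argument to the complex Poisson framework via the holonomy identification $\mathcal{M}_{\SL_2}(\Sigma) \cong \mathcal{X}_{\SL_2}(\Sigma)$. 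A secondary bookkeeping point is to verify that the symplectic form produced by Poisson-Lie reduction coincides with the restriction of the Poisson bracket on $\mathcal{X}_{\SL_2}(\Sigma_{g,n}, \emptyset)$ defined via the generalized Goldman formula of Definition \ref{GoldmanFormula}; this should follow by a direct comparison using the fusion formula of Theorem \ref{coro_fusion}, expressing both brackets in terms of the $r$-matrix action on trace functions, but demands careful tracking of signs and orientations.
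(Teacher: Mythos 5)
The paper itself does not prove this statement: it cites Fock--Rosly \cite{FockRosly}, who work with discrete lattice models on ciliated graphs, and GHJW \cite{GHJW_ModSpacesParBd}, who use an extended-moduli-space argument. Your proposal instead follows the Alekseev--Malkin route of iterated Poisson-Lie fusion followed by reduction, which is native to this paper's framework and is a genuinely different (and attractive) path. That said, there are several gaps.

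First, the fusion decomposition is wrong as listed. Since $\underline{\mathbf{\Sigma}}_{g,n}$ has a boundary arc in each of its $n$ boundary components, one has $\underline{\mathbf{\Sigma}}_{g,n}\cong(\mathbf{\Sigma}_{1,0}^0)^{\circledast g}\circledast(\mathbb{D}_1^+)^{\circledast(n-1)}$: no copy of $\mathbf{m}_1$ enters. This is not merely cosmetic. The Poisson variety $\mathcal{X}(\mathbf{m}_1)=\SL_2^{STS}$ is \emph{not} non-degenerate as an $\SL_2^D$-Poisson variety: by Theorem \ref{theorem_leaves_monogon} the image of $\pi^{\#}$ at a point of $\SL_2^0$ is the tangent space to the conjugacy class, which coincides with the image of $a_G$, so $(a_G,\pi^{\#})$ misses the Casimir direction. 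Inserting $\mathbf{m}_1$ into the iterated fusion would therefore break the non-degeneracy chain on which the whole reduction argument rests.

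Second, the claim that $\mu^{-1}(C_1,\ldots,C_n)$ is the intersection of $n$ preimages of symplectic leaves is false: by Theorem \ref{theorem_leaves_monogon}, $C_i$ is not a leaf of $\SL_2^{STS}$, only $C_i\cap\SL_2^0$ is, and $C_i\cap\SL_2^1$ is a union of singleton leaves. You can repair this because $\pi\bigl(\mu^{-1}(C_1,\ldots,C_n)\bigr)=\pi\bigl(\mu^{-1}(\prod_i(C_i\cap\SL_2^0))\bigr)$ --- every conjugacy class meets $\SL_2^0$ and the $(\SL_2^D)^n$-action (whose orbits are the $\pi$-fibres) can bring each boundary holonomy into the open Bruhat cell --- but this step must be made explicit, since otherwise the ``level set of a Poisson map at a leaf'' structure you invoke simply does not exist.

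Third, ``the standard Poisson-Lie reduction procedure'' is doing a lot of work. Non-degeneracy of $M$ by itself says nothing about the quotient; the content is that the infinitesimal $(\SL_2^D)^n$-action on $\mu^{-1}((\SL_2^0)^n)$ is Hamiltonian (as in the proof of Theorem \ref{theorem_OpenDense}, via equivariance of $\mu$ together with the Hamiltonian nature of the conjugation action on each $\SL_2^0$), so that $a_G$ lands in $\operatorname{im}(\pi^{\#}_M)$ and non-degeneracy then forces $\pi^{\#}_M$ to be surjective on that locus. Passing from there to symplecticity of each reduced space $\pi\bigl(\mu^{-1}(\prod_i(C_i\cap\SL_2^0))\bigr)\cap\mathcal{X}^0_{\SL_2}$ needs either a dual-pair argument in the sense of Theorem \ref{theorem_Weinstein} or a Marsden--Weinstein-type transversality check; neither is filled in. Finally, your connectedness step is honestly flagged as deferred, but the appeal to Mehta--Seshadri is off-target, as that result concerns the compact/unitary case; for $\SL_2(\mathbb{C})$ one must argue directly (as in GHJW's Theorem 9.1) or exploit irreducibility of $\mathcal{X}^0_{\SL_2}$.
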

 
% \begin{proof}(Sketch in the case $n=1$:)
% When $n=1$ then $\overline{\mathbf{\Sigma}}_{g, 1}= \mathbf{\Sigma}_{g,0}^0$. Recall from Theorem \ref{theorem_GJS} that $\mathcal{X}_{\SL_2}(\mathbf{\Sigma}_{g,0}^0)=\mu^{-1}(\SL_2^0)$ is symplectic. In the stated skein algebras at $k=\quotient{\mathbb{C}[[\hbar]]}{(\hbar)^2}$ with $A=\exp(\hbar/2)$, the two embeddings of marked surfaces $\mathbf{m}_1 \to  \mathbf{\Sigma}_{g,0}^0$ and $(\Sigma_{g,n}, \emptyset) \to  \mathbf{\Sigma}_{g,0}^0$ induce two embeddings $\mu_{\hbar} : \mathcal{O}_{\hbar}[\SL_2^{STS}] \hookrightarrow \mathcal{S}_{\hbar}(\mathbf{\Sigma}_{g,0}^0)$ and $\iota_{\hbar}:  \mathcal{S}_{\hbar}(\mathbf{\Sigma}_{g,1})\hookrightarrow \mathcal{S}_{\hbar}(\mathbf{\Sigma}_{g,0}^0)$. It is easy to see that in $\mathcal{S}_{\hbar}(\mathbf{\Sigma}_{g,0}^0)$, the subalgebras $\mathcal{A}_1:=\mu_{\hbar}(\mathcal{O}_{\hbar}[\SL_2^{STS}])$ and $\mathcal{A}_2:= \iota_{\hbar}( \mathcal{S}_{\hbar}(\mathbf{\Sigma}_{g,1}))$ are commuant to each other, i.e. that 
% $$ \mathcal{A}_1= \{a | [a,b]=0, \forall b \in \mathcal{A}_2\} \quad \mbox{and} \quad \mathcal{A}_2=\{a | [a,b]=0, \forall b\in \mathcal{A}_1\}.$$
 %It follows that in the correspondance
 %$$\begin{tikzcd}
 %{} & \mathcal{X}^{0}_{\SL_2}(\mathbf{\Sigma}_{g,0}^0) \arrow[ld, "\pi"] \arrow[rd, "\mu"]& {} \\
% \mathcal{X}_{\SL_2}^0(\Sigma_{g,1}) & {} & (\SL_2^0)^{STS}
 %\end{tikzcd}$$
 %then $(\pi, \mu)$ is a dual pair in the sense of Definition \ref{def_dualpairs}. 
% \end{proof}
 
 \subsection{Comparison between stated skein algebras and relative character varieties}
 
 We now prove that the Poisson variety $\mathcal{X}(\mathbf{\Sigma})$ is isomorphic to the character variety $\mathcal{X}_{\SL_2}(\mathbf{\Sigma})$.  For unmarked surfaces, the first relation between the two objects was proved by Bullock in \cite{Bullock} using the following simple remark. For $A\in \SL_2$, the Cayley-Hamilton Theorem writes $A^2-\tr(A)A+1=0$ therefore, given $A,B \in \SL_2$, one has
 \begin{equation}\label{eq_bullock} 
 \tr (AB) + \tr (AB^{-1}) =\tr(A)\tr(B).
 \end{equation}
  Now consider a curve $\gamma\subset \Sigma \times[0,1]$, framed in the direction of $1$,  whose regular projection on $\Sigma$ has one crossing at some point $v$. Consider two element $\alpha_v, \beta_v \in \pi_1(\Sigma, v)$ so that $\alpha_v \beta_v$ and $\alpha_p\beta_p$ are the classes of the two resolutions of $\gamma$ at $v$ as depicted in Figure \ref{fig_bullock}. Now for $\rho: \pi_1(\Sigma, v) \to \SL_2$, Equation \ref{eq_bullock} gives
  $$ -\tr (\rho(\alpha_v\alpha_v))= - (-\tr(\rho(\alpha_v\beta_v^{-1}))) - (-\tr(\rho(\alpha_v)))(-\tr(\rho(\beta_v))), $$
  which recovers the skein relation in $\mathcal{S}_{-1}(\Sigma)$. By extending this idea, one gets the
  
   \begin{figure}[!h] 
\centerline{\includegraphics[width=5cm]{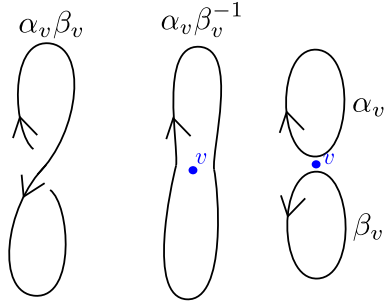} }
\caption{Curves corresponding to a Kauffman resolution of a crossing.} 
\label{fig_bullock} 
\end{figure}

  \begin{theorem}
  \begin{enumerate}
  \item (Bullock \cite{Bullock}) We have an isomorphism of algebras $\Psi: \quotient{\mathcal{S}_{-1}(\Sigma, \emptyset)}{\sqrt{0}} \cong \mathcal{O}[\mathcal{X}_{\SL_2}(\Sigma)]$ sending a curve $\gamma$ to the opposite $-\tau_{\gamma}$ of the associated trace function.
  \item (Przytycki-Sikora \cite{PS00}, Charles-March\'e \cite{ChaMa}): the algebra $\mathcal{S}_{-1}(\Sigma, \emptyset)$ is reduced, so Bullock isomorphism writes  $\Psi: \mathcal{S}_{-1}(\Sigma, \emptyset) \cong \mathcal{O}[\mathcal{X}_{\SL_2}(\Sigma)]$.
  \item (Turaev \cite{Turaev91}) $\Psi$ is Poisson, for the Poisson bracket on  $\mathcal{S}_{-1}(\Sigma, \emptyset)$  induced from $A=-\exp(\hbar/2)$.
  \item (Barett \cite{Barett}) Let $S$ be a spin structure on $\Sigma$ and $\omega_S: \mathrm{H}_1(\Sigma; \mathbb{Z}/2\mathbb{Z}) \to \mathbb{Z}/2\mathbb{Z}$ denote the associated Johnson quadratic form. Then there exists an isomorphism  $\mathcal{S}_A(\Sigma) \to \mathcal{S}_{-A}(\Sigma)$ sending a curve $\gamma$ to $(-1)^{\omega_S([\gamma])} \gamma$. In particular, we have a Poisson isomorphism $\Psi^S: \mathcal{S}_{+1}(\Sigma) \cong \mathcal{O}[\mathcal{X}_{\SL_2}(\Sigma)]$ sending a curve $\gamma$ to $(-1)^{\omega_S([\gamma])+1}\tau_{\gamma}$.
  \end{enumerate}
  \end{theorem}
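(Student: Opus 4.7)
The plan is to prove the four parts of this theorem in turn, exploiting the structures built earlier in the survey.

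For part (1), I would define $\Psi$ on the multicurve basis of $\mathcal{S}_{-1}(\Sigma,\emptyset)$ by sending a multicurve $\gamma_1\cup \cdots \cup \gamma_n$ to $(-1)^n \tau_{\gamma_1}\cdots \tau_{\gamma_n}$, then extend linearly. To verify the skein relations, specialize $A=-1$: the trivial loop coefficient $-(A^2+A^{-2})=-2$ matches $-\tau_{\mathrm{triv}}([\rho])=-\tr(\mathds{1}_2)=-2$. The crossing relation is the heart of the argument, and after applying $\Psi$ to both sides reduces to the universal identity $\tau_{\alpha\beta}+\tau_{\alpha\beta^{-1}}=\tau_{\alpha}\tau_{\beta}$, obtained by multiplying Cayley--Hamilton $\rho(\beta)^2-\tr(\rho(\beta))\rho(\beta)+\mathds{1}_2=0$ by $\rho(\alpha)\rho(\beta)^{-1}$ and taking the trace. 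The image lies in $\mathcal{O}[\mathcal{X}_{\SL_2}(\Sigma)]$ since trace functions are gauge invariant, and surjectivity follows from Procesi's theorem (invoked earlier) that curve functions generate the coordinate ring of the character variety. Injectivity modulo the nilradical is then automatic, since the target is reduced and the quotient map $\mathcal{S}_{-1}(\Sigma,\emptyset)/\sqrt{0}\twoheadrightarrow \mathcal{O}[\mathcal{X}_{\SL_2}(\Sigma)]$ has no further nilpotents to kill, provided one also knows part (2).

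For part (2), I would apply the triangular strategy (Strategy \ref{strategy}) after first using part (4) to transfer from $A=-1$ to $A=+1$. At $A=+1$ the stated skein algebra $\mathcal{S}_{+1}(\mathbb{B})$ specializes to $\mathcal{O}[\SL_2]$, a domain, and hence so is the cobraided tensor product $\mathcal{S}_{+1}(\mathbb{T})\cong \mathcal{O}[\SL_2]\circledast \mathcal{O}[\SL_2]$ by a direct PBW-basis verification. By the triangular decomposition \eqref{eq_triangular}, $\mathcal{S}_{+1}(\mathbf{\Sigma})$ embeds as a subalgebra of a tensor product of domains, hence is reduced (in fact a domain) for any triangulable $\mathbf{\Sigma}$. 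For a closed surface $(\Sigma_{g,0},\emptyset)$, the off-puncture exact sequence \eqref{eq_off_puncture} presents $\mathcal{S}_{+1}(\Sigma_{g,0})$ as a quotient of $\mathcal{S}_{+1}(\Sigma_{g,1})$ by the ideal $\mathcal{I}_p$ described in Proposition \ref{prop_offpuncture}; reducedness here is not automatic and is the main technical obstacle (see below).

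For part (3), I would apply the deformation-quantization mechanism of Section \ref{sec_DefoQuantization}. For two transversally intersecting closed curves $\gamma_1,\gamma_2$, the Poisson bracket on $\mathcal{S}_{+1}(\Sigma,\emptyset)$ is computed from $\hbar^{-1}(\gamma_1\star\gamma_2-\gamma_2\star\gamma_1) \bmod \hbar$ with $A^{1/2}=\exp(\hbar/4)$. Applying the skein relation at each intersection point $v\in\gamma_1\cap\gamma_2$ and expanding $A^{\pm 1}-A^{\mp 1}=\pm \hbar+O(\hbar^2)$ yields $\{\gamma_1,\gamma_2\}=\sum_v \varepsilon(v)(\gamma_1\gamma_2)_v$, where $(\gamma_1\gamma_2)_v$ is the oriented smoothing at $v$. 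This exactly matches Goldman's formula (Definition \ref{GoldmanFormula}) once one checks that for closed surfaces the boundary-arc sum is empty and the tensor $(X_{f,\alpha}(v),X_{g,\beta}(v))$ evaluates on trace functions to the trace of the smoothed curve; the sign twist of part (4) then transports the statement to $A=-1$.

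For part (4), I would define $\Theta_S(\gamma_1\cup \cdots \cup \gamma_n):=(-1)^{\sum_i\omega_S([\gamma_i])}\gamma_1\cup \cdots \cup \gamma_n$ on the multicurve basis and verify that $\Theta_S$ sends the skein relation at $A$ to that at $-A$. The key ingredient is the quadratic-form property $\omega_S(\alpha+\beta)=\omega_S(\alpha)+\omega_S(\beta)+(\alpha\cdot\beta) \pmod 2$: at each crossing the two Kauffman resolutions change the homology class of the multicurve in complementary ways, and the resulting signs from $\omega_S$ compensate the sign change $A\mapsto -A$ in the skein coefficients. The trivial-loop relation is invariant under $A\mapsto -A$ automatically. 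The principal obstacle of the whole theorem is the closed-surface case of part (2): the triangular strategy yields reducedness of triangulable marked surfaces, but the off-puncture surjection goes the wrong way, so extracting reducedness of the quotient requires genuinely new input---either Przytycki--Sikora's filtration from a pants decomposition or the deformation-theoretic analysis of Charles--March\'e---and this is precisely why the references \cite{PS00,ChaMa} are invoked rather than being deduced from Theorem \ref{theorem_domain}.
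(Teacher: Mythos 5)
The paper does not prove this theorem: it is stated as a citation theorem, with only the Cayley--Hamilton remark $\tr(AB)+\tr(AB^{-1})=\tr(A)\tr(B)$ preceding it as motivation. So there is no ``paper's proof'' to match your approach against; what matters is whether your sketch is sound.

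The main problem is in part (1). Surjectivity of $\Psi$ indeed follows from Procesi's first fundamental theorem (curve functions generate $\mathcal{O}[\mathcal{X}_{\SL_2}(\Sigma)]$), and you verify the skein relations correctly. But the claim that ``injectivity modulo the nilradical is then automatic, since the target is reduced'' is a non sequitur: a surjection $R\twoheadrightarrow S$ with $R$ and $S$ both reduced need not be injective --- the kernel could be any radical ideal. The genuine content of Bullock's theorem is precisely the identification of the kernel of $\mathcal{S}_{-1}(\Sigma,\emptyset)\to\mathcal{O}[\mathcal{X}_{\SL_2}(\Sigma)]$ with the nilradical, and this requires the \emph{second} fundamental theorem of invariant theory for $\SL_2$ (Procesi's relations, not just Procesi's generators): one must show that every relation among trace functions is generated, up to nilpotents, by the Cayley--Hamilton relations, i.e.\ exactly by the $A=-1$ skein relations. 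Your appeal to part (2) does not repair this --- knowing $\mathcal{S}_{-1}(\Sigma,\emptyset)$ is reduced turns $\mathcal{S}_{-1}(\Sigma,\emptyset)/\sqrt{0}$ into $\mathcal{S}_{-1}(\Sigma,\emptyset)$ but still leaves a surjection, not an isomorphism.

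Parts (2)--(4) are in reasonable shape: you correctly locate the real obstruction for part (2) in the closed-surface case (where the triangular strategy yields only a quotient presentation and reducedness requires the Przytycki--Sikora filtration or Charles--March\'e analysis), and the deformation-quantization derivation for part (3) and the Johnson-quadratic-form twist for part (4) are the standard arguments in the right spirit, up to normalization details you would still need to track (the factor of $2$ in the closed-curve term of Definition \ref{GoldmanFormula} and the precise identification of $(X_{f,\alpha}(v),X_{g,\beta}(v))$ with the trace of the resolved curve).
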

  
  For unmarked surfaces, both algebras $\mathcal{S}_{+1}(\Sigma)$ and $\mathcal{S}_{-1}(\Sigma)$ are commutative and Bullock's isomorphism $\Psi: \mathcal{S}_{-1}(\Sigma, \emptyset) \cong \mathcal{O}[\mathcal{X}_{\SL_2}(\Sigma)]$ is canonical whereas the isomorphism $\Psi^S: \mathcal{S}_{+1}(\Sigma) \cong \mathcal{O}[\mathcal{X}_{\SL_2}(\Sigma)]$ depends on the choice of a spin structure.  For marked surfaces with $\mathcal{A}\neq \emptyset$, the stated skein algebra $\mathcal{S}_{-1}(\mathbf{\Sigma})$ is no longer commutative (only its graded $0$ part is) so we cannot avoid the fact that our isomorphism $\mathcal{S}_{+1}(\mathbf{\Sigma}) \cong \mathcal{O}[\mathcal{X}_{\SL_2}(\mathbf{\Sigma})]$
  will be non-canonical. An obvious reason is the fact that whereas the Poisson structure on $\mathcal{S}_{+1}(\mathbf{\Sigma})$ is canonical, the Poisson structure on $\mathcal{O}[\mathcal{X}_{\SL_2}(\mathbf{\Sigma})]$ depends on a choice of orientation of the boundary arcs. In \cite{KojuQuesneyClassicalShadows}, the authors defined a notion of \textit{relative spin structure} for marked surfaces, which generalizes spin structures for unmarked surfaces and such that $(1)$ a relative spin structure on $\mathbf{\Sigma}$ induces a canonical relative spin structure on $\mathbf{\Sigma}_{a\#b}$ and $(2)$ an orientation of the edges of a triangle $\mathbb{T}$ defines a relative spin structure on it. Therefore, given a triangulation $\Delta$ of $\mathbf{\Sigma}$ with an orientation $\mathfrak{o}_{\Delta}$ of its edges, we get a relative spin structure $S=S(\Delta, \mathfrak{o}_{\Delta})$ on $\mathbf{\Sigma}$. To a relative spin structure $S$ and a choice $\mathfrak{o}$ of orientation of the boundary arcs, we can associate a map $w_S$ which associates to any oriented embedded curve $(\alpha, \partial \alpha) \subset (\Sigma, \mathcal{A})$ a sign $w_S(\alpha)\in \{ \pm 1\}$. For unmarked surfaces, $w_S$ is related to Johnson quadratic form $\omega_S$ by $w_S(\gamma)=\omega_S([\gamma])+1$.  
  
  \begin{notations}
  For $\mathfrak{o}$ an orientation of the boundary arcs and $a\in \mathcal{A}$, we write $\mathfrak{o}(a)=+1$ of the orientation of $a$ coincides with the orientation $\mathfrak{o}_+(a)$ induced from the orientation of $\Sigma$ and write $\mathfrak{o}(a)=-1$ if they are opposite. For $\alpha$ an oriented embedded arc in $\Sigma$ with $\emptyset \neq \partial \alpha \in \mathcal{A}$, we denote by $X_{\alpha, i, j}\in \mathcal{O}[\mathcal{X}_{\SL_2}(\mathbf{\Sigma})]$ the regular function sending $[\rho]$ to the matrix coefficient $\rho(\alpha)_{i,j}$ and we denote by $U(\alpha)=\begin{pmatrix} X_{\alpha, +, +} & X_{\alpha, +, -} \\ X_{\alpha, -, +} & X_{\alpha, -, -} \end{pmatrix}$ the $2\times 2$ matrix with coefficients in $ \mathcal{O}[\mathcal{X}_{\SL_2}(\mathbf{\Sigma})]$. Recall from Notations \ref{notations_presentations} the definition of the $2\times 2$ matrix $M(\alpha)$ with coefficients in $\mathcal{S}_{+1}(\mathbf{\Sigma})$.
  \end{notations}

     \begin{theorem}\label{theorem_charvar_skein}(K.-Quesney \cite[Theorem $1.3$, Theorem $3.17$]{KojuQuesneyClassicalShadows})
     Let $S$ be a relative spin structure of $\mathbf{\Sigma}$, fix an orientation $\mathfrak{o}$ of its boundary arcs and let $w_S$ be the associated function. Then we have a Poisson isomorphism
    $$ \Psi^S : \mathcal{S}_{+1}(\mathbf{\Sigma})\xrightarrow{\cong} \mathcal{O}[\mathcal{X}_{\SL_2}(\mathbf{\Sigma})]^{\mathfrak{o}}, $$
    characterized by the facts that:
    \begin{enumerate}
    \item  
    For each  embedded arc $\alpha$, one has   
   		$$
   		\Psi^{S} (M(\alpha)) = (-1)^{w_S(\alpha)} (C^{-1})^{1-\mathfrak{o}(\alpha(0))} U(\alpha) (C^{-1})^{\mathfrak{o}(\alpha(1))}.  
   		$$
   		\item For each closed curve   $\gamma$, one has 
   		$$
   		\Psi^{S} (\gamma) = (-1)^{w_S(\gamma)} \tau_{\gamma}. 
   		$$  
		\end{enumerate}
   \end{theorem}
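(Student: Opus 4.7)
The plan is to apply the triangular strategy (Strategy \ref{strategy}) combined with the deformation quantization framework of Section \ref{sec_DefoQuantization}. The existence and the Poisson property of $\Psi^S$ will be established simultaneously by identifying each side, in a compatible way, with a flat deformation of the bigon case.

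\textbf{Step 1 (Bigon).} For $\mathbb{B}$, the algebra $\mathcal{S}_{+1}(\mathbb{B})$ is $\mathcal{O}[\SL_2]$ with the Drinfel'd--Sklyanin Poisson bracket $\SL_2^D$, as computed in Section \ref{sec_STS} from the FRT relation $N\odot N=\mathscr{R}^{-1}(N\odot N)\mathscr{R}$. On the other hand, $\mathcal{X}_{\SL_2}(\mathbb{B})$, computed via the one-generator no-relation presentation, is $\SL_2$ with the same Poisson bracket coming from Fock--Rosly's formula (Definition \ref{GoldmanFormula}). Choosing the orientation $\mathfrak{o}$ and the trivial relative spin structure for $\mathbb{B}$, the matrix formula in the statement reduces to the required matching $\alpha_{ij}\mapsto X_{\alpha,i,j}$ (possibly twisted by the conversion matrix $C$ to account for boundary orientations), and the Poisson property follows tautologically.

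\textbf{Step 2 (Triangle).} Since $\mathbb{T}=\mathbb{B}\circledast\mathbb{B}$, Costantino--L\^e's Theorem \ref{theorem_fusion} identifies $\mathcal{S}_A(\mathbb{T})$ with the cobraided fusion $\mathcal{O}_q[\SL_2]\circledast\mathcal{O}_q[\SL_2]$, and the classical counterpart (Theorem \ref{coro_fusion}) identifies $\mathcal{X}(\mathbb{T})$ with the Poisson fusion of $\SL_2^D$ with itself. Because the isomorphism of Step 1 intertwines the $\mathcal{O}_q[\SL_2]$- (resp. $\SL_2^D$-) comodule structures which define both fusions, it lifts to a Poisson isomorphism for $\mathbb{T}$. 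Signs are tracked here by the relative spin structure induced by the edge orientation of $\mathbb{T}$, as guaranteed by \cite{KojuQuesneyClassicalShadows}.

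\textbf{Step 3 (Triangulable surfaces).} Given a triangulation $\Delta$ and a compatible orientation $\mathfrak{o}_\Delta$ of the edges, the relative spin structure $S=S(\Delta,\mathfrak{o}_\Delta)$ is determined. The quantum gluing sequence (Corollary to Theorem \ref{theorem_gluingformula}) and its classical analogue fit into a diagram
\begin{equation*}
\begin{tikzcd}
0 \arrow[r] & \mathcal{S}_{+1}(\mathbf{\Sigma}) \arrow[r,"\theta^\Delta"]\arrow[d,dotted,"\Psi^S"] & \otimes_{\mathbb{T}\in F(\Delta)}\mathcal{S}_{+1}(\mathbb{T}) \arrow[d,"\otimes\Psi^{S_\mathbb{T}}"] \arrow[r] & \left(\otimes_e\mathcal{O}[\SL_2^D]\right)\otimes\left(\otimes_\mathbb{T}\mathcal{S}_{+1}(\mathbb{T})\right)\arrow[d] \\
0 \arrow[r] & \mathcal{O}[\mathcal{X}_{\SL_2}(\mathbf{\Sigma})]^{\mathfrak{o}} \arrow[r,"\theta^\Delta"] & \otimes_{\mathbb{T}}\mathcal{O}[\mathcal{X}_{\SL_2}(\mathbb{T})]^{\mathfrak{o}_\mathbb{T}} \arrow[r] & \left(\otimes_e\mathcal{O}[\SL_2^D]\right)\otimes\left(\otimes_\mathbb{T}\mathcal{O}[\mathcal{X}_{\SL_2}(\mathbb{T})]^{\mathfrak{o}_\mathbb{T}}\right)
\end{tikzcd}
\end{equation*}
in which the middle and right vertical maps are Poisson isomorphisms by Step 2 and the bigon-level matching. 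Commutativity of the right square (equivalently: the comodule structures coming from gluing agree with the ones coming from $\mathcal{O}_q[\SL_2]$ vs. $\SL_2^D$) is the content of functoriality of the constructions, so a unique Poisson isomorphism $\Psi^S$ exists making the left square commute. Independence of the choice of $\Delta$ follows from the characterization on arcs and curves, which does not refer to $\Delta$.

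\textbf{Step 4 (Closed surfaces).} For $\mathbf{\Sigma}=(\Sigma_{g,0},\emptyset)$, compare with $\mathbf{\Sigma}'=(\Sigma_{g,1},\emptyset)$. The off-puncture exact sequence \eqref{eq_off_puncture}, together with Proposition \ref{prop_offpuncture} (the ideal is generated by $[\gamma]-[\gamma']$ with $\gamma\sim\gamma'$ in $\Sigma_{g,0}$), identifies $\mathcal{S}_{+1}(\mathbf{\Sigma})$ with the quotient of $\mathcal{S}_{+1}(\mathbf{\Sigma}')$ by this ideal; on the classical side, trace functions of curves isotopic in $\Sigma_{g,0}$ coincide, yielding an analogous surjection. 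The isomorphism from Step 3 for $\mathbf{\Sigma}'$ descends to the quotients, producing $\Psi^S$ for closed surfaces.

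\textbf{Main obstacle.} The delicate point is not the algebraic isomorphism but the sign bookkeeping encoded in $w_S$: the formula $\Psi^S(M(\alpha))=(-1)^{w_S(\alpha)}(C^{-1})^{1-\mathfrak{o}(\alpha(0))}U(\alpha)(C^{-1})^{\mathfrak{o}(\alpha(1))}$ must be shown to be compatible with (i) gluing (so that the induced relative spin structure on $\mathbf{\Sigma}_{a\# b}$ produces the correct signs), (ii) the classical gauge equivalence between the star products on $\mathcal{S}_{+1}(\mathbf{\Sigma})$ and the canonical one on $\mathcal{O}[\mathcal{X}_{\SL_2}(\mathbf{\Sigma})]^{\mathfrak{o}}$, and (iii) Barrett-type sign changes needed to go from the stated-skein parameter $A=\exp(\hbar/4)$ to the character-variety normalization. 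This requires invoking the combinatorial machinery of relative spin structures developed in \cite{KojuQuesneyClassicalShadows}, and is precisely where the dependence on $S$ enters naturally.
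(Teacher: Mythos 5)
Your proposal takes essentially the same route as the paper: reduce to the bigon and triangle, then glue via the exact sequence of Theorem \ref{theorem_gluingformula} using exactly the commutative diagram the paper displays, and finally handle closed surfaces via the off-puncture ideal, with the relative-spin-structure sign bookkeeping deferred to \cite{KojuQuesneyClassicalShadows}. The only (minor) difference is that you handle the triangle via the fusion identification $\mathbb{T}=\mathbb{B}\circledast\mathbb{B}$ and Theorem \ref{theorem_fusion} together with Theorem \ref{coro_fusion}, whereas the paper does a direct computation analogous to Section \ref{sec_STS}; both work, but note that the fusion route still requires the classical co-R-matrix expansion $\mathbf{r}_\hbar\equiv\epsilon\otimes\epsilon+\hbar r^+\pmod{\hbar^2}$ to see that the two fusion products agree modulo $\hbar^2$, and one must check that the relative spin structure induced on $\mathbb{T}$ by the edge orientations produces precisely the signs $w_S$ predicted by the fusion formula — which is the same delicate point the paper also defers.
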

  
  Theorem \ref{theorem_charvar_skein} can be proved using the triangular strategy  by first noting that it can be easily proved in the cases of the bigon and the triangle using explicit computations like we performed in Section \ref{sec_STS}, and then can be proved for any triangulated marked surface $(\mathbf{\Sigma}, \Delta)$ with an orientation $\mathfrak{o}_{\Delta}$ of its edges extending the orientation $\mathfrak{o}$, using the following commutative diagram, where all morphisms are Poisson and both lines are exact:
  $$\begin{tikzcd}
  0 \arrow[r] &
  \mathcal{S}_{+1}(\mathbf{\Sigma}) \arrow[r, "\theta^{\Delta}"] \arrow[d, dotted, "\exists!", "\cong"'] &
  \otimes_{\mathbb{T}\in F(\Delta)} \mathcal{S}_{+1}(\mathbb{T}) \arrow[d, "\cong", "\otimes_{\mathbb{T}} \Psi^{\mathbb{T}, \mathfrak{o}_{\mathbb{T}}}"'] 
  \arrow[r, "\Delta^L -\sigma \circ \Delta^R"]
  &
  \left( \otimes_{e\in \mathring{\mathcal{E}}(\Delta)} \mathcal{O}[\SL_2^D] \right) \otimes \left(   \otimes_{\mathbb{T}\in F(\Delta)} \mathcal{S}_{+1}(\mathbb{T}) \right) 
  \arrow[d, "\cong", "\otimes_e \Psi^{\mathbb{B}} \otimes \otimes_{\mathbb{T}} \Psi^{(\mathbb{T}, \mathfrak{o}_{\mathbb{T}})}"']
  \\
   0 \arrow[r] &
 \mathcal{O}[\mathcal{X}_{\SL_2}(\mathbf{\Sigma})]^{\mathfrak{o}}\arrow[r, "\theta^{\Delta}"] &
  \otimes_{\mathbb{T}\in F(\Delta)}\mathcal{O}[\mathcal{X}_{\SL_2}(\mathbb{T})]^{\mathfrak{o}_{\mathbb{T}}}   \arrow[r, "\Delta^L -\sigma \circ \Delta^R"]&
  \left( \otimes_{e\in \mathring{\mathcal{E}}(\Delta)}\mathcal{O}[\mathcal{X}_{\SL_2}(\mathbb{B})] \right) \otimes \left(   \otimes_{\mathbb{T}\in F(\Delta)}\mathcal{O}[\mathcal{X}_{\SL_2}(\mathbb{T})]^{\mathfrak{o}_{\mathbb{T}}}  \right) 
\end{tikzcd}
$$
 We can alternatively prove Theorem \ref{theorem_charvar_skein} in the case where $\mathcal{A}\neq \emptyset$,  by noticing that the finite presentation of $\mathcal{S}_{A}(\mathbf{\Sigma})$ associated to presentation $\mathbb{P}$ given by Theorem \ref{theorem_presentation} reduces, once taking $A=+1$, to a finite presentation of $\mathcal{O}[\mathcal{X}_{\SL_2}(\mathbf{\Sigma}, \mathbb{P})]$ and then compare the commutators modulo $\hbar^2$ of two non-intersecting curves given by the arcs exchange formulas with the generalized Goldman bracket. We refer to \cite{KojuPresentationSSkein} a proof using this approach.

  \section{Representations of stated skein algebras}
  
  In all this section, $A$ is a root of unity of odd order $N$.
  The goal of this section is to study the finite dimensional (weight) representations $\rho: \mathcal{S}_A(\mathbf{\Sigma}) \to \End(V)$ (i.e. approach Problem \ref{problem_classification}) and to motivate some open problems around the question.
  
  \begin{definition} A finite dimensional representation $\rho: \mathcal{S}_A(\mathbf{\Sigma}) \to \End(V)$ is called
  \begin{enumerate}
  \item a \textit{weight representation} if $V$ is semi-simple as a module over the center of $\mathcal{S}_A(\mathbf{\Sigma})$; 
  \item a \textit{semi-weight representation} if $V$ is semi-simple as module over $\mathcal{S}_{+1}(\mathbf{\Sigma})$ though the Chebyshev-Frobenius morphism;
  \item a \textit{central representation} if for any $x\in \mathcal{S}_{+1}(\mathbf{\Sigma})$, $\rho\circ Ch_A(x)$ is a scalar.
  \end{enumerate}
  \end{definition}
  
  For instance any semi-weight indecomposable representation is central. For a central representation $\rho$, we have $\rho\circ Ch_A(x)=\chi(x) \id$ for all $x\in \mathcal{S}_{+1}(\mathbf{\Sigma})$ and the character $\chi$ induces a closed point of $\mathcal{X}(\mathbf{\Sigma})$ named its \textit{classical shadow}. When an isomorphism $\mathcal{X}(\mathbf{\Sigma}) \cong \mathcal{X}_{\SL_2}(\mathbf{\Sigma})$ is fixed (by choosing a relative spin structure), we can equivalently call classical shadow the point in $\mathcal{X}_{\SL_2}(\mathbf{\Sigma})$.

  Let us fix $[r] \in \mathcal{X}_{\SL_2}(\mathbf{\Sigma})$ and suppose we want to classify all semi-weight indecomposable representations with classical shadow $[r]$. Let $\mathbf{m} \in \mathrm{Specm}(\mathcal{S}_{+1}(\mathbf{\Sigma}))$ be the maximal ideal corresponding to $[r]$ and write 
  $$\mathcal{S}_A(\mathbf{\Sigma})_{[r]}:= \quotient{ \mathcal{S}_A(\mathbf{\Sigma})}{Ch_A(\mathbf{m}) \mathcal{S}_A(\mathbf{\Sigma})}.$$
  The algebra $\mathcal{S}_A(\mathbf{\Sigma})_{[r]}$ is finite dimensional. Therefore Problem \ref{problem_classification} consists in classifying the indecomposable representations of all the finite dimensional algebras $\mathcal{S}_A(\mathbf{\Sigma})_{[r]}$. We face two obvious difficulties: 
  \begin{enumerate}
\item[(i)] there is an infinite number of possible classical shadows $[r]$ so way too much algebras to study and 
\item[(ii)] the problem of classifying all indecomposable modules of a finite dimensional algebra might be out of reach.
\end{enumerate}
  
  Concerning $(i)$, we will review two important theorems that will considerably help to reduce the number of algebras to study, mainly:
  \begin{enumerate}
  \item By the Unicity representations theorem of Frohman-L\^e-Kania-Bartoszynska in \cite{FrohmanKaniaLe_UnicityRep} and a result of Brown-Gordon, there exists a Zariski open dense subset $\mathcal{FAL} \subset \mathcal{X}_{\SL_2}(\mathbf{\Sigma})$, named \textit{fully Azumaya locus} so that for all $[r] \in \mathcal{FAL}$, the algebra $\mathcal{S}_A(\mathbf{\Sigma})_{[r]}$ can be computed easily. 
  \item By Brown-Gordon's theory of Poisson orders, if $[r]$ and $[r]'$ belong to the same equivariant symplectic leaf, then $\mathcal{S}_A(\mathbf{\Sigma})_{[r]}\cong \mathcal{S}_A(\mathbf{\Sigma})_{[r]'}$.
  \end{enumerate}
  
  Concerning $(ii)$, we have the following classification due to Drozd:
  \begin{definition}
  A finite dimensional $\mathbb{C}$ algebra $\mathcal{A}$ is 
  \begin{enumerate}
  \item \textit{of finite representation type} if it has a finite number of isomorphism classes of indecomposable finite dimensional modules; 
  \item \textit{of tame representation type} if it is not of finite representation type and if for every $d\geq 0$, there exists a finite collection of $\mathcal{A}-\mathbb{C}[X]$ bimodules $M_1, \ldots, M_n$ such that any $d$-dimensional indecomposable $\mathcal{A}$ module is isomorphic to a module $M_i\otimes S$ for $S$ a simple $\mathbb{C}[X]$-module.
  \item \textit{of wild representation type} if there exists a functor $F: \mathbb{C}\left< x, y\right>-\Mod \to \mathcal{A}-\Mod$ that preserves indecomposability and isomorphism classes.
  \end{enumerate}
  \end{definition}
  
  \begin{theorem}(Drozd \cite{Drozd}: Trichotomy theorem) A finite dimensional algebra is either of finite, tame or wild representation type.\end{theorem}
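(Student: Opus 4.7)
The plan is to follow the classical strategy via the theory of bocses (bimodule problems over categories with coalgebra differential) originally developed by Drozd and later streamlined by Crawley-Boevey. The essential point is that one does not prove the trichotomy directly on $\mathcal{A}-\mathrm{Mod}$; instead one translates the classification problem into a "matrix problem" and then runs a reduction algorithm. Pedagogically it is cleanest to prove the weaker \emph{tame-or-wild dichotomy} first (the non-tame case is wild), since the existence of the finite-type subclass inside the tame case is essentially a definitional matter.

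First I would set up the machinery. Attach to $\mathcal{A}$ its category $\mathcal{P}(\mathcal{A})$ of finitely generated projectives, and replace $\mathcal{A}-\mathrm{Mod}$ by the category of representations of a suitable bocs $\mathfrak{B}=(\mathcal{R},\mathcal{W})$ where $\mathcal{R}$ is an additive $\mathbb{C}$-category (the "objects" of the matrix problem) and $\mathcal{W}$ is an $\mathcal{R}$-coalgebra bimodule (encoding allowed "base changes"). Crawley-Boevey's construction of the bocs of a finite dimensional algebra from a projective resolution of $\mathcal{A}/\mathrm{rad}(\mathcal{A})$ gives a representation equivalence between $\mathcal{A}-\mathrm{mod}$ and $\mathrm{Rep}(\mathfrak{B})$. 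One then introduces the two distinguished bocses: the "tame generic" bocs, whose indecomposable representations in each dimension are parametrized by finitely many one-parameter families of $\mathbb{C}[X]$-modules, and the "wild" bocs coming from the free algebra $\mathbb{C}\langle x,y\rangle$.

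The heart of the argument is the \emph{reduction algorithm}: a finite list of transformations (regularization, deletion of idempotent-splitting objects, edge-reduction, loop-reduction, unravelling) that replace a bocs $\mathfrak{B}$ by a smaller bocs $\mathfrak{B}'$ whose representation theory embeds fully faithfully into that of $\mathfrak{B}$, while the indecomposables not seen by $\mathfrak{B}'$ are explicitly listed (and form only finitely many isomorphism classes, or a single one-parameter family). I would run this algorithm starting from the bocs of $\mathcal{A}$: either the algorithm terminates in finitely many steps (every indecomposable was captured in one of the finitely many explicit steps, producing finite representation type), or it produces at some stage a bocs whose "differential graph" contains a configuration forcing the presence of at least two independent solid arrows based at a single vertex. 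In the latter case an explicit embedding of $\mathbb{C}\langle x,y\rangle-\mathrm{mod}$ into $\mathrm{Rep}(\mathfrak{B})$ shows wildness. The remaining case is when the algorithm produces only "one-parameter" data, which by the tame generic bocs calculation yields tameness.

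The main obstacle will of course be the reduction algorithm itself: verifying that the list of reductions is exhaustive (every minimal non-tame bocs must be wild) requires a delicate case analysis of the differential and of which solid/dashed arrows can survive, together with Crawley-Boevey's crucial dichotomy lemma stating that a bocs whose representation variety in some dimension contains a two-dimensional family of non-isomorphic indecomposables is wild. A secondary technical point is that the standard proofs require $\mathbb{C}$ (or at least an algebraically closed field), used both in Fitting's lemma for the reduction steps and in guaranteeing that generic $\mathbb{C}[X]$-points of a one-parameter family are non-isomorphic indecomposables; this is fine for us since we work over $\mathbb{C}$. I would only prove mutual exclusivity last: tame excludes wild by a standard degeneration-of-families argument using that a wild algebra has, in some dimension, a representation variety of dimension strictly exceeding that allowed by any finite family of $\mathcal{A}-\mathbb{C}[X]$-bimodules.
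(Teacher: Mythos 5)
The paper does not prove this theorem: it states it and cites Drozd, treating the trichotomy as a black box imported from the literature on representation type of finite-dimensional algebras. There is therefore no ``paper's own proof'' to compare against, and your proposal should be judged on its own merits as a reconstruction of the classical argument.

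As a sketch of the standard Drozd/Crawley-Boevey proof, your outline is essentially correct: translate $\mathcal{A}\text{-}\mathrm{mod}$ into the representation category of a bocs, run the reduction algorithm (regularization, deletion, edge and loop reduction, unravelling), show that either the algorithm terminates (finite type), or it exhibits a pattern forcing a fully faithful embedding of $\mathbb{C}\langle x,y\rangle\text{-}\mathrm{mod}$ (wild), or it yields only one-parameter data (tame). A few remarks. First, note that with the definitions given in the paper, ``finite'' and ``tame'' are \emph{disjoint by construction} (the definition of tame requires the algebra not to be of finite representation type), so the only genuine exclusivity content is that tame and wild are mutually exclusive; your closing paragraph correctly identifies this as the delicate step and correctly attributes it to a dimension count on representation varieties rather than to the reduction algorithm itself. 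Second, the part of the argument most prone to hand-waving is precisely the exhaustiveness of the reduction list and the minimal-non-tame-implies-wild lemma: you flag this as the ``main obstacle'' but give no indication of how the case analysis closes, so as written the proposal is a program rather than a proof, and a referee would require that you either cite Drozd's original paper or Crawley-Boevey's account for the full reduction calculus rather than reproduce it. Finally, you are right that the proof requires the base field to be algebraically closed (here $\mathbb{C}$); the paper works over $\mathbb{C}$ throughout this section, so this is consistent.
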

  
  The word problem for finitely presented groups is undecidable and can be embedded into the problem of classifying the finite dimensional representations of $\mathbb{C}\left< x, y\right>$, therefore the problem of classifying the representations of a wild algebra is  undecidable. 
  We can reformulate a more reasonable version of Problem \ref{problem_classification} as follows:
  
  \begin{problem}\label{problem_classification2}
  \begin{enumerate}
  \item Classify the equivariant symplectic leaves of $\mathcal{X}_{\SL_2}(\mathbf{\Sigma})$ (this is Problem \ref{problem_classification_leafs}); 
  \item For each leaf, choose a representative $[r]$ and determine the representation type of $\mathcal{S}_A(\mathbf{\Sigma})_{[r]}$; 
  \item If $\mathcal{S}_A(\mathbf{\Sigma})_{[r]}$ it is not wild, classify its indecomposable finite dimensional representations.
  \end{enumerate}
  \end{problem}
  Solving Problem \ref{problem_classification2} would classify all semi-weight non-wild finite representations of $\mathcal{S}_A(\mathbf{\Sigma})$.
  As we shall see in Section \ref{sec_resolution}, this problem has been fully solved only for the bigon by Gordon and Brown in \cite{BrownGordon_OqG}, for the reduced stated skein algebra of $\mathbb{D}_1$ in \cite{KojuQGroupsBraidings} and is easy (and probably well known to the experts) for the closed torus. To attack Problem \ref{problem_classification2} for closed surfaces (for which the first step has been solved in the former section), we now present three families of representations of skein algebras arising from TQFTs, non semi-simple TQFTs and from Quantum Teichm\"uller theory. Obviously all the notions defined in the present section extend word-by-word to reduced stated skein algebras and Problem \ref{problem_classification2} has an analogous formulation.
  
  \subsection{Representations coming from modular TQFTs}
  
  In order to provide a three-dimensional interpretation of the Jones polynomial, Witten introduced in  \cite{Wi2} a Topological Quantum Field Theory (TQFT) based on the $SU(2)$ Chern-Simons functional. A purely algebraic construction, based on the algebraic input of a modular category (plus a choice of square root), were defined by Reshetikhin and Turaev in \cite{RT, Tu}.  The Witten-Reshetikhin-Turaev TQFTs provide representations of Kauffman-bracket skein algebras at roots of unity. Let us describe a simple construction of them following \cite{BHMV2}. 
  Recall that $D(g)$ represents a disc with $g$ inner pairwise non-intersecting open subdiscs removed. Consider the handlebody $H_g=D(g)\times [0,1]$ and the skein module $\mathcal{S}_A(H_g):= \mathcal{S}_A(D(g), \emptyset)$. By identifying a tubular neighborhood $\mathcal{N}$ of $\partial H_g$ with $\Sigma_g \times [0,1]$, we get an oriented embedding $i : \Sigma_g\times [0,1] \to H_g$. We  consider $\mathcal{S}_A(H_g)$ as a $\mathcal{S}_A(\Sigma_g)$-left module by defining for two framed links $L_1 \subset \Sigma_g\times [0,1]$ and $L_2\subset H_g$ the module product $[L_1]\cdot [L_2] := [i(L_1)\cup L_2]$, where $L_2$ has been first isotoped outside $\mathcal{N}$.
  Fix a genus $g$ Heegaard splitting of the sphere, i.e. two homeomorphisms  $S: \partial H_g \cong \overline{\partial H_g}$ and 
$ H_g\bigcup_{S:\partial H_g\rightarrow \overline{\partial H_g}} H_g \cong S^3$. Denote by $\varphi_1, \varphi_2 : H_g \hookrightarrow S^3$ the embeddings in the first and second factors.
\par For two framed links $L_1,L_2 \subset H_g$ , the above gluing defines a link $\varphi_1(L_1)\bigcup \varphi_2(L_2) \subset S^3$. Let $\left< \cdot \right> : \mathcal{S}_A(S^3) \cong \mathbb{C}$ be the isomorphism sending the empty link to $1$. The Hopf pairing is the Hermitian form:
$$ \left( \cdot , \cdot \right)_A^H :\mathcal{S}_A(H_g)\times\mathcal{S}_A(H_g) \rightarrow \mathbb{C}$$
defined by $$\left( L_1 , L_2 \right)_{A}^H := \left< \varphi_1(L_1)\bigcup \varphi_2(L_2) \right>$$
and the spaces $V_A(\Sigma_g)$ are the quotients:
$$ V_A(\Sigma_g) := \quotient{ \mathcal{S}_A(H_g)}{\ker \left(\left( \cdot , \cdot \right)_{A}^H \right)}$$
  The structure of left $\mathcal{S}_A(\Sigma_g)$-module of $\mathcal{S}_A(H_g)$ induces, by passing to the quotient, the so-called \textit{Witten-Reshetikhin-Turaev representation}
  $$ \rho^{WRT}_A : \mathcal{S}_A(\Sigma_g) \to \End(V_A(\Sigma_g)).$$
  In genus $1$, the spaces $V_A(\Sigma_1)$ are easy to compute. 
    \begin{notations} \begin{enumerate}
  \item The Chebyshev polynomial of second kind is the polynomial $S_n(x)$ defined by the recursive formula $S_0(x)=1, S_1(x)=x$ and $S_n(x)=xS_{n-1}(x)-S_{n-2}(x)$. It is related to Chebyshev of first kind by $T_n(x)=S_n(x)-S_{n-2}(x)=XS_{n-1}(x)-2S_{n-2}(x).$
  \item The quantum number $[n]$ is the complex $[n]:= \frac{q^n-q^{-n}}{q-q^{-1}}=S_n[q+q^{-1}]$.
  \end{enumerate}
  \end{notations}
  Since $H_1= D^2\times S^1\cong \mathbb{A}\times [0,1]$ is a thickened annulus, by Theorem \ref{theorem_basis} $\mathcal{S}_A(H_1)\cong \mathbb{C}[X]$ is generated by the class $X$ of the meridian ${0}\times S^1 \subset D^2\times S^1$ with framing towards $1$. So, writing  $v_i:=S_i(X) \in \mathcal{S}_A(H_1)$, the set $\{v_i, i \geq 0\}$ is a basis. A standard computation shows that 
  $$ (v_i, v_j)^H_A=(-1)^{i+j}[(i+1)(j+1)], $$
  from which we deduce that: $(1)$ $[v_{N-1}]= 0$ in $V_A(\Sigma_1)$, $(2)$ $[v_{n+N}]=[v_n]$, $(3)$ $[v_{N-2-n}]=[v_n]$ and $(4)$ the set $\{[v_i] | 0\leq i \leq \frac{N-3}{2} \}$ is a basis of $V_A(\Sigma_1)$. Note that $(1)$ and $(3)$ imply that 
  $$ T_N([v_1])= [v_1]S_{N-1}([v_1]) - 2S_{N-2}([v_1]) = -2.$$
  
  In higher genera, in order to find bases of $V_A(\Sigma_g)$, we need to invoke general properties of TQFTs.
  Whereas most of the literature of the algebraists community studying quantum groups is written using $A$ a root of unity of odd order, most of the literature of the topologists community (since the pioneered work of Witten) only consider roots of unity of even orders. In this survey, we made the choice of considering odd roots of unity since the representation theory is simpler but it has the inconvenient that we need to adapt the existing literature on TQFTs to the (easier) case of roots of odd order. The spaces $V_A(\Sigma_g)$ can be obtained from the Reshetikhin-Turaev construction by using a modular category $\mathcal{C}^{M}$ described as follows. Like most modular categories, $\mathcal{C}^M$ is obtained from a pre-modular category $\widetilde{\mathcal{C}}$ by the so-called purification procedure (the categorical GNS construction) described in \cite{Tu}. For  $\widetilde{\mathcal{C}}$ we can either consider the premodular category of $U_q\mathfrak{sl}_2$ modules at odd roots of unity, or consider the  Cauchy closure $\mathrm{Cauchy}(TL_A)$  of the Temperley-Lieb category. The two obtained modular categories, say $ \mathcal{C}_A^{M1},  \mathcal{C}_A^{M2}$ are equivalent through an  equivalence $F: \mathcal{C}_A^{M1}\cong \mathcal{C}_A^{M2}$ of braided categories. However, when $A$ has odd order, $F$  does not preserve the duality and we need to consider the duality of $\mathcal{C}_A^{M2}$ in order to get representations of the Kauffman-bracket skein algebras. Let us be more precise.

  In brief, $F$ sends the class $S_i$ of the simple $i+1$ dimensional $U_q\mathfrak{sl}_2$-module with highest weight $q^{i}$ to the class $[f_i]$ of the $i^{th}$ Jones-Wenzl idempotent and the quantum traces differ since $\qdim(S_i)=[i+1]$ and $\qdim([f_i])=(-1)^i [i+1]$. Since we want the skein relation 
  $\begin{tikzpicture}[baseline=-0.4ex,scale=0.5,rotate=90] 
\draw [fill=gray!45,gray!45] (-.6,-.6)  rectangle (.6,.6)   ;
\draw[line width=1.2,black] (0,0)  circle (.4)   ;
\end{tikzpicture}
= -([2])
\begin{tikzpicture}[baseline=-0.4ex,scale=0.5,rotate=90] 
\draw [fill=gray!45,gray!45] (-.6,-.6)  rectangle (.6,.6)   ;
\end{tikzpicture}
$ and not $+[2]$,  we need to use the duality of the Temperley-Lieb category
   (see \cite{SnyderTingley_HalfTwist} for details on this subtelty).  What is important for us is that the modular category  $\mathcal{C}_A^{M}$ has a finite set of isomorphism classes of simple objects $\{ S_0=\mathds{1}, S_1,  \ldots, S_{\frac{N-3}{2}}\}$. For $i,j,k \in \mathcal{I}:=\{0, \ldots, \frac{N-3}{2}\}$ the triple $(i,j,k)$ is said  \textit{compatible} if $(1)$ $i+j+k$ is even, $(2)$ they respect the triangular inequalities $i\leq j+k, j\leq i+k, k\leq i+j$ and $(3)$ $i+j+k \leq N-3$. The main properties of $\mathcal{C}_A^{M}$  are:
   
\begin{enumerate}
\item   In $K^0(\mathcal{C}_A^{M})\cong V_A(\Sigma_1)$, we have 
\begin{equation}\label{eq_tcheby}
T_N([S_1])= -2 \mathds{1}.
\end{equation}
\item We have $  \adjustbox{valign=c}{\includegraphics[width=1.9cm]{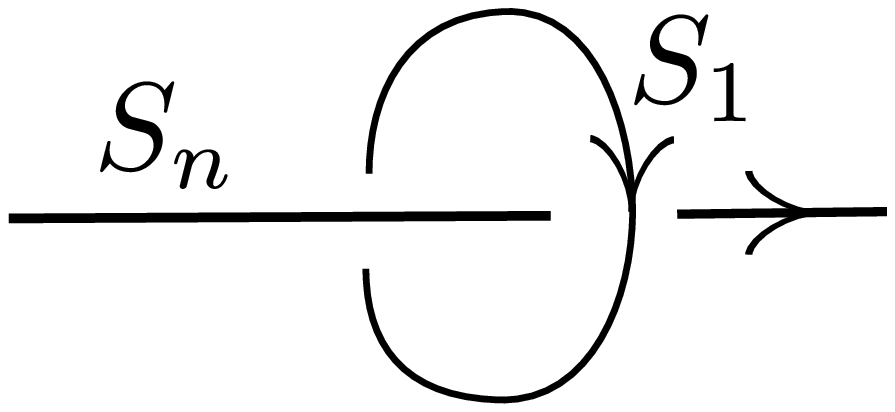}}= \lambda_n  \adjustbox{valign=c}{\includegraphics[width=1.9cm]{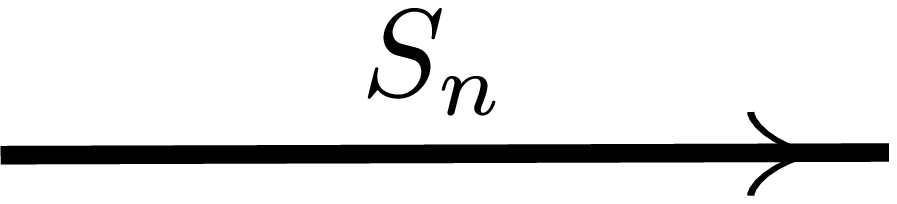}} $, where $\lambda_n=-(q^{n+1}+q^{-(n+1)})$. In particular, the $\lambda_n$ are pairwise distinct.
\item We have $$\Hom_{\mathcal{C}^M}(S_i\otimes S_j \otimes S_k, \mathds{1}) \cong \left\{ \begin{array}{ll} \mathbb{C} & \mbox{, if }(i,j,k)\mbox{ is compatible;} \\ 0 & \mbox{, else.} \end{array} \right. $$
\end{enumerate}
  
   The main property we need is the existence of conformal blocks bases for $V_A(\Sigma_g)$. Let $\mathcal{P}=\{\gamma_e\}_e$ a pants decomposition of $\Sigma_g$, i.e. a maximal set of pairwise non-intersecting non-isotopic closed curves in $\Sigma_g$ and suppose that each $i(\gamma_e)$ bounds a disc $D_e\subset H_g$. Note that there are $3g-3$ such curves if $g\geq 2$ and a single one for $g=1$. Let $\Gamma\subset H_g$ be a banded uni-trivalent graph such that each edge $e$ intersects a single disc $D_e$ exactly once. Let $col(\Gamma)$ be the set of maps $\sigma: \mathcal{E}(\Gamma) \to \mathcal{I}$ such that if $e_1,e_2,e_3$ are three edges adjacent to the same vertex, then $(\sigma(e_1), \sigma(e_2), \sigma(e_3))$ is compatible. For each $\sigma \in col(\Gamma)$, we can define an element $u_{\sigma} \in V_A(\Sigma_g)$ such that
   \begin{enumerate}
   \item for $\sigma=0$, $u_{0}$ is the class of the empty link; 
   \item the set $\{u_{\sigma}\}_{\sigma\in col(\Gamma)}$ forms a basis of $V_A(\Sigma_g)$ and 
   \item for each $\gamma_e \in \mathcal{P}$, one has $\rho_A^{WRT}(\gamma_e) u_{\sigma} = \lambda_{\sigma(e)} u_{\sigma}$.
   \end{enumerate}
   We refer to \cite{Tu} for the construction of this so-called conformal blocks basis. The following lemma is a classic in Lie group theory.
  
  \begin{lemma}\label{lemma_irrep}
  Let $\mathcal{A}$ a $\mathbb{C}$-algebra and $\rho: \mathcal{A} \to \End(V)$ a finite dimensional representation. Let $T\subset \mathcal{A}$ a commutative subalgebra and consider the decomposition
  $$ V= \oplus_{\chi: T\to \mathbb{C}} V^{(\chi)}, \quad V^{(\chi)}:=\{v\in V| \rho(t)v=\chi(t)v, \forall t\in T \}.$$
  Suppose that: 
  \begin{enumerate}
  \item there exists a character $\chi_0$ and $v_0\in V^{(\chi_0)}$ so that $v_0$ is cyclic, i.e. so that $\rho(\mathcal{A})v_0=V$ and
  \item the space $V^{(\chi_0)}$ has dimension  $1$.
\end{enumerate}
Then $\rho$ is irreducible.
\end{lemma}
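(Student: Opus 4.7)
I will show that every nonzero $\mathcal{A}$-submodule $W\subset V$ contains $v_{0}$; combined with cyclicity, this will force $V=\rho(\mathcal{A})v_{0}\subset W$, hence $W=V$ and $V$ is irreducible. By hypothesis (2), $V^{(\chi_{0})}=\mathbb{C}v_{0}$ is one-dimensional, so it suffices to prove the weaker statement $W\cap V^{(\chi_{0})}\neq 0$.

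The first step, essentially formal, is that $W$ inherits the weight decomposition $W=\bigoplus_{\chi}(W\cap V^{(\chi)})$. Indeed, only finitely many pairwise distinct characters $\chi_{1},\ldots,\chi_{k}$ of $T$ occur in $V$; choosing $t\in T$ for which the scalars $\chi_{1}(t),\ldots,\chi_{k}(t)$ are pairwise distinct, Lagrange interpolation produces a polynomial expression of each weight projector $p_{\chi_{i}}\colon V\to V^{(\chi_{i})}$ in $\rho(t)$. Hence $p_{\chi_{i}}\in\rho(\mathcal{A})$ preserves any $\mathcal{A}$-stable subspace, and in particular $W$.

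The heart of the argument is then the nonvanishing $W\cap V^{(\chi_{0})}\neq 0$. Here I would use cyclicity to identify $V\cong\mathcal{A}/I$ with $I:=\operatorname{Ann}(v_{0})$, so that nonzero $\mathcal{A}$-submodules $W\subset V$ correspond to proper left ideals $J\supsetneq I$, with $W\cong J/I$. The claim $W\cap V^{(\chi_{0})}\neq 0$ then translates into the existence of some $b\in J\setminus I$ whose class in $\mathcal{A}/I$ lies in $V^{(\chi_{0})}$; producing such a $b$ uses that $t-\chi_{0}(t)\in I$ for every $t\in T$ (because $v_{0}$ has weight $\chi_{0}$), together with the weight-space decomposition of $\mathcal{A}/I$ from the previous paragraph and the one-dimensionality of its $\chi_{0}$-isotypic component.

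The main obstacle is precisely this last step, which amounts to a maximality-type assertion on $I$: no proper left ideal $J\supsetneq I$ can have trivial intersection with the weight-$\chi_{0}$ line of $\mathcal{A}/I$. Since $\mathcal{A}$ is an arbitrary (possibly noncommutative) $\mathbb{C}$-algebra with only the combinatorial data of the weight decomposition to exploit, this is the delicate step where the interplay between cyclicity of $v_{0}$, one-dimensionality of $V^{(\chi_{0})}$, and the full weight decomposition genuinely enters; once it is established, everything else reduces to the bookkeeping of the two previous paragraphs.
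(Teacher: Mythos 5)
You correctly identify that the whole argument hinges on showing $W \cap V^{(\chi_0)} \neq 0$ for every nonzero $\mathcal{A}$-submodule $W$, and you honestly flag that you cannot establish this ``heart'' step. The reason you cannot is that, under the stated hypotheses alone, it is false, and so is the lemma. Take $\mathcal{A}$ the algebra of upper-triangular $2\times 2$ complex matrices acting naturally on $V=\mathbb{C}^2$, $T$ the diagonal subalgebra, $\chi_0$ the character picking out the lower-right diagonal entry, and $v_0=e_2$. Then $v_0$ is cyclic (since the elementary matrix $E_{12}$ sends $e_2$ to $e_1$) and $V^{(\chi_0)}=\mathbb{C}e_2$ is one-dimensional, yet $W:=\mathbb{C}e_1$ is a proper nonzero submodule with $W\cap V^{(\chi_0)}=0$. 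Your preparatory step (writing $W=\bigoplus_\chi (W\cap V^{(\chi)})$ via Lagrange interpolation applied to $\rho(t)$ for a suitable $t\in T$) is fine and standard; it is the subsequent nonvanishing that genuinely fails.

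The paper's own proof takes a different route from yours: it shows that every $\theta$ in the commutant of $\rho(\mathcal{A})$ preserves the line $V^{(\chi_0)}=\mathbb{C}v_0$, hence is scalar on a cyclic generator, hence scalar on all of $V$, so $\operatorname{End}_{\mathcal{A}}(V)=\mathbb{C}$, and it then ``concludes by Schur.'' But that deduction is the \emph{converse} of Schur's lemma, which is not valid for general finite-dimensional modules; the same upper-triangular example exhibits the failure, since there $\operatorname{End}_{\mathcal{A}}(\mathbb{C}^2)=\mathbb{C}$ even though $\mathbb{C}^2$ is not simple. Both your route and the paper's become correct once one adds a hypothesis guaranteeing semisimplicity of $V$ as an $\mathcal{A}$-module (as happens in the intended applications, where $V_A(\Sigma_g)$ carries a nondegenerate invariant pairing): then $V=W\oplus W'$ for some complement $W'$, the one-dimensional space $V^{(\chi_0)}$ must sit entirely inside $W$ or inside $W'$, and cyclicity of $v_0$ forces that summand to be all of $V$; equivalently, for a semisimple module $\operatorname{End}_{\mathcal{A}}(V)=\mathbb{C}$ does imply simplicity. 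Without some such supplementary hypothesis, the step you were stuck on is not provable, and you should not expect to fill the gap.
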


\begin{proof}
Let $\theta \in \End(V)$ be such that $\theta \rho(a) = \rho(a) \theta$ for all $a\in \mathcal{A}$. Then for every character $\chi$, $\theta (V^{(\chi)}) \subset V^{(\chi)}$, so there exists $\lambda \in \mathbb{C}$ so that $\theta v_0 = \lambda v_0$.  Since $v_0$ is cyclic, for every $v\in V$, there exists $a\in \mathcal{A}$ such that $\rho(a)v_0 = v$ thus $\theta (v)=\theta \rho(a) v_0= \rho(a) \theta v_0= \lambda v$. Therefore $\theta = \lambda \id$ and we conclude using the Sch\"ur lemma.
\end{proof}
  
  The following theorem was stated for $A$ a root of unity of even order, though as we shall show, the proofs extend straightforwardly to the odd case.
  \begin{theorem}
  \begin{enumerate}
  \item (Gelca-Uribe \cite[Theorem $6.7$]{GelcaUribe_SU2}, see also \cite{BonahonWong4}) The representation $\rho^{WRT}$ is irreducible.
  \item (Bonahon-Wong \cite{BonahonWong4}) The classical shadow of $\rho^{WRT}$ is a central representation.
  \end{enumerate}
  \end{theorem}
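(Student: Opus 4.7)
My plan is to handle part (1) by applying Lemma \ref{lemma_irrep} with the commutative subalgebra coming from a pants decomposition, and then derive part (2) as a direct consequence of irreducibility via Schur's lemma.

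For part (1), I would first fix a pants decomposition $\mathcal{P} = \{\gamma_e\}$ of $\Sigma_g$ adapted to the handlebody $H_g$ in the sense that each embedded curve $i(\gamma_e) \subset H_g$ bounds a disc (so the given conformal blocks basis is defined). Let $T \subset \mathcal{S}_A(\Sigma_g)$ be the subalgebra generated by the classes $[\gamma_e]$; commutativity is immediate since the $\gamma_e$ are pairwise disjoint. The conformal blocks basis $\{u_\sigma\}_{\sigma \in col(\Gamma)}$ then diagonalizes $T$ via $\rho^{WRT}_A(\gamma_e)\, u_\sigma = \lambda_{\sigma(e)}\, u_\sigma$, and by property $(2)$ of the modular category $\mathcal{C}^M$ the scalars $\lambda_n$ (for $n \in \mathcal{I}$) are pairwise distinct. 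Consequently distinct colorings produce distinct characters $\chi_\sigma : T \to \mathbb{C}$, so each weight space $V_A(\Sigma_g)^{(\chi_\sigma)}$ equals $\mathbb{C}\, u_\sigma$ and is one-dimensional. In particular $V_A(\Sigma_g)^{(\chi_0)} = \mathbb{C}\, u_0$, where $u_0$ is the class of the empty link.

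The delicate step is to verify that $u_0$ is cyclic for $\rho^{WRT}$. My plan is to introduce a dual family of simple closed curves $\{\gamma'_e\}$, with $\gamma'_e$ meeting $\gamma_e$ once transversally and disjoint from the other pants curves. Then standard skein-theoretic cabling computations, translated to the language of $\mathcal{C}^M$, express $\rho^{WRT}([\gamma'_e])$ in the basis $\{u_\sigma\}$ as a tridiagonal operator in the index $\sigma(e)$ whose off-diagonal entries (controlled by the relevant $6j$-symbols) are non-zero on every admissible coloring. Combining iterates of these $[\gamma'_e]$-actions with spectral projectors onto the one-dimensional weight spaces, constructed as polynomials in the $[\gamma_e]$ via Lagrange interpolation against the distinct $\lambda_n$, yields every $u_\sigma$ starting from $u_0$. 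Lemma \ref{lemma_irrep} then gives the irreducibility of $\rho^{WRT}$.

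For part (2), the argument is immediate from part (1): by Theorem \ref{theorem_chebyshev}, the image of the Chebyshev-Frobenius morphism $Ch_A$ lies in the center of $\mathcal{S}_A(\Sigma_g)$, and by irreducibility of $\rho^{WRT}$ Schur's lemma forces $\rho^{WRT}(Ch_A(x))$ to be a scalar for every $x \in \mathcal{S}_{+1}(\Sigma_g)$, which is the definition of centrality. The main obstacle is the cyclicity of $u_0$: it requires proving the non-vanishing of the relevant $6j$-symbol entries in $\mathcal{C}^M$ and running the combinatorial induction over admissible colorings. An alternative route avoiding the $6j$-computation would exploit the non-degeneracy of the Hopf pairing on $V_A(\Sigma_g)$ and (approximate) self-adjointness of the skein action, arguing that any proper invariant subspace admits an orthogonal complement also invariant and compatible with the weight decomposition, eventually contradicting the presence of $u_0$; either route hinges on a non-trivial semisimple-theoretic input about how the skein algebra connects different conformal blocks.
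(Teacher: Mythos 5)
Your overall plan matches the paper's: apply Lemma \ref{lemma_irrep} to the commutative subalgebra $T$ generated by the pants curves, use the conformal blocks basis and the distinctness of the $\lambda_n$ to get one-dimensional weight spaces, and then argue cyclicity of $u_0$. Your deduction of part (2) from irreducibility via Schur's lemma (since $Ch_A$ has central image, $\rho^{WRT}(Ch_A(x))$ commutes with the image of $\rho^{WRT}$, hence is scalar) is correct and is a legitimate alternative to the paper's route, which instead evaluates the scalar directly via the identity $T_N(\rho^{WRT}(\gamma)) = -2\,\mathrm{id}$ coming from $T_N([S_1]) = -2\,\mathds{1}$ in $K^0(\mathcal{C}^M_A)$. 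The trade-off: your version is more elementary but doesn't identify the shadow, while the paper's explicit computation is needed elsewhere (e.g.\ Corollary \ref{coro_representations_central}).

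The genuine gap is in the cyclicity of $u_0$, which you flag as ``the delicate step'' and then leave incomplete. Your route via dual curves $\gamma'_e$ acting tridiagonally in the conformal blocks basis requires a non-vanishing statement for the relevant $6j$-symbols of $\mathcal{C}^M_A$ that you do not prove, and the alternative ``self-adjointness/Hopf pairing'' route is similarly left as a sketch. Both are far more work than needed. You missed the simple argument the paper uses: the empty link $v_0$ is tautologically cyclic because the action map $a \mapsto a \cdot v_0$ sends $\mathcal{S}_A(\Sigma_g)$ onto $\mathcal{S}_A(H_g)$ --- any link in $H_g$ isotopes into the collar $\Sigma_g \times [0,1]$ --- and $V_A(\Sigma_g)$ is by construction a quotient of $\mathcal{S}_A(H_g)$. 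This avoids any fusion-coefficient computation entirely and is the step you should supply to close the argument.
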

  
  \begin{proof}
  Consider $T=\{\rho^{WRT}(\gamma_e)\}_e$ and $v_0 \in V_A(\Sigma_g)$ the class of the empty link. The fact that $v_0$ is cyclic is an immediate consequence of the facts that the map $\mathcal{S}_A(\Sigma)\to \mathcal{S}_A(H_g)$ is surjective, that $\mathcal{S}_A(H_g)v_0=\mathcal{S}_A(H_g)$ and that $V_A(\Sigma_g)$ is defined as a quotient of $\mathcal{S}_A(H_g)$. Moreover, since the coefficients $\lambda_i$ are pairwise distinct, $v_0$ is the unique vector of the conformal blocks basis such that $\rho^{WRT}(\gamma_e) v_0= -[2]v_0$ for all $e\in \mathcal{E}(\Gamma)$ so 
 the irreducibility of $\rho_A^{WRT}$ follows from Lemma \ref{lemma_irrep}. Next, using Equation \eqref{eq_tcheby}, we have
 $ \rho_A^{WRT}(T_N((\gamma)))=-2\id$ for all curve $\gamma$, which proves the second assertion.
  \end{proof}
  
  "Which central representation is the classical shadow of $\rho_A^{WRT}$" depends on the arbitrary choice of isomorphism $\mathcal{S}_{+1}(\Sigma_g) \cong \mathcal{O}[\mathcal{X}_{\SL_2}(\Sigma_g)]$, i.e. depends on the choice of a spin structure,   so the question is not relevant.
  
  \begin{corollary}\label{coro_representations_central} For every central representation $[r]\in \mathcal{X}_{\SL_2}^2(\Sigma)$, there exists a simple $\mathcal{S}_A(\Sigma)$-module with classical shadow $[r]$ and whose dimension is strictly smaller than the PI-dimension of $\mathcal{S}_A(\Sigma)$.
  \end{corollary}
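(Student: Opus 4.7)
The plan is to start from the simple Witten--Reshetikhin--Turaev module $V_A(\Sigma_g)$ produced above, and to \emph{twist} it by the characters of the $\mathrm{H}_1(\Sigma;\mathbb{Z}/2\mathbb{Z})$-grading on $\mathcal{S}_A(\Sigma)$ in order to realise every central representation as a classical shadow. Fix a relative spin structure on $\Sigma$, so that we have a Poisson isomorphism $\mathcal{S}_{+1}(\Sigma)\cong \mathcal{O}[\mathcal{X}_{\SL_2}(\Sigma)]$ and a well defined shadow map. The theorem above gives a simple central representation $\rho_A^{WRT}\colon \mathcal{S}_A(\Sigma)\to \End(V_A(\Sigma_g))$ with some classical shadow $[r_0]\in \mathcal{X}_{\SL_2}^{2}(\Sigma)$.

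For each character $\varepsilon\in \Hom(\mathrm{H}_1(\Sigma;\mathbb{Z}/2\mathbb{Z}),\{\pm 1\})\cong \mathrm{H}^1(\Sigma;\mathbb{Z}/2\mathbb{Z})$, the decomposition $\mathcal{S}_A(\Sigma)=\oplus_{\chi}\mathcal{S}_A^{(\chi)}(\Sigma)$ and the multiplicativity of the grading imply that the map $\phi_\varepsilon\colon \mathcal{S}_A(\Sigma)\to \mathcal{S}_A(\Sigma)$ defined by $\phi_\varepsilon(x):=\varepsilon(\chi)\,x$ for $x\in \mathcal{S}_A^{(\chi)}(\Sigma)$ is an algebra automorphism. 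Consequently the composition $\rho_\varepsilon:=\rho_A^{WRT}\circ \phi_\varepsilon$ is a simple $\mathcal{S}_A(\Sigma)$-module of the same dimension $\dim V_A(\Sigma_g)$.

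The key step is to identify the classical shadow of $\rho_\varepsilon$. Since $N$ is odd, for any closed curve $\gamma$ the Chebyshev--Frobenius image $Ch_A(\gamma)=T_N(\gamma)$ is a $\mathbb{C}$-linear combination of odd powers $\gamma^{N-2j}$, all of grade $[\gamma]\in\mathrm{H}_1(\Sigma;\mathbb{Z}/2\mathbb{Z})$; more generally $Ch_A$ preserves the grading (by multiplicativity on disjoint unions of curves). Hence for any multicurve $L$, the element $Ch_A(L)$ lies in $\mathcal{S}_A^{([L])}(\Sigma)$ and therefore
\[
\rho_\varepsilon(Ch_A(L)) \;=\; \rho_A^{WRT}\bigl(\varepsilon([L])\,Ch_A(L)\bigr)\;=\;\varepsilon([L])\,\chi_{[r_0]}(L)\,\id.
\]
Thus $\rho_\varepsilon$ is central, with shadow $[r_\varepsilon]\in \mathcal{X}_{\SL_2}^{2}(\Sigma)$ characterised by $r_\varepsilon(\gamma)=\varepsilon([\gamma])\,r_0(\gamma)$ for every closed curve $\gamma$. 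Because central representations are classified by $\Hom(\pi_1(\Sigma),\{\pm\mathds{1}_2\})\cong \mathrm{H}^1(\Sigma;\mathbb{Z}/2\mathbb{Z})$, on which $\mathrm{H}^1(\Sigma;\mathbb{Z}/2\mathbb{Z})$ acts freely and transitively, letting $\varepsilon$ range over all characters exhausts $\mathcal{X}_{\SL_2}^{2}(\Sigma)$.

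It remains to compare $\dim V_A(\Sigma_g)$ with the PI-dimension of $\mathcal{S}_A(\Sigma_g)$. For $g=1$ the Frohman--Gelca description gives $\mathrm{PI\text{-}dim}(\mathcal{S}_A(\Sigma_1))=N$ while $\dim V_A(\Sigma_1)=(N-1)/2$, and the strict inequality is immediate. For $g\geq 2$, by Theorem \ref{theorem_center} the PI-dimension is $N^{3g-3}$, whereas the Verlinde formula yields
\[
\dim V_A(\Sigma_g)\;=\;\Bigl(\tfrac{N}{2}\Bigr)^{g-1}\sum_{j=1}^{(N-1)/2}\sin\bigl(j\pi/N\bigr)^{2-2g},
\]
which is of order $N^{3g-3}$ but with a prefactor strictly less than $1$, and a direct estimate of the sum gives $\dim V_A(\Sigma_g)<N^{3g-3}$. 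This last dimension estimate is the main technical obstacle; everything else is a direct application of the grading and of the irreducibility of $\rho_A^{WRT}$.
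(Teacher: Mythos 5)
Your proof takes essentially the same route as the paper: twist $\rho_A^{WRT}$ by characters of $\mathrm{H}^1(\Sigma;\mathbb{Z}/2\mathbb{Z})$, acting via the $\mathrm{H}_1(\Sigma;\mathbb{Z}/2\mathbb{Z})$-grading, to realise every central shadow. Your treatment of the twist is in fact more careful than the paper's: you explain why $\phi_\varepsilon$ is an algebra automorphism (multiplicativity of the grading) and why $Ch_A$ preserves the grading when $N$ is odd, both of which the paper leaves implicit. The one place you genuinely diverge is the dimension bound, and there you make it much harder than it needs to be. The paper simply observes that $\dim V_A(\Sigma_g)=|col_N(\Gamma)|$, and for $g\geq 2$ the colorings assign to each of the $3g-3$ edges of the trivalent graph a color from the set $\mathcal{I}$ of cardinality $(N-1)/2$, so before even imposing admissibility one has $|col_N(\Gamma)|\leq\bigl((N-1)/2\bigr)^{3g-3}<N^{3g-3}$; the case $g=1$ gives $(N-1)/2<N$ directly. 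Your invocation of the Verlinde formula is not wrong, but you describe the resulting sine-sum estimate as ``the main technical obstacle'' and then do not actually carry it out, whereas the elementary count of colorings closes the gap in one line with no estimate at all. If you keep your route, you must supply the bound on $\sum_{j}\sin(j\pi/N)^{2-2g}$ explicitly; if you swap in the coloring count, the argument is complete as written.
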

  
  \begin{proof}
  Clearly the dimension $\dim (V_A(\Sigma_g)) = | col_N(\Gamma) |$ is strictly smaller than the PI-dimension of  $\mathcal{S}_A(\Sigma)$, which is $N^{3g-3}$ for $g\geq 2$ and $N$ for $g=1$ by Theorem \ref{theorem_center}. Suppose we have fix a spin structure $S$ giving an identification $\mathcal{S}_{+1}(\Sigma_g) \cong \mathcal{O}[\mathcal{X}_{\SL_2}(\Sigma_g)]$ for which $\rho_A^{WRT}$ has classical shadow $x\in \mathrm{H}^1(\Sigma_g; \mathbb{Z}/2\mathbb{Z}) \subset \mathcal{X}_{\SL_2}(\Sigma_g)$ and consider $\chi \in \mathrm{H}^1(\Sigma_g; \mathbb{Z}/2\mathbb{Z}) $. Then the representation $\chi\cdot \rho_A^{WRT} : \mathcal{S}_A(\Sigma_g) \to \End(V_A(\Sigma_g))$ defined by $\chi\cdot \rho_A^{WRT}(\gamma)= (-1)^{\chi([\gamma])} \rho_A^{WRT}(\gamma)$ has classical shadow $x+\mathcal{\chi}\in \mathrm{H}^1(\Sigma_g; \mathbb{Z}/2\mathbb{Z}) $ so every central representation is the classical shadow of such a representation.
  \end{proof}

  \subsection{Representations coming from non semi-simple TQFTs}
  
  Blanchet, Costantino, Geer and Patureau-Mirand defined in \cite{BCGPTQFT} a new family of TQFTs named non semi-simple because their algebraic input is no longer a modular category but rather a so-called $G$-modular relative category (which is non semi-simple in general) as described by De Renzi in \cite{DeRenzi_NSETQFT}. The categories giving rise to representations of the Kauffman-bracket skein algebras are the categories $\mathcal{C}_A^{NS}$ of projective weight representations of the unrolled quantum group $\overline{U}^H_q\mathfrak{sl}_2$  that we consider here at odd roots of unity as studied in \cite{CGP_unrolledQG, DeRenziGeerPatureau_TQFT_QG}. Like in the modular case, we need to change the duality from the usual conventions in order to get the correct skein algebra.
  The construction of these representations is technical and cannot be sketched in one short survey, so instead we focus on its properties. The TQFTs associated to $\mathcal{C}_A^{NS}$ defines spaces $V_A(\Sigma_g, \omega)$ associated to a closed surfaces and a cohomology class $\omega \in \mathrm{H}_1(\Sigma_g; {\mathbb{C}}/{\mathbb{Z}})$ (plus a choice of Lagrangian which is relevant for the present discussion) which induces representations of the skein algebras:
  $$\rho^{BCGP} : \mathcal{S}_A(\Sigma_g) \to \End( V_A(\Sigma, \omega) ).$$
  Note that if we had considered a root of unity of even order, as it is done in \cite{BCGPTQFT}, then the cohomology class would have been in $\mathrm{H}_1(\Sigma_g; {\mathbb{C}}/{2\mathbb{Z}})$ and we would have needed to consider instead the direct sum $\oplus_{\chi \in \mathrm{H}_1(\Sigma_g; \mathbb{Z}/2\mathbb{Z})} V_A(\Sigma_g, \omega + \chi)$ in order to have skein modules.

   Like in the modular case, the fact that the TQFTs can be extended to the circles implies that the spaces $V_A(\Sigma_g, \omega)$ admit bases associated to pants decompositions 
  as described in \cite{BCGP_Bases}. In order to describe them, we need to study $\mathcal{C}_A^{NS}$ a little bit.
Write $q=\exp(\frac{2i\pi k}{N})$ with $k$ prime to $N$ and for $\alpha\in \mathbb{C}$, set $q^{\alpha}:= \exp(\frac{2i\pi k \alpha}{N})$. The unrolled quantum groups has generators $H, K^{\pm 1}, E, F$ and   
 , by definition, in a module $V$ in $\mathcal{C}_A^{NS}$, the element $H$ acts semi-simply on $V$ and if $Hv=\lambda v$ then $Kv= q^{\lambda}v$. Moreover $E^Nv=F^Nv=0$ so every indecomposable representations are highest and lowest weight representation. Recall that a highest weight module is a module $V$ having a cyclic vector $v_0\in V$ such that $F v_0=0$ and $Hv_0=\alpha v_0$ in which case $\alpha$ is called the \textit{highest weight of} $V$. The category $\mathcal{C}_A^{NS}$ is $\mathbb{C}/\mathbb{Z}$-graded $\mathcal{C}_A^{NS}=\bigsqcup_{\overline{\alpha}\in \mathbb{C}/\mathbb{Z}} \mathcal{C}_{\overline{\alpha}}$ where $\mathcal{C}_{\overline{\alpha}}$ is the full subcategory of modules with highest weight $\beta$ such that $k\beta \equiv \alpha \pmod{\mathbb{Z}}$ (the $k$ is introduced here to simplify notations). 
  For $\alpha \in \mathbb{C}$, consider the module $V_{\alpha} \in \mathcal{C}_{\overline{\alpha}}$ with basis $\{v_0, \ldots, v_{N-1}\}$ such that 
  $$ Hv_i = \frac{\alpha}{k} + N-1 -2i \quad, Fv_i=v_{i+1}, Fv_{N-1}=0 \quad, Ev_i=[i]\left( \frac{q^i e^{-2i\pi \alpha/N} - q^{-i} e^{2i\pi \alpha/N}}{q-q^{-1}} \right) v_{i-1}, Ev_0=0.$$
  Note that $V_{\alpha}$ is simple if and only if $\alpha \in \ddot{\mathbb{C}}:= (\mathbb{C}\setminus \frac{1}{2}\mathbb{Z})\cup \frac{N}{2}\mathbb{Z}$. The Casimir element $C=EF+\frac{Kq^{-1} +K^{-1}q}{(q-q^{-1})^2}$ acts on $V_{\alpha}$ by the scalar operator with value $\frac{ e^{2i\pi \alpha/N}+e^{-2i\pi \alpha/N}}{(q-q^{-1})^2}$ from which we deduce that 
  \begin{equation}\label{eq_lambdaE}
    \adjustbox{valign=c}{\includegraphics[width=1.9cm]{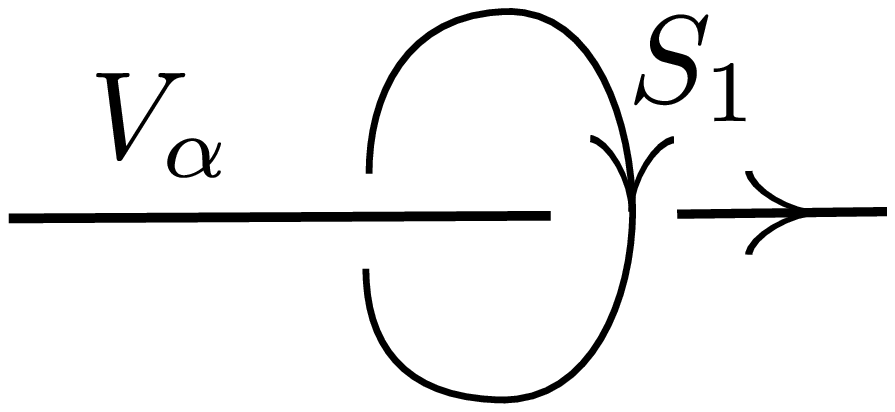}} =\lambda_{\alpha}    \adjustbox{valign=c}{\includegraphics[width=1.9cm]{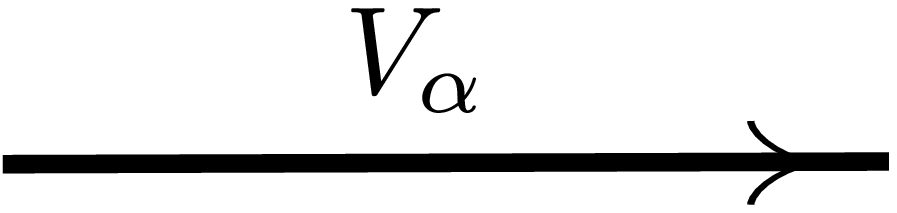}}\mbox{, where }\lambda_{\alpha}:= - (e^{2i\pi \alpha/N}+e^{-2i\pi \alpha/N}) .
  \end{equation}
  In Equation \ref{eq_lambdaE}, the minus sign is present because we changed the duality in $\mathcal{C}_A^{NS}$ as explained before. 
  Note that, as a $U_q\mathfrak{sl}_2$ module, $V_{\alpha}$ only depends on $\overline{\alpha} \in \mathbb{C}/\mathbb{Z}$. 
  For $\overline{\alpha} \notin \{ \overline{0}, \overline{\frac{1}{2}} \}$, the category $\mathcal{C}_{\overline{\alpha}}$ is semi-simple, with $\{ V_{\alpha'}, \alpha' \in \overline{\alpha}\}$ as set of isomorphisms classes of simple objects.

  Fix $\omega \in \mathrm{H}^1(\Sigma_g; \mathbb{C}/\mathbb{Z})$ a class such that $\omega \notin \mathrm{H}^1(\Sigma_g; \frac{1}{2}\mathbb{Z}/\mathbb{Z})$ and $\{\gamma_e\}_e$ a pants decomposition associated to $\Gamma \subset H_g$ a banded trivalent graph as in the previous section. Fix $r: \mathbb{C} \to \mathbb{C}/\mathbb{Z}$ an arbitrary retraction of the quotient map. Write $col_N(\Gamma) = \{ \sigma : \mathcal{E}(\Gamma) \to \{ 0, \ldots, N-1\} \}$. Like in the modular case, to $\sigma \in col_N(\Gamma)$, one can associate a vector $u_{\sigma} \in V(\Sigma_g, \omega)$, by coloring an edge $e$ by the module $V_{r(\omega(\gamma_e))+\sigma(e)}$ in such a way that $(1)$ the set $\{u_{\sigma}, \sigma \in col_N(\Gamma)\}$ is a basis of $V_A(\Sigma_g, \omega)$ and $(2)$ one has $\rho^{BCGP}(\gamma_e) u_{\sigma}= \lambda_{\omega(e)+\sigma(e)} u_{\sigma}$ so 
  $$ T_N(\rho^{BCGP}(\gamma_e))= - (\exp(2i\pi \omega([\gamma])) + \exp(-2i\pi \omega([\gamma]))\id.$$
  Since any curve in $\Sigma_g$ belongs to a pants decomposition, this formula is true for any curve.
  Fix a spin structure $S$ on $\Sigma_g$ with Johnson quadratic form $w_S$ and define  $r_{\omega}:\pi_1(\Sigma_g, v) \to \SL_2$ the diagonal representation defined by 
  $$ r_{\omega}(\gamma):= (-1)^{w_S([\gamma])} \begin{pmatrix} \exp(2i\pi \omega([\gamma]))& 0\\ 0 &\exp(-2i\pi \omega([\gamma])) \end{pmatrix}.$$
  We deduce from the preceding discussion that:
  \begin{enumerate}
  \item The dimension $dim(V_A(\Sigma_g, \omega))= |\mathcal{E}(\Gamma)|^N= \left\{ \begin{array}{ll} N^{3g-3} & \mbox{, if }g\geq 2; \\ N & \mbox{, if }g=1.\end{array} \right.$ is equal to the PI-dimension of $\mathcal{S}_A(\Sigma_g)$ and
  \item The representation $\rho^{BCGP}$ is central with classical shadow $[r_{\omega}] \in \mathcal{X}_{\SL_2}^1(\Sigma_g)$.
  \end{enumerate}
  
  \begin{question}\label{question_BCGP} For $\omega  \in \mathrm{H}^1(\Sigma_g; \mathbb{C}/\mathbb{Z}) \setminus  \mathrm{H}^1(\Sigma_g; \frac{1}{2}\mathbb{Z}/\mathbb{Z})$, is the representation $\rho^{BCGP}$ irreducible ?
  \end{question}
  
  Like in the modular case, it is natural to try to use Lemma \ref{lemma_irrep} to solve Question \ref{question_BCGP} using the conformal block basis $\{u_{\sigma}, \sigma\in col_N(\Gamma)\}$. Since $\lambda_{\sigma(e) +i} \neq \lambda_{\sigma(e)+j}$ for $i\neq j$ in $\{0, \ldots, N-1\}$, one of the hypothesis of Lemma \ref{lemma_irrep} is clearly satisfied. Unlike the modular case, what is absolutely not clear is whether one of the vector $u_{\sigma}$ is cyclic or not. As we shall see in Proposition \ref{prop_equivalences}, $\rho^{BCGP}$ is irreducible for one such class $\omega$ if and only if it is irreducible for all of them so one can use the fact that the representation depends analytically on $\omega$ to solve Question \ref{question_BCGP}. When $\omega \in  \mathrm{H}^1(\Sigma_g; \frac{1}{2}\mathbb{Z}/\mathbb{Z})$, the representations $\rho^{BCGP}$ are much harder to study and relatively unexplored so all natural questions remain open. 
  
  \subsection{Representations coming from quantum tori}
  
  Thanks to the quantum trace, Bonahon and Wong defined in \cite{BonahonWong2} a large family of representations of skein algebras as follows. 
  Let $(\mathbf{\Sigma}, \Delta)$ be a triangulated marked surface and consider $r: \mathcal{Z}_{\omega}(\mathbf{\Sigma}, \Delta)\to \End(V)$ an irreducible representation. Then the composition
  $$ \rho^{BW} : \mathcal{S}^{red}_A(\mathbf{\Sigma}) \xrightarrow{\tr^{\Delta}} \mathcal{Z}_{\omega}(\mathbf{\Sigma}, \Delta)\xrightarrow{r} \End(V), $$
  is called a \textit{quantum Teichm\"uller representation}. Clearly, since the quantum torus $\mathcal{Z}_{\omega}(\mathbf{\Sigma}, \Delta)$ is Azumaya of constant rank, 
  such a representation only depends up to isomorphism on the induced character on the center $\mathbf{Z}_{\mathbf{\Sigma}, \Delta}$ of  $\mathcal{Z}_{\omega}(\mathbf{\Sigma}, \Delta)$. Moreover, by Theorem \ref{theorem_center}, each quantum Teichm\"uller representation has dimension the PI-dimension of $ \mathcal{S}^{red}_A(\mathbf{\Sigma}) $.
    
   Write $\mathcal{Y}(\mathbf{\Sigma},\Delta)=\mathrm{Specm}(\mathbf{Z}_{+1}(\mathbf{\Sigma}, \Delta))$ and $\mathcal{NA}: \mathcal{Y}(\mathbf{\Sigma}, \Delta) \to \mathcal{X}^{red}(\mathbf{\Sigma})$ be the regular map defined by the quantum trace $\tr_{+1}$ at $A=+1$. Note that, since the quantum trace is injective, $\mathcal{NA}$ is dominant and since $\mathcal{X}^{red}(\mathbf{\Sigma})$ is irreducible (because $\mathcal{S}_{+1}^{red}(\mathbf{\Sigma})$ is a domain), then the image of $\mathcal{NA}$ contains a Zariski open dense subset, so a generic point of  $\mathcal{X}^{red}(\mathbf{\Sigma})$ is the classical shadow of a quantum Teichm\"uller representation.
 
  \begin{question}\label{question_QTRep}
  \begin{enumerate}
  \item What is the image of $\mathcal{NA}$ ? In other terms, which elements of $\mathcal{X}^{red}(\mathbf{\Sigma})$ are the classical shadows of  quantum Teichm\"uller representations ?
    \item If two quantum Teichm\"uller representations induce the same character over the center of $\mathcal{S}_A(\mathbf{\Sigma})$, are they isomorphic ?
  \item Given a point $y\in \mathcal{Y}(\mathbf{\Sigma},\Delta)$ and $V(y)$ a corresponding quantum Teichm\"uller representation, is the $\mathcal{S}_A(\mathbf{\Sigma})$ module $V(y)$ simple ? indecomposable ? projective ?
  \end{enumerate}
  \end{question}
  
  Question \ref{question_QTRep} was solved in \cite{KojuQGroupsBraidings} in the case where $\mathbf{\Sigma}=\mathbb{D}_1$. The trick is that $\mathcal{S}_A^{red}(\mathbb{D}_1)$ is isomorphic to the simply laced quantum group $\widetilde{U}_q\mathfrak{gl}_2$, closely related to $U_q\mathfrak{sl}_2$ and for which the representation theory can be studied "by hand". Note that $\mathcal{X}^{red}(\mathbb{D}_1) \cong B^+\times B^-$.
   Then using the fact that $\mathbb{D}_n$ is obtained by gluing $n$ copies of $\mathbb{D}_1$ together, one obtains the
  
  \begin{theorem}(K. \cite{KojuQGroupsBraidings})\label{theorem_D1} Suppose that $\mathbf{\Sigma}=\mathbb{D}_n$ for $n\geq 1$. 
  \begin{enumerate}
  \item For $n\geq 2$, the map $\mathcal{NA}$ is surjective. For $n=1$, an element  $(g_+, g_-) \in B^+ \times B^-$ is not in the image of $\mathcal{NA}$ if and only if either both $g_-$ and $g_+$ are diagonal and not scalar or if none of them is diagonal and both have diagonal elements equal to $\pm 1$.
  \item For any $n\geq 2$, two quantum Teichm\"uller representations of $\mathcal{S}_A^{red}(\mathbb{D}_n)$ which induce the same  character over the center of $\mathcal{S}_A^{red}(\mathbb{D}_n)$ are canonically isomorphic.
  \item For $n=1$, any quantum Teichm\"uller representation of $\mathcal{S}_A^{red}(\mathbb{D}_1)$ is indecomposable and is a projective object in the category of weight modules. Moreover a quantum Teichm\"uller representation of  $\mathcal{S}_A^{red}(\mathbb{D}_1)$ with classical shadow $(g_+,g_-)$ is simple 
  if and only if $g_+g_-^{-1}\neq \pm \mathds{1}_2$ or if $g_+g_-^{-1}=\pm \mathds{1}_2$  and the inner puncture Casimir element $\gamma_p$ is sent to $\pm 2 \id$.
  \end{enumerate}
  \end{theorem}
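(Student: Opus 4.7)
The plan is to first establish all statements for $n=1$ by exploiting the identification $\mathcal{S}_A^{red}(\mathbb{D}_1)\cong\widetilde{U}_q\mathfrak{gl}_2$ from \cite{KojuQGroupsBraidings} together with the explicit representation theory of quantum groups at odd roots of unity, and then to extend the results to $n\geq 2$ via the gluing formula of Theorem \ref{theorem_gluingformula}.

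For $n=1$, the algebra $\widetilde{U}_q\mathfrak{gl}_2$ has PI-dimension $N$ by Theorem \ref{theorem_center}, and its center is generated by the Chebyshev-Frobenius image together with the quantum Casimir $C=EF+\frac{Kq^{-1}+K^{-1}q}{(q-q^{-1})^2}$, the inner-puncture element $\gamma_p^{\pm 1}$, and the boundary element $\alpha_\partial^{\pm 1}$. Under Theorem \ref{theorem_charvar_skein}, the classical shadow $(g_+,g_-)\in B^+\times B^-$ records precisely the Chebyshev-Frobenius part, i.e.\ the values of $E^N,F^N,K^{\pm N}$, while $C$ and $\gamma_p$ supply the remaining central data (and $\alpha_\partial$ is determined by $K^N$). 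The map $\mathcal{NA}$ becomes the projection forgetting $C$, subject to the polynomial identity of quantum Cayley-Hamilton type expressing the minimal relation satisfied by $C$ over the Chebyshev-Frobenius image. Surjectivity then reduces to determining which classical shadows admit a compatible Casimir value, and a direct case analysis by diagonality of $g_+$ and $g_-$ will recover the two excluded classes of item (1). Items (3) then follow from explicit representation theory: any lift produces a cyclic highest-weight $\widetilde{U}_q\mathfrak{gl}_2$-module of dimension $N$ with one-dimensional weight spaces, hence automatically indecomposable and projective in the category of weight modules. Failure of simplicity is equivalent to $E^N$ or $F^N$ annihilating the cyclic vector, which by inspection occurs precisely when $g_+g_-^{-1}=\pm\mathds{1}_2$ together with the stated constraint on $\gamma_p$.

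For $n\geq 2$, realizing $\mathbb{D}_n$ as the iterated gluing of $n$ copies of $\mathbb{D}_1$, Theorem \ref{theorem_gluingformula} gives an injection $\theta^{\Delta}:\mathcal{S}_A^{red}(\mathbb{D}_n)\hookrightarrow\mathcal{S}_A^{red}(\mathbb{D}_1)^{\otimes n}$ identifying the image with the subalgebra of coinvariants for the $\mathcal{O}_q[\SL_2]^{\otimes(n-1)}$ co-action. Any tensor product of quantum Teichm\"uller representations of $\mathbb{D}_1$ restricts to a quantum Teichm\"uller representation of $\mathbb{D}_n$ by the naturality of the quantum trace under gluing, and conversely every quantum Teichm\"uller representation of $\mathbb{D}_n$ arises this way. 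The extra $\mathbb{D}_1$-factors provide enough freedom to bypass the exceptional loci from item (1) for $n=1$, yielding surjectivity of $\mathcal{NA}$. For item (2), Theorems \ref{theorem_center} and \ref{theorem_centerCF} give that $\mathcal{S}_A^{red}(\mathbb{D}_n)$ and $\mathcal{Z}_\omega(\mathbb{D}_n,\Delta)$ share the PI-dimension $N^n$, so the Azumaya property of the latter transfers to the former along $\Tr^\Delta$ at every image point of $\mathcal{NA}$; quantum Teichm\"uller representations with the same central character on $\mathcal{S}_A^{red}(\mathbb{D}_n)$ then coincide up to the canonical isomorphism inherited from their common realization as restrictions of the corresponding irreducible $\mathcal{Z}_\omega(\mathbb{D}_n,\Delta)$-module.

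The main obstacle will be item (1) for $n=1$: delineating the exact exceptional set requires a careful case analysis according to the diagonality of $g_+,g_-$, and in the case where neither is diagonal, further isolating the subtle constraint on diagonal entries that prevents the Casimir equation from admitting an irreducible solution. A secondary difficulty in item (2) is that the Azumaya-transfer argument requires $\mathcal{NA}$ to hit only the Azumaya locus of $\mathcal{S}_A^{red}(\mathbb{D}_n)$, so one must verify that the gluing of $n\geq 2$ copies produces genuinely simple and not merely indecomposable modules at every admissible classical shadow.
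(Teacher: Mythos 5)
The proposal follows the same overall strategy as the paper's sketch: study $\mathbb{D}_1$ by hand via the identification $\mathcal{S}_A^{red}(\mathbb{D}_1)\cong\widetilde{U}_q\mathfrak{gl}_2$, then pass to $\mathbb{D}_n$ by gluing $n$ copies. However, there are concrete gaps that go beyond the difficulties you acknowledge.

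First, the gluing step for $n\geq 2$ rests on the converse claim that \emph{every} quantum Teichm\"uller representation of $\mathbb{D}_n$ arises as the restriction of a tensor product of quantum Teichm\"uller representations of $\mathbb{D}_1$. While the forward direction is clear (the gluing map $\theta^\Delta$ intertwines the quantum traces, and restricting an irrep of $\mathcal{Z}_\omega(\mathbb{D}_1,\Delta_1)^{\otimes n}$ along $\theta^\Delta$ still has the correct dimension $N^n$, hence is irreducible since $\mathcal{Z}_\omega(\mathbb{D}_n,\Delta)$ is Azumaya of that rank), the converse is not automatic: $\theta^\Delta$ is a proper inclusion of quantum tori, so the induced map on central characters need not be surjective. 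A quantum Teichm\"uller representation of $\mathbb{D}_n$ is defined by an irrep of $\mathcal{Z}_\omega(\mathbb{D}_n,\Delta)$, and you need an argument that its central character extends to one of $\mathcal{Z}_\omega(\mathbb{D}_1,\Delta_1)^{\otimes n}$ (or else you must argue directly on $\mathcal{Z}_\omega(\mathbb{D}_n,\Delta)$ without invoking the tensor factorization). Second, item (2) asserts \emph{canonical} isomorphism, and your Azumaya-transfer argument at best gives isomorphism; the canonicity, which is what feeds the construction of projective braid-group representations discussed immediately after the theorem, is not addressed. Third, in your description of the center of $\mathcal{S}_A^{red}(\mathbb{D}_1)$ you list the quantum Casimir $C$ alongside $\gamma_p$, but Theorem~\ref{theorem_center}(2) gives generators only as the Chebyshev--Frobenius image together with $\gamma_p$ and $\alpha_\partial^{\pm1}$; under the identification with $\widetilde{U}_q\mathfrak{gl}_2$, $\gamma_p$ \emph{is} (up to normalization) the Casimir, so treating $C$ as an independent generator risks double-counting the center and obscures the structure of the fiber of $\mathcal{NA}$, which is exactly what you need to pin down for item (1). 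Finally, as you yourself flag, item (1) for $n=1$ and the Azumaya-transfer hypothesis in item (2) remain to be established; a complete proof must carry out the explicit highest/lowest-weight analysis of $\widetilde{U}_q\mathfrak{gl}_2$ weight modules rather than gesture at it.
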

  
  Thanks to Theorem \ref{theorem_D1}, one can use quantum Teichm\"uller representation to define projective representations of the mapping class group of $\mathbb{D}_n$, i.e. of the braid group as studied in \cite{KojuQGroupsBraidings}. If one could generalize this theorem to other marked surfaces, the same procedure would permit to obtain finite dimensional projective representations of mapping class groups and Torelli groups of more complicated surfaces. This construction might serve as a motivation for solving Question \ref{question_QTRep}.
  
  \vspace{2mm}
  \par The above procedure only provides representations of the stated skein algebras of triangulable surfaces.
  In order to obtain representations of the skein algebras of closed surfaces from quantum Teichm\"uller theory, Bonahon and Wong developed in \cite{BonahonWong3} the following strategy. Let $\mathbf{\Sigma}=(\Sigma_{g,0}, \emptyset)$ a closed surface and $\mathbf{\Sigma}'= (\Sigma_{g,n}, \emptyset)$ the same surface with $n\geq 1$ open discs removed.  For each punctures $p_1, \ldots, p_n$ of $\mathbf{\Sigma}'$ one has a off-puncture ideal $\mathcal{I}_{p_i}$ and writing $\mathcal{I}^{tot}:= \sum_{i=1}^n \mathcal{I}_{p_i}$ the total off-puncture ideal, one gets an exact sequence
  $$ 0 \to \mathcal{I}^{tot} \to \mathcal{S}_A(\mathbf{\Sigma}')\xrightarrow{i_*}  \mathcal{S}_A(\mathbf{\Sigma}) \to 0.$$
  This suggests a natural strategy: choose a triangulation $\Delta$ of $\mathbf{\Sigma}'$ and a quantum Teichm\"uller representation $\rho': \mathcal{S}_A(\mathbf{\Sigma}') \xrightarrow{tr^{\Delta}} \mathcal{Z}_{\omega}(\mathbf{\Sigma}', \Delta) \xrightarrow{r} \End(V)$ and consider the submodule
  $$V^0 := \{ v\in V| \quad x\cdot v = 0,  \forall x \in \mathcal{I}^{tot} \}.$$
  Then, by the exactness of the above sequence,  $\rho'$ induces a representation $\rho: \mathcal{S}_A(\mathbf{\Sigma}) \to \End(V^0)$. The problem is, since $\mathcal{I}^{tot}$ is infinitely generated, it seems extremely difficult to show that $V^0\neq 0$. Bonahon and Wong proved that this is the case, provided that: 
  \begin{enumerate}
\item  the triangulation $\Delta$ is combinatoric, i.e. every edges of $\Delta$ has two distinct endpoints and 
\item the irreducible representation $r:  \mathcal{Z}_{\omega}(\mathbf{\Sigma}', \Delta) \to \End(V)$ sends every Casimir element $H_{p_i}$ to $r(H_{p_i})=-q^{-1} \id_V$.
\end{enumerate}

\begin{definition}
We call quantum Teichm\"uller representation a  representation $\rho^{BW}: \mathcal{S}_A(\mathbf{\Sigma}) \to \End(V^0)$ obtained from an irreducible representation  $r:  \mathcal{Z}_{\omega}(\mathbf{\Sigma}', \Delta) \to \End(V)$ satisfying the above two conditions.
\end{definition}

\begin{theorem}\label{theorem_QTClosed}(Bonahon-Wong \cite{BonahonWong3}) Let $\mathbf{\Sigma}=(\Sigma_{g}, \emptyset)$ be a closed connected marked surface of genus $g\geq 1$.
\begin{enumerate}
\item Every element of $\mathcal{X}_{\SL_2}(\Sigma_g)$ is the classical shadow of a quantum Teichm\"uller representation.
\item If $g=1$, every quantum Teichm\"uller representation has dimension $N$.
\item If $g\geq 2$, every quantum Teichm\"uller representation with non central classical shadow has dimension $N^{3g-3}$ (the PI-dimension of $ \mathcal{S}_A(\mathbf{\Sigma})$).
If the classical shadow is central then dimension is $\leq N^{3g-3}$.
\end{enumerate}
\end{theorem}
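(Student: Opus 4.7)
Let $\mathbf{\Sigma}' = (\Sigma_{g,1}, \emptyset)$ be the surface obtained by removing a small open disc from $\Sigma_g$, equipped with a combinatorial triangulation $\Delta$. The off-puncture exact sequence
\begin{equation*}
0 \to \mathcal{I}_p \to \mathcal{S}_A(\mathbf{\Sigma}') \xrightarrow{i_*} \mathcal{S}_A(\mathbf{\Sigma}) \to 0
\end{equation*}
shows that each quantum Teichmüller representation of $\mathcal{S}_A(\mathbf{\Sigma})$ is, by construction, the annihilator subspace $V^0 = \{v \in V \mid \mathcal{I}_p \cdot v = 0\}$ inside a simple $\mathcal{Z}_\omega(\mathbf{\Sigma}', \Delta)$-module $V$ (viewed as a $\mathcal{S}_A(\mathbf{\Sigma}')$-module via $\tr^\Delta$) with $r(H_p) = -q^{-1}\mathrm{id}$. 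The whole proof is organized around the analysis of this subspace.

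For item (1), starting from $[\rho] \in \mathcal{X}_{\SL_2}(\Sigma_g)$ — viewed after a spin-structure choice as a character $\chi$ of $\mathcal{S}_{+1}(\mathbf{\Sigma})$ — I would pull $\chi$ back to a character $\chi'$ of $\mathcal{S}_{+1}(\mathbf{\Sigma}')$ via $i_*$. Using the explicit description of the center of $\mathcal{Z}_\omega(\mathbf{\Sigma}', \Delta)$ in Theorem~\ref{theorem_centerCF} and the commutative square relating Chebyshev--Frobenius to Frobenius, one produces a central character of $\mathcal{Z}_\omega(\mathbf{\Sigma}', \Delta)$ whose restriction to the Frobenius image realizes $\chi'$ under $\mathcal{NA}$ and which simultaneously sends $H_p$ to $-q^{-1}$. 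The compatibility of these two constraints is automatic: $\gamma_p$ bounds a disc in $\Sigma_g$, so $T_N(\chi'(\gamma_p)) = -2$, and under $\tr^\Delta$ this classical relation is precisely what forces $H_p^N$ in the Frobenius image to equal the appropriate scalar, leaving $H_p = -q^{-1}$ as an admissible choice.

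The main obstacle will be showing $V^0 \neq 0$ and computing its dimension. I would follow Bonahon--Wong: by Proposition~\ref{prop_offpuncture}, $\mathcal{I}_p$ is generated by differences of connected diagrams that become isotopic after filling $p$; their images under $\tr^\Delta$ can be written as Laurent polynomials in the Chekhov--Fock variables incident to $p$, and these polynomial relations are then reinterpreted as conditions cutting out a joint eigenspace of a commuting family of operators naturally associated to $p$. The hypothesis $r(H_p) = -q^{-1}$ is exactly what guarantees that the relevant eigenspace — namely the one on which $\gamma_p$ acts with the eigenvalue $\lambda$ satisfying $T_N(\lambda) = -2$ — is non-trivial. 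This is the delicate technical heart of \cite{BonahonWong3} and where I expect the real work to concentrate.

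To conclude (2) and (3), the PI-dimension formulas of Theorems~\ref{theorem_center} and \ref{theorem_centerCF} give $\dim V = N^{3g-2}$ for $g \geq 2$ and $\dim V = N$ for $g=1$. When $g \geq 2$ and the classical shadow is non-central, $\gamma_p$ has $N$ distinct eigenvalues on $V$ and the single admissible eigenspace has codimension exactly $N$, yielding $\dim V^0 = N^{3g-3}$; the central case admits repeated eigenvalues of $\gamma_p$ and produces only the upper bound $\dim V^0 \leq N^{3g-3}$. For $g=1$ the single puncture condition turns out to be automatic on $V$ once $H_p$ is fixed (the relations in $\mathcal{I}_p$ become central identities already encoded in the central character of $\mathcal{Z}_\omega(\Sigma_{1,1}, \Delta)$), so $V^0 = V$ has dimension $N$.
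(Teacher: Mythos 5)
Your overall skeleton matches the paper's (and Bonahon--Wong's) approach: you cap off the surface to get $\mathbf{\Sigma}' = (\Sigma_{g,1},\emptyset)$ with a combinatorial triangulation, you realize the quantum Teichm\"uller representation as $V^0 := \{v \in V \mid \mathcal{I}_p v = 0\}$ inside an irreducible $\mathcal{Z}_\omega(\mathbf{\Sigma}',\Delta)$-module $V$, you use the Azumaya property of the quantum torus to get $\dim V = N^{3g-2}$, and you reduce the whole problem to understanding the action of $\mathcal{I}_p$ on $V$. That part is right.

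The technical heart, however, is garbled in two places. First, $\gamma_p$ cannot ``have $N$ distinct eigenvalues on $V$'': $\gamma_p$ is a peripheral curve and is central in $\mathcal{S}_A(\mathbf{\Sigma}')$, and its image $\tr^\Delta(\gamma_p)$ lies in the central subalgebra generated by the Casimir $H_p^{\pm 1}$; since $V$ is an irreducible $\mathcal{Z}_\omega$-module, $\gamma_p$ acts on $V$ by a \emph{scalar}. The operator whose kernel you actually need is the off-diagonal element $Q_p$ --- the image under $\tr^\Delta$ of the stated corner arc $\alpha(p)_{+-}$ --- which is not central. Second, the arithmetic is wrong: you claim the admissible subspace has ``codimension exactly $N$,'' but $\dim V - \dim V^0 = N^{3g-2} - N^{3g-3} = (N-1)N^{3g-3}$, which is $N$ only when $g=1$ and $N=2$. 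What actually happens (and is the delicate content of \cite{BonahonWong3}) is that $\dim\ker(\rho'(Q_p))$ is shown to be $\frac{1}{N}\dim V = N^{3g-3}$ for non-central shadows.

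The conceptual ingredient you are missing is the proposition, reproduced after the theorem in the paper, that the ideal $J_p = \mathcal{Z}_\omega(\mathbf{\Sigma}',\Delta)\,\tr^\Delta(\mathcal{I}_p)\,\mathcal{Z}_\omega(\mathbf{\Sigma}',\Delta)$ is generated by exactly two elements, $H_p + q^{-1}$ and $Q_p$, provided $\Delta$ has no loop edge at $p$. This is why combinatorial triangulations and the hypothesis $r(H_p) = -q^{-1}\,\mathrm{id}$ appear: the first generator is automatically killed and $V^0 = \ker(\rho'(Q_p))$. Without this finite-generation result, your reinterpretation of ``Laurent relations incident to $p$'' as a joint-eigenspace condition has no traction --- $\mathcal{I}_p$ is infinitely generated and you would have no handle on $V^0$. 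Likewise, your claim that for $g=1$ the condition ``turns out to be automatic'' needs the same proposition to reduce to showing $\rho'(Q_p)=0$ for the genus-one Chekhov--Fock representation with the prescribed central character; it does not follow merely because the central relations are encoded in the central character. Your sketch of item (1) (choosing a compatible central character of $\mathcal{Z}_\omega$) is plausible in outline and roughly matches what is done in \cite{BonahonWong3}, though the sign bookkeeping relating $T_N(\chi'(\gamma_p)) = -2$, $H_p^N$, and the constraint $r(H_p)=-q^{-1}$ needs care and should not be called ``automatic.''
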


These representations have been poorly explored and most natural questions remains open (for $g\geq 2$). The second and third items of Questions \ref{question_QTRep} are also open for a closed surface of genus $g\geq 2$. We can add the following ones:

\begin{question} Suppose $g\geq 2$ and $\mathbf{\Sigma}=(\Sigma_{g}, \emptyset)$.
\begin{enumerate}
\item What is the dimension of quantum Teichm\"uller representation with central classical shadow ?
  \item If $\rho^{BW}$ is a quantum Teichm\"uller representation of $ \mathcal{S}_A(\mathbf{\Sigma})$ with diagonal classical shadow, is it isomorphic to the representation $\rho^{BCGP}$ with same shadow coming from non semi-simple TQFTs ?
\end{enumerate}
\end{question}

  The trick to compute the dimension of such a quantum Teichm\"uller representation is that, whereas the off-puncture ideal $\mathcal{I}_p$ is infinitely generated, its image 
  $$J_p:=  \mathcal{Z}_{\omega}(\mathbf{\Sigma}', \Delta) (\tr^{\Delta} (\mathcal{I}_p) )  \mathcal{Z}_{\omega}(\mathbf{\Sigma}', \Delta), $$
   is extremely simple: it is generated by only two elements.  One of them is the element $H_p +q^{-1}$, where $H_p$ is the Casimir element defined in Definition \ref{def_central_elements}. The second is defined as follows. Let $e\in \mathcal{E}(\Delta)$ be an edge with exactly one endpoint adjacent to $p$. Replace $e$ by two parallel copies $e'$ and $e''$ of the same arc bounding a bigon and let $\mathbf{\Sigma}'(e)$ be the marked surface obtained by removing the interior of this bigon like in Figure \ref{fig_off_diagonal}; so $\mathbf{\Sigma}'$ is obtained from $\mathbf{\Sigma}'(e)$ by gluing $e'$ and $e''$. Since $p$ is a boundary puncture of $\mathbf{\Sigma}'(e)$, we can consider its associated bad arc $\alpha(p)_{-+}$ and denote by $\alpha(p)_{+-}$ the same stated arc with opposite states. The triangulation $\Delta$ of $\mathbf{\Sigma}'$ induces a triangulation $\Delta(e)$ of $\mathbf{\Sigma}'(e)$ and we have a commutative diagram: 
   $$ \begin{tikzcd}
   \mathcal{S}_A(\mathbf{\Sigma}') \arrow[r, hook, "\theta_{e'\#e''}"] \arrow[d, hook, "\tr^{\Delta}"] & 
   \mathcal{S}_A(\mathbf{\Sigma}'(e)) \arrow[d, hook, "\tr^{\Delta(e)}"] \\
   \mathcal{Z}_{\omega}(\mathbf{\Sigma}', \Delta) \arrow[r, hook, "\theta_{e'\#e''}"] & 
   \mathcal{Z}_{\omega}(\mathbf{\Sigma}'(e), \Delta(e))
   \end{tikzcd}
   $$
     
   \begin{figure}[!h] 
\centerline{\includegraphics[width=7cm]{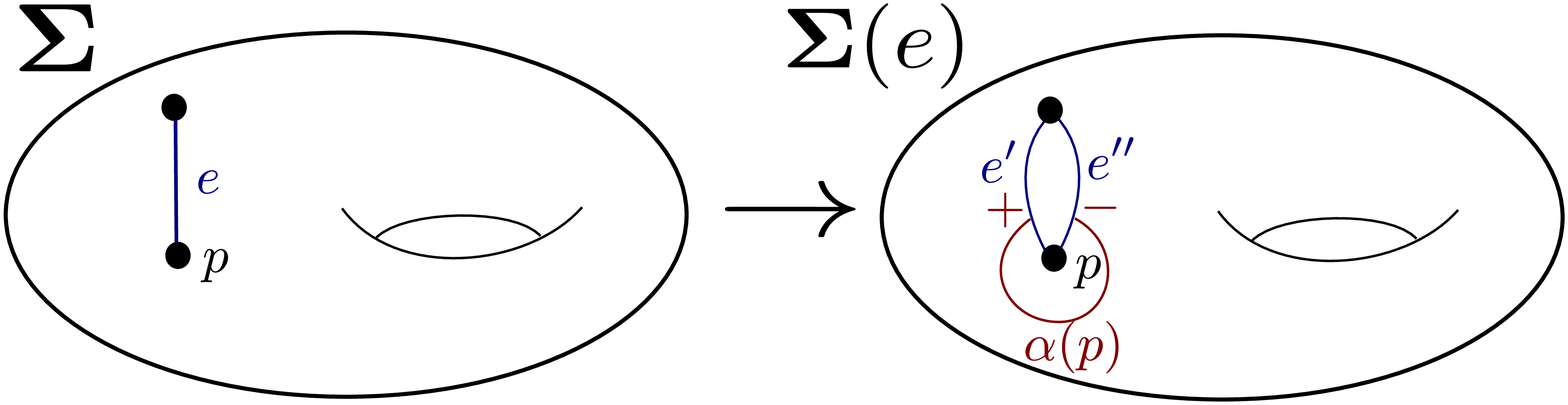} }
\caption{The skein element $\alpha(p)_{+-}$ defining the off-diagonal element $Q_p$.} 
\label{fig_off_diagonal} 
\end{figure} 

The element $\tr^{\Delta(e)} (\alpha(p)_{+-})$ belongs to the image of $\theta_{e'\#e''}$, i.e. there exists an element $Q_p \in \mathcal{Z}_{\omega}(\mathbf{\Sigma}', \Delta)$ such that $\theta_{e'\#e''}(Q_q) = \tr^{\Delta(e)} (\alpha(p)_{+-})$. The element $Q_p$ only depends on the choice of the adjacent edge $e$ up to multiplication by an invertible element of $\mathcal{Z}_{\omega}(\mathbf{\Sigma}', \Delta)$ and is called the \textit{off diagonal element}. Note that in matrix notations we have
$$ \tr^{\Delta(e)} \begin{pmatrix} \alpha(p)_{++} & \alpha(p)_{+-} \\ \alpha(p)_{-+} & \alpha(p)_{--} \end{pmatrix} - \begin{pmatrix} -q^{-1} & 0 \\ 0 & -q \end{pmatrix} = \theta_{e'\#e''} \begin{pmatrix} H_p + q^{-1} & Q_p \\ 0 & H_p^{-1}+q \end{pmatrix}, $$

which explains the name "off diagonal" and permits to prove the

\begin{proposition}(Bonahon-Wong \cite{BonahonWong3})
 If none of the edges of $\Delta$ adjacent to $p$ is a loop edge, then the ideal $J_p$ is generated by $H_p+q^{-1}$ and $Q_p$.
 \end{proposition}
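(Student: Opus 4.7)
The plan is to prove the proposition by exploiting the edge-doubling trick already used to define $Q_p$, and deduce the two containments $(H_p+q^{-1},Q_p) \subseteq J_p$ and $J_p \subseteq (H_p+q^{-1},Q_p)$ separately. Since no edge of $\Delta$ adjacent to $p$ is a loop, we can pick such an edge $e$ and form $\mathbf{\Sigma}'(e)$ by splitting $e$ into parallel copies $e',e''$ that bound a bigon containing the boundary puncture $p$; then $\mathbf{\Sigma}'$ is recovered from $\mathbf{\Sigma}'(e)$ by gluing $e'$ and $e''$, and in $\mathbf{\Sigma}'(e)$ we have access to the stated corner arc $\alpha(p)$ and its four stated versions $\alpha(p)_{ij}$.

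For the inclusion $(H_p+q^{-1},Q_p)\subseteq J_p$, the key observation is that filling in the inner puncture $p$ of $\mathbf{\Sigma}'(e)$ collapses the bigon onto a disc, so that the arc $\alpha(p)$ becomes a trivial stated arc whose value in the filled surface is prescribed by the trivial-arc relations \eqref{trivial_arc_rel}; concretely the four classes $\alpha(p)_{++}+q^{-1}$, $\alpha(p)_{+-}$, $\alpha(p)_{-+}$, $\alpha(p)_{--}+q$ all lie in the off-puncture ideal of $\mathcal{S}_A(\mathbf{\Sigma}'(e))$, which maps, after regluing $e'$ with $e''$, into $\mathcal{I}_p \subset \mathcal{S}_A(\mathbf{\Sigma}')$. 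Applying $\tr^{\Delta(e)}$ to this matrix of elements and invoking the given identity $\tr^{\Delta(e)}(M(\alpha(p)))-\mathrm{diag}(-q^{-1},-q)=\theta_{e'\#e''}\bigl(\begin{smallmatrix}H_p+q^{-1}&Q_p\\0&H_p^{-1}+q\end{smallmatrix}\bigr)$ together with the injectivity of the gluing map $\theta_{e'\#e''}$, one concludes that $H_p+q^{-1}$ and $Q_p$ both lie in $J_p$ (and $H_p^{-1}+q=qH_p^{-1}(H_p+q^{-1})$ is then redundant).

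For the reverse inclusion $J_p\subseteq (H_p+q^{-1},Q_p)$, Proposition \ref{prop_offpuncture} reduces us to showing that for every pair $(\gamma_1,\gamma_2)$ of connected closed curves in $\Sigma'$ that are isotopic after filling $p$, the element $\tr^\Delta([\gamma_1]-[\gamma_2])$ lies in $(H_p+q^{-1},Q_p)$. Any such isotopy can be decomposed into a finite sequence of elementary moves: isotopies inside $\Sigma'$ (which do not change the skein class) and finger moves pushing a strand across the disc at $p$. Using the skein relations in $\mathcal{S}_A(\mathbf{\Sigma}'(e))$, any single finger move across $p$ can be expressed as applying the state-sum cutting-arc relation \eqref{cutting_arc_rel} along $\alpha(p)$ and then using the stated-arc identities modulo the off-puncture ideal of $\mathbf{\Sigma}'(e)$, which by Step 2 maps into $(H_p+q^{-1},Q_p)$ after regluing. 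Together with the standard peripheral-curve identity $\tr^\Delta(\gamma_p+q+q^{-1})=(H_p+q^{-1})(1+qH_p^{-1})$, this shows that every generator of $\mathcal{I}_p$ has quantum trace in $(H_p+q^{-1},Q_p)$.

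The main obstacle will be Step 3, i.e.\ verifying that every finger move of a strand past $p$ really can be reduced, inside $\mathcal{S}_A(\mathbf{\Sigma}'(e))/\mathcal{I}_p^{(e)}$, to scalar multiples of the four matrix entries controlled by the identity above; this requires an induction on the number of intersection points of the strand with the bigon, combined with careful bookkeeping of states (hence the need for the full matrix $\alpha(p)_{ij}$, not just the bad arc). The loop-freeness hypothesis is essential here: it guarantees that $e',e''$ are distinct boundary arcs in $\mathbf{\Sigma}'(e)$ and that the bigon around $p$ is embedded, so that the cutting-arc relation along $\alpha(p)$ is unambiguous and the injectivity of $\theta_{e'\#e''}$ applies cleanly.
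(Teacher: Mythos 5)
The paper does not prove this proposition; it cites Bonahon--Wong \cite{BonahonWong3} and only records the matrix identity relating $\tr^{\Delta(e)}$ of the corner arcs $\alpha(p)_{ij}$ to $\theta_{e'\#e''}$ of $H_p+q^{-1}$, $Q_p$, and $H_p^{-1}+q$, so your proposal has to stand on its own. It has two genuine gaps. In Step~1 you write that the off-puncture ideal of $\mathcal{S}_A(\mathbf{\Sigma}'(e))$ ``maps, after regluing $e'$ with $e''$, into $\mathcal{I}_p$'', but the gluing morphism $\theta_{e'\#e''}$ points from $\mathcal{S}_A(\mathbf{\Sigma}')$ into $\mathcal{S}_A(\mathbf{\Sigma}'(e))$, not the reverse: there is no ``regluing'' algebra map. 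What you would actually need is $\theta_{e'\#e''}^{-1}\bigl(J_p^{(e)}\bigr)\subseteq J_p$, which requires checking that $\theta_{e'\#e''}$ intertwines filling the inner puncture $p$ of $\mathbf{\Sigma}'$ with merging $e',e''$ across the boundary puncture $p$ of $\mathbf{\Sigma}'(e)$; you do not address this compatibility, and $p$ is a boundary puncture of $\mathbf{\Sigma}'(e)$, not an inner one as you assert. Moreover the only element of $\mathcal{I}_p$ you actually exhibit is $\gamma_p + q + q^{-1}$, whose trace factors as $(H_p + q^{-1})\,H_p^{-1}(H_p+q)$; since $H_p+q$ is not a unit of $\mathcal{Z}_\omega(\mathbf{\Sigma}',\Delta)$, this does not by itself put $H_p+q^{-1}$ in $J_p$.

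For the reverse inclusion you have identified the right tools (Proposition~\ref{prop_offpuncture}, the cutting-arc relation along $\alpha(p)$, and the vanishing of the bad arc under $\tr^{\Delta(e)}$), but you explicitly flag the decisive step --- that the quantum trace of every elementary finger move past $p$ lands in $(H_p+q^{-1},Q_p)$ --- as ``the main obstacle'' and leave it as an unwritten ``induction with careful bookkeeping of states''. That is precisely where the content of the proposition lives: one must expand $\theta_{e'\#e''}$ of a generator $[D_1,s_1]-[D_2,s_2]$ of $\mathcal{I}_p$ in terms of the $\alpha(p)_{ij}$ for a strand crossing $e$ an arbitrary number of times near $p$, discard the bad-arc contributions, and reorganize what remains as a $\mathcal{Z}_\omega$-linear combination of $H_p+q^{-1}$ and $Q_p$. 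Without carrying out that computation the proposal is an outline of a plausible strategy, not a proof.
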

 
 This proposition explains why we need to consider combinatorial triangulations and impose $r(H_p)=-q^{-1}\id_V$. Thanks to this proposition, we see that $V^0= \cap_{p} \ker (\rho'(Q_p))$ and the rest of the proof consists in evaluating the dimension of $\ker (\rho'(Q_p))$.
  
  \subsection{The unicity representations theorem and Azumaya locus}
  
  Let $R$ be a prime complex algebra which is finitely generated over its center $Z$ and let $D$ its PI-dimension. Write $\mathcal{X}:= \mathrm{Specm}(Z)$ and for $x\in \mathcal{X}$ corresponding to a maximal ideal $\mathfrak{m}_x \subset Z$, consider the finite dimensional algebra
  $$R_x:= \quotient{R}{\mathfrak{m}_x R}.$$
  
  \begin{definition} The \textit{Azumaya locus} of $R$ is the subset 
  $$ \mathcal{AL}(R):= \{ x \in \mathcal{X} | R_x \cong Mat_D(\mathbb{C}) \}, $$
  where $Mat_D(\mathbb{C})$ is the algebra of $D\times D$ matrices.
  \end{definition}
  
  Note that any irreducible representation $\rho: R\to \End(V)$ sends central elements to scalar operators so admits a (unique) $x\in \mathcal{X}$ such that $\mathfrak{m}_x\mathcal{A}\subset \ker(\rho)$. 
  
  \begin{remark}\label{remark_AzumayaLocus}
 An irreducible representation  $\rho: \mathcal{A}\to \End(V)$ sends central elements to scalar operators so induces a point  $x\in \mathcal{X}$. If $x\in \mathcal{AL}$, then $V$ is $D$ dimensional. By a theorem of Posner, if $x$ does not belong to the Azumaya locus, then $R_x$ has PI-dimension strictly smaller than $R$, therefore any irreducible representation $\rho: R\to \End(V)$ inducing $x$ has dimension $\dim(V)<D$. So the Azumaya locus admits the following alternative definition:
  $$\mathcal{AL}(R)=\{ x \in \mathcal{X} | x \mbox{ is induced by an irrep of maximal dimension }D\}.$$
  Therefore, if $\rho:R\to \End(V)$ is a $D$-dimensional central representation inducing $x\in \mathcal{X}$, then $\rho$ is irreducible if and only if $x\in \mathcal{AL}$.
  \end{remark}
  
  \begin{definition}
  Let $R$ be a prime, finitely generated $\mathbb{C}$ algebra which has finite rank $D^2$ over its center $Z$. So if $S:=Z\setminus \{0\}$, then by Theorem \ref{theorem_Posner}, $Q= RS^{-1}$ is a central simple algebra with center $F:= ZS^{-1}$ and $F$ admits a field extension $\overline{F}$ such that $R\otimes_Z\overline{F} \cong M_D(\overline{F})$. 
  \begin{enumerate}
  \item The \textit{reduced trace} is the composition
  $$tr : R \hookrightarrow R\otimes_Z \overline{F} \cong M_D(F) \xrightarrow{tr_D} F, $$
  where $tr_D ((a_{ij})_{i,j}):= \frac{1}{D}\sum_i a_{ii}$ (so that $tr(1)=1$).
By \cite[Theorem $10.1$]{Reiner03}, $tr$ takes values in $Z$.  
  \item 
  The \textit{discriminant ideal} of $R$ is the ideal $\mathcal{D} \subset Z$ generated by the elements 
  $$  \det\left( tr(x_ix_j) \right) \in Z \quad, (x_1, \ldots, x_{D^2}) \in R^{D^2}.$$
 
  \end{enumerate}
  \end{definition}

  \begin{theorem}\label{theorem_UnicityRep}(Brown-Milen \cite[Theorem $1.2$]{Brown_AL_discriminant}) If $R$ is $(1)$ prime, $(2)$  finitely generated as an algebra and $(3)$ finitely generated as a module over its center $Z$, then 
  $$ \mathcal{AL}(R)= \mathrm{Specm}(Z) \setminus V(\mathcal{D}).$$
   \end{theorem}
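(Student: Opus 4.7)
The plan is to establish both inclusions by analyzing the reduced trace form $\langle a,b\rangle := \tr(ab)$, which is a symmetric $Z$-bilinear form $R \times R \to Z$. The discriminant ideal $\mathcal{D}$ is by construction the ideal generated by the Gram determinants of this form against arbitrary $D^2$-tuples, so $\mathcal{D} \not\subseteq \mathfrak{m}_x$ precisely means that the induced $\mathbb{C}$-bilinear trace form on $R_x = R/\mathfrak{m}_x R$ is non-degenerate on some $D^2$-subset, which (combining with the fact that $R_x$ has $\mathbb{C}$-dimension at most $D^2$) is the same as saying the whole form on $R_x$ is non-degenerate.

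First I would prove the inclusion $\mathcal{AL}(R) \subseteq \mathrm{Specm}(Z)\setminus V(\mathcal{D})$. If $x \in \mathcal{AL}(R)$, then by definition $R_x \cong \mathrm{Mat}_D(\mathbb{C})$, and in particular $\dim_{\mathbb{C}}(R_x)=D^2$. The trace form on $\mathrm{Mat}_D(\mathbb{C})$ is the usual normalized matrix trace pairing, which is well known to be non-degenerate (its Gram matrix in the basis of elementary matrices is, up to scalar, a permutation matrix). Since the formation of the reduced trace commutes with the specialization $R \to R_x$, one extracts a $D^2$-tuple $(x_1,\dots,x_{D^2})$ in $R$ whose images in $R_x$ form such a basis, and the corresponding generator of $\mathcal{D}$ does not lie in $\mathfrak{m}_x$.

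Next I would prove the reverse inclusion, which is the more substantial direction. Assume $x \notin V(\mathcal{D})$, so there exist elements $x_1, \dots, x_{D^2} \in R$ whose Gram determinant $\det(\tr(x_ix_j))$ is a unit at $x$. Nakayama plus the fact that $R$ is finite over $Z$ of generic rank $D^2$ shows that these elements generate $R_x$ as a $\mathbb{C}$-module and $\dim_{\mathbb{C}} R_x = D^2$; moreover the trace form on $R_x$ is non-degenerate. Non-degeneracy of the trace form is the classical criterion for separability of a finite-dimensional algebra, so $R_x$ is separable over $\mathbb{C}$, hence semisimple. By Artin–Wedderburn, $R_x \cong \prod_i \mathrm{Mat}_{d_i}(\mathbb{C})$ with $\sum d_i^2 = D^2$. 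Finally, since $R$ has PI-degree $D$, every irreducible $R_x$-representation has dimension at most $D$, so each $d_i \leq D$; the constraint $\sum d_i^2 = D^2$ then forces a single factor with $d_1 = D$, i.e.\ $R_x \cong \mathrm{Mat}_D(\mathbb{C})$.

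The main obstacle is the last paragraph: making sure that the reduced trace defined generically (via the field extension $\overline{F}$) truly behaves well under specialization to $R_x$, and cleanly arguing that non-degeneracy of the specialized trace form actually yields a matrix algebra of the maximal size $D$. The bound $d_i \leq D$ is the key input coming from PI-theory (Kaplansky's theorem on PI-rings), and the rest is linear algebra; one should also verify carefully that the hypotheses on $R$ (prime, module-finite over $Z$) are enough to guarantee that $\tr$ takes values in $Z$ and that $R$ has constant rank $D^2$ at every closed point of $\mathrm{Specm}(Z)$, which is where the assumption that $R$ is module-finite over its noetherian center is used.
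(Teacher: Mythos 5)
The paper does not prove this theorem; it simply cites Brown--Yakimov, so there is no internal proof to compare against. Judged on its own terms, your argument correctly identifies the shape of the proof (both inclusions via the trace form and a separability/Artin--Wedderburn argument), and the inclusion $\mathcal{AL}(R)\subseteq\mathrm{Specm}(Z)\setminus V(\mathcal{D})$ is essentially fine modulo the specialization compatibility you flag. But the converse direction has a genuine gap.

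The problem is the claim that a nonvanishing Gram determinant forces $\dim_{\mathbb{C}}R_x=D^2$. Upper semicontinuity of fiber dimension gives $\dim_{\mathbb{C}}R_x\geq D^2$, not $\leq D^2$; for a non-flat finite module the fiber can jump \emph{up} above the generic rank. Your appeal to Nakayama is circular: to run Nakayama you would already need to know that the $\bar{x}_i$ generate $R_x$, which is the thing to be proved. Concretely, a priori $R_x$ could be $V\oplus V^{\perp}$ with $V=\mathrm{span}(\bar{x}_i)$ of dimension $D^2$ carrying a nondegenerate form and $V^{\perp}\neq 0$; nothing in your argument excludes this. The missing ingredient is that $R$, being prime, is $Z$-torsion-free, and this is what rigidifies the picture: set $M=\sum Z_{\mathfrak m}x_i\subseteq R_{\mathfrak m}$, note that the unit Gram matrix makes $M$ free of rank $D^2$ and the pairing map $M\to\mathrm{Hom}_{Z_{\mathfrak m}}(M,Z_{\mathfrak m})$ an isomorphism, then observe that $\psi\colon R_{\mathfrak m}\to\mathrm{Hom}_{Z_{\mathfrak m}}(M,Z_{\mathfrak m})$, $r\mapsto\tr(r\cdot)|_M$, is injective because its kernel is $Z$-torsion (the form is nondegenerate over $\mathrm{Frac}(Z)$) and $R$ is torsion-free. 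Chasing $M\hookrightarrow R_{\mathfrak m}\hookrightarrow\mathrm{Hom}(M,Z_{\mathfrak m})\cong M$ forces $R_{\mathfrak m}=M$, so $R_{\mathfrak m}$ is free of rank $D^2$ and $\dim_{\mathbb{C}}R_x=D^2$ with nondegenerate trace form. Only then do your separability and PI-degree arguments apply. You should also pin down the comparison between the specialized reduced trace and the regular-representation trace on $R_x$ before invoking the separability criterion, although in characteristic $0$ this is a routine proportionality once $R_{\mathfrak m}$ is known to be $Z_{\mathfrak m}$-free.
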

  
In particular, Theorem \ref{theorem_UnicityRep} implies that the Azumaya locus is an open dense set. This fact seems to be well-known to the experts since a long time (see e.g. \cite{BrownGoodearl}) though the author was not able to find to whom attribute this folklore result. The reduced trace for skein algebra of unmarked surfaces was computed in \cite{FrohmanKaniaLe_DimSkein} though the discriminant and the associated Azumaya loci are still unknown (in genus $g\geq 2$).
  
  \begin{corollary}\label{coro_unicity}(\cite{FrohmanKaniaLe_UnicityRep} for unmarked surfaces, \cite[Theorem $1.2$]{KojuAzumayaSkein} for marked surfaces) 
  \begin{enumerate}
  \item 
  The Azumaya loci of $\mathcal{S}_A(\mathbf{\Sigma})$ and $\mathcal{S}^{red}_A(\mathbf{\Sigma})$ are open dense subsets.
  \item There exists an open dense subset $\mathcal{O}\subset \mathcal{X}^{red}(\mathbf{\Sigma})$ so that every irreducible $\mathcal{S}^{red}_A(\mathbf{\Sigma})$ representation with induced character in $\mathcal{O}$  
  is isomorphic to a quantum Teichm\"uller representation and every quantum Teichm\"uller representation with character in $\mathcal{O}$ is irreducible.
  \end{enumerate}
  \end{corollary}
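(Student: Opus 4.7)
The plan is to deduce both assertions from Brown--Milen's Theorem \ref{theorem_UnicityRep}, which identifies the Azumaya locus of a prime, affine $\mathbb{C}$-algebra that is module-finite over its center with the complement of the vanishing locus of the discriminant ideal. For part $(1)$, I will verify the three hypotheses (prime, finitely generated as algebra, finitely generated over the center) for both $\mathcal{S}_A(\mathbf{\Sigma})$ and $\mathcal{S}^{red}_A(\mathbf{\Sigma})$; for part $(2)$, I will combine $(1)$ with the fact that quantum tori are Azumaya of constant rank, the equality of PI-dimensions in Theorem \ref{theorem_center}, and the dominance of $\mathcal{NA}$ noted before the statement.

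For $(1)$: primeness of both algebras follows from Theorem \ref{theorem_domain} (they are domains over the ground ring $k=\mathbb{C}$), and for $\mathcal{S}^{red}_A(\mathbf{\Sigma})$ one also has the alternative argument via the embedding $\tr^\Delta$ into the quantum torus. Finite generation as a $\mathbb{C}$-algebra follows from the explicit finite presentations of Theorem \ref{theorem_presentation} when $\mathcal{A}\neq\emptyset$, and from the classical results cited at the beginning of Section \ref{sec_presentations} in the unmarked case, combined with the surjection $f_\ast:\mathcal{S}_A(\mathbf{\Sigma}_{g,1},\emptyset)\twoheadrightarrow\mathcal{S}_A(\Sigma_{g,0},\emptyset)$ of \eqref{eq_off_puncture} for closed surfaces. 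Finite generation over the center is precisely Theorem \ref{theorem_center}. Theorem \ref{theorem_UnicityRep} then identifies the Azumaya locus with the open set $\mathrm{Specm}(Z)\setminus V(\mathcal{D})$, and density follows from $\mathcal{D}\neq (0)$: after localizing at $Z\setminus\{0\}$ one obtains a central simple algebra of rank $D^2$, so the reduced-trace pairing is non-degenerate on $R\otimes_Z F$ and one finds $(x_1,\ldots,x_{D^2})\in R^{D^2}$ with $\det(\tr(x_ix_j))\neq 0$.

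For $(2)$: by the discussion preceding the corollary, the image of $\mathcal{NA}:\mathcal{Y}(\mathbf{\Sigma},\Delta)\to\mathcal{X}^{red}(\mathbf{\Sigma})$ contains a Zariski open dense subset $\mathcal{U}$ (since $\tr^\Delta$ is injective and $\mathcal{X}^{red}(\mathbf{\Sigma})$ is irreducible). Let $\mathcal{AL}\subset\mathcal{X}^{red}(\mathbf{\Sigma})$ be the Azumaya locus supplied by $(1)$ and set $\mathcal{O}:=\mathcal{U}\cap\mathcal{AL}$, which is open dense. For $x\in\mathcal{O}$, pick $y\in\mathcal{NA}^{-1}(x)$ and let $r:\mathcal{Z}_\omega(\mathbf{\Sigma},\Delta)\to\End(V)$ be the irreducible representation with central character $y$; since $\mathcal{Z}_\omega(\mathbf{\Sigma},\Delta)$ is Azumaya of constant rank equal to its PI-dimension, $\dim V$ equals $\mathrm{PI\text{-}Dim}(\mathcal{Z}_\omega(\mathbf{\Sigma},\Delta))$, which by Theorem \ref{theorem_center} coincides with the PI-dimension $D$ of $\mathcal{S}^{red}_A(\mathbf{\Sigma})$. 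The quantum Teichm\"uller representation $\rho^{BW}=r\circ\tr^\Delta$ then has classical shadow $x$ and dimension $D$, hence is irreducible by Remark \ref{remark_AzumayaLocus}. Conversely, any irreducible representation with shadow $x\in\mathcal{AL}$ has dimension $D$ and, since $\mathcal{S}^{red}_A(\mathbf{\Sigma})/\mathfrak{m}_x\mathcal{S}^{red}_A(\mathbf{\Sigma})\cong M_D(\mathbb{C})$ has a unique simple module up to isomorphism, it is isomorphic to $\rho^{BW}$.

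The only delicate point is bookkeeping: one must check that the notion of ``classical shadow'' used in the definition of $\mathcal{NA}$ (i.e.\ the character induced on the Chebyshev--Frobenius image $\mathcal{S}^{red}_{+1}(\mathbf{\Sigma})\subset Z(\mathcal{S}^{red}_A(\mathbf{\Sigma}))$) is compatible with the Azumaya locus $\mathcal{AL}\subset\mathcal{X}^{red}(\mathbf{\Sigma})$ produced by $(1)$, because \emph{a priori} Theorem \ref{theorem_UnicityRep} provides the Azumaya locus in $\mathrm{Specm}(Z)$ rather than in $\mathcal{X}^{red}(\mathbf{\Sigma})$. Since $Z$ is module-finite over $\mathcal{S}^{red}_{+1}(\mathbf{\Sigma})$ (explicitly generated by peripheral curves, boundary elements and the Frobenius image per Theorem \ref{theorem_center}), the forgetful map $\mathrm{Specm}(Z)\to\mathcal{X}^{red}(\mathbf{\Sigma})$ is finite surjective, and the image of its Azumaya locus is constructible and dense; taking $\mathcal{O}$ to be an open dense subset of this image intersected with $\mathcal{U}$ is all that is needed. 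This is the main technical step; everything else reduces to invoking the cited black boxes.
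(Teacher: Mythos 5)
Your proof takes essentially the same route as the paper's: invoke Brown--Milen (Theorem \ref{theorem_UnicityRep}) together with Theorem \ref{theorem_center} for part $(1)$, and for part $(2)$ combine the PI-dimension match between $\mathcal{S}^{red}_A(\mathbf{\Sigma})$ and $\mathcal{Z}_\omega(\mathbf{\Sigma},\Delta)$ with the dominance of the induced map on Specm's coming from the injective quantum trace, then set $\mathcal{O}$ equal to the intersection of the dense image with the Azumaya locus. You add welcome detail that the paper leaves implicit (density via non-vanishing of the discriminant, the precise hypotheses of Brown--Milen), and you correctly flag the ``delicate point'' that the Azumaya locus naturally lives in $\mathrm{Specm}(\mathcal{Z})$ rather than in $\mathcal{X}^{red}(\mathbf{\Sigma})$ --- a distinction the paper itself elides (it writes $\mathrm{Specm}(\mathcal{Z})=\mathcal{X}(\mathbf{\Sigma})$, which is not literally correct).

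However, one sentence in your converse step is off as written: you assert that $\mathcal{S}^{red}_A(\mathbf{\Sigma})/\mathfrak{m}_x\mathcal{S}^{red}_A(\mathbf{\Sigma})\cong M_D(\mathbb{C})$ has a unique simple module, but if $\mathfrak{m}_x$ is the maximal ideal corresponding to a \emph{classical shadow} $x\in\mathcal{X}^{red}(\mathbf{\Sigma})$, this quotient is $M_{D'}\!\left(Z^{red}(x)\right)$ (Corollary \ref{theorem_BG_Ramifications}), which has several non-isomorphic simple modules, one for each character of the fiber $Z^{red}(x)$. The fix is exactly the bookkeeping you sketch afterwards but should carry through: run the argument over $\mathrm{Specm}(\mathcal{Z})$, where the Azumaya locus is literally defined and $\mathcal{S}^{red}_A(\mathbf{\Sigma})/\mathfrak{m}\,\mathcal{S}^{red}_A(\mathbf{\Sigma})\cong M_{D'}(\mathbb{C})$ for $\mathfrak{m}$ a maximal ideal of $\mathcal{Z}$ in the Azumaya locus, and only afterwards pass to the fully Azumaya locus in $\mathcal{X}^{red}(\mathbf{\Sigma})$ via the finite surjective map $\mathrm{Specm}(\mathcal{Z})\to\mathcal{X}^{red}(\mathbf{\Sigma})$. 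With that adjustment the proof is complete and matches the paper's strategy.
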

  
   Said differently, a generic irreducible representation is of quantum Teichm\"uller type and a generic quantum Teichm\"uller representation is irreducible. Corollary \ref{coro_unicity} was conjectured by Bonahon and Wong in \cite{BonahonWong2}.

  \begin{proof}
  The first assertion is proved by putting Theorem \ref{theorem_UnicityRep} and Theorem \ref{theorem_center} together. The second follows from the fact, proved in Theorems \ref{theorem_center},  \ref{theorem_centerCF} and \ref{theorem_QTClosed} that the PI-dimension of $\mathcal{S}^{red}_A(\mathbf{\Sigma})$ is equal to the dimension of its quantum Teichm\"uller representations. Since the quantum trace sends injectivly the center $\mathcal{Z}$ of $\mathcal{S}^{red}_A(\mathbf{\Sigma})$  into the center $\mathcal{Z}'$ of $\mathcal{Z}_{\omega}(\mathbf{\Sigma}, \Delta)$, it induces a dominant map $f: \mathrm{Specm}(\mathcal{Z}') \to \mathrm{Specm}(\mathcal{Z})=\mathcal{X}(\mathbf{\Sigma})$ and since $\mathcal{X}(\mathbf{\Sigma})$ is irreducible (by Theorem \ref{theorem_domain}), the image of $f$ contains a Zariski open dense subset $U\subset \mathcal{X}(\mathbf{\Sigma})$ and we can choose $\mathcal{O}:= U \cap \mathcal{AL}$.
\end{proof}
  
  \begin{notations}
  Let $\mathcal{Z}$ and $\mathcal{Z}^{red}$ denote the centers of $\mathcal{S}_A(\mathbf{\Sigma})$ and $\mathcal{S}^{red}_A(\mathbf{\Sigma})$ respectively and set $\widehat{\mathcal{X}}(\mathbf{\Sigma}):= \mathrm{Specm}(\mathcal{Z})$ and $\widehat{\mathcal{X}}^{red}(\mathbf{\Sigma}):= \mathrm{Specm}(\mathcal{Z}^{red})$. We denote by $\pi: \widehat{\mathcal{X}}(\mathbf{\Sigma})\to {\mathcal{X}}(\mathbf{\Sigma})$ and  $\pi': \widehat{\mathcal{X}}^{red}(\mathbf{\Sigma})\to {\mathcal{X}}^{red}(\mathbf{\Sigma})$
    the branched covering induced by the Chebyshev-Frobenius morphisms.
    \end{notations}
    
    \begin{definition} The \textit{fully Azumaya locus}  is the subset $\mathcal{FAL}(\mathbf{\Sigma})\subset \mathcal{X}(\mathbf{\Sigma})$ of elements $x$ such that all elements of $\pi^{-1}(x)$ belong to the Azumaya locus of $\mathcal{S}_A(\mathbf{\Sigma})$. The fully Azumaya locus $\mathcal{FAL}^{red}(\mathbf{\Sigma})\subset \mathcal{X}^{red}(\mathbf{\Sigma})$ is defined similarly. 
    \end{definition}

The fully Azumaya locus was introduced by Brown and Gordon in \cite{BrownGordon_ramificationcenters} in the case of the bigon. Since the projection map $\pi: \widehat{\mathcal{X}}(\mathbf{\Sigma}) \to \mathcal{X}(\mathbf{\Sigma})$ is finite, and since finite morphisms send closed sets to closed sets (\cite[Ex.2.35(b)]{Hart}), Corollary \ref{coro_unicity} implies that the fully Azumaya loci are open dense subsets. The key theorem to work over the fully Azumaya locus is the:

\begin{theorem}(Brown-Gordon \cite[Corollary $2.7$]{BrownGordon_ramificationcenters})
Let $\mathcal{A}$ be an affine prime $\mathbb{C}$-algebra finitely generated over its center $\mathcal{Z}$ and denote by $D$ its PI-dimension. Let $R\subset \mathcal{Z}$ be a subalgebra such that $\mathcal{Z}$ if finitely generated as a $R$-module. Let $M\in \mathcal{AL}(\mathcal{A})$ and $\mathfrak{m}:=M\cap R$. Then
$$ \quotient{\mathcal{A}}{\mathfrak{m} \mathcal{A}} \cong Mat_D \left(\quotient{\mathcal{Z}}{\mathfrak{m}\mathcal{Z}}\right).$$
\end{theorem}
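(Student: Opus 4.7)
The statement is to be understood in the context of the Poisson order $(\mathcal{A},R,\partial)$ implicit in this section: beyond the ring-theoretic hypotheses given, $R$ carries a Poisson bracket and there is a $\mathbb{C}$-linear map $\partial : R \to \mathrm{Der}_\mathbb{C}(\mathcal{A})$ with $\partial_r|_R = \{r,-\}$ and $\partial_r(\mathcal{Z})\subseteq\mathcal{Z}$, arising for (reduced) stated skein algebras from the deformation quantization of Section \ref{sec_DefoQuantization} with $R$ the image of the Chebyshev–Frobenius morphism. My plan is to deduce the claim in three stages: first, use that Azumaya-ness is an open condition on $\mathrm{Specm}(\mathcal{Z})$; second, use the Poisson derivations $\partial_r$ to propagate Azumaya-ness from $M$ to every maximal ideal of $\mathcal{Z}$ lying over $\mathfrak{m}$; third, invoke the triviality of Brauer classes of local $\mathbb{C}$-algebras to identify the resulting Azumaya algebra over $\mathcal{Z}/\mathfrak{m}\mathcal{Z}$ with a matrix algebra.

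\textbf{Stages 1 and 2 (propagation of Azumaya-ness).} By Theorem \ref{theorem_UnicityRep}, $\mathcal{AL}(\mathcal{A}) = \mathrm{Specm}(\mathcal{Z}) \setminus V(\mathcal{D})$ is open, so some distinguished open $D(f)$ with $f\in\mathcal{Z}\setminus M$ already lies inside $\mathcal{AL}(\mathcal{A})$, giving $\mathcal{A}[f^{-1}]$ Azumaya of constant rank $D^2$ over $\mathcal{Z}[f^{-1}]$. To upgrade this from "$M$ Azumaya'' to "every $M'\in\pi^{-1}(\mathfrak{m})$ Azumaya'', I will exploit the derivations $\partial_r$: each preserves $\mathcal{Z}$, and, formally integrating, $\exp(t\partial_r)$ becomes an algebra automorphism of $\mathcal{A}$ (and of $\mathcal{Z}$) which yields isomorphisms $\mathcal{A}/M\mathcal{A}\xrightarrow{\cong}\mathcal{A}/\exp(t\partial_r)(M)\mathcal{A}$. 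Azumaya-ness is therefore preserved along $\partial$-flow orbits in $\mathrm{Specm}(\mathcal{Z})$. The main technical obstacle is orbit transitivity on the fiber: since for $r\in R$ the induced flow on $\mathrm{Specm}(R)$ is the Hamiltonian $\{r,-\}$ and generally moves $\mathfrak{m}$, one does not directly stay inside $\pi^{-1}(\mathfrak{m})$. Following the Brown–Gordon approach, this is handled by passing to the $\mathfrak{m}$-adic completion $\widehat{\mathcal{A}}_\mathfrak{m}$ and combining flows of derivations $\partial_{r_1},\dots,\partial_{r_k}$ for a system $r_i$ of generators of $\mathfrak{m}$: any loop in the base starting and ending at $\mathfrak{m}$ lifts to a monodromy automorphism of the complete fiber ring, and one checks that the group generated by all such monodromies acts transitively on the finite set $\pi^{-1}(\mathfrak{m})$.

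\textbf{Stage 3 (Azumaya over a semilocal Artin $\mathbb{C}$-algebra is a matrix algebra).} Once every $M'\in\pi^{-1}(\mathfrak{m})$ is Azumaya, the base change $\overline{\mathcal{A}} := \mathcal{A}\otimes_{\mathcal{Z}}(\mathcal{Z}/\mathfrak{m}\mathcal{Z})$ is Azumaya of constant rank $D^2$ over $\overline{\mathcal{Z}} := \mathcal{Z}/\mathfrak{m}\mathcal{Z}$. By Nullstellensatz applied to $R$ and finiteness of $\mathcal{Z}$ over $R$, $\overline{\mathcal{Z}}$ is a finite-dimensional commutative $\mathbb{C}$-algebra, hence decomposes as $\overline{\mathcal{Z}} \cong \prod_i \overline{\mathcal{Z}}_{M_i}$ with each factor local Artinian with residue field $\mathbb{C}$. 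Such a ring has trivial Brauer group — idempotents lift through a nilpotent radical (Hensel), and a central simple algebra over $\mathbb{C}$ is a matrix algebra — so each localization satisfies $\overline{\mathcal{A}}_{M_i}\cong\mathrm{Mat}_D(\overline{\mathcal{Z}}_{M_i})$. Taking the product over $i$ gives $\overline{\mathcal{A}} \cong \mathrm{Mat}_D(\overline{\mathcal{Z}})$, which is exactly the desired identification $\mathcal{A}/\mathfrak{m}\mathcal{A}\cong\mathrm{Mat}_D(\mathcal{Z}/\mathfrak{m}\mathcal{Z})$. As indicated above, the delicate step is Stage 2; the rest is either openness of $\mathcal{AL}$ or standard commutative and Azumaya algebra.
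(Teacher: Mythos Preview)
The survey does not prove this theorem; it is quoted from Brown--Gordon and then immediately applied in Corollary~\ref{theorem_BG_Ramifications}. There is thus no in-paper proof to compare against, and I assess your argument on its own.

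Your Stage~2 imports hypotheses that are not in the statement. The theorem as written is purely ring-theoretic: there is no Poisson bracket on $R$ and no derivation map $\partial:R\to\mathrm{Der}(\mathcal{A})$. The cited source is Brown--Gordon's \emph{ramification} paper, not their Poisson-order paper, and in the survey this theorem sits in the Azumaya-locus subsection, \emph{before} Poisson orders are introduced in Section~\ref{sec_PO}. So the flow-and-monodromy mechanism you sketch, whatever its merits, is not a proof of the theorem as stated. Even granting a Poisson-order structure, the key claim that the monodromies act transitively on $\pi^{-1}(\mathfrak{m})$ is asserted without justification; there is no general reason the fibre should be connected under such flows, and this is exactly the delicate point you yourself flag.

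What is actually going on is visible from how the result is used: Corollary~\ref{theorem_BG_Ramifications} invokes it only for $x$ in the \emph{fully} Azumaya locus, i.e.\ with every point of $\pi^{-1}(\mathfrak{m})$ already in $\mathcal{AL}(\mathcal{A})$. With that hypothesis your Stage~2 is vacuous and your Stage~3 is the entire argument: $\mathcal{A}/\mathfrak{m}\mathcal{A}$ is then Azumaya of constant rank $D^2$ over the finite-dimensional commutative $\mathbb{C}$-algebra $\mathcal{Z}/\mathfrak{m}\mathcal{Z}$; each local summand has residue field $\mathbb{C}$ and hence trivial Brauer group, so $\mathcal{A}/\mathfrak{m}\mathcal{A}\cong\mathrm{Mat}_D(\mathcal{Z}/\mathfrak{m}\mathcal{Z})$. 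That part of your write-up is correct and is the substance of the result.
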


 Recall for $x\in \mathcal{X}(\mathbf{\Sigma})$ the notation $\mathcal{S}_A(\mathbf{\Sigma})_{x}:= \quotient{ \mathcal{S}_A(\mathbf{\Sigma})}{Ch_A(\mathfrak{m}_x) \mathcal{S}_A(\mathbf{\Sigma})}$. Let $\mathcal{Z}$ be the center of $\mathcal{S}_A(\mathbf{\Sigma})$ and write 
$$ Z(x):= \quotient{\mathcal{Z}}{Ch_A(\mathfrak{m}_x)\mathcal{Z}}.$$
For $x\in \mathcal{X}^{red}(\mathbf{\Sigma})$, we define $\mathcal{S}^{red}_A(\mathbf{\Sigma})_{x}$ and $Z^{red}(x)$ similarly.

\begin{corollary}\label{theorem_BG_Ramifications}
If $x\in \mathcal{FAL}(\mathbf{\Sigma})$ and $D:=PI-Dim(\mathcal{S}_A(\mathbf{\Sigma}))$, then $\mathcal{S}_A(\mathbf{\Sigma})_{x}\cong Mat_D(Z(x))$.
Similarly, if $x\in \mathcal{FAL}^{red}(\mathbf{\Sigma})$ and $D':=PI-Dim(\mathcal{S}^{red}_A(\mathbf{\Sigma}))$, then $\mathcal{S}^{red}_A(\mathbf{\Sigma})_{x}\cong Mat_{D'}(Z^{red}(x))$.
\end{corollary}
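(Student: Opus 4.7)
The plan is to apply the Brown--Gordon theorem stated just above directly, with the subalgebra $R$ taken to be the image of the Chebyshev--Frobenius morphism. Set $\mathcal{A} := \mathcal{S}_A(\mathbf{\Sigma})$, denote by $\mathcal{Z}$ its center, and let $R := Ch_A(\mathcal{S}_{+1}(\mathbf{\Sigma})) \subset \mathcal{Z}$, which makes sense by Theorem \ref{theorem_chebyshev}. Since $Ch_A$ is injective, $R \cong \mathcal{S}_{+1}(\mathbf{\Sigma})$, the point $x$ corresponds to the maximal ideal $\mathfrak{m}_x$ of $R$, and the branched covering $\pi : \widehat{\mathcal{X}}(\mathbf{\Sigma}) \to \mathcal{X}(\mathbf{\Sigma})$ is by definition the morphism on maximal spectra induced by the inclusion $R \hookrightarrow \mathcal{Z}$.

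First I would verify the hypotheses of Brown--Gordon. The algebra $\mathcal{A}$ is prime since, by Theorem \ref{theorem_domain}, $\mathcal{S}_A(\mathbf{\Sigma})$ is a domain. It is affine because it is a finite module over the affine commutative ring $R$, by Theorem \ref{theorem_center}(ii). Its PI-dimension $D$ is the one computed in Theorem \ref{theorem_center}. Finally, Theorem \ref{theorem_center}(ii) asserts that $\mathcal{A}$ is finitely generated as an $R$-module, and since $\mathcal{Z}$ is sandwiched as $R \subset \mathcal{Z} \subset \mathcal{A}$, the Noetherianity of the affine commutative ring $R$ implies that $\mathcal{Z}$ is itself finitely generated as an $R$-module.

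Now, for $x \in \mathcal{FAL}(\mathbf{\Sigma})$, every maximal ideal $M \in \pi^{-1}(x)$ lies in the Azumaya locus $\mathcal{AL}(\mathcal{A})$ by definition of $\mathcal{FAL}$. The fibre $\pi^{-1}(x)$ is non-empty because $\pi$ is a finite surjective morphism (module-finiteness of $\mathcal{Z}$ over $R$ implies going-up). Pick any $M \in \pi^{-1}(x)$; by construction of $\pi$, we have $M \cap R = Ch_A(\mathfrak{m}_x)$. Applying the Brown--Gordon theorem then yields
\[
\quotient{\mathcal{S}_A(\mathbf{\Sigma})}{Ch_A(\mathfrak{m}_x)\,\mathcal{S}_A(\mathbf{\Sigma})}
\;\cong\; Mat_D\!\left( \quotient{\mathcal{Z}}{Ch_A(\mathfrak{m}_x)\,\mathcal{Z}} \right),
\]
which is exactly the claimed isomorphism $\mathcal{S}_A(\mathbf{\Sigma})_x \cong Mat_D(Z(x))$. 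The reduced case is handled identically, with $\mathcal{A}$ replaced by $\mathcal{S}_A^{red}(\mathbf{\Sigma})$, which is prime since it embeds via the quantum trace into the domain $\mathcal{Z}_{\omega}(\mathbf{\Sigma}, \Delta)$, with $R$ replaced by $Ch_A(\mathcal{S}_{+1}^{red}(\mathbf{\Sigma}))$, and with $D$ replaced by the PI-dimension $D'$ from Theorem \ref{theorem_center}.

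Since this is essentially a direct application of the cited black-box result, there is no genuine obstacle: the non-trivial content (primality, module-finiteness over the Chebyshev--Frobenius image, and the identification of $\mathcal{FAL}$ in terms of the branched covering) has all been packaged into the preceding theorems. The only point requiring mild care is the tautological observation that $M \in \pi^{-1}(x)$ is exactly the condition $M \cap R = Ch_A(\mathfrak{m}_x)$ demanded by the hypothesis of Brown--Gordon.
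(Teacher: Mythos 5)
Your proof is correct and is the expected direct application of the Brown--Gordon theorem, with the hypotheses supplied by Theorems \ref{theorem_domain} and \ref{theorem_center}; the paper treats this as an immediate corollary and does not write out an argument.

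One small point worth flagging: your phrasing "pick any $M \in \pi^{-1}(x)$" and then apply the theorem to that single $M$ reads as though only one Azumaya point in the fibre is needed, which is indeed what the paper's literal statement of the Brown--Gordon theorem suggests. In fact the conclusion $\mathcal{A}/\mathfrak{m}\mathcal{A} \cong Mat_D(\mathcal{Z}/\mathfrak{m}\mathcal{Z})$ is a statement about the whole Artinian ring $\mathcal{Z}/\mathfrak{m}\mathcal{Z}$, which decomposes as a product over \emph{all} maximal ideals of $\mathcal{Z}$ lying over $\mathfrak{m}$; the correct hypothesis (as in Brown--Gordon's original Corollary 2.7) is that every such $M$ lies in the Azumaya locus. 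That is exactly what "$x\in\mathcal{FAL}$" gives you, and you do observe that every $M\in\pi^{-1}(x)$ is Azumaya, so the argument goes through --- but the logical role of the fully-Azumaya hypothesis is to guarantee this for the entire fibre, not merely to allow picking one witness. Without it, the isomorphism would fail on the factors of $\mathcal{Z}/\mathfrak{m}\mathcal{Z}$ supported at non-Azumaya points of the fibre.

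A second minor caveat: your primality argument for $\mathcal{S}^{red}_A(\mathbf{\Sigma})$ via the quantum trace only applies when $\mathbf{\Sigma}$ is triangulable. For the non-triangulable cases (unmarked surfaces, the bigon, the monogon) one observes instead that $\mathcal{S}^{red}_A$ coincides with $\mathcal{S}_A$ (no bad arcs) or is a commutative Laurent polynomial algebra, so it is a domain by Theorem \ref{theorem_domain} or direct inspection. This does not affect the substance of the argument.
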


Note that, since the algebras $Z(x)$ are easy to compute explicitly using Theorem \ref{theorem_center}, by putting Corollary \ref{coro_unicity} and Theorem \ref{theorem_BG_Ramifications} together, we have solved Problem \ref{problem_classification2} generically, i.e. for every classical shadows lying in the fully Azumaya locus. 
We need thus to solve the:

\begin{problem}\label{problem_AL} Compute the (fully) Azumaya loci of $\mathcal{S}_A(\mathbf{\Sigma})$ and $\mathcal{S}_A^{red}(\mathbf{\Sigma})$.
\end{problem}

Let us first state a first easy result towards the resolution of this problem.
Recall that $\mathcal{X}^{red}(\mathbf{\Sigma})\subset \mathcal{X}(\mathbf{\Sigma})$ is a bad arcs leaf.

\begin{lemma}\label{lemma_reduced_leaf}
For a connected marked surface $\mathbf{\Sigma}$ with non-trivial marking, then
the leaf $\mathcal{X}^{red}(\mathbf{\Sigma})\subset \mathcal{X}(\mathbf{\Sigma})$ does not intersect the fully Azumaya locus of $\mathcal{S}_A(\mathbf{\Sigma})$.
\end{lemma}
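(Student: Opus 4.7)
The plan is to show that any irreducible representation of $\mathcal{S}_A(\mathbf{\Sigma})$ whose classical shadow lies above a point $x\in\mathcal{X}^{red}(\mathbf{\Sigma})$ must factor through $\mathcal{S}_A^{red}(\mathbf{\Sigma})$, and then exploit the strict inequality between the PI-dimensions noted right after Theorem \ref{theorem_center} to produce a contradiction with the maximality of the dimension of an Azumaya representation.

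First, I would fix $x\in\mathcal{X}^{red}(\mathbf{\Sigma})$, so that by definition $\chi_x(\alpha(p)_{-+})=0$ for every boundary puncture $p$, i.e. each bad arc sits in $\mathfrak{m}_x\subset\mathcal{S}_{+1}(\mathbf{\Sigma})$. Applying the Chebyshev-Frobenius morphism yields $Ch_A(\alpha(p)_{-+})=\alpha(p)_{-+}^{(N)}\in Ch_A(\mathfrak{m}_x)\,\mathcal{S}_A(\mathbf{\Sigma})$. In the finite dimensional quotient $\mathcal{S}_A(\mathbf{\Sigma})_x:=\mathcal{S}_A(\mathbf{\Sigma})/Ch_A(\mathfrak{m}_x)\mathcal{S}_A(\mathbf{\Sigma})$, the element $\alpha(p)_{-+}^{(N)}$ therefore vanishes. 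When the two endpoints of $\alpha(p)$ lie in distinct boundary arcs this is literally $(\alpha(p)_{-+})^N=0$; in the degenerate case where the two endpoints lie in the same boundary arc one verifies via the height-exchange relations \eqref{height_exchange_rel} that $\alpha(p)_{-+}^{(N)}$ equals a nonzero scalar multiple of $(\alpha(p)_{-+})^N$. In either case each bad arc becomes nilpotent in $\mathcal{S}_A(\mathbf{\Sigma})_x$.

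Next, I would use L\^e-Yu's Lemma \ref{lemma_leyu}, which asserts that $\alpha(p)_{-+}\mathcal{S}_A(\mathbf{\Sigma})=\mathcal{S}_A(\mathbf{\Sigma})\alpha(p)_{-+}$, to show that in any irreducible representation $\rho:\mathcal{S}_A(\mathbf{\Sigma})_x\to\End(V)$ the image $\rho(\alpha(p)_{-+})$ must vanish. Indeed, $\rho(\alpha(p)_{-+})\,\rho(\mathcal{S}_A(\mathbf{\Sigma})_x)=\rho(\mathcal{S}_A(\mathbf{\Sigma})_x)\,\rho(\alpha(p)_{-+})$ is a two-sided ideal in $\rho(\mathcal{S}_A(\mathbf{\Sigma})_x)=\End(V)$ (by Jacobson density); since $\rho(\alpha(p)_{-+})$ is nilpotent it is not invertible, so this ideal cannot be all of the simple algebra $\End(V)$, forcing it, and hence $\rho(\alpha(p)_{-+})$, to be zero. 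Consequently $\rho$ factors through the surjection $\mathcal{S}_A(\mathbf{\Sigma})_x\twoheadrightarrow\mathcal{S}^{red}_A(\mathbf{\Sigma})_x$.

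Finally, suppose for contradiction that some $x\in\mathcal{X}^{red}(\mathbf{\Sigma})$ lies in $\mathcal{FAL}(\mathbf{\Sigma})$, and choose $\hat{x}\in\pi^{-1}(x)$. The Unicity Theorem (Corollary \ref{coro_unicity}, via Theorem \ref{theorem_UnicityRep}) provides an irreducible representation $\rho_{\hat{x}}$ of $\mathcal{S}_A(\mathbf{\Sigma})$ of dimension $D:=PI\text{-}Dim(\mathcal{S}_A(\mathbf{\Sigma}))$ with classical shadow $x$. By the preceding step $\rho_{\hat{x}}$ descends to an irreducible representation of $\mathcal{S}_A^{red}(\mathbf{\Sigma})$ still of dimension $D$. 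But the remark following Theorem \ref{theorem_center} gives $PI\text{-}Dim(\mathcal{S}_A^{red}(\mathbf{\Sigma}))<D$ whenever $\mathcal{A}\neq\emptyset$, contradicting the Posner-Formanek bound (Theorem \ref{theorem_Posner}) on the dimensions of irreducible representations of $\mathcal{S}_A^{red}(\mathbf{\Sigma})$. The only delicate ingredient in this scheme is the identification of $\alpha(p)_{-+}^{(N)}$ with a scalar multiple of $(\alpha(p)_{-+})^N$ in the same-boundary-arc case; once that is dispatched the rest is a routine combination of the Unicity theorem with the PI-dimension comparison.
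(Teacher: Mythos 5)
Your overall strategy is sound and fills in the intent of the paper's one-line proof: show that an irreducible representation whose classical shadow lies over $\mathcal{X}^{red}(\mathbf{\Sigma})$ must kill the bad arcs (hence descend to $\mathcal{S}^{red}_A(\mathbf{\Sigma})$), and then invoke the strict PI-dimension inequality from Theorem~\ref{theorem_center}. Your use of Lemma~\ref{lemma_leyu} together with the Jacobson density/simplicity argument is correct: once a bad arc is known to act non-invertibly, quasi-centrality forces it to act by zero.

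The genuine gap is the identity $\alpha(p)_{-+}^{(N)} = c\,(\alpha(p)_{-+})^N$ with $c\neq 0$ in the case where both endpoints of the bad arc lie in the same boundary arc (which happens whenever a boundary component carries a single boundary arc, e.g.\ $\mathbf{m}_1$ or $\mathbf{\Sigma}_{g,n}^0$). The height-exchange relations \eqref{height_exchange_rel} do not merely produce a scalar: the off-diagonal entries of $\mathscr{R}$ mean that each reordering step also yields a \emph{state-swap/turnback} correction, which, via the relations \eqref{eq: skein 2}, contributes terms with strictly fewer strands. There is no a priori reason these corrections cancel, so the claimed proportionality---and hence the nilpotency of the bad arc in $\mathcal{S}_A(\mathbf{\Sigma})_x$---is not established. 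Since your argument uses nilpotency precisely to show $\rho(\alpha(p)_{-+})$ is not invertible, this step does not go through as written.

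A clean way to close the gap is to use the quantum trace. For triangulable $\mathbf{\Sigma}$, the quantum trace $\tr^{\Delta}:\mathcal{S}_A(\mathbf{\Sigma})\to\mathcal{Z}_{\omega}(\mathbf{\Sigma},\Delta)$ has kernel exactly the bad-arc ideal, and it intertwines the Chebyshev--Frobenius morphism with the Frobenius morphism on the Chekhov--Fock torus (as recalled just before Section~3.4). Since $\tr^{\Delta}_{+1}(\alpha(p)_{-+})=0$, one gets $\tr^{\Delta}\bigl(Ch_A(\alpha(p)_{-+})\bigr)=Fr_\omega(0)=0$, so $Ch_A(\alpha(p)_{-+})=\alpha(p)_{-+}^{(N)}$ lies in the bad-arc ideal, i.e.\ the Chebyshev morphism descends to a map $Ch_A^{red}:\mathcal{S}_{+1}^{red}(\mathbf{\Sigma})\to\mathcal{S}_A^{red}(\mathbf{\Sigma})$ compatible with the quotient $\psi$. (The non-triangulable cases $\mathbb{B}$, $\mathbf{m}_0$ are handled directly.) With that in hand, a shorter route than yours is available: pick any $\hat{x}'\in(\pi')^{-1}(x)\subset\widehat{\mathcal{X}}^{red}(\mathbf{\Sigma})$ and any irreducible $\rho'$ of $\mathcal{S}_A^{red}(\mathbf{\Sigma})$ with that central character; pulling back along $\psi$ gives an irreducible representation of $\mathcal{S}_A(\mathbf{\Sigma})$ of dimension at most $PI\text{-}Dim(\mathcal{S}^{red}_A(\mathbf{\Sigma}))<D$ whose classical shadow is $x$ (by the compatibility just established), hence some $\hat{x}\in\pi^{-1}(x)$ lies outside $\mathcal{AL}$ and $x\notin\mathcal{FAL}$. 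This avoids having to show that \emph{every} irreducible with shadow $x$ kills the bad arcs, which is what your nilpotency step aims for.
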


\begin{proof}
This follows from the facts, proved in Theorem \ref{theorem_center}, that $PI-Dim(\mathcal{S}^{red}_A(\mathbf{\Sigma}))< PI-Dim(\mathcal{S}_A(\mathbf{\Sigma}))$. 
\end{proof}

In order to state Takenov's results in \cite{Takenov_Azumaya}, let us introduce some notations. For the unmarked once-punctured torus $(\Sigma_{1,1}, \emptyset)$, consider the three generators $\alpha_1, \alpha_2, \alpha_3$ of Figure \ref{fig_Takenov} and define the three finite subsets of $\mathcal{X}(\Sigma_{1,1})$:
\begin{align*}
& C_1:=\{x| \chi_x(\alpha_i) \in \{\pm 2\},  \forall i\},  \quad C_2:= \{x| \chi_x(\alpha_i) =0, \forall i\}, \\ 
& C_3:= \left\{x | \exists i \exists \varepsilon_1, \varepsilon_2 = \pm \mbox{ s.t } (\chi_x(\alpha_i), \chi_x(\alpha_{i+1}), \chi_x(\alpha_{i+2}))=(\varepsilon_1 2, \varepsilon_2 \frac{2}{\sqrt{3}} i, \varepsilon_1\varepsilon_2 \frac{2}{\sqrt{3}} ) \right\}.
 \end{align*}
Set
$$\mathcal{O}_{1,1}:= \mathcal{X}(\Sigma_{1,1}) \setminus \left( C_1 \cup C_2 \cup C_3 \right).$$
For the unmarked four-holed sphere $(\Sigma_{0,4}, \emptyset)$ with punctures $p_1,p_2, p_3, p_4$ consider the curve $\alpha$ in Figure \ref{fig_Takenov}. For $x\in \mathcal{X}(\Sigma_{0,4}, \emptyset)$, write $p_i=\chi_x(\gamma_{p_i})$. Let $\mathcal{O}_{0,4} \subset  \mathcal{X}(\Sigma_{0,4}, \emptyset)$ denote the open subset of $x$ such that $\chi_x(\alpha)\neq \pm 2$ and $\chi_x(\alpha)\neq  T_N(r)$ for every $r$ solution of the equation
$$ (r^2 +p_0p_3 r + p_0^2 + p_3^2 -4)(r^2 +p_1p_2r +p_1^2+ p_2^2 -4)=0.$$
\begin{figure}[!h] 
\centerline{\includegraphics[width=7cm]{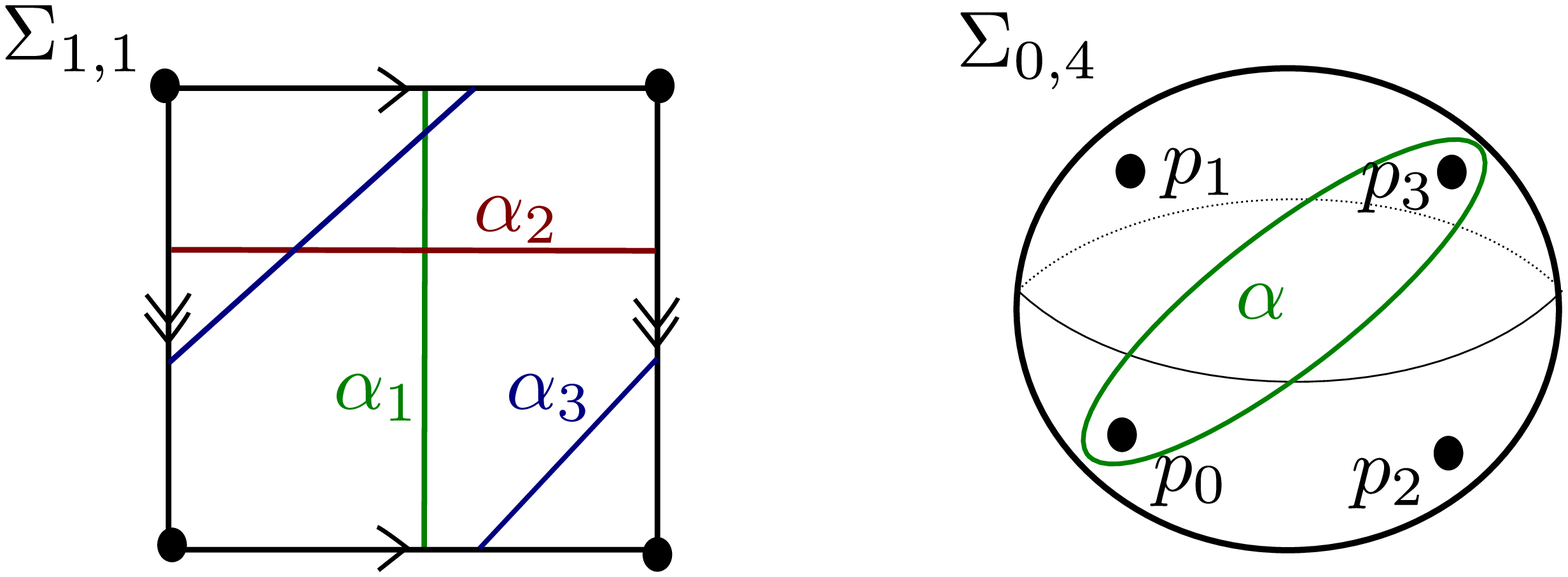} }
\caption{Some curves in $\Sigma_{0,1}$ and $\Sigma_{0,4}$.} 
\label{fig_Takenov} 
\end{figure}

Here is what is known concerning Problem \ref{problem_AL}:

\begin{theorem}
\begin{enumerate}
\item (\textit{Brown-Goodearl }\cite[Theorem C]{BrownGoodearl}:) The Azumaya locus of $\mathcal{O}_q[\SL_2]$ is equal to the smooth locus. Therefore (Brown-Gordon \cite{BrownGordon_OqG}), its fully Azumaya locus is the set of non-diagonal matrices of $\SL_2$.
\item (\textit{Takenov }\cite[Theorem $15$, Theorem $17$]{Takenov_Azumaya}:) The open dense subsets $\mathcal{O}_{1,1}$ and $\mathcal{O}_{0,4}$ are included into the Azumaya loci of $\mathcal{S}_A(\Sigma_{1,1})$ and $\mathcal{S}_A(\Sigma_{0,4})$ respectively.
\item (\textit{K.} \cite{KojuQGroupsBraidings}:) The fully Azumaya locus of $\mathcal{S}^{red}_A(\mathbb{D}_1)$ is equal to the set of elements $(g_+, g_-) \in B_+\times B_-=\mathcal{X}^{red}(\mathbf{D}_1)$ such that $g_+g_-^{-1}\neq \pm \mathds{1}_2$.
\item (\textit{Bonahon-Wong }\cite{BonahonWong4}:) For a closed surface, the central representations of $\mathcal{X}_{\SL_2}(\Sigma_g)$ are not in the Azumaya locus.
\item (\textit{Ganev-Jordan-Safranov } \cite{GanevJordanSafranov_FrobeniusMorphism}:) For a closed surface, the smooth locus of $\mathcal{X}_{\SL_2}(\Sigma_g)$ is included in the Azumaya locus. Moreover, for $\mathbf{\Sigma}_{g,0}^0$, the Azumaya locus is equal to the dense bad arcs leaf $\mathcal{X}^0(\mathbf{\Sigma}_{g,0}^0)$. 
\item (Consequence of \textit{Alekseev-Malkin} \cite{AlekseevMalkin_PoissonLie}:) The open dense subset $D_{00}$ is included in the fully Azumaya locus of $D_q^+(\SL_2)$.
\end{enumerate}
\end{theorem}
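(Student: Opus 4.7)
The argument will combine three ingredients: (i) Theorem \ref{theorem_leaves_D1} of Alekseev--Malkin, identifying $D_{00}$ as a single symplectic leaf of $D_+(\SL_2) = \mathcal{X}(\mathbb{D}_1^+)$; (ii) the Unicity Theorem (Corollary \ref{coro_unicity}), which guarantees that the Azumaya locus of $D_q^+(\SL_2)$ is open and Zariski dense in $\widehat{\mathcal{X}}(\mathbb{D}_1^+)$; and (iii) the theory of Poisson orders, which propagates the Azumaya property along symplectic leaves of the center.

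First I would note that by Theorem \ref{theorem_leaves_D1} the subset $D_{00} \subset D_+(\SL_2)$ is a connected symplectic leaf; it is the unique open dense stratum of the bad arcs partition, since the other three strata $D_{01}, D_{10}, D_{11}$ are cut out by the vanishing of at least one bad arc and are therefore contained in the proper closed subset $\{\alpha(p_1)_{-+} \cdot \alpha(p_2)_{-+} = 0\}$. Being preserved by the $(\mathbb{C}^*)^2$-action coming from gluing bigons to the two boundary arcs, it is in fact a single equivariant symplectic leaf.

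Next I would lift the picture through the finite branched covering $\pi:\widehat{\mathcal{X}}(\mathbb{D}_1^+)\to \mathcal{X}(\mathbb{D}_1^+)$ induced by the Chebyshev--Frobenius morphism. Since the image of Chebyshev--Frobenius is central and its elements have trivial Poisson bracket against everything (they are Casimirs at $A=+1$), the cover $\pi$ is a Poisson map in the sense that symplectic leaves of $\widehat{\mathcal{X}}$ project onto symplectic leaves of $\mathcal{X}$. In particular $\pi^{-1}(D_{00})$ is an open dense subset of $\widehat{\mathcal{X}}(\mathbb{D}_1^+)$ which is a finite disjoint union of symplectic leaves. By Corollary \ref{coro_unicity} the Azumaya locus $\mathcal{AL}(D_q^+(\SL_2)) \subset \widehat{\mathcal{X}}(\mathbb{D}_1^+)$ is open and dense, and therefore must intersect every one of these leaves nontrivially.

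To conclude, I would invoke Brown--Gordon's theorem on Poisson orders: the stated skein algebra $D_q^+(\SL_2)$ is a Poisson order over its center via the Chebyshev--Frobenius morphism, and two points of $\widehat{\mathcal{X}}$ belonging to the same symplectic leaf give rise to isomorphic (in particular Morita equivalent) finite-dimensional fiber algebras. Consequently the Azumaya property is constant along each connected component of $\pi^{-1}(D_{00})$, which forces $\pi^{-1}(D_{00}) \subset \mathcal{AL}(D_q^+(\SL_2))$, i.e. $D_{00} \subset \mathcal{FAL}(D_q^+(\SL_2))$. The main obstacle I anticipate is the clean bookkeeping of the Poisson geometry of the cover $\pi$ and of the symplectic leaf structure of $\widehat{\mathcal{X}}(\mathbb{D}_1^+)$ relative to that of $\mathcal{X}(\mathbb{D}_1^+)$: although conceptually transparent, verifying that the Poisson order hypotheses apply in this exact marked-surface setting requires some routine but careful translation using the triangular decomposition of $\mathcal{S}_A(\mathbb{D}_1^+)$.
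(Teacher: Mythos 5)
Your overall strategy matches the paper's: identify $D_{00}$ as an open dense symplectic leaf of $D_+(\SL_2) = \mathcal{X}(\mathbb{D}_1^+)$ via Theorem \ref{theorem_leaves_D1}, use the density of the (fully) Azumaya locus from Corollary \ref{coro_unicity}, and propagate along the leaf using Brown--Gordon's Poisson order machinery. There is, however, a real slip in the middle step. The Poisson order structure built in the paper is $(\mathcal{S}_A(\mathbf{\Sigma}), \mathcal{X}(\mathbf{\Sigma}), Ch_A, D)$, where the Poisson variety is $\mathcal{X}(\mathbf{\Sigma}) = \mathrm{Specm}(\mathcal{S}_{+1}(\mathbf{\Sigma}))$ with the Goldman bracket, and Theorem \ref{theorem_PO} / Corollary \ref{coro_POSkein} are statements about points and symplectic leaves of $\mathcal{X}(\mathbf{\Sigma})$, not of $\widehat{\mathcal{X}}(\mathbf{\Sigma}) = \mathrm{Specm}(\mathcal{Z})$. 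You assert that $\pi\colon\widehat{\mathcal{X}}\to\mathcal{X}$ is Poisson and that $\widehat{\mathcal{X}}$ carries a symplectic foliation, but no Poisson structure on $\widehat{\mathcal{X}}$ is ever constructed in the paper; your justification (Chebyshev--Frobenius elements are Casimirs at $A=+1$) concerns brackets on $\mathcal{S}_{+1}$, not a bracket on $\mathcal{Z}$. When you then invoke ``two points of $\widehat{\mathcal{X}}$ belonging to the same symplectic leaf,'' you are applying a theorem in a setting it was not formulated for.

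This gap happens to be harmless in the specific case at hand, for a reason you did not note: by Theorem \ref{theorem_center}(3), the center of $D_q^+(\SL_2) = \mathcal{S}_A(\mathbb{D}_1^+)$ is generated exactly by the image of the Chebyshev--Frobenius morphism, since $\mathbb{D}_1^+ = \underline{\mathbf{\Sigma}}_{0,2}$ has no inner punctures and both its boundary components have an odd number ($= 1$) of boundary arcs, so there are no extra central elements $\gamma_p$ or $\beta^{(N,k)}_\partial$ to adjoin. Hence $\pi$ is a bijection and $\widehat{\mathcal{X}}(\mathbb{D}_1^+) \cong \mathcal{X}(\mathbb{D}_1^+)$, collapsing your ``finite disjoint union of leaves upstairs'' to $D_{00}$ itself. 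The argument the paper actually makes is cleaner and stays entirely downstairs in $\mathcal{X}(\mathbf{\Sigma})$: Corollary \ref{coro_unicity} plus finiteness of $\pi$ gives that $\mathcal{FAL}$ is open dense in $\mathcal{X}(\mathbf{\Sigma})$, so it meets the dense symplectic leaf $D_{00}$; Corollary \ref{coro_POSkein} gives $\mathcal{S}_A(\mathbf{\Sigma})_x \cong \mathcal{S}_A(\mathbf{\Sigma})_y$ for $x,y$ in the same leaf of $\mathcal{X}(\mathbf{\Sigma})$; and Corollary \ref{theorem_BG_Ramifications} lets you transfer the fully-Azumaya property, giving $D_{00} \subset \mathcal{FAL}(D_q^+(\SL_2))$. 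Also note that since $D_{00}$ is already a single honest symplectic leaf by Theorem \ref{theorem_leaves_D1}, the non-equivariant Theorem \ref{theorem_PO} suffices here, and the equivariant refinement Theorem \ref{theorem_PO_equivariant} you gesture at is not needed.
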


The second and third items in this theorem are proved by brute force computations starting with a finite presentation of the skein algebras and hardly generalize. Note that the irreducible representations of $\mathcal{S}_A(\Sigma_{1,1})$ have been classified by Havl\'i\v{c}ek-Po\v{s}ta
in \cite{HavlicekPosta_Uqso3} (after remarking that  $\mathcal{S}_A(\Sigma_{1,1})\cong U'_q\mathfrak{so}_3$), and Takenov's result for $\Sigma_{1,1}$  is derived from this work. The fourth item is an immediate consequence of Corollary \ref{coro_representations_central} which easily generalizes to non-necessary closed unmarked surfaces. The last two items, reviewed in the next subsection, follows from a more general theorem: if an equivariant symplectic leaf is dense, then it is included in the fully Azumaya locus.
Note that none of the proofs makes use of Theorem \ref{theorem_UnicityRep}.

  \subsection{Brown and Gordon's Poisson orders}\label{sec_PO}
  
We now prove that if $x,y$ belong to the same equivariant symplectic leaf, then $\mathcal{S}_A(\mathbf{\Sigma})_x \cong \mathcal{S}_A(\mathbf{\Sigma})_y$ using the theory of Poisson orders. The theory began with the work of DeConcini-Kac on $U_q\mathfrak{g}$ in \cite{DeConciniKacRepQGroups}, the work of DeConcini-Lyubashenko on $\mathcal{O}_q[G]$ in \cite{DeConciniLyubashenko_OqG} and was fully developed by Brown-Gordon in \cite{BrownGordon_PO} that we closely follow.

\begin{definition}
A \textit{Poisson order} is a $4$-uple $(\mathcal{A}, \mathcal{X}, \phi, D)$ where: 
\begin{enumerate}
\item $\mathcal{A}$ is an (associative, unital) affine $\mathbb{C}$-algebra finitely generated over its center $\mathcal{Z}$; 
\item $\mathcal{X}$ is a Poisson affine $\mathbb{C}$ -variety; 
\item $\phi: \mathcal{O}[\mathcal{X}] \hookrightarrow \mathcal{Z}$ a finite injective morphism of algebras; 
\item $D: \mathcal{O}[\mathcal{X}] \to Der (\mathcal{A}) : z\mapsto D_z$ a linear map such that for all $f,g \in  \mathcal{O}[\mathcal{X}]$, we have
$$ D_f(\phi(g))= \phi(\{f,g\})$$.
\end{enumerate}
\end{definition}
 Let us consider a Poisson affine variety $\mathcal{X}$. An ideal $\mathcal{I}\subset \mathcal{O}[\mathcal{X}]$ is a \textit{Poisson ideal} if $\{ \mathcal{I}, \mathcal{O}[\mathcal{X}] \} \subset \mathcal{I}$. Since the sum of two Poisson ideals is a Poisson ideal, every maximal ideal $\mathfrak{m}\subset \mathcal{O}[\mathcal{X}]$ contains a unique maximal Poisson ideal $P(\mathfrak{m}) \subset \mathfrak{m}$. Define an equivalence relation $\sim$ on $\mathcal{X}=\mathrm{Specm}(\mathcal{O}[\mathcal{X}])$ by $\mathfrak{m} \sim \mathfrak{m}'$ if $P(\mathfrak{m})=P(\mathfrak{m}')$. The equivalence class of $\mathfrak{m}$ is named its \textit{symplectic core} and denoted by $C(\mathfrak{m})$.
 
 \begin{definition} The partition $\mathcal{X}=\bigsqcup_{\mathfrak{m}\in \quotient{\mathcal{X}}{\sim}} C(\mathfrak{m})$ is called the \textit{symplectic cores partition} of $\mathcal{X}$.
 \end{definition} 

If $x, y\in \mathcal{X}$ belong to the same smooth strata of $\mathcal{X}$ and are such that $y$ can be obtained from $x$ by deriving along some Hamiltonian flow, Brown and Gordon proved that $C(x)=C(y)$ from which they deduce the

\begin{proposition}(Brown-Gordon \cite[Proposition $3.6$]{BrownGordon_PO}) The symplectic leaves partition is a refinement of the symplectic cores partition.
\end{proposition}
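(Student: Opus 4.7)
The plan is to reduce to the single-step case using the very definition of symplectic leaves. Suppose $x,y \in \mathcal{X}$ belong to the same symplectic leaf. By definition, they lie in a common smooth stratum $X^i$ and there is a finite chain $x = p_0, p_1, \ldots, p_k = y$ together with Hamiltonians $h_0, \ldots, h_{k-1} \in \mathcal{O}[X^i]$ such that $p_{j+1}$ is obtained from $p_j$ by deriving along the flow of $h_j$. If I can show that each such one-step move preserves the symplectic core, i.e.\ $C(p_j) = C(p_{j+1})$, then transitivity gives $C(x) = C(y)$, which is exactly the statement that the symplectic leaf through $x$ is contained in a single symplectic core.

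So the task reduces to a single step: given $y = \phi_h^{t_0}(x)$ for $h \in \mathcal{O}[X^i]$, show $P(\mathfrak{m}_x) = P(\mathfrak{m}_y)$. I will prove $P(\mathfrak{m}_x) \subset \mathfrak{m}_y$; since $P(\mathfrak{m}_x)$ is a Poisson ideal this then gives $P(\mathfrak{m}_x) \subset P(\mathfrak{m}_y)$ by maximality, and reversing the sign of $t_0$ yields the opposite inclusion. Let $I := P(\mathfrak{m}_x)$ and pick $f \in I$. Locally near $x$, extend $h$ to a regular function $\tilde h$ on an affine open neighborhood $U \subset X$ of the Zariski-closure of the flow curve, and consider the analytic function $g(t) := f(\phi_h^t(x))$ defined for $t$ in a neighborhood of $[0,t_0]$.

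The key computation is that $g^{(n)}(0) = \{\tilde h, \{\tilde h, \ldots, \{\tilde h, f\}\cdots\}\}(x)$, the value at $x$ of the $n$-fold iterated Poisson bracket. Because $I$ is a Poisson ideal in $\mathcal{O}[X]$ and $\tilde h \in \mathcal{O}[U]$, an easy induction shows $\{\tilde h, \{\tilde h, \ldots, \{\tilde h, f\}\cdots\}\}|_U \in I|_U$, hence it vanishes at $x \in U$. So all Taylor coefficients of $g$ at $0$ vanish. Since the Hamiltonian flow of an analytic vector field is analytic in $t$, $g$ is analytic, and therefore $g \equiv 0$ on its interval of definition. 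In particular $f(y) = g(t_0) = 0$, proving $I \subset \mathfrak{m}_y$, and hence $I \subset P(\mathfrak{m}_y)$.

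The main subtlety—and where I expect to spend real care—is the passage from the Hamiltonian $h \in \mathcal{O}[X^i]$, which only lives on a stratum, to a regular function $\tilde h$ on an open neighborhood in $X$ that genuinely computes the same flow modulo the stratum. One has to verify that the Poisson bivector of $X^i$ inherited from $X$ makes the iterated brackets $\{\tilde h, \cdots \{\tilde h, f\}\cdots\}$ match, when restricted to $X^i$, the iterated brackets with $h$, so that the Poisson-ideal property of $I$ can be applied at each iteration; smoothness of $X^i$ and the fact that it is a Poisson subvariety of $X$ are what make this local extension possible. Once that is granted, the argument above concludes in one stroke.
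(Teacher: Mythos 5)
Your proposal is correct and follows the same route the paper takes (the paper simply cites Brown--Gordon's Proposition~3.6 and records the key intermediate fact that a one-step Hamiltonian flow preserves the symplectic core): reduce to a single flow step, show that the maximal Poisson ideal $P(\mathfrak{m}_x)$ vanishes along the flow via the vanishing of all iterated brackets plus analyticity of the flow in $t$, and invoke transitivity. The subtlety you flag is genuine but resolves as you suggest: each closed stratum $Y_i = X \setminus (X^0 \cup \cdots \cup X^{i-1})$ is cut out by a Poisson ideal (singular loci of Poisson varieties are Poisson closed), so a local regular extension $\tilde h$ of $h$ has Hamiltonian vector field tangent to $Y_i$ and restricting there to the Hamiltonian vector field of $h$ for the quotient bracket, making the iterated brackets and the flow agree along $X^i$.
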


For $(\mathcal{A}, \mathcal{X}, \phi, D)$ a Poisson order and $x\in \mathcal{X}$, write 
$$\mathcal{A}_x:= \quotient{\mathcal{A}}{\phi(\mathfrak{m}_x)\mathcal{A}}.$$
The main theorem of the theory is

\begin{theorem}\label{theorem_PO}(Brown-Gordon \cite[Theorem $4.2$]{BrownGordon_PO}) Let $(\mathcal{A}, \mathcal{X}, \phi, D)$ be a Poisson order. If $x,y\in \mathcal{X}$ belong to the same symplectic core, then $\mathcal{A}_x\cong \mathcal{A}_y$.
\end{theorem}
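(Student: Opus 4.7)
The plan is to use the derivations $D_f$, for $f \in \mathcal{O}[\mathcal{X}]$, as algebraic Hamiltonian flows relating different fibres of $\mathcal{A}$ over $\mathcal{X}$. Set $P := P(\mathfrak{m}_x) = P(\mathfrak{m}_y)$ and $Z := V(P) \subset \mathcal{X}$; by maximality of $P$, the symplectic core $C(x) = C(y)$ is Zariski dense in the Poisson subvariety $Z$.

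First, for each $f \in \mathcal{O}[\mathcal{X}]$ I would invoke the formal exponential $\exp(tD_f) := \sum_{n \geq 0} \tfrac{t^n}{n!} D_f^n$. Since $D_f$ is a derivation of $\mathcal{A}$ satisfying $D_f \circ \phi = \phi \circ \{f, \cdot\}$, this series defines a $\mathbb{C}[[t]]$-algebra automorphism of $\mathcal{A}[[t]]$ whose restriction to $\phi(\mathcal{O}[\mathcal{X}])$ is the pull-back by the formal Hamiltonian flow $\varphi^f_t$ of $f$. Quotienting by $\phi(\mathfrak{m}_x)\mathcal{A}[[t]]$ and specialising $t$ to complex values at which the flow through $x$ is analytically defined yields isomorphisms $\mathcal{A}_x \cong \mathcal{A}_{\varphi^f_t(x)}$. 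Iterating such elementary moves proves $\mathcal{A}_x \cong \mathcal{A}_y$ as soon as $y$ can be reached from $x$ by a finite sequence of Hamiltonian flows, in particular whenever $x$ and $y$ lie on the same symplectic leaf.

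The main obstacle is then to extend this conclusion from a single symplectic leaf to the whole core $C(x)$, since the leaf partition of $Z$ is in general a strict refinement of the core partition. I would argue algebraically: the compatibility $D_f \circ \phi = \phi \circ \{f, \cdot\}$ implies that a closed subvariety of $\mathcal{X}$ is stable under every Hamiltonian derivation $\{f, \cdot\}$ if and only if its defining ideal in $\mathcal{O}[\mathcal{X}]$ is Poisson. Viewing $\{\mathcal{A}_z\}_{z \in Z}$ as a constructible family of finite dimensional algebras, the locus $J \subset Z$ on which the isomorphism type of $\mathcal{A}_z$ differs from that of $\mathcal{A}_x$ is closed and Hamiltonian-stable, hence of the form $V(Q)$ for some Poisson ideal $P \subset Q$. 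Since $x \notin J$ one has $Q \not\subset \mathfrak{m}_x$, so $Q \supsetneq P$. But if $y$ belonged to $J$ then $Q \subset \mathfrak{m}_y$, producing a Poisson ideal strictly larger than $P$ inside $\mathfrak{m}_y$, which contradicts $P(\mathfrak{m}_y) = P$. Thus $y \notin J$ and $\mathcal{A}_x \cong \mathcal{A}_y$. The delicate point which will require the most care is justifying the constructibility of this isomorphism locus together with the $D$-stability of its complement, which is the technical heart of Brown and Gordon's argument in~\cite{BrownGordon_PO}.
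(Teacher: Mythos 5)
The paper cites Brown and Gordon for this theorem without giving a proof of its own, so the comparison has to be with the original argument. Your two-stage plan — integrate the derivations $D_f$ along Hamiltonian flows, then leverage the Poisson-ideal characterisation of the symplectic core — is in the right spirit. The genuine gap is in the second stage: you assert that the locus $J = \{ z \in Z : \mathcal{A}_z \not\cong \mathcal{A}_x \}$ is closed, and your deduction $J = V(Q)$ for a Poisson ideal $Q \supsetneq P$ depends on this. For a finite $\mathcal{O}[\mathcal{X}]$-algebra the isomorphism-type locus is constructible (stratify by fibre dimension, then take preimages of $\mathrm{GL}_d$-orbits in the variety of structure constants), but it need not be closed: already for $\mathcal{A} = \mathbb{C}[t][\epsilon]/(\epsilon^2 - t)$ over $\mathbb{C}[t]$, starting from $x$ at $t = 0$ gives $J = \{t \neq 0\}$, which is open and not closed. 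Without closedness, $x \notin J$ no longer gives $x \notin \overline{J}$, and the chain $Q \subset \mathfrak{m}_y \Rightarrow Q \subset P(\mathfrak{m}_y) = P$ that produces your contradiction does not go through.

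Brown and Gordon sidestep this by using constructibility \emph{symmetrically}. After reducing to the case $P(\mathfrak{m}_x) = P(\mathfrak{m}_y) = 0$ (quotient by $P$, which is legitimate because $P$ is a Poisson ideal; $\mathcal{X}$ then becomes irreducible since $P(\mathfrak{m}_x)$ is always prime), both $\mathcal{U} = J^{c}$ and $J$ are constructible and stable under the Hamiltonian derivations, so the closures $\overline{\mathcal{U}}$ and $\overline{J}$ are cut out by Poisson ideals. Since $x \in \mathcal{U}$ and $P(\mathfrak{m}_x) = 0$, the defining ideal of $\overline{\mathcal{U}}$ must be zero, so $\mathcal{U}$ is dense. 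Suppose for contradiction that $y \in J$; the identical reasoning using $P(\mathfrak{m}_y) = 0$ makes $J$ dense as well. But two disjoint dense constructible subsets of an irreducible variety each contain a dense open set, and two dense opens in an irreducible variety must meet — contradiction. That substitution of the both-pieces-dense-constructible contradiction for your false closedness claim is precisely the missing idea, and is (together with the formal-completion treatment of the flows, rather than pointwise analytic specialisation of $\exp(tD_f)$) the technical heart of the original proof.
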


It can be improved as follows.

\begin{definition} Let $G$ be an affine Lie group. A Poisson order $(\mathcal{A}, \mathcal{X}, \phi, D)$ is said $G$-\textit{equivariant} if $G$ acts  on $\mathcal{A}$ by automorphism such that its action preserves $\phi(\mathcal{O}[\mathcal{X}])\subset \mathcal{A}$ and such that it is $D$ equivariant in the sense that for every $g\in G$, $z\in \mathcal{O}[\mathcal{X}]$ and $a\in \mathcal{A}$, one has
$$ D_{g\cdot z}(a) = g D_z (g^{-1}a).$$
\end{definition}

In this case the action of $g\in G$ induces an isomorphism $\mathcal{A}_x\cong \mathcal{A}_{gx}$. Let us call \textit{equivariant symplectic cores} and \textit{equivariant  symplectic leaves} the $G$-orbits of the symplectic cores and symplectic leaves.

\begin{theorem}\label{theorem_PO_equivariant}(Brown-Gordon \cite[Proposition $4.3$]{BrownGordon_PO}) For $(\mathcal{A}, \mathcal{X}, \phi, D)$ a $G$-equivariant Poisson order, if $x,y\in \mathcal{X}$ belong to the same equivariant symplectic core (or symplectic leaf a fortiori) then $\mathcal{A}_x\cong \mathcal{A}_y$.
\end{theorem}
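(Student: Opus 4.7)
The plan is to reduce the equivariant statement to the already established non-equivariant Theorem \ref{theorem_PO}. Concretely, the $G$-action on $\mathcal{A}$ should itself implement isomorphisms of fibre algebras of the form $\mathcal{A}_x\cong \mathcal{A}_{g\cdot x}$, and then Theorem \ref{theorem_PO} can be applied once one has shifted $x$ into the symplectic core of $y$ by a suitable element of $G$.

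First I would verify that for each $g\in G$ the automorphism $\alpha_g\in \mathrm{Aut}(\mathcal{A})$ descends to an algebra isomorphism $\overline{\alpha}_g : \mathcal{A}_x \xrightarrow{\cong} \mathcal{A}_{g\cdot x}$ for every $x\in \mathcal{X}$. By hypothesis $\alpha_g$ preserves the central subalgebra $\phi(\mathcal{O}[\mathcal{X}])\subset \mathcal{A}$, hence restricts, via $\phi$, to an algebra automorphism of $\mathcal{O}[\mathcal{X}]$. This restriction is, by construction, the pull-back of the algebraic $G$-action on $\mathcal{X}$, and therefore sends the maximal ideal $\mathfrak{m}_x$ to $\mathfrak{m}_{g\cdot x}$. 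Consequently $\alpha_g\bigl(\phi(\mathfrak{m}_x)\mathcal{A}\bigr)=\phi(\mathfrak{m}_{g\cdot x})\mathcal{A}$, and $\overline{\alpha}_g$ is well defined, with inverse $\overline{\alpha}_{g^{-1}}$.

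Next, unpacking the definitions: if $x$ and $y$ lie in the same equivariant symplectic core, then by definition there exists $g\in G$ such that $g\cdot x$ and $y$ belong to the same (non-equivariant) symplectic core of $\mathcal{X}$. Applying Theorem \ref{theorem_PO} to the Poisson order $(\mathcal{A},\mathcal{X},\phi,D)$ at the pair of points $(g\cdot x, y)$ gives an isomorphism $\mathcal{A}_{g\cdot x}\cong \mathcal{A}_y$. Composing with $\overline{\alpha}_g : \mathcal{A}_x\xrightarrow{\cong} \mathcal{A}_{g\cdot x}$ yields the desired isomorphism $\mathcal{A}_x\cong \mathcal{A}_y$. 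The case of equivariant symplectic leaves is the same statement a fortiori, since the symplectic leaves partition refines the symplectic cores partition (by \cite{BrownGordon_PO}, Proposition~3.6, quoted just above the theorem).

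There is essentially no obstacle beyond bookkeeping: the entire content is packaged in Theorem \ref{theorem_PO}, and the only thing to check by hand is the first paragraph, which only uses that $G$ acts by algebra automorphisms preserving $\phi(\mathcal{O}[\mathcal{X}])$. Note in particular that the $D$-equivariance hypothesis is \emph{not} required for this reduction; it would only enter if one tried to re-prove Theorem \ref{theorem_PO} directly in the equivariant setting by building an intrinsic equivariant version of the Brown--Gordon ``flow along Hamiltonian derivations'' argument.
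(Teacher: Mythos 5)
Your proof is correct and follows exactly the route the paper indicates: the remark immediately after the definition of $G$-equivariant Poisson order (``the action of $g\in G$ induces an isomorphism $\mathcal{A}_x\cong\mathcal{A}_{gx}$'') is your step one, and the theorem is then the composition with Theorem~\ref{theorem_PO}. Your closing observation that $D$-equivariance is not used in the reduction is accurate; one might add the caveat that $D$-equivariance is precisely what guarantees the induced $G$-action on $\mathcal{X}$ is Poisson (one sees $\{g\cdot f, g\cdot h\}=g\cdot\{f,h\}$ by combining $D_f(\phi(h))=\phi(\{f,h\})$ with the equivariance of $D$ and injectivity of $\phi$), and hence that $G$ permutes symplectic cores so that the equivariant partitions are well-behaved --- but the bookkeeping of $\overline{\alpha}_g$ and the appeal to Theorem~\ref{theorem_PO} indeed need only that $G$ acts by automorphisms preserving $\phi(\mathcal{O}[\mathcal{X}])$.
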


\par We now endow the triple $(\mathcal{S}_A(\mathbf{\Sigma}), \mathcal{X}(\mathbf{\Sigma}), Ch_A)$ with a structure of $(\mathbb{C}^*)^{\mathcal{A}}$ equivariant Poisson order.

\begin{example}\label{example_PO} 
Let $\mathcal{A}_q$ a free flat affine $\mathbb{C}(q^{\pm 1})$-algebra,   $N\geq 1$ and, writing $t:= N(q^N-1)$,  the $\quotient{\mathbb{C}(q^{\pm 1})}{(q^N-1)}$ algebra $\mathcal{A}_N:= \quotient{\mathcal{A}}{t}$ and $\pi: \mathcal{A}_q \to \mathcal{A}_N$ the quotient map. By fixing a basis $\mathcal{B}$ of $\mathcal{A}_q$ by flatness we can define a linear
 embedding $\hat{\cdot}: \mathcal{A}_{N} \to \mathcal{A}_q$ sending a basis element $b\in \mathcal{B}$ seen as element in $\mathcal{A}_{N}$ to the same element $\hat{b}$ seen as an element in $\mathcal{A}_q$. Note that $\hat{\cdot}$ is a left inverse for $\pi$. Suppose that the  algebra $\mathcal{A}_{+1}=\mathcal{A}_q\otimes_{q=1}\mathbb{C}$ is commutative and suppose there exists a central embedding  $\phi: \mathcal{A}_{+1} \hookrightarrow \mathcal{A}_{N}$ into the center of $\mathcal{A}_{N}$. Write $\mathcal{X}:=\mathrm{Specm}(\mathcal{A}_{+1})$ and define $D: \mathcal{A}_{+1} \to Der(\mathcal{A}_N)$ by the formula
 $$ D_xy:= \pi \left( \frac{ [\widehat{\phi(x)}, \hat{y}]}{N(q^N-1)} \right) .$$
Clearly $D_x$ is a derivation, is independent on the choice of the basis $\mathcal{B}$ and preserves $\phi(\mathcal{A}_{+1})$, so it defines a Poisson bracket $\{\cdot, \cdot \}_N$ on $\mathcal{A}_{+1}$ by 
$$ D_x \phi(y)= \phi (\{x,y\}_N).$$
So, writing $\mathcal{X}=\mathrm{Specm}(\mathcal{A}_{+1})$, then $(\mathcal{A}_N, \mathcal{X}, \phi, D)$ is a Poisson order for this bracket. Note that if $\zeta_N$ is an $N$-th root of unity and $\mathcal{A}_{\zeta_N}=\mathcal{A}_q \otimes_{q=\zeta_N} \mathbb{C}$, we get a Poisson order $(\mathcal{A}_{\zeta_N}, \mathcal{X}, \phi, D)$ as well simply by tensoring by $\mathbb{C}$.

What is not clear is how to compare the above bracket $\{\cdot, \cdot\}_N$ with the one coming from deformation quantization defined in Section \ref{sec_DefoQuantization}, i.e. with the bracket $\{\cdot, \cdot \}_{\infty}$ defined by setting $q=\exp(\hbar)$ and writing $\star$ the product in $\mathcal{A}_q\otimes_{q=\exp(\hbar)} \mathbb{C}[[\hbar]]$, 
by the formula
$$ x\star y - y\star x \equiv \hbar \{x,y\}_{\infty} \pmod{\hbar^2}.$$
\end{example}

\begin{lemma}\label{lemma_bracket_torus}
Let $\mathbb{E}=(E, (\cdot, \cdot))$ be a quadratic pair and consider the Poisson order $(\mathbb{T}_{\zeta_N}(\mathbb{E}), \mathcal{X}(\mathbb{E}), Fr_N, D)$ defined in Example 
\ref{example_PO} using the Frobenius morphism for $\phi$. Then the Poisson bracket $\{\cdot, \cdot\}_N$ coming from $D$ coincides with the Poisson bracket $\{\cdot, \cdot\}_{\infty}$ coming from deformation quantization.
\end{lemma}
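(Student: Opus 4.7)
The plan is to establish the equality by computing both Poisson brackets on the standard generators $\{Z_a : a \in E\}$ of $\mathbb{T}_{+1}(\mathbb{E}) = \mathbb{C}[E]$ and appealing to the Leibniz rule. Since the bracket $\{\cdot,\cdot\}_\infty$ has already been computed in Section~\ref{sec_DefoQuantization} to satisfy $\{Z_a, Z_b\}_\infty = 2(a,b)\, Z_{a+b}$, the task reduces to producing the same formula for the Brown--Gordon bracket $\{\cdot,\cdot\}_N$ coming from the derivation $D$.

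For the Brown--Gordon side I would fix the monomial basis $\mathcal{B} = \{Z_x\}_{x \in E}$, so that the natural lifts are $\widehat{\phi(Z_a)} = Z_{Na}$ and $\widehat{\phi(Z_b)} = Z_{Nb}$ (using skew-symmetry of $(\cdot,\cdot)$ to see that $Z_a^N = Z_{Na}$). Plugging these into the defining formula yields
\[
D_{Z_a}(\phi(Z_b)) \;=\; \pi\!\left(\frac{[Z_{Na}, Z_{Nb}]}{N(q^N-1)}\right) \;=\; \pi\!\left(\frac{q^{N^2(a,b)} - q^{-N^2(a,b)}}{N(q^N-1)}\right) Z_{N(a+b)}.
\]
Setting $u = q^N$, the scalar factor becomes $(u^{N(a,b)} - u^{-N(a,b)})/(N(u-1))$, whose value at $u=1$ (computed as the derivative of $u \mapsto u^{N(a,b)} - u^{-N(a,b)}$ at $u=1$, divided by $N$) equals $2(a,b)$. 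Hence $D_{Z_a}(\phi(Z_b)) = 2(a,b)\,\phi(Z_{a+b})$, and injectivity of $\phi$ gives $\{Z_a, Z_b\}_N = 2(a,b)\,Z_{a+b}$, matching $\{Z_a, Z_b\}_\infty$.

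The only step that calls for a sanity check is that the commutator $[\widehat{\phi(x)}, \hat{y}]$ is genuinely divisible by $q^N-1$ in $\mathcal{A}_q$, so that the division in the definition of $D_x$ makes sense. On generators this is visible directly from the factorization $q^{2N^2(a,b)} - 1 = ((q^N)^{N(a,b)})^2 - 1$; in general it follows from the fact that $\phi(\mathcal{A}_{+1})$ becomes central upon specializing $q \mapsto \zeta_N$, combined with the $\mathbb{C}[q^{\pm 1}]$-flatness of $\mathcal{A}_q$. Beyond this formal verification the lemma is essentially a one-line computation on generators, so I do not anticipate any real obstacle.
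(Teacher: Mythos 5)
Your proof is correct and follows essentially the same route as the paper's: both reduce to the commutator computation $[Z_a^N, Z_b^N] = \bigl(q^{N^2(a,b)} - q^{-N^2(a,b)}\bigr) Z_{a+b}^N$ and then evaluate the resulting scalar coefficient after dividing by $N(q^N-1)$ and reducing modulo $(q^N-1)$. The paper makes the divisibility and the evaluation explicit by rewriting the quotient as a geometric sum $q^{-N(a,b)}\tfrac{1}{N}\sum_{i=1}^{2N(a,b)} q^{Ni}$, whereas you package the identical computation as the value of $\bigl(u^{N(a,b)}-u^{-N(a,b)}\bigr)/(N(u-1))$ at $u=1$ via a derivative; these are the same manipulation in two guises. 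One small remark on the framing: since $\{Z_a\}_{a\in E}$ is already a linear basis of $\mathbb{C}[E]$, checking that both brackets agree on every pair $(Z_a, Z_b)$ suffices by bilinearity alone, so the appeal to the Leibniz rule in your opening sentence is unnecessary.
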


\begin{proof}
Let $a,b \in E$. Since $Z_a^N Z_b^N= q^{N^2(a,b)}Z_{a+b}^N$ and $Z_b^N Z_a^N= q^{-N^2(a,b)}Z_{a+b}^N$, we have: 
\begin{align*}
& \pi \left( \frac{ [Fr_N(Z_a), Fr_N(Z_b)]}{N(q^N-1)} \right) =  \pi \left( \frac{ [Z_a^N, Z_b^N]}{N(q^N-1)} \right) \\
&= \pi \left( \frac{q^{N^2(a,b)} - q^{-N^2(a,b)}}{N(q^N-1)} \right) Z_{a+b}^N \\
&= \pi \left( q^{-N(a,b)}\frac{1}{N} \sum_{i=1}^{2N(a,b)} q^{Ni} \right) Fr_N(Z_{a+b})
= Fr_N(2(a,b)Z_{a+b}).
\end{align*}
So $\{Z_a, Z_b\}_N= 2(a,b)Z_{a+b}$. Recall that we obtained the same formula for the bracket $\{\cdot, \cdot\}_{\infty}$ by the computation
$$ Z_a \star Z_b - Z_b \star Z_a = (q^{(a,b)}-q^{-(a,b)}) Z_{a+b} \equiv \hbar 2(a,b)Z_{a+b} \pmod{\hbar^2}.$$
\end{proof}

\begin{proposition}(DeConcini-Lyubashenko \cite{DeConciniLyubashenko_OqG} for the bigon, Ganev-Jordan-Safranov \cite{GanevJordanSafranov_FrobeniusMorphism} for $\mathbf{\Sigma}=(\Sigma_{g,0}, \emptyset)$ and $\mathbf{\Sigma}=\mathbf{\Sigma}_{g,0}^0$)
For any marked surface $\mathbf{\Sigma}$, the map $D$ of Example \ref{example_PO} defines some structures of Poisson orders $(\mathcal{S}_A(\mathbf{\Sigma}), \mathcal{X}(\mathbf{\Sigma}), Ch_A, D)$ and $(\mathcal{S}_A^{red}(\mathbf{\Sigma}, \mathcal{X}^{red}(\mathbf{\Sigma}), Ch_A, D)$ compatible with Goldman's Poisson bracket.
\end{proposition}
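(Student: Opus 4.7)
The plan is to apply Example \ref{example_PO} directly and then compare the resulting bracket $\{\cdot,\cdot\}_N$ with Goldman's bracket $\{\cdot,\cdot\}_{\infty}$ coming from the deformation quantization at $A=\exp(\hbar/2)$. The existence of the Poisson order structure is essentially formal: by Theorem \ref{theorem_basis} the basis $\mathcal{B}^{\mathfrak{o}}$ is independent of $A^{1/2}$, so $\mathcal{S}_A(\mathbf{\Sigma})$ is free over $\mathbb{Z}[A^{\pm 1/2}]$; Theorem \ref{theorem_center} provides the central embedding $Ch_A$ and the finiteness of $\mathcal{S}_A(\mathbf{\Sigma})$ over its image. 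These are precisely the hypotheses of Example \ref{example_PO}, so fixing a basis $\mathcal{B}$ and the associated lift $\hat{\cdot}$ yields a derivation $D_x y := \pi\bigl([\widehat{Ch_A(x)},\hat{y}]/(N(q^N-1))\bigr)$ and an induced Poisson bracket $\{\cdot,\cdot\}_N$ on $\mathcal{S}_{+1}(\mathbf{\Sigma})$; the $(\mathbb{C}^*)^{\mathcal{A}}$-equivariance follows because the coaction $\Delta^L$ commutes with $Ch_A$ at the lift level. The same reasoning applies verbatim to $\mathcal{S}_A^{red}(\mathbf{\Sigma})$.

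The core of the proof is then to show $\{\cdot,\cdot\}_N=\{\cdot,\cdot\}_{\infty}$, where the latter is Goldman's bracket under the identification of Theorem \ref{theorem_charvar_skein}. I would handle the triangulable case first by exploiting the quantum trace. By Lemma \ref{lemma_bracket_torus}, the two brackets already agree on any balanced Chekhov-Fock quantum torus $\mathcal{Z}_\omega(\mathbf{\Sigma},\Delta)$, and the quantum trace $\Tr^{\Delta}:\mathcal{S}_A^{red}(\mathbf{\Sigma})\hookrightarrow \mathcal{Z}_\omega(\mathbf{\Sigma},\Delta)$ intertwines $Ch_A$ with $Fr_\omega$ (as recalled after the definition of the quantum trace). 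Since both $\{\cdot,\cdot\}_N$ and $\{\cdot,\cdot\}_{\infty}$ are intrinsic to the algebraic data $(\mathcal{A}_q,\phi)$, both brackets are pulled back from $\mathcal{Z}_\omega$, so they coincide on $\mathcal{S}_{+1}^{red}(\mathbf{\Sigma})$. Combined with the fact (Theorem \ref{theorem_charvar_skein}) that $\{\cdot,\cdot\}_\infty$ \emph{is} the Goldman bracket, this proves the reduced statement for any triangulable $\mathbf{\Sigma}$. For the unreduced statement, substitute L\^e--Yu's refined quantum trace $\phi:\mathcal{S}_A(\mathbf{\Sigma})\hookrightarrow \overline{\mathcal{Z}}_\omega(\mathbf{\Sigma},\Delta)$ and repeat the argument. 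Compatibility with the cases of the bigon $\mathbb{B}$ and monogon $\mathbf{m}_0$ (both non-triangulable) is either immediate ($\mathbf{m}_0$ is trivial) or reduces to DeConcini--Lyubashenko's theorem for $\mathcal{O}_q[\SL_2]$.

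The remaining and principal obstacle is the case of a closed unmarked surface $\mathbf{\Sigma}=(\Sigma_{g,0},\emptyset)$, where no quantum trace is available. Here I would invoke the last step of Strategy \ref{strategy}: let $\mathbf{\Sigma}'=(\Sigma_{g,1},\emptyset)$ and consider the off-puncture exact sequence \eqref{eq_off_puncture} $0\to\mathcal{I}_p\to\mathcal{S}_A(\mathbf{\Sigma}')\xrightarrow{i_*}\mathcal{S}_A(\mathbf{\Sigma})\to 0$. Because $Ch_A^{\mathbf{\Sigma}'}(\gamma-\gamma')=T_N(\gamma)-T_N(\gamma')$ lies in $\mathcal{I}_p$ (by Proposition \ref{prop_offpuncture}), the map $i_*$ is compatible with the Chebyshev--Frobenius morphisms, and the derivation $D$ on $\mathcal{S}_A(\mathbf{\Sigma}')$ preserves $\mathcal{I}_p$ because $[\widehat{Ch_A(x)},\hat{y}]\in \mathcal{S}_A(\mathbf{\Sigma}')\cdot\widehat{\mathcal{I}_p\text{-generators}}$ modulo $N(q^N-1)$. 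Consequently $D$ descends to $\mathcal{S}_A(\mathbf{\Sigma})$, giving the Poisson order structure, and the induced bracket $\{\cdot,\cdot\}_N$ is the quotient of the one on $\mathbf{\Sigma}'$. Since Goldman's bracket on $\mathcal{X}_{\SL_2}(\Sigma_{g,0})$ is likewise the restriction of the bracket on $\mathcal{X}_{\SL_2}(\mathbf{\Sigma}')$ to the Poisson subalgebra of classes invariant under filling the puncture, the equality $\{\cdot,\cdot\}_N=\{\cdot,\cdot\}_{\infty}$ proved for $\mathbf{\Sigma}'$ transfers to $\mathbf{\Sigma}$. The delicate verification here is that $D$ genuinely descends; I expect this to be the main technical point, but it is controlled by the explicit form of the generators of $\mathcal{I}_p$ given by Proposition \ref{prop_offpuncture} together with the identity $Ch_A(\gamma-\gamma')=T_N(\gamma)-T_N(\gamma')$.
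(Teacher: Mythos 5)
Your treatment of the triangulable case is essentially the paper's argument: reduce to the quantum torus via the quantum trace (or its L\^e--Yu enhancement for the unreduced algebra), use the fact that $\Tr^{\Delta}$ intertwines $Ch_A$ with $Fr_\omega$, and invoke Lemma~\ref{lemma_bracket_torus}, citing DeConcini--Lyubashenko for $\mathcal{O}_q[\SL_2]$. So far you are aligned with the paper.

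Where you genuinely diverge is the closed unmarked case. The paper's proof, read literally (``for any other non-trivial connected marked surface, we can embed $\mathcal{S}_A(\mathbf{\Sigma})$ into some quantum torus''), does not actually cover $(\Sigma_{g,0},\emptyset)$ with $g\geq 2$, since no such embedding is known (Section~\ref{sec_ClosedCase} explicitly lists it as an open problem); the paper defers this case to Ganev--Jordan--Safranov as indicated in the attribution. Your off-puncture argument is therefore a real self-contained addition. It is correct, but the phrasing ``$D$ descends to $\mathcal{S}_A(\mathbf{\Sigma})$'' slightly misrepresents the structure of the argument. Example~\ref{example_PO} already equips $\mathcal{S}_A(\mathbf{\Sigma})$ with its \emph{own} derivation $D^{\mathbf{\Sigma}}$, built from any basis of the flat $\mathbb{Z}[A^{\pm 1/2}]$-model; nothing needs to descend. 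What you actually need is the intertwining $D^{\mathbf{\Sigma}}_{i_*(x)}\circ i_* = i_*\circ D^{\mathbf{\Sigma}'}_x$, which follows immediately because $i_*$ is a surjection of flat $\mathbb{Z}[A^{\pm 1/2}]$-algebras compatible with the Chebyshev--Frobenius maps: one may choose the lifts $\widehat{Ch_A(i_*(x))}=i_*(\widehat{Ch_A(x)})$ and $\widehat{i_*(z)}=i_*(\hat z)$, so the divided commutator transports through $i_*$ automatically. This yields $\{i_*x,i_*y\}_N = i_*\{x,y\}_N$ by injectivity of $Ch_A^{\mathbf{\Sigma}}$. Combined with the fact that $\{\cdot,\cdot\}_\infty$ is functorial under any morphism of $\mathbb{C}[q^{\pm 1}]$-algebras (Remark~\ref{remarkPoisson}) and that $i_*$ is surjective, the equality of brackets on $\mathcal{S}_{+1}(\mathbf{\Sigma}')$ transports to $\mathcal{S}_{+1}(\mathbf{\Sigma})$. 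In particular, there is no ``delicate verification'' via the explicit generators of $\mathcal{I}_p$ in Proposition~\ref{prop_offpuncture}: the functoriality of the divided commutator does all the work. (As a side remark, $D^{\mathbf{\Sigma}'}_x(\mathcal{I}_p)\subset\mathcal{I}_p$ \emph{does} hold, but it is a consequence of the intertwining identity, not an input to it.)
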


\begin{proof} That these $4$-uplet are Poisson orders is clear. What we need to prove is that the Poisson bracket induced by $D$ coincides with the one coming from deformation quantization. When $\mathbf{\Sigma}=\mathbb{B}$ this was done  by DeConcini and Lyubashenko in \cite{DeConciniLyubashenko_OqG} by explicit computations to which we refer. For any other non trivial connected marked surface $\mathbf{\Sigma}$, we can embedded $\mathcal{S}_A^{red}(\mathbf{\Sigma})$ and $\mathcal{S}_A(\mathbf{\Sigma})$ into some quantum torus using Bonahon-Wong's quantum trace or its L\^e-Yu enhancement. In both cases, the Chebyshev-Frobenius morphism is the restriction of the Frobenius morphism so the result follows from Lemma \ref{lemma_bracket_torus}.
\end{proof}

In particular, by putting Corollary \ref{coro_unicity} and Theorem \ref{theorem_PO} together, we see that if a symplectic leaf is dense, then it is included in the fully Azumaya locus, in particular we get the

\begin{theorem}\label{theorem_GJS}
\begin{enumerate}
\item (Ganev-Jordan-Safranov \cite[Theorem $1.1$]{GanevJordanSafranov_FrobeniusMorphism}:) For a closed surface $\mathbf{\Sigma}= (\Sigma_{g,0}, \emptyset)$, the smooth locus $\mathcal{X}_{\SL_2}^0(\Sigma_{g,0})$ is included in the Azumaya locus.
\item (Ganev-Jordan-Safranov \cite[Theorem $1.1$]{GanevJordanSafranov_FrobeniusMorphism}:) For $g\geq 1$, $\mathbf{\Sigma}_{g,0}^0$, the bad arcs leaf $\mathcal{X}^0(\mathbf{\Sigma}_{g,0}^0)=\mu^{-1}(\SL_2^0)$ is equal to the Azumaya locus.
\end{enumerate}
\end{theorem}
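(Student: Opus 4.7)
The plan is to unify both parts through the interplay between Brown--Gordon's theory of Poisson orders (Theorem \ref{theorem_PO}) and the Frohman--Kania-Bartoszynska--L\^e unicity of representations (Corollary \ref{coro_unicity}). The underlying mechanism is the following: whenever $L\subset \mathcal{X}(\mathbf{\Sigma})$ is an open dense symplectic leaf, and the Azumaya locus coincides with the fully Azumaya locus so that the Brown--Gordon fibres $\mathcal{S}_A(\mathbf{\Sigma})_x$ are indexed by $\mathcal{X}(\mathbf{\Sigma})$ itself, the open density of $\mathcal{FAL}$ forces $L\cap \mathcal{FAL}\ne \emptyset$; by Theorem \ref{theorem_PO_equivariant} the isomorphism class of the fibre $\mathcal{S}_A(\mathbf{\Sigma})_x$ is constant as $x$ ranges over a symplectic core, and since the partition into leaves refines the partition into cores, the whole leaf $L$ lies in $\mathcal{FAL}$. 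A useful preliminary observation is that both for $\mathbf{\Sigma}=(\Sigma_{g,0},\emptyset)$ and for $\mathbf{\Sigma}=\mathbf{\Sigma}_{g,0}^0$, Theorem \ref{theorem_center} shows that the centre of $\mathcal{S}_A(\mathbf{\Sigma})$ reduces to the image of the Chebyshev--Frobenius morphism alone (no inner punctures exist, and the unique boundary component of $\mathbf{\Sigma}_{g,0}^0$ has an odd number of boundary arcs, contributing no $\beta^{(N,k)}_\partial$); consequently the branched cover $\pi:\widehat{\mathcal{X}}\to \mathcal{X}$ is an isomorphism, so $\mathcal{AL}=\mathcal{FAL}$ as subsets of $\mathcal{X}(\mathbf{\Sigma})$.

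For item $(1)$, I may assume $g\ge 2$, since $\mathcal{X}_{\SL_2}^0(\Sigma_1)=\emptyset$. The smooth locus $\mathcal{X}_{\SL_2}^0(\Sigma_g)$ is Zariski open and dense in $\mathcal{X}_{\SL_2}(\Sigma_g)$, and Goldman's Theorem \ref{theorem_smoothsymplectic} asserts that it is symplectic. Transporting via a choice of spin structure (Theorem \ref{theorem_charvar_skein}), it becomes a union of open connected symplectic leaves of $\mathcal{X}(\Sigma_{g,0})$, and the general mechanism above gives $\mathcal{X}_{\SL_2}^0(\Sigma_g)\subseteq \mathcal{AL}$.

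For item $(2)$, the Ganev--Jordan--Safranov Theorem \ref{theorem_OpenDense} furnishes directly the open dense symplectic leaf $\mathcal{X}^0(\mathbf{\Sigma}_{g,0}^0)=\mu^{-1}(\SL_2^0)$, whence $\mathcal{X}^0\subseteq\mathcal{AL}$ by the same argument. For the reverse inclusion, observe that $\mathbf{\Sigma}_{g,0}^0$ carries a unique bad arc, and that under the moment map $\mu:\mathcal{X}(\mathbf{\Sigma}_{g,0}^0)\to \SL_2^{STS}$ this bad arc corresponds precisely to the entry $a$ of the matrix in $\SL_2^{STS}$ (cf.\ the conventions of Section \ref{sec_STS}); therefore $\mathcal{X}^1=\mu^{-1}(\SL_2^1)$ coincides with the reduced character variety $\mathcal{X}^{red}(\mathbf{\Sigma}_{g,0}^0)$. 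Lemma \ref{lemma_reduced_leaf} then gives $\mathcal{AL}\cap \mathcal{X}^1=\emptyset$, because the PI-dimension of $\mathcal{S}_A^{red}(\mathbf{\Sigma}_{g,0}^0)$ is strictly smaller than that of $\mathcal{S}_A(\mathbf{\Sigma}_{g,0}^0)$ (Theorem \ref{theorem_center}), completing the equality $\mathcal{X}^0=\mathcal{AL}$.

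The main technical point silently absorbed into this clean outline is the verification of the hypothesis underlying Brown--Gordon's theorem, namely that the Poisson bracket on $\mathcal{S}_{+1}(\mathbf{\Sigma})$ induced by the derivation $D_xy=\pi([\widehat{Ch_A(x)},\hat y]/(N(q^N-1)))$ agrees with the deformation-quantization bracket governing the symplectic foliation of $\mathcal{X}(\mathbf{\Sigma})$. This compatibility is checked by embedding $\mathcal{S}_A(\mathbf{\Sigma})$ into a quantum torus via the (L\^e--Yu refined) quantum trace, where the explicit calculation of Lemma \ref{lemma_bracket_torus} applies, and then pulling back along the embedding. Once this compatibility is granted, the argument above is entirely formal.
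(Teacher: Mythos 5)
Your proof follows essentially the same route as the paper's: Goldman's Theorem~\ref{theorem_smoothsymplectic} and Ganev--Jordan--Safranov's Theorem~\ref{theorem_OpenDense} provide open dense symplectic leaves, Corollary~\ref{coro_unicity} gives density of the Azumaya locus, Theorem~\ref{theorem_PO} propagates the Azumaya property along the leaf, and Lemma~\ref{lemma_reduced_leaf} rules out $\mathcal{X}^1(\mathbf{\Sigma}_{g,0}^0)$. Your explicit remark that in both cases Theorem~\ref{theorem_center} forces the centre to coincide with the image of $Ch_A$ --- so that $\widehat{\mathcal{X}}\to\mathcal{X}$ is an isomorphism and $\mathcal{AL}=\mathcal{FAL}$ --- is a useful clarification that the paper leaves implicit; without it, the Brown--Gordon argument would a priori only address the fully Azumaya locus rather than the Azumaya locus itself. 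One caveat in the final paragraph: for the closed surface $(\Sigma_{g,0},\emptyset)$ there is no L\^e--Yu (or Bonahon--Wong) quantum trace, since closed surfaces are not triangulable, so the compatibility between the bracket $\{\cdot,\cdot\}_N$ coming from the derivation $D$ and the deformation-quantization bracket cannot be checked by ``embedding into a quantum torus and pulling back'' in that case; one instead needs to descend the Poisson-order structure through the off-puncture ideal from $(\Sigma_{g,1},\emptyset)$, as in step $(4)$ of Strategy~\ref{strategy}. Since you cite the compatibility result rather than reprove it, this does not affect the correctness of your argument.
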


\begin{proof}
The leaves $\mathcal{X}_{\SL_2}^0(\Sigma_{g})$ and $\mathcal{X}^0(\mathbf{\Sigma}_{g,0}^0)$ are symplectic by Theorem \ref{theorem_smoothsymplectic}  and Theorem \ref{theorem_GJS} respectively and they are both open dense subsets. Since the Azumaya loci of $\mathcal{S}_A(\Sigma_{g,0})$ and $\mathcal{S}_A(\mathbf{\Sigma}_{g,0}^0)$ are
 also dense by Corollary \ref{coro_unicity}, they intersect non-trivially these leaves and we conclude using Theorem \ref{theorem_PO}. The fact that $\mathcal{X}^1(\mathbf{\Sigma}_{g,0}^0)$ does not intersect the Azumaya locus follows from Lemma \ref{lemma_reduced_leaf}.
 \end{proof}
 
 Note that Corollary \ref{coro_representations_central} implies that 
 
 \begin{corollary} For a closed surface $\Sigma_g$, none of the central representations in $\mathcal{X}_{\SL_2}(\Sigma_g)$ belongs to the Azumaya locus of $\mathcal{S}_A(\Sigma_g)$.
 \end{corollary}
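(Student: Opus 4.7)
The plan is to combine the existence result of Corollary \ref{coro_representations_central} with the characterization of the Azumaya locus given by Remark \ref{remark_AzumayaLocus}. Essentially, Corollary \ref{coro_representations_central} already provides, for each central representation $[r]\in \mathcal{X}_{\SL_2}^2(\Sigma_g)$, an explicit ``small'' simple module with classical shadow $[r]$, and such small simple modules are exactly the obstruction to belonging to the Azumaya locus.

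More precisely, first fix a central representation $[r]\in \mathcal{X}_{\SL_2}^2(\Sigma_g)$ and apply Corollary \ref{coro_representations_central} to produce a simple $\mathcal{S}_A(\Sigma_g)$-module $V$ of dimension strictly smaller than $D:=\mathrm{PI}\text{-}\dim(\mathcal{S}_A(\Sigma_g))$ whose classical shadow is $[r]$. Next observe that the corresponding simple representation $\rho:\mathcal{S}_A(\Sigma_g)\to \End(V)$ sends every central element to a scalar, and therefore defines a closed point $\hat x \in \widehat{\mathcal{X}}(\Sigma_g)=\mathrm{Specm}(\mathcal{Z})$ lying above the classical shadow $x\in\mathcal{X}(\Sigma_g)$ corresponding to $[r]$ under the identification $\mathcal{S}_{+1}(\Sigma_g)\cong \mathcal{O}[\mathcal{X}_{\SL_2}(\Sigma_g)]$ (once a spin structure has been fixed).

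Now apply Remark \ref{remark_AzumayaLocus}: if $\hat x$ were in the Azumaya locus of $\mathcal{S}_A(\Sigma_g)$, then every irreducible representation inducing the character $\hat x$ on the center would have dimension equal to $D$. Since $V$ is such an irreducible representation but $\dim(V)<D$, we conclude that $\hat x \notin \mathcal{AL}(\mathcal{S}_A(\Sigma_g))$. Consequently, at least one point of the fiber $\pi^{-1}(x)$ fails to lie in the Azumaya locus, so $x$ does not lie in the fully Azumaya locus $\mathcal{FAL}(\Sigma_g)$, as claimed.

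There is essentially no obstacle here beyond bookkeeping: the only subtle point is to keep clean the distinction between the point $x\in \mathcal{X}(\Sigma_g)$ (the classical shadow, depending on a spin structure for its identification with $[r]$) and the point $\hat x\in \widehat{\mathcal{X}}(\Sigma_g)$ over the center, since the Azumaya locus is defined downstairs on $\widehat{\mathcal{X}}$ while the statement of the corollary is phrased on $\mathcal{X}_{\SL_2}(\Sigma_g)$. Once this is disentangled the argument reduces to quoting Corollary \ref{coro_representations_central} together with Remark \ref{remark_AzumayaLocus}.
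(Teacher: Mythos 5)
Your argument is exactly the paper's: the paper's proof is the one-liner "Corollary \ref{coro_representations_central} implies that..." and you have simply unfolded it via Remark \ref{remark_AzumayaLocus}, which is the right thing to do. The only point where your bookkeeping is slightly off is the last sentence: you establish that a point $\hat x$ of the fiber $\pi^{-1}(x)$ is outside $\mathcal{AL}(\mathcal{S}_A(\Sigma_g))$, hence $x\notin\mathcal{FAL}(\Sigma_g)$, and then declare "as claimed." But the corollary asserts $x\notin\mathcal{AL}(\mathcal{S}_A(\Sigma_g))$, with the Azumaya locus regarded as a subset of $\mathcal{X}_{\SL_2}(\Sigma_g)$. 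The missing (easy) observation that squares this away is precisely what makes the corollary's statement well-posed in the first place: for a closed surface $\Sigma_g$ there are no inner punctures, so by Theorem \ref{theorem_center}(1) the center $\mathcal{Z}$ of $\mathcal{S}_A(\Sigma_g)$ equals the image of the Chebyshev-Frobenius morphism, the branched covering $\pi\colon\widehat{\mathcal{X}}(\Sigma_g)\to\mathcal{X}(\Sigma_g)$ is the identity, $\hat x=x$, and $\mathcal{AL}=\mathcal{FAL}$. With that sentence added, your proof closes the loop cleanly and matches the paper's intent; the worry you flag about "keeping clean the distinction between $x$ and $\hat x$" is genuine in general, but here it collapses and you should say so explicitly rather than detouring through $\mathcal{FAL}$ without the identification.
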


 Let us consider the skein algebra of a closed connected surface $\Sigma_g$ of genus $g\geq 2$. At this stage, the only question that remains open is
 
 \begin{question} Does the diagonal representations of $\mathcal{X}_{\SL_2}^1(\Sigma_g)$ belong to the Azumaya locus of $\mathcal{S}_A(\Sigma_g)$ ?
 \end{question}
 
 L\^e and Yu conjectured in \cite{LeYu_Survey} that the answer is no. Let us collect what we learned from our study:
 
 \begin{proposition}\label{prop_equivalences}
 The following assertion are equivalent:
 \begin{enumerate}
 \item There exists a diagonal representation in  $\mathcal{X}_{\SL_2}^1(\Sigma_g)$ that belongs to the Azumaya locus of $\mathcal{S}_A(\Sigma_g)$; 
 \item All  diagonal representations  belong to the Azumaya locus of $\mathcal{S}_A(\Sigma_g)$;
 \item There exists a class $\omega \in \mathrm{H}^1(\Sigma_g; \mathbb{C}/\mathbb{Z}) \setminus \mathrm{H}^1(\Sigma; \frac{1}{2}\mathbb{Z}/\mathbb{Z})$ for which the representation $\rho_{\omega}^{BCGP}$ coming from non semi simple TQFTs is irreducible; 
 \item For all $\omega \in  \mathrm{H}^1(\Sigma_g; \mathbb{C}/\mathbb{Z}) \setminus \mathrm{H}^1(\Sigma;  \frac{1}{2}\mathbb{Z}/\mathbb{Z})$, then $\rho_{\omega}^{BCGP}$ is irreducible; 
 \item There exists a quantum Teichm\"uller representation $\rho^{BW}$ with classical shadow in $\mathcal{X}_{\SL_2}^1(\Sigma_g)$ which is irreducible; 
 \item All quantum Teichm\"uller representations $\rho^{BW}$ with classical shadow in $\mathcal{X}_{\SL_2}^1(\Sigma_g)$ are  irreducible.
 \end{enumerate}
 Moreover, if these assertions are true, then any two representation $\rho^{BCGP}$ and $\rho^{BW}$ having the same diagonal classical shadow in $\mathcal{X}_{\SL_2}^1(\Sigma_g)$ are isomorphic.
 \end{proposition}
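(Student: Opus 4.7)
The plan is to first reduce the whole statement to a single question about the equivariant symplectic leaf $\mathcal{X}^1_{\SL_2}(\Sigma_g)$ by applying Brown--Gordon's Poisson-order theory, and then to translate each irreducibility condition into an Azumaya-locus membership condition by a dimension count.

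First I would observe that for a closed unmarked surface $(\Sigma_{g,0},\emptyset)$, Theorem \ref{theorem_center}(1) implies that the center of $\mathcal{S}_A(\mathbf{\Sigma})$ is generated by the image of $Ch_A$ alone (no inner punctures contribute), so $\widehat{\mathcal{X}}(\mathbf{\Sigma})=\mathcal{X}(\mathbf{\Sigma})$ and $\mathcal{AL}=\mathcal{FAL}$. A choice of spin structure identifies $\mathcal{X}(\mathbf{\Sigma})$ with $\mathcal{X}_{\SL_2}(\Sigma_g)$. By Corollary \ref{coro_closed_surface} the diagonal locus $\mathcal{X}^1_{\SL_2}(\Sigma_g)$ is a single symplectic leaf, and since $\mathcal{A}=\emptyset$ here, it is also a single equivariant symplectic leaf. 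The Poisson-order structure $(\mathcal{S}_A(\mathbf{\Sigma}),\mathcal{X}(\mathbf{\Sigma}),Ch_A,D)$ combined with Theorem \ref{theorem_PO} and the fact that the leaves partition refines the cores partition then gives $\mathcal{S}_A(\mathbf{\Sigma})_x\cong\mathcal{S}_A(\mathbf{\Sigma})_y$ for all $x,y\in\mathcal{X}^1_{\SL_2}(\Sigma_g)$. In the closed-surface setting this isomorphism is equivalent to $x\in\mathcal{AL}\Leftrightarrow y\in\mathcal{AL}$, since $x\in\mathcal{AL}$ here means precisely $\mathcal{S}_A(\mathbf{\Sigma})_x\cong Mat_D(\mathbb{C})$ with $D=N^{3g-3}$. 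This yields $(1)\Leftrightarrow(2)$.

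Next I would connect the Azumaya-locus assertion to the irreducibility of the BCGP and Bonahon--Wong representations. Each $\rho^{BCGP}_\omega$ with $\omega\notin H^1(\Sigma_g;\tfrac12\mathbb{Z}/\mathbb{Z})$, and each quantum Teichm\"uller representation $\rho^{BW}$ with diagonal classical shadow, is a central representation of dimension equal to the PI-dimension $D=N^{3g-3}$, with classical shadow lying in $\mathcal{X}^1_{\SL_2}(\Sigma_g)$: the first statement follows from the conformal blocks basis together with \eqref{eq_lambdaE}, and the second from Theorem \ref{theorem_QTClosed}. By Remark \ref{remark_AzumayaLocus}, a $D$-dimensional central representation is irreducible if and only if its classical shadow lies in $\mathcal{AL}$. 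As $\omega$ ranges over $H^1(\Sigma_g;\mathbb{C}/\mathbb{Z})\setminus H^1(\Sigma_g;\tfrac12\mathbb{Z}/\mathbb{Z})$ the shadows $[r_\omega]$ fill the whole of $\mathcal{X}^1_{\SL_2}(\Sigma_g)$ (because $\exp(2i\pi\omega)$ sweeps all of $\mathbb{C}^\times$), and by Theorem \ref{theorem_QTClosed}(1) every point of $\mathcal{X}^1_{\SL_2}(\Sigma_g)$ is the shadow of some $\rho^{BW}$. Combining with $(1)\Leftrightarrow(2)$, irreducibility of any one representation among $\rho^{BCGP}_\omega$ or $\rho^{BW}$ with shadow in $\mathcal{X}^1_{\SL_2}$ forces the whole leaf into $\mathcal{AL}$ and hence forces irreducibility of all such representations, giving $(1)\Leftrightarrow(3)\Leftrightarrow(4)\Leftrightarrow(5)\Leftrightarrow(6)$.

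The final ``moreover'' is then immediate: under the equivalent assertions, for any diagonal shadow $[r]\in\mathcal{X}^1_{\SL_2}$ we have $\mathcal{S}_A(\mathbf{\Sigma})_{[r]}\cong Mat_D(\mathbb{C})$, which admits a unique simple module of dimension $D$ up to isomorphism; both $\rho^{BCGP}_\omega$ with $[r_\omega]=[r]$ and any $\rho^{BW}$ with shadow $[r]$ are $D$-dimensional simple modules of this algebra, hence must be isomorphic. The main obstacle of the plan is that it is entirely tautological: it propagates Azumaya membership along the leaf $\mathcal{X}^1_{\SL_2}(\Sigma_g)$ without producing any point of the leaf in $\mathcal{AL}$. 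Actually deciding whether (1)--(6) hold therefore requires an independent input -- for instance an explicit discriminant calculation in the spirit of Theorem \ref{theorem_UnicityRep}, or a direct irreducibility check for some specific $\rho^{BCGP}_\omega$ via Lemma \ref{lemma_irrep} using a cyclicity argument on the conformal blocks basis -- and this is precisely the open point signalled by L\^e--Yu.
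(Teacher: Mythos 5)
Your proof is correct and takes essentially the same approach as the paper: $(1)\Leftrightarrow(2)$ via Lemma~\ref{lemma_diagonal_symplectic} (equivalently Corollary~\ref{coro_closed_surface}) and Brown--Gordon's Theorem~\ref{theorem_PO}, and the remaining equivalences via the dimension count showing both $\rho^{BCGP}_\omega$ and $\rho^{BW}$ are central of dimension equal to the PI-dimension $N^{3g-3}$, combined with Remark~\ref{remark_AzumayaLocus}. Your version is more explicit than the paper's terse argument (noting in particular that for a closed surface the center is exactly the image of $Ch_A$ so $\mathcal{AL}=\mathcal{FAL}$, and spelling out the ``moreover'' via uniqueness of the simple $Mat_D(\mathbb{C})$-module), and your closing remark about the conditional/tautological nature of the statement is exactly the point the paper intends by raising L\^e--Yu's conjecture afterward.
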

 
 \begin{proof}
 The equivalence between the first two assertions follows from the fact, proved in Lemma \ref{lemma_diagonal_symplectic},  that $\mathcal{X}_{\SL_2}^1(\Sigma_g)$ is a symplectic leaf together with Theorem \ref{theorem_PO}. The other equivalences follow from the fact that both families of representations $\rho^{BCGP}$ and $\rho^{BW}$ have dimension equal to $N^{3g-3}$ which is the PI-dimension of $\mathcal{S}_A(\Sigma_g)$ by Theorem \ref{theorem_center}. 
 \end{proof}
 
 The above Proposition serves as a motivation to solve Question \ref{question_BCGP}. Of course, a much more difficult problem is the 
 
 \begin{problem} Let $g\geq 2$.
 \begin{enumerate}
 \item
 If $[r]\in \mathcal{X}^2_{\SL_2}(\Sigma_g)$ is a central representation, compute the algebra $\mathcal{S}_A(\Sigma_g)_{[r]}$ and find its indecomposable representations (if it's not wild).
 \item For $[r]\in \mathcal{X}^2_{\SL_2}(\Sigma_g)$ a central representation, compare the representations $\rho^{WRT}, \rho^{BCGP}$ and $\rho^{BW}$ with classical shadow $[r]$. In particular, are the representations $\rho^{BCGP}$ and $\rho^{BW}$ isomorphic ? Is $\rho^{WRT}$ a subrepresentation of one of them ?
 \end{enumerate}
 \end{problem}
 
 \vspace{2mm}
 \par 
 
 We now endow the previous Poisson orders with a structure of $(\mathbb{C}^*)^{\mathcal{A}}$-equivariant Poisson order. 
 Let $\varphi : \mathcal{O}_q[\SL_2] \to \mathbb{C}[X^{\pm1}]$ be the surjective morphism defined by 
 $$ \varphi(\alpha_{+-})=\varphi(\alpha_{-+})=0, \quad \varphi(\alpha_{++})=X, \quad \varphi(\alpha_{--})= X^{-1}.$$
 The morphism $\varphi$ is clearly a morphism of Hopf algebras and the induced morphism on $\mathcal{O}[\SL_2] \xrightarrow{Ch_A} \mathcal{O}_q[\SL_2]$ is the diagonal embedding $\mathbb{C}^* \to \SL_2$ sending $z$ to $\begin{pmatrix} z^N & 0 \\ 0 & z^{-N} \end{pmatrix}$. Note that, while identifying $\mathbb{C}[X^{\pm 1}]$ with the reduced algebra $\mathcal{S}_A^{red}(\mathbb{B})$, then $\varphi$ is just the quotient map $\mathcal{S}_A(\mathbb{B}) \to \mathcal{S}_A^{red}(\mathbb{B})$.
 
 Define an algebraic action of $(\mathbb{C}^*)^{\mathcal{A}}$ on $\mathcal{S}_A(\mathbf{\Sigma})$ by the co-action 
 $$\Delta^{diag}: \mathcal{S}_A(\mathbf{\Sigma}) \xrightarrow{\Delta^L} \left(\mathcal{O}_q[\SL_2]^{\otimes \mathcal{A}} \right) \otimes \mathcal{S}_A(\mathbf{\Sigma}) \xrightarrow{ (\varphi^{\otimes \mathcal{A}}) \otimes \id} \mathbb{C}[X^{\pm 1}] \otimes \mathcal{S}_A(\mathbf{\Sigma}).$$

The above action induces by quotient a similar action on $\mathcal{S}_A^{red}(\mathbf{\Sigma})$ and both action preserve the image of the Chebyshev-Frobenius morphism.
The equivariance of $D$ for this action is an immediate consequence of the definition of $D$ so we get the 

\begin{corollary}\label{coro_POSkein} If $x,y \in \mathcal{X}(\mathbf{\Sigma})$ belong to the same equivariant symplectic leaf (or equivariant symplectic core), then $\mathcal{S}_A(\mathbf{\Sigma})_x\cong \mathcal{S}_A(\mathbf{\Sigma})_y$. Similarly, if $x,y \in \mathcal{X}^{red}(\mathbf{\Sigma})$ belong to the same equivariant symplectic leaf (or equivariant symplectic core), then $\mathcal{S}^{red}_A(\mathbf{\Sigma})_x\cong \mathcal{S}^{red}_A(\mathbf{\Sigma})_y$. 
\end{corollary}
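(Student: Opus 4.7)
The plan is to verify that the Poisson orders $(\mathcal{S}_A(\mathbf{\Sigma}), \mathcal{X}(\mathbf{\Sigma}), Ch_A, D)$ and $(\mathcal{S}^{red}_A(\mathbf{\Sigma}), \mathcal{X}^{red}(\mathbf{\Sigma}), Ch_A, D)$, which were established in the preceding proposition, satisfy the $(\mathbb{C}^*)^{\mathcal{A}}$-equivariance hypothesis of Theorem \ref{theorem_PO_equivariant}, and then directly invoke that theorem. Since the Poisson bracket coming from $D$ coincides with Goldman's bracket (the one defining the symplectic leaves of $\mathcal{X}(\mathbf{\Sigma})$ and $\mathcal{X}^{red}(\mathbf{\Sigma})$), the notions of (equivariant) symplectic leaves and (equivariant) symplectic cores are compatible between the two setups.

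First I would record that $\Delta^{diag}$, being the composition of the coaction $\Delta^L$ with the Hopf algebra morphism $\varphi^{\otimes \mathcal{A}}$, is itself a coaction of the commutative Hopf algebra $\mathbb{C}[X^{\pm 1}]^{\otimes \mathcal{A}}$; it therefore defines an algebraic $(\mathbb{C}^*)^{\mathcal{A}}$-action by algebra automorphisms on $\mathcal{S}_A(\mathbf{\Sigma})$, and it descends to $\mathcal{S}^{red}_A(\mathbf{\Sigma})$ since the quotient map $\mathcal{S}_A(\mathbf{\Sigma}) \to \mathcal{S}^{red}_A(\mathbf{\Sigma})$ identifies precisely with the quotient associated to $\varphi$.

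Next I would verify that this action preserves the image of $Ch_A$. This is a consequence of the naturality of the Chebyshev-Frobenius morphism with respect to the gluing operations, explicitly built into its construction in the proof of Theorem \ref{theorem_chebyshev}. Indeed the coaction $\Delta^L$ is by definition a gluing map $\theta_{b\#c}$, and the commutative square expressing that $Ch_A$ commutes with gluing maps shows that $Ch_A$ intertwines the analogous coactions on $\mathcal{S}_{+1}(\mathbf{\Sigma})$ and $\mathcal{S}_A(\mathbf{\Sigma})$. In particular $Ch_A(\mathcal{S}_{+1}(\mathbf{\Sigma})) \subset \mathcal{S}_A(\mathbf{\Sigma})$ is stable under the $(\mathbb{C}^*)^{\mathcal{A}}$-action, and this action corresponds, at the classical level, to the $(\mathbb{C}^*)^{\mathcal{A}}$-action on $\mathcal{X}(\mathbf{\Sigma})$ defining its equivariant symplectic leaves.

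The final, and technically most delicate, step is the $D$-equivariance $D_{g\cdot z}(a) = g\, D_z(g^{-1}a)$. The action $\Delta^{diag}$ is defined naturally over the generic ring $\mathbb{C}[q^{\pm 1}]$, so it lifts to a $(\mathbb{C}^*)^{\mathcal{A}}$-action by $\mathbb{C}[q^{\pm 1}]$-algebra automorphisms on the generic stated skein algebra, and this lifted action commutes with the quotient map $\pi$ of Example \ref{example_PO}. Since algebra automorphisms preserve commutators and since $Ch_A$ is equivariant by the previous paragraph, we have, for any lift $\widehat{\cdot}$ and any $g\in (\mathbb{C}^*)^{\mathcal{A}}$,
$$
D_{g\cdot z}(a) = \pi\!\left(\frac{[\widehat{\phi(g\cdot z)},\, \hat a\,]}{N(q^N-1)}\right) = \pi\!\left(\frac{g\cdot[\widehat{\phi(z)},\, g^{-1}\!\cdot \hat a\,]}{N(q^N-1)}\right) = g\, D_z(g^{-1}a),
$$
the equality in the middle being valid modulo $(q^N-1)\mathcal{S}_A(\mathbf{\Sigma})$ because the two lifts $\widehat{\phi(g\cdot z)}$ and $g\cdot\widehat{\phi(z)}$ agree after the quotient, thanks to the independence of $D$ on the choice of basis noted in Example \ref{example_PO}. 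The main obstacle lies exactly in this compatibility of the lift with the group action; it is handled by the independence of $D$ on the lifting basis. Applying Theorem \ref{theorem_PO_equivariant} to both Poisson orders then yields the desired isomorphisms $\mathcal{S}_A(\mathbf{\Sigma})_x \cong \mathcal{S}_A(\mathbf{\Sigma})_y$ and $\mathcal{S}^{red}_A(\mathbf{\Sigma})_x \cong \mathcal{S}^{red}_A(\mathbf{\Sigma})_y$.
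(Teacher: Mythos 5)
Your proposal follows the same route as the paper: introduce the $(\mathbb{C}^*)^{\mathcal{A}}$-action via the coaction $\Delta^{diag}$, verify it preserves the image of $Ch_A$, check the $D$-equivariance, and invoke Theorem \ref{theorem_PO_equivariant}. The paper is much terser on the equivariance step (it simply asserts it is ``an immediate consequence of the definition of $D$''); you fill in the mechanism correctly by observing that the action lifts to the generic $\mathbb{C}[q^{\pm1}]$-algebra, commutes with $\pi$, acts by algebra automorphisms hence preserves commutators, and that the identity $D_{g\cdot z}(a)=g\,D_z(g^{-1}a)$ reduces to compatibility of the lift with the group action. One remark on precision: the basis-independence of $D$ cited from Example \ref{example_PO} holds only up to inner derivations that vanish on the image of $\phi$, so it does not by itself immediately transport a computation between arbitrary lifts; the cleanest way to close the gap you identify is to choose from the start a $(\mathbb{C}^*)^{\mathcal{A}}$-weight basis $\mathcal{B}$ of the generic algebra, for which the lift $\widehat{\cdot}$ is genuinely $(\mathbb{C}^*)^{\mathcal{A}}$-equivariant, so that $\widehat{\phi(g\cdot z)}=g\cdot\widehat{\phi(z)}$ on the nose and the displayed chain of equalities holds without any ``modulo $(q^N-1)$'' caveat. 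With that small adjustment your argument is airtight and coincides with the paper's intended proof.
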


In the case of the bigon, the corollary says that the isomorphism class of $\mathcal{O}_q[\SL_2]_g$ only depends on the double Bruhat cell of $g\in \SL_2$. This was the first version of an equivariant Poisson order which appeared in the work of DeConcini-Lyubashenko in \cite{DeConciniLyubashenko_OqG}, so Corollary \ref{coro_POSkein} is a generalization of their work.

\subsection{Resolutions of the classification problem is small cases}\label{sec_resolution}.
\vspace{2mm}
\par 
\textbf{The torus case:} Let $\mathbf{\Sigma}=(\Sigma_1, \emptyset)$ the closed torus. The character variety admits a double branched covering $\pi: (\mathbb{C}^*)^2\to \mathcal{X}_{\SL_2}(\Sigma_1)$, where the covering involution sends $(z_1,z_2)$ to $(z_1^{-1}, z_2^{-1})$ and the four branched points $(\pm 1, \pm 1)$ correspond to the scalar representations.
The symplectic leaves of $\mathcal{X}_{\SL_2}(\Sigma_1)$ are $(1)$ the smooth locus $\mathcal{X}^{1}_{\SL_2}(\Sigma_1)$ of non scalar representations and $(2)$ the four ramification points corresponding to the scalar representations. Since none of the scalar representation belongs to the Azumaya locus  and $\mathcal{X}^{1}_{\SL_2}(\Sigma_1)$ is dense, then the Azumaya locus of $\mathcal{S}_A(\Sigma_1)$ is $\mathcal{X}^{1}_{\SL_2}(\Sigma_1)$. Let us construct its indecomposable representations explicitly. Let $V=\mathbb{C}^N$ with canonical basis $\{e_i, i \in \mathbb{Z}/N\mathbb{Z}\}$ and consider two operators $U,V \in \End(V)$ defined by $Ve_i=e_{i+1}$ and $Ue_i=q^i e_i$, so that $UV=q VU$ and $U^N=V^N=\id_V$. 
 For $(x,y) \in (\mathbb{C}^*)^2$, consider the irreducible(Schr\"odinger) representation $r_{x,y}: \mathcal{W}_q \to \End(V)$ defined by $r_{x,y}(X)=xU$ and $r_{x,y}(V)=yV$. Clearly, the isomorphism class of $r_{x,y}$ only depends on $(x^N, y^N)$ and every simple $\mathcal{W}_q$-module is isomorphic to such a $r_{x,y}$. Recall from Section \ref{sec_ClosedCase} the definition of the Frohman-Gelca isomorphism $\phi^{FG} : \mathcal{S}_A(\Sigma_1) \cong \mathcal{W}_q^+$. By composition
 $$ \rho_{x,y}: \mathcal{S}_A(\Sigma_1) \xrightarrow{\phi^{FG}} \mathcal{W}_q \xrightarrow{r_{x,y}} \End(V), $$
 we get an $N$-dimensional representation of $ \mathcal{S}_A(\Sigma_1) $ which only depends, up to isomorphism, on $x^N$ and $y^N$. Its classical shadow is easily computed from the formulas
  $$T_N(\rho_{x,y}(L))=r_{x,y}(T_N(X+X^{-1}))=r_{x,y}(X^N+X^{-N})= (x^N+x^{-N})\id_V \quad \mbox{and} \quad T_N(\rho_{x,y}(M))=(y^N+y^{-N})\id.$$
  So $\rho_{x,y}$ has classical shadow $\pi(x^N, y^N)$ and every point of $\mathcal{X}_{\SL_2}(\Sigma_1)$ is the shadow of such a representation. Now choose $x=\varepsilon_1$ and $y=\varepsilon_2$, for $\varepsilon_1,\varepsilon_2\in \{ \pm1\}$ so that $\pi(x^N, y^N)$ is a central representation. Recall the definition of the automorphism $\Theta \in \mathrm{Aut}(\mathcal{W}_q)$ sending $X, Y$ to $X^{-1}, Y^{-1}$ and such that $\mathcal{W}_q^+$ is the subalgebra of $\Theta$-invariants elements and define $\theta\in \GL(V)$ by $\theta(e_i)=e_{-i}$. A simple computation shows that 
  $$ r_{\varepsilon_1,\varepsilon_2} (\Theta(Z))= \theta r_{\varepsilon_1,\varepsilon_2}(Z) \theta \quad\mbox{, for all }Z \in \mathcal{W}_q.$$
Therefore, the two subspaces $V^{\pm}:= \{v\in V| \theta (v) = \pm v\} =\Span \{ e_i\pm e_{-i}, i \in \mathbb{Z}/N\mathbb{Z} \}$ are stable under the action of $\mathcal{W}_q^+$ since
$$ \Theta(Z)=Z \mbox{ and }\theta(v)=\pm v \Rightarrow \theta r_{\varepsilon_1,\varepsilon_2}(Z)  \cdot v = r_{\varepsilon_1,\varepsilon_2}(\Theta(Z)) \theta \cdot v=\pm r_{\varepsilon_1,\varepsilon_2}(Z)  \cdot v .$$
Therefore, $\rho_{\varepsilon_1, \varepsilon_2}$ is the direct sum of two  subrepresentations
$$ \rho_{\varepsilon_1, \varepsilon_2}^{\pm} : \mathcal{S}_A(\Sigma_1) \to \End(V^{\pm}).$$
Writing $v_i^{\pm}:= e_i\pm e_{-i}$, we have
 $$ \rho^{\pm}_{\varepsilon_1, \varepsilon_2} (M) v_i^{\pm} = \varepsilon_1 (q^i +q^{-i}) v_i^{\pm} \quad, \rho^{\pm}_{\varepsilon_1, \varepsilon_2} (L) v_i^{\pm} =\varepsilon_2 (v_{i-1}^{\pm} + v_{i+1}^{\pm}), $$
 from which we easily see that  both representation are irreducible and that $\rho^{-}_{+1, +1}$ is isomorphic to the Witten-Reshetikhin-Turaev representation. 
 
 \begin{proposition}
 \begin{enumerate}
 \item Every weight indecomposable representation of $\mathcal{S}_A(\Sigma_1)$  is  isomorphic to either a $\rho_{x,y}$ or a $\rho^{\pm}_{\varepsilon_1, \varepsilon_2}$.
  \item If $\rho^{BW}$ and $\rho^{BCGP}$ are representations of $\mathcal{S}_A(\Sigma_1)$ with classical shadow $\pi(x^N, y^N)$, then $\rho^{BW}\cong \rho^{BCGP}\cong \rho_{x,y}$. Moreover, if $x^N=y^N=1$, then $\rho^{WRT}$ is a sub-representation of them.
 \end{enumerate}
 \end{proposition}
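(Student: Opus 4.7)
The whole argument will run through the Frohman-Gelca isomorphism $\phi^{FG}\colon \mathcal{S}_A(\Sigma_1)\xrightarrow{\cong} \mathcal{W}_q^+$, together with the fact that the inclusion $\mathcal{W}_q^+\hookrightarrow \mathcal{W}_q$ is a $\mathbb{Z}/2$-Galois extension with nontrivial element $\Theta$. Any weight indecomposable module $W$ has its center acting by a single character, hence is a module over the local algebra $\mathcal{S}_A(\Sigma_1)_{[r]}$ for a unique classical shadow $[r]\in \mathcal{X}_{\SL_2}(\Sigma_1)$. I would determine this local algebra in each case.

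For non-scalar $[r]=\pi(x^N,y^N)$, the shadow lies in the symplectic leaf $\mathcal{X}^1_{\SL_2}(\Sigma_1)$ of Lemma \ref{lemma_diagonal_symplectic}, which is Zariski open dense and therefore intersects the Azumaya locus (guaranteed dense by Corollary \ref{coro_unicity}); the Poisson-order transport of Corollary \ref{coro_POSkein} then places the entire leaf in the fully Azumaya locus, so $\mathcal{S}_A(\Sigma_1)_{[r]}\cong M_N(\mathbb{C})$ and its unique simple module of dimension $N$ must be $\rho_{x,y}$ (whose shadow is verified via $T_N(X+X^{-1})=X^N+X^{-N}$). This simultaneously forces any weight indecomposable with shadow $[r]$ to be $\rho_{x,y}$ and identifies $\rho^{BW}$ and $\rho^{BCGP}$ with $\rho_{x,y}$ in this range, since both have dimension $N$ (by Theorem \ref{theorem_QTClosed} and because $\dim V_A(\Sigma_1,\omega)=N$).

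For scalar shadow $[r]=\pi(\varepsilon_1,\varepsilon_2)$, the branch locus of $\pi$, the preimage is the single maximal ideal $\mathfrak{m}\subset\mathcal{W}_q$ with $X^N=\varepsilon_1$, $Y^N=\varepsilon_2$, and Azumaya gives $\mathcal{W}_q/\mathfrak{m}\cong \End(V)$ via $r_{\varepsilon_1,\varepsilon_2}$. Using the intertwining identity $r_{\varepsilon_1,\varepsilon_2}\circ\Theta=\mathrm{Ad}(\theta)\circ r_{\varepsilon_1,\varepsilon_2}$ already established in the text, taking $\Theta$-fixed points on both sides yields
\[
\mathcal{S}_A(\Sigma_1)_{[r]}\;\cong\;\End(V)^{\mathrm{Ad}(\theta)}\;\cong\;\End(V^+)\oplus\End(V^-),
\]
which is semisimple with exactly two simple modules of dimensions $(N\pm 1)/2$, namely the representations $\rho^{\pm}_{\varepsilon_1,\varepsilon_2}$ described before the proposition. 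Their irreducibility and non-isomorphism I would check with Lemma \ref{lemma_irrep}, using the pairwise distinct eigenvalues $\varepsilon_1(q^i+q^{-i})$ (for $0\leq i\leq (N-1)/2$) of $\phi^{FG}(M)$ together with cyclicity of $v_0^+$ and $v_1^-$ under the tridiagonal action of $\phi^{FG}(L)$. This completes item~(1).

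For item~(2) at scalar shadows (only $\rho^{BW}$ is at stake, since $\rho^{BCGP}_\omega$ requires $\omega\notin \mathrm{H}^1(\Sigma_1;\tfrac{1}{2}\mathbb{Z}/\mathbb{Z})$), a dimension count in $M_{(N+1)/2}(\mathbb{C})\oplus M_{(N-1)/2}(\mathbb{C})$ shows that an $N$-dimensional module decomposes as $a\cdot\rho^+\oplus b\cdot\rho^-$ with $a(N+1)+b(N-1)=2N$; for $N\geq 5$ this already forces $a=b=1$ and yields $\rho^{BW}\cong \rho_{\varepsilon_1,\varepsilon_2}$. The main obstacle I foresee is the edge case $N=3$, where $(a,b)=(0,3)$ is also numerically admissible, so I would have to rule out $\rho^{BW}\cong 3\rho^-$ by exhibiting explicitly a $\theta$-invariant vector in $\rho^{BW}$ coming from the Bonahon-Wong construction (e.g.~the image of the empty link, which is manifestly preserved by $\Theta$), thereby forcing a nontrivial $\rho^+$-summand. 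The closing assertion then falls out: when $x^N=y^N=1$, the identification $\rho^-_{1,1}\cong \rho^{WRT}$ recorded in the text exhibits $\rho^{WRT}$ as the $\rho^-$-summand of $\rho^{BW}\cong \rho_{1,1}=\rho^+_{1,1}\oplus \rho^-_{1,1}$.
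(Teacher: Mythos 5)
Your overall architecture follows the paper's: Frohman--Gelca, Azumaya locus for non-scalar shadows, a computation of the fiber algebra $\mathcal{S}_A(\Sigma_1)_{[r]}$ at the scalar shadows, and a dimension count for item~(2). The one place you genuinely diverge from the paper is the identification $\mathcal{S}_A(\Sigma_1)_{\pi(\varepsilon_1,\varepsilon_2)}\cong\End(V^+)\oplus\End(V^-)$: the paper argues by showing that $\mathcal{I}_{\varepsilon_1,\varepsilon_2}^+:=\mathcal{I}_{\varepsilon_1,\varepsilon_2}\cap\mathcal{W}_q^+$ is generated by $T_N(X+X^{-1})-2\varepsilon_1$ and $T_N(Y+Y^{-1})-2\varepsilon_2$ (hence $\rho_{\varepsilon_1,\varepsilon_2}$ is faithful on the fiber, and then its image is read off as $\End(V)^{\mathrm{Ad}(\theta)}$), whereas you short-circuit this by ``taking $\Theta$-fixed points.''

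This short-circuit leaves a real gap. The averaging argument (in characteristic $0$) cleanly gives $\mathcal{W}_q^+/\mathcal{I}_{\varepsilon_1,\varepsilon_2}^+\cong(\mathcal{W}_q/\mathcal{I}_{\varepsilon_1,\varepsilon_2})^\Theta=\End(V)^{\mathrm{Ad}(\theta)}$, but what you actually need is $\mathcal{S}_A(\Sigma_1)_{[r]}=\mathcal{W}_q^+/Ch_A(\mathfrak{m}_{[r]})\mathcal{W}_q^+\cong\End(V)^{\mathrm{Ad}(\theta)}$, and that requires identifying the two ideals $Ch_A(\mathfrak{m}_{[r]})\mathcal{W}_q^+$ and $\mathcal{I}_{\varepsilon_1,\varepsilon_2}^+$. ``Taking fixed points on both sides'' does not do this for you: the fixed-point argument only produces the surjection $\mathcal{S}_A(\Sigma_1)\twoheadrightarrow\End(V)^{\mathrm{Ad}(\theta)}$; that its kernel is exactly the central ideal coming from the scalar shadow $[r]$ is precisely the ``faithfulness'' step the paper names explicitly. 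In fact this step is delicate: the shadow $\pi(\varepsilon_1,\varepsilon_2)$ sits at a branch point of $\pi$, which is an $A_1$-type singularity of $\mathcal{X}_{\SL_2}(\Sigma_1)$, so the maximal ideal $\mathfrak{m}_{[r]}$ has three, not two, minimal generators ($\tau_M-2\varepsilon_1$, $\tau_L-2\varepsilon_2$, and the $(1,1)$-curve trace). You should include the third Chebyshev image in the ideal and then check the equality with $\mathcal{I}_{\varepsilon_1,\varepsilon_2}^+$; glossing it with the fixed-point slogan hides the hardest point.

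On item~(2) you are more careful than the paper. The paper dispatches the second assertion with ``it follows from the first,'' but for a scalar shadow the dimension equation $a\frac{N+1}{2}+b\frac{N-1}{2}=N$ forces $(a,b)=(1,1)$ only for $N\geq 5$, and $(a,b)=(0,3)$ survives when $N=3$; you are right to flag this. However your proposed fix is not sound as written: $\theta\in\GL(V)$ is an operator on the Schr\"odinger module $V$, not on the Bonahon--Wong space $V^0$, so ``exhibiting a $\theta$-invariant vector in $\rho^{BW}$'' is not meaningful without first constructing an intertwiner between the two models. If you want to eliminate $(a,b)=(0,3)$ when $N=3$ you need a genuine structural argument (for instance, identifying the Bonahon--Wong module for the torus directly with the Schr\"odinger module over $\mathcal{W}_q^+$, or exhibiting a vector on which the meridian acts by $-[2]$), rather than the appeal to the empty link.
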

 
 \begin{proof}
 The second assertion is a consequence of the first one. When $(x^{2N}, y^{2N})\neq (1,1)$, the first assertion follows from the fact that the Azumaya locus of $\mathcal{S}_A(\Sigma_1)$ consists in the non scalar representations. To prove the first assertion when $(x,y)=(\varepsilon_1, \varepsilon_2)$, we prove that the representation $\rho_{\varepsilon_1, \varepsilon_2}: \mathcal{S}_A(\Sigma_1)_{\pi(\varepsilon_1, \varepsilon_2)}\to \End(V)$ is faithful.
 Let $\mathcal{I}_{\varepsilon_1, \varepsilon_2}\subset \mathcal{W}_q$ be the ideal generated by $(X^N-\varepsilon_1, Y^N-\varepsilon_2)$ and consider the
   the inclusion map $\iota: \mathcal{W}_q^{+} \hookrightarrow \mathcal{W}_q$. Then $\mathcal{I}^+_{\varepsilon_1, \varepsilon_2} = \iota^{-1}(\mathcal{I}_{\varepsilon_1, \varepsilon_2}) = \mathcal{I}_{\varepsilon_1, \varepsilon_2} \cap \mathcal{W}_q^+$ is clearly generated by $(T_N(X+X^{-1})- 2\varepsilon_1, T_N(Y+Y^{-1})-2\varepsilon_2)$ thus $\iota$ induces an injective morphism:
   $$ \rho_{\varepsilon_1, \varepsilon_2} :  \mathcal{S}_A(\Sigma_1)_{\pi(\varepsilon_1, \varepsilon_2)} \xrightarrow[\phi^{FG}]{\cong} \quotient{\mathcal{W}_q^+}{\mathcal{I}_{\varepsilon_1, \varepsilon_2}^+} \hookrightarrow \quotient{\mathcal{W}_q}{\mathcal{I}_{\varepsilon_1, \varepsilon_2}} \cong \End(V), $$
   and we have proved that $\rho_{\varepsilon_1, \varepsilon_2}$ is faithful. Therefore:
   $$ \mathcal{S}_A(\Sigma_1)_{\pi(\varepsilon_1, \varepsilon_2)} \cong \mathrm{Im}(\rho_{\varepsilon_1, \varepsilon_2})= \End(V^+) \oplus \End(V^-).$$
   This proves the first assertion in the scalar case and concludes the proof.

 \end{proof}

 \textbf{The bigon}(Brown-Gordon \cite{BrownGordon_OqG})
  \par  By a theorem of Brown and Goodearl (\cite[Theorem C]{BrownGoodearl}), the Azumaya locus of $\mathcal{O}_q[\SL_2]$ is equal to the smooth locus of $\widehat{\mathcal{X}}(\mathbb{B})$ from which we deduce that the fully Azumaya locus is the set $\mathcal{FAL}\subset \SL_2^D$ of non diagonal matrices. Recall from Section \ref{sec_STS} that the decomposition into equivariant symplectic leaves corresponds to the double Bruhat cells decomposition $\SL_2^D= X_{00}\bigsqcup X_{01} \bigsqcup X_{01} \bigsqcup X_{11}$, where $X_{11}$ represents the set of diagonal matrices. Recall from Theorem \ref{theorem_center} (which, in that particular case, is due to Enriquez) that the center of $\mathcal{O}_q[\SL_2]$ is generated by the image of the Chebyshev-Frobenius morphism (i.e. by the elements $\alpha_{ij}^N$) together with the elements $\alpha_{+-}^k\alpha_{-+}^{N-k}$ for $1\leq k \leq N-1$.
  
  A simple computation shows that for $g\in \SL_2$ 
  $$ \mathcal{Z}(g) \cong \left\{ 
  \begin{array}{ll} 
 \quotient{\mathbb{C}[X]}{(X^N-1)} \cong \mathbb{C}^{\oplus N} & \mbox{, if }g\in X_{00}; \\
  \quotient{\mathbb{C}[X]}{(X^N)} & \mbox{, if }g\in X_{01}\cup X_{10}; \\
  \quotient{\mathbb{C}[Z_1, \ldots, Z_{N-1}]}{(Z_iZ_j, 1\leq i,j\leq N-1)} & \mbox{, if }g\in X_{11}.
  \end{array}
  \right.$$
  
  Therefore, Corollary \ref{theorem_BG_Ramifications} implies that (this is \cite[Corollary $2.7$ ]{BrownGordon_ramificationcenters}):
  $$\mathcal{O}_q[\SL_2]_g \cong \left\{
  \begin{array}{ll}
  Mat_N(\mathbb{C})^{\oplus N} & \mbox{, if }g\in X_{00}; \\
  Mat_N\left(\quotient{\mathbb{C}[X]}{(X^N)} \right) & \mbox{, if }g\in X_{10}\cup X_{01}.
  \end{array} \right.$$
 So for each $g\in \SL_2\setminus X_{11}$, there exists a unique (up to isomorphism) indecomposable weight module $V(g)$ with classical shadow $g$ and this module has dimension $N$. Moreover, when $g\in X_{10}\cup X_{01}$, $V(g)$ is the subrepresentation of a tower of semi-weight indecomposable modules described as follows. Let $V=\mathbb{C}^N$ with basis $\{e_1, \ldots, e_N\}$ and define a representation $r : \quotient{\mathbb{C}[X]}{(X^N)} \to \End(V)$ by $r(X)e_i=e_{i+1}$, for all $1\leq i \leq N-1$ and $r(X)e_N=0$. For $1\leq i \leq N$, let $V_i:= \Span\{ e_i, e_{i+1} \ldots, e_N\} \subset V$. We thus have a tower $V_N\subset V_{N-1} \subset \ldots \subset V_1=V$ and each $V_i$ is stable by $\quotient{\mathbb{C}[X]}{(X^N)}$. The algebra $Mat_N(\quotient{\mathbb{C}[X]}{(X^N)})$ naturally acts on $\quotient{\mathbb{C}[X]}{(X^N)}^{\oplus N}$ by matrix multiplication so composing with $r$  and setting  $W_i:= V_i^{\oplus N}$, we get some representations $r_i : Mat_N(\quotient{\mathbb{C}[X]}{(X^N)}) \to \End(W_i)$ from which we define a semi-weight indecomposable representation: 
 $$\rho_g^i : \mathcal{O}_q[\SL_2] \to \mathcal{O}_q[\SL_2]_g \cong Mat_N(\quotient{\mathbb{C}[X]}{(X^N)}) \xrightarrow{r_i} \End(W_i).$$
 So we get a flag $V(g)=W_N\subset \ldots \subset W_1$ of representations over $V(g)$. In particular, $V(g)$ is not projective in this case, though it is still a projective object in the category of weight representations.

 When $g\in X_{11}$, Gordon and Brown proved in \cite{BrownGordon_OqG} that $\mathcal{O}_q[\SL_2]_g$ is wild, so we will avoid this case.

\vspace{2mm}
\par
\textbf{The algebra $\mathcal{S}^{red}_A(\mathbb{D}_1)$}
 \par
 Using the fact $\mathcal{S}^{red}_A(\mathbb{D}_1)$ is isomorphic to $\widetilde{U}_q\mathfrak{gl}_2$, the author were able to classify by hand in \cite{KojuQGroupsBraidings} every finite dimensional weight indecomposable representations of $\mathcal{S}^{red}_A(\mathbb{D}_1)$. Retrospectively, we can deduce from this classification that  the fully Azumaya locus is equal to the  subset of elements  $(g_+, g_-) \in B^+ \times B^- =\mathcal{X}^{red}(\mathbb{D}_1)$ such that $g_+ g_-^{-1}\neq \pm \mathds{1}_2$. By Theorem \ref{theorem_center}, 
  the center $\mathcal{Z}$ of $\mathcal{S}^{red}_A(\mathbb{D}_1)$ is generated by the image of the Chebyshev-Frobenius morphism together with the Casimir elements $\alpha_{\partial}^{\pm 1}$ and $\gamma_p$ corresponding to the two boundary components of the annulus $\mathbb{D}_1$. So for  $(g_+, g_-) \in B^+\times B^-$ writing $t:= \tr( g_+ g_-^{-1})\in \mathbb{C}$ and $h_{\partial}\in \mathbb{C}^*$ the lower-right matrix coefficient of $g_+g_-$, an element of the fiber $\pi^{-1}((g_+,g_-))$ of 
the branched covering $\pi: \widehat{\mathcal{X}}^{red}(\mathbb{D}_1) \to {\mathcal{X}}^{red}(\mathbb{D}_1) $ is an element $((g_+,g_-), x, y)$ such that $T_N(x)=t$ and $y^N=h_{\partial}$ so $\pi$ is branched at  these $(g_+, g_-)$ such that $t=\pm 2$. 
  A simple computation shows that 
  $$ \mathcal{Z}((g_+, g_-)) \cong \quotient{\mathbb{C}[X,Y]}{(T_N(X)-t, Y^N -1)} \cong 
  \left\{
  \begin{array}{ll}
  \mathbb{C}^{\oplus N^2} & \mbox{, if }t\neq \pm 2; \\
   \mathbb{C}^{\oplus N} \oplus (\quotient{\mathbb{C}[X]}{(X-1)^2})^{\oplus \frac{N(N-1)}{2}} & \mbox{, if }t=\pm 2.
   \end{array}
   \right.
   $$
   So Theorem \ref{theorem_BG_Ramifications} implies that if $g_+g_-^{-1}\neq \pm \mathds{1}_2$ then
   $$\mathcal{S}_A^{red}(\mathbb{D}_1)_{(g_+,g_-)} \cong 
   \left\{
   \begin{array}{ll}
   Mat_N(\mathbb{C})^{\oplus N^2} & \mbox{, if }t\neq \pm 2; \\
  Mat_N(\mathbb{C})^{\oplus N} \oplus Mat_N(\quotient{\mathbb{C}[X]}{(X-1)^2})^{\oplus  \frac{N(N-1)}{2}} & \mbox{, if }t=\pm 2.
  \end{array}
  \right.
  $$
  So,  for each $\hat{g}=(g_+, g_-, x, y)$ such that $g_+g_-^{-1}\neq \pm \mathds{1}_2$, we have a unique (up to isomorphism) indecomposable weight module $V(\hat{g})$ which, moreover, has dimension $N$ and is simple. And when $\tr(g_+g_-) = \pm 2$ and $x\neq \mp 2$, then $V(\hat{g})$ is a submodule of a larger indecomposable semi-weight module.
  Using the classification in \cite[Appendix B]{KojuQGroupsBraidings}, we can say moreover than when $g_+g_-^{-1}= \pm \mathds{1}_2$  there exists a $N$ dimensional  indecomposable weight module $V(\hat{g})$ with classical shadow $\hat{g}$ as well and that the latter is simple if and only if $x=\mp 2$. If $x\neq \mp 2$, then it contains a simple proper submodule $S(\hat{g}) \subset V(\hat{g})$ and any weight indecomposable module is isomorphic to either a $V(\hat{g})$ or a $S(\hat{g})$.
    
Eventually, under the identification 
   $\mathcal{S}^{red}_A(\mathbb{D}_1)\cong \widetilde{U}_q\mathfrak{gl}_2$ and using the fact that each $\widetilde{U}_q\mathfrak{gl}_2$-module sending the central element $H_{\partial}$ to $1$ (having $y=1$) induces a $U_q\mathfrak{sl}_2$ module, then the simple non-projective modules $S(\hat{x})$ induce the $U_q\mathfrak{sl}_2$-modules defining the colored Jones polynomials, their projective cover $S(\hat{x}) \subset V(\hat{x})$ induce the modules of the ADO invariants, the $V(\hat{x})$ for which $ \tr(g_+g_-^{-1})=  2$ and $\chi_{\hat{x}}(\gamma_p) = - 2$ induces the module of the Kashaev invariant and the others are the diagonal, semi-cyclic and cyclic modules.

  \section{Generalizations to other gauge groups}
  
  It is impossible in one short survey to cover all results about skein algebras and their representations, so we made the restrictive choice to only consider the Kauffman-bracket stated skein algebras, to consider only roots of unity of odd order and to consider only thickened surfaces and not $3$-manifolds in general. Let us say a few worlds about skein algebras for different gauge groups.

  Since the work of Walker \cite{Walker_SkeinCat}, it is known that we can associate to any ribbon category $\mathcal{C}$  skein algebras $\mathcal{S}_{\mathcal{C}}(\Sigma)$ for unmarked surfaces. The ones considered in this paper are the ones associated to Temperley-Lieb categories. Natural choices for $\mathcal{C}$ are, for $G$ a reductive affine Lie group,  the subcategory $\mathrm{Rep}_{\zeta}(G)$ of the category of finite dimensional weight representations of Lusztig integral form $U_q\mathfrak{g}^L\otimes_{q=\zeta_N} \mathbb{C}$ at some root of unity $\zeta$, generated by modules with weights in the lattice defined by $G$, from which we get skein algebras $\mathcal{S}_{\zeta}^{G}(\Sigma)$. The Kauffman bracket skein algebra is then isomorphic to the skein algebra at $G=\SL_2$. It is expected (it is work in progress \cite{CostantinoKojuLe_TannakianSSkein}) that we can extend stated skein algebras to any Tannakian ribbon category and so define stated skein algebras $\mathcal{S}_q^G(\mathbf{\Sigma})$ for any marked surfaces. All properties, like the identification between the algebra of the bigon and $\mathcal{O}_q[G]$, the flatness, the gluing/cutting/excision and fusion formulas, the Chebyshev morphism, the quantum trace and the relation with relative character varieties, are expected to hold for any $G$.
  Here is what is known and claimed:
  \begin{enumerate}
  \item The algebra $\mathcal{S}_q^{\mathbb{C}^*}(\mathbf{\Sigma})$ is trivial, and studied in \cite{KojuQuesneyQNonAb}: it is the quantum torus associated to the quadratic pair $(\mathrm{H}_1(\Sigma, \mathcal{A}; \mathbb{Z}), (\cdot, \cdot))$, where $(\cdot, \cdot)$ is the relative intersection form defined in Definition \ref{def_relative_intersection}. All properties extend to this case.
  \item Higgins has defined in \cite{Higgins_SSkeinSL3} the algebra $\mathcal{S}_q^{\SL_3}(\mathbf{\Sigma})$ using Kuperberg's category of webs. He identified $\mathcal{S}_q^{\SL_3}(\mathbb{B})$ with $\mathcal{O}_q[\SL_3]$, proved the gluing and fusion properties and was able to define explicit bases proving the flatness of the algebras. In particular, all ingredients to use the triangular strategy are present in his work.
  \item L\^e and Sikora have announced \cite{LeSikora_ToAppear} that they will define the algebras  $\mathcal{S}_q^{\SL_N}(\mathbf{\Sigma})$, prove the gluing formula and define a quantum trace using the triangular strategy. A previous step towards the definition of a $\SL_N$ quantum trace was accomplished by Douglas in \cite{Douglas_QTraceSLN}.
  \item Stated skein algebras are closely related to locally constant categories valued factorizaton algebras on surfaces. The theory was studied by Ben Zvi-Brochier-Jordan in \cite{BenzviBrochierJordan_FactAlg1, BenzviBrochierJordan_FactAlg2}. A locally constant factorization algebra is completely determined, up to contractible choices, by its induced representation of the framed little disc operads (by a theorem of Lurie). In this case, such a choice corresponds to the choice of a ribbon category $\mathcal{C}$ and the corresponding factorization algebra can be defined abstractly, as in \cite{AyalaFrancis_FactAlgPrimer}, using a left Kan extension. A more explicit construction was studied by Cooke in \cite{Cooke_FactorisationHomSkein} using skein categories. These are locally constant pre-factorization algebras based on $\mathcal{C}$ and a surface $\Sigma$ for which the excision property (the equivalent for factorization algebras of the descent condition for a pre-cosheaf for being a cosheaf) was proved in \cite{Cooke_FactorisationHomSkein}. 
  The precise derivation of stated skein algebras from factorization algebras was sketched in \cite[Remark 2.21]{GunninghamJordanSafranov_FinitenessConjecture} and will be fully develop by Ha\"ioun in the next forthcoming paper \cite{Haioun_ToAppear}.
  In this language, the skein algebras $\mathcal{S}_q^G(\Sigma, \emptyset)$ for closed surfaces and the stated skein algebras $\mathcal{S}_q^G(\mathbf{\Sigma}_{g,0}^0)$ have been studied in \cite{BenzviBrochierJordan_FactAlg2, GanevJordanSafranov_FrobeniusMorphism}. In particular: (1) the fact that they define quantization deformations of  relative character varieties (the generalization of Theorem \ref{theorem_charvar_skein}) was proved in \cite{BenzviBrochierJordan_FactAlg1, BenzviBrochierJordan_FactAlg2}, $(2)$ the Chebyshev-Frobenius morphism was defined in \cite{GanevJordanSafranov_FrobeniusMorphism} where it appears very naturally directly from Lusztig's Frobenius functor $Fr_A : \mathrm{Rep}(G) \to \mathrm{Rep}_A(G)$, $(3)$ the centers of $\mathcal{S}_q^G(\Sigma, \emptyset), \mathcal{S}_q^G(\mathbf{\Sigma}_{g,0}^0)$ have been proved in \cite{GanevJordanSafranov_FrobeniusMorphism} to be equal to the images of the Chebyshev-Frobenius morphisms and these algebras have been proved to be finitely generated over their center and $(4)$ Theorem \ref{theorem_GJS} was initially stated (in an appropriate more general form) for arbitrary $G$.
  \item  Moreover, all results of Alekseev-Malkin and Hodges-Levasseur of Section \ref{sec_STS} and the results of Brown-Gordon for the bigon were initially stated for arbitrary $G$.
  \end{enumerate}

\bibliographystyle{amsalpha}
\bibliography{biblio}

\def\cprime{$'$}
\providecommand{\bysame}{\leavevmode\hbox to3em{\hrulefill}\thinspace}
\providecommand{\MR}{\relax\ifhmode\unskip\space\fi MR }
% \MRhref is called by the amsart/book/proc definition of \MR.
\providecommand{\MRhref}[2]{%
  \href{http://www.ams.org/mathscinet-getitem?mr=#1}{#2}
}
\providecommand{\href}[2]{#2}
\begin{thebibliography}{DRGPM20}

\bibitem[AB83]{AB}
M.~F. Atiyah and R.~Bott, \emph{The {Y}ang-{M}ills equations over {R}iemann
  surfaces}, Philos. Trans. Roy. Soc. London Ser. A \textbf{308} (1983),
  no.~1505, 523--615. \MR{702806 (85k:14006)}

\bibitem[AF]{AyalaFrancis_FactAlgPrimer}
D.~Ayala and J.~Francis, \emph{A factorization homology primer},
  arXiv:1903.10961.

\bibitem[AF17]{AbdielFrohman_SkeinFrobenius}
N.~Abdiel and C.~Frohman, \emph{The localized skein algebra is {F}robenius},
  Alg. Geom. Top. \textbf{17} (2017), no.~6, 3341--3373.

\bibitem[AGS95]{AlekseevGrosseSchomerus_LatticeCS1}
A.Y. Alekseev, H.~Grosse, and V.~Schomerus, \emph{Combinatorial quantization of
  the {H}amiltonian {C}hern-{S}imons theory {I}}, Communications in
  Mathematical Physics \textbf{172} (1995), no.~2, 317--358.

\bibitem[AGS96]{AlekseevGrosseSchomerus_LatticeCS2}
\bysame, \emph{Combinatorial quantization of the {H}amiltonian {C}hern-{S}imons
  theory {II}}, Communications in Mathematical Physics \textbf{174} (1996),
  no.~3, 561--604.

\bibitem[AKSM02]{AlekseevKosmannMeinrenken}
A.Y. Alekseev, Y.~Kosmann-Schwarzbach, and E.~Meinrenken, \emph{Quasi-{P}oisson
  manifolds}, Canad. J. Math. \textbf{54} (2002), 3--29.

\bibitem[AM94]{AlekseevMalkin_PoissonLie}
A.Y. Alekseev and A.Z. Malkin, \emph{ymplectic structures associated to
  {L}ie-{P}oisson groups}, Communications in Mathematical Physics \textbf{162}
  (1994), no.~1, 147--173.

\bibitem[AM95]{AlekseevMalkin_PoissonCharVar}
\bysame, \emph{Symplectic structure of the moduli space of flat connection on a
  {R}iemann surface}, Communications in Mathematical Physics \textbf{169}
  (1995), 99--119.

\bibitem[AMM98]{AlekseevMalkinMeinrenken_LieGroupMomentMap}
A.Y. Alekseev, A.Z. Malkin, and E.~Meinrenken, \emph{Lie group valued moment
  maps}, J. Differential Geom. \textbf{48} (1998), no.~3, 445--495.

\bibitem[AS96]{AlekseevSchomerus_RepCS}
A.Y. Alekseev and V.~Schomerus, \emph{Representation theory of {C}hern-{S}imons
  observables}, Duke Math. J. \textbf{85} (1996), no.~2, 447--510.

\bibitem[Bar99]{Barett}
J.W. Barett, \emph{Skein spaces and spin structures}, Math. Proc. Cambridge
  \textbf{126} (1999), no.~2, 267--275.

\bibitem[BCGP15]{BCGP_Bases}
C.~{Blanchet}, F.~{Costantino}, N.~{Geer}, and B.~{Patureau-Mirand}, \emph{{Non
  semi-simple {T}{Q}{F}{T}s from unrolled quantum $sl(2)$}}, Proceedings of the
  G\"okova Geometry and Topology Conference (2015).

\bibitem[BCGP16]{BCGPTQFT}
\bysame, \emph{{Non semi-simple {T}{Q}{F}{T}s, {R}eidemeister torsion and
  {K}ashaev's invariants}}, Advances in Mathematics \textbf{301} (2016), 1--78.

\bibitem[BFKB98]{BullockFrohmanKania_LGFT}
D.~Bullock, C.~Frohman, and J.~Kania-Bartoszynska, \emph{Topological
  interpretations of lattice gauge field theory}, Commun. Math. Phys.
  \textbf{198} (1998), no.~1, 47--81.

\bibitem[BG01]{BrownGordon_ramificationcenters}
K.~Brown and I.~Gordon, \emph{The ramification of the centres: {L}ie algebras
  in positive characteristic and quantised enveloping algebras}, Math. Zeit.
  \textbf{238} (2001), 733--779.

\bibitem[BG02a]{BrownGordon_OqG}
\bysame, \emph{The ramification of the centres: quantized function algebras at
  roots of unity}, Proc. London Math. Soc. \textbf{84} (2002), 147--178.

\bibitem[BG02b]{BrownGoodearl}
K.A. Brown and K.R. Goodearl, \emph{Lectures on algebraic quantum groups},
  Basel, Birkhauser, 2002.

\bibitem[BG03]{BrownGordon_PO}
K.~Brown and I.~Gordon, \emph{Poisson orders, symplectic reflection algebras
  and representation theory}, Journal f\"ur die {R}eine und {A}ngewandte
  {M}athematik \textbf{559} (2003), 193--216.

\bibitem[BHMV95]{BHMV2}
C.~Blanchet, N.~Habegger, G.~Masbaum, and P.~Vogel, \emph{Topological quantum
  field theories derived from the {K}auffman bracket}, Topology \textbf{34}
  (1995), no.~4, 883--927. \MR{1362791 (96i:57015)}

\bibitem[Big14]{BigelowQGroups}
S.~Bigelow, \emph{A diagrammatic definition of ${U}_q\mathfrak{sl}_2$}, J. Knot
  Theory and its Ramifications \textbf{23} (2014), 1450036.

\bibitem[BL]{BloomquistLe}
W.~Bloomquist and T.T.Q. {Le}, \emph{{The {C}hebyshev-{F}robenius homomorphism
  for stated skein modules of $3$-manifolds}}, arXiv:2011.02130[math.GT].

\bibitem[BP00]{BullockPrzytycki_00}
D.~Bullock and J.~Przytycki, \emph{Multiplicative structure of {K}auffman
  bracket skein module quantization}, Proc. Amer. Math. Soc. \textbf{128}
  (2000), 923--931.

\bibitem[BR95]{BuffenoirRoche}
E.~Buffenoir and Ph. Roche, \emph{Two-dimensional lattice gauge theory based on
  a quantum group}, Comm. Math. Phys. \textbf{170} (1995), no.~3, 669--698.

\bibitem[BR96]{BuffenoirRoche2}
\bysame, \emph{Link invariants and combinatorial quantization of {H}amiltonian
  {C}hern-{S}imons theory}, Comm. Math. Phys. \textbf{181} (1996), no.~2,
  331--365.

\bibitem[Bul97]{Bullock}
D.~Bullock, \emph{Rings of $sl_2(\mathbb{C})$-characters and the {K}auffman
  bracket skein module}, Comentarii Math. Helv. \textbf{72} (1997), no.~4,
  521--542.

\bibitem[Bul99]{BullockGeneratorsSkein}
\bysame, \emph{A finite set of generators for the {K}auffman bracket skein
  algebra}, Math. Zeitschift \textbf{231} (1999), 91--101.

\bibitem[BW11]{BonahonWongqTrace}
F.~Bonahon and H.~Wong, \emph{Quantum traces for representations of surface
  groups in $\mathrm{SL}_2(\mathbb{C})$}, Geom. Topol. \textbf{15} (2011),
  1569--1615.

\bibitem[BW16a]{BonahonWong1}
F.~{Bonahon} and H.~{Wong}, \emph{{Representations of the {K}auffman bracket
  skein algebra {I}: invariants and miraculous cancellations}}, Inventiones
  Mathematicae \textbf{204} (2016), 195--243.

\bibitem[BW16b]{BonahonWong4}
\bysame, \emph{{The {W}itten-{R}eshetikhin-{T}uraev representation of the
  {K}auffman bracket skein algebra}}, Proc.Amer.Math Soc. (2016), no.~144,
  2711--2724.

\bibitem[BW17]{BonahonWong2}
\bysame, \emph{{Representations of the {K}auffman bracket skein algebra {II}:
  punctured surfaces}}, Alg. Geom. Topology \textbf{17} (2017), 3399--3434.

\bibitem[BW19]{BonahonWong3}
\bysame, \emph{{Representations of the {K}auffman bracket skein algebra {III}:
  closed surfaces and naturality}}, Quantum Topology \textbf{10} (2019),
  325--398.

\bibitem[BY17]{Brown_AL_discriminant}
K.A. Brown and M.~Yakimov, \emph{Azumaya loci and discriminant ideals of {PI}
  algebras}, Advances in Mathematics \textbf{340} (2017).

\bibitem[BZBJ18a]{BenzviBrochierJordan_FactAlg1}
D.~Ben-Zvi, A.~Brochier, and D.~Jordan, \emph{Integrating quantum groups over
  surfaces}, J.Topology \textbf{11} (2018), no.~4, 873--916.

\bibitem[BZBJ18b]{BenzviBrochierJordan_FactAlg2}
\bysame, \emph{Quantum character varieties and braided module categories},
  Selecta Math. \textbf{24} (2018), no.~5, 4711--4748.

\bibitem[CF99]{ChekhovFock}
L.O. Chekhov and V.~Fock, \emph{Quantum {T}eichm\"uller spaces},
  Theor.Math.Phys. \textbf{120} (1999), 1245--1259.

\bibitem[CGPM15]{CGP_unrolledQG}
F.~Costantino, N.~Geer, and B.~Patureau-Mirand, \emph{Some remarks on the
  unrolled quantum group of sl(2)}, Journal of Pure and Applied Algebra
  \textbf{219} (2015), no.~8, 3238 -- 3262.

\bibitem[CKL]{CostantinoKojuLe_TannakianSSkein}
F.~Costantino, J.~Korinman, and T.T.Q. L\^e, \emph{{Stated skein algebras for
  {T}annakian ribbon categories}}, In preparation.

\bibitem[CL19]{CostantinoLe19}
F.~Costantino and T.T.Q. L\^e, \emph{{Stated skein algebras of surfaces}},
  ArXiv:1907.11400[math.GT], 2019.

\bibitem[CM09]{ChaMa}
L.~Charles and J.~March\'e, \emph{Multicurves and regular functions on the
  representation variety of a surface in {SU}(2)}, Comentarii Math. Helv.
  \textbf{87} (2009), 409--431.

\bibitem[Coo19]{Cooke_FactorisationHomSkein}
J.~Cooke, \emph{Excision of skein categories and factorisation homology},
  arXiv:1910.02630, 2019.

\bibitem[CP95]{ChariPressley}
V.~{Chari} and A.~N. {Pressley}, \emph{{A Guide to Quantum Groups}}, Cambridge
  University Press, October 1995.

\bibitem[CS83]{CullerShalenCharVar}
M.~Culler and P.B. Shalen, \emph{Varieties of group representations and
  splittings of $3$-manifolds.}, Ann. of Math. \textbf{117} (1983), no.~2,
  109--146.

\bibitem[DCK89]{DeConciniKacRepQGroups}
C.~De~Concini and V.G. Kac, \emph{Representations of quantum groups at roots of
  $1$}, Operator Algebras, Unitary Representations, Enveloping Algebras, and
  Invariant Theory \textbf{92} (1989), 471--506.

\bibitem[DCL94]{DeConciniLyubashenko_OqG}
C.~De~Concini and V.~Lyubashenko, \emph{Quantum function algebras at roots of
  1}, Adv. Math. \textbf{108} (1994), 205--262.

\bibitem[DCP93]{DeConciniProcesiBook}
C.~De~Concini and C.~Procesi, \emph{Quantum groups}, D-modules, representation
  theory and quantum groups, Springer-Verlag, Berlin, 1993.

\bibitem[Dou]{Douglas_QTraceSLN}
D.~Douglas, \emph{Quantum traces for $sl_n(\mathbb{C})$: the case $n=3$},
  arXiv:2101/06817.

\bibitem[DR]{DeRenzi_NSETQFT}
M.~De~Renzi, \emph{Non-{S}emisimple {E}xtended {T}opological {Q}uantum {F}ield
  {T}heories}, To appear in Mem. Amer. Math. Soc., arXiv:1703.07573 [math.GT].

\bibitem[DRGPM20]{DeRenziGeerPatureau_TQFT_QG}
M.~De~Renzi, N.~Geer, and B.~Patureau-Mirand, \emph{Non-semisimple quantum
  invariants and {T}{Q}{F}{T}s from small and unrolled quantum groups}, Algebr.
  Geom. Topol. \textbf{20} (2020), no.~7, 3377--3422.

\bibitem[{Dri}83]{DrinfeldrMatrix}
V.G. {Drinfel'd}, \emph{{On constant quasiclassical solutions of the
  {Y}ang-{B}axter quantum equation }}, Soviet. Math. Dokl \textbf{28} (1983),
  667--71.

\bibitem[Dro79]{Drozd}
Y.A. Drozd, \emph{Tame and {W}ild matrix problems.}, Institute of Mathematics,
  Academy of Sciences, Ukranian SSR, Kiev (1979), 39--74.

\bibitem[Fai20]{Faitg_LGFT_SSkein}
M.~Faitg, \emph{Holonomy and (stated) skein algebras in combinatorial
  quantization}, arXiv:2003.08992, 2020.

\bibitem[FG00]{FG00}
C.~Frohman and R.~Gelca, \emph{Skein modules and the noncommutative torus},
  Trans. Amer. Math. Soc. \textbf{352} (2000), no.~10, 4877--4888. \MR{1675190
  (2001b:57014)}

\bibitem[FK18]{FrohmanKania_SkeinRootUnity}
C.~{Frohman} and J.~{Kania-Bartoszynska}, \emph{{The structure of the
  {K}auffman bracket skein algebra at roots of unity}}, Math Zeitschift
  \textbf{289} (2018), 889--920.

\bibitem[FKL19a]{FrohmanKaniaLe_DimSkein}
C.~{Frohman}, J.~{Kania-Bartoszynska}, and T.T.Q {L\^e}, \emph{{Dimension and
  Trace of the Kauffman Bracket Skein Algebra}}, 2019,
  arXiv:1902.02002[math.GT].

\bibitem[FKL19b]{FrohmanKaniaLe_UnicityRep}
C.~{Frohman}, J.~{Kania-Bartoszynska}, and T.T.Q. {L{\^e}}, \emph{{Unicity for
  representations of the {K}auffman bracket skein algebra}}, Inventiones
  mathematicae \textbf{215} (2019), no.~2, 609--650.

\bibitem[FR99]{FockRosly}
V.V. Fock and A.A. Rosly, \emph{Poisson structure on moduli of flat connections
  on {R}iemann surfaces and the r-matrix}, Moscow Seminar in Mathematical
  Physics, Amer. Math. Soc. Transl. Ser. 2 \textbf{191} (1999), 67--86.

\bibitem[GHJW97]{GHJW_ModSpacesParBd}
K.~Guruprasad, J.~Huebschmann, L.~Jeffrey, and A.~Weinstein, \emph{Group
  systems, groupoids, and moduli spaces of parabolic bundles}, , Duke Math. J.
  \textbf{89} (1997), no.~2, 377--412.

\bibitem[GJSa]{GanevJordanSafranov_FrobeniusMorphism}
I.~Ganev, D.~Jordan, and P.~Safranov, \emph{The quantum {F}robenius for
  character varieties and multiplicative quiver varieties}, arXiv:1901.11450.

\bibitem[GJSb]{GunninghamJordanSafranov_FinitenessConjecture}
S.~Gunningham, D.~Jordan, and P.~Safranov, \emph{The finiteness conjecture for
  skein modules}, arXiv:1908.05233[math.QA].

\bibitem[GMN12]{GMN12}
D.~Gaiotto, G.W. Moore, and A.~Neitzke, \emph{Wall-crossing in coupled 2d-4d
  systems}, J. High Energy Phys. \textbf{12} (2012), 082.

\bibitem[GMN13]{GMN13}
\bysame, \emph{Spectral networks}, Ann. Henri Poincare \textbf{14} (2013),
  no.~7, 1643--1731.

\bibitem[Gol84]{Goldman_symplectic}
W.M. Goldman, \emph{The symplectic nature of fundamental groups of surfaces},
  Advances in Mathematics \textbf{54} (1984), no.~2, 200 -- 225.

\bibitem[Gol86]{Goldman86}
\bysame, \emph{Invariant functions on {L}ie groups and {H}amiltonian flows of
  surface groups representations}, Invent. math. \textbf{85} (1986), 263--302.

\bibitem[GRS05]{GRS_QuantizationDeformation}
S.~Gutt, J.~Rawnsley, and D.~Sternheimer, \emph{Poisson geometry, deformation
  quantisation and group representations}, Cambridge University Press, 2005.

\bibitem[GU10]{GelcaUribe_SU2}
R.~Gelca and A.~Uribe, \emph{Quantum mechanics and non-abelian theta functions
  for the gauge group $su(2)$}, arXiv:1007.2010[math-ph], 2010.

\bibitem[Hai]{Haioun_ToAppear}
B.~Haioun, \emph{Relating stated skein algebras and internal skein algebras},
  To appear.

\bibitem[Har77]{Hart}
R.~Hartshorne, \emph{Algebraic geometry}, Springer-Verlag, New York, 1977,
  Graduate Texts in Mathematics, No. 52. \MR{0463157 (57 \#3116)}

\bibitem[Hig]{Higgins_SSkeinSL3}
V.~Higgins, \emph{Triangular decomposition of $sl_3$ skein algebras},
  arXiv:2008.09419.

\bibitem[Hod93]{HodgesLevasseur_OqG}
T.~Hodges, T.J.and~Levasseur, \emph{Primitive ideals of
  $\mathbb{C}_q[\mathrm{SL}(3)]$}, Comm. Math. Phys. (1993), no.~156, 581--605.

\bibitem[HP01]{HavlicekPosta_Uqso3}
M.~Havl\'i\v{c}ek and S.~Po\v{s}ta, \emph{On the classification of irreducible
  finite-dimensional representations of $u?_q(\mathfrak{so}_3)$ algebra}, J.
  Math. Phys. \textbf{42} (2001), no.~1, 474--500.

\bibitem[Kar]{Karuo_ToAppear}
H.~Karuo, \emph{Kauffman bracket skein module of the connected sum of
  handlebodies and non-injectivity}, To appear.

\bibitem[Kas95]{Kassel}
C.~Kassel, \emph{Quantum groups}, Springer-Verlag, New York, 1995, Graduate
  Texts in Mathematics, No. 155.

\bibitem[Kon03]{KontsevichQuantizationPoisson}
M.~Kontsevich, \emph{Quantization of {P}oisson manifolds}, Lett. Math. Phys.
  \textbf{66} (2003), no.~3, 157--216.

\bibitem[{Kor}19a]{KojuQGroupsBraidings}
J.~{Korinman}, \emph{{Quantum groups and braiding operators in quantum
  {T}eichm\"uller theory}}, arXiv:1907.01732[math.GT], 2019.

\bibitem[{Kor}19b]{KojuTriangularCharVar}
\bysame, \emph{{Triangular decomposition of character varieties}},
  arXiv:1904.09022 [math.AG], 2019.

\bibitem[{Kor}20]{KojuPresentationSSkein}
\bysame, \emph{{Finite presentations for stated skein algebras and lattice
  gauge field theory}}, arXiv:2012.03237[math.QA], 2020.

\bibitem[Kor21]{KojuAzumayaSkein}
J.~Korinman, \emph{Unicity for representations of reduced stated skein
  algebras}, Topology and its Applications \textbf{293} (2021), 107570, Special
  issue in honor of Daciberg Lima Goncalves' 70th Birthday.

\bibitem[KQ19a]{KojuQuesneyClassicalShadows}
J.~{Korinman} and A.~Quesney, \emph{{Classical shadows of stated skein
  representations at odd roots of unity}}, arXiv:1905.03441[math.GT], 2019.

\bibitem[KQ19b]{KojuQuesneyQNonAb}
\bysame, \emph{{The quantum trace map as a quantum non-abelianization}},
  arXiv:1907.01177[math.GT], 2019.

\bibitem[Le17]{Le_QTraceMuller}
T.T.Q. Le, \emph{{Quantum {T}eichm\"uller spaces and quantum trace map}}, J.
  Inst. Math. Jussieu (2017), 1--43.

\bibitem[{Le}18]{LeStatedSkein}
T.T.Q. {Le}, \emph{{Triangular decomposition of skein algebras}}, Quantum
  Topology \textbf{9} (2018), 591--632.

\bibitem[LS]{LeSikora_ToAppear}
T.T.Q. {Le} and A.~{Sikora}, To appear.

\bibitem[Lus90]{Lusztig_QGroupsRoots1}
G.~Lusztig, \emph{Quantum groups at roots of 1?}, Geom. Dedicata \textbf{35}
  (1990), 89--113.

\bibitem[LYa]{LeYu_ToAppear}
T.T.Q. {Le} and T.~{Yu}, To appear.

\bibitem[LYb]{LeYu_SSkeinQTraces}
\bysame, \emph{Quantum traces and embeddings of stated skein algebras into
  quantum tori}, arXiv:2012.15272[math.GT].

\bibitem[LYc]{LeYu_Survey}
\bysame, \emph{Stated skein modules of marked $3$-manifolds/surfaces, a
  survey}, arXiv:2005.14577[math.GT].

\bibitem[Lyu95]{Lyubashenko_ModularTransoTensorCat}
V.~Lyubashenko, \emph{Modular transformations for tensor categories}, Journal
  of Pure and Applied Algebra \textbf{98} (1995), 279--327.

\bibitem[M\"16]{Muller}
G.~M\"uller, \emph{Skein algebras and cluster algebras of marked surfaces},
  Quantum topology \textbf{7} (2016), no.~3, 435--503.

\bibitem[Maj95]{Majid_QGroups}
S.~Majid, \emph{Foundations of quantum group theory}, Cambridge University
  Press, 1995.

\bibitem[Pro87]{Procesi87}
C.~Procesi, \emph{A formal inverse to the {C}ayley-{H}amilton {T}heorem.},
  Journal of algebra \textbf{107} (1987), 63--74.

\bibitem[Prz99]{Przytycki_skein}
J.H Przytycki, \emph{Fundamentals of {K}auffman bracket skein modules}, Kobe J.
  Math. \textbf{16} (1999), no.~16, 45--66.

\bibitem[PS00]{PS00}
J.H Przytycki and S.~Sikora, \emph{On skein algebras and
  $\mathrm{SL}_2(\mathbb{C})$-character varieties}, Topology \textbf{39}
  (2000), no.~1, 115--148.

\bibitem[PS19]{PrzytyckiSikora_SkeinDomain}
\bysame, \emph{Skein algebras of surfaces}, Trans. Amer. Math. Soc. (2019),
  no.~371, 1309--1332.

\bibitem[Rei03]{Reiner03}
I.~Reiner, \emph{Maximal {O}rders}, London Math. Soc. \textbf{28} (2003).

\bibitem[RT91]{RT}
N.~Reshetikhin and V.~G. Turaev, \emph{Invariants of {$3$}-manifolds via link
  polynomials and quantum groups}, Invent. Math. \textbf{103} (1991), no.~3,
  547--597. \MR{1091619 (92b:57024)}

\bibitem[{San}18]{SantharoubaneSkeinGenerators}
R.~{Santharoubane}, \emph{{Algebraic generators of the skein algebra of a
  surface}}, 2018, arXiv:1803.09804[math.GT].

\bibitem[SL91]{SjamaarLerman}
R.~Sjamaar and E.~Lerman, \emph{{Stratified symplectic spaces and reduction}},
  Ann. Math. \textbf{134} (1991), 375--422.

\bibitem[ST09]{SnyderTingley_HalfTwist}
N.~Snyder and P.~Tingley, \emph{The half-twist for $u_q (g)$ representations},
  Algebra and number theory \textbf{3} (2009), no.~7, 809--834.

\bibitem[STS85]{STS_DressingAction}
M.A. Semenov-Tian-Shansky, \emph{Dressing transformations and {P}oisson group
  actions}, Publ. Res. Inst. Math. Sci. \textbf{21} (1985), no.~6, 1237--1260.

\bibitem[{Tak}]{Takenov_Azumaya}
N.~{Takenov}, \emph{{Representations of the {K}auffman skein algebra of small
  surfaces}}, arXiv:1504.04573[math.GT].

\bibitem[Tur88]{Tu88}
V.~G. Turaev, \emph{The {C}onway and {K}auffman modules of a solid torus}, Zap.
  Nauchn. Sem. Leningrad. Otdel. Mat. Inst. Steklov. (LOMI) \textbf{167}
  (1988), no.~Issled. Topol. 6, 79--89, 190. \MR{964255 (90f:57012)}

\bibitem[Tur91]{Turaev91}
\bysame, \emph{Skein quantization of {P}oisson algebras of loops on surfaces},
  Ann. Sci. Ecole norm. \textbf{24} (1991), no.~4, 2799--2805.

\bibitem[Tur10]{Tu}
V.G. Turaev, \emph{Quantum invariants of knots and 3-manifolds}, revised ed.,
  de Gruyter Studies in Mathematics, vol.~18, Walter de Gruyter \& Co., Berlin,
  2010. \MR{2654259 (2011f:57023)}

\bibitem[Wal]{Walker_SkeinCat}
Walker, \emph{T{Q}{F}{T}s}, url: http://canyon23.net/math/tc.pdf.

\bibitem[Wei83]{Weinstein_DualPairs}
A.~Weinstein, \emph{The local structure of {P}oisson manifolds}, J. Diff. Geom.
  \textbf{18} (1983), 523--557.

\bibitem[Wit89]{Wi2}
E.~Witten, \emph{Quantum field theory and the {J}ones polynomial}, Comm. Math.
  Phys. \textbf{121} (1989), no.~3, 351--399. \MR{990772 (90h:57009)}

\end{thebibliography}

\end{document}